\documentclass[12pt]{amsart}

\usepackage[T1]{fontenc}
\usepackage{amsmath,amssymb,amsthm,booktabs,geometry,microtype}
\usepackage[colorlinks,linkcolor=blue,citecolor=blue,urlcolor=blue]{hyperref}
\usepackage{enumerate}
\usepackage{listings}
\microtypesetup{expansion=false}

\newtheorem{theorem}{Theorem}[section]
\newtheorem{lemma}[theorem]{Lemma}
\newtheorem{proposition}[theorem]{Proposition}
\newtheorem{corollary}[theorem]{Corollary}
\newtheorem{conjecture}[theorem]{Conjecture}
\theoremstyle{definition}
\newtheorem{definitionx}[theorem]{Definition}
\newtheorem{problem}[theorem]{Problem}
\theoremstyle{remark}
\newtheorem{remark}[theorem]{Remark}

\DeclareMathOperator{\den}{den}
\newcommand{\Zt}{\mathbb Z_2}

\newcommand{\ma}{\mu_\alpha}
\newcommand{\mb}{\mu_\beta}
\newcommand{\Nn}{\mathcal N}
\newcommand{\doi}[1]{\href{https://doi.org/#1}{doi:#1}}
\newcommand{\Ct}{\widetilde C}
\newcommand{\Dt}{\widetilde D}
\DeclareMathOperator{\Fhyp}{{}_2F_1}

\title[Ultraspherical background of the Jacobi phase expansion]{The ultraspherical background of the Jacobi phase expansion, in closed form}
\author[I. Area]{Iv\'an Area}
\address[I. Area]{IFCAE, Universidade de Vigo, Departamento de Matem\'atica Aplicada II, E. E. Aeron\'au\-tica e do Espazo, Campus As Lagoas s/n, 32004 Ourense, Spain}
\email[I. Area]{area@uvigo.gal}
\subjclass[2020]{Primary 33C45; Secondary 33C05, 33C65, 41A60, 11S80}
\keywords{Jacobi polynomials; ultraspherical polynomials; Appell functions; Bessel polynomials; Borel transform; 2-adic valuation}

\begin{document}
\begin{abstract}
The large-degree phase of $P_n^{(\alpha,\beta)}(\cos\theta)$ carries, at each order in $\Nn^{-1}=(n+\frac{\alpha+\beta+1}2)^{-1}$, an additive constant $\kappa_r$ which the phase equation does not determine.  These constants are antisymmetric in $(\alpha^2,\beta^2)$ and vanish on the ultraspherical diagonal
$\alpha=\beta$; what does not vanish there is the amplitude, and we compute it completely: the Borel transform of the diagonal logarithm is
\[
  \Psi_A(t)=\frac{e^{t}\bigl(\cosh2\alpha t-\cosh t\bigr)}{\sinh 2t}.
\]
The proof is arithmetic rather than asymptotic, through the terminating half-odd-integer family and two classical Bessel recurrences.  Second, we linearise transversally over this background and resolve the transverse problem: a physical-curve contiguity with ballot-number numerators and an exact Appell $F_3\to{}_2F_1$ reduction render the transverse tangent a parameter variation of a zero-balanced Gauss function, split at the boundary into Gamma-closed blocks and solved by quadratures.  The formal transverse tangent is thereby computed in closed form (trigamma series for its real part, an explicit Genocchi--Bernoulli central-difference quadrature for its imaginary part), in all orders, with the required sectorial estimates proved in a dedicated appendix.  Third, the arithmetic consequence: the Genocchi form of the Gamma exponent has a unique $2$-adically dominant all-ones partition, from which we prove $\nu_2(w_m)=-2\nu_2((2m)!)$ for every $m\ge0$: the sharp dyadic denominator law for the phase constants, $((2m)!)^2w_m\in\mathbb Z_2^\times$, in all orders.  A conjugate-layer factorisation certified by a Wilf--Zeilberger argument, four auxiliary valuation laws, and an application to the computation of Jacobi zeros for general admissible real parameters ($\alpha,\beta>-1$) complete the picture.
\end{abstract}

\maketitle

\section{Introduction}

\subsection{The problem}

We use the standard normalisation and notation for Jacobi polynomials in~\cite{Szego,DLMF}.  Whenever the zeros of $P_n^{(\alpha,\beta)}$ are considered as an ordered set in $(-1,1)$ (here and throughout Section~\ref{sec:zeros}) we assume $\alpha,\beta>-1$, the classical regime in which the $n$ zeros are real, simple and interior; the formal and hypergeometric identities developed below are polynomial or meromorphic in the parameters and, where explicitly stated, extend beyond this real Jacobi regime by algebraic or meromorphic continuation.  Write the zeros as $x_{n,k}=\cos\theta_{n,k}$, $0<\theta_{n,1}<\dots<\theta_{n,n}<\pi$, and put $\Nn=n+\frac{\alpha+\beta+1}2$, $z=\cot\frac\theta2$,  $A=\alpha^2$, $B=\beta^2$.  In Liouville normal form the Kummer phase is solved recursively in $\Nn^{-2}$ and yields
\begin{equation}\label{eq:quantisation}
  \Phi(\theta)=\Nn\theta-\Bigl(\frac\alpha2+\frac14\Bigr)\pi   +\sum_{r\ge1}\Nn^{-(2r-1)}\Psi_r(z)   =\Bigl(k-\frac12\Bigr)\pi .
\end{equation}
Each integration introduces a free constant; fixing them by the odd Laurent primitive in $z$ gives $\kappa_r=\Psi_r(1)$, the value at the midpoint
$\theta=\pi/2$.  These are the only part of \eqref{eq:quantisation} that the phase equation does not produce.

They are antisymmetric.  Since $P_n^{(\alpha,\beta)}(-x)=(-1)^n P_n^{(\beta,\alpha)}(x)$ fixes $x=0$, reflection at $\theta=\pi/2$ interchanges $\alpha$ and $\beta$ and reverses the sign of the phase, so 
\begin{equation}\label{eq:antisym}
  \kappa_r(B,A)=-\kappa_r(A,B),\qquad\text{hence}\qquad (A-B)\mid\kappa_r .
\end{equation}
The antisymmetry is a statement in the even variables $(A,B)$ of the expansion; the diagonal $A=B$ means $\alpha^2=\beta^2$, and contains in particular the ultraspherical case $\alpha=\beta$.  Thus \emph{$\kappa_r$ vanishes identically for $\alpha=\beta$}: the Legendre and Gegenbauer rules need no phase constants at all.  That is why the constants were invisible in the classical literature, and it means the whole difficulty of \eqref{eq:quantisation} lies transverse to the diagonal.

General background on hypergeometric and classical special functions may be found in~\cite{AndrewsAskeyRoy}.  The diagonal is not empty, however: the \emph{amplitude} there is a nontrivial asymptotic series.  This paper computes it in closed form, and uses that background to linearise the transverse problem.

\paragraph{Related literature.}
Large-degree asymptotics for Jacobi polynomials and functions have a long history, from the classical trigonometric and uniform expansions of Hahn~\cite{Hahn1980} and Frenzen--Wong~\cite{FrenzenWong1985} to modern Riemann--Hilbert treatments of Jacobi-type weights \cite{DeanoHuybrechsOpsomer2016}, fast computation of Gauss--Jacobi nodes and weights \cite{HaleTownsend2013,GilSeguraTemme2021}, nonoscillatory phase-function methods \cite{BremerYang2020}, explicit error bounds \cite{HuangLinWangWong2025}, and, closest to our starting point, the inverse-factorial large-degree expansions of Nemes~\cite{Nemes2025} for Jacobi and related functions with fixed parameters, which carry explicit computable error bounds, are built from the same Hankel-coefficient blocks as \eqref{eq:F}, and terminate at half-odd-integer parameters.  That literature develops the expansions, bounds, algorithms and representations themselves; it does not address the exact algebraic and arithmetic structure of the transverse phase constants.  The present work takes the large-degree phase--amplitude structure as its starting point, not as its result, and asks for the exact content of the constants that the recursive
phase equation leaves undetermined: the closed ultraspherical logarithmic background, the explicit transverse formal tangent, and the sharp dyadic denominator law.  Phase functions for Jacobi expansions are well established as a computational tool \cite{BremerYang2020}; here the object of study is instead the symbolic and arithmetic structure of the phase expansion's coefficients.

\subsection{Results}

Let
\[
  a_k(\nu)=\frac{(\frac12+\nu)_k(\frac12-\nu)_k}{(-2)^kk!} =\frac{\prod_{j=0}^{k-1}\bigl(4\nu^2-(2j+1)^2\bigr)}{8^kk!}
\]
be the Hankel coefficients.  With $Z=2\Nn$ the Jacobi large-degree behaviour is carried by the inverse-factorial series
\begin{equation}\label{eq:F}
  F(Z)=\sum_{n\ge0}\frac{G_n}{(Z-1)(Z-2)\cdots(Z-n)},\qquad G_n=\sum_{l=0}^na_l(\alpha)a_{n-l}(\beta)\,p^{\,l}q^{\,n-l},
\end{equation}
where at the midpoint $p=1+i$, $q=1-i$.  Write $\log F=\sum_{n\ge1}L_nZ^{-n}$ and, on the diagonal $A=B=\alpha^2$,
\[
  \Psi_A(t)=\sum_{n\ge1}\frac{L_n^{\mathrm{diag}}}{(n-1)!}\,t^n .
\]

\begin{theorem}[main]\label{thm:main}
For every $\alpha$,
\begin{equation}\label{eq:closed}
  \Psi_A(t)=\frac{e^{t}\bigl(\cosh2\alpha t-\cosh t\bigr)}{\sinh 2t}  =\tfrac12\bigl(\operatorname{csch}t+\operatorname{sech}t\bigr)
   \bigl(\cosh2\alpha t-\cosh t\bigr),\qquad A=\alpha^2 .
\end{equation}
Equivalently, coefficientwise,
\begin{equation}\label{eq:coef}
  L_n^{\mathrm{diag}}=\frac{4^n}{n(n+1)}  \Bigl[B_{n+1}\bigl(\tfrac{3+2\alpha}4\bigr) +B_{n+1}\bigl(\tfrac{3-2\alpha}4\bigr)
       -B_{n+1}(1)-B_{n+1}\bigl(\tfrac12\bigr)\Bigr],
\end{equation}
with $B_k$ the Bernoulli polynomials, convention $B_1=-\frac12$.
\end{theorem}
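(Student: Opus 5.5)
The plan is to prove \eqref{eq:closed} by interpolation in the parameter, exploiting the fact that both sides are polynomial in $A=\alpha^2$ coefficient by coefficient. On the left, $G_n$ is a polynomial in $\alpha^2$ of degree at most $n$, since each $a_l(\alpha)$ is a polynomial in $4\alpha^2$ of degree $l$. The coefficient $L_n$ of $\log F$ is a polynomial in $G_1,\dots,G_n$ with coefficients depending only on $n$: these come from expanding $1/((Z-1)\cdots(Z-k))$ in powers of $Z^{-1}$ and taking the logarithm. Hence $L_n^{\mathrm{diag}}$ is a polynomial in $A$ of bounded degree, at most $n$ after tracking weights. On the right, the Taylor coefficients of $\cosh 2\alpha t$ are polynomials in $\alpha^2$. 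Equivalently, \eqref{eq:coef} is even in $\alpha$ under $\tfrac{3+2\alpha}4\leftrightarrow\tfrac{3-2\alpha}4$ and polynomial in $\alpha$. So it suffices to verify the identity for the infinitely many values $\alpha=m+\tfrac12$, $m=0,1,2,\dots$. The coefficient form \eqref{eq:coef} then follows routinely: rewrite $e^t\cosh(2\alpha t)/\sinh 2t$ and the other terms as combinations of $e^{xt'}/(e^{t'}-1)$ with $t'=4t$ and $x\in\{\tfrac{3\pm2\alpha}4,1,\tfrac12\}$, and read off Bernoulli polynomials.

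At $\alpha=\beta=m+\tfrac12$ the Hankel coefficients terminate, $a_k(m+\tfrac12)=0$ for $k>m$, so $G_n=0$ for $n>2m$ and $F$ is a finite sum. The base case $m=0$ is immediate: $F\equiv1$, so $\log F=0$. This agrees with the right side, since $\cosh 2\alpha t-\cosh t=0$ at $\alpha=\tfrac12$.

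For the induction I would compare $F_{m+1}$ with $F_m$, where $F_m$ denotes $F$ at $\alpha=\beta=m+\tfrac12$. The target increment is
\[
\Psi_{(m+3/2)^2}(t)-\Psi_{(m+1/2)^2}(t)=\frac{e^t\bigl(\cosh(2m+3)t-\cosh(2m+1)t\bigr)}{\sinh2t}=\frac{e^t\sinh\bigl((2m+2)t\bigr)}{\cosh t}.
\]
Expanding $1/\cosh t=2\sum_{j\ge0}(-1)^je^{-(2j+1)t}$, and using that the Borel pairing here is $\log(\cdot)=\int_0^\infty e^{-Zt}\Psi(t)\,dt/t$ (formally), this increment is the logarithm of an explicit finite or alternating product of Gamma ratios of the form $\Gamma(\tfrac{Z+c}{4})/\Gamma(\tfrac{Z+c'}{4})$ with shifts $c,c'$ depending on $m$. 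The inductive step therefore reduces to the claim that $F_{m+1}(Z)/F_m(Z)$ equals that Gamma ratio as a formal inverse-factorial series.

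To establish the ratio I would identify the terminating sums $\sum_k a_k(m+\tfrac12)p^kx^k$ with reverse Bessel polynomials $\theta_m$. The midpoint values $p=1+i$, $q=1-i$ give a product of a Bessel polynomial and its conjugate, with the inverse factorials $1/(Z-1)\cdots(Z-n)$ acting as a Borel--Laplace convolution. The two classical Bessel recurrences are the three-term recurrence $\theta_{m+1}=(2m+1)x\theta_m+\theta_{m-1}$ and the derivative relation. I would use them to derive a first-order contiguity $F_{m+1}=R_m(Z)F_m$ with $R_m$ rational, or a Gamma ratio, in $Z$. I expect this step to be the main obstacle: converting the Bessel recurrences, which hold for the polynomials in $x$, into an exact relation between the inverse-factorial series, and computing $R_m$ explicitly. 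What I would try first is to evaluate the finite sum $F_m(Z)$ in closed form for small $m$ and guess $R_m$, then prove the contiguity by checking that both sides satisfy the same recurrence in $n$ for the coefficients $G_n$. Once $R_m$ matches the Gamma ratio above, taking logarithms and Borel transforms completes the induction, and polynomial interpolation in $A$ extends the result to every $\alpha$.
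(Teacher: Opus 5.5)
Your outer frame matches the paper's: termination at $\alpha=\beta=\ell+\tfrac12$ to $g_\ell(s)=y_\ell(ps)y_\ell(qs)$, a Gamma-quotient closed form on that family, the degree bound in $A$ with interpolation at $A=(\ell+\tfrac12)^2$, and the Bernoulli read-off. (A minor point: the terminating sums are the ordinary Bessel polynomials $y_m$, and the recurrence you quote is theirs, not that of the reverse $\theta_m$.)

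The gap is the inductive step. You leave it as ``guess $R_m$, then check'', and the target you set, a one-step contiguity $F_{m+1}=R_mF_m$, is the wrong one. $R_m$ is rational, since each $F_m$ is, but its degree grows with $m$. From the closed form, $R_{2j}=\prod_{i=-j}^{j}(Z+4i)/(Z+4i-2)$, and already $F_2/F_1=(Z+2)(Z-2)/\bigl(Z(Z-4)\bigr)$. So the identity you would need is a coefficient recurrence in $n$ whose length grows with $m$, and the two Bessel recurrences give no handle on it. Concretely, running both recurrences through a single step at the midpoint gives
\[
 s^2g_{\ell+1}'+\bigl(1-2(\ell+1)s\bigr)g_{\ell+1}=\widetilde D_\ell
 :=\tfrac q2\,y_\ell(ps)\,y_{\ell-1}(qs)+\tfrac p2\,y_{\ell-1}(ps)\,y_\ell(qs).
\]
By contrast, $s^2g_\ell'+(1-2\ell s)g_\ell=C_\ell=\tfrac q2\,y_{\ell-1}(ps)y_\ell(qs)+\tfrac p2\,y_\ell(ps)y_{\ell-1}(qs)$. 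The weights on the cross terms are swapped, so the first-order system in the pair $(g_\ell,C_\ell)$ does not close after one step. Guessing $R_m$ from small cases gives you a formula but no uniform proof.

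The missing idea is to step by two. A second application of the three-term recurrence produces the cross term $(2\ell+3)s\,(p^2+q^2)\,y_{\ell+1}(ps)y_{\ell+1}(qs)$. It vanishes because $p^2+q^2=0$, leaving $C_{\ell+2}=C_\ell+(4\ell+2)s\,g_\ell$. Combined with $s^2g_\ell'+(1-2\ell s)g_\ell=C_\ell$ (from the derivative recurrence and $\frac1p+\frac1q=1$), this is a first-order relation between $g_{\ell+2}$ and $g_\ell$.

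The translation you flagged as the obstacle is then just $Ze_n=e_{n-1}+ne_n$ for $e_n=1/\bigl((Z-1)\cdots(Z-n)\bigr)$. This matches the coefficient of $s^{m+1}$ in $s^2g'+(1-cs)g$ with the coefficient of $e_m$ in $(Z-c)F$; the normalising terms $1$ and $Z$ cancel between the two sides. The result is $F_{\ell+2}(Z)(Z-2\ell-4)=F_\ell(Z)(Z+2\ell+2)$.

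Your own Borel bookkeeping confirms this choice of step: $\Psi_{(\ell+5/2)^2}-\Psi_{(\ell+1/2)^2}=2e^t\sinh\bigl((2\ell+3)t\bigr)=e^{(2\ell+4)t}-e^{-(2\ell+2)t}$, which is exactly the image of $\log\frac{Z+2\ell+2}{Z-2\ell-4}$. Your one-step increment $e^t\sinh((2\ell+2)t)/\cosh t$ is instead an alternating sum of $2\ell+2$ exponentials. With the two base cases $F_0=1$ and $F_1=Z/(Z-2)$, induction on each parity class gives $F_\ell=\Gamma(\zeta-\frac\ell2)\Gamma(\zeta+\frac{\ell+1}2)/\bigl(\Gamma(\zeta)\Gamma(\zeta+\frac12)\bigr)$ with $\zeta=Z/4$. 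After that, the rest of your plan goes through.
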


The proof occupies Sections~\ref{sec:half}--\ref{sec:transfer}.  It is not an asymptotic argument: no representation theorem for Jacobi polynomials is used. Instead, the identity is established on the half-odd-integer family, where \eqref{eq:F} terminates, by an induction driven by two classical Bessel recurrences; a degree bound in $A$ then transfers it to all $\alpha$.

\begin{corollary}[formal contiguity]\label{cor:contiguity}
On the diagonal, as an identity of formal Laurent series at $Z=\infty$,
\[
F(Z+4)\,Z(Z+2)=F(Z)\bigl((Z+1)^2-4A\bigr).
\]
Equivalently, after the exact gauge factor $\Gamma(Z)$ is adjoined, $\mathcal G:=\Gamma(Z)F(Z)$ has the corresponding asymptotic expansion
satisfying
\[
\mathcal G(Z+4)=(Z+1)(Z+3)\bigl((Z+1)^2-4A\bigr)\mathcal G(Z).
\]
For half-odd-integer $\alpha$ these identities are identities of rational (or Gamma-quotient) functions by Theorem~\ref{thm:halfinteger}; for general $\alpha$ the assertion here is coefficientwise in $Z^{-1}$.
\end{corollary}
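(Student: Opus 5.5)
The plan is to derive the corollary from Theorem~\ref{thm:main} by passing from the Borel transform $\Psi_A$ to a functional equation in $Z$. Write $\log F(Z)=\sum_{n\ge1}L_nZ^{-n}$. The Borel correspondence used to define $\Psi_A$ sends $Z^{-n}$ to $t^n/(n-1)!$, so it is the formal Laplace transform $Z^{-n}=\int_0^\infty t^{n-1}e^{-Zt}\,dt$. Applied termwise, it gives
\[
\log F(Z)\;\sim\;\int_0^\infty \frac{\Psi_A(t)}{t}\,e^{-Zt}\,dt .
\]
Shifting $Z\mapsto Z+4$ multiplies the kernel by $e^{-4t}$. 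The claimed contiguity is equivalent to
\[
\log F(Z+4)-\log F(Z)=\log\frac{(Z+1)^2-4A}{Z(Z+2)}
\]
as formal series in $Z^{-1}$, with $\log$ of $1+O(Z^{-1})$ expanded formally. So I would verify the Borel-side identity
\[
(e^{-4t}-1)\,\frac{\Psi_A(t)}{t}\;=\;\frac{1}{t}\Bigl(e^{-t}+e^{-3t}-e^{-(1+2\alpha)t}-e^{-(1-2\alpha)t}\Bigr).
\]
Both sides are power series in $t$, and the right-hand side is the Borel transform of the right-hand logarithm.

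The right-hand side is justified by the Frullani-type formal identity $\log(1+c/Z)=\int_0^\infty (1-e^{-ct})e^{-Zt}\,dt/t$ coefficientwise. This identity is exact on coefficients, since $\log(1+c/Z)=\sum_n(-1)^{n-1}c^nZ^{-n}/n$ and $(1-e^{-ct})/t=\sum_n(-1)^{n-1}c^nt^{n-1}/n!$. Factor $(Z+1)^2-4A=(Z+1-2\alpha)(Z+1+2\alpha)$ and $Z(Z+2)$. Then
\[
\log\frac{(Z+1)^2-4A}{Z(Z+2)}=\log\Bigl(1+\tfrac{1-2\alpha}{Z}\Bigr)+\log\Bigl(1+\tfrac{1+2\alpha}{Z}\Bigr)-\log\Bigl(1+\tfrac{2}{Z}\Bigr).
\]
Its Borel image is therefore $(1-e^{-(1-2\alpha)t})+(1-e^{-(1+2\alpha)t})-(1-e^{-2t})$, divided by $t$. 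A small caution: the formal identity $F(Z+4)/F(Z)=\dots$ requires re-expanding $\log F(Z+4)$ in $Z^{-1}$. On the coefficient side this is the binomial re-expansion of $(Z+4)^{-n}$, which corresponds exactly to multiplication of the Borel transform by $e^{-4t}$. This is a finite identity per coefficient, so everything stays formal.

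Next comes the algebra on the left side. Using \eqref{eq:closed},
\[
(e^{-4t}-1)\,\frac{e^{t}(\cosh 2\alpha t-\cosh t)}{\sinh 2t}=-2e^{-2t}e^{t}(\cosh2\alpha t-\cosh t),
\]
because $e^{-4t}-1=-2e^{-2t}\sinh 2t$. The right-hand side of this display equals $-e^{-t}\bigl(e^{2\alpha t}+e^{-2\alpha t}-e^{t}-e^{-t}\bigr)=e^{-2t}+1-e^{-(1-2\alpha)t}-e^{-(1+2\alpha)t}$. This matches $(1-e^{-(1-2\alpha)t})+(1-e^{-(1+2\alpha)t})-(1-e^{-2t})$, which settles the first identity coefficientwise. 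The exponentials I wrote in the earlier displayed target should simply be replaced by this correct combination; the check is exactly this cancellation of $\sinh 2t$.

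For the gauged form I multiply by $\Gamma(Z+4)=Z(Z+1)(Z+2)(Z+3)\Gamma(Z)$. Then $\mathcal G(Z+4)=\Gamma(Z+4)F(Z+4)=(Z+1)(Z+3)\,\Gamma(Z)F(Z)\bigl((Z+1)^2-4A\bigr)$, where the factor $Z(Z+2)$ is cancelled using the first identity. For half-odd-integer $\alpha$ the series \eqref{eq:F} terminates, so Theorem~\ref{thm:halfinteger} makes both statements genuine identities of rational or Gamma-quotient functions. The only delicate point, and the one I would write out carefully, is the formal legitimacy of the shift, i.e.\ that Borel multiplication by $e^{-4t}$ is exactly the coefficientwise $Z\mapsto Z+4$ re-expansion. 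That holds by the binomial identity on $\int t^{n-1}e^{-(Z+4)t}\,dt$, with no convergence required.
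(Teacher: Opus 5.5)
Your proof is correct, but it is not the argument the paper gives for this corollary. The paper combines \eqref{eq:coef} with Lemma~\ref{lem:stirlingexp} to identify $\log F$ on the diagonal with the Stirling series of the Gamma quotient $\mathcal M(Z/4;\alpha)$. It then reads the ratio $\bigl((Z+1)^2-4A\bigr)/\bigl(Z(Z+2)\bigr)$ off the functional equation of $\Gamma$ under $\zeta\mapsto\zeta+1$, and exponentiates.

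You instead start from the closed Borel form \eqref{eq:closed} and check the log-difference identity on the Borel side. You use the two term-by-term rules (a shift by $c$ is multiplication by $e^{-ct}$, and $\log(1+c/Z)\mapsto(1-e^{-ct})/t$) together with $e^{-4t}-1=-2e^{-2t}\sinh2t$. This is exactly the computation of Remark~\ref{rem:borel}, run forwards instead of backwards. It closes because $\mathfrak B$ is a linear bijection $Z^{-1}\mathbb C[[Z^{-1}]]\to\mathbb C[[t]]$; state that injectivity explicitly.

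What your route buys is that it is purely formal. It needs no meromorphic function, and it avoids the step the paper's proof tacitly uses to pass from the functional equation of $\mathcal M$ to a coefficientwise statement: either uniqueness of asymptotic expansions or the Bernoulli difference identity.

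What the paper's route buys is conceptual: it shows that $F$ on the diagonal is formally the Gamma quotient $\mathcal M$. That makes the gauged form and the exact half-odd-integer statement transparent. In your write-up the latter deserves its one line: $\mathcal M_\ell(Z+4)/\mathcal M_\ell(Z)=\bigl((Z+1)^2-4A\bigr)/\bigl(Z(Z+2)\bigr)$ holds exactly by $\Gamma(x+1)=x\Gamma(x)$, and $F_\ell=\mathcal M_\ell$ by Theorem~\ref{thm:halfinteger}.

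Two slips should also be cleaned up. First, the Laplace correspondence is $Z^{-n}=\int_0^\infty t^{n-1}e^{-Zt}\,dt/(n-1)!$. Second, your first displayed Borel target, with $e^{-t}+e^{-3t}$, is actually the Borel image of $\log\bigl(((Z+1)^2-4A)/((Z+1)(Z+3))\bigr)$. Replace it from the outset by $\bigl(1+e^{-2t}-e^{-(1-2\alpha)t}-e^{-(1+2\alpha)t}\bigr)/t$.
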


\begin{corollary}[specialisations]\label{cor:specials}

\begin{enumerate}
\item[(i)] \emph{Rigidity}: $\Psi_A=\Lambda_1\cosh2\alpha t+\Lambda_0$   with $\Lambda_0(t)$, $\Lambda_1(t)$ independent of $\alpha$; coefficientwise, for $k\ge1$,   $[A^k]\Psi_A$ $=\frac{2^{2k-1}}{(2k)!}t^{2k-2}[A^1]\Psi_A$.
\item[(ii)] \emph{Chebyshev}: $\Psi_{1/4}\equiv0$, i.e.\ no correction at   $\alpha=\beta=\pm\frac12$.
\item[(iii)] \emph{Legendre}: $\Psi_0(t)=\frac12\bigl(\operatorname{sech}t  -\tanh\frac t2-1\bigr)$.
\end{enumerate}
\end{corollary}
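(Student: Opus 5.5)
All three items follow directly from the closed form \eqref{eq:closed} of Theorem~\ref{thm:main}, so the plan is to read each one off that formula. No further contact with the inverse-factorial series \eqref{eq:F} is needed.

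For (i), write
\[
\Psi_A(t)=\Lambda_1(t)\cosh2\alpha t+\Lambda_0(t),\qquad \Lambda_1=\tfrac12(\operatorname{csch}t+\operatorname{sech}t),\qquad \Lambda_0=-\Lambda_1\cosh t .
\]
Both $\Lambda_0$ and $\Lambda_1$ visibly do not depend on $\alpha$. For the coefficientwise statement, expand $\cosh2\alpha t=\sum_{k\ge0}\frac{(4A)^k t^{2k}}{(2k)!}$ in powers of $A=\alpha^2$. This gives $[A^k]\Psi_A=\frac{4^k t^{2k}}{(2k)!}\Lambda_1(t)$ for $k\ge1$. Specialising to $k=1$ gives $[A^1]\Psi_A=2t^2\Lambda_1$. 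Dividing the two, $[A^k]\Psi_A=\frac{4^k}{2(2k)!}t^{2k-2}[A^1]\Psi_A=\frac{2^{2k-1}}{(2k)!}t^{2k-2}[A^1]\Psi_A$, which is the asserted relation.

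One point needs care. $\Lambda_1$ has a simple pole at $t=0$ coming from $\operatorname{csch}t$, so the splitting into $\Lambda_1$ and $\Lambda_0$ is one of Laurent series. However, each $A^k$-coefficient with $k\ge1$ carries the factor $t^{2k}$, which cancels the pole. The $A^0$ part is $\Lambda_1(1-\cosh t)$, and it is regular at $t=0$ as well. So the identity is an honest identity of power series in $t$, consistent with the definition of $\Psi_A$.

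For (ii), $A=\frac14$ means $2\alpha=\pm1$. Then $\cosh2\alpha t=\cosh t$, so the bracket in \eqref{eq:closed} vanishes and $\Psi_{1/4}\equiv0$.

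For (iii), set $\alpha=0$, so the bracket becomes $1-\cosh t$, and simplify:
\[
\Psi_0=\tfrac12(\operatorname{csch}t+\operatorname{sech}t)(1-\cosh t).
\]
The only computation is the half-angle identity
\[
\frac{1-\cosh t}{\sinh t}=-\tanh\tfrac t2 .
\]
Using it, $\tfrac12\operatorname{csch}t\,(1-\cosh t)=-\tfrac12\tanh\frac t2$ and $\tfrac12\operatorname{sech}t\,(1-\cosh t)=\tfrac12\operatorname{sech}t-\tfrac12$. Adding the two pieces gives $\frac12(\operatorname{sech}t-\tanh\frac t2-1)$, as claimed.

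There is no real obstacle. The only points needing care are the Laurent-versus-power-series issue in (i), handled above, and the sign bookkeeping in the half-angle step of (iii).
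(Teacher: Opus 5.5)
Your proposal is correct and follows the paper's own proof: each item is read off the closed form \eqref{eq:closed}. For (i) you expand $\cosh2\alpha t=\sum_k(4A)^kt^{2k}/(2k)!$; for (ii) you use $\cosh2\alpha t=\cosh t$ at $\alpha=\pm\frac12$; and for (iii) you make the half-angle simplification at $\alpha=0$. Your remark that the simple pole of $\operatorname{csch}t$ in $\Lambda_1$ cancels in every $A^k$-coefficient, and in the $A^0$ part, is a detail the paper leaves implicit, and you handle it correctly.
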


\begin{corollary}[the Legendre tangent]\label{cor:R}
Let $g_n=\partial_AL_n(x_0,y_0;A-\frac14,B-\frac14)|_{A=B=0}$ and $\Phi(t)=\sum_{n\ge1}g_nt^n/(n-1)!$.  Then
\[
  \operatorname{Re}\Phi(t)=\frac{t^2e^t}{\sinh2t},
\]
that is $\operatorname{Re}g_{2k+1}=(1-2^{2k-1})B_{2k}$, $\operatorname{Re}g_{2k}=\frac{(2k-1)E_{2k-2}}2$, $\operatorname{Im}g_{2k}=0$,
with $B$ Bernoulli and $E$ Euler numbers; and $\nu_2(\operatorname{Re}g_n)=-1$ for every $n$.
\end{corollary}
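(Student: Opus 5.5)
The plan is to obtain the real part of $\Phi$ from Theorem~\ref{thm:main} by a symmetry argument, so that no new computation of the $L_n$ is needed. The coefficients and the valuations then follow from classical expansions.

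\textbf{Step 1: reduce to the diagonal.} At the midpoint, $G_n$ has real coefficients in $a_l(\alpha)$, $a_{n-l}(\beta)$ and the weights $p=1+i$, $q=1-i=\bar p$. Interchanging $\alpha\leftrightarrow\beta$ therefore amounts to swapping $p\leftrightarrow q$, which is complex conjugation. Hence
\[
L_n(B,A)=\overline{L_n(A,B)}
\]
as polynomials with real parameters. Differentiating at a diagonal point gives $\partial_B L_n=\overline{\partial_A L_n}$ there, and so
\[
\operatorname{Re}\partial_A L_n=\tfrac12(\partial_A+\partial_B)L_n=\tfrac12\,\frac{d}{dA}L_n^{\mathrm{diag}} .
\]
I read the shifted arguments $A-\frac14$, $B-\frac14$ as the paper's parametrisation in which the base point $A=B=0$ is the Legendre point $\alpha=\beta=0$. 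I would check this convention against the definition of $L_n(x_0,y_0;\cdot,\cdot)$ given later in the paper.

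\textbf{Step 2: compute the real part from Theorem~\ref{thm:main}.} With this identification,
\[
\operatorname{Re}\Phi=\tfrac12\,\partial_A\Psi_A\big|_{A=0}.
\]
By \eqref{eq:closed} only the factor $\cosh 2\alpha t=\cosh(2t\sqrt A)$ depends on $A$, and its $A$-derivative at $A=0$ is $2t^2$; Corollary~\ref{cor:specials}(i) gives the same coefficientwise. Using
\[
\tfrac12(\operatorname{csch}t+\operatorname{sech}t)=\frac{e^t}{\sinh 2t},
\]
we get
\[
\operatorname{Re}\Phi=\tfrac12\cdot 2t^2\cdot\frac{e^t}{\sinh 2t}=\frac{t^2e^t}{\sinh 2t}.
\]

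\textbf{Step 3: read off the coefficients.} Split the closed form as
\[
\frac{t^2e^t}{\sinh 2t}=\frac{t^2}{2\sinh t}+\frac{t^2}{2\cosh t}.
\]
The first summand is odd. Expanding
\[
\frac{t}{\sinh t}=\sum_k\frac{(2-2^{2k})B_{2k}}{(2k)!}\,t^{2k}
\]
and multiplying by $t/2$ gives $\operatorname{Re}g_{2k+1}$ after multiplying by $(2k)!$, which is $(1-2^{2k-1})B_{2k}$. The second summand is even. Expanding $\operatorname{sech}t=\sum_j E_{2j}t^{2j}/(2j)!$ and multiplying by $t^2/2$ gives $\operatorname{Re}g_{2k}=\tfrac12 E_{2k-2}(2k-1)!/(2k-2)!=\frac{(2k-1)E_{2k-2}}{2}$.

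\textbf{Step 4: the vanishing imaginary part.} I would derive $\operatorname{Im}g_{2k}=0$ not from the closed form but from the parity structure of \eqref{eq:quantisation}. There $\operatorname{Im}\log F$ carries the phase, which contains only odd powers of $\Nn^{-1}$, equivalently of $Z^{-1}$. So $\operatorname{Im}L_{2k}\equiv0$ identically in $(A,B)$, and in particular its $A$-derivative vanishes. I expect this step to be the main obstacle. One must verify carefully that, at the midpoint, the phase/amplitude split of \eqref{eq:quantisation} is exactly $\operatorname{Im}/\operatorname{Re}$ of $\log F$ with $Z=2\Nn$. If that identification is delicate, the fallback is to check directly that the even-order coefficients of $\log F$ are real, using the Hankel-coefficient structure.

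\textbf{Step 5: the valuations.} Von Staudt--Clausen gives $\nu_2(B_{2k})=-1$ for every $k\ge0$ when $B_{2k}\ne0$, and $1-2^{2k-1}$ is odd for $k\ge1$; the case $k=0$ gives $g_1=\frac12$ directly. Euler numbers $E_{2j}$ are odd and $2k-1$ is odd, so $\nu_2(\operatorname{Re}g_{2k})=-1$ as well. Hence $\nu_2(\operatorname{Re}g_n)=-1$ for every $n$.
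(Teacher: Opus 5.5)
Your Steps 1--3 and 5 reproduce the paper's argument. The conjugation $L_n(B,A)=\overline{L_n(A,B)}$ makes $\operatorname{Re}g_n$ the diagonal derivative, so $2\operatorname{Re}\Phi=\partial_A\Psi_A|_{A=0}$. From there the proof runs through $\partial_A\cosh2\alpha t|_{A=0}=2t^2$, the $\operatorname{csch}/\operatorname{sech}$ split, von Staudt--Clausen, and the oddness of the Euler numbers. (Von Staudt--Clausen needs $k\ge1$, since $B_0=1$, but you treat $g_1=\frac12$ separately.)

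The gap is Step~4. The quantity $\operatorname{Im}g_n=\frac1{2i}(\partial_A-\partial_B)L_n|_{A=B=0}$ is a \emph{transverse} derivative, so neither Theorem~\ref{thm:main} nor the conjugation symmetry controls it. The symmetry only makes $\operatorname{Im}L_n$ antisymmetric in $(A,B)$, which is compatible with $\operatorname{Im}g_1=\frac12\neq0$.

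Your substitute is the odd-in-$\Nn^{-1}$ shape of \eqref{eq:quantisation}. It transfers to $\log F$ only if $\operatorname{Im}\log F$ at the midpoint coincides with that phase at \emph{every} order, even orders included, and nothing available gives this:
\begin{itemize}
\item The phase equation determines the phase only up to $\theta$-independent constants. A constant at order $\Nn^{-2k}$ is exactly what $\operatorname{Im}L_{2k}$ at $\theta=\pi/2$ would be.
\item Proposition~\ref{prop:dictionary} is a working normalisation involving only the odd coefficients $L_{2r-1}$. It is matched with $\Psi_r(1)$ only for $r\le8$.
\end{itemize}
So Step~4 presupposes the conclusion. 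Your fallback (checking reality of the even coefficients ``using the Hankel-coefficient structure'') restates the claim rather than proving it.

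At this point the paper itself only says the vanishing ``follows from reflection''. Its all-orders proof is the first paragraph of the proof of Theorem~\ref{thm:tangentlaw}:
\begin{itemize}
\item Theorem~\ref{thm:imclosed} gives $\operatorname{Im}g_n=-\frac12\widehat q_n$.
\item Lemma~\ref{lem:parityF} shows that the forcing $\mathsf F=\widehat{\mathsf f}_\Gamma$ is even in $Z^{-1}$, so its Borel kernel $\varphi$ is odd.
\item The kernel $\varphi(t)/(2\sinh2t)$ of $\widehat Q_\Gamma$ is then even, so $\widehat q_{2k}=0$.
\end{itemize}
This is the transverse input your proposal is missing. To avoid the matching machinery, you would need an independent proof that $\partial_A\log\bigl(F(Z)F(-Z)\bigr)\big|_{A=B=0}$ has real coefficients. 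That statement is equivalent to $\operatorname{Im}g_{2k}=0$ for all $k$, and the proposal does not provide it.
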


We then linearise transversally: the first variation off the diagonal is a \emph{linear} forced system over this background, its normalised response at the Legendre background is
$v_0/P_d=\operatorname{artanh}x-Q_d/P_d-H_d$, and the surviving arithmetic problem is identified precisely.

The second half of the paper develops the transverse structure.  The Bessel ladder is closed \emph{without} the midpoint identity $p^2+q^2=0$; on the whole physical curve, where the Horn weights satisfy $p+q=pq$ identically, the half-odd-integer family obeys a three-term contiguity whose monic numerators satisfy $P_{\ell+2}=(Z^2-(2\ell+2)^2)P_\ell+(\varpi-2)(2\ell+3)P_{\ell+1}$ and are given in closed form by a ballot-number formula.  For \emph{independent} half-odd-integer parameters the same ladder reaches off the ultraspherical diagonal, with the transverse direction entering as the explicit inhomogeneity $2i(\ell-m)$.  Principally, the midpoint amplitude is identified as a Kamp\'e de F\'eriet function at arguments summing to one. More sharply, it is exactly an Appell $F_3$ function whose physical curve $y=x/(2x-1)$ is a classical second-order specialization: the full midpoint amplitude reduces to a single Gauss ${}_2F_1$ at argument $2$, and the Legendre transverse tangent becomes the second parameter variation of a zero-balanced ${}_2F_1$.  This route is carried to its analytic end: the classical zero-balanced connection formula splits the boundary value into a Ramanujan digamma block and a Kummer block, both Gamma-closed at the Legendre point; the transverse variation of the Kummer block satisfies an explicitly certified contiguity in the large parameter and is solved by quadratures; the open imaginary tangent obeys a one-function reduction modulo its real part; the boundary transfer of the zero-balanced family is closed by derivative-free contiguities and normalised by exact anchors (polylogarithmic at the summable point, regularised Euler-moment integrals for every real residual class); and the matching problem is solved: the formal tangent is computed in closed form (trigamma series for its real part, an explicit Bernoulli-polynomial series for its imaginary part), proved in all orders by a two-component matching whose sectorial estimates are carried out in full in Appendix~\ref{app:vertical}, and certified in exact arithmetic to order $41$; the corresponding closed form of the real Stokes correction, $-(\pi^2/8)\sec^2(\pi s/2)$, is established under the numerically certified periodicity hypothesis (P) of Proposition~\ref{prop:stokesfun} and is \emph{not used} in the proof of the formal tangent.  The dyadic law is thereby reduced to the $2$-adic arithmetic of one explicit formal series, and that arithmetic is carried out: a Genocchi-form exponent with a hyperbolic-tangent kernel, an all-ones dominance lemma and a mixing bound prove $\nu_2(w_m)=-2\nu_2((2m)!)$: the transverse law $((2m)!)^2w_m\in\mathbb Z_2^\times$ holds in all orders.  It is also \emph{factorised into conjugate layers} of Gauss functions, $F=\sum_r\varrho_r(Z)\,\gamma_r(A)\gamma_r(B)\,\mathcal P_r(A;Z)\overline{\mathcal P}_r(B;Z)$, each layer of order $Z^{-3r}$, by a Burchnall--Chaundy-type expansion proved here with a Wilf--Zeilberger certificate: the constants $3r$ and $((2m)!)^2$ of the dyadic law are structural.  The layer representation reproduces the global Legendre tangent exactly, real and imaginary parts alike, and its denominator is the explicit exponential of the diagonal closed form.  Four auxiliary valuation laws are analysed in all orders: the denominator laws are proved unconditionally here, the numerator laws only under the explicitly stated integrality hypothesis of the companion paper (verified there to high order); from the companion's unconditional Bessel-basis integrality theorem we derive the unconditional bound $\nu_2(\operatorname{Im}g_{2r-1})\ge-H_r-4(r-1)$.  Under the integrality hypothesis the depth-$4(r-1)$ divisibility of the normalised congruence is forced, and the sharp law is equivalent to a single parity statement: the normalised cancellation does not gain one further factor of $2$.  The final sections of the transverse development then resolve this: the formal tangent is computed in closed form (Subsection~\ref{sec:matching}) and the transverse law is established by an elementary $2$-adic dominance argument (Subsection~\ref{sec:arithmetic}); the sectorial estimates behind the matching, stated in Lemma~\ref{lem:vertical}, are proved in full in Appendix~\ref{app:vertical}, which also proves that the lattice constant of the transverse quadrature vanishes.  Sections~\ref{sec:genladder}--\ref{sec:layers} contain the transverse structure: the ladder off the midpoint (Section~\ref{sec:genladder}), the physical-curve contiguity and the ballot closed form of its numerators (Section~\ref{sec:curve}), the two-parameter half-odd-integer family off the diagonal (Section~\ref{sec:offdiag}), the exact Appell $F_3\to{}_2F_1$ reduction on the physical curve (Section~\ref{sec:appell}), and the conjugate-layer factorisation of the full amplitude (Section~\ref{sec:layers}). Section~\ref{sec:transverse} linearises off the diagonal, Section~\ref{sec:Mcompletion} completes the off-curve calculus of the diagonal background by adjoining the radial operator, and Subsections~\ref{sec:split}--\ref{sec:anchors} develop the boundary analysis of the zero-balanced reduction: the boundary split and transverse quadrature, the formal quadratures with the isolation of the oscillation, the derivative-free boundary transfer, and the exact anchors.
Section~\ref{sec:tangentlayers} rebuilds the Legendre tangent from the layers and reduces the surviving arithmetic problem to one explicit
congruence, Section~\ref{sec:zeros} applies the whole to the computation of zeros, and Section~\ref{sec:open} states the (now resolved) transverse problem precisely and records the routes.

\subsection{Formal and analytic conventions}\label{sec:conventions}

Throughout the paper, identities in $Z^{-1}$ (equivalently in $v=Z^{-1}$ or $s^{-1}$) are identities in the ring of formal Laurent series unless convergence is explicitly asserted.  When an exact meromorphic function is introduced, its large-$Z$ expansion is always understood in the stated sector.  Terminating half-odd-integer cases are exact rational (or Gamma-quotient) identities.  Numerical certificates are used only as independent verification, never as substitutes for an all-orders proof; the Note on verification and the table at the head of Appendix~\ref{app:code} classify every script accordingly.

\subsection{The load-bearing chain}\label{sec:chain}

The proof of the transverse law has two logically separate parts. First, Sections~\ref{sec:half}--\ref{sec:Mcompletion} determine the ultraspherical background and its formal normal variations.  Second, Sections~\ref{sec:offdiag}--\ref{sec:tangentlayers} resolve the transverse problem.  The analytic step is Theorem~\ref{thm:imclosed}: the transverse tangent is identified, in all orders, with an explicit formal central-difference quadrature (the estimates are in
Appendix~\ref{app:vertical}).  The arithmetic step is then independent of the boundary analysis: Lemmas~\ref{lem:genocchi}--\ref{lem:dominance} and Theorems~\ref{thm:arithlaw}--\ref{thm:problemW} establish the sharp $2$-adic law by a unique-dominant-partition argument.  The passage to the phase constants is Lemma~\ref{lem:weights2}, Theorem~\ref{thm:tangentlaw} and Corollary~\ref{cor:consequences}. Proposition~\ref{prop:stokesfun} concerns an additional function-level Stokes formula and is used in neither step.  In compact form:
{\small
\begin{gather*}
 \boxed{\text{terminating family}}
 \Rightarrow\boxed{\text{Bessel ladder}}
 \Rightarrow\boxed{\text{Gamma quotient}}
 \Rightarrow\boxed{\text{Thm.~\ref{thm:main}}}\\
 \Rightarrow\ \boxed{\text{curve contiguity}}
 +\boxed{F_3\to{}_2F_1}
 +\boxed{\text{conjugate layers}}\\
 \Rightarrow\ \boxed{\text{two-component matching}}
 \Rightarrow\boxed{\text{Thm.~\ref{thm:imclosed}}}\\
 \Rightarrow\ \boxed{\text{Genocchi exponent}}
 \Rightarrow\boxed{\text{all-ones dominance}}
 \Rightarrow\boxed{\text{Thm.~\ref{thm:problemW}}}
 \Rightarrow\boxed{\text{Thm.~\ref{prob:W}}} .
\end{gather*}}

\section{The half-odd-integer family}\label{sec:half}

\subsection{Bessel polynomials at the midpoint}

Write $y_n$ for the Bessel polynomials
\[
  y_n(x)=\sum_{k=0}^n\frac{(n+k)!}{(n-k)!\,k!\,2^k}\,x^k .
\]

\begin{lemma}\label{lem:hankelhalf}
For $\alpha=\ell+\frac12$,
\[
  a_k\bigl(\ell+\tfrac12\bigr)=\frac{(\ell+k)!}{(\ell-k)!\,k!\,2^k}
  \quad(0\le k\le\ell),\qquad a_k\bigl(\ell+\tfrac12\bigr)=0\quad(k>\ell),
\]
so that $\sum_ka_k(\ell+\frac12)s^k=y_\ell(s)$.
\end{lemma}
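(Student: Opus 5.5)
The plan is direct computation from the product form of the Hankel coefficient,
\[
a_k(\nu)=\frac{\prod_{j=0}^{k-1}\bigl(4\nu^2-(2j+1)^2\bigr)}{8^kk!},
\]
evaluated at $\nu=\ell+\frac12$. In that case $2\nu=2\ell+1$, so each factor factorises as
\[
4\nu^2-(2j+1)^2=(2\ell+1-2j-1)(2\ell+1+2j+1)=4(\ell-j)(\ell+j+1).
\]

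\textbf{Vanishing for $k>\ell$.} When $k>\ell$, the index $j=\ell$ occurs in the product, and the factor $(\ell-j)$ vanishes there. Hence $a_k(\ell+\frac12)=0$ for all $k>\ell$.

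\textbf{Evaluation for $0\le k\le\ell$.} Here I will collect the factors of $4$ and the two telescoping products separately:
\[
\prod_{j=0}^{k-1}4(\ell-j)(\ell+j+1)=4^k\,\frac{\ell!}{(\ell-k)!}\cdot\frac{(\ell+k)!}{\ell!}=4^k\,\frac{(\ell+k)!}{(\ell-k)!}.
\]
Dividing by $8^kk!$ then gives
\[
a_k\bigl(\ell+\tfrac12\bigr)=\frac{(\ell+k)!}{(\ell-k)!\,k!\,2^k},
\]
as claimed.

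As a cross-check I would also verify the sign using the Pochhammer form. There $(\frac12-\nu)_k=(-\ell)_k=(-1)^k\ell!/(\ell-k)!$, and this sign cancels against the $(-2)^k$ in the denominator. Since the two displayed formulas for $a_k$ are given in the paper as equal, this check is optional.

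\textbf{Generating function.} Comparing the resulting coefficients with the definition of $y_\ell$ shows that $\sum_k a_k(\ell+\frac12)s^k$ is exactly $y_\ell(s)$, and the sum terminates at $k=\ell$. There is no real obstacle in this argument; the only point needing care is the bookkeeping of the powers of $2$, namely $4^k/8^k=2^{-k}$.
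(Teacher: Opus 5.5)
Your proposal is correct and follows the same route as the paper: factor each term as $(2\ell-2j)(2\ell+2j+2)=4(\ell-j)(\ell+j+1)$, note that the $j=\ell$ factor kills the product for $k>\ell$, and divide the telescoped product $4^k(\ell+k)!/(\ell-k)!$ by $8^kk!$. Comparing the resulting coefficients with the defining sum of $y_\ell$ then gives the generating-function identity, exactly as you describe.
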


\begin{proof}
Since $4\alpha^2=(2\ell+1)^2$, and
\[
\prod_{j<k}\bigl((2\ell+1)^2-(2j+1)^2\bigr)
=\prod_{j<k}(2\ell-2j)(2\ell+2j+2)=4^k\frac{\ell!}{(\ell-k)!}
\frac{(\ell+k)!}{\ell!}, 
\]
vanishes for $k>\ell$, dividing by $8^kk!$ gives the stated value.
\end{proof}

Consequently, on the diagonal $\alpha=\beta=\ell+\frac12$, the generating function of the coefficients in \eqref{eq:F} is the \emph{polynomial}
\[
  g_\ell(s):=\sum_nG_n^{(\ell)}s^n=y_\ell(ps)\,y_\ell(qs),
  \qquad p=1+i,\ q=1-i,
\]
of degree $2\ell$; in particular $G_n^{(\ell)}=0$ for $n>2\ell$ and $F_\ell(Z)$ is a rational function of $Z$.

The four relations we use among the midpoint constants are
\begin{equation}\label{eq:pq}
  pq=2,\qquad p+q=2,\qquad p^2+q^2=0,\qquad \frac1p+\frac1q=1 .
\end{equation}
The third is the one that does the work.

\subsection{Two classical recurrences, and their consequence}

\begin{lemma}\label{lem:bessel}
For $n\ge1$,
\[
  \text{(a)}\quad x^2y_n'(x)=(nx-1)y_n(x)+y_{n-1}(x),
  \qquad
  \text{(b)}\quad y_{n+1}(x)=(2n+1)x\,y_n(x)+y_{n-1}(x).
\]
\end{lemma}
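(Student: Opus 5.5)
Both identities are coefficient identities, and I will prove them by comparing coefficients of $x^k$ against the explicit sum
\[
  y_n(x)=\sum_k c_{n,k}x^k,\qquad c_{n,k}=\frac{(n+k)!}{(n-k)!\,k!\,2^k},
\]
with $c_{n,k}=0$ for $k>n$ or $k<0$. Everything then reduces to factorial ratios. I will prove (b) first, because it is a one-line check, and then obtain (a) by the same bookkeeping.

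For (b), the coefficient of $x^k$ on the right is $(2n+1)c_{n,k-1}+c_{n-1,k}$. Factor out the common quantity $\frac{(n+k-1)!}{(n+1-k)!\,k!\,2^{k}}$. After this normalisation:
\begin{itemize}
\item the first term contributes $2k(2n+1)$;
\item the second term contributes $(n+1-k)(n-k)\cdot 2^{0}\cdots$; precisely, $c_{n-1,k}=\frac{(n-1+k)!}{(n-1-k)!\,k!\,2^k}$ gives the factor $(n-k)(n-k+1)$.
\end{itemize}
Their sum is $(n-k)(n-k+1)+2k(2n+1)$, which should equal $(n+k)(n+k+1)$, the factor for $c_{n+1,k}$. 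Expanding both sides gives $n^2+k^2+n+k+2nk$ on each, so (b) holds. The boundary indices $k=0$ and $k=n+1$ are covered by the vanishing conventions.

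For (a), the coefficient of $x^{k}$ on the left is $(k-1)c_{n,k-1}$. On the right it is $n\,c_{n,k-1}-c_{n,k}+c_{n-1,k}$. So I must show
\[
  c_{n,k}-c_{n-1,k}=(n-k+1)\,c_{n,k-1}.
\]
Normalise by $\frac{(n+k-1)!}{(n-k)!\,k!\,2^k}$. Then:
\begin{itemize}
\item the left side becomes $(n+k)-(n-k)=2k$;
\item the right side becomes $(n-k+1)\cdot 2k\cdot\frac{(n-k)!}{(n-k+1)!}=2k$.
\end{itemize}
The two sides agree. Degrees match as well: the top coefficient $x^{n+1}$ on each side is $(n-n)c_{n,n}$ against $0$, so both vanish.

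Alternatively, one could derive (a) from the differential equation $x^2y''+(2x+2)y'=n(n+1)y$ together with (b). The direct coefficient check is shorter, so I will use that.

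There is no real obstacle here. The only care needed is with the index ranges $k=0$, $k=n$ and $k=n+1$, where terms vanish by convention. The $n=1$ case, with $y_0=1$, is covered by the same formulas.
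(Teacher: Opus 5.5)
Your proof is correct and takes the same approach as the paper, which proves both identities by comparing coefficients in the defining sum of $y_n$. Your normalised factorial ratios carry out that comparison correctly: $(n-k)(n-k+1)+2k(2n+1)=(n+k)(n+k+1)$ for (b), and $c_{n,k}-c_{n-1,k}=(n-k+1)c_{n,k-1}$ reducing to $2k=2k$ for (a), with the boundary indices handled by the vanishing conventions.
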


Both identities follow by comparing coefficients in the defining sum; they are also standard consequences of the Bessel-polynomial differential equation and recurrence theory~\cite{KrallFrink,Burchnall,DunsterEtAl}.

\begin{definitionx}
$\displaystyle
 C_\ell(s)=\tfrac12\Bigl(q\,y_{\ell-1}(ps)y_\ell(qs)
                        +p\,y_\ell(ps)y_{\ell-1}(qs)\Bigr)$,
which is real because the two terms are complex conjugate.
\end{definitionx}

\begin{proposition}\label{prop:step3}
$s^2g_\ell'(s)+\bigl(1-2\ell s\bigr)g_\ell(s)=C_\ell(s)$.
\end{proposition}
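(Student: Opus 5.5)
The plan is a direct computation. Differentiate $g_\ell(s)=y_\ell(ps)\,y_\ell(qs)$ with the product rule and eliminate the derivatives using Lemma~\ref{lem:bessel}(a); only the relations $pq=2$ and $\frac1p+\frac1q=1$ from \eqref{eq:pq} should be needed. Throughout I take $\ell\ge1$, so that $y_{\ell-1}$ and Lemma~\ref{lem:bessel}(a) make sense, consistent with the definition of $C_\ell$.

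First, $s^2g_\ell'(s)=p\,s^2y_\ell'(ps)\,y_\ell(qs)+q\,s^2y_\ell(ps)\,y_\ell'(qs)$. Write $p\,s^2y_\ell'(ps)=\frac1p\,(ps)^2y_\ell'(ps)$ and apply Lemma~\ref{lem:bessel}(a) at $x=ps$:
\[
p\,s^2y_\ell'(ps)=\tfrac1p\bigl[(\ell ps-1)y_\ell(ps)+y_{\ell-1}(ps)\bigr].
\]
The analogous identity holds at $x=qs$ with $q$ in place of $p$.

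Substituting both identities gives
\[
s^2g_\ell'(s)=\Bigl(2\ell s-\bigl(\tfrac1p+\tfrac1q\bigr)\Bigr)g_\ell(s)
+\tfrac1p\,y_{\ell-1}(ps)y_\ell(qs)+\tfrac1q\,y_\ell(ps)y_{\ell-1}(qs).
\]
Now use $\frac1p+\frac1q=1$ for the coefficient of $g_\ell$. Then use $pq=2$ in the forms $\frac1p=\frac q2$ and $\frac1q=\frac p2$. The two cross terms become exactly $\frac12\bigl(q\,y_{\ell-1}(ps)y_\ell(qs)+p\,y_\ell(ps)y_{\ell-1}(qs)\bigr)=C_\ell(s)$. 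Rearranging yields $s^2g_\ell'+(1-2\ell s)g_\ell=C_\ell$.

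There is no real obstacle here. The only point requiring care is the bookkeeping of the factors $p,q$ when converting $s^2y'(ps)$ into $(ps)^2y'(ps)$. Note that $p^2+q^2=0$ is not used at this step; presumably it enters later, when Lemma~\ref{lem:bessel}(b) is applied to $C_\ell$.
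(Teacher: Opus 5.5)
Your computation is correct and is exactly the paper's proof: the product rule, Lemma~\ref{lem:bessel}(a) at $x=ps$ and $x=qs$, then $\frac1p+\frac1q=1$ for the coefficient of $g_\ell$ and $pq=2$ to turn the cross terms into $C_\ell$. One small remark: the induction of Theorem~\ref{thm:halfinteger} starting from $F_0$ also needs the case $\ell=0$, which holds under the convention $y_{-1}=1$ of Section~\ref{sec:genladder}, since both sides then equal $1=\frac{p+q}{2}$.
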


\begin{proof}
Put $U=y_\ell(ps)$, $W=y_\ell(qs)$, $U_-=y_{\ell-1}(ps)$, $W_-=y_{\ell-1}(qs)$.  Lemma~\ref{lem:bessel}(a) at $x=ps$ reads $p\,s^2U'=(\ell ps-1)U+U_-$, and similarly for $W$.  Hence
\begin{multline*}
  s^2g_\ell'=s^2(U'W+UW')
  =\frac{(\ell ps-1)U+U_-}{p}\,W+U\,\frac{(\ell qs-1)W+W_-}{q}\\
  =\Bigl(2\ell s-\frac1p-\frac1q\Bigr)UW+\frac{U_-W}p+\frac{UW_-}q .
\end{multline*}
By \eqref{eq:pq}, $\frac1p+\frac1q=1$, $\frac1p=\frac q2$ and $\frac1q=\frac p2$, so the right-hand side is $(2\ell s-1)g_\ell+C_\ell$.
\end{proof}

\begin{proposition}\label{prop:step4}
$C_{\ell+2}=C_\ell+(4\ell+2)\,s\,g_\ell$.
\end{proposition}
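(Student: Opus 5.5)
The plan is a direct algebraic computation: use Lemma~\ref{lem:bessel}(b) to rewrite $C_{\ell+2}$, which involves $y_{\ell+1}$ and $y_{\ell+2}$, in terms of the level-$\ell$ quantities $U=y_\ell(ps)$, $W=y_\ell(qs)$, $U_-=y_{\ell-1}(ps)$, $W_-=y_{\ell-1}(qs)$ of Proposition~\ref{prop:step3}. We then simplify with the midpoint relations \eqref{eq:pq}. We take $\ell\ge1$, so that $y_{\ell-1}$ and hence $C_\ell$ are defined and Lemma~\ref{lem:bessel}(b) applies at $n=\ell$ and $n=\ell+1$.

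First, put $U_1=y_{\ell+1}(ps)$ and $W_1=y_{\ell+1}(qs)$. Lemma~\ref{lem:bessel}(b) with $n=\ell$ gives
\[
U_1=(2\ell+1)ps\,U+U_-,\qquad W_1=(2\ell+1)qs\,W+W_-.
\]
With $n=\ell+1$ it gives
\[
y_{\ell+2}(ps)=(2\ell+3)ps\,U_1+U,\qquad y_{\ell+2}(qs)=(2\ell+3)qs\,W_1+W.
\]

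Second, substitute the second pair into the definition of $C_{\ell+2}$:
\[
2C_{\ell+2}=q\,U_1\bigl((2\ell+3)qs\,W_1+W\bigr)+p\bigl((2\ell+3)ps\,U_1+U\bigr)W_1
=(2\ell+3)s\,(p^2+q^2)\,U_1W_1+q\,U_1W+p\,UW_1 .
\]
Here the relation $p^2+q^2=0$ from \eqref{eq:pq} does the essential work: it removes the cross term $U_1W_1$ entirely. This cancellation is the only non-routine point, and it explains why the ladder closes with a step of two rather than one.

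Third, substitute the first pair into what remains:
\[
q\,U_1W+p\,UW_1=(2\ell+1)s\,pq\,UW+q\,U_-W+(2\ell+1)s\,pq\,UW+p\,UW_- .
\]
With $pq=2$ this equals $4(2\ell+1)s\,UW+\bigl(q\,U_-W+p\,UW_-\bigr)$. The bracket is $2C_\ell$ by definition, and $UW=g_\ell(s)$. Dividing by $2$ gives $C_{\ell+2}=C_\ell+(4\ell+2)s\,g_\ell$, as claimed.

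I expect no real obstacle. The only care needed is to apply recurrence (b) in the right order, expanding $y_{\ell+2}$ before $y_{\ell+1}$, so that the $p^2+q^2$ cancellation is visible before everything is expanded down to level $\ell$.
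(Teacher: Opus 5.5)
Your proof is correct and is the same computation as the paper's: expand $y_{\ell+2}$ by Lemma~\ref{lem:bessel}(b) so that $p^2+q^2=0$ removes the $y_{\ell+1}(ps)y_{\ell+1}(qs)$ term, then expand $y_{\ell+1}$ and use $pq=2$. Restricting to $\ell\ge1$ costs nothing: with the convention $y_{-1}=1$ (adopted in Section~\ref{sec:genladder}), recurrence (b) also holds at $n=0$, and the identical computation gives the case $\ell=0$, which the two-step induction of Theorem~\ref{thm:halfinteger} needs.
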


\begin{proof}
With $U_+=y_{\ell+1}(ps)$, $U_{++}=y_{\ell+2}(ps)$ and similarly for $W$,
Lemma~\ref{lem:bessel}(b) gives $U_{++}=(2\ell+3)psU_++U$ and
$W_{++}=(2\ell+3)qsW_++W$.  Therefore
\[
  2C_{\ell+2}=qU_+W_{++}+pU_{++}W_+
  =(2\ell+3)s\,(p^2+q^2)\,U_+W_+\;+\;qU_+W+pUW_+ ,
\]
and the first term vanishes by \eqref{eq:pq}.  Applying
Lemma~\ref{lem:bessel}(b) once more, $U_+=(2\ell+1)psU+U_-$ and
$W_+=(2\ell+1)qsW+W_-$, so
\[
  qU_+W+pUW_+=2(2\ell+1)pq\,s\,UW+\bigl(qU_-W+pUW_-\bigr)
             =4(2\ell+1)s\,g_\ell+2C_\ell ,
\]
using $pq=2$.  Dividing by $2$ gives the claim.
\end{proof}

\begin{corollary}\label{cor:gstep}
$s^2g_{\ell+2}'+\bigl[1-(2\ell+4)s\bigr]g_{\ell+2}
 =s^2g_\ell'+\bigl[1+(2\ell+2)s\bigr]g_\ell$.
\end{corollary}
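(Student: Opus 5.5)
Write Proposition~\ref{prop:step3} at level $\ell+2$ and at level $\ell$, and then use Proposition~\ref{prop:step4} to cancel the $C$ terms. The corollary is a purely algebraic consequence of those two propositions, so no further Bessel identity should be needed.

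First, Proposition~\ref{prop:step3} with $\ell$ replaced by $\ell+2$ gives
\[
s^2g_{\ell+2}'+\bigl(1-(2\ell+4)s\bigr)g_{\ell+2}=C_{\ell+2}.
\]
This is already the left-hand side of the corollary. Next I apply Proposition~\ref{prop:step4}:
\[
C_{\ell+2}=C_\ell+(4\ell+2)s\,g_\ell .
\]
Finally, Proposition~\ref{prop:step3} at level $\ell$ gives
\[
C_\ell=s^2g_\ell'+(1-2\ell s)g_\ell .
\]
Substituting, the right-hand side becomes
\[
s^2g_\ell'+\bigl(1-2\ell s+(4\ell+2)s\bigr)g_\ell=s^2g_\ell'+\bigl(1+(2\ell+2)s\bigr)g_\ell ,
\]
which is exactly the claimed identity.

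The only point needing care is the range of validity. Proposition~\ref{prop:step3} relies on Lemma~\ref{lem:bessel}(a), which requires $n\ge1$, and $C_\ell$ involves $y_{\ell-1}$. So the argument as written covers $\ell\ge1$. For $\ell=0$ I would either adopt the convention $y_{-1}=y_0=1$ (consistent with recurrence (b) at $n=0$, i.e.\ $y_1=x\,y_0+y_{-1}$, since $y_1=1+x$), or check the case directly. The direct check is immediate: $g_0=1$, so the right-hand side is $1+2s$. On the left, $g_2=y_2(ps)y_2(qs)$ is an explicit quartic, and the identity can be verified by expanding it using $pq=2$, $p+q=2$ and $p^2+q^2=0$.

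I expect no real obstacle: the main step is the bookkeeping of the linear coefficients, $-2\ell+4\ell+2=2\ell+2$.
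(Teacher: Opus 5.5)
Your proof is correct and is the paper's own argument: Proposition~\ref{prop:step3} at levels $\ell+2$ and $\ell$ turns the two sides into $C_{\ell+2}$ and $C_\ell+(4\ell+2)s\,g_\ell$, and Proposition~\ref{prop:step4} identifies them. Your treatment of $\ell=0$ through the convention $y_{-1}=1$ covers the base of the induction in Proposition~\ref{prop:Frec}, a point the paper leaves implicit here and only adopts explicitly in Section~\ref{sec:genladder}.
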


\begin{proof}
By Proposition~\ref{prop:step3} the left side is $C_{\ell+2}$ and the right side is $C_\ell+(4\ell+2)sg_\ell$; apply Proposition~\ref{prop:step4}.
\end{proof}

\subsection{From the generating function to the inverse-factorial series}

Let $e_n(Z)=1/\bigl((Z-1)\cdots(Z-n)\bigr)$, $e_0=1$.

\begin{lemma}\label{lem:basis}
$Z\,e_n=e_{n-1}+n\,e_n$ for $n\ge1$.
\end{lemma}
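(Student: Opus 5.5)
The identity is a direct algebraic manipulation of the definition $e_n(Z)=1/\bigl((Z-1)\cdots(Z-n)\bigr)$, with no analysis involved. We plan to write $Z=(Z-n)+n$ and multiply by $e_n$, so that
\[
  Z\,e_n=(Z-n)\,e_n+n\,e_n .
\]
The only thing to check is that $(Z-n)e_n=e_{n-1}$, which is immediate from the product form: for $n\ge1$, $e_n=e_{n-1}\cdot\frac1{Z-n}$, since the denominator of $e_n$ is that of $e_{n-1}$ times the extra factor $(Z-n)$.

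For the base case $n=1$ we have $e_1=1/(Z-1)$, and the convention $e_0=1$ gives $(Z-1)e_1=1=e_0$. So the relation $e_n=e_{n-1}/(Z-n)$ holds for every $n\ge1$, including $n=1$.

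Substituting this back gives $Ze_n=e_{n-1}+ne_n$. The identity holds as rational functions of $Z$, and hence also coefficientwise in the ring of formal Laurent series in $Z^{-1}$, in the sense of Subsection~\ref{sec:conventions}.

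There is no real obstacle here; the only point of care is the convention $e_0=1$ in the base case, handled above. The role of the lemma is to convert multiplication by $Z$ on $F=\sum G_ne_n$ into a shift-plus-diagonal action on the coefficients $G_n$. This is what will let Corollary~\ref{cor:gstep}, an identity for the generating functions $g_\ell$, be transcribed into an identity for $F_\ell$.
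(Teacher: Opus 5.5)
Your proof is correct and is the paper's argument: write $Z=(Z-n)+n$ and use $(Z-n)e_n=e_{n-1}$, which follows from the product definition. The convention $e_0=1$ covers the case $n=1$, as you note.
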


\begin{proof}
$Ze_n=\bigl((Z-n)+n\bigr)e_n$ and $(Z-n)e_n=e_{n-1}$.
\end{proof}

\begin{proposition}\label{prop:Frec}
$F_{\ell+2}(Z)\,(Z-2\ell-4)=F_\ell(Z)\,(Z+2\ell+2)$.
\end{proposition}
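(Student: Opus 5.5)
The plan is to transport Corollary~\ref{cor:gstep}, an identity between polynomials in $s$, to an identity between the rational functions $F_\ell$ and $F_{\ell+2}$. The tool is the linear map $T$ on polynomials defined by $T(s^n)=e_n(Z)$. Since $g_\ell$ is a polynomial of degree $2\ell$, every sum involved is finite, so the result will be an exact identity of rational functions in $Z$ rather than a formal one.

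\emph{Step 1: how $T$ acts on the operator.} Lemma~\ref{lem:basis}, applied with index $n+1$, gives $e_n=(Z-n-1)e_{n+1}$. For $g=\sum_n G_ns^n$, summing $G_ne_n=Z\,G_ne_{n+1}-(n+1)G_ne_{n+1}$ over $n$ yields
\[
T(g)=Z\,T(sg)-T(s^2g')-T(sg).
\]
Here I use $T(s^2g')=\sum nG_ne_{n+1}$ and $T(sg)=\sum G_ne_{n+1}$. Hence, for any constant $c$,
\[
T\bigl(s^2g'+(1-cs)g\bigr)=(Z-1-c)\,T(sg).
\]

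\emph{Step 2: identify $T(sg)$ with a shifted $F$.} Directly from the definitions,
\[
e_{n+1}(Z)=\frac{e_n(Z-1)}{Z-1}.
\]
Therefore $T(sg_\ell)=\sum_nG_n^{(\ell)}e_{n+1}(Z)=F_\ell(Z-1)/(Z-1)$.

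\emph{Step 3: apply $T$ to Corollary~\ref{cor:gstep}.} The left side has $c=2\ell+4$ and the right side has $c=-(2\ell+2)$. By Step 1,
\[
(Z-2\ell-5)\,T(sg_{\ell+2})=(Z+2\ell+1)\,T(sg_\ell).
\]
Substituting Step 2 and cancelling the common factor $(Z-1)^{-1}$ gives
\[
(Z-2\ell-5)F_{\ell+2}(Z-1)=(Z+2\ell+1)F_\ell(Z-1).
\]
Replacing $Z$ by $Z+1$ then gives exactly $F_{\ell+2}(Z)(Z-2\ell-4)=F_\ell(Z)(Z+2\ell+2)$.

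The only delicate point is the bookkeeping in Step 1: the shift by one in the operator $Z\mapsto Z-1-c$ has to be tracked, as does the boundary term. The boundary term is harmless, because the relation $e_n=(Z-n-1)e_{n+1}$ holds for every $n\ge0$, including $n=0$, so the $G_0$ term needs no separate treatment. Everything else is mechanical once the map $T$ is fixed.
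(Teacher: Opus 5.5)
Your proof is correct, and it follows the paper's strategy: both transfer Corollary~\ref{cor:gstep} from the generating function $g_\ell$ to the basis $e_n$ by means of Lemma~\ref{lem:basis}. The difference is only in bookkeeping. You push the polynomial identity forward through $T$, recognise $T(sg_\ell)=F_\ell(Z-1)/(Z-1)$, and finish with the shift $Z\mapsto Z+1$; this makes the normalisation $G_0^{(\ell)}=1$ unnecessary. The paper instead expands $(Z-c)F_\ell$ in the $e_m$-basis at unshifted $Z$ and uses $G_0^{(\ell)}=1$ to match the $Z$-terms and constants. That exhibits the proposition as equivalent to the coefficient relations $G_{m+1}+(m-c)G_m$, which the paper reuses later.
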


\begin{proof}
Write $F_\ell=\sum_nG_n^{(\ell)}e_n$ with $G^{(\ell)}_0=1$.  By
Lemma~\ref{lem:basis}, for any constant $c$,
\[
  (Z-c)F_\ell=Z-c+G_1^{(\ell)}
  +\sum_{m\ge1}\Bigl(G^{(\ell)}_{m+1}+(m-c)G^{(\ell)}_m\Bigr)e_m .
\]
Since $G^{(\ell)}_0=1$ for every $\ell$, the terms in $Z$ agree on the two sides of the claimed identity, and comparing coefficients of $e_m$ (and the constant, which is the case $m=0$) shows that the identity is equivalent to
\[
  G^{(\ell+2)}_{m+1}+(m-2\ell-4)G^{(\ell+2)}_m
  =G^{(\ell)}_{m+1}+(m+2\ell+2)G^{(\ell)}_m
  \qquad(m\ge0).
\]
Multiplying by $s^m$ and summing, and using $\sum_mG_{m+1}s^m=(g-1)/s$ and $\sum_mmG_ms^m=sg'$, this is exactly Corollary~\ref{cor:gstep} after multiplication by $s$.
\end{proof}

\begin{lemma}[base cases]\label{lem:base}
$F_0=1$ and $F_1(Z)=\dfrac{Z}{Z-2}$.
\end{lemma}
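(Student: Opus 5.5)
Both base cases are direct evaluations of the inverse-factorial series \eqref{eq:F} in its terminating form, so the plan is to compute each from the polynomial generating function $g_\ell(s)=y_\ell(ps)\,y_\ell(qs)$.

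For $\ell=0$ we have $y_0=1$, so $g_0(s)=1$. Hence $G^{(0)}_0=1$ and $G^{(0)}_n=0$ for $n\ge1$, and $F_0=e_0=1$.

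For $\ell=1$, Lemma~\ref{lem:hankelhalf} (or the defining sum) gives $y_1(x)=1+x$. Therefore
\[
g_1(s)=(1+ps)(1+qs)=1+(p+q)s+pq\,s^2=1+2s+2s^2 ,
\]
using $p+q=2$ and $pq=2$ from \eqref{eq:pq}. So $G^{(1)}_0=1$, $G^{(1)}_1=2$, $G^{(1)}_2=2$, and all higher coefficients vanish. Substituting into $F_1=\sum_n G^{(1)}_n e_n$ gives
\[
F_1(Z)=1+\frac{2}{Z-1}+\frac{2}{(Z-1)(Z-2)} .
\]
Over the common denominator $(Z-1)(Z-2)$ the numerator is
\[
(Z-1)(Z-2)+2(Z-2)+2=Z^2-Z=Z(Z-1).
\]
Cancelling the factor $Z-1$ leaves $F_1(Z)=Z/(Z-2)$.

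There is no real obstacle here. The only point needing care is that the midpoint relations $p+q=pq=2$ are what make the coefficients real integers, and hence make the cancellation against $Z-1$ exact. These two base cases then seed the two parity classes of the recursion in Proposition~\ref{prop:Frec}.
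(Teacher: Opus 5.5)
Your proof is correct and follows the paper's argument exactly: $g_0=1$ gives $F_0=1$, and $g_1=(1+ps)(1+qs)=1+2s+2s^2$ gives $F_1=1+\frac{2}{Z-1}+\frac{2}{(Z-1)(Z-2)}=\frac{Z(Z-1)}{(Z-1)(Z-2)}=\frac{Z}{Z-2}$. One correction to your closing remark: the factor $Z-1$ cancels because of the relation $p+q=pq$ itself, not because the coefficients are integers, since the numerator evaluated at $Z=1$ is $pq-(p+q)$. This is why the same cancellation persists on the whole physical curve in Corollary~\ref{cor:Prec}.
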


\begin{proof}
$y_0=1$ gives $g_0=1$, so $G_0=1$ and $G_n=0$ for $n\ge1$.  $y_1(x)=1+x$ gives $g_1(s)=(1+ps)(1+qs)=1+2s+2s^2$, so
\[
F_1=1+\frac2{Z-1}+\frac2{(Z-1)(Z-2)}   =\frac{Z^2-Z}{(Z-1)(Z-2)}=\frac Z{Z-2}.
\]
\end{proof}

\begin{theorem}\label{thm:halfinteger}
For every $\ell\ge0$, with $\zeta=Z/4$,
\[
  F_\ell(Z)=\mathcal M_\ell(Z):=
  \frac{\Gamma\bigl(\zeta-\frac\ell2\bigr)\,
        \Gamma\bigl(\zeta+\frac{\ell+1}2\bigr)}
       {\Gamma(\zeta)\,\Gamma\bigl(\zeta+\frac12\bigr)} .
\]
\end{theorem}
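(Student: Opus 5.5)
We argue by induction on $\ell$ in steps of two, so that the even and odd chains are handled separately. The base cases come from Lemma~\ref{lem:base}, and the inductive step from Proposition~\ref{prop:Frec}. Since both sides are meromorphic functions of $Z$ (indeed $F_\ell$ is rational), it suffices to check that $\mathcal M_\ell$ satisfies the same two base values and the same two-step recurrence. The recurrence $F_{\ell+2}(Z-2\ell-4)=F_\ell(Z+2\ell+2)$ determines $F_{\ell+2}$ uniquely from $F_\ell$ as a rational function.

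\emph{Base cases.} For $\ell=0$ we get $\mathcal M_0=\Gamma(\zeta)\Gamma(\zeta+\frac12)/(\Gamma(\zeta)\Gamma(\zeta+\frac12))=1=F_0$. For $\ell=1$,
\[
\mathcal M_1=\frac{\Gamma(\zeta-\frac12)\Gamma(\zeta+1)}{\Gamma(\zeta)\Gamma(\zeta+\frac12)}=\frac{\zeta}{\zeta-\frac12}=\frac{Z}{Z-2},
\]
using $\Gamma(\zeta+1)=\zeta\Gamma(\zeta)$ and $\Gamma(\zeta+\frac12)=(\zeta-\frac12)\Gamma(\zeta-\frac12)$. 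This agrees with Lemma~\ref{lem:base}.

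\emph{Recurrence.} We compute the ratio
\[
\frac{\mathcal M_{\ell+2}}{\mathcal M_\ell}=\frac{\Gamma(\zeta-\frac\ell2-1)}{\Gamma(\zeta-\frac\ell2)}\cdot\frac{\Gamma(\zeta+\frac{\ell+1}2+1)}{\Gamma(\zeta+\frac{\ell+1}2)}=\frac{\zeta+\frac{\ell+1}2}{\zeta-\frac\ell2-1}.
\]
Multiplying numerator and denominator by $4$ turns this into $(Z+2\ell+2)/(Z-2\ell-4)$, which is exactly the ratio demanded by Proposition~\ref{prop:Frec}. Hence $\mathcal M_{\ell+2}(Z-2\ell-4)=\mathcal M_\ell(Z+2\ell+2)$. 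By induction $F_\ell=\mathcal M_\ell$ as rational functions of $Z$: in the odd chain, for example, $\mathcal M_\ell$ telescopes from $\mathcal M_1$ into a finite product of linear factors in $Z$.

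\emph{Where care is needed.} The only delicate step is the division by $Z-2\ell-4$ in transferring the recurrence. Both sides are rational, and the identity holds for all $Z$ away from finitely many poles, so this causes no trouble. The genuine work has already been done in Corollary~\ref{cor:gstep} and Proposition~\ref{prop:Frec}; what remains is the Gamma bookkeeping above. It is also worth checking that $\mathcal M_\ell$ is indeed rational, i.e. that its Gamma quotient telescopes: the arguments $\zeta-\frac\ell2$ and $\zeta$ differ by the integer or half-integer $\frac\ell2$, and likewise $\zeta+\frac{\ell+1}2$ and $\zeta+\frac12$ differ by $\frac\ell2$. For $\ell$ even, each numerator Gamma pairs with the denominator Gamma at integer offset $\frac\ell2$ ($\Gamma(\zeta-\frac\ell2)$ with $\Gamma(\zeta)$, and $\Gamma(\zeta+\frac{\ell+1}2)$ with $\Gamma(\zeta+\frac12)$). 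For $\ell$ odd, the pairing crosses over: $\Gamma(\zeta-\frac\ell2)$ pairs with $\Gamma(\zeta+\frac12)$ at offset $\frac{\ell+1}2$, and $\Gamma(\zeta+\frac{\ell+1}2)$ pairs with $\Gamma(\zeta)$ at offset $\frac{\ell+1}2$. In both cases the quotient is rational, consistent with the induction.
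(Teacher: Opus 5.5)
Your proposal is correct and follows the paper's proof essentially verbatim: it uses the same base cases from Lemma~\ref{lem:base}, the same Gamma functional-equation computation $\mathcal M_{\ell+2}/\mathcal M_\ell=(Z+2\ell+2)/(Z-2\ell-4)$ matching Proposition~\ref{prop:Frec}, and the same induction in steps of two along each parity class. Your extra check that the Gamma quotient telescopes to a rational function, with the crossed pairing for odd $\ell$, is correct and a harmless supplement.
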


\begin{proof}
By the functional equation of $\Gamma$,
\[
  \frac{\mathcal M_{\ell+2}}{\mathcal M_\ell}
  =\frac{\Gamma(\zeta-\frac\ell2-1)}{\Gamma(\zeta-\frac\ell2)}
   \cdot\frac{\Gamma(\zeta+\frac{\ell+3}2)}{\Gamma(\zeta+\frac{\ell+1}2)}
  =\frac{\zeta+\frac{\ell+1}2}{\zeta-\frac{\ell+2}2}
  =\frac{Z+2\ell+2}{Z-2\ell-4},
\]
which is Proposition~\ref{prop:Frec}.  For the base cases, $\mathcal M_0=1$ and
\[
\mathcal M_1=\frac{\Gamma(\zeta-\frac12)}{\Gamma(\zeta+\frac12)}
\frac{\Gamma(\zeta+1)}{\Gamma(\zeta)}=\frac{\zeta}{\zeta-\frac12}
=\frac Z{Z-2}, 
\]
matching Lemma~\ref{lem:base}.  Induction in steps of two along each parity class completes the proof.
\end{proof}

\section{\texorpdfstring{From half-integers to all $\alpha$}{From half-integers to all alpha}}\label{sec:transfer}

\begin{lemma}[degree bound]\label{lem:degree}
$\deg_AL_n^{\mathrm{diag}}\le n$, and the right-hand side of \eqref{eq:coef}
is a polynomial in $A$ of degree at most $\lfloor(n+1)/2\rfloor\le n$.
\end{lemma}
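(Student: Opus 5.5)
We prove the two bounds separately; both come down to counting how powers of $A$ can arise.

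For the first bound we track the $A$-degree of the coefficients. On the diagonal $A=B=\alpha^2$, each Hankel coefficient $a_k(\alpha)=\prod_{j<k}(4A-(2j+1)^2)/(8^kk!)$ is a polynomial in $A$ of degree exactly $k$. Hence $G_n=\sum_l a_l(\alpha)a_{n-l}(\alpha)p^lq^{n-l}$ has $\deg_A G_n\le n$, and we record this as a weight: $G_n$ has weight $\le n$. Next we expand each $e_n(Z)=1/((Z-1)\cdots(Z-n))$ as $Z^{-n}(1+O(Z^{-1}))$ with numerical coefficients. Then the coefficient of $Z^{-m}$ in $F-1$ is a finite combination of the $G_n$ with $1\le n\le m$, and so has $A$-degree $\le m$.

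We then take the logarithm, $\log F=\sum_{k\ge1}(-1)^{k-1}(F-1)^k/k$. Let $f_m=[Z^{-m}](F-1)$, so $\deg_A f_m\le m$. The coefficient $L_n=[Z^{-n}]\log F$ is a combination of products $f_{m_1}\cdots f_{m_k}$ with $m_1+\dots+m_k=n$. Its degree is therefore at most $\sum m_i=n$, which gives $\deg_A L_n^{\mathrm{diag}}\le n$. This is just the statement that the $A$-degree is subadditive and bounded by the $Z^{-1}$-order, and it needs no cancellation.

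For the second bound we use the Bernoulli expansion $B_{n+1}(x)=\sum_j\binom{n+1}{j}B_{n+1-j}x^j$. Put $x=\frac34+u$ with $u=\pm\alpha/2$ and add the two terms. The odd powers of $u$ cancel, so $B_{n+1}(\frac34+u)+B_{n+1}(\frac34-u)$ is an even polynomial in $u$ of degree $\le n+1$. It is therefore a polynomial in $u^2=A/4$ of degree $\le\lfloor(n+1)/2\rfloor$. The remaining terms $-B_{n+1}(1)-B_{n+1}(\frac12)$ and the prefactor $4^n/(n(n+1))$ do not depend on $\alpha$. This gives the degree $\lfloor(n+1)/2\rfloor$, and $\lfloor(n+1)/2\rfloor\le n$ holds for $n\ge1$.

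The only step needing care is the first one, namely that the $A$-degree of the coefficient of $Z^{-m}$ in $F$ is bounded by $m$. This depends on $e_n$ starting at order $Z^{-n}$, and on its lower-order expansion coefficients being free of $A$. Both facts are immediate from the definition of $e_n$, so we expect no real obstacle.
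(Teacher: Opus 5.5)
Your proof is correct and follows essentially the same route as the paper. Both arguments use $\deg_A G_n\le n$, the $A$-free (Stirling-number) expansion of $e_n$ giving $\deg_A c_m\le m$, the weight-$n$ structure of the coefficients of $\log F$ (which you make explicit via the composition expansion of $\log(1+x)$), and the evenness in $\alpha$ of $B_{n+1}\bigl(\tfrac{3+2\alpha}4\bigr)+B_{n+1}\bigl(\tfrac{3-2\alpha}4\bigr)$.
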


\begin{proof}
$a_k(\nu)\in\mathbb Q[A]$ with $\deg a_k=k$, so $\deg_AG_n\le n$.  Since
\[
e_n=\sum_{m\ge n}S(m,n)Z^{-m}
\]
with $S(m,n)$ the Stirling numbers of the
second kind, the plain coefficients 
\[
c_m=\sum_{n\le m}G_nS(m,n)
\]
of $F=1+\sum_mc_mZ^{-m}$ satisfy $\deg_Ac_m\le m$, and each $L_n$ is an isobaric polynomial of weight $n$ in the $c_m$, whence $\deg_AL_n\le n$.  On the other side $B_{n+1}\bigl(\frac{3+2\alpha}4\bigr)+B_{n+1}\bigl(\frac{3-2\alpha}4\bigr)$ is even in $\alpha$ of degree at most $n+1$, hence a polynomial in
$A=\alpha^2$ of degree at most $\lfloor(n+1)/2\rfloor$.
\end{proof}

\begin{lemma}[Stirling expansion of the multiplier]\label{lem:stirlingexp}
Put
\[
  \mathcal M(\zeta;\alpha)
  =\frac{\Gamma\bigl(\zeta+\frac{1-2\alpha}4\bigr)
         \Gamma\bigl(\zeta+\frac{1+2\alpha}4\bigr)}
        {\Gamma(\zeta)\,\Gamma\bigl(\zeta+\frac12\bigr)} .
\]
Then, for all $\alpha$ and $\zeta\to\infty$,
\[
  \log\mathcal M(\zeta;\alpha)
  \sim\sum_{n\ge1}\frac{\beta_n(\alpha)}{n(n+1)}\,\zeta^{-n},
\]
\[
  \beta_n(\alpha)=B_{n+1}\bigl(\tfrac{3+2\alpha}4\bigr)
       +B_{n+1}\bigl(\tfrac{3-2\alpha}4\bigr)
       -B_{n+1}(1)-B_{n+1}\bigl(\tfrac12\bigr).
\]
Moreover $\mathcal M_\ell$ of Theorem~\ref{thm:halfinteger} is the value of the left-hand quotient at $\alpha=\ell+\frac12$.
\end{lemma}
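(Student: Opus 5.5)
The plan is to apply the classical generalised Stirling expansion of a single Gamma function to each of the four factors. We use the asymptotic series, valid for fixed $a$ as $\zeta\to\infty$ in $|\arg\zeta|<\pi-\delta$,
\[
\log\Gamma(\zeta+a)\sim\bigl(\zeta+a-\tfrac12\bigr)\log\zeta-\zeta+\tfrac12\log2\pi
+\sum_{n\ge1}\frac{(-1)^{n+1}B_{n+1}(a)}{n(n+1)}\,\zeta^{-n}.
\]
This is the standard Hermite--Barnes form (DLMF 5.11.8).

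\textbf{Step 1: the non-power terms cancel.} The four shifts are $a_1=\frac{1-2\alpha}4$, $a_2=\frac{1+2\alpha}4$ in the numerator and $a_3=0$, $a_4=\frac12$ in the denominator. We have $a_1+a_2=\frac12=a_3+a_4$, and each side carries two factors. Hence the $\zeta\log\zeta$, $\log\zeta$, $-\zeta$ and $\frac12\log2\pi$ terms all cancel. This leaves
\[
\log\mathcal M\sim\sum_{n\ge1}\frac{(-1)^{n+1}}{n(n+1)}\Bigl[B_{n+1}(a_1)+B_{n+1}(a_2)-B_{n+1}(0)-B_{n+1}(\tfrac12)\Bigr]\zeta^{-n}.
\]

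\textbf{Step 2: absorb the sign.} We use the reflection $B_{k}(1-x)=(-1)^kB_k(x)$ with $k=n+1$, so $(-1)^{n+1}B_{n+1}(a)=B_{n+1}(1-a)$. This gives $1-a_1=\frac{3+2\alpha}4$, $1-a_2=\frac{3-2\alpha}4$, $1-0=1$ and $1-\frac12=\frac12$, which is exactly $\beta_n(\alpha)$. We should check the convention $B_1=-\frac12$ against the $n=0$ term. That term is not present, since the sum starts at $n=1$ and $B_{n+1}$ has index at least $2$, so the convention only matters in the later coefficient formula \eqref{eq:coef}.

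\textbf{Step 3: the half-odd-integer specialisation.} At $\alpha=\ell+\frac12$ we get $\frac{1-2\alpha}4=-\frac\ell2$ and $\frac{1+2\alpha}4=\frac{\ell+1}2$. These reproduce the numerator of $\mathcal M_\ell$ in Theorem~\ref{thm:halfinteger} directly.

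\textbf{Main obstacle.} The only delicate point is justifying that the expansion holds uniformly in the parameter and may be read coefficientwise. Since each $B_{n+1}(a)$ is a polynomial in $a$ and the Barnes expansion holds for every fixed complex $a$, the statement as an asymptotic series for each fixed $\alpha$ is immediate. The coefficients are polynomial in $\alpha$, and even by the symmetry $\alpha\mapsto-\alpha$, as required for the later transfer via Lemma~\ref{lem:degree}.
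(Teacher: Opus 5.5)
Your proposal is correct and is essentially the paper's proof. The paper subtracts DLMF~5.11.8 in numerator--denominator pairs instead of expanding all four factors separately, but the remaining steps are the same: the cancellation from $a_1+a_2=a_3+a_4=\tfrac12$, the reflection $(-1)^{n+1}B_{n+1}(a)=B_{n+1}(1-a)$, and the evaluation at $\alpha=\ell+\tfrac12$.
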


\begin{proof}
The classical gamma-quotient expansion, obtained by subtracting \cite[Eq.~5.11.8]{DLMF} at $h=a$ and $h=b$ (see also \cite[Sec.~24.2]{DLMF} for the Bernoulli polynomials),

\[
  \log\frac{\Gamma(\zeta+a)}{\Gamma(\zeta+b)}-(a-b)\log\zeta
  \sim\sum_{n\ge1}\frac{(-1)^{n+1}}{n(n+1)}
      \bigl(B_{n+1}(a)-B_{n+1}(b)\bigr)\zeta^{-n}
\]
applies with $a=\frac{1\mp2\alpha}4$ and $b\in\{0,\frac12\}$; the logarithmic terms cancel because $\frac{1-2\alpha}4+\frac{1+2\alpha}4=\frac12=0+\frac12$.
Using $(-1)^{n+1}B_{n+1}(a)=B_{n+1}(1-a)$ and $1-\frac{1\mp2\alpha}4=\frac{3\pm2\alpha}4$, $1-0=1$, $1-\frac12=\frac12$ gives the stated form.  For the second assertion, at $\alpha=\ell+\frac12$ one has $\frac{1-2\alpha}4=-\frac\ell2$ and $\frac{1+2\alpha}4=\frac{\ell+1}2$.
\end{proof}

\begin{proof}[Proof of Theorem~\ref{thm:main}]
Fix $n$.  By Lemma~\ref{lem:degree} both sides of \eqref{eq:coef} are polynomials in $A$ of degree at most $n$.  By Theorem~\ref{thm:halfinteger}, $F=\mathcal M_\ell$ at $\alpha=\ell+\frac12$ for every $\ell\ge0$; both sides being rational functions of $Z$ there, their expansions at $Z=\infty$ coincide, and by Lemma~\ref{lem:stirlingexp} the coefficients of $\log\mathcal M_\ell$ are the right-hand side of \eqref{eq:coef} evaluated at $A=(\ell+\frac12)^2$.  Hence the two polynomials agree at the infinitely many distinct points $A=(\ell+\frac12)^2$, $\ell=0,1,2,\dots$, and therefore coincide.  This proves \eqref{eq:coef}.

For \eqref{eq:closed}, sum \eqref{eq:coef} with the generating function $\sum_kB_k(x)u^k/k!=ue^{xu}/(e^u-1)$ at $u=4t$.  The constant and linear corrections cancel because $\frac{3+2\alpha}4+\frac{3-2\alpha}4-1-\frac12=0$, and one obtains
\[
  \Psi_A(t)=\frac{e^{(3+2\alpha)t}+e^{(3-2\alpha)t}-e^{4t}-e^{2t}}{e^{4t}-1}
  =\frac{2e^{3t}\bigl(\cosh2\alpha t-\cosh t\bigr)}{2e^{2t}\sinh2t},
\]
which is \eqref{eq:closed}.
\end{proof}

\begin{proof}[Proof of Corollary~\ref{cor:contiguity}]
By Theorem~\ref{thm:main} and Lemma~\ref{lem:stirlingexp}, the formal
Laurent series $\log F$ on the diagonal is the Stirling series of
$\Gamma(\zeta+\frac{1-2\alpha}4)\Gamma(\zeta+\frac{1+2\alpha}4)/
\bigl(\Gamma(\zeta)\Gamma(\zeta+\frac12)\bigr)$ with $\zeta=Z/4$, whose ratio
at $\zeta\mapsto\zeta+1$ is
$\bigl(\zeta+\frac{1-2\alpha}4\bigr)\bigl(\zeta+\frac{1+2\alpha}4\bigr)/
\bigl(\zeta(\zeta+\frac12)\bigr)=\bigl((Z+1)^2-4A\bigr)/\bigl(Z(Z+2)\bigr)$.
Thus the ratio identity is first obtained coefficientwise in $Z^{-1}$.
Exponentiating is legitimate because both formal logarithms have zero
constant term, and gives the first displayed contiguity identity.  Multiplying
by the exact quotient
$\Gamma(Z+4)/\Gamma(Z)=Z(Z+1)(Z+2)(Z+3)$ gives the gauged form.  This
argument deliberately makes no claim that the infinite inverse-factorial
series defines an analytic function for arbitrary $\alpha$ beyond the
formal/asymptotic statement just proved.
\end{proof}

\begin{remark}[a Borel proof of the converse]\label{rem:borel}
The contiguity conversely determines \eqref{eq:closed} in two lines, which is
worth recording because it is the shortest route from the difference equation
to the closed form.  For $f=\sum_{n\ge1}c_nZ^{-n}$ put
$\mathfrak Bf(t)=\sum_nc_nt^{n-1}/(n-1)!$; then
$\mathfrak B[f(Z+c)]=e^{-ct}\mathfrak Bf$ and
$\mathfrak B\bigl[\log\frac{Z+a}{Z+b}\bigr]=(e^{-bt}-e^{-at})/t$, both term by
term.  Applying $\mathfrak B$ to
$\log F(Z+4)-\log F(Z)=\log\frac{Z+1-2\alpha}Z+\log\frac{Z+1+2\alpha}{Z+2}$
gives
$(e^{-4t}-1)\Psi_A(t)/t=\bigl[1+e^{-2t}-2e^{-t}\cosh2\alpha t\bigr]/t$, and
multiplying numerator and denominator by $e^{2t}$ yields \eqref{eq:closed}.
\end{remark}

\begin{proof}[Proof of Corollaries~\ref{cor:specials} and~\ref{cor:R}]
(i) is $\cosh2\alpha t=\sum_k(4A)^kt^{2k}/(2k)!$ read off \eqref{eq:closed};
(ii) is $\cosh2\alpha t=\cosh t$ at $\alpha=\pm\frac12$; (iii) is
$\alpha=0$, where $\frac12(\operatorname{csch}t+\operatorname{sech}t)(1-\cosh t)
=\frac12(\operatorname{sech}t-\tanh\frac t2-1)$.  For
Corollary~\ref{cor:R}, reflection makes
$\operatorname{Re}g_n=\frac12(\partial_A+\partial_B)L_n|_{A=B=0}$, i.e.\ the
derivative along the diagonal, so
$2\operatorname{Re}\Phi=\partial_A\Psi_A|_{A=0}$; and
$\partial_A\cosh2\alpha t|_{A=0}=2t^2$ gives
$\operatorname{Re}\Phi=t^2e^t/\sinh2t=\frac{t^2}2(\operatorname{csch}t
+\operatorname{sech}t)$.  Expanding
$\operatorname{sech}t=\sum_jE_{2j}t^{2j}/(2j)!$ and
$\operatorname{csch}t=\frac1t+\sum_{k\ge1}2(1-2^{2k-1})B_{2k}t^{2k-1}/(2k)!$
gives the coefficient formulas, and $\operatorname{Im}g_{2k}=0$ follows from
reflection.  Finally Euler numbers are odd integers and, by von
Staudt--Clausen, $\nu_2(B_{2k})=-1$ because $2-1=1$ divides $2k$; since
$1-2^{2k-1}$ is odd, $\nu_2(\operatorname{Re}g_n)=-1$ for every $n$.
\end{proof}

\section{The ladder off the midpoint}\label{sec:genladder}

The proof of Proposition~\ref{prop:step4} used the midpoint identity
$p^2+q^2=0$; identifying its substitute away from the midpoint was singled
out above as the key transverse task.  The substitute turns out to be not
another algebraic relation between the weights but an exact mixed-parity
system, valid for arbitrary weights, obtained from the same two recurrences
of Lemma~\ref{lem:bessel}.

Fix arbitrary nonzero $p,q$ and set, for $\ell\ge0$ (with $y_{-1}=1$),
\[
  U=y_\ell(ps),\quad W=y_\ell(qs),\quad U_-=y_{\ell-1}(ps),\quad
  W_-=y_{\ell-1}(qs),\qquad g_\ell(s)=UW,
\]
\[
  \Ct_\ell=\frac{U_-W}{p}+\frac{UW_-}{q},\qquad
  \Dt_\ell=\frac{UW_-}{p}+\frac{U_-W}{q},\qquad
  \sigma=\frac1p+\frac1q,\qquad \rho=\frac pq+\frac qp .
\]
At the midpoint $\sigma=1$, $\rho=0$, and $\Ct_\ell=C_\ell$ because $pq=2$.

\begin{theorem}[general ladder]\label{thm:genladder}
For every $\ell\ge0$ and arbitrary $p,q$,
\begin{align}
  &\text{\upshape(i)}\qquad
  s^2g_\ell'+(\sigma-2\ell s)\,g_\ell=\Ct_\ell,\label{eq:S3gen}\\
  &\text{\upshape(ii)}\qquad
  \Ct_{\ell+1}=\Dt_\ell+\rho\,(2\ell+1)\,s\,g_\ell,\qquad
  \Dt_{\ell+1}=\Ct_\ell+2(2\ell+1)\,s\,g_\ell,\label{eq:mixed}\\
  &\text{\upshape(iii)}\qquad
  \Ct_{\ell+2}=\Ct_\ell+2(2\ell+1)\,s\,g_\ell
    +\rho\,(2\ell+3)\,s\,g_{\ell+1}.\label{eq:S4gen}
\end{align}
\end{theorem}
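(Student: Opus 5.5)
The plan is to repeat the computations in the proofs of Propositions~\ref{prop:step3} and~\ref{prop:step4}, keeping $p$ and $q$ as free symbols. No relation among the weights is imposed, so the symmetric combinations $\sigma=\frac1p+\frac1q$ and $\rho=\frac pq+\frac qp$ are simply carried along. The only inputs are the two recurrences of Lemma~\ref{lem:bessel}, applied at $x=ps$ and $x=qs$.

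For (i), I would differentiate $g_\ell=UW$ and use Lemma~\ref{lem:bessel}(a) in the forms $p\,s^2U'=(\ell ps-1)U+U_-$ and $q\,s^2W'=(\ell qs-1)W+W_-$. Dividing by $p$ and $q$ respectively gives
\[
s^2g_\ell'=\Bigl(2\ell s-\tfrac1p-\tfrac1q\Bigr)UW+\frac{U_-W}{p}+\frac{UW_-}{q}=(2\ell s-\sigma)g_\ell+\Ct_\ell .
\]
This is \eqref{eq:S3gen}. Nothing beyond the definitions is used; the midpoint relations $\frac1p=\frac q2$, $\frac1q=\frac p2$ were only needed earlier to identify $\Ct_\ell$ with $C_\ell$.

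For (ii), I would write out $\Ct_{\ell+1}=\frac{UW_+}{p}+\frac{U_+W}{q}$ and $\Dt_{\ell+1}=\frac{U_+W}{p}+\frac{UW_+}{q}$. Then substitute Lemma~\ref{lem:bessel}(b) at index $\ell$, namely $U_+=(2\ell+1)psU+U_-$ and $W_+=(2\ell+1)qsW+W_-$.

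In $\Ct_{\ell+1}$ the cross terms give $(2\ell+1)s\bigl(\frac qp+\frac pq\bigr)UW=\rho(2\ell+1)s\,g_\ell$. The remainders are $\frac{UW_-}{p}+\frac{U_-W}{q}=\Dt_\ell$.

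In $\Dt_{\ell+1}$ the cross terms give $(2\ell+1)s\bigl(\frac pp+\frac qq\bigr)UW=2(2\ell+1)s\,g_\ell$. The remainders are $\frac{U_-W}{p}+\frac{UW_-}{q}=\Ct_\ell$. This yields \eqref{eq:mixed}.

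Part (iii) is then immediate by composing the two mixed relations. Apply the first relation of \eqref{eq:mixed} at $\ell+1$ to get $\Ct_{\ell+2}=\Dt_{\ell+1}+\rho(2\ell+3)s\,g_{\ell+1}$. Then substitute the second relation at $\ell$ for $\Dt_{\ell+1}$, which gives \eqref{eq:S4gen}. As a consistency check, at the midpoint $\rho=0$ and $\Ct_\ell=C_\ell$, so (iii) reduces to Proposition~\ref{prop:step4}.

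The only real obstacle is the boundary index $\ell=0$, since Lemma~\ref{lem:bessel} is stated for $n\ge1$ and the theorem claims every $\ell\ge0$. With the convention $y_{-1}=1$ I would verify the two recurrences directly at $n=0$:
\begin{itemize}
\item (a) reads $x^2y_0'=0=(0\cdot x-1)\cdot1+y_{-1}$, which holds;
\item (b) reads $y_1=1+x=x\,y_0+y_{-1}$, which also holds.
\end{itemize}
So both recurrences hold for all $n\ge0$ under this convention. All steps above then go through uniformly in $\ell\ge0$, which completes the argument.
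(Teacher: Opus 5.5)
Your proposal is correct and matches the paper's proof essentially step for step: (i) comes from Lemma~\ref{lem:bessel}(a) at $x=ps$ and $x=qs$, (ii) from substituting Lemma~\ref{lem:bessel}(b) into $\Ct_{\ell+1}$ and $\Dt_{\ell+1}$, and (iii) from composing the two mixed relations. Your explicit check that both recurrences hold at $n=0$ under the convention $y_{-1}=1$ is a worthwhile detail the paper leaves implicit, as is the requirement $p,q\neq0$, which your divisions already assume.
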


\begin{proof}
(i)  Lemma~\ref{lem:bessel}(a) at $x=ps$ reads
$p\,s^2U'=(\ell ps-1)U+U_-$, and similarly at $x=qs$; hence
$s^2g_\ell'=\bigl[(\ell ps-1)U+U_-\bigr]W/p+U\bigl[(\ell qs-1)W+W_-\bigr]/q
=(2\ell s-\sigma)g_\ell+\Ct_\ell$.
(ii)  With $U_+=y_{\ell+1}(ps)$, $W_+=y_{\ell+1}(qs)$,
Lemma~\ref{lem:bessel}(b) gives $U_+=(2\ell+1)psU+U_-$,
$W_+=(2\ell+1)qsW+W_-$, so
\[
  \Ct_{\ell+1}=\frac{UW_+}{p}+\frac{U_+W}{q}
  =(2\ell+1)s\Bigl(\frac qp+\frac pq\Bigr)UW+\frac{UW_-}{p}+\frac{U_-W}{q}
  =\rho(2\ell+1)sg_\ell+\Dt_\ell,
\]
and likewise
$\Dt_{\ell+1}=2(2\ell+1)sg_\ell+\Ct_\ell$.
(iii)  Compose the two identities of (ii).
\end{proof}

At the midpoint, \eqref{eq:S4gen} is Proposition~\ref{prop:step4}; away from
it, the correction is the single term $\rho(2\ell+3)sg_{\ell+1}$, which
couples the two parities (Code~CA).

\section{The physical curve}\label{sec:curve}

\begin{lemma}\label{lem:curverel}
On the unit circle write $\tau=e^{i\theta}$.  The Horn weights are
$p=2/(1-\tau)$, $q=2/(1+\tau)$, and
\[
  p+q=pq=\frac4{1-\tau^2}=:\varpi,
  \qquad\text{hence}\qquad
  \sigma=1,\qquad \rho=\varpi-2
\]
\emph{identically in} $\tau$, not only at the midpoint $\tau=i$ (where $\varpi=2$).
Equivalently, in the Horn variables $x=p/2$, $y=q/2$, the physical curve is
the hypersurface $x+y=2xy$.
\end{lemma}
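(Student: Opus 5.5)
This is a direct algebraic verification, and I will carry it out in the order of the statement. First, with $p=2/(1-\tau)$ and $q=2/(1+\tau)$, I combine over the common denominator $(1-\tau)(1+\tau)=1-\tau^2$:
\[
p+q=\frac{2(1+\tau)+2(1-\tau)}{1-\tau^2}=\frac{4}{1-\tau^2},\qquad pq=\frac{4}{1-\tau^2}.
\]
This gives $p+q=pq=\varpi$ as rational functions of $\tau$. No use of $|\tau|=1$ is needed, which is the sense of ``identically in $\tau$''.

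The two consequences for the ladder constants of Theorem~\ref{thm:genladder} then follow formally. For the first, $\sigma=\frac1p+\frac1q=\frac{p+q}{pq}=1$. For the second,
\[
\rho=\frac pq+\frac qp=\frac{p^2+q^2}{pq}=\frac{(p+q)^2-2pq}{pq}=\frac{\varpi^2-2\varpi}{\varpi}=\varpi-2.
\]
At $\tau=i$ we have $1-\tau^2=2$, so $\varpi=2$ and $\rho=0$. We also recover $p=2/(1-i)=1+i$ and $q=1-i$, which is consistent with \eqref{eq:pq}.

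For the Horn-variable form, I substitute $p=2x$ and $q=2y$ into $p+q=pq$, giving $2x+2y=4xy$, that is, $x+y=2xy$. Conversely, any point with $x+y=2xy$ and $x,y\neq0$ satisfies $\sigma=1$ and $\rho=pq-2$. The parametrisation $x=1/(1-\tau)$, $y=1/(1+\tau)$ covers this curve: solving gives $\tau=1-1/x$, and then $y=x/(2x-1)$ is exactly the relation $x+y=2xy$.

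The only step needing any care is the identification of the ``Horn weights'' with $2/(1\mp\tau)$. That is a definition the statement supplies, so I take it as given rather than derive it from the Jacobi asymptotics. I also check that it specialises correctly to the midpoint values, which it does. There is no real obstacle; the whole lemma reduces to the one-line identity $(1+\tau)+(1-\tau)=2$.
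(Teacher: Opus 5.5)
Your proof is correct and takes exactly the paper's route: the paper's whole argument is the identity $p+q=2\bigl[(1+\tau)+(1-\tau)\bigr]/(1-\tau^2)=4/(1-\tau^2)=pq$, which likewise makes no use of $|\tau|=1$. Your derivations of $\sigma=(p+q)/(pq)=1$ and $\rho=\bigl((p+q)^2-2pq\bigr)/(pq)=\varpi-2$, the midpoint check, and the substitution $p=2x$, $q=2y$ just write out the consequences that the paper treats as immediate.
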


\begin{proof}
$p+q=2\bigl[(1+\tau)+(1-\tau)\bigr]/(1-\tau^2)=4/(1-\tau^2)=pq$.
\end{proof}

Because the translation to the basis $e_n$ of Lemma~\ref{lem:basis} requires
precisely $\sigma=1$, the half-odd-integer analysis of
Section~\ref{sec:half} is global on the curve.  For
$\alpha=\beta=\ell+\frac12$, $g_\ell(s)=y_\ell(ps)y_\ell(qs)$ again
generates the coefficients of $F_\ell(Z)=\sum_nG^{(\ell)}_ne_n(Z)$.

\begin{theorem}[curve contiguity]\label{thm:curvecontig}
On the physical curve, for every $\ell\ge0$, as rational functions in
$\mathbb Q(\varpi)(Z)$,
\begin{equation}\label{eq:curvecontig}
  F_{\ell+2}(Z)\,(Z-2\ell-4)
  =F_\ell(Z)\,(Z+2\ell+2)+(\varpi-2)(2\ell+3)\,F_{\ell+1}(Z).
\end{equation}
\end{theorem}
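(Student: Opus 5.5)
The plan is to follow the proof of Proposition~\ref{prop:Frec} verbatim, replacing Corollary~\ref{cor:gstep} by its curve analogue obtained from Theorem~\ref{thm:genladder}. On the physical curve Lemma~\ref{lem:curverel} gives $\sigma=1$ and $\rho=\varpi-2$ identically. Part~(i) of Theorem~\ref{thm:genladder} then reads $\Ct_\ell=s^2g_\ell'+(1-2\ell s)g_\ell$ for every $\ell$. Substituting this into part~(iii), with $\ell$, $\ell+1$, $\ell+2$, gives the generating-function identity
\[
s^2g_{\ell+2}'+\bigl[1-(2\ell+4)s\bigr]g_{\ell+2}
=s^2g_\ell'+\bigl[1+(2\ell+2)s\bigr]g_\ell+(\varpi-2)(2\ell+3)\,s\,g_{\ell+1}.
\]
This is Corollary~\ref{cor:gstep} plus one extra term coupling the parities.

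Next I would translate this into the basis $e_n$. As in the proof of Proposition~\ref{prop:Frec}, Lemma~\ref{lem:basis} gives, for any constant $c$,
\[
(Z-c)F_\ell=Z-c+G^{(\ell)}_1+\sum_{m\ge1}\bigl(G^{(\ell)}_{m+1}+(m-c)G^{(\ell)}_m\bigr)e_m .
\]
Since $G^{(\ell)}_0=1$ for all $\ell$, the $Z$-terms on the two sides of \eqref{eq:curvecontig} cancel. What remains on the right is the extra term $(\varpi-2)(2\ell+3)F_{\ell+1}=(\varpi-2)(2\ell+3)\sum_m G^{(\ell+1)}_m e_m$. It has no $Z$-term, so it contributes only to the coefficients of $e_m$ for $m\ge0$, with $e_0=1$ giving the constant.

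Comparing coefficients of $e_m$, the claim is equivalent to the scalar identities
\[
G^{(\ell+2)}_{m+1}+(m-2\ell-4)G^{(\ell+2)}_m=G^{(\ell)}_{m+1}+(m+2\ell+2)G^{(\ell)}_m+(\varpi-2)(2\ell+3)G^{(\ell+1)}_m,\qquad m\ge0.
\]
Multiply by $s^{m+1}$ and sum, using $\sum_m G_{m+1}s^{m+1}=g-1$ and $\sum_m mG_ms^{m}=sg'$. The left side becomes $g_{\ell+2}-1+s^2g_{\ell+2}'-(2\ell+4)sg_{\ell+2}$, and the right side becomes the analogous expression plus $(\varpi-2)(2\ell+3)sg_{\ell+1}$. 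The constants $-1$ cancel, and what is left is exactly the displayed generating-function identity. Since every $g_\ell$ is a polynomial, all sums are finite and the $F_\ell$ are rational functions in $Z$. The coefficients lie in $\mathbb Q(\varpi)$ because $G^{(\ell)}_n$ is symmetric in $p,q$, hence a polynomial in $p+q=pq=\varpi$.

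The main obstacle is bookkeeping rather than substance. I must check that the $\ell+1$ term genuinely appears with the factor $s$, so that it matches the $e_m$ coefficient with no shift. I must also check the case $\ell=0$, which uses the convention $y_{-1}=1$ in Theorem~\ref{thm:genladder}. As a sanity check I would verify the identity at $\ell=0$ directly: $g_0=1$, $g_1=1+\varpi s+\varpi s^2$, and $g_2$ is computed from $y_2(x)=1+3x+3x^2$.
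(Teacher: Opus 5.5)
Your proposal is correct and is essentially the paper's own proof: specialise Theorem~\ref{thm:genladder}(i) and (iii) to $\sigma=1$, $\rho=\varpi-2$, eliminate $\Ct$ to obtain the three-term coefficient identity, and then run the translation argument of Proposition~\ref{prop:Frec}. The paper eliminates $\Ct$ coefficientwise in $s^{m+1}$, while you do it at the level of generating functions, which is the same computation; one cosmetic point is that (i) is needed only at $\ell$ and $\ell+2$, since $g_{\ell+1}$ enters (iii) directly.
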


\begin{proof}
By Theorem~\ref{thm:genladder}(i) with $\sigma=1$, comparing coefficients of
$s^{m+1}$, $G^{(\ell)}_{m+1}+(m-2\ell)G^{(\ell)}_m=\Ct_{\ell,m+1}$ for
$m\ge0$; by Theorem~\ref{thm:genladder}(iii) with $\rho=\varpi-2$,
$\Ct_{\ell+2,m+1}=\Ct_{\ell,m+1}+2(2\ell+1)G^{(\ell)}_m
+(\varpi-2)(2\ell+3)G^{(\ell+1)}_m$.  Eliminating $\Ct$,
\[
  G^{(\ell+2)}_{m+1}+(m-2\ell-4)G^{(\ell+2)}_m
  =G^{(\ell)}_{m+1}+(m+2\ell+2)G^{(\ell)}_m
   +(\varpi-2)(2\ell+3)G^{(\ell+1)}_m\qquad(m\ge0),
\]
and the translation argument of Proposition~\ref{prop:Frec} (the case $m=0$
supplies the constants, and the coefficients of $Z$ agree because
$G_0^{(\ell)}=1$) yields \eqref{eq:curvecontig}.
\end{proof}

\begin{corollary}[numerator recurrence]\label{cor:Prec}
For every $\ell\ge0$,
$F_\ell(Z)=P_\ell(Z;\varpi)/\bigl((Z-2)(Z-4)\cdots(Z-2\ell)\bigr)$
with $P_\ell\in\mathbb Z[\varpi][Z]$ monic of degree $\ell$ and
\begin{equation}\label{eq:Prec}
  P_0=1,\qquad P_1=Z+\varpi-2,\qquad
  P_{\ell+2}(Z)=\bigl(Z^2-(2\ell+2)^2\bigr)P_\ell(Z)
  +(\varpi-2)(2\ell+3)\,P_{\ell+1}(Z).
\end{equation}
At $\varpi=2$ this collapses to $P_{\ell+2}=(Z^2-(2\ell+2)^2)P_\ell$, whose
solutions are the numerators of the Gamma quotient of
Theorem~\ref{thm:halfinteger}:
$S_{2m}=\prod_{j=1}^m(Z^2-(4j-2)^2)$,
$S_{2m+1}=Z\prod_{j=1}^m(Z^2-16j^2)$, so that uniformly
$S_{e+2}=(Z^2-(2e+2)^2)S_e$.
\end{corollary}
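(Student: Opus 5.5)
Define $P_\ell(Z):=(Z-2)(Z-4)\cdots(Z-2\ell)\,F_\ell(Z)$; the task is then to check three things: the recurrence \eqref{eq:Prec} for $P_\ell$, the base cases, and that $P_\ell$ is a monic polynomial in $\mathbb Z[\varpi][Z]$ of degree $\ell$. Write $D_\ell=\prod_{j=1}^{\ell}(Z-2j)$.

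\textbf{The recurrence.} Multiply \eqref{eq:curvecontig} by $D_{\ell+1}$. Since $D_{\ell+2}=D_{\ell+1}(Z-2\ell-4)$, the left side becomes $P_{\ell+2}$. On the right, $D_{\ell+1}F_\ell=(Z-2\ell-2)P_\ell$, so the first term is $(Z+2\ell+2)(Z-2\ell-2)P_\ell=(Z^2-(2\ell+2)^2)P_\ell$. The second term is $(\varpi-2)(2\ell+3)P_{\ell+1}$. This is exactly \eqref{eq:Prec}.

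\textbf{Base cases.} $F_0=1$ gives $P_0=1$. For $\ell=1$, $y_1(x)=1+x$ gives $g_1=1+(p+q)s+pqs^2=1+\varpi s+\varpi s^2$ on the curve, using Lemma~\ref{lem:curverel}. Hence
\[
F_1=1+\frac{\varpi}{Z-1}+\frac{\varpi}{(Z-1)(Z-2)}=\frac{(Z-1)(Z-2)+\varpi(Z-1)}{(Z-1)(Z-2)}=\frac{Z-2+\varpi}{Z-2}.
\]
So $P_1=Z+\varpi-2$, which reduces to Lemma~\ref{lem:base} at $\varpi=2$.

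\textbf{Integrality, monicity, degree.} Argue by strong induction from \eqref{eq:Prec}. If $P_\ell$ and $P_{\ell+1}$ are monic in $\mathbb Z[\varpi][Z]$ of degrees $\ell$ and $\ell+1$, then $(Z^2-(2\ell+2)^2)P_\ell$ is monic of degree $\ell+2$. The correction term $(\varpi-2)(2\ell+3)P_{\ell+1}$ has integer coefficients in $\varpi$ and degree $\ell+1<\ell+2$. Therefore $P_{\ell+2}$ is monic of degree $\ell+2$ in $\mathbb Z[\varpi][Z]$. The only point needing care is that $F_\ell$ was a priori just rational with denominator dividing $(Z-1)\cdots(Z-2\ell)$. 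The recurrence itself shows $D_\ell F_\ell$ is polynomial, so the odd factors $(Z-1),(Z-3),\dots$ cancel automatically; no separate check is needed.

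\textbf{The specialisation $\varpi=2$.} The correction term drops, leaving $P_{\ell+2}=(Z^2-(2\ell+2)^2)P_\ell$, with $P_0=1$ and $P_1=Z$. Iterating on even indices $\ell=2j-2$ gives factors $Z^2-(4j-2)^2$, hence $S_{2m}$. Iterating on odd indices $\ell=2j-1$ gives factors $Z^2-(4j)^2$, hence $S_{2m+1}$. The uniform recurrence $S_{e+2}=(Z^2-(2e+2)^2)S_e$ is just this recurrence restated.

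To confirm these are the Gamma-quotient numerators, compare with Theorem~\ref{thm:halfinteger}. Its step ratio $(Z+2\ell+2)/(Z-2\ell-4)$, multiplied through by $D_{\ell+2}/D_\ell=(Z-2\ell-2)(Z-2\ell-4)$, gives the same two-step factor $Z^2-(2\ell+2)^2$. The base values $\mathcal M_0=1$ and $\mathcal M_1=Z/(Z-2)$ match $P_0=1$ and $P_1=Z$.

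No step here is a real obstacle. The only thing to watch is the degree bookkeeping, which guarantees the correction term never disturbs the leading coefficient.
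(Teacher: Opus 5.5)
Your proposal is correct and follows the paper's proof: compute $F_1=(Z+\varpi-2)/(Z-2)$ from $g_1=1+\varpi s+\varpi s^2$, then clear denominators in the curve contiguity \eqref{eq:curvecontig} and induct. You additionally write out the monicity, degree and integrality bookkeeping and the $\varpi=2$ specialisation, which the paper's short proof leaves implicit.
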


\begin{proof}
$g_1=(1+ps)(1+qs)=1+\varpi s+\varpi s^2$ gives
$F_1=1+\varpi e_1+\varpi e_2=(Z^2+(\varpi-3)Z+2-\varpi)/((Z-1)(Z-2))=(Z+\varpi-2)/(Z-2)$,
the numerator vanishing at $Z=1$.  Solving \eqref{eq:curvecontig} for
$F_{\ell+2}$ and clearing denominators gives
$P_{\ell+2}=(Z+2\ell+2)(Z-2\ell-2)P_\ell+(\varpi-2)(2\ell+3)P_{\ell+1}$ by
induction.
\end{proof}

The recurrence \eqref{eq:Prec} has a closed solution.  Write $u=\varpi-2$ and
let
\[
  \beta_j(k)=\frac{k}{2j+k}\binom{2j+k}{j}\qquad(\beta_0(k)=1)
\]
be the ballot (Catalan-triangle) numbers.

\begin{theorem}[ballot form of the curve numerators]\label{thm:ballot}
For every $\ell\ge0$, with $d=\ell-k$ in the summand indexed by $k$,
\begin{equation}\label{eq:ballot}
  P_\ell(Z;2+u)=\sum_{k=0}^{\ell}(2k-1)!!\binom{\ell+k}{2k}u^k
  \sum_{j\ge0}(-1)^j\,\frac{d!}{(d-2j)!}\;\beta_j(k)\,S_{d-2j}(Z),
\end{equation}
the inner sum running over $j\le d/2$.
\end{theorem}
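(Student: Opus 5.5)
The plan is induction on $\ell$, using the recurrence \eqref{eq:Prec} of Corollary~\ref{cor:Prec}. The right-hand side of \eqref{eq:ballot} will be shown to satisfy the same initial values and the same three-term recurrence. Denote it by $R_\ell$.

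For the base cases:
\begin{itemize}
\item $\ell=0$ gives only $k=0$, $d=0$, $j=0$, so $R_0=S_0=1$.
\item $\ell=1$: the $k=0$ term is $S_1=Z$. The $k=1$ term is $(1)!!\binom{2}{2}u\,S_0=u$. Hence $R_1=Z+u=P_1$.
\end{itemize}

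The structural fact to exploit is $S_{e+2}=(Z^2-(2e+2)^2)S_e$. This makes $\{S_e\}$ a basis of monic polynomials (with $\deg S_e=e$) in which multiplication by $Z^2-(2\ell+2)^2$ acts almost diagonally:
\[
(Z^2-(2\ell+2)^2)S_e=S_{e+2}+\bigl((2e+2)^2-(2\ell+2)^2\bigr)S_e .
\]
So I would expand both sides of the recurrence in the basis $S_e$ and compare coefficients, writing $R_\ell=\sum_k\sum_j c_{\ell,k,j}\,u^kS_{\ell-k-2j}$. In the $u$-expansion, the term $(Z^2-(2\ell+2)^2)R_\ell$ preserves the power of $u$. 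The term $u(2\ell+3)R_{\ell+1}$ raises it by one.

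Matching the coefficient of $u^kS_e$ with $e=\ell+2-k-2j$, the identity to prove is
\[
c_{\ell+2,k,j}=c_{\ell,k,j-1}+4\bigl((e+1)^2-(\ell+1)^2\bigr)c_{\ell,k,j}+(2\ell+3)\,c_{\ell+1,k-1,j},
\]
with the appropriate index shifts: $e=\ell-k-2j'$ in the second term fixes $j'$. This is a hypergeometric-term identity in $(\ell,k,j)$. After factoring out the common factor $(2k-1)!!\binom{\ell+2+k}{2k}\frac{d!}{(d-2j)!}$, it should reduce to a rational identity in $\ell,k,j$ together with the ballot recurrences.

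For the ballot numbers, the relevant facts are $\beta_j(k)=\beta_j(k-1)+\beta_{j-1}(k+1)$ (equivalently, the Catalan-triangle Pascal rule) and the closed form $\beta_j(k)=\frac{k}{2j+k}\binom{2j+k}{j}$. Dividing the recurrence by $\beta_j(k)$ yields ratios such as $\beta_{j-1}(k)/\beta_j(k)$ and $\beta_j(k-1)/\beta_j(k)$, which are explicit rational functions. The claim then becomes a polynomial identity to verify.

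The main obstacle is this coefficient identity. The shifts in $d$, $k$ and $j$ interact, because $d=\ell-k$ changes with both $\ell$ and $k$, and the factor $(2e+2)^2-(2\ell+2)^2=4(e-\ell)(e+\ell+2)$ must be split against $d!/(d-2j)!$ and $\binom{\ell+k}{2k}$. My first attempt would be to divide everything by $c_{\ell+2,k,j}$ and check that the resulting rational function of $(\ell,k,j)$ sums to $1$. This verification is finite-dimensional and could be certified symbolically, in a WZ style, as elsewhere in the paper. If that bookkeeping proves unwieldy, I would fall back on the generating function in $u$. Since $P_\ell$ at $u=0$ is $S_\ell$, I would try to identify $\sum_k(2k-1)!!\binom{\ell+k}{2k}u^k(\cdots)$ with the Bessel-type coefficients $(\ell+k)!/((\ell-k)!\,k!\,2^k)$ of Lemma~\ref{lem:hankelhalf}. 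Indeed $(2k-1)!!\binom{\ell+k}{2k}=\frac{(\ell+k)!}{(\ell-k)!\,k!\,2^k}$, so the $u$-expansion is exactly a Bessel polynomial $y_\ell$ decorated by ballot corrections. That observation suggests proving the $j$-sum part separately, as a lemma expressing $u$-derivatives of $P_\ell$ in the $S$-basis.
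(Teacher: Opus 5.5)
Your route is the paper's: induction through \eqref{eq:Prec} from the two base cases, uniqueness of the expansion in the basis $\{u^kS_e\}$, and reduction to one four-term identity for the closed coefficients, checked via ratios. Two points must be repaired before it is a proof.

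\textbf{The coefficient identity.} Your displayed identity has the $j$-indices of the two $c_{\ell,k,\cdot}$ terms interchanged. Fix the target $u^kS_e$ with $e=\ell+2-k-2j$.
\begin{itemize}
\item The shift part $S_{e-2}\mapsto S_e$ of multiplication by $Z^2-(2\ell+2)^2$ picks up $c_{\ell,k,j}$ with weight $1$.
\item The diagonal part has weight $(2e+2)^2-(2\ell+2)^2=-4(k+2j-2)(2\ell+4-k-2j)$. It acts on the term $S_e$ of $P_\ell$, which carries index $j-1$.
\end{itemize}
The identity to prove is therefore
\[
c_{\ell+2,k,j}=c_{\ell,k,j}-4(k+2j-2)(2\ell+4-k-2j)\,c_{\ell,k,j-1}+(2\ell+3)\,c_{\ell+1,k-1,j}.
\]
Your version already fails at $\ell=k=j=0$, where $c_{2,0,0}=1$ can only come from $c_{0,0,0}$ through the unweighted term.

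\textbf{The boundary indices.} The step ``divide by $c_{\ell+2,k,j}$ and check that a rational function equals $1$'' proves the identity only where each ratio formula equals the actual ratio of coefficients. Those coefficients are, by convention, zero outside $0\le k\le\ell$, $0\le2j\le\ell-k$. Handling exactly this is the bulk of the paper's proof. With your normalisation and $D=\ell-k-2j$, the three ratios are
\[
\frac{(D+1)(D+2)}{(\ell+k+1)(\ell+k+2)},\qquad
\frac{4j(j+k)(2\ell+4-k-2j)}{(\ell+k+1)(\ell+k+2)(2j+k-1)},\qquad
\frac{2(2\ell+3)(k-1)(j+k)}{(\ell+k+1)(\ell+k+2)(2j+k-1)},
\]
the middle one already cancelled against $k+2j-2$. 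Their sum is $1$ by the cubic \eqref{eq:cubic}.

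Away from $k=0$ and $(k,j)=(1,0)$, the vanishing factors $(D+1)(D+2)$, $j$ and $k-1$ correctly reproduce the out-of-range zeros: at $D=-1,-2$, at $j=0$, and at $k=1$ with $j\ge1$. The two excluded cases you must check by hand:
\begin{itemize}
\item At $k=0$ the target vanishes for $j\ge1$ (because $\beta_j(0)=0$), so you cannot divide by it. Directly, only $c_{\cdot,0,0}=1$ survives, and the middle weight vanishes at $j=1$.
\item At $(k,j)=(1,0)$ the last two ratios are $0/0$. Directly, the identity reads $\binom{\ell+3}{2}=\binom{\ell+1}{2}+(2\ell+3)$.
\end{itemize}

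With these added your argument is complete. It is also slightly more economical than the paper's, which divides by $c(\ell,k,J)$ and so must treat $J=0$ (including $k=\ell+1,\ell+2$) and $D=-1,-2$ separately. The ballot Pascal rule and the generating-function fallback are not needed.
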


\begin{proof}
The entire proof consists in verifying that the closed coefficients
\eqref{eq:ballot} satisfy the same recurrence and the same initial data
as the curve numerators: the interior identity is checked first, then
the boundary patterns excluded by the interior divisions, then the base
cases; induction closes the argument.
The polynomials $S_e$ have degree $e$, so the expansion of any polynomial in
the basis $\{u^kS_e\}$ is unique.  Write
$c(\ell,k,j)=(2k-1)!!\binom{\ell+k}{2k}(-1)^j\frac{d!}{(d-2j)!}\beta_j(k)$
for the coefficient of $u^kS_{\ell-k-2j}$ in \eqref{eq:ballot}, with
$c(\ell,k,j)=0$ outside $0\le k\le\ell$, $0\le2j\le d$.  Since
$Z^2S_e=S_{e+2}+(2e+2)^2S_e$, substituting \eqref{eq:ballot} into
\eqref{eq:Prec} and matching the coefficient of $u^kS_e$ with
$e=(\ell+2)-k-2J$ reduces the claim, by uniqueness, to the four-term
identity
\begin{equation}\label{eq:fourterm}
  c(\ell+2,k,J)=c(\ell,k,J)-4(k+2J-2)(2\ell+4-k-2J)\,c(\ell,k,J-1)
  +(2\ell+3)\,c(\ell+1,k-1,J).
\end{equation}
We verify \eqref{eq:fourterm} for all indices, including the boundary
patterns.  First suppose $k\ge1$.  The coefficient admits the factorial form
\begin{equation}\label{eq:cfactorial}
 c(\ell,k,j)
 =(-1)^j
 \frac{(\ell+k)!\,(2j+k-1)!}
 {2^k(k-1)!\,j!\,(j+k)!\,(\ell-k-2j)!},
\end{equation}
whenever $\ell-k-2j\ge0$; outside this range it is zero by convention.
This follows immediately from
$(2k-1)!!=(2k)!/(2^kk!)$ and
$\beta_j(k)=k(2j+k-1)!/(j!(j+k)!)$.

For the interior range $J\ge1$ and $D:=\ell-k-2J\ge0$, division by the
nonzero coefficient $c(\ell,k,J)$ gives
\[
  \frac{c(\ell+2,k,J)}{c(\ell,k,J)}
  =\frac{(\ell+k+1)(\ell+k+2)}{(D+1)(D+2)},
\]
\[
  \frac{c(\ell+1,k-1,J)}{c(\ell,k,J)}
  =\frac{2(k-1)(J+k)}{(D+1)(D+2)(2J+k-1)},
\]
\[
  \frac{c(\ell,k,J-1)}{c(\ell,k,J)}
  =\frac{-\,J(J+k)}
  {(D+1)(D+2)(2J+k-2)(2J+k-1)}.
\]
Substitution in \eqref{eq:fourterm}, followed by clearing denominators,
reduces the recurrence to
\begin{align}\label{eq:cubic}
  (\ell+k+1)(\ell+k+2)(2J+k-1)
  ={}&(D+1)(D+2)(2J+k-1)\notag\\
  &+4J(J+k)(2\ell+4-k-2J)\notag\\
  &+2(2\ell+3)(k-1)(J+k),
\end{align}
and after $\ell=D+k+2J$ both sides expand to the same cubic polynomial.

It remains to prove, rather than merely test, the boundary cases excluded by
that division.  If $k=0$, then
$c(\ell,0,0)=1$ and $c(\ell,0,j)=0$ for $j\ge1$.  Hence
\eqref{eq:fourterm} is immediate: for $J=0$ it is $1=1$, for $J=1$ the only
potential term $c(\ell,0,0)$ is multiplied by
$k+2J-2=0$, and for $J\ge2$ every term is zero.

Now let $k\ge1$ and $J=0$.  If $k\le\ell$, the middle term of
\eqref{eq:fourterm} vanishes and \eqref{eq:cfactorial} gives
\[
 \frac{c(\ell+2,k,0)}{c(\ell,k,0)}
 =\frac{(\ell+k+1)(\ell+k+2)}
        {(\ell-k+1)(\ell-k+2)},\qquad
 \frac{c(\ell+1,k-1,0)}{c(\ell,k,0)}
 =\frac{2k}{(\ell-k+1)(\ell-k+2)}.
\]
Their substitution gives
$c(\ell+2,k,0)=c(\ell,k,0)+(2\ell+3)c(\ell+1,k-1,0)$ exactly.  The only
remaining $J=0$ targets have $k=\ell+1$ or $k=\ell+2$; then
$c(\ell,k,0)=0$ and the middle term is again zero, while direct evaluation
of the double-factorial formula gives
\[
 c(\ell+2,k,0)=(2\ell+3)c(\ell+1,k-1,0)=(2\ell+3)!!.
\]

Finally take $J\ge1$.  When the target coefficient
$c(\ell+2,k,J)$ exists but $c(\ell,k,J)$ does not, necessarily
$D=-2$ or $D=-1$; these are the only remaining possibilities because the
target basis index is $D+2\ge0$.  For $D=-2$,
\[
 \frac{c(\ell,k,J-1)}{c(\ell+2,k,J)}
 =-\frac{J}
 {2(2J+k-2)(2J+k-1)(2J+2k-1)},
\]
\[
 \frac{c(\ell+1,k-1,J)}{c(\ell+2,k,J)}
 =\frac{k-1}{(2J+k-1)(2J+2k-1)}.
\]
Since here $\ell=k+2J-2$, insertion in the right-hand side of
\eqref{eq:fourterm} gives exactly $1$ after division by
$c(\ell+2,k,J)$.  For $D=-1$ the corresponding ratios are
\[
 \frac{c(\ell,k,J-1)}{c(\ell+2,k,J)}
 =-\frac{J}
 {2(2J+k-2)(2J+k-1)(2J+2k+1)},
\]
\[
 \frac{c(\ell+1,k-1,J)}{c(\ell+2,k,J)}
 =\frac{k-1}{(2J+k-1)(2J+2k+1)},
\]
and with $\ell=k+2J-1$ the same substitution again gives $1$.  These formulas
also cover $k=1$, because then
$c(\ell+1,0,J)=0$ for $J\ge1$ and the factor $k-1$ vanishes.  Thus
\eqref{eq:fourterm} holds for every admissible coefficient, with no
finite-range assumption.  Code~CG checks the interior cubic and these
boundary reductions symbolically; its finite integer grid serves as a
regression test.

With the base cases $P_0=1=c(0,0,0)S_0$ and
$P_1=Z+u=S_1+uS_0$, induction on $\ell$ through \eqref{eq:Prec} completes
the proof.
\end{proof}

\begin{remark}[the exact boundary of route (A)]\label{rem:offcurve}
The curve $\sigma=1$ is a hypersurface through the midpoint in the space of
independent Horn variables $(x,y)$; on it, Theorem~\ref{thm:curvecontig},
the degree bound of Lemma~\ref{lem:degree} (valid at any fixed $(x,y)$) and
interpolation at $A=(\ell+\frac12)^2$ determine the full diagonal
background for all $\alpha$.  What is not determined is the normal
derivative: for $\sigma\ne1$ the coefficient relations acquire the shape
$\sigma G_{m+1}+(m-c)G_m$, and the mismatch $(\sigma-1)mG_m$ is the
operator $MF=\sum_mmG_me_m$, which is not expressible through rational
operations and shifts in $Z$ on inverse-factorial series.  Handling $M$ is
now the exact remaining obstruction of route~(A) of
Section~\ref{sec:open}.
\end{remark}

\section{The radial-operator completion of route (A)}\label{sec:Mcompletion}
Remark~\ref{rem:offcurve} identified one precise obstruction to extending the
curve contiguity away from $\sigma=1$: multiplication of the coefficient
$G_m$ by its index $m$.  That obstruction can in fact be adjoined without
loss.  The resulting calculus is differential, rather than rational, in the
normal variable.  It determines every normal jet of the half-odd-integer
diagonal background and thereby completes route~(A) at the level of formal
jets.  It does not by itself settle the dyadic parity problem, because the
transverse forcing still has to be propagated through this background.

For arbitrary nonzero $p,q$, put
$\sigma=\frac1p+\frac1q$ and $\rho=\frac pq+\frac qp$, and let
\[
 F_\ell(Z;\sigma,\rho)=\sum_{m\ge0}G_m^{(\ell)}e_m(Z),
 \qquad e_m(Z)=\frac1{(Z-1)\cdots(Z-m)}.
\]
The two invariants recover the symmetric functions of $p,q$:
\begin{equation}\label{eq:pqrhosigma}
 pq=\frac{\rho+2}{\sigma^2},\qquad
 p+q=\frac{\rho+2}{\sigma}.
\end{equation}
Consequently $F_\ell$ is symmetric in $p,q$ and is a rational function of
$(Z,\sigma,\rho)$ for every half-odd-integer index $\ell$.

\begin{definitionx}[radial operator]
On an inverse-factorial series define
\[
 \mathcal M\Bigl(\sum_{m\ge0}G_me_m\Bigr)=\sum_{m\ge0}mG_me_m.
\]
\end{definitionx}

\begin{lemma}[exact closure of the inverse-factorial basis]\label{lem:Mclosure}
If every $G_m(x,y)$ is homogeneous of total degree $m$, then
$\mathcal MF=(\Theta_x+\Theta_y)F$.
Moreover, if
$\mathcal RF=\sum_{m\ge0}G_{m+1}e_m$, then
\begin{equation}\label{eq:Rclosure}
 \mathcal RF=Z(F-1)-\mathcal MF,
\end{equation}
and in the coordinates $(\sigma,\rho)$, at fixed $\rho$,
$\mathcal M=-\sigma\partial_\sigma$.
\end{lemma}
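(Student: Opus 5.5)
The plan is to argue entirely coefficientwise in the inverse-factorial basis $\{e_m\}$: all three operators ($\mathcal M$, $\Theta_x+\Theta_y$, $\sigma\partial_\sigma$) act termwise on $F=\sum_m G_me_m$, since the $e_m(Z)$ do not depend on the Horn variables. Each identity therefore reduces to a statement about a single coefficient $G_m$, or, for \eqref{eq:Rclosure}, about the action of $Z$ on the basis. No convergence question arises: all identities are identities of formal inverse-factorial (equivalently, formal Laurent) series. In our application $G_m=\sum_l a_l(\alpha)a_{m-l}(\beta)p^lq^{m-l}$ is visibly homogeneous of degree $m$ in $(p,q)$, hence also in $(x,y)=(p/2,q/2)$.

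For the first claim I will apply Euler's identity for homogeneous functions. If $G_m(x,y)$ is homogeneous of total degree $m$, then $(x\partial_x+y\partial_y)G_m=mG_m$. Summing against $e_m$ gives $(\Theta_x+\Theta_y)F=\sum_m mG_me_m=\mathcal MF$.

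For \eqref{eq:Rclosure} I will use Lemma~\ref{lem:basis}, $Ze_m=e_{m-1}+me_m$ for $m\ge1$. Since $G_0=1$ and $e_0=1$, we have $F-1=\sum_{m\ge1}G_me_m$. Multiplying by $Z$ and applying the lemma gives
\[
Z(F-1)=\sum_{m\ge1}G_me_{m-1}+\sum_{m\ge1}mG_me_m=\mathcal RF+\mathcal MF .
\]
Here the index is shifted in the first sum, and the $m=0$ term of $\mathcal MF$ vanishes, so the second sum is all of $\mathcal MF$.

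For the last claim I will use the scaling $(p,q)\mapsto(\lambda p,\lambda q)$. Under it $\sigma\mapsto\sigma/\lambda$ and $\rho\mapsto\rho$, so $(\sigma,\rho)$ are honest coordinates on the space of symmetric functions of $(p,q)$: by \eqref{eq:pqrhosigma} they recover $pq$ and $p+q$. Writing the symmetric homogeneous coefficient as $G_m=\widehat G_m(\sigma,\rho)$, homogeneity reads $\widehat G_m(\sigma/\lambda,\rho)=\lambda^m\widehat G_m(\sigma,\rho)$. Differentiating at $\lambda=1$ gives $-\sigma\partial_\sigma\widehat G_m=m\widehat G_m$, and summing against $e_m$ yields $\mathcal M=-\sigma\partial_\sigma$ at fixed $\rho$.

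The only step needing care, and the one I expect to be the main obstacle, is justifying that $G_m$ is a well-defined function of $(\sigma,\rho)$, which requires symmetry in $p,q$. This holds on the diagonal $\alpha=\beta$ by the swap $l\leftrightarrow m-l$; off the diagonal one should keep $(p,q)$ as coordinates and read the identity via the Euler form instead. Where symmetry holds, \eqref{eq:pqrhosigma} makes $\widehat G_m$ a rational function of $(\sigma,\rho)$, and it is a polynomial in $pq$ and $p+q$.
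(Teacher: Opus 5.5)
Your proposal is correct and follows the paper's proof essentially line for line: Euler's identity for the homogeneous coefficients gives the first claim, Lemma~\ref{lem:basis} applied to $Z(F-1)$ (using $G_0=e_0=1$) gives \eqref{eq:Rclosure}, and the scaling $(p,q)\mapsto(\lambda p,\lambda q)$, which fixes $\rho$ and sends $\sigma\mapsto\lambda^{-1}\sigma$, gives $\mathcal M=-\sigma\partial_\sigma$. Your extra remark that $(\sigma,\rho)$ only parametrise symmetric functions of $(p,q)$ is a sensible clarification, and it matches the paper's observation, via \eqref{eq:pqrhosigma}, that the diagonal family $F_\ell$ is symmetric in $p,q$.
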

\begin{proof}
Homogeneity gives the first assertion.  Lemma~\ref{lem:basis} gives
$ZF=Z+\sum_{m\ge1}G_me_{m-1}+\sum_{m\ge1}mG_me_m
=Z+\mathcal RF+\mathcal MF$, which is \eqref{eq:Rclosure}.  Under a common
scaling $(p,q)\mapsto(\lambda p,\lambda q)$, $\rho$ is fixed and
$\sigma\mapsto\lambda^{-1}\sigma$; hence the radial Euler operator is
$-\sigma\partial_\sigma$ at fixed $\rho$.
\end{proof}

Define the first-order normal operator
\begin{equation}\label{eq:Qoperator}
 \mathcal Q_c^{(\sigma)}F
 :=\sigma Z(F-1)-\sigma(1-\sigma)\partial_\sigma F-cF
 =\sum_{m\ge0}\bigl(\sigma G_{m+1}+(m-c)G_m\bigr)e_m.
\end{equation}
Thus the term which was nonlocal in a calculus of rational functions and
$Z$-shifts is a local first-order derivative in the normal coordinate.

\begin{theorem}[$\mathcal M$-completed ladder]\label{thm:Mladder}
For every $\ell\ge0$ and arbitrary $(\sigma,\rho)$,
\begin{equation}\label{eq:Mladder}
 \mathcal Q_{2\ell+4}^{(\sigma)}F_{\ell+2}
 =\mathcal Q_{-2\ell-2}^{(\sigma)}F_\ell
  +\rho(2\ell+3)F_{\ell+1}.
\end{equation}
At $\sigma=1$ this reduces to the curve contiguity
\eqref{eq:curvecontig}.
\end{theorem}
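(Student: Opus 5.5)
Both sides of \eqref{eq:Mladder} lie in the inverse-factorial basis $\{e_m\}$ by \eqref{eq:Qoperator}. The plan is therefore to translate the identity into a coefficient identity in $m$, derive that coefficient identity from Theorem~\ref{thm:genladder} (which holds for arbitrary $p,q$), and then repackage it. First I would confirm that the second expression in \eqref{eq:Qoperator} equals the first. By Lemma~\ref{lem:Mclosure}, $\sigma Z(F-1)=\sigma\mathcal RF+\sigma\mathcal MF$, and $\mathcal M=-\sigma\partial_\sigma$ at fixed $\rho$. Hence $\sigma Z(F-1)-\sigma(1-\sigma)\partial_\sigma F=\sigma\mathcal RF+\sigma\mathcal MF+(1-\sigma)\mathcal MF=\sigma\mathcal RF+\mathcal MF$, whose $e_m$-coefficient is $\sigma G_{m+1}+mG_m$. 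Subtracting $cF$ gives the coefficient $\sigma G_{m+1}+(m-c)G_m$. One caveat: the homogeneity hypothesis of Lemma~\ref{lem:Mclosure} holds because $G_m$ is the $s^m$ coefficient of $y_\ell(ps)y_\ell(qs)$, so it is homogeneous of degree $m$ in $(p,q)$. This is also what justifies writing $F_\ell$ as a function of $(\sigma,\rho)$ via \eqref{eq:pqrhosigma}.

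The main step reduces the claim to the coefficient relation
\[
\sigma G^{(\ell+2)}_{m+1}+(m-2\ell-4)G^{(\ell+2)}_m=\sigma G^{(\ell)}_{m+1}+(m+2\ell+2)G^{(\ell)}_m+\rho(2\ell+3)G^{(\ell+1)}_m\qquad(m\ge0).
\]
Extracting the $s^{m+1}$ coefficient of Theorem~\ref{thm:genladder}(i) gives $\sigma G^{(\ell)}_{m+1}+(m-2\ell)G^{(\ell)}_m=\Ct_{\ell,m+1}$, because $s^2g'$ contributes $mG_m$ and $-2\ell s g$ contributes $-2\ell G_m$. The same extraction from (iii) gives $\Ct_{\ell+2,m+1}=\Ct_{\ell,m+1}+2(2\ell+1)G^{(\ell)}_m+\rho(2\ell+3)G^{(\ell+1)}_m$. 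Applying (i) at index $\ell+2$ and eliminating $\Ct$ yields the displayed relation, since $(m-2\ell)+2(2\ell+1)=m+2\ell+2$. This is exactly the computation in the proof of Theorem~\ref{thm:curvecontig}, with $\sigma$ kept general. Keeping $\sigma$ general is harmless, because the $Z$-translation step is no longer needed: the operator $\mathcal Q$ already produces $e_m$-coefficients directly.

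The case $m=0$ needs separate attention. The $s^1$ coefficient of (i) involves $\Ct_{\ell,1}$ and $G_0=1$, and I would check that the convention $y_{-1}=1$ makes $\Ct_{0}$ consistent with (i) at $\ell=0$; (i) at $\ell=0$ reads $\sigma g_0=\Ct_0$ with $g_0=1$, which holds. Multiplying the coefficient identity by $e_m$ and summing then gives \eqref{eq:Mladder} exactly, as an identity of finite sums for half-odd-integer indices, since $G_m^{(\ell)}=0$ for $m>2\ell$.

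Finally, at $\sigma=1$ the derivative term drops out and $\mathcal Q_c^{(1)}F=Z(F-1)-cF=(Z-c)F-Z$. The relation becomes $(Z-2\ell-4)F_{\ell+2}-Z=(Z+2\ell+2)F_\ell-Z+\rho(2\ell+3)F_{\ell+1}$, and on the curve $\rho=\varpi-2$ by Lemma~\ref{lem:curverel}. This is \eqref{eq:curvecontig}.

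I expect the main obstacle to be bookkeeping rather than anything conceptual: making sure $\partial_\sigma$ is taken at fixed $\rho$ consistently, and that the boundary coefficients ($m=0$, and the range where $G^{(\ell)}_{m+1}$ vanishes) match.
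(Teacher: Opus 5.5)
Your proposal is correct and follows the paper's proof essentially step for step. You extract the $s^{m+1}$ coefficients of Theorem~\ref{thm:genladder}(i) and (iii), eliminate $\Ct$, multiply by $e_m$ and sum, then set $\sigma=1$ so that $\mathcal Q_c^{(1)}F=(Z-c)F-Z$ and the two $-Z$ terms cancel. Your extra check via Lemma~\ref{lem:Mclosure}, that the two expressions in \eqref{eq:Qoperator} agree, is something the paper builds into the definition, so it is a harmless addition rather than a different route.
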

\begin{proof}
From Theorem~\ref{thm:genladder}(i), comparison of the coefficient of
$s^{m+1}$ gives
$\sigma G_{m+1}^{(\ell)}+(m-2\ell)G_m^{(\ell)}=[s^{m+1}]\Ct_\ell$.
Use Theorem~\ref{thm:genladder}(iii) to eliminate $\Ct_{\ell+2}$ and
$\Ct_\ell$.  One obtains, for every $m\ge0$,
\begin{align*}
 \sigma G_{m+1}^{(\ell+2)}+(m-2\ell-4)G_m^{(\ell+2)}
 ={}&\sigma G_{m+1}^{(\ell)}+(m+2\ell+2)G_m^{(\ell)}
 +\rho(2\ell+3)G_m^{(\ell+1)}
\end{align*}
(Code~CL re-verifies this coefficient identity symbolically in $(p,q)$ for
$\ell\le3$, $m\le8$).  Multiplication by $e_m$ and summation give
\eqref{eq:Mladder}.  At $\sigma=1$, \eqref{eq:Qoperator} becomes
$\mathcal Q_c^{(1)}F=(Z-c)F-Z$; the two copies of $-Z$ cancel and
\eqref{eq:curvecontig} follows.
\end{proof}

The degeneracy at $\sigma=1$ is now transparent: the coefficient of
$\partial_\sigma F$ in \eqref{eq:Qoperator} vanishes there.  Nevertheless,
all normal jets are determined triangularly.

\begin{theorem}[all normal jets on the physical curve]\label{thm:normaljets}
Put
$F_{\ell,k}(Z;\rho)
=\partial_\sigma^kF_\ell(Z;\sigma,\rho)\big|_{\sigma=1}$ for $k\ge0$,
with $F_{\ell,-1}=0$.  Then
\begin{align}\label{eq:jetladder}
 &(Z+k-2\ell-4)F_{\ell+2,k}
   +k(Z+k-1)F_{\ell+2,k-1}\notag\\
 &\qquad=(Z+k+2\ell+2)F_{\ell,k}
   +k(Z+k-1)F_{\ell,k-1}
   +\rho(2\ell+3)F_{\ell+1,k}.
\end{align}
In particular, at the midpoint $\rho=0$, the two parity ladders decouple
and every normal jet is obtained recursively from the two base families
$F_0$ and $F_1$.
\end{theorem}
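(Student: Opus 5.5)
Differentiate the $\mathcal M$-completed ladder \eqref{eq:Mladder} $k$ times in $\sigma$ at fixed $\rho$ and then set $\sigma=1$. All objects involved are rational in $(Z,\sigma,\rho)$ for half-odd-integer indices, so the differentiation is legitimate coefficientwise. It will be convenient to rewrite the operator \eqref{eq:Qoperator} as
\[
\mathcal Q_c^{(\sigma)}F=\sigma Z(F-1)-\sigma(1-\sigma)\partial_\sigma F-cF .
\]

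First, I apply Leibniz to each piece. For $\sigma Z F$, $\partial_\sigma^k$ gives $\sigma Z\,\partial_\sigma^kF+kZ\,\partial_\sigma^{k-1}F$. For $-\sigma Z$, it gives $-Z$ when $k=0$, $-Z$ when $k=1$ (as a constant), and $0$ when $k\ge2$. For $-\sigma(1-\sigma)\partial_\sigma F$, the factor $\sigma(1-\sigma)$ has derivatives $1-2\sigma$, $-2$, and $0$ beyond. This gives
\[
-\sigma(1-\sigma)\partial_\sigma^{k+1}F-k(1-2\sigma)\partial_\sigma^kF+k(k-1)\partial_\sigma^{k-1}F .
\]
At $\sigma=1$ the top-order term $\partial_\sigma^{k+1}F$ drops out, which is exactly the degeneracy noted before the theorem. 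What remains is
\[
\partial_\sigma^k\mathcal Q_c^{(\sigma)}F\big|_{\sigma=1}
=(Z+k-c)F_{\cdot,k}+k(Z+k-1)F_{\cdot,k-1}-Z\,\delta_{k\le1}\cdot(\text{const}).
\]
Here $kZ+k(k-1)=k(Z+k-1)$. The $\rho(2\ell+3)F_{\ell+1}$ term contributes $\rho(2\ell+3)F_{\ell+1,k}$, since $\rho$ is held fixed.

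Second, I check the inhomogeneous constants. The terms coming from $-\sigma Z$ are identical on both sides of \eqref{eq:Mladder}, because each side carries exactly one $\mathcal Q$ applied to a series with $G_0=1$. They therefore cancel for every $k$, just as the two copies of $-Z$ cancel at $k=0$ in the proof of Theorem~\ref{thm:Mladder}. Substituting $c=2\ell+4$ on the left and $c=-2\ell-2$ on the right then yields \eqref{eq:jetladder} verbatim. The case $k=0$, with the convention $F_{\ell,-1}=0$, recovers \eqref{eq:curvecontig}.

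Third, for the triangularity claim at $\rho=0$, I solve \eqref{eq:jetladder} for $F_{\ell+2,k}$. Its coefficient $Z+k-2\ell-4$ is a nonzero element of $\mathbb Q(Z)$, so $F_{\ell+2,k}$ is expressed through $F_{\ell+2,k-1}$, $F_{\ell,k}$ and $F_{\ell,k-1}$. When $\rho=0$ the $F_{\ell+1}$ term is absent, so parity is preserved. A double induction, on $k$ and then on $\ell$ within each parity class, reduces every jet to the jets $F_{0,k}$ and $F_{1,k}$ of the base families $F_0$ and $F_1$. These are explicit: $F_0=1$, and $F_1$ comes from $g_1=1+(p+q)s+pqs^2$ rewritten via \eqref{eq:pqrhosigma}.

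The only delicate point is the bookkeeping of the constant terms from $-\sigma Z$ and the vanishing of the $\partial_\sigma^{k+1}$ term at $\sigma=1$. Everything else is routine Leibniz.
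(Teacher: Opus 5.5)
Your proposal is correct and follows the paper's proof. Like the paper, you differentiate the $\mathcal M$-completed ladder \eqref{eq:Mladder} $k$ times in $\sigma$ at fixed $\rho$, use that $\sigma(1-\sigma)$ vanishes at $\sigma=1$ and has only two nonzero derivatives, and cancel the $F$-independent terms coming from $-\sigma Z$ between the two sides. Your bookkeeping is slightly more careful than the paper's display \eqref{eq:Qjet}: that display omits the constant $-Z$ that also arises at $k=1$ from $kZ\,\partial_\sigma^{k-1}(F-1)$, which, as you note, cancels exactly as at $k=0$, so \eqref{eq:jetladder} is unaffected.
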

\begin{proof}
For $k\ge1$, Leibniz' rule applied to \eqref{eq:Qoperator} gives
\begin{equation}\label{eq:Qjet}
 \partial_\sigma^k\mathcal Q_c^{(\sigma)}F\big|_{\sigma=1}
 =(Z+k-c)F_k+k(Z+k-1)F_{k-1}:
\end{equation}
only the first two derivatives of $\sigma(1-\sigma)$ are nonzero, and
their contributions combine with the derivative of $\sigma Z(F-1)$ exactly
as displayed.  For $k=0$ the formula is
$\mathcal Q_c^{(1)}F=(Z-c)F_0-Z$; the $-Z$ terms cancel between the two
sides of \eqref{eq:Mladder}.  Differentiating \eqref{eq:Mladder} $k$ times
and using \eqref{eq:Qjet} proves \eqref{eq:jetladder}
(Code~CL verifies \eqref{eq:jetladder} in exact arithmetic for
$\ell\le1$, $k\le2$ at a rational off-curve point).
\end{proof}

The base data are explicit: from $g_0=1$,
$g_1=(1+ps)(1+qs)$ and \eqref{eq:pqrhosigma},
\begin{equation}\label{eq:jetbases}
 F_0=1,
 \qquad
 F_1=1+\frac{\rho+2}{\sigma}\,e_1
       +\frac{\rho+2}{\sigma^2}\,e_2,
\end{equation}
hence $F_{0,k}=0$ for $k\ge1$ and
\begin{equation}\label{eq:F1jets}
 F_{1,k}=(-1)^kk!\,(\rho+2)\bigl(e_1+(k+1)e_2\bigr)
 \qquad(k\ge1).
\end{equation}
Equations \eqref{eq:jetladder}--\eqref{eq:F1jets} are therefore a closed
rational algorithm for every jet, with no unspecified boundary value.

\begin{remark}[completion and remaining limitation of route (A)]
\label{rem:Mcompletionstatus}
The operator $M$ of Remark~\ref{rem:offcurve} is therefore no longer an
obstruction: it is exactly $-\sigma\partial_\sigma$, and the whole formal
normal jet of the half-odd-integer background is generated by
\eqref{eq:jetladder}.  For each fixed inverse-power order and fixed jet
order, the same polynomial-degree interpolation used in
Section~\ref{sec:transfer} transfers the result from
$A=(\ell+\tfrac12)^2$ to arbitrary diagonal $A$; equivalently, route~(A)
now supplies the coefficients $u,v,T$ of
\eqref{eq:transp}--\eqref{eq:transq} to any prescribed order in the normal
coordinate.  What remains is not existence or formal determination of the
off-curve background, but arithmetic control after the transverse solution
is propagated through it: the factors $Z+k\pm(2\ell+\mathrm{const})$ in
\eqref{eq:jetladder} create, upon inverse-power expansion, the same global
cancellations measured by $C_r$.  Thus the completion of route~(A) removes
the analytic obstruction but does not yet prove the integrality hypothesis
or the final oddness assertion.
\end{remark}

\section{Off the diagonal: the two-parameter family}\label{sec:offdiag}

The phase constants live off the ultraspherical diagonal, and the ladder
reaches there.  Take \emph{independent} half-odd-integer parameters
$\alpha=\ell+\frac12$, $\beta=m+\frac12$ at the midpoint $p=1+i$, $q=1-i$:
both Hankel series terminate and $g_{\ell,m}(s)=y_\ell(ps)y_m(qs)$
generates the coefficients of the rational function
$F_{\ell,m}(Z)=\sum_nG^{(\ell,m)}_ne_n(Z)$.  Set
$\Ct_{\ell,m}=y_{\ell-1}(ps)y_m(qs)/p+y_\ell(ps)y_{m-1}(qs)/q$ and
$\Dt_{\ell,m}=y_\ell(ps)y_{m-1}(qs)/p+y_{\ell-1}(ps)y_m(qs)/q$.

\begin{theorem}[two-parameter midpoint ladder]\label{thm:twopar}
For all $\ell,m\ge0$, at the midpoint,
\begin{align}
  &\text{\upshape(i)}\qquad
  s^2g_{\ell,m}'+\bigl(1-(\ell+m)s\bigr)g_{\ell,m}=\Ct_{\ell,m},
  \label{eq:2parS3}\\
  &\text{\upshape(ii)}\qquad
  \Ct_{\ell+1,m+1}=\Dt_{\ell,m}+2i(\ell-m)\,s\,g_{\ell,m},\qquad
  \Dt_{\ell+1,m+1}=\Ct_{\ell,m}+2(\ell+m+1)\,s\,g_{\ell,m},
  \label{eq:2parmixed}\\
  &\text{\upshape(iii)}\qquad
  F_{\ell+2,m+2}(Z)\,\bigl(Z-(\ell+m)-4\bigr)\notag\\
  &\qquad\qquad
  =F_{\ell,m}(Z)\,\bigl(Z+(\ell+m)+2\bigr)+2i(\ell-m)\,F_{\ell+1,m+1}(Z).
  \label{eq:2parcontig}
\end{align}
\end{theorem}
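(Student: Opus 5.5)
The plan is to repeat the proof of Theorem~\ref{thm:genladder} with the two Bessel factors carrying \emph{different} indices, and then to transcribe the resulting identity into the inverse-factorial basis exactly as in Proposition~\ref{prop:Frec} and Theorem~\ref{thm:curvecontig}. Put $U=y_\ell(ps)$, $U_\pm=y_{\ell\pm1}(ps)$, $W=y_m(qs)$, $W_\pm=y_{m\pm1}(qs)$, so that $g_{\ell,m}=UW$. I would first check that Lemma~\ref{lem:bessel} also holds at index $0$ under the convention $y_{-1}=1$. For (a) this reads $0=-1+y_{-1}$, and for (b) it reads $y_1=x+y_{-1}$. This lets $\ell=0$ or $m=0$ enter the argument without separate treatment.

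For (i), I apply Lemma~\ref{lem:bessel}(a) at $x=ps$ with index $\ell$ and at $x=qs$ with index $m$:
\[
s^2g'=\bigl[(\ell ps-1)U+U_-\bigr]W/p+U\bigl[(mqs-1)W+W_-\bigr]/q=\bigl((\ell+m)s-\sigma\bigr)g+\Ct_{\ell,m}.
\]
At the midpoint $\sigma=\frac1p+\frac1q=1$ by \eqref{eq:pq}.

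For (ii), I substitute $U_+=(2\ell+1)psU+U_-$ and $W_+=(2m+1)qsW+W_-$ from Lemma~\ref{lem:bessel}(b) into
\[
\Ct_{\ell+1,m+1}=UW_+/p+U_+W/q .
\]
This gives $\Dt_{\ell,m}+s\,g\,\bigl[(2m+1)\tfrac qp+(2\ell+1)\tfrac pq\bigr]$. The only genuinely new input is the midpoint evaluation $p/q=i$, $q/p=-i$, which turns the bracket into $2i(\ell-m)$. In the same way, $\Dt_{\ell+1,m+1}=U_+W/p+UW_+/q$ yields $\Ct_{\ell,m}+\bigl[(2\ell+1)+(2m+1)\bigr]sg$. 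Composing the two identities of (ii) then gives
\[
\Ct_{\ell+2,m+2}=\Ct_{\ell,m}+2(\ell+m+1)s\,g_{\ell,m}+2i(\ell-m)s\,g_{\ell+1,m+1}.
\]

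For (iii), I read (i) coefficientwise at $s^{n+1}$ for each $n\ge0$, which gives $G_{n+1}+(n-\ell-m)G_n=[s^{n+1}]\Ct_{\ell,m}$. The $n=0$ case is the constant-term check $G_1-(\ell+m)G_0$. Applying this at $(\ell+2,m+2)$ and at $(\ell,m)$ and eliminating $\Ct$ by the composed identity gives
\[
G^{(\ell+2,m+2)}_{n+1}+(n-\ell-m-4)G^{(\ell+2,m+2)}_n
=G^{(\ell,m)}_{n+1}+(n+\ell+m+2)G^{(\ell,m)}_n+2i(\ell-m)G^{(\ell+1,m+1)}_n .
\]
Lemma~\ref{lem:basis}, in the form $(Z-c)F=Z-c+G_1+\sum_{n\ge1}(G_{n+1}+(n-c)G_n)e_n$, converts this into \eqref{eq:2parcontig}. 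The $Z$ terms cancel because every $G_0=1$. The extra term $\sum_n G^{(\ell+1,m+1)}_ne_n$ is exactly $F_{\ell+1,m+1}$, including its constant term $1$, which matches the $e_0$ coefficient.

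I expect the main obstacle to be bookkeeping rather than any conceptual step. Two things need care. First, the boundary indices, where the $y_{-1}=1$ convention has to be shown consistent with both recurrences. Second, the $e_0$/constant-term matching, where the inhomogeneous term must carry its full constant $G^{(\ell+1,m+1)}_0=1$. I would verify both explicitly, and check the case $\ell=m=0$ directly, where the identity reduces to $F_{2,2}(Z-4)=F_{0,0}(Z+2)$.
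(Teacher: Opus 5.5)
Your proposal is correct and follows the paper's own proof essentially step for step. It uses the same two Bessel recurrences with distinct indices $\ell,m$, the midpoint values $p/q=i$, $q/p=-i$ that turn the bracket into $2i(\ell-m)$, the composition of the two mixed identities, and the translation to the basis $e_n$ as in Theorem~\ref{thm:curvecontig} with $\ell+m$ in place of $2\ell$. The differences are cosmetic: you get the $e_0$ (constant-term) matching from the $n=0$ coefficient of (i) rather than from the explicit value $G^{(\ell,m)}_1=\frac{\ell(\ell+1)}2p+\frac{m(m+1)}2q$ used in the paper, and you check explicitly that Lemma~\ref{lem:bessel} extends to index $0$ under $y_{-1}=1$, which the paper uses tacitly.
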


\begin{proof}
(i) is the computation of Theorem~\ref{thm:genladder}(i) with the two
orders kept distinct, using $\sigma=1$.  For (ii), with $U=y_\ell(ps)$,
$W=y_m(qs)$,
\[
  \Ct_{\ell+1,m+1}=\frac{UW_+}{p}+\frac{U_+W}{q}
  =s\Bigl[(2m+1)\frac qp+(2\ell+1)\frac pq\Bigr]g_{\ell,m}+\Dt_{\ell,m},
\]
and at the midpoint $q/p=-i$, $p/q=i$, so the bracket is $2i(\ell-m)$; the
second identity has bracket $(2\ell+1)+(2m+1)$.  Composing gives
$\Ct_{\ell+2,m+2}=\Ct_{\ell,m}+2(\ell+m+1)sg_{\ell,m}
+2i(\ell-m)sg_{\ell+1,m+1}$, and the translation of
Theorem~\ref{thm:curvecontig}, with $\ell+m$ in place of $2\ell$, yields
\eqref{eq:2parcontig}; the constants match because
$G^{(\ell,m)}_1=\frac{\ell(\ell+1)}2p+\frac{m(m+1)}2q$ gives
$G^{(\ell+2,m+2)}_1-G^{(\ell,m)}_1
=\bigl[(\ell+m)+2\bigr]+\bigl[(\ell+m)+4\bigr]+2i(\ell-m)$.
\end{proof}

On the diagonal $\ell=m$ the inhomogeneity vanishes and
\eqref{eq:2parcontig} is Proposition~\ref{prop:Frec}.  Off the diagonal the
transverse direction enters as the explicit term $2i(\ell-m)$:
antisymmetric, purely imaginary, of constant coefficient along the diagonal
step.  Together with the two boundary strips $m=0$ and $m=1$, the recurrence determines
every $F_{\ell,m}$ by induction on $\min(\ell,m)$, using the midpoint
conjugation symmetry
\[
  F_{\ell,m}(Z)=\overline{F_{m,\ell}(\overline Z)}
\]
(and hence $F_{\ell,m}(Z)=\overline{F_{m,\ell}(Z)}$ for real $Z$) whenever
the smaller of the two indices is $\ell$ rather than $m$.  More explicitly,
if $m\ge2$, equation~\eqref{eq:2parcontig} expresses $F_{\ell,m}$ in terms
of $F_{\ell-2,m-2}$ and $F_{\ell-1,m-1}$ after shifting the two indices down;
repeating reaches $m=0$ or $m=1$.  If $\ell<m$ one first interchanges the
indices by the displayed conjugation symmetry and applies the same argument.
The strip $m=0$ is the terminating Gauss family of Lemma~\ref{lem:KdF}; the
strip $m=1$ is equally explicit from $y_1(qs)=1+qs$ and the finite defining
sum.  Thus no unproved initial value is hidden in the two-step diagonal
recurrence (Code~CC checks the resulting identities on its configured grid).

\begin{lemma}[one-sided family; Kamp\'e de F\'eriet form]\label{lem:KdF}
For every $\delta\ge0$,
$F_{\delta,0}(Z)=\Fhyp\bigl(-\delta,\delta+1;1-Z;x_0\bigr)$ with
$x_0=\frac{1+i}2$.  More generally, for all $\alpha,\beta$, as formal
series,
\begin{equation}\label{eq:KdF}
  F(\alpha,\beta;Z)=\sum_{\mu,\nu\ge0}
  \frac{(\tfrac12-\alpha)_\mu(\tfrac12+\alpha)_\mu
        (\tfrac12-\beta)_\nu(\tfrac12+\beta)_\nu}
       {(1-Z)_{\mu+\nu}\;\mu!\,\nu!}\;x_0^{\,\mu}y_0^{\,\nu},
\end{equation}
a Kamp\'e de F\'eriet function $F^{0:2;2}_{1:0;0}$ at arguments with
$x_0+y_0=1$, $x_0y_0=\frac12$.
\end{lemma}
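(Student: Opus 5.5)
The plan is to rewrite the inverse-factorial series \eqref{eq:F} term by term, then specialise. Everything is a formal rearrangement of finite sums at each order, so convergence never enters.

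\emph{Step 1: rewrite the basis.} First I would express the basis of Lemma~\ref{lem:basis} in Pochhammer form. Since $(Z-1)(Z-2)\cdots(Z-n)=(-1)^n(1-Z)(2-Z)\cdots(n-Z)$, we have
\[
e_n(Z)=\frac{(-1)^n}{(1-Z)_n}.
\]

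\emph{Step 2: expand the Hankel coefficients.} Next I would substitute the Pochhammer form of the Hankel coefficients, $a_k(\nu)=(\tfrac12+\nu)_k(\tfrac12-\nu)_k/\bigl((-2)^kk!\bigr)$, into $G_n=\sum_{\mu+\nu=n}a_\mu(\alpha)a_\nu(\beta)p^\mu q^\nu$. In each summand the signs combine as $(-2)^{-\mu}(-2)^{-\nu}(-1)^n=2^{-n}=2^{-\mu}2^{-\nu}$, using $\mu+\nu=n$. Hence
\[
G_ne_n=\sum_{\mu+\nu=n}\frac{(\tfrac12-\alpha)_\mu(\tfrac12+\alpha)_\mu(\tfrac12-\beta)_\nu(\tfrac12+\beta)_\nu}{(1-Z)_{\mu+\nu}\,\mu!\,\nu!}\Bigl(\frac p2\Bigr)^{\mu}\Bigl(\frac q2\Bigr)^{\nu}.
\]

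\emph{Step 3: assemble the double series.} Summing over $n$ and reindexing the pairs $(\mu,\nu)$ freely gives \eqref{eq:KdF} with $x_0=p/2=\frac{1+i}2$ and $y_0=q/2=\frac{1-i}2$. The reindexing is legitimate because, at each fixed order in $Z^{-1}$, only the finitely many terms with $\mu+\nu\le$ that order contribute, as $e_n=O(Z^{-n})$. The stated relations $x_0+y_0=1$ and $x_0y_0=\frac12$ are just $p+q=2$ and $pq=2$ from \eqref{eq:pq}. The shape of the coefficient, with one joint lower parameter $(1-Z)_{\mu+\nu}$ and two separate pairs of upper Pochhammers, is by definition the Kamp\'e de F\'eriet function $F^{0:2;2}_{1:0;0}$.

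\emph{Step 4: the one-sided family.} Here I take $\beta=\frac12$, i.e.\ $m=0$. Then $(\tfrac12-\beta)_\nu=(0)_\nu$ vanishes for every $\nu\ge1$, so only $\nu=0$ survives. Setting $\alpha=\delta+\frac12$ gives $(\tfrac12-\alpha)_\mu=(-\delta)_\mu$ and $(\tfrac12+\alpha)_\mu=(\delta+1)_\mu$. The series therefore collapses to
\[
\sum_\mu\frac{(-\delta)_\mu(\delta+1)_\mu}{(1-Z)_\mu\,\mu!}\,x_0^\mu=\Fhyp(-\delta,\delta+1;1-Z;x_0),
\]
which terminates at $\mu=\delta$, consistently with Lemma~\ref{lem:hankelhalf}. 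It is a rational function of $Z$, so the identity holds exactly rather than merely formally.

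There is no genuine obstacle in this argument. The only place where care is needed is the sign bookkeeping in Step~2, namely the three factors $(-1)^n$, $(-2)^{-\mu}$ and $(-2)^{-\nu}$. Getting these right is exactly what makes the arguments come out as $p/2$ and $q/2$ rather than $-p/2$ and $-q/2$.
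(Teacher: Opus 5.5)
Your proposal is correct and takes the same approach as the paper: write $e_n(Z)=(-1)^n/(1-Z)_n$, absorb the Hankel signs term by term so that $a_\mu(\alpha)p^\mu(-1)^\mu=(\tfrac12-\alpha)_\mu(\tfrac12+\alpha)_\mu x_0^\mu/\mu!$, then specialise to $\alpha=\delta+\tfrac12$, $\beta=\tfrac12$, where only $\nu=0$ survives and the $\mu$-series terminates. Your sign bookkeeping is right, and so is your remark that the regrouping involves only finitely many terms at each order in $Z^{-1}$.
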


\begin{proof}
$e_n(Z)=(-1)^n/(1-Z)_n$ and
$a_\mu(\alpha)(2x_0)^\mu(-1)^\mu
=(\tfrac12-\alpha)_\mu(\tfrac12+\alpha)_\mu x_0^\mu/\mu!$, term by term in
\eqref{eq:F}.  At $\alpha=\delta+\frac12$, $\beta=\frac12$, only $\nu=0$
survives and the $\mu$-series terminates (Code~CCb).
\end{proof}

\subsection{\texorpdfstring{The Appell $F_3$ specialization on the physical curve}{The Appell F3 specialization on the physical curve}}\label{sec:appell}

\paragraph{Branch convention.}  All multivalued functions in this and
the following subsections are continued from a common germ at the
origin of the physical path: the Appell/Kamp\'e de F\'eriet series at
$(x,y)$ small, the reduced Gauss function through the argument path
$4x(t)(1-x(t))=2t+2it(1-t)$, which stays in the \emph{upper} half-plane
and reaches $2+i0$; every boundary value written
${}_2F_1^{\uparrow}(\cdots;2)$, $\log(1-z)\to-i\pi$, $2^+$ or $-1-i0$
is the one obtained along this path, and the lower determinations
(denoted $\downarrow$) are their Schwarz reflections.  Replacing the
upper boundary value by the lower one changes the functions; no identity
below mixes the two determinations except where both appear explicitly,
as in Appendix~\ref{app:sub-relation}.

The Kamp\'e de F\'eriet notation in Lemma~\ref{lem:KdF} hides an additional
one-variable structure.  We make it explicit because it removes one of the
two Horn variables before any asymptotic or $2$-adic argument is made.
For independent variables $x,y$ write
\[
 \mathcal F(\alpha,\beta;Z;x,y)
 :=F_3\!\left(\begin{matrix}
  \frac12-\alpha,\frac12-\beta;
  \frac12+\alpha,\frac12+\beta\\[1mm]
  1-Z
 \end{matrix}\middle|x,y\right),
\]
where the convention is that the first and third upper parameters belong to
$x$, and the second and fourth to $y$.  Thus \eqref{eq:KdF} is precisely
$F(\alpha,\beta;Z)=\mathcal F(\alpha,\beta;Z;x_0,y_0)$.

\begin{theorem}[Appell reduction on the physical curve]\label{thm:appellreduce}
Put $c=1-Z$, $b_1=\frac12+\alpha$, $b_2=\frac12+\beta$.  On
\[
       y=\frac{x}{2x-1},
       \qquad\text{equivalently}\qquad x+y=2xy,
\]
one has, for generic parameters and then by meromorphic continuation,
\begin{align}
 \mathcal F\!\left(\alpha,\beta;Z;x,\frac{x}{2x-1}\right)
  ={}&(1-x)^{c-1}(1-2x)^{b_2}\notag\\
 &\times {}_2F_1\!\left(
  \frac{b_2-b_1+c}{2},
  \frac{b_1+b_2+c-1}{2};c;4x(1-x)\right).
 \label{eq:appellreduce}
\end{align}
The branches are those obtained by analytic continuation from a neighbourhood
of $x=0$ along the chosen path.
\end{theorem}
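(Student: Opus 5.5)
We will prove \eqref{eq:appellreduce} by reducing $F_3$ along the curve to a one-variable series and matching it with a classical quadratic-type transformation. The first step uses the standard reduction of Appell's $F_3$ with a common denominator parameter: summing over the second index at fixed total degree $N=\mu+\nu$ gives
\[
 F_3(a_1,a_2;b_1,b_2;c;x,y)=\sum_{N\ge0}\frac{1}{(c)_N\,N!}\sum_{\mu+\nu=N}\binom N\mu (a_1)_\mu(b_1)_\mu(a_2)_\nu(b_2)_\nu\,x^\mu y^\nu .
\]
Equivalently, we can use the Euler-type integral or the known identity
\[
 F_3(a_1,a_2;b_1,b_2;c;x,y)=\sum_{k\ge0}\frac{(a_1)_k(b_1)_k x^k}{(c)_k k!}\,{}_2F_1\!\bigl(a_2,b_2;c+k;y\bigr).
\]
Here $a_1=\frac12-\alpha=1-b_1$ and $a_2=1-b_2$, so each inner Gauss function is ${}_2F_1(1-b_2,b_2;c+k;y)$. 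We will apply Pfaff's transformation to it, producing the factor $(1-y)^{-b_2}$ and argument $y/(y-1)$. On the curve $y=x/(2x-1)$ we have $1-y=(x-1)/(2x-1)$ and $y/(y-1)=x/(1-x)$, so the $y$-variable is traded for a function of $x$ alone.

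Rather than pursue the double series directly, the cleaner route is to verify \eqref{eq:appellreduce} as an identity of power series in $x$ at fixed generic parameters through the ordinary differential equation. First, expand the left side in powers of $x$, using $y=-x(1-2x)^{-1}=-x\sum_j(2x)^j$. Then show that $H(x):=(1-x)^{1-c}(1-2x)^{-b_2}\,\mathcal F(\ldots;x,x/(2x-1))$ satisfies the Gauss equation in the variable $w=4x(1-x)$ with parameters $(\frac{b_2-b_1+c}2,\frac{b_1+b_2+c-1}2;c)$. The Appell $F_3$ system consists of two second-order PDEs in $(x,y)$. Restricting to the curve, we would form the combination obtained by applying the total derivative $d/dx=\partial_x+y'(x)\partial_y$ twice. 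The mixed derivatives must then be eliminated using the third equation, $(x-y)\partial_{xy}F=\ldots$ (the $F_3$ system's consequence), which is where the specific curve $x+y=2xy$ is needed.

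The alternative, and the one we would try first because it is finite and checkable, is a coefficient identity. Both sides are analytic at $x=0$ with constant term $1$. Since $w=4x(1-x)$ is a local coordinate at $x=0$, it suffices to show that the coefficient of $x^N$ agrees for all $N$. That coefficient is a rational function of $(b_1,b_2,c)$ with denominator dividing $(c)_N$, so we can reduce to a terminating case. For $b_1=\delta+1$ (that is, $a_1=-\delta$), both sides become polynomial in the $b_1$-direction. The case $b_2=1$ (giving $a_2=0$ and $\mathcal F={}_2F_1(-\delta,\delta+1;c;x)$) is then a classical quadratic transformation, ${}_2F_1(a,1-a;c;x)=(1-x)^{c-1}{}_2F_1(\frac{c-a}2,\frac{c+a-1}2;c;4x(1-x))$, which is DLMF 15.8.\ case with $b=1-a$. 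This matches \eqref{eq:appellreduce} at $b_2=1$ because the factor $(1-2x)^{b_2}$ then combines correctly. The general $b_2$ would then follow from a polynomial-identity argument in $b_1$ and $b_2$, using infinitely many integer specialisations of each parameter together with the degree bound of the $x^N$ coefficient in those parameters.

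The main obstacle is exactly this extension in $b_2$ off the value $b_2=1$. The $b_2$-dependence sits both in the prefactor $(1-2x)^{b_2}$ and in the Gauss parameters, so we need a second family of exactly computable cases. We would use $b_2=m+1$, which gives the terminating $y$-series. There the inner ${}_2F_1(-m,m+1;c+k;y)$ is a Jacobi polynomial, and the double sum can be resummed by Vandermonde. If that resummation becomes unwieldy, we fall back on the PDE route: derive the contiguity in $b_2$ of both sides (from the $F_3$ contiguities and the Gauss contiguities respectively) and check the base case. Finally, meromorphic continuation in $(b_1,b_2,c)$ removes genericity, and continuation along the stated upper path fixes the branches as specified by the branch convention.
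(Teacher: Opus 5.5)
There is a genuine gap: everything beyond the single value $b_2=1$ is missing.

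Your scaffolding is sound. The coefficient of $x^N$ on either side of \eqref{eq:appellreduce} is, up to the factor $1/(c)_N$, a polynomial of degree at most $2N$ in each of $b_1,b_2$, so agreement for infinitely many values of $b_2$ would suffice. The case $b_2=1$ is the classical ${}_2F_1(a,1-a;c;x)=(1-x)^{c-1}{}_2F_1(\frac{c-a}2,\frac{c+a-1}2;c;4x(1-x))$ with $a=1-b_1$, valid for all $b_1$. You do need to add Euler's transformation to match it: at $b_2=1$ the Gauss function in \eqref{eq:appellreduce} has excess $c-A-B=-\tfrac12$, and $(1-4x(1-x))^{-1/2}=(1-2x)^{-1}$ cancels the prefactor $1-2x$. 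But one value of $b_2$ is useless for interpolation. For $b_2=m+1$ you only assert that the double sum \emph{can be resummed by Vandermonde}, and this does not hold up. Already for $m=1$ the left side is ${}_2F_1(1-b_1,b_1;c;x)-\frac{2y}{c}\,{}_2F_1(1-b_1,b_1;c+1;x)$. Matching it with \eqref{eq:appellreduce} takes a quadratic and an Euler transformation of each term, followed by a contiguous relation in $4x(1-x)$; no summation theorem does it. For general $m$ you would need an induction in $m$, which is precisely the $b_2$-contiguity you leave as the fallback.

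That fallback does not close either. With $a_2=1-b_2$ tied to $b_2$, the contiguity operators of $F_3$ in $b_2$ are rational in $\Theta_y=y\partial_y$. This is a derivative transverse to the curve, and the restricted function does not determine it. You can turn these operators into relations along $y=x/(2x-1)$ only if you already know the restriction satisfies a second-order equation. The PDE route has the same hole. The \emph{third equation} $(x-y)\partial_x\partial_yF=\cdots$ you invoke belongs to the rank-three $F_1$ system. The $F_3$ system has rank four and no further second-order equation, so restriction to a generic curve gives a fourth-order ODE. That on $x+y=2xy$ it collapses to the Gauss equation, with the displayed parameters and the displayed solution, is the whole content of the theorem.

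The paper does not rederive this. It identifies \eqref{eq:appellreduce} with the fourth pair of Theorem~3.1 of Vid\=unas~\cite{VidunasAppell}. It uses his remark that for that pair the two functions coincide as germs at $x=0$, not merely as solutions of one ODE, and then continues analytically. To make your route a proof you must either cite that result or genuinely establish the identity for infinitely many $b_2$. One way is to finish the Pfaff computation you began. It correctly produces $(1-y)^{-b_2}=(1-2x)^{b_2}(1-x)^{-b_2}$ and the argument $x/(1-x)$, but it leaves a coupled double series still to be summed.
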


\begin{proof}
The four upper parameters of our $F_3$ are
$1-b_1,1-b_2;b_1,b_2$.  The fourth pair in Theorem~3.1 of
Vid\=unas~\cite{VidunasAppell} is therefore exactly the pair displayed in
\eqref{eq:appellreduce}.  Immediately after that theorem, Vid\=unas states
that in the first and fourth pairs the two functions are directly identical
in a neighbourhood of $x=0$ (not merely solutions of the same second-order
ODE).  Hence \eqref{eq:appellreduce} holds on the common germ at $x=0$.
The relation $y=x/(2x-1)$ is the physical-curve identity already used in
Section~\ref{sec:curve}.  Therefore the two sides represent the same analytic
germ near $x=0$.  Analytic continuation of this common germ along any path
avoiding the singular locus gives the stated identity.  Notice that this is
an identity for fixed generic $Z$ (and then meromorphically in the
parameters); no passage to the large-$Z$ limit is used in establishing it.
\end{proof}

At the midpoint,
\[
 x_0=\frac{1+i}{2},\qquad y_0=\frac{1-i}{2},\qquad
 y_0=\frac{x_0}{2x_0-1},\qquad 4x_0(1-x_0)=2.
\]
For the straight path $x(t)=t x_0$, $0<t<1$, one has
\[
 y(t)=\frac{x(t)}{2x(t)-1},\qquad
 4x(t)(1-x(t))=2t+2it(1-t).
\]
Thus the transformed Gauss argument stays strictly in the upper half-plane
for $0<t<1$ and tends to $2+i0$ as $t\uparrow1$.  In addition,
$|x(t)|=t/\sqrt2<1$ and
\[
 |y(t)|^2=\frac{t^2}{2(2t^2-2t+1)}\le\frac12,
\]
so the defining Appell double series itself stays inside its bidisc of
absolute convergence along the whole path.  The path has $x,y\ne0$, meets
neither $x=1$ nor $y=1$, and the additional finite Appell singular relation
$xy=x+y$, together with $x+y=2xy$, would force $xy=0$.  Hence continuation
may start at any sufficiently small $t>0$ inside the common germ and reaches
the midpoint without meeting the finite singular locus.  The only
nontrivial continuation issue in the reduced expression is therefore the
Gauss boundary value at its standard cut.  If
${}_2F_1^{\uparrow}(a,b;c;2)$ denotes this upper boundary value, then
analytic continuation of the common germ gives
\begin{equation}\label{eq:midpoint2F1}
 F(\alpha,\beta;Z)
 =y_0^{-Z}(-i)^{\frac12+\beta}
 {}_2F_1^{\uparrow}\!\left(
 \frac{1-Z+\beta-\alpha}{2},
 \frac{1-Z+\alpha+\beta}{2};1-Z;2\right).
\end{equation}
This branch convention is essential: the two boundary values on the cut
$[1,\infty)$ need not agree.

\begin{corollary}[one-variable normal form for the Legendre tangent]
\label{cor:gausstangent}
Set $s=(1-Z)/2$ and
\[
 \mathcal H(\alpha;Z)
 ={}_2F_1^{\uparrow}\!\left(s-\frac\alpha2,
                                  s+\frac\alpha2;2s;2\right).
\]
Then the inverse-power expansion at $Z=\infty$ inherited coefficientwise
from the defining inverse-factorial series for $F$ satisfies
\begin{equation}\label{eq:gausstangent}
 \sum_{n\ge1}g_n Z^{-n}
 =\frac12\left.\frac{\partial^2}{\partial\alpha^2}
       \log\mathcal H(\alpha;Z)\right|_{\alpha=0}
 =\frac12\,\frac{\partial_\alpha^2\mathcal H(0;Z)}{\mathcal H(0;Z)}.
\end{equation}
The last expression is independent of a choice of logarithm.  In particular
the two numerator parameters in \eqref{eq:gausstangent} sum to the
denominator parameter: the remaining transverse problem is a parameter
variation of a single \emph{zero-balanced} Gauss function.
\end{corollary}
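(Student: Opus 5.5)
The plan is to specialise the midpoint reduction \eqref{eq:midpoint2F1} to the Legendre value $\beta=0$. Then transfer the resulting exact identity, valid for fixed $Z$, to the coefficientwise inverse-power statement. I read the index $A$ in $g_n$ as the variable $\alpha^2$ of the first parameter, with the Legendre point at $\alpha=\beta=0$; this is the reading under which \eqref{eq:gausstangent} is consistent. With $\beta=0$ and $1-Z=2s$, \eqref{eq:midpoint2F1} reads
\[
F(\alpha,0;Z)=y_0^{-Z}(-i)^{1/2}\,\mathcal H(\alpha;Z).
\]
The prefactor $y_0^{-Z}(-i)^{1/2}$ does not depend on $\alpha$. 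Hence, for every fixed admissible $Z$ and every $\alpha$ near $0$, the ratios $\partial_\alpha^k F/F$ and $\partial_\alpha^k\mathcal H/\mathcal H$ coincide. Since $\partial_\alpha F(0,0;Z)=0$, this gives
\[
\tfrac12\,\partial_\alpha^2\log F\big|_{\alpha=0}=\tfrac12\,\partial_\alpha^2\mathcal H(0;Z)/\mathcal H(0;Z),
\]
and no logarithm branch enters.

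The second step converts the $\alpha$-derivative into the $A$-derivative. Each Hankel coefficient $a_k(\alpha)$ is a polynomial in $A=\alpha^2$, so $F$ is even in $\alpha$; the same holds for $\mathcal H$, whose two numerator parameters are exchanged by $\alpha\mapsto-\alpha$. For a function of the form $f(\alpha^2)$ one has $\tfrac12\partial_\alpha^2 f|_{\alpha=0}=f'(0)$. Therefore $\tfrac12\partial_\alpha^2\log F|_{\alpha=0}=\partial_A\log F|_{A=0}$, at $B=0$ and at the midpoint. Coefficientwise in $Z^{-1}$ this is $\sum_n g_nZ^{-n}$, by the definition of $g_n$ and of $L_n$ through $\log F=\sum L_nZ^{-n}$.

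The step I expect to be the main obstacle is the legitimacy of moving between the analytic identity at fixed $Z$ and the formal $Z^{-1}$-expansion. First I will check that the double series \eqref{eq:KdF} converges absolutely at $(x_0,y_0)$ for $Z\notin\mathbb Z_{>0}$, locally uniformly in $\alpha$. The Pochhammer ratio is bounded by $\mu!\,\nu!/(\mu+\nu)!\le1$ times polynomial factors, and $|x_0|=|y_0|=2^{-1/2}<1$. Grouping by $n=\mu+\nu$ then identifies the inverse-factorial series with the analytic function $F$, entire in $\alpha$.

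To be safe, I will present \eqref{eq:gausstangent} as an identity whose right-hand side \emph{defines} its expansion through $F$, as the statement's phrase ``inherited coefficientwise'' indicates. Concretely, $\partial_\alpha^2\mathcal H/\mathcal H$ equals $\partial_\alpha^2F/F$ exactly, and the latter is expanded term by term using $\partial_\alpha^2$ of the polynomial coefficients $G_n$ together with the Stirling expansion of $e_n$ from Lemma~\ref{lem:degree}. In this form no sectorial asymptotics for the Gauss function are needed. If a genuinely analytic statement is wanted, I would try uniformity of the inverse-factorial expansion on a complex $\alpha$-disc and Cauchy estimates to commute $\partial_\alpha^2$ with the expansion.

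Finally, zero-balance is immediate: $(s-\alpha/2)+(s+\alpha/2)=2s$, which is the denominator parameter.
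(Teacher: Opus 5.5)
Your proposal is correct and follows the paper's own argument. You specialise \eqref{eq:midpoint2F1} to $\beta=0$ and use that the prefactor does not depend on $\alpha$. Evenness then gives $\partial_A|_{A=0}=\frac12\partial_\alpha^2|_{\alpha=0}$ together with $\partial_\alpha\mathcal H(0;Z)=0$. Finally, because the identity is exact at fixed $Z$, the coefficients are read off from the left-hand inverse-factorial series, so no $Z^{-1}$ convergence is needed.

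You also check that \eqref{eq:KdF} converges absolutely at $(x_0,y_0)$. The paper's proof of this corollary does not do this at this point, but the same estimate is supplied later in Proposition~\ref{prop:bilateral}(i). It is a welcome addition, since it is what makes $F$ a genuine function of $(\alpha,Z)$.
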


\begin{proof}
Take $\beta=0$ in \eqref{eq:midpoint2F1}.  Its prefactor is independent of
$\alpha$.  Since $A=\alpha^2$ and the left-hand side is even in $\alpha$,
$\partial_A|_{A=0}=\frac12\partial_\alpha^2|_{\alpha=0}$.  The function
$\mathcal H$ is itself even in $\alpha$, so $\partial_\alpha\mathcal H(0;Z)=0$;
therefore the second logarithmic derivative is
$\partial_\alpha^2\mathcal H(0;Z)/\mathcal H(0;Z)$.  Finally, $F=1+O(Z^{-1})$ coefficientwise, so its formal logarithm at
$Z=\infty$ is uniquely defined.  Since \eqref{eq:midpoint2F1} is an exact
identity for generic fixed $Z$, the formal inverse-power expansion used in
this paper may be taken on either side; the coefficients inherited from the
left-hand inverse-factorial series are therefore exactly those in
\eqref{eq:gausstangent}.  No assertion about convergence of the resulting
$Z^{-1}$ series is needed.
\end{proof}

There is also a useful self-contained differential form of the same
reduction.  With $A=\alpha^2$, $B=\beta^2$ and
$\Theta_x=x\partial_x$, $\Theta_y=y\partial_y$, the defining coefficients
of $F_3$ give
\begin{align}
 \mathcal L_x^{(A)}\mathcal F&=0,&
 \mathcal L_x^{(A)}&=
 \Theta_x(\Theta_x+\Theta_y-Z)
 -x\bigl((\Theta_x+\tfrac12)^2-A\bigr),\label{eq:appellPDE-x}\\
 \mathcal L_y^{(B)}\mathcal F&=0,&
 \mathcal L_y^{(B)}&=
 \Theta_y(\Theta_x+\Theta_y-Z)
 -y\bigl((\Theta_y+\tfrac12)^2-B\bigr).\label{eq:appellPDE-y}
\end{align}
Thus the parameters enter the Appell system \emph{affinely}.  Put
$C=(A+B)/2$, $D=(A-B)/2$, let
$F_0=\mathcal F|_{D=0}$ and $R=\partial_D\mathcal F|_{D=0}$.  Differentiating
\eqref{eq:appellPDE-x}--\eqref{eq:appellPDE-y} yields the forced amplitude
system
\begin{equation}\label{eq:forcedAppell}
  \mathcal L_x^{(C)}R=-xF_0,
  \qquad
  \mathcal L_y^{(C)}R=+yF_0,
  \qquad R(y,x)=-R(x,y).
\end{equation}
At the conjugate midpoint $y_0=\overline{x_0}$, coefficientwise conjugation
and exchange of the two Appell variables give
$\overline{\mathcal F(C,D;x_0,y_0)}=\mathcal F(C,-D;x_0,y_0)$ for real
$C,D$.  Consequently, at $D=0$, $F_0$ and $\partial_CF_0$ are real whereas
$R=\partial_D\mathcal F|_{D=0}$ is purely imaginary.  At the Legendre point
$C=0$, since $\partial_A=\tfrac12(\partial_C+\partial_D)$, the numerator of
\eqref{eq:NumDen} obeys
\begin{equation}\label{eq:NumR}
  \operatorname{Im}\operatorname{Num}=\frac12\operatorname{Im}R,
  \qquad \operatorname{Den}=F_0.
\end{equation}
Equations \eqref{eq:gausstangent} and \eqref{eq:forcedAppell} are equivalent
ways of removing structure from the open problem: the first removes a Horn
variable on the physical curve, while the second replaces the logarithmic
linearisation by an affine forced system before division by the background.
Code~CJ verifies the coefficient recurrences in
\eqref{eq:appellPDE-x}--\eqref{eq:forcedAppell} exactly and independently
checks \eqref{eq:midpoint2F1} at high precision against the defining double
series, with the $2+i0$ branch fixed as above.

\begin{remark}[interpolation nodes]\label{rem:nodes}
$\kappa_r$ is a polynomial in $(A,B)$; its values on the grid
$A_\ell=(\ell+\frac12)^2$, $B_m=(m+\frac12)^2$ are carried by the rational
functions $F_{\ell,m}$, and the node differences
$A_\ell-A_m=(\ell-m)(\ell+m+1)$ are integers.  The ladder
\eqref{eq:2parcontig} therefore feeds a Lagrange extraction of
$[A^1B^0]\kappa_r$ with $2$-adically controlled weights; we record this as a
concrete route.
\end{remark}

\subsection{The boundary split and the transverse quadrature}
\label{sec:split}

Corollary~\ref{cor:gausstangent} places the open tangent inside the
one-parameter boundary family $\mathcal H(\alpha;Z)$.  We now split this
boundary value into its two classical constituents and solve the transverse
variation by quadratures.  Throughout, $s=(1-Z)/2$,
$a=s-\frac\alpha2$, $b=s+\frac\alpha2$, and $Z>1$ is real, so $s<0$; the
degenerate lattice $2s\in\mathbb Z_{\le0}$ is excluded.

\begin{lemma}[quadratic evaluation of the Legendre boundary value]
\label{lem:quadeval}
\[
 \mathcal H(0;Z)={}_2F_1^{\uparrow}(s,s;2s;2)
 =e^{i\pi s/2}\,\frac{\sqrt\pi\,\Gamma(s+\frac12)}{\Gamma(\frac{s+1}2)^2}.
\]
In particular
$\operatorname{Im}\mathcal H(0;Z)/\operatorname{Re}\mathcal H(0;Z)
=\tan\frac{\pi s}2$.
\end{lemma}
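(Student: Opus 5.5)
The plan is to evaluate the boundary value through the quadratic transformation adapted to the denominator $2b$:
\[
 {}_2F_1(a,b;2b;z)=(1-z)^{-a/2}\,{}_2F_1\!\Bigl(\tfrac a2,\;b-\tfrac a2;\;b+\tfrac12;\;\frac{z^2}{4z-4}\Bigr).
\]
This is a classical identity, valid as an equality of germs near $z=0$, where both sides are holomorphic and the new argument $w=z^2/(4(z-1))$ is small. With $a=b=s$ it becomes
\[
 {}_2F_1(s,s;2s;z)=(1-z)^{-s/2}\,{}_2F_1\!\bigl(\tfrac s2,\tfrac s2;s+\tfrac12;w\bigr).
\]
At $z=2$ the new argument is exactly $w=1$. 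Moreover $w-1=(z-2)^2/(4(z-1))$, so the point $z=2$, which is singular for the original function, maps to the boundary point $w=1$ of the new function.

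The key steps, in order, are as follows.
\begin{enumerate}
\item[(i)] Continue the identity along the physical path $z(t)=2t+2it(1-t)$ of the branch convention. The left side is the upper determination ${}_2F_1^{\uparrow}$. On the right, $1-z(t)$ runs in the lower half-plane and tends to $-1-i0$. Hence $\log(1-z)\to-i\pi$ and $(1-z)^{-s/2}\to e^{i\pi s/2}$, which is the prefactor in the statement.
\item[(ii)] Show that ${}_2F_1(\frac s2,\frac s2;s+\frac12;w(t))$ tends to its value at $w=1$. Here $c-a-b=\frac12>0$, so the Gauss series converges absolutely on $|w|\le1$ and defines a function continuous on the closed disc. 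Provided $w(t)$ approaches $1$ without leaving the region where the principal branch is that series, the limit is the Gauss sum
\[
 {}_2F_1\bigl(\tfrac s2,\tfrac s2;s+\tfrac12;1\bigr)=\frac{\Gamma(s+\frac12)\Gamma(\frac12)}{\Gamma(\frac{s+1}2)^2},
\]
and $\Gamma(\frac12)=\sqrt\pi$ gives the claimed formula.
\item[(iii)] For $s<0$ real and off the lattice, the Gamma quotient is real and nonzero. Therefore $\arg\mathcal H(0;Z)=\pi s/2$ modulo $\pi$, and $\operatorname{Im}/\operatorname{Re}=\tan\frac{\pi s}2$.
\end{enumerate}

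The main obstacle is step (ii), the path bookkeeping for $w(t)$. I must check that along $z(t)$, for $0<t<1$, the image $w(t)$ never crosses the cut $[1,\infty)$, so that the germ continued on the right is still the principal Gauss function. From
\[
 w-1=\frac{(z-2)^2}{4(z-1)},
\]
the value $w$ is real and at least $1$ only when $(z-2)^2/(z-1)\in[0,\infty)$. I would show this fails for $z$ on the open arc. With $z=2+\varepsilon$, near the endpoint one has $w-1\approx\varepsilon^2/4$ and $\varepsilon\approx-2(1-t)(1+i)$, so $\varepsilon^2$ is purely imaginary. Thus $w$ approaches $1$ nontangentially from off the real axis. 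Globally, one can check that $\operatorname{Im}w(t)\ne0$ along the arc.

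Even if that global check is delicate, continuity of the Gauss function on the closed disc, combined with $|w(t)|\le1$ or a short detour inside the cut plane, suffices. Because $c-a-b>0$, both boundary determinations at $w=1$ coincide with the Gauss sum. The only genuinely branch-sensitive factor is therefore $(1-z)^{-s/2}$, which step (i) fixes. As an independent check, I would compare the result numerically with Code~CJ's evaluation of \eqref{eq:midpoint2F1} at $\alpha=\beta=0$.
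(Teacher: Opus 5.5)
Your proposal is correct and is essentially the paper's proof: you use the quadratic transformation ${}_2F_1(a,b;2b;z)=(1-z)^{-a/2}\,{}_2F_1(\tfrac a2,b-\tfrac a2;b+\tfrac12;\tfrac{z^2}{4(z-1)})$ at $a=b=s$, Gauss's sum at argument $1$ justified by continuity since $c-a-b=\tfrac12>0$, and $(1-z)^{-s/2}\to e^{i\pi s/2}$ because $\log(1-z)\to-i\pi$ on the upper approach. Your extra path bookkeeping is sound, and in fact $w(t)=-\bigl(\tfrac{z(t)/2}{1-t-it}\bigr)^2$ lies in the open lower half of the unit disc for $0<t<1$, so $|w(t)|<1$ holds outright; the only slip is that $z(t)-2=-2(1-t)(1-it)$, so $\varepsilon\approx-2(1-t)(1-i)$ rather than $-2(1-t)(1+i)$, which is harmless because $\varepsilon^2$ is purely imaginary either way and both approaches to $w=1$ give the same limit.
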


\begin{proof}
The quadratic transformation \cite[Sec.~3.1]{AndrewsAskeyRoy}
\[
 {}_2F_1(a,b;2b;z)
 =(1-z)^{-a/2}\,{}_2F_1\Bigl(\frac a2,\,b-\frac a2;\,b+\frac12;\,
 \frac{z^2}{4(z-1)}\Bigr)
\]
at $a=b=s$ has transformed argument
$z^2/(4(z-1))\to1$ as $z\to2$, and the right-hand Gauss series is
continuous up to argument $1$ because there
$\operatorname{Re}(c-a-b)=\frac12>0$; Gauss's summation gives
${}_2F_1(\frac s2,\frac s2;s+\frac12;1)
=\Gamma(s+\frac12)\Gamma(\frac12)/\Gamma(\frac{s+1}2)^2$.
Approaching from $\operatorname{Im}z>0$, $\log(1-z)\to-i\pi$, so
$(1-z)^{-s/2}\to e^{i\pi s/2}$.
\end{proof}

\begin{proposition}[boundary split]\label{prop:split}
For $\operatorname{Re}s<\frac12$ and $2s\notin\mathbb Z$,
\begin{equation}\label{eq:split}
 \mathcal H(\alpha;Z)
 =\frac{\Gamma(2s)}{\Gamma(a)\Gamma(b)}
 \bigl[\mathsf S(\alpha;s)+i\pi\,\mathsf K(\alpha;s)\bigr],
\end{equation}
where
\[
 \mathsf K(\alpha;s)={}_2F_1(a,b;1;-1),\qquad
 \mathsf S(\alpha;s)=\sum_{n\ge0}(-1)^n\frac{(a)_n(b)_n}{(n!)^2}
 \bigl[2\psi(n+1)-\psi(a+n)-\psi(b+n)\bigr],
\]
both series converging absolutely.  In particular, for real $\alpha$ the
imaginary part of the boundary value is
$\pi\Gamma(2s)\,{}_2F_1(a,b;1;-1)/(\Gamma(a)\Gamma(b))$.
At the Legendre point $\alpha=0$ both blocks are classically closed:
\begin{equation}\label{eq:blocksclosed}
 \mathsf K(0;s)=\frac{\Gamma(1+\frac s2)}{\Gamma(1+s)\,\Gamma(1-\frac s2)}
 \quad\text{\emph{(Kummer)}},\qquad
 \mathsf S(0;s)=\cos\frac{\pi s}2\cdot
 \frac{\sqrt\pi\,\Gamma(s+\frac12)\,\Gamma(s)^2}
      {\Gamma(2s)\,\Gamma(\frac{s+1}2)^2}.
\end{equation}
\end{proposition}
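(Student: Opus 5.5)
The plan is to obtain \eqref{eq:split} from the classical connection formula for a zero-balanced Gauss function, ${}_2F_1(a,b;a+b;z)$, near $z=1$. Here $c=2s=a+b$, so the ordinary $z\mapsto1-z$ connection degenerates. The logarithmic case (DLMF 15.8.10, or Abramowitz--Stegun 15.3.10) gives, for $|1-z|<1$,
\[
{}_2F_1(a,b;a+b;z)=\frac{\Gamma(a+b)}{\Gamma(a)\Gamma(b)}\sum_{n\ge0}\frac{(a)_n(b)_n}{(n!)^2}\bigl[2\psi(n+1)-\psi(a+n)-\psi(b+n)-\log(1-z)\bigr](1-z)^n .
\]
At $z=2$ we have $1-z=-1$. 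Along the upper path fixed by the branch convention, $\log(1-z)\to-i\pi$, and $(1-z)^n=(-1)^n$.

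Substituting these values, the $\psi$-part is exactly $\mathsf S(\alpha;s)$. The logarithmic part contributes $+i\pi\sum(-1)^n(a)_n(b)_n/(n!)^2=i\pi\,{}_2F_1(a,b;1;-1)=i\pi\mathsf K$. This gives \eqref{eq:split}.

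The delicate point is that $z=2$ lies on the boundary of the disc $|1-z|<1$. I must therefore show absolute convergence at $|1-z|=1$ and then pass to the limit by Abel's theorem along $z\to2$ from the upper half-plane. The terms satisfy $(a)_n(b)_n/(n!)^2\sim n^{a+b-2}/(\Gamma(a)\Gamma(b))=n^{2s-2}/(\Gamma(a)\Gamma(b))$, and the $\psi$-bracket is $O(n^{-1})$ since $\psi(n+1)-\psi(n+a)=O(1/n)$. Hence both series converge absolutely when $\operatorname{Re}2s-2<-1$, that is $\operatorname{Re}s<\frac12$, which is the stated hypothesis. Absolute convergence on the closed disc gives continuity, so the boundary value equals the sum, consistent with the branch of Corollary~\ref{cor:gausstangent}.

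The condition $2s\notin\mathbb Z$ keeps $\Gamma(2s)$ finite. At integers $a$ or $b$ the prefactor vanishes while the $\psi$-terms have poles; I would handle that case by continuity in $\alpha$ or simply note it as excluded generically. For real $\alpha$ and real $s$, the quantities $a$, $b$, $\mathsf S$, $\mathsf K$ and the prefactor are all real, so the imaginary part is $\pi\Gamma(2s)\mathsf K/(\Gamma(a)\Gamma(b))$.

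At the Legendre point, $\mathsf K(0;s)={}_2F_1(s,s;1;-1)$ is Kummer's sum ${}_2F_1(a,b;1+a-b;-1)$ with $a=b=s$. It equals $\Gamma(1+a-b)\Gamma(1+a/2)/(\Gamma(1+a)\Gamma(1+a/2-b))$, which reduces to the displayed expression. For $\mathsf S(0;s)$ I would avoid direct summation. Instead I compare real parts in \eqref{eq:split} at $\alpha=0$ with Lemma~\ref{lem:quadeval}. The real part of $\mathcal H(0;Z)$ is $\cos\frac{\pi s}2\,\sqrt\pi\,\Gamma(s+\frac12)/\Gamma(\frac{s+1}2)^2$, and it equals $\Gamma(2s)\mathsf S(0;s)/\Gamma(s)^2$. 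Solving gives the stated formula.

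As a consistency check, the imaginary parts should match: $\Gamma(2s)\pi\mathsf K(0;s)/\Gamma(s)^2$ should equal $\sin\frac{\pi s}2\sqrt\pi\Gamma(s+\frac12)/\Gamma(\frac{s+1}2)^2$. I would verify this with the duplication and reflection formulas. The main obstacle I anticipate is the boundary/branch bookkeeping: justifying the sign $+i\pi$ from the upper approach, and the Abel-limit continuity at $|1-z|=1$.
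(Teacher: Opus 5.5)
Your proposal is correct and is essentially the paper's proof: the zero-balanced connection formula (DLMF 15.8.10) is evaluated at $1-z\to-1-i0$, absolute convergence of both limiting series for $\operatorname{Re}s<\tfrac12$ and Abel continuity give \eqref{eq:split}, Kummer's theorem gives $\mathsf K(0;s)$, and comparing real parts in \eqref{eq:split} with Lemma~\ref{lem:quadeval} gives $\mathsf S(0;s)$. One detail should be made explicit: the vertical approach $z=2+i\varepsilon$ lies outside the disc $|1-z|<1$, so the Abel limit must be taken along an upper approach that stays inside the disc (the paper's physical path $z=2t+2it(1-t)$ does, since $|1-z|=1-2t(1-t)<1$), and the result is independent of the chosen approach because the upper branch continues analytically across $(1,\infty)$.
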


\begin{proof}
Write $R_n(a,b)=2\psi(n+1)-\psi(a+n)-\psi(b+n)$.  The zero-balanced
connection formula
\cite[Sec.~2.3]{AndrewsAskeyRoy} (see also \cite[Eq.~15.8.10]{DLMF})
gives, for $|1-z|<1$,
$|\arg(1-z)|<\pi$,
\[
 {}_2F_1(a,b;a+b;z)
 =\frac{\Gamma(a+b)}{\Gamma(a)\Gamma(b)}
 \sum_{n\ge0}\frac{(a)_n(b)_n}{(n!)^2}
 \bigl[R_n(a,b)-\log(1-z)\bigr](1-z)^n .
\]
On the upper approach $z\to2+i0$ one has $1-z\to-1-i0$, so
$\log(1-z)\to-i\pi$ and $(1-z)^n\to(-1)^n$; the two limiting series are
absolutely convergent for $\operatorname{Re}(a+b)<1$ since their terms are
$O(n^{\operatorname{Re}(a+b)-2}\log n)$, and the boundary value is attained
by Abel continuity along the path.  The digamma-free part is the Taylor
series of ${}_2F_1(a,b;1;-1)$ because $(1)_n=n!$.  The first evaluation
in \eqref{eq:blocksclosed} is Kummer's theorem
${}_2F_1(a,b;1+a-b;-1)$ at $a=b=s$; the second follows from
Lemma~\ref{lem:quadeval} by taking real parts in \eqref{eq:split}.
The $n=0$ kernel $2\psi(1)-\psi(a)-\psi(b)$ is precisely the constant
$R(a,b)=-2\gamma-\psi(a)-\psi(b)$ governing the zero-balanced asymptotics
studied in \cite{AndersonEtAl,SimicVuorinen}.
\end{proof}

\begin{lemma}[contiguity of the Kummer block, with certificate]
\label{lem:Kcontig}
For all $\alpha$ and generic $s$,
\begin{equation}\label{eq:Kcontig}
 4(s-1)(4s^2-\alpha^2)\,\mathsf K(\alpha;s+1)
 +2\alpha^2(2s-1)\,\mathsf K(\alpha;s)
 +s\bigl(4(s-1)^2-\alpha^2\bigr)\,\mathsf K(\alpha;s-1)=0 .
\end{equation}
At $\alpha=0$ this degenerates to the two-term relation
$4s\,\mathsf K(0;s+1)+(s-1)\,\mathsf K(0;s-1)=0$, an exact identity of the
Kummer closed form.
\end{lemma}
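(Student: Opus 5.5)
The plan is to treat \eqref{eq:Kcontig} as a three-term contiguous relation for the Gauss function ${}_2F_1(a,b;1;z)$ under the simultaneous shift $(a,b)\mapsto(a\pm1,b\pm1)$, with $a=s-\frac\alpha2$, $b=s+\frac\alpha2$. It will be proved by creative telescoping on the power-series coefficients. The relation is then transported to $z=-1$ by analytic continuation rather than by summing at $z=-1$ directly. The reason is that the series for $\mathsf K(\alpha;s+1)$ has terms of size $O(n^{2\operatorname{Re}s})$ at $z=-1$, so termwise manipulations there are not legitimate for all the $s$ we want.

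\textbf{Step 1: a rational certificate.} Put
\[
t_n(s;z)=\frac{(a)_n(b)_n}{(n!)^2}z^n .
\]
The shifted terms $t_n(s\pm1;z)$ are rational multiples of $t_n(s;z)$, the ratio being a rational function of $(n,s,\alpha)$ only. I will look for polynomials $c_{+}(s,\alpha,z)$, $c_0(s,\alpha,z)$, $c_-(s,\alpha,z)$ and a rational certificate $R(n;s,\alpha,z)$ such that
\[
c_+t_n(s+1)+c_0t_n(s)+c_-t_n(s-1)=\Delta_n\bigl(R\,t_n(s)\bigr).
\]
This would come from Zeilberger's algorithm, or equivalently by composing the classical Gauss contiguous relations that raise $a$ and $b$ one at a time. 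After dividing by $t_n(s)$ the verification is a single polynomial identity in $(n,s,\alpha,z)$, which is checked mechanically in the manner of Code~CG.

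\textbf{Step 2: summation and continuation to $z=-1$.} For $|z|<1$, summing over $n\ge0$ telescopes. The boundary term $R\,t_n\to0$ because $t_n$ decays geometrically and $R$ has polynomial growth. The $n=0$ contribution vanishes provided $R$ carries the factor $n$, which is what makes the certificate valid. This gives
\[
c_+F(a+1,b+1;1;z)+c_0F(a,b;1;z)+c_-F(a-1,b-1;1;z)=0
\]
on the unit disc. All three functions are analytic on $\mathbb C\setminus[1,\infty)$, so the identity continues to $z=-1$. It then remains to check that $c_\pm(s,\alpha,-1)$ and $c_0(s,\alpha,-1)$ are proportional to $4(s-1)(4s^2-\alpha^2)$, $2\alpha^2(2s-1)$ and $s(4(s-1)^2-\alpha^2)$, after removing a common factor.

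Plausibility checks support the shape of the target. The factor $4s^2-\alpha^2=4ab$ in front of $\mathsf K(\alpha;s+1)$ is the expected normaliser for raising both numerator parameters. The coefficient of the middle term vanishes at $\alpha=0$, and the evenness in $\alpha$ is automatic because $\mathsf K$ is symmetric in $a,b$.

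\textbf{Step 3: the Legendre degeneration.} At $\alpha=0$ the middle term drops out and \eqref{eq:Kcontig} becomes
\[
16s^2(s-1)\,\mathsf K(0;s+1)+4s(s-1)^2\,\mathsf K(0;s-1)=0 .
\]
Dividing by $4s(s-1)$ gives the stated two-term relation. To confirm it independently from the Kummer evaluation \eqref{eq:blocksclosed}, form the ratio $\mathsf K(0;s+1)/\mathsf K(0;s-1)$. The functional equation of $\Gamma$ gives $\frac{s+1}{2}$ from the $\Gamma(1+\frac s2)$ factor, $\frac1{s(s+1)}$ from the $\Gamma(1+s)$ factor and $\frac{1-s}{2}$ from the $\Gamma(1-\frac s2)$ factor. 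Their product is $(1-s)/(4s)$, which is exactly the two-term relation.

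\textbf{Main obstacle.} I expect the difficulty to lie in Step 1: producing the certificate in a form whose coefficients specialise cleanly at $z=-1$. For general $z$ the $(a,b)$-shift relation has coefficients of higher degree in $z$, and the factorisation at $z=-1$ into the displayed cubic, quadratic and cubic factors has to emerge from cancellation. If Zeilberger's output has extraneous factors, I would instead run creative telescoping directly on $t_n(s;-1)$ for $\operatorname{Re}s<-\frac12$. In that range all three series converge absolutely and the boundary terms vanish. The identity would then be extended to generic $s$ by meromorphic continuation in $s$, since each $\mathsf K(\alpha;s)$ is meromorphic in $s$ through its analytic continuation in $z$.
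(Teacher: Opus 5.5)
Your proof is correct, but your main route differs from the paper's, which is essentially your fallback. The paper telescopes directly at $z=-1$. It exhibits the explicit certificate $r(n)=-n^2P(n)/((a-1+n)(b-1+n))$ for $t_n=(-1)^n(a)_n(b)_n/(n!)^2$ and checks one rational identity in $(n,s,\alpha)$ symbolically. It then sums using $r(0)=0$ and $t_nr(n)=O(n^{2\operatorname{Re}s})$. As you anticipated, that argument is literally valid only where the shifted series converges and this boundary term dies, namely $\operatorname{Re}s<0$; extending to general $s$ needs a continuation in $s$ that the paper leaves implicit.

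You instead prove the contiguity on $|z|<1$, where summation is harmless, and continue in $z$ inside $\mathbb C\setminus[1,\infty)$. Abel's theorem makes the continued value at $z=-1$ agree with the series wherever the series converges, so you obtain \eqref{eq:Kcontig} for all $s$ at once.

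The step you flagged as the main obstacle goes through with no extraneous factor. Write $F^\pm={}_2F_1(a\pm1,b\pm1;1;z)$. Compose $(z\partial_z+a)F=a\,{}_2F_1(a+1,b;1;z)$ and its $b$-analogue with the hypergeometric equation, and use Euler's transformation for the lowering. This gives
\[
(1-s)(1-z)^2\,ab\,F^+ +(1-2s)\Bigl[s(1-s)(1+z)+\tfrac{\alpha^2}{4}(1-z)\Bigr]F - s(1-a)(1-b)\,F^- = 0 .
\]
At $z=-1$ its left side is exactly $-\tfrac14$ times the left side of \eqref{eq:Kcontig}. This relation also explains the Legendre degeneration, which the paper only records. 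The $\alpha$-free part of the middle coefficient is $s(1-s)(1-2s)(1+z)$, and it vanishes precisely at Kummer's point $z=-1$.

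What the paper's route buys is economy: one mechanically checkable certificate, with no general-$z$ relation to derive. Yours buys uniformity in $s$ and a structural reason for the degeneration. Your Step~3 check against the Kummer closed form coincides with the paper's.
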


\begin{proof}
Termwise telescoping.  With $t_n=(-1)^n(a)_n(b)_n/(n!)^2$, the shifted
series have termwise ratios
$t_n^{+}=t_n(a+n)(b+n)/(ab)$ and $t_n^{-}=t_n(a-1)(b-1)/((a-1+n)(b-1+n))$,
and the rational function
\begin{gather*}
 r(n)=\frac{-n^2\,P(n)}{(a-1+n)(b-1+n)},\\
 P(n)=8(s-1)n^2+8(3s-2)(s-1)n+4(5s-2)(s-1)^2+(2-s)\alpha^2,
\end{gather*}
satisfies the certificate identity
\[
 A\,\frac{(a+n)(b+n)}{ab}+B
 +C\,\frac{(a-1)(b-1)}{(a-1+n)(b-1+n)}
 =-\frac{(a+n)(b+n)}{(n+1)^2}\,r(n+1)-r(n)
\]
with $A,B,C$ the three coefficients of \eqref{eq:Kcontig} (an identity of
rational functions of $(n,s,\alpha)$; Code~CK re-derives it symbolically).
Multiplying by $t_n$ and summing over $n\ge0$, the right side telescopes:
$r(0)=0$, and $t_nr(n)=O(n^{2\operatorname{Re}s})\to0$.  The $\alpha=0$
degeneration follows since the middle coefficient carries $\alpha^2$;
that the Kummer closed form satisfies it is the elementary computation
$\mathsf K(0;s+1)/\mathsf K(0;s-1)=-(s-1)/(4s)$.
\end{proof}

\begin{theorem}[transverse quadrature]\label{thm:quad}
Let
\[
 \mathsf K_2(s):=\partial_\alpha^2\mathsf K(\alpha;s)\big|_{\alpha=0}
 =-\frac12\sum_{n\ge1}(-1)^n\Bigl(\frac{(s)_n}{n!}\Bigr)^{\!2}
 \sum_{j=0}^{n-1}\frac1{(s+j)^2},
\]
a series with rational terms.  Then:
\begin{enumerate}
\item[(i)] $\mathsf K_2$ satisfies the inhomogeneous two-term recurrence
\begin{equation}\label{eq:K2rec}
 8s^2(s-1)\,\mathsf K_2(s+1)+2s(s-1)^2\,\mathsf K_2(s-1)
 =4(s-1)\,\mathsf K_0(s+1)-2(2s-1)\,\mathsf K_0(s)+s\,\mathsf K_0(s-1),
\end{equation}
with $\mathsf K_0=\mathsf K(0;\cdot)$ the Kummer-closed block.
Equivalently, the normalised quotient
$Q=\mathsf K_2/\mathsf K_0$ obeys the exact difference equation
\begin{equation}\label{eq:Qdiff}
 Q(s+1)-Q(s-1)=\mathsf f(s),\qquad
 \mathsf f(s)=-\frac{2s-1}{2s^2(s-1)^2}
 -\frac{(2s-1)\,\mathsf r(s)}{4s^2(s-1)},
\end{equation}
where
$\mathsf r(s)=\mathsf K_0(s)/\mathsf K_0(s+1)
=2\,\Gamma(1+\frac s2)\Gamma(\frac{1-s}2)\big/
 \bigl(\Gamma(1-\frac s2)\Gamma(\frac{1+s}2)\bigr)$.
\item[(ii)] On each lattice $\{s_0-2k\}_{k\ge0}$, $\mathsf r$ is bounded
and $\mathsf f(\sigma)=O(\sigma^{-2})$, so
$\sum_{k\ge0}\mathsf f(s-1-2k)$ converges absolutely and
\begin{equation}\label{eq:quadrature}
 \mathsf K_2(s)=\mathsf K_0(s)
 \Bigl[c(s_0)+\sum_{k\ge0}\mathsf f(s-1-2k)\Bigr]
\end{equation}
with a lattice constant $c(s_0)$.  The theorem itself makes no
assertion that $c(s_0)$ vanishes; that vanishing is proved in
Theorem~\ref{thm:czero} of Appendix~\ref{app:vertical}.
\end{enumerate}
\end{theorem}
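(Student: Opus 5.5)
The plan is to differentiate the Kummer-block contiguity of Lemma~\ref{lem:Kcontig} twice in $\alpha$ at $\alpha=0$. This produces the inhomogeneous recurrence (i) directly. Normalising by the closed form $\mathsf K_0$ then turns it into a first-order difference equation of step two, which is summed along the lattice.

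First, the series for $\mathsf K_2$. Since $(s-\frac\alpha2)_n(s+\frac\alpha2)_n=\prod_{j<n}\bigl((s+j)^2-\frac{\alpha^2}4\bigr)$ is even in $\alpha$, its second $\alpha$-derivative at $0$ equals $(s)_n^2\cdot(-\frac12)\sum_{j<n}(s+j)^{-2}$. For generic $s$ the differentiated series still converges absolutely, with terms $O(n^{2\operatorname{Re}s-2}\log n)$. The termwise derivative is therefore legitimate and gives the displayed formula.

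Second, \eqref{eq:Kcontig} holds identically in $\alpha$, so we apply $\partial_\alpha^2|_{\alpha=0}$ to it. The coefficients depend on $\alpha$ only through $\alpha^2$. Hence at $\alpha=0$ the derivative is $2\times$(the $\alpha^2$-coefficient of each coefficient) times $\mathsf K_0$, plus (the coefficient at $\alpha=0$) times $\mathsf K_2$. The $\alpha^2$-coefficients are $-4(s-1)$, $2(2s-1)$ and $-s$. The $\alpha=0$ coefficients are $16s^2(s-1)$, $0$ and $4s(s-1)^2$. Moving the $\mathsf K_0$ terms across and dividing by $2$ yields \eqref{eq:K2rec}.

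Third, the quotient $Q=\mathsf K_2/\mathsf K_0$. We divide \eqref{eq:K2rec} by $8s^2(s-1)\mathsf K_0(s+1)$. The two-term Kummer relation $\mathsf K_0(s-1)=-\frac{4s}{s-1}\mathsf K_0(s+1)$ from Lemma~\ref{lem:Kcontig} turns the coefficient of $\mathsf K_2(s-1)$ into $-Q(s-1)$. On the right, the first and third terms combine to $4(s-1)-\frac{4s^2}{s-1}=-\frac{4(2s-1)}{s-1}$. After division this gives $-\frac{2s-1}{2s^2(s-1)^2}$. The middle term gives $-\frac{(2s-1)\mathsf r(s)}{4s^2(s-1)}$. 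This is \eqref{eq:Qdiff}. The Gamma form of $\mathsf r$ follows from the Kummer closed form in \eqref{eq:blocksclosed} together with the reflection/duplication identities, used to rewrite $\Gamma(1+s)/\Gamma(2+s)$ and the half-argument ratios. This is routine but requires care.

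Fourth, part (ii). The quotient $\mathsf r(\sigma)=2\Gamma(1+\frac\sigma2)\Gamma(\frac{1-\sigma}2)/(\Gamma(1-\frac\sigma2)\Gamma(\frac{1+\sigma}2))$ is evaluated along $\sigma=s_0-2k$, where $\sigma/2$ drops by $1$ each step. The reflection formula gives
\[\mathsf r=2\frac{\sin(\pi(1+\sigma)/2)}{\sin(\pi\sigma/2)}\cdot\frac{\Gamma(\frac{1-\sigma}2)\Gamma(\frac{1+\sigma}2)}{\Gamma(1-\frac\sigma2)\Gamma(1+\frac\sigma2)}\cdot(\text{ratio}).\]
More simply, both $\Gamma(1+\frac\sigma2)/\Gamma(\frac{1+\sigma}2)$ and $\Gamma(\frac{1-\sigma}2)/\Gamma(1-\frac\sigma2)$ behave like $|\sigma/2|^{\pm1/2}$ by Stirling, after the trigonometric factor, which is constant on the lattice because it is $2$-periodic in $\sigma$. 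So $\mathsf r$ is bounded on the lattice. Then $\mathsf f(\sigma)=O(\sigma^{-2})$ and the series converges absolutely.

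Finally, telescoping. Set $T(s)=\sum_{k\ge0}\mathsf f(s-1-2k)$, so that $T(s+1)-T(s-1)=\mathsf f(s)$. Then $Q-T$ is invariant under $s\mapsto s+2$ and hence constant on each lattice class; call this constant $c(s_0)$. The main obstacle is controlling $\mathsf r$ uniformly on the lattice, where the sign and periodic factors must be tracked correctly; no vanishing of $c$ is claimed here.
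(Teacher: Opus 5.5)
Your proposal is correct and follows the paper's proof essentially step for step. You differentiate the contiguity of Lemma~\ref{lem:Kcontig} twice in $\alpha$ at $\alpha=0$, eliminate $\mathsf K_0(s-1)$ by the two-term Kummer relation to reach \eqref{eq:Qdiff}, bound $\mathsf r$ on the lattice by reflection and Stirling with a lattice-constant trigonometric factor, and telescope so that the $2$-periodic difference $Q-\sum_{k\ge0}\mathsf f(s-1-2k)$ is the lattice constant. Two small corrections: the Gamma form of $\mathsf r$ needs only $\Gamma(z+1)=z\Gamma(z)$ (namely $\Gamma(2+s)=(1+s)\Gamma(1+s)$ and $\Gamma(\frac{3+s}2)=\frac{1+s}2\Gamma(\frac{1+s}2)$), not reflection or duplication; and your first display in step four, with its unspecified ``(ratio)'', should be dropped in favour of the explicit identity $\Gamma(1+\frac\sigma2)/\Gamma(\frac{1+\sigma}2)=-\cot(\frac{\pi\sigma}2)\,\Gamma(\frac{1-\sigma}2)/\Gamma(-\frac\sigma2)$, whose cotangent factor is constant on $\{s_0-2k\}$.
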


\begin{proof}
The inner rational form of $\mathsf K_2$ is
$\psi'(s+n)-\psi'(s)=-\sum_{j<n}(s+j)^{-2}$ applied to
$\partial_\alpha^2\log$ of the terms, using evenness of $\mathsf K$ in
$\alpha$.  (i)~Differentiate \eqref{eq:Kcontig} twice at $\alpha=0$: the
first $\alpha$-derivatives vanish by evenness, and the $\alpha^2$-parts of
the three coefficients contribute
$-8(s-1)$, $4(2s-1)$, $-2s$ against $\mathsf K_0$, giving
\eqref{eq:K2rec} after division by $2$.  Dividing \eqref{eq:K2rec} by
$8s^2(s-1)\mathsf K_0(s+1)$ and eliminating $\mathsf K_0(s-1)$ through
the two-term relation of Lemma~\ref{lem:Kcontig} yields \eqref{eq:Qdiff}.
(ii)~By reflection and Stirling,
$\Gamma(\frac{1-s}2)/\Gamma(1-\frac s2)\sim(|s|/2)^{-1/2}$ while the
quotient of the two left-hand Gammas is
$O(|s|^{1/2})$ times a trigonometric factor that is periodic on the
lattice, so $\mathsf r$ is bounded there; summing \eqref{eq:Qdiff} in
steps of two telescopes to \eqref{eq:quadrature}, the general solution of
the homogeneous equation on a fixed lattice being
$\mathsf K_0\times\mathrm{const}$.
\end{proof}

\begin{remark}[status of the lattice constant]\label{rem:latticeconstant}
Code~CK finds $c(s_0)=0$, equivalently
$Q(s_0-2k)\to0$ as $k\to\infty$, to $40$ digits on the representative
lattices tested there.  This numerical observation is subsequently
\emph{proved}: Theorem~\ref{thm:czero} of Appendix~\ref{app:vertical}
shows $c(s_0)=0$ for every admissible lattice, by a
period-cylinder classification argument whose analytic inputs are the
two-component relation \eqref{eq:tworelation} and the vertical decay
of Corollary~\ref{cor:Qvert}.  Until that point of the development the
constant is carried explicitly, and no statement before
Appendix~\ref{app:vertical} assumes its vanishing.
\end{remark}

\begin{corollary}[one-function reduction of the imaginary tangent]
\label{cor:onefun}
Fix real $Z>1$ with $s=(1-Z)/2$ generic, and write
$N/D=\frac12\,\partial_\alpha^2\mathcal H(0;Z)/\mathcal H(0;Z)$ for the
function whose formal expansion is the tangent
$\sum_{n\ge1}g_nZ^{-n}$ of Corollary~\ref{cor:gausstangent}.  Then
\begin{equation}\label{eq:onefun}
 \operatorname{Im}\frac ND
 =\frac{\pi\,\mathsf K_2(s)}{2\,\mathsf S(0;s)}
 -\tan\frac{\pi s}2
 \Bigl(\operatorname{Re}\frac ND+\frac{\psi'(s)}4\Bigr).
\end{equation}
Every term on the right except $\mathsf K_2$ and
$\operatorname{Re}(N/D)$ is in closed form ($\mathsf S(0;s)$ by
\eqref{eq:blocksclosed}); the real part of the tangent is closed
\emph{coefficientwise} by Corollary~\ref{cor:R}, while its exact upper
boundary value carries a Stokes-type correction, quantified in
Remark~\ref{rem:target-false} and evaluated exactly in
Subsection~\ref{sec:anchors}.  The analytic content of
route~(A$_0$) of Section~\ref{sec:open} is therefore carried by the single
function $\mathsf K_2$, i.e.\ by the quadrature \eqref{eq:quadrature},
together with that boundary correction.
\end{corollary}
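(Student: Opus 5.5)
The identity follows by differentiating the boundary split \eqref{eq:split} twice in $\alpha$ at $\alpha=0$, using evenness in $\alpha$ throughout, and then separating real and imaginary parts.

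Put $\Gamma_\alpha:=\Gamma(2s)/(\Gamma(a)\Gamma(b))$ and $\mathsf B(\alpha):=\mathsf S(\alpha;s)+i\pi\mathsf K(\alpha;s)$. Then $\mathcal H=\Gamma_\alpha\mathsf B$, and
\[
 \frac ND=\frac12\Bigl(\partial_\alpha^2\log\Gamma_\alpha\big|_0+\frac{\partial_\alpha^2\mathsf B(0)}{\mathsf B(0)}\Bigr),
\]
since the first $\alpha$-derivatives vanish by evenness. The prefactor term is elementary. With $a=s-\alpha/2$ and $b=s+\alpha/2$,
\[
 \partial_\alpha^2\bigl[-\log\Gamma(a)-\log\Gamma(b)\bigr]_0=-\tfrac12\psi'(s),
\]
which contributes $-\psi'(s)/4$ to $N/D$. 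This contribution is real for real $s$.

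Next write $\mathsf B(0)=\mathsf S_0+i\pi\mathsf K_0$ with $\mathsf S_0=\mathsf S(0;s)$ and $\mathsf K_0=\mathsf K(0;s)$. Similarly $\partial_\alpha^2\mathsf B(0)=\mathsf S_2+i\pi\mathsf K_2$, where $\mathsf S_2=\partial_\alpha^2\mathsf S|_0$. For real $s$ and real $\alpha$, both $\mathsf S$ and $\mathsf K$ are real series, so this is a genuine decomposition into real and imaginary parts. By Lemma~\ref{lem:quadeval} together with \eqref{eq:blocksclosed}, $\pi\mathsf K_0/\mathsf S_0=\tan\frac{\pi s}2$; equivalently, $\arg\mathsf B(0)=\pi s/2$. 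Set $T=\tan\frac{\pi s}2$ and $X=(\mathsf S_2+i\pi\mathsf K_2)/(\mathsf S_0(1+iT))$. Multiplying numerator and denominator by $1-iT$ gives
\[
 \operatorname{Re}X=\frac{\mathsf S_2+T\pi\mathsf K_2}{\mathsf S_0(1+T^2)},\qquad
 \operatorname{Im}X=\frac{\pi\mathsf K_2-T\mathsf S_2}{\mathsf S_0(1+T^2)}.
\]

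Now eliminate the unknown $\mathsf S_2$. The combination $\operatorname{Im}X+T\operatorname{Re}X$ equals $\pi\mathsf K_2/\mathsf S_0$, because the $\mathsf S_2$ terms cancel and the factor $1+T^2$ divides out. Since $2\operatorname{Re}(N/D)=\operatorname{Re}X-\psi'(s)/2$ and $2\operatorname{Im}(N/D)=\operatorname{Im}X$, this becomes
\[
 2\operatorname{Im}\frac ND+2T\Bigl(\operatorname{Re}\frac ND+\frac{\psi'(s)}4\Bigr)=\frac{\pi\mathsf K_2}{\mathsf S_0}.
\]
Dividing by $2$ gives exactly \eqref{eq:onefun}.

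The main obstacle is justifying that the $\alpha$-derivatives may be taken inside the split \eqref{eq:split} and inside the series for $\mathsf S$ and $\mathsf K$. I would handle this by local uniform convergence in $\alpha$ near $0$ for fixed $\operatorname{Re}s<\frac12$, which follows from the termwise $O(n^{2\operatorname{Re}s-2}\log n)$ bounds used in Proposition~\ref{prop:split}. It also requires that $\mathsf S_0\neq0$ and $T$ is finite, both of which hold for generic $s$. The remaining claims of the corollary are then direct citations: $\mathsf S(0;s)$ is in closed form by \eqref{eq:blocksclosed}, and the real part is closed coefficientwise by Corollary~\ref{cor:R}.
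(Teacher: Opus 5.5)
Your proposal is correct and follows essentially the same route as the paper: write $\mathcal H=\mathsf G(\mathsf S+i\pi\mathsf K)$, use evenness to get $2N/D=-\tfrac12\psi'(s)+(\mathsf S_2+i\pi\mathsf K_2)/(\mathsf S_0+i\pi\mathsf K_0)$, and eliminate $\mathsf S_2$ between real and imaginary parts via $\pi\mathsf K_0/\mathsf S_0=\tan\frac{\pi s}2$ from Lemma~\ref{lem:quadeval}. Your explicit remark on justifying differentiation under the series is a small addition the paper leaves implicit. Your aside that $\arg\mathsf B(0)=\pi s/2$ holds only modulo $\pi$, but it is never used, so nothing is affected.
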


\begin{proof}
Write $\mathcal H=\mathsf G\,(\mathsf S+i\pi\mathsf K)$ with
$\mathsf G=\Gamma(2s)/(\Gamma(a)\Gamma(b))$, by
Proposition~\ref{prop:split}.  All three factors are even in $\alpha$, so
first $\alpha$-derivatives vanish at $0$ and
$2\,N/D=\partial_\alpha^2\log\mathcal H|_0
=\partial_\alpha^2\log\mathsf G|_0
+\bigl(\mathsf S_2+i\pi\mathsf K_2\bigr)/\bigl(\mathsf S_0+i\pi\mathsf
K_0\bigr)$, where $\mathsf S_2,\mathsf K_2$ are the second
$\alpha$-derivatives and $\mathsf S_0,\mathsf K_0$ the values.  Since
$\partial_\alpha^2\log\mathsf G|_0=-\frac12\psi'(s)$ is real, taking real
and imaginary parts and using
$\pi\mathsf K_0/\mathsf S_0
=\operatorname{Im}\mathcal H(0)/\operatorname{Re}\mathcal H(0)
=\tan\frac{\pi s}2$ (Lemma~\ref{lem:quadeval}) to eliminate
$\mathsf S_2$ between the two resulting equations gives
\eqref{eq:onefun}.
\end{proof}

\begin{remark}[what the split does, and what it does not do]
\label{rem:splitscope}
The forcing $\mathsf f$ in \eqref{eq:Qdiff} has two blocks.  The rational
block $-(2s-1)/(2s^2(s-1)^2)$ generates, under Euler--Maclaurin summation
on the lattice, Hurwitz--Bernoulli data; the $\Gamma$-quotient block
carries the lattice-periodic trigonometric weight of $\mathsf r$ and
generates Euler-number-type data.  This is precisely the
Bernoulli--Euler dichotomy proved for the real part of the tangent in
Corollary~\ref{cor:R}, a strong consistency signal for the route.  What
the split does \emph{not} do is decide the parity: the formal
$Z^{-1}$ coefficients arise only after the oscillatory factors
($e^{i\pi s/2}$, $\tan\frac{\pi s}2$, the lattice weight of $\mathsf r$)
cancel between the terms of \eqref{eq:onefun}, in keeping with the
nonlocality recorded in Section~\ref{sec:open}.  The surviving
arithmetic problem is the $2$-adic content of the quadrature
\eqref{eq:quadrature}.  Corollary~\ref{cor:rational-quadrature} below
closes the rational block both analytically and formally; for the Gamma
block, Lemma~\ref{lem:rhat} and Proposition~\ref{prop:gammaquadrature}
isolate the entire oscillation as one cotangent factor, and
Proposition~\ref{prop:osc-remainder} reduces the cancellation of all
trigonometric factors to one explicit real remainder, which the anchors of
Subsection~\ref{sec:anchors} evaluate exactly.  The target remains the
parity $C_r\equiv1\pmod2$ of
\eqref{eq:parityCr}.  Code~CK verifies every statement of this
subsection: the certificate of Lemma~\ref{lem:Kcontig} symbolically, and
\eqref{eq:split}--\eqref{eq:onefun} at $40$-digit precision.
\end{remark}

\subsection{Formal quadratures and the isolation of the oscillation}
\label{sec:formalquad}

We now separate the quadrature \eqref{eq:quadrature} into a closed
rational part and a single oscillatory factor, and isolate exactly what
remains between the convergent representation and the formal $Z^{-1}$
expansion.

\begin{lemma}[exact splitting identities]\label{lem:forcing-split}
The forcing in \eqref{eq:Qdiff} splits as
\[
 \mathsf f=\mathsf f_{\rm rat}+\mathsf f_{\Gamma},\qquad
 \mathsf f_{\rm rat}(s)=\frac1{2s^2}-\frac1{2(s-1)^2},
 \qquad
 \mathsf f_{\Gamma}(s)=-\frac{(2s-1)\,\mathsf r(s)}{4s^2(s-1)}.
\]
Moreover the Gamma quotient obeys the multiplicative step-two identity
\begin{equation}\label{eq:rstep}
 \mathsf r(s)\,\mathsf r(s-1)=-\frac{4s}{s-1}.
\end{equation}
\end{lemma}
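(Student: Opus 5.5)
The statement has two parts, and both are elementary identities; I would check each directly from the definitions in Theorem~\ref{thm:quad}.

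\emph{First part: the splitting of the forcing.} I only need the rational block. Since $\mathsf f_\Gamma$ is literally the second term of $\mathsf f$ in \eqref{eq:Qdiff}, it suffices to show
\[
 -\frac{2s-1}{2s^2(s-1)^2}=\frac1{2s^2}-\frac1{2(s-1)^2}.
\]
Over the common denominator $2s^2(s-1)^2$ the right side has numerator
\[
 (s-1)^2-s^2=-(2s-1),
\]
which proves the splitting.

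\emph{Second part: the step-two identity \eqref{eq:rstep}.} I would use the closed Gamma form
\[
 \mathsf r(s)=\frac{2\,\Gamma(1+\frac s2)\,\Gamma(\frac{1-s}2)}{\Gamma(1-\frac s2)\,\Gamma(\frac{1+s}2)}.
\]
Replacing $s$ by $s-1$ gives
\[
 \mathsf r(s-1)=\frac{2\,\Gamma(\frac{s+1}2)\,\Gamma(1-\frac s2)}{\Gamma(\frac{3-s}2)\,\Gamma(\frac s2)}.
\]
In the product $\mathsf r(s)\mathsf r(s-1)$, the factors $\Gamma(\frac{1+s}2)$ and $\Gamma(1-\frac s2)$ cancel. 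What remains is
\[
 4\,\frac{\Gamma(1+\frac s2)}{\Gamma(\frac s2)}\cdot\frac{\Gamma(\frac{1-s}2)}{\Gamma(\frac{3-s}2)}.
\]
By the functional equation of $\Gamma$, the first quotient is $\frac s2$ and the second is $1\big/\frac{1-s}2$. The product is therefore
\[
 4\cdot\frac{s/2}{(1-s)/2}=\frac{4s}{1-s}=-\frac{4s}{s-1}.
\]

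\emph{Cross-check.} As an independent check, I would derive the same identity from the Kummer relation of Lemma~\ref{lem:Kcontig} without using the Gamma form. Writing
\[
 \mathsf r(s)\,\mathsf r(s-1)=\frac{\mathsf K_0(s)}{\mathsf K_0(s+1)}\cdot\frac{\mathsf K_0(s-1)}{\mathsf K_0(s)}=\frac{\mathsf K_0(s-1)}{\mathsf K_0(s+1)},
\]
the two-term relation $4s\,\mathsf K_0(s+1)+(s-1)\,\mathsf K_0(s-1)=0$ gives this ratio as $-4s/(s-1)$. This agrees with the direct computation.

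\emph{Main obstacle.} There is no real obstacle; the only thing to watch is bookkeeping. The Gamma form of $\mathsf r$ must be consistent with the definition $\mathsf r=\mathsf K_0(s)/\mathsf K_0(s+1)$ and with the Kummer closed form \eqref{eq:blocksclosed}. If they differ, I would rely on the contiguity derivation, since it uses only the stated relation. The identities hold as meromorphic functions away from the poles, that is, for generic $s$.
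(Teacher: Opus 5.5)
Your proof is correct. The splitting of $\mathsf f$ is the same partial-fraction check the paper makes. Your ``cross-check'' for \eqref{eq:rstep} is exactly the paper's proof: telescope $\mathsf r(s)\mathsf r(s-1)=\mathsf K_0(s-1)/\mathsf K_0(s+1)$, then read off the ratio from the two-term relation $4s\,\mathsf K_0(s+1)+(s-1)\mathsf K_0(s-1)=0$ of Lemma~\ref{lem:Kcontig}.

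Your primary argument is a different route. You cancel $\Gamma(\frac{1+s}2)$ and $\Gamma(1-\frac s2)$ in the closed Gamma form, then use $\Gamma(1+\frac s2)/\Gamma(\frac s2)=\frac s2$ and $\Gamma(\frac{1-s}2)/\Gamma(\frac{3-s}2)=\frac2{1-s}$. This is self-contained and does not invoke the contiguity certificate. The paper's route instead ties the identity to the Kummer contiguity structure, which is what later drives the quadrature of Theorem~\ref{thm:quad}. The two routes are essentially equivalent: Lemma~\ref{lem:Kcontig} itself remarks that the Kummer closed form satisfies the two-term relation by the elementary computation $\mathsf K_0(s+1)/\mathsf K_0(s-1)=-(s-1)/(4s)$, which is your Gamma calculation in another guise.

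Your consistency worry does not arise. Substitute $\Gamma(2+s)=(1+s)\Gamma(1+s)$ and $\Gamma(\frac{3+s}2)=\frac{1+s}2\Gamma(\frac{1+s}2)$ into the Kummer closed form \eqref{eq:blocksclosed}. This shows that $\mathsf K_0(s)/\mathsf K_0(s+1)$ equals the displayed Gamma expression for $\mathsf r$ exactly, so both of your arguments concern the same function.
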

\begin{proof}
The partial-fraction identity
$\frac1{2s^2}-\frac1{2(s-1)^2}=-\frac{2s-1}{2s^2(s-1)^2}$ is immediate.
Since $\mathsf r(s)=\mathsf K_0(s)/\mathsf K_0(s+1)$, the two-term relation
$4s\,\mathsf K_0(s+1)+(s-1)\mathsf K_0(s-1)=0$ of
Lemma~\ref{lem:Kcontig} gives \eqref{eq:rstep}.
\end{proof}

\begin{proposition}[formal inverse of the central difference]\label{prop:formal-sinh}
Let $\mathbb K$ be a field of characteristic zero and let
$f(s)\in s^{-2}\mathbb K[[s^{-1}]]$.  There is a unique
$Q(s)\in s^{-1}\mathbb K[[s^{-1}]]$ satisfying
$Q(s+1)-Q(s-1)=f(s)$.  If $D=d/ds$ and $D^{-1}$ denotes the primitive with
zero constant term in $s^{-1}\mathbb K[[s^{-1}]]$, then
\begin{equation}\label{eq:formal-sinh}
 Q=\frac1{2\sinh D}f
 =\frac12D^{-1}f+
 \sum_{j\ge1}\frac{(1-2^{2j-1})B_{2j}}{(2j)!}\,D^{2j-1}f.
\end{equation}
In particular \eqref{eq:formal-sinh} applies unconditionally to the rational
block $\mathsf f_{\rm rat}$.  Its application to the Gamma block requires
first separating the lattice-periodic component of $\mathsf r$
(Lemma~\ref{lem:rhat} below).
\end{proposition}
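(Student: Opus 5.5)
Our plan is to work entirely in the ring $\mathbb K((s^{-1}))$ and treat the shift as the formal operator $e^{\pm D}$, where $e^{cD}g(s)=g(s+c)=\sum_k c^kD^kg/k!$. This is legitimate on $s^{-1}\mathbb K[[s^{-1}]]$ because $D$ raises the order in $s^{-1}$ by at least one. The order-$N$ coefficient of $D^kg$ therefore involves only finitely many coefficients of $g$, so every operator series in $D$ converges coefficientwise. With this in place, the central difference $Q(s+1)-Q(s-1)$ is the operator $2\sinh D$ acting on $Q$. We write $2\sinh D=2D\cdot T(D)$ with $T(D)=\sinh D/D=1+D^2/6+\cdots$.

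\emph{Uniqueness} comes first. Suppose $Q=\sum_{k\ge1}q_ks^{-k}$ satisfies $(2\sinh D)Q=0$. If $q_m$ is the first nonzero coefficient, the leading term of $2\sinh D\,Q$ is $2Dq_ms^{-m}=-2mq_ms^{-m-1}\neq0$ in characteristic zero, a contradiction. The same leading-term computation also gives \emph{existence} by triangular solving. Comparing coefficients of $s^{-m-1}$, we get $-2mq_m$ plus a combination of $q_1,\dots,q_{m-1}$ equal to $[s^{-m-1}]f$. This determines the $q_m$ recursively for all $m\ge1$, and it uses that $f$ has no $s^{-1}$ term, which is the hypothesis $f\in s^{-2}\mathbb K[[s^{-1}]]$.

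To obtain the explicit formula, we note that $D^{-1}$ is well defined on $s^{-2}\mathbb K[[s^{-1}]]$, mapping $s^{-k-1}$ to $-s^{-k}/k$ for $k\ge1$, and lands in $s^{-1}\mathbb K[[s^{-1}]]$ with $DD^{-1}=\mathrm{id}$. We then use the classical expansion
\[
\frac{x}{\sinh x}=\sum_{j\ge0}\frac{(2-2^{2j})B_{2j}}{(2j)!}x^{2j},
\]
which follows from $\operatorname{csch}x=\coth(x/2)-\coth x$ and $x\coth x=\sum 2^{2j}B_{2j}x^{2j}/(2j)!$. Halving it gives coefficients $\frac12$ for $j=0$ and $(1-2^{2j-1})B_{2j}/(2j)!$ for $j\ge1$. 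The same coefficients appear in the expansion of $\operatorname{csch}$ already used in the proof of Corollary~\ref{cor:R}.

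We then define
\[
Q=\frac{x}{2\sinh x}\Big|_{x=D}\,D^{-1}f .
\]
The $j=0$ term is $\frac12D^{-1}f$, and for $j\ge1$ we have $D^{2j}D^{-1}f=D^{2j-1}f$, which is exactly \eqref{eq:formal-sinh}. The series converges coefficientwise in $s^{-1}\mathbb K[[s^{-1}]]$. Since operator series in $D$ multiply as formal power series, $(2\sinh D)Q=D\,(2\sinh D/D)(D/(2\sinh D))D^{-1}f=DD^{-1}f=f$. By uniqueness this $Q$ is the solution.

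The only delicate step is bookkeeping: justifying that composition of these operator power series is compatible with their action on $s^{-1}\mathbb K[[s^{-1}]]$. This follows from the filtration remark, since each coefficient is a finite computation. The application to $\mathsf f_{\rm rat}$ is immediate. Expanded at $s=\infty$, $\mathsf f_{\rm rat}=\frac12s^{-2}-\frac12s^{-2}(1-s^{-1})^{-2}$ has vanishing $s^{-1}$ term, so it lies in $s^{-2}\mathbb Q[[s^{-1}]]$ (indeed in $s^{-3}$).
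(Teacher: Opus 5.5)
Your proof is correct and follows essentially the same route as the paper: you identify the central difference with $2\sinh D$ on formal series, invert it through the expansion of $x/\sinh x$, and get existence and uniqueness from the fact that the leading part $2D$ is bijective from $s^{-1}\mathbb K[[s^{-1}]]$ onto $s^{-2}\mathbb K[[s^{-1}]]$, which is your triangular solve. You write out more of the bookkeeping than the paper does, namely the coefficientwise convergence of operator series, the derivation via $\operatorname{csch}x=\coth(x/2)-\coth x$, and the check that $\mathsf f_{\rm rat}$ has no $s^{-1}$ term.
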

\begin{proof}
On formal Laurent series, the central difference is
$e^D-e^{-D}=2\sinh D$.  Since
\[
 \frac{x}{\sinh x}=\sum_{j\ge0}
 \frac{2(1-2^{2j-1})B_{2j}}{(2j)!}x^{2j},
\]
its inverse on series with no constant term is \eqref{eq:formal-sinh}.
The leading operator is $D$, which is bijective from
$s^{-1}\mathbb K[[s^{-1}]]$ to $s^{-2}\mathbb K[[s^{-1}]]$; this also proves
existence and uniqueness.
\end{proof}

\begin{corollary}[closed rational quadrature]\label{cor:rational-quadrature}
The unique formal solution of
\[
 Q_{\rm rat}(s+1)-Q_{\rm rat}(s-1)=\mathsf f_{\rm rat}(s),
 \qquad Q_{\rm rat}(s)\in s^{-1}\mathbb Q[[s^{-1}]],
\]
is also the distinguished analytic solution which tends to zero along a
leftward step-two lattice.  It has the closed form
\begin{equation}\label{eq:Qrat}
 Q_{\rm rat}(s)=\frac18\left[
 \psi'\!\left(\frac{1-s}{2}\right)
 -\psi'\!\left(1-\frac s2\right)\right].
\end{equation}
Equivalently,
$Q_{\rm rat}(s)=\sum_{k\ge0}\mathsf f_{\rm rat}(s-1-2k)$,
the series converging whenever the lattice avoids the poles.
\end{corollary}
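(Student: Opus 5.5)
We prove Corollary~\ref{cor:rational-quadrature} in three steps: check that the closed form \eqref{eq:Qrat} solves the difference equation exactly, show that it tends to zero along the leftward lattice and equals the telescoped sum there, and identify its large-$s$ expansion with the unique formal solution of Proposition~\ref{prop:formal-sinh}.

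\textbf{Step 1: the closed form solves the equation.} Put $u=\frac{1-s}2$, so that $1-\frac s2=u+\frac12$. The shift $s\mapsto s+1$ sends $u\mapsto u-\frac12$, and $s\mapsto s-1$ sends $u\mapsto u+\frac12$. Hence, with $Q_{\rm rat}$ defined by \eqref{eq:Qrat},
\[
 8\bigl[Q_{\rm rat}(s+1)-Q_{\rm rat}(s-1)\bigr]
 =\bigl[\psi'(u-\tfrac12)-\psi'(u)\bigr]-\bigl[\psi'(u+\tfrac12)-\psi'(u+1)\bigr].
\]
The recurrence $\psi'(x)-\psi'(x+1)=x^{-2}$ collapses this to $(u-\frac12)^{-2}-u^{-2}$. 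Since $u-\frac12=-\frac s2$ and $u=-\frac{s-1}2$, this equals $4s^{-2}-4(s-1)^{-2}=8\,\mathsf f_{\rm rat}(s)$, which is the equation.

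\textbf{Step 2: decay on the lattice and the telescoped series.} Along $s\mapsto s-2k$ we have $u\mapsto u+k\to+\infty$, so both trigamma terms are $O(1/k)$ and $Q_{\rm rat}(s-2k)\to0$. Writing $Q_{\rm rat}(s)=Q_{\rm rat}(s-2K)+\sum_{k<K}\mathsf f_{\rm rat}(s-1-2k)$ and letting $K\to\infty$ gives the series representation. This series converges absolutely because $\mathsf f_{\rm rat}(\sigma)=O(\sigma^{-3})$ by the partial-fraction form.

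Alternatively, one can sum directly. Using $\psi'(x)=\sum_{n\ge0}(x+n)^{-2}$, we get $\sum_k\frac1{2(s-1-2k)^2}=\frac18\psi'(\frac{1-s}2)$, and similarly for the other block. This yields \eqref{eq:Qrat} in one line and proves Step 1 independently.

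The distinguished-solution claim then follows. Any two solutions differ by a function that is constant on each step-two lattice. Requiring the limit to vanish kills that constant, so the solution tending to zero is unique and equals the sum.

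\textbf{Step 3: agreement with the formal solution.} This is the only point needing care. The function $\psi'$ has the Stirling expansion $\psi'(x)\sim x^{-1}+\frac12x^{-2}+\sum_{j\ge1}B_{2j}x^{-2j-1}$, valid as $x\to\infty$ away from the negative axis. Since $u\to+\infty$ as $s\to-\infty$, we expand both terms in powers of $u^{-1}$ and re-expand in $s^{-1}$. The leading $x^{-1}$ terms give $\frac18\bigl(u^{-1}-(u+\frac12)^{-1}\bigr)=O(s^{-2})$, so there is no constant term. The expansion therefore lies in $s^{-1}\mathbb Q[[s^{-1}]]$, with rational coefficients because the Bernoulli numbers are rational.

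Asymptotic expansions respect the finite shifts $s\pm1$, so this expansion satisfies the difference equation coefficientwise. The uniqueness part of Proposition~\ref{prop:formal-sinh} then identifies it with $\frac1{2\sinh D}\mathsf f_{\rm rat}$.

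The main obstacle is justifying this passage from the analytic identity to the formal one. I would handle it by fixing the sector $\operatorname{Re}s<0$, where every trigamma argument tends to infinity in $|\arg x|<\pi$. I would then note that each truncation error of the asymptotic expansion commutes with the unit shifts up to a term of the same order.
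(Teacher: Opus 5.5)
Your proof is correct and follows the paper's argument: the paper sums the lattice series directly using $\sum_{k\ge0}(a+2k)^{-2}=\frac14\psi'(a/2)$, checks the central difference with the trigamma recurrence as an alternative, and invokes the uniqueness part of Proposition~\ref{prop:formal-sinh} after noting that the expansion has no constant term. Your version adds two things the paper leaves implicit: the decay/telescoping argument that singles out the distinguished solution, and the justification that the asymptotic expansion inherits the difference equation coefficientwise.
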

\begin{proof}
From $\mathsf f_{\rm rat}(\sigma)=\frac12(\sigma^{-2}-(\sigma-1)^{-2})$
and $\sum_{k\ge0}(a+2k)^{-2}=\frac14\psi'(a/2)$, the lattice sum is
$\frac12\cdot\frac14[\psi'(\frac{1-s}2)-\psi'(1-\frac s2)]$, which is
\eqref{eq:Qrat}.  Alternatively, the trigamma recurrence
$\psi'(z+1)=\psi'(z)-z^{-2}$ shows directly that the central difference of
\eqref{eq:Qrat} is $\mathsf f_{\rm rat}$.  Its asymptotic expansion has no
constant term, so uniqueness follows from
Proposition~\ref{prop:formal-sinh}.
\end{proof}

\begin{remark}[where the lattice constant can occur]\label{rem:periodic-kernel}
Analytically, the kernel of the central-difference operator contains
all $2$-periodic functions, and the constant $c(s_0)$ in
\eqref{eq:quadrature} records precisely this lattice ambiguity after
normalisation by $\mathsf K_0$.  Formally at $s=\infty$, however, the kernel
inside $s^{-1}\mathbb K[[s^{-1}]]$ is zero.  Thus the remaining task is not
to choose a constant in the formal category, but to prove that the complete
oscillatory expression in \eqref{eq:onefun} descends to that formal
category.
\end{remark}

\begin{lemma}[separation of the Gamma oscillation]\label{lem:rhat}
Put
\begin{equation}\label{eq:rhatdef}
 \widehat{\mathsf r}(s)
 :=s\left[\frac{\Gamma(s/2)}
 {\Gamma((1+s)/2)}\right]^2.
\end{equation}
Then
\begin{equation}\label{eq:rhatfactor}
 \mathsf r(s)=\tan\frac{\pi s}{2}\,\widehat{\mathsf r}(s).
\end{equation}
Moreover $\widehat{\mathsf r}$ has a canonical Poincar\'e expansion
\begin{equation}\label{eq:rhatformal}
 \widehat{\mathsf r}(s)\sim
 2\exp\left(
 2\sum_{n\ge1}\frac{(-1)^{n+1}
 \bigl[B_{n+1}(0)-B_{n+1}(\tfrac12)\bigr]}{n(n+1)}
 \left(\frac{2}{s}\right)^{\!n}\right)
 \in 2+s^{-1}\mathbb Q[[s^{-1}]].
\end{equation}
Thus every non-formal dependence of $\mathsf r$ on the step-two lattice is
carried by the single factor $\tan(\pi s/2)$.
\end{lemma}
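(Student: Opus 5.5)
The plan is to prove the factorisation \eqref{eq:rhatfactor} by two applications of the reflection formula, and then to read off the expansion \eqref{eq:rhatformal} from the gamma-quotient Stirling expansion already used in Lemma~\ref{lem:stirlingexp}.

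\emph{Factorisation.} I start from the closed form of Theorem~\ref{thm:quad},
\[
\mathsf r(s)=\frac{2\,\Gamma(1+\frac s2)\,\Gamma(\frac{1-s}2)}{\Gamma(1-\frac s2)\,\Gamma(\frac{1+s}2)}.
\]
I remove the two ``reflected'' Gamma factors.
\begin{itemize}
\item From $\Gamma(\frac{1-s}2)\Gamma(\frac{1+s}2)=\pi/\cos\frac{\pi s}2$, I replace $\Gamma(\frac{1-s}2)$ by $\pi/\bigl(\cos\frac{\pi s}2\,\Gamma(\frac{1+s}2)\bigr)$.
\item From $\Gamma(\frac s2)\Gamma(1-\frac s2)=\pi/\sin\frac{\pi s}2$, I replace $1/\Gamma(1-\frac s2)$ by $\sin\frac{\pi s}2\,\Gamma(\frac s2)/\pi$.
\end{itemize}
The two factors of $\pi$ cancel, and the trigonometric factors combine into $\tan\frac{\pi s}2$. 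The functional equation $\Gamma(1+\frac s2)=\frac s2\Gamma(\frac s2)$ then turns the remaining prefactor $2\cdot\frac s2$ into $s$. This leaves exactly $s\,\Gamma(\frac s2)^2/\Gamma(\frac{1+s}2)^2=\widehat{\mathsf r}(s)$. The identity holds as meromorphic functions for all $s$ off the poles, and in particular on every admissible lattice.

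\emph{Expansion.} Put $\zeta=s/2$, so that $\widehat{\mathsf r}(s)=2\zeta\,\bigl[\Gamma(\zeta)/\Gamma(\zeta+\frac12)\bigr]^2$. I apply the classical expansion quoted in the proof of Lemma~\ref{lem:stirlingexp} (DLMF~5.11.8, differenced) with $a=0$ and $b=\frac12$:
\[
\log\frac{\Gamma(\zeta)}{\Gamma(\zeta+\frac12)}+\tfrac12\log\zeta\sim\sum_{n\ge1}\frac{(-1)^{n+1}\bigl(B_{n+1}(0)-B_{n+1}(\frac12)\bigr)}{n(n+1)}\,\zeta^{-n}.
\]
Doubling this and exponentiating, the factor $\zeta^{-1}$ cancels the prefactor $\zeta$, so the constant term is $2$. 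Substituting $\zeta^{-n}=(2/s)^n$ gives \eqref{eq:rhatformal}. Since the Bernoulli values are rational and the exponent has no constant term, the exponential lies in $2+s^{-1}\mathbb Q[[s^{-1}]]$.

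\emph{Main obstacle.} The only delicate point is the sense of ``canonical Poincar\'e expansion'' on the relevant lattices. These lie along $s\to-\infty$, whereas the Stirling expansion holds uniformly only in sectors $|\arg\zeta|\le\pi-\delta$. I would handle this as follows.
\begin{itemize}
\item State the expansion as a genuine asymptotic expansion in that sector.
\item Observe that the quotient $\Gamma(\zeta)/\Gamma(\zeta+\frac12)$ contains no oscillation. Reflecting both Gammas produces the factor $\sin\pi(\zeta+\frac12)/\sin\pi\zeta=\cot\pi\zeta$, and once this factor is squared its square is the only lattice-periodic term that appears.
\item Conclude that the formal series attached to $\widehat{\mathsf r}$ is the same from either side. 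All lattice-periodic dependence of $\mathsf r$ has been moved into $\tan\frac{\pi s}2$ by \eqref{eq:rhatfactor}, which is the final claim of the lemma.
\end{itemize}
I would add a remark that uniqueness of Poincar\'e coefficients in the right half-plane is what makes the series canonical.
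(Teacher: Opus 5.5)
Your derivation of \eqref{eq:rhatfactor} is correct and is exactly the paper's proof: the two reflection formulas plus $\Gamma(1+\frac s2)=\frac s2\Gamma(\frac s2)$. So is your derivation of \eqref{eq:rhatformal}: the differenced Stirling expansion at $\zeta=s/2$ with $a=0$, $b=\frac12$, doubled and exponentiated, with the prefactor $\zeta$ cancelling $\zeta^{-1}$ to give the constant $2$. The paper's proof stops there and treats the closing sentence of the lemma as a gloss.

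The problem is your ``main obstacle'' paragraph, where you try to justify that closing sentence. It is false on the real leftward lattices you say it concerns. Your own reflection gives the exact identity
\[
 \widehat{\mathsf r}(s)=-\cot^2\frac{\pi s}2\cdot\Bigl(-s\Bigl[\frac{\Gamma(\frac{1-s}2)}{\Gamma(1-\frac s2)}\Bigr]^2\Bigr).
\]
Here the second factor has a clean Stirling expansion as $s\to-\infty$, with leading term $2$. Hence along a real lattice $s=s_0-2k$ one has $\widehat{\mathsf r}(s_0-2k)\to-2\cot^2(\pi s_0/2)$, not $2$; for $s_0=\frac12$ the limit is $-2$, whereas $\widehat{\mathsf r}(s)\to2$ as $s\to+\infty$. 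So on real lattices:
\begin{itemize}
\item the quotient $\Gamma(\zeta)/\Gamma(\zeta+\frac12)$ does oscillate;
\item $\widehat{\mathsf r}$ is not asymptotic to \eqref{eq:rhatformal};
\item $\widehat{\mathsf r}$ still depends on the lattice through $\cot^2(\pi s_0/2)$, so the net trigonometric factor of $\mathsf r$ there is $-\cot(\pi s/2)$, not $\tan(\pi s/2)$.
\end{itemize}
Your bullets claiming the quotient ``contains no oscillation'' and that ``all lattice-periodic dependence has been moved into $\tan\frac{\pi s}2$'' contradict the $\cot^2$ factor you correctly identify one line later.

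What is true is the sectorial statement. For $|\operatorname{Im}s|$ large, $\cot^2\frac{\pi s}2=-1+O(e^{-\pi|\operatorname{Im}s|})$, so both regimes give the same expansion on an overlap sector off the real axis. Uniqueness of Poincar\'e coefficients there, or arithmetically the parity identity of Lemma~\ref{lem:parityF}, then identifies the left-regime series with \eqref{eq:rhatformal}. This is how the paper itself uses the lemma later: see Lemma~\ref{lem:fhatglobal}(ii), and the caveat in Proposition~\ref{prop:gammaquadrature} that the identification is sectorial, not a real-axis Poincar\'e expansion. Either drop that paragraph or restate it in this sectorial form.
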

\begin{proof}
The reflection formulas
\[
 \Gamma\Bigl(\frac{1-s}{2}\Bigr)
 \Gamma\Bigl(\frac{1+s}{2}\Bigr)
 =\frac{\pi}{\cos(\pi s/2)},\qquad
 \Gamma\Bigl(\frac{s}{2}\Bigr)
 \Gamma\Bigl(1-\frac{s}{2}\Bigr)
 =\frac{\pi}{\sin(\pi s/2)},
\]
together with $\Gamma(1+s/2)=(s/2)\Gamma(s/2)$, give
\eqref{eq:rhatfactor}.  Formula \eqref{eq:rhatformal} is the standard
formal logarithmic expansion of the Gamma quotient at $z=s/2$,
\emph{squared} (whence the factor $2$ in the exponent), followed by
exponentiation; the leading term is $s\cdot(s/2)^{-1}=2$.
\end{proof}

\begin{proposition}[formal Gamma quadrature on a lattice]\label{prop:gammaquadrature}
Define the formal forcing
\begin{equation}\label{eq:fgammahat}
 \widehat{\mathsf f}_{\Gamma}(s)
 :=-\frac{(2s-1)\,\widehat{\mathsf r}(s)}{4s^2(s-1)}
 \in s^{-2}\mathbb Q[[s^{-1}]],
\end{equation}
and let
\begin{equation}\label{eq:Qgammahat}
 \widehat Q_{\Gamma}
 :=\frac1{2\sinh D}\,\widehat{\mathsf f}_{\Gamma}
 \in s^{-1}\mathbb Q[[s^{-1}]].
\end{equation}
For a fixed step-two lattice avoiding the poles, the Gamma part of the
convergent quadrature has the exact factorisation
\begin{equation}\label{eq:Qgammafactor}
 \sum_{k\ge0}\mathsf f_{\Gamma}(s-1-2k)
 =-\cot\frac{\pi s}{2}
   \sum_{k\ge0}\widehat{\mathsf f}_{\Gamma}(s-1-2k).
\end{equation}
In the upper half-plane the second sum is asymptotic to
$\widehat Q_{\Gamma}(s)$ with exponentially small error
(Lemma~\ref{lem:vertical}); on real step-two lattices its expansion
acquires $2$-periodic-coefficient corrections, so the identification is
sectorial, not a real-axis Poincar\'e expansion (see
Subsection~\ref{sec:matching}).  Consequently the Gamma quadrature has
the formal oscillatory shape
\begin{equation}\label{eq:Qgammaosc}
 Q_{\Gamma}^{\rm quad}(s)
 \sim-\cot\frac{\pi s}{2}\,\widehat Q_{\Gamma}(s),
\end{equation}
up to the lattice constant already isolated in
Remark~\ref{rem:latticeconstant} (and proved to vanish in
Theorem~\ref{thm:czero}).
\end{proposition}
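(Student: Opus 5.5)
The plan is to treat the three assertions of Proposition~\ref{prop:gammaquadrature} in order: the formal membership statements, the exact factorisation \eqref{eq:Qgammafactor}, and the sectorial identification \eqref{eq:Qgammaosc}.

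\emph{Formal membership.} By Lemma~\ref{lem:rhat}, $\widehat{\mathsf r}\in 2+s^{-1}\mathbb Q[[s^{-1}]]$. The rational prefactor $-(2s-1)/(4s^2(s-1))$ expands as $-\tfrac12 s^{-2}+O(s^{-3})$ with rational coefficients. Hence $\widehat{\mathsf f}_\Gamma\in s^{-2}\mathbb Q[[s^{-1}]]$, with leading term $-s^{-2}$. Proposition~\ref{prop:formal-sinh} over $\mathbb K=\mathbb Q$ then gives $\widehat Q_\Gamma\in s^{-1}\mathbb Q[[s^{-1}]]$ as the unique formal solution of the central-difference equation.

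\emph{The factorisation \eqref{eq:Qgammafactor}.} This is pointwise algebra on the lattice. By \eqref{eq:rhatfactor}, $\mathsf f_\Gamma(\sigma)=\tan(\pi\sigma/2)\,\widehat{\mathsf f}_\Gamma(\sigma)$. For $\sigma=s-1-2k$ we have $\tan(\pi\sigma/2)=\tan(\pi(s-1)/2)=-\cot(\pi s/2)$, because $\tan$ has period $\pi$ and $\tan(x-\pi/2)=-\cot x$. So the trigonometric factor is constant along the lattice and pulls out of the sum.

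Convergence is needed on both sides. On the left it holds by Theorem~\ref{thm:quad}(ii). On the right, Stirling for the Gamma quotient gives $\widehat{\mathsf r}(\sigma)=O(1)$ as $\sigma\to-\infty$ along the lattice, after the reflection formulas, with the oscillating factors cancelling once $\tan$ has been removed. Hence $\widehat{\mathsf f}_\Gamma(\sigma)=O(\sigma^{-2})$ and the sum converges absolutely, provided the lattice avoids the zeros of $\cos(\pi s/2)$ and the poles of $\widehat{\mathsf r}$. I state this genericity condition explicitly.

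\emph{Sectorial identification.} Set $T(s)=\sum_{k\ge0}\widehat{\mathsf f}_\Gamma(s-1-2k)$. It satisfies $T(s+1)-T(s-1)=\widehat{\mathsf f}_\Gamma(s)$ exactly. For $\operatorname{Im}s\to+\infty$, I would compare $T$ with $\widehat Q_\Gamma$ through the standard Plana/Abel--Plana representation of a one-sided lattice sum. Write $T$ as an integral of the analytic continuation of $\widehat{\mathsf f}_\Gamma$ plus a boundary Euler--Maclaurin series, which reproduces the coefficients of $1/(2\sinh D)$, plus a remainder. That remainder is an integral against a kernel like $1/(e^{\mp i\pi w}+1)$, and it is exponentially small in the upper sector.

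This is exactly the content deferred to Lemma~\ref{lem:vertical}, so here I cite it rather than reprove it. Combining with the factorisation gives \eqref{eq:Qgammaosc}. The caveat about real lattices is recorded by noting that on the real axis the Abel--Plana remainder is no longer small: the poles of $\widehat{\mathsf r}$'s continuation produce $2$-periodic corrections. The lattice constant enters only through the homogeneous solution $\mathsf K_0\times\mathrm{const}$ in \eqref{eq:quadrature}, and is carried along unchanged.

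\emph{Main obstacle.} I expect the sectorial estimate to be the hard step. It requires uniform Stirling bounds for $\widehat{\mathsf r}$ in a half-plane and control of the contour shift past its poles. Everything else is algebra.
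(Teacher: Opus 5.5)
Your proposal is correct and follows the paper's proof. The factorisation comes from $\mathsf f_\Gamma=\tan(\pi\sigma/2)\,\widehat{\mathsf f}_\Gamma$ with $\tan(\pi(s-1-2k)/2)=-\cot(\pi s/2)$ independent of $k$; the formal series is fixed by the uniqueness in Proposition~\ref{prop:formal-sinh}; and the sectorial identification is deferred to Lemma~\ref{lem:vertical}, exactly as in the paper (your Abel--Plana sketch differs from the appendix, which instead uses termwise Hurwitz-zeta expansions on the horizontal line plus formal uniqueness and so never meets the off-line poles you flag, but since you only cite the lemma this does not affect the argument).
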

\begin{proof}
By Lemma~\ref{lem:rhat},
$\mathsf f_{\Gamma}(\sigma)=
\tan(\pi\sigma/2)\,\widehat{\mathsf f}_{\Gamma}(\sigma)$.
For $\sigma=s-1-2k$,
\[
 \tan\frac{\pi(s-1-2k)}2=-\cot\frac{\pi s}2,
\]
independently of $k$, which proves \eqref{eq:Qgammafactor}.  The formal
expansion of the non-oscillatory sum is the unique formal solution of its
central-difference equation, hence \eqref{eq:Qgammahat} by
Proposition~\ref{prop:formal-sinh}.
\end{proof}

\begin{remark}[the cancellation target after factorisation]\label{rem:gamma-cancel}
The obstruction has now been reduced further.  The Gamma block is
not an arbitrary lattice-dependent function: it is a single cotangent
factor times the formal series $\widehat Q_{\Gamma}$.  Therefore the formal
descent required in Problem~\ref{prob:formal-cancellation} amounts to
showing, in the complete expression \eqref{eq:onefun}, that this cotangent
factor combines with the explicit trigonometric and Gamma factors in
$\mathsf K_0/\mathsf S_0$ and with the real tangent so that no
periodic term remains.  After that cancellation, the only new coefficient
data are those of the rational series $\widehat Q_{\Gamma}$.
\end{remark}

\begin{proposition}[isolation of the complete oscillatory remainder]
\label{prop:osc-remainder}
Let $Q=\mathsf K_2/\mathsf K_0$, and on a fixed step-two lattice
write its quadrature decomposition as
\begin{equation}\label{eq:Qdecomp}
 Q(s)=c(s_0)+Q_{\rm rat}(s)
       -\cot\frac{\pi s}{2}\,\widehat Q_{\Gamma}^{\,\rm quad}(s),
\end{equation}
where $Q_{\rm rat}$ is \eqref{eq:Qrat} and
$\widehat Q_{\Gamma}^{\,\rm quad}(s)=\sum_{k\ge0}
\widehat{\mathsf f}_{\Gamma}(s-1-2k)$ is the non-oscillatory Gamma
quadrature, whose formal expansion is \eqref{eq:Qgammahat}.  Then the
one-function identity \eqref{eq:onefun} is equivalent to
\begin{equation}\label{eq:osc-isolated}
 \operatorname{Im}\frac ND
 =-\frac12\,\widehat Q_{\Gamma}^{\,\rm quad}(s)
 +\tan\frac{\pi s}{2}\,\mathcal E(s),
\end{equation}
where
\begin{equation}\label{eq:Edef}
 \mathcal E(s)=\frac{c(s_0)}2+\frac{Q_{\rm rat}(s)}2
 -\operatorname{Re}\frac ND-\frac{\psi'(s)}4.
\end{equation}
Thus every oscillatory contribution to the imaginary tangent is contained
in the single scalar remainder $\tan(\pi s/2)\,\mathcal E(s)$.
\end{proposition}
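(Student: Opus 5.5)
This is a direct substitution of the decomposition \eqref{eq:Qdecomp} into \eqref{eq:onefun}, followed by regrouping of the trigonometric factors. Everything is exact on the fixed lattice; no formal expansion is used. Write $T=\tan\frac{\pi s}2$, so that $\cot\frac{\pi s}2=1/T$. The plan has three steps.

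\emph{Step 1: express the first term of \eqref{eq:onefun} through $Q$.} Since $\mathsf K_2=\mathsf K_0\,Q$, we have
\[
\frac{\pi\mathsf K_2}{2\mathsf S(0;s)}=\frac{\pi\mathsf K_0}{2\mathsf S_0}\,Q .
\]
By Proposition~\ref{prop:split}, $\mathcal H(0;Z)=\mathsf G\,(\mathsf S_0+i\pi\mathsf K_0)$ with $\mathsf G$ real. Lemma~\ref{lem:quadeval} then gives $\pi\mathsf K_0/\mathsf S_0=\operatorname{Im}\mathcal H(0)/\operatorname{Re}\mathcal H(0)=T$. This is the same identity already used in the proof of Corollary~\ref{cor:onefun}. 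Hence the first term is exactly $\frac T2\,Q(s)$.

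\emph{Step 2: insert the quadrature decomposition.} Substituting \eqref{eq:Qdecomp} gives
\[
\tfrac T2 Q=\tfrac T2\,c(s_0)+\tfrac T2\,Q_{\rm rat}-\tfrac T2\cot\tfrac{\pi s}2\,\widehat Q_\Gamma^{\,\rm quad}.
\]
The last term collapses to $-\frac12\widehat Q_\Gamma^{\,\rm quad}$ because $T\cot\frac{\pi s}2=1$. This is the only point where the trigonometric factors cancel, and it is what produces the non-oscillatory leading term of \eqref{eq:osc-isolated}.

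\emph{Step 3: collect the factor $T$.} The remaining terms of \eqref{eq:onefun} are $-T\bigl(\operatorname{Re}\frac ND+\frac{\psi'(s)}4\bigr)$. Grouping everything multiplied by $T$ yields $T\,\mathcal E(s)$ with $\mathcal E$ exactly as in \eqref{eq:Edef}. This proves that \eqref{eq:onefun} implies \eqref{eq:osc-isolated}.

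For the converse, every step is a reversible algebraic identity, provided the relation $Q=\mathsf K_2/\mathsf K_0$ together with \eqref{eq:Qdecomp} is taken as the definition linking the two sides. Reading the chain backwards recovers \eqref{eq:onefun}.

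The only points needing care are where the substitutions are legitimate. The decomposition \eqref{eq:Qdecomp} itself must be justified from Theorem~\ref{thm:quad}(ii) combined with Corollary~\ref{cor:rational-quadrature} and the factorisation \eqref{eq:Qgammafactor}. We also need $\cos\frac{\pi s}2$, $\sin\frac{\pi s}2$, $\mathsf K_0$ and $\mathsf S_0$ all nonzero, so that $T$, $\cot\frac{\pi s}2$ and $Q$ are finite. This holds for generic $s$ with $2s\notin\mathbb Z$ on a lattice avoiding the poles, and those exceptional points can be excluded in the statement. I do not expect a genuine obstacle: the main thing to check is the sign convention in \eqref{eq:Qgammafactor}, namely that $\tan\frac{\pi(s-1-2k)}2=-\cot\frac{\pi s}2$ so that the minus sign survives. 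After that, the remaining work is bookkeeping.
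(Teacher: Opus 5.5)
Your proposal is correct and is essentially the paper's own argument: rewrite the first term of \eqref{eq:onefun} as $\tfrac12\tan\tfrac{\pi s}2\,Q$ via $\pi\mathsf K_0/\mathsf S_0=\tan\tfrac{\pi s}2$, insert \eqref{eq:Qdecomp}, use $\tan\tfrac{\pi s}2\cot\tfrac{\pi s}2=1$, and collect the remaining terms into $\tan\tfrac{\pi s}2\,\mathcal E(s)$. Your remarks on reversibility, and on excluding the points where $\tan\tfrac{\pi s}2$, $\cot\tfrac{\pi s}2$, $\mathsf K_0$ or $\mathsf S_0$ degenerate, are harmless additions that the paper leaves implicit.
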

\begin{proof}
Since
$\pi\mathsf K_0/\mathsf S_0=\tan(\pi s/2)$,
formula \eqref{eq:onefun} reads
\[
 \operatorname{Im}\frac ND
 =\frac12\tan\frac{\pi s}{2}\,Q(s)
 -\tan\frac{\pi s}{2}
  \left(\operatorname{Re}\frac ND+\frac{\psi'(s)}4\right).
\]
Insert \eqref{eq:Qdecomp}; the product of tangent and cotangent is $1$, and
\eqref{eq:osc-isolated}--\eqref{eq:Edef} follow.
\end{proof}

\begin{corollary}[exact cancellation criterion]\label{cor:cancellation-criterion}
On every lattice for which the quadrature representation is valid,
the identity
\begin{equation}\label{eq:Ezero}
 \mathcal E(s)=0
\end{equation}
is sufficient to remove all oscillation and gives the purely formal formula
\begin{equation}\label{eq:ImQhat}
 \operatorname{Im}\frac ND=-\frac12\,\widehat Q_{\Gamma}(s).
\end{equation}
Conversely, within the class of expressions generated by a formal Laurent
series and one independent factor $\tan(\pi s/2)$, the descent of
\eqref{eq:osc-isolated} to an ordinary Laurent series forces
\eqref{eq:Ezero}.  Remark~\ref{rem:target-false} shows that the natural
candidate determination of $\mathcal E$ fails; the exact boundary datum is
supplied instead by the anchors of Subsection~\ref{sec:anchors}.
\end{corollary}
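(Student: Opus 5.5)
The plan is to read everything off the decomposition of Proposition~\ref{prop:osc-remainder}, i.e.\ equation \eqref{eq:osc-isolated}. The corollary has two halves, sufficiency and a conditional converse, and I will handle them in that order.

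\emph{Sufficiency.} Fix a lattice on which \eqref{eq:quadrature} and the decomposition \eqref{eq:Qdecomp} are valid. Substituting $\mathcal E(s)=0$ into \eqref{eq:osc-isolated} kills the term $\tan(\pi s/2)\,\mathcal E(s)$ and leaves $\operatorname{Im}(N/D)=-\frac12\widehat Q_\Gamma^{\,\rm quad}(s)$. The right-hand side is the non-oscillatory Gamma quadrature. By Proposition~\ref{prop:gammaquadrature}, its expansion in the upper-half-plane sector is the unique formal solution $\widehat Q_\Gamma=(2\sinh D)^{-1}\widehat{\mathsf f}_\Gamma\in s^{-1}\mathbb Q[[s^{-1}]]$, up to an exponentially small error. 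Uniqueness here comes from Proposition~\ref{prop:formal-sinh}, where the kernel of the central difference on $s^{-1}\mathbb K[[s^{-1}]]$ is zero. Passing to that formal expansion gives \eqref{eq:ImQhat}. I will stress that the identification is sectorial, in line with the caveat already stated after \eqref{eq:Qgammafactor}, so no real-axis Poincar\'e claim is made.

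\emph{Converse.} Work in the class $\mathcal C=\{U(s)+\tan(\pi s/2)\,V(s)\}$, with $U,V$ formal Laurent series in $s^{-1}$ (or non-oscillatory functions having such expansions) and $\tan(\pi s/2)$ treated as an independent symbol. The key step is uniqueness of the representation in $\mathcal C$: if $U+\tan(\pi s/2)V=0$, then $U=V=0$. My first argument is analytic. On a vertical ray $s=\sigma+i\tau$ with $\tau\to+\infty$ one has $\tan(\pi s/2)\to i$, and the approach is exponential. Comparing this ray with a second ray at a shifted real part, or with the lower half-plane where the limit is $-i$, gives two asymptotic relations $U\pm iV\approx0$, hence $U=V=0$ formally. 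Alternatively, shifting $s\mapsto s+1$ turns $\tan$ into $-\cot$ while $U,V$ change only by formal Taylor shifts. Now suppose the left side of \eqref{eq:osc-isolated} descends to an ordinary Laurent series $L$. Then $L+\frac12\widehat Q_\Gamma=\tan(\pi s/2)\,\mathcal E$ with a non-oscillatory left side, and uniqueness forces $\mathcal E=0$, which is \eqref{eq:Ezero}.

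\emph{Remaining clause.} The final sentence only points forward, to Remark~\ref{rem:target-false} and to Subsection~\ref{sec:anchors}, so it needs no proof here beyond noting that $\mathcal E$ contains $c(s_0)$ and the exact upper boundary value of $\operatorname{Re}(N/D)$. The main obstacle is making the independence of $\tan(\pi s/2)$ rigorous. Each component of $\mathcal E$ ($Q_{\rm rat}$, $\psi'$, $\operatorname{Re}N/D$) must be confirmed to lie in the non-oscillatory class. In particular, $\operatorname{Re}(N/D)$ as a boundary value may itself carry periodic pieces, which is exactly why the statement is phrased ``within the class'' and why the Stokes correction is deferred.
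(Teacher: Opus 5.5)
Your proposal is correct and is essentially the paper's own argument; the paper writes no separate proof because the step is immediate. Put $\mathcal E=0$ into \eqref{eq:osc-isolated} and replace $\widehat Q_\Gamma^{\,\rm quad}$ by its formal expansion $\widehat Q_\Gamma$ via Proposition~\ref{prop:gammaquadrature} and the uniqueness in Proposition~\ref{prop:formal-sinh}; the converse is then just the stipulated independence of $\tan(\pi s/2)$ over formal Laurent series.

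Drop the analytic ``proof'' of that independence, though. It is unnecessary, since independence is a hypothesis of the converse. It also cannot yield the conclusion for actual functions, for three reasons. A shifted vertical ray in the upper half-plane again gives $U+iV\approx0$, so it adds nothing. Comparing the two half-planes only shows that the asymptotic expansions of $U$ and $V$ vanish, not the functions themselves. And the paper's later results give a counterexample: the true $\mathcal E$ is nonzero (Remark~\ref{rem:target-false}; \eqref{eq:Eclosed} under hypothesis (P) of Proposition~\ref{prop:stokesfun}), yet \eqref{eq:ImQhat} holds formally (Theorem~\ref{thm:imclosed}), because $\tan(\pi s/2)\,\mathcal E$ has zero asymptotic expansion off the real axis.
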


\begin{remark}[the candidate identity and the missing correction]\label{rem:next-identity}
The natural candidate for \eqref{eq:Ezero} is the real identity
\begin{equation}\label{eq:real-cancel-target}
 \operatorname{Re}\frac ND+\frac{\psi'(s)}4
 =\frac{c(s_0)}2+\frac1{16}\left[
 \psi'\!\left(\frac{1-s}{2}\right)
 -\psi'\!\left(1-\frac s2\right)\right].
\end{equation}
No Gamma-lattice sum remains in this target: its left side is known
coefficientwise from Corollary~\ref{cor:R}, and the right side is an
explicit trigamma expression plus the lattice constant.
Remark~\ref{rem:target-false} shows that this candidate is \emph{false} if
$\operatorname{Re}(N/D)$ is identified with the formal trigamma
resummation: the missing boundary correction $\Delta(s)$ defined there must
be computed before the cancellation step can be completed.
\end{remark}

\begin{remark}[the formal resummation is not the boundary value]
\label{rem:target-false}
A direct evaluation of the upper boundary value shows that
$\operatorname{Re}(N/D)$ must not be identified with the formal trigamma
resummation of Corollary~\ref{cor:R}.  What remains valid is the formal
Laplace identity
\[
 \int_0^\infty\frac{t\,e^{2st}}{\sinh2t}\,dt
 =\frac18\,\psi'\!\left(\frac{1-s}{2}\right),
 \qquad \operatorname{Re}s<0,
\]
which resums the inverse-power series of the real tangent.  At
$s=-0.3$, however, numerical differentiation of
$\mathcal H(\alpha;Z)$ gives
\[
 \operatorname{Re}\Bigl(
 \tfrac12\,\partial_\alpha^2\mathcal H(0)/\mathcal H(0)
 \Bigr)=-1.15505738\ldots,
 \qquad
 \frac18\,\psi'\!\left(\frac{1-s}{2}\right)=0.39893180\ldots,
\]
a discrepancy of $-1.55398918\ldots$, far beyond differentiation error
(Code~CL).  The Laplace calculation resums the formal series; it does not
identify that resummation with the exact upper boundary value.  The exact
problem must therefore retain the boundary (Stokes-type) correction
\[
 \Delta(s):=\operatorname{Re}\frac ND
 -\frac18\,\psi'\!\left(\frac{1-s}{2}\right).
\]
Controlling $\mathcal E$, equivalently $\Delta$, requires an independent
exact formula: Subsections~\ref{sec:boundarytransfer}
and~\ref{sec:anchors} construct precisely such a formula, first as a
derivative-free contiguous transfer and then as exact anchors, so that
$\Delta(s)=\frac18\operatorname{Re}R_s-\frac18\psi'(\frac{1-s}2)$ becomes
an explicit convergent expression; Subsection~\ref{sec:matching} then
identifies it in closed form,
$\Delta(s)=-(\pi^2/8)\sec^2(\pi s/2)$
(Proposition~\ref{prop:stokesfun}).
\end{remark}

\subsection{Derivative-free contiguities and the boundary transfer}
\label{sec:boundarytransfer}

For the boundary analysis it is convenient to pass to the symmetric
normalisation $q=\alpha/2$: put
\[
 Y_s(q;z):={}_2F_1(s-q,s+q;2s;z),\qquad
 \mathcal H(\alpha;Z)=Y_s(\alpha/2;2^+),
\]
\[
 \frac ND=\frac18\left.\partial_q^2\log Y_s(q;2^+)\right|_{q=0},
\]
and write, at the upper boundary value $z=2^+=2+i0$,
\[
 y_s=Y_s(0;2^+),\quad y_s'=\partial_zY_s(0;2^+),\quad
 w_s=\partial_q^2Y_s(q;2^+)\big|_{q=0},\quad
 w_s'=\partial_z\partial_q^2Y_s(q;2^+)\big|_{q=0},
\]
\[
 R_s=\frac{w_s}{y_s},\qquad p_s=\frac{y_s'}{y_s},\qquad
 J_s=\frac{w_s'}{y_s}-\frac{w_sy_s'}{y_s^2}=\partial_zR_s(2^+),
\]
so that $N/D=R_s/8$ and $\Delta(s)$ of Remark~\ref{rem:target-false} is
$\frac18\operatorname{Re}R_s-\frac18\psi'(\frac{1-s}2)$.  All identities
below hold first away from parameter poles and then by meromorphic
continuation; the branch is unchanged throughout because every function is
continued to the same upper boundary value.

\begin{lemma}[derivative-free Gauss contiguity at the boundary]
\label{lem:GaussDerivativeFree}
Write $\mathcal F(a,b)={}_2F_1^{\uparrow}(a,b;a+b;2)$.  Then
\begin{align}
 a\,\mathcal F(a+1,b)+(b-a)\,\mathcal F(a,b)
   +b\,{}_2F_1^{\uparrow}(a-1,b;a+b;2)&=0,
 \label{eq:GaussAcontig}
\end{align}
and symmetrically in $(a,b)$.  Consequently the argument derivative is
eliminated:
\begin{align}
 2\,\partial_z{}_2F_1^{\uparrow}(a,b;a+b;z)\big|_{z=2}
 &=-b\bigl(\mathcal F(a,b)
   +{}_2F_1^{\uparrow}(a-1,b;a+b;2)\bigr)\label{eq:derelimA}\\
 &=-a\bigl(\mathcal F(a,b)
   +{}_2F_1^{\uparrow}(a,b-1;a+b;2)\bigr).\notag
\end{align}
\end{lemma}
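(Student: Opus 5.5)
The plan is to derive both statements from classical Gauss contiguous relations that hold identically in $z$ for ${}_2F_1(a,b;c;z)$ with $c$ held fixed. Each relation is established on the common germ at $z=0$ and then continued along the upper path to $z=2+i0$. Since every relation is a linear identity with coefficients polynomial in $z$ and the parameters, continuation along one path preserves it. Evaluating at the upper boundary value then gives identities among the ${}_2F_1^{\uparrow}$ values, and no branch mixing can occur. Throughout we keep $c=a+b$ fixed as a constant; it is \emph{not} shifted with $a$. So $\mathcal F(a+1,b)$ means ${}_2F_1^{\uparrow}(a+1,b;a+b;2)$, consistent with the other two terms in \eqref{eq:GaussAcontig}, where the denominator parameter stays $a+b$.

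First I use the two derivative relations, with $F={}_2F_1(a,b;c;z)$ and $\theta=z\,d/dz$:
\[
\theta F=a\bigl(F(a+1)-F\bigr),\qquad z(1-z)F'=(c-a)F(a-1)+(a-c+bz)F .
\]
The first is immediate from $(a+1)_n/(a)_n=(a+n)/a$. The second is the standard relation (DLMF 15.5.E14-type) and is checked coefficientwise. I eliminate $F'$ between the two. With $c=a+b$ this gives $(1-z)a(F(a+1)-F)=bF(a-1)+(bz-b)F$. At $z=2$ the factor $1-z$ equals $-1$, and the relation becomes
\[
-aF(a+1)+aF=bF(a-1)-bF\quad\text{after dividing }bz-b=b\text{ appropriately},
\]
which rearranges to \eqref{eq:GaussAcontig}. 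The coefficient $b-a$ arises as $-(b\cdot(2-1))+\ldots$, and I will recheck the exact bookkeeping of this coefficient. The symmetric version follows from the symmetry of ${}_2F_1$ in $(a,b)$.

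For \eqref{eq:derelimA}, I evaluate the second derivative relation itself at $z=2$. There $z(1-z)=-2$, so
\[
-2F'=bF(a-1)+(a-(a+b)+2b)F=b\bigl(F(a-1)+F\bigr).
\]
This is precisely $2F'=-b(\mathcal F+{}_2F_1^{\uparrow}(a-1,b;a+b;2))$. Swapping $a$ and $b$ gives the second line of \eqref{eq:derelimA}.

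The main obstacle I expect is not algebraic but analytic: justifying the passage to the boundary value. I would handle it as follows. The identities hold as meromorphic identities on the upper path. The functions ${}_2F_1^{\uparrow}(\cdot;2)$ are defined as the limits along that path; these limits exist for the finite parameter values in question, and $z=2$ is a regular point of the hypergeometric equation, which is singular only at $0$, $1$ and $\infty$. Therefore the continued functions, together with their $z$-derivatives, are analytic at $2$ on the continued sheet, and the relations pass to the limit term by term. Parameter poles are excluded, as stipulated, and then restored by meromorphic continuation.
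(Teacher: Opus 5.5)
Your argument is correct and is essentially the paper's, reordered.

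The paper quotes Gauss's three-term relation $a(z-1)F(a+1,b;c;z)+(2a-c-az+bz)F+(c-a)F(a-1,b;c;z)=0$. It sets $c=a+b$, $z=2$ to get \eqref{eq:GaussAcontig}. It then combines $zF'=a\bigl(F(a+1,b;c;z)-F\bigr)$ with \eqref{eq:GaussAcontig} to eliminate $F(a+1,b;a+b;2)$ and reach \eqref{eq:derelimA}.

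You instead take the two derivative relations as primitive. Then \eqref{eq:derelimA} follows in one line from $z(1-z)F'=(c-a)F(a-1)+(a-c+bz)F$ at $z=2$, so it does not depend on \eqref{eq:GaussAcontig}. Your \eqref{eq:GaussAcontig} is the elimination of $F'$, i.e.\ a re-derivation of the three-term relation rather than a citation of it.

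Your coefficientwise check of that derivative relation goes through. Your reading of $\mathcal F(a+1,b)$ as ${}_2F_1^{\uparrow}(a+1,b;a+b;2)$, with $c$ frozen, is the one the paper's proof actually uses. Under the literal reading $c=a+b+1$ the identity fails: at $a=b=1$ the left side would be $i\pi/2$.

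There is one slip to fix. At $z=2$ the term $(bz-b)F$ equals $+bF$, not $-bF$. The evaluated relation is therefore $-aF(a+1)+aF=bF(a-1)+bF$. This rearranges to $aF(a+1)+(b-a)F+bF(a-1)=0$, which is exactly \eqref{eq:GaussAcontig}. The line as you wrote it would give the middle coefficient $-(a+b)$ instead.

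Your justification of the passage to $2+i0$ is sound and more explicit than the paper's. All functions are continued along the same upper path, $z=2$ is a regular point of the hypergeometric equation, and the shifts leave $c$ unchanged, so no new parameter poles appear.
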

\begin{proof}
Gauss's contiguous relation
$a(z-1)F(a+1,b;c;z)+(2a-c-az+bz)F(a,b;c;z)+(c-a)F(a-1,b;c;z)=0$
\cite{VidunasContiguous} at $c=a+b$, $z=2$ gives
\eqref{eq:GaussAcontig}.  The derivative contiguities
$zF'=a(F(a+1,b;c;z)-F)=b(F(a,b+1;c;z)-F)$ are standard
\cite[Sec.~15.5]{DLMF}; substituting \eqref{eq:GaussAcontig} and its
$(a,b)$-exchange gives \eqref{eq:derelimA}.
\end{proof}

The lower-parameter contiguity
$(\vartheta+c)F(a,b;c+1;z)=cF(a,b;c;z)$, $\vartheta=z\partial_z$, combined
with $\partial_zF(a,b;c;z)=\frac{ab}cF(a+1,b+1;c+1;z)$, gives the
first $s$-shift of the zero-balanced family,
\begin{equation}\label{eq:sdifferentialcontig}
 (\vartheta+2s+1)\,Y_{s+1}(q;z)
 =\frac{2s(2s+1)}{s^2-q^2}\,\partial_zY_s(q;z).
\end{equation}

\begin{proposition}[second symmetric contiguity]\label{prop:secondsym}
With $d=d_s(q)=s^2-q^2$,
\begin{equation}\label{eq:second-symmetric-contiguity}
 d\,(\vartheta+2s)(\vartheta+2s+1)\,Y_{s+1}(q;z)
 =2s(2s+1)\bigl((\vartheta+s)^2-q^2\bigr)Y_s(q;z),
\end{equation}
an identity of power series at $z=0$, hence of upper boundary values.  At
$z=2$ it is functionally independent of \eqref{eq:sdifferentialcontig}:
after use of the two hypergeometric equations, the determinant of the
coefficients of $Y_{s+1}(q;2)$ and $\partial_zY_{s+1}(q;2)$ is $4d^2$.
\end{proposition}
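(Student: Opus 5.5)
The plan is to prove \eqref{eq:second-symmetric-contiguity} coefficientwise at $z=0$ and then transport it to the boundary by analytic continuation. After that, the determinant assertion at $z=2$ is a finite linear-algebra computation using the two hypergeometric equations.

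\emph{Step 1 (power-series identity).} Write $Y_s(q;z)=\sum_{n\ge0}\frac{(s-q)_n(s+q)_n}{(2s)_n\,n!}z^n$. Since $\vartheta$ acts on $z^n$ as multiplication by $n$, the $z^n$ coefficient on the left is
\[
 d\,(n+2s)(n+2s+1)\frac{(s+1-q)_n(s+1+q)_n}{(2s+2)_n\,n!}.
\]
Two elementary Pochhammer shifts do the work. First, $(s+1\mp q)_n=(s\mp q)_{n+1}/(s\mp q)$, so the factor $d=(s-q)(s+q)$ cancels. Second, $(2s+2)_n=(2s)_{n+2}/(2s(2s+1))$, so $(n+2s)(n+2s+1)/(2s+2)_n=2s(2s+1)/(2s)_n$. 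Together these turn the left coefficient into $2s(2s+1)(s-q)_{n+1}(s+q)_{n+1}/((2s)_n\,n!)$.

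On the right, the coefficient is $2s(2s+1)\bigl((n+s)^2-q^2\bigr)(s-q)_n(s+q)_n/((2s)_n\,n!)$. Since $(n+s)^2-q^2=(s-q+n)(s+q+n)$, this is the same expression. The identity therefore holds in $\mathbb Q(s,q)[[z]]$, and then meromorphically in $(s,q)$ away from $2s\in\mathbb Z_{\le0}$.

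\emph{Step 2 (boundary values).} Both sides are obtained by applying polynomial differential operators in $z$ to the germs of $Y_{s+1}$ and $Y_s$ at $0$. Analytic continuation commutes with $\partial_z$, so continuing along the upper path of the Branch convention preserves the identity up to $z=2+i0$. The point $z=2$ is an ordinary point of both Gauss equations, so values and derivatives there are the $2^+$ boundary values used throughout, with no Abel-limit issue.

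\emph{Step 3 (independence at $z=2$).} Use the hypergeometric equation for $Y_{s+1}$ (parameters $s+1\mp q$, $2s+2$) at $z=2$ to express $\partial_z^2Y_{s+1}(q;2)$ through $Y_{s+1}(q;2)$ and $\partial_zY_{s+1}(q;2)$. This makes the left side of \eqref{eq:second-symmetric-contiguity} a linear form in these two values.

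Likewise, use the equation for $Y_s$ to reduce the right side to $Y_s(q;2)$ and $\partial_zY_s(q;2)$. Then eliminate $\partial_zY_s(q;2)$ by \eqref{eq:sdifferentialcontig}, which expresses it through $Y_{s+1}(q;2)$ and $\partial_zY_{s+1}(q;2)$. After this, \eqref{eq:sdifferentialcontig} and \eqref{eq:second-symmetric-contiguity} become two linear relations expressing quantities at level $s$ in terms of $\bigl(Y_{s+1}(q;2),\partial_zY_{s+1}(q;2)\bigr)$. One then computes the $2\times2$ determinant of their coefficients, after clearing the common factor $2s(2s+1)$, and checks that it equals $4d^2$. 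Here $d$ enters once from \eqref{eq:sdifferentialcontig} and once from the left-hand factor of \eqref{eq:second-symmetric-contiguity}.

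The main obstacle is this last step. The arithmetic is routine, but the normalisation in which the determinant is exactly $4d^2$ has to be fixed consistently. I would carry out the reduction symbolically, specialising at $z=2$ only at the end, and record the resulting matrix explicitly. Functional independence then follows whenever $d\ne0$, that is, off the diagonal $q=\pm s$.
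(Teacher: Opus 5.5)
Your Step~1 is exactly the paper's proof, which phrases it as the ratio $\frac{(n+s)^2-q^2}{s^2-q^2}\cdot\frac{2s(2s+1)}{(n+2s)(n+2s+1)}$ of the $n$th Taylor coefficients of $Y_{s+1}$ and $Y_s$, and Step~2 is its ``hence of upper boundary values''. The paper's proof stops there; the determinant can be read off from the explicit rows \eqref{eq:first-boundary-row}--\eqref{eq:second-boundary-row} of Theorem~\ref{thm:boundarytransfer}, whose $(Y_{s+1},\partial_zY_{s+1})$-coefficients $(2s+1,\,2)$ and $\bigl(2d(s^2-s-1+q^2),\,4d(s-1)\bigr)$ give $4d\bigl[(2s+1)(s-1)-(s^2-s-1+q^2)\bigr]=4d^2$.

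Your elimination variant, which you left unexecuted, also closes. At $z=2$ it yields $2s(2s+1)Y_s=-2(s^2+s+q^2)Y_{s+1}-4s\,\partial_zY_{s+1}$ and $2s(2s+1)\partial_zY_s=d(2s+1)Y_{s+1}+2d\,\partial_zY_{s+1}$, with determinant $4d(s^2-q^2)=4d^2$. Note that the second factor $d$ arises from this cancellation, not from the prefactor $d$ in \eqref{eq:second-symmetric-contiguity}; that prefactor cancels against the $dY_s$ term on its right-hand side.
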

\begin{proof}
The quotient of the $n$th Taylor coefficients of $Y_{s+1}$ and $Y_s$ is
\[
 \frac{(n+s)^2-q^2}{s^2-q^2}\cdot
 \frac{2s(2s+1)}{(n+2s)(n+2s+1)},
\]
which is \eqref{eq:second-symmetric-contiguity} coefficientwise, since
$\vartheta z^n=nz^n$.
\end{proof}

\begin{theorem}[scalar boundary transfer, and the transverse jet]
\label{thm:boundarytransfer}
At the upper boundary value, the pair
\eqref{eq:sdifferentialcontig}--\eqref{eq:second-symmetric-contiguity}
reduces to the two boundary rows
\begin{gather}
 2\,\partial_zY_{s+1}(q;2)+(2s+1)Y_{s+1}(q;2)
 =\frac{2s(2s+1)}{d}\,\partial_zY_s(q;2),
 \label{eq:first-boundary-row}\\
 d\,\bigl\{4(s-1)\partial_zY_{s+1}(q;2)
 +2(s^2-s-1+q^2)Y_{s+1}(q;2)\bigr\}\notag\\
 \qquad=2s(2s+1)\bigl\{-2\partial_zY_s(q;2)-d\,Y_s(q;2)\bigr\},
 \label{eq:second-boundary-row}
\end{gather}
an invertible two-by-two system.  Eliminating $\partial_zY_{s+1}$ gives the
scalar transfer
\begin{equation}\label{eq:scalar-boundary-transfer}
 Y_{s+1}(q;2^+)
 =\frac{s(2s+1)}{d^2}
 \bigl[2s\,\partial_zY_s(q;2^+)+d\,Y_s(q;2^+)\bigr],
\end{equation}
and, twice differentiated in $q$ at zero,
\begin{equation}\label{eq:scalar-second-jet-transfer}
 w_{s+1}=\frac{2s+1}{s^3}
 \bigl(2s\,w_s'+s^2w_s-2y_s\bigr)
 +\frac{4(2s+1)}{s^5}\bigl(2s\,y_s'+s^2y_s\bigr).
\end{equation}
Equivalently, with $U_s(q)=\bigl(Y_s(q;2^+),\,\partial_zY_s(q;2^+)\bigr)^T$,
\begin{equation}\label{eq:Mmatrix}
 U_{s+1}(q)=M_s(q)\,U_s(q),\qquad
 M_s(q)=
 \begin{pmatrix}
 \dfrac{s(2s+1)}{d}&
 \dfrac{2s^2(2s+1)}{d^2}\\[3mm]
 -\dfrac{s(2s+1)^2}{2d}&
 \dfrac{s(2s+1)\{d-s(2s+1)\}}{d^2}
 \end{pmatrix},
\end{equation}
and, since $M_s$ is even in $q$, the transverse jet propagates by
\begin{equation}\label{eq:jetprop}
 U_{s+1}^{[2]}=M_s(0)\,U_s^{[2]}+M_s''(0)\,U_s(0),
 \qquad U_s^{[2]}=\partial_q^2U_s(q)\big|_{q=0}=(w_s,w_s')^T.
\end{equation}
\end{theorem}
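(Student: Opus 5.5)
The plan is to evaluate both contiguities at $z=2$, using the hypergeometric equation to remove second and higher $z$-derivatives. Then the two resulting rows are solved as a linear system. Throughout I work with power-series identities at $z=0$, continue them along the upper path, and pass to the boundary value $2^+$. This is legitimate because every identity involved is a linear differential relation with coefficients polynomial in $z$, and such relations are preserved under analytic continuation of a common germ.

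\emph{Boundary rows.} At $z=2$ one has $\vartheta=2\partial_z$. The first row \eqref{eq:first-boundary-row} is then immediate from \eqref{eq:sdifferentialcontig}. For the second row I expand $(\vartheta+2s)(\vartheta+2s+1)Y_{s+1}$ and $((\vartheta+s)^2-q^2)Y_s$. The second derivatives are eliminated with the hypergeometric equation
\[
z(1-z)Y''+\bigl(c-(a+b+1)z\bigr)Y'-abY=0 .
\]
For $Y_s$ this has $c=2s$, $a+b=2s$ and $ab=d$. At $z=2$ it reads $-2Y''+(-2s-2)Y'-dY=0$, so $Y_s''(2)=-(s+1)Y_s'-\tfrac d2Y_s$. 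The analogous relation holds for $Y_{s+1}$, with $s\to s+1$ and $d\to d_{s+1}=(s+1)^2-q^2$.

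Substituting these, $\vartheta^2=4\partial_z^2+2\partial_z$ becomes linear in $(Y,\partial_zY)$. Collecting terms should give exactly the braces in \eqref{eq:second-boundary-row}. The only real work in the proof is checking that $4(s-1)$ and $2(s^2-s-1+q^2)$ come out right. I would do this by direct expansion, and as a cross-check I would verify the rows on the first Taylor coefficients of the identity before continuation.

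\emph{Elimination.} Multiply \eqref{eq:first-boundary-row} by $2(s-1)d$ and subtract it from \eqref{eq:second-boundary-row}; this cancels $\partial_zY_{s+1}$. The coefficient of $Y_{s+1}$ becomes $d\,[2(s^2-s-1+q^2)-(2s+1)\cdot2(s-1)]$. The bracket simplifies to $2q^2-2s^2+2s\cdot0\ldots$, and I expect it to reduce to a multiple of $d$, giving $-2d^2/\ldots$ up to a simple factor. The right side collects to $-2s(2s+1)[2s\,\partial_zY_s+dY_s]$ over the appropriate power of $d$. Dividing yields \eqref{eq:scalar-boundary-transfer}.

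The same solve also gives $\partial_zY_{s+1}$ from \eqref{eq:first-boundary-row}, and these two entries assemble into the matrix $M_s(q)$ of \eqref{eq:Mmatrix}. Invertibility follows from the determinant $4d^2\ne0$, as recorded in Proposition~\ref{prop:secondsym}. I regard this consistency bookkeeping, and not any conceptual point, as the main obstacle. If the coefficients disagree, the first thing to check is the sign convention $\vartheta=z\partial_z$ at $z=2$.

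\emph{Jets.} $M_s(q)$ depends on $q$ only through $d=s^2-q^2$, so it is even in $q$. The functions $Y_s(q;\cdot)$ are also even in $q$, because the parameters $s\mp q$ are exchanged, and ${}_2F_1$ is symmetric in its numerator parameters. So differentiating $U_{s+1}=M_sU_s$ twice at $q=0$ kills the cross terms $M_s'U_s'$, which proves \eqref{eq:jetprop}.

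Taking the first row of \eqref{eq:jetprop} explicitly gives \eqref{eq:scalar-second-jet-transfer}. This uses $\partial_q^2(d^{-1})|_0=2/s^4$ and $\partial_q^2(d^{-2})|_0=4/s^6$ applied to \eqref{eq:scalar-boundary-transfer}. I would check the resulting rational coefficients against the displayed formula. The term $-2y_s$ comes from $\partial_q^2 d=-2$ multiplying $Y_s$ inside the bracket, and the second group comes from $\partial_q^2(s(2s+1)/d^2)\cdot(2s\,y_s'+s^2y_s)$.
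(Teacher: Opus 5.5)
Your proposal is correct and follows the paper's own route. You evaluate both contiguities at $z=2$, remove $\partial_z^2$ with the hypergeometric equations for $Y_s$ and $Y_{s+1}$, eliminate $\partial_zY_{s+1}$, read $M_s$ off the first row, and differentiate twice in $q$ using evenness. The steps you left tentative do close exactly: the bracket $2(s^2-s-1+q^2)-2(s-1)(2s+1)$ equals $-2d$, so the eliminated row is $-2d^2\,Y_{s+1}=-2s(2s+1)\bigl[2s\,\partial_zY_s+d\,Y_s\bigr]$, and the determinant of the system is $4d^2$.
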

\begin{proof}
The hypergeometric equation for $Y_{s+1}$, respectively $Y_s$, evaluated at
$z=2$, converts the second-order operators in
\eqref{eq:second-symmetric-contiguity} into the displayed first-order
boundary rows; \eqref{eq:first-boundary-row} is
\eqref{eq:sdifferentialcontig} at $z=2$.  The determinant of the system is
proportional to $d^2\ne0$; elimination gives
\eqref{eq:scalar-boundary-transfer}, and the first row returns
$\partial_zY_{s+1}$, i.e.\ \eqref{eq:Mmatrix}.  Formula
\eqref{eq:scalar-second-jet-transfer} is
$\partial_q^2$ of \eqref{eq:scalar-boundary-transfer} at $q=0$ using
$d=s^2-q^2$ and evenness in $q$; \eqref{eq:jetprop} is the same statement
for the pair.  (Code~CL validates every displayed identity at $30$--$40$
digits.)
\end{proof}

\begin{proposition}[the diagonal logarithmic $z$-derivative is closed]
\label{prop:ps}
For the upper boundary value,
\begin{equation}\label{eq:ps-explicit}
 p_s=-\frac s2+
 \frac{i\,s^2(s+\frac12)}{2(2s+1)}
 \left[\frac{\Gamma(\frac{s+1}{2})}
 {\Gamma(\frac{s+2}{2})}\right]^2.
\end{equation}
Consequently the coefficients of the jet transfer are all explicit, and
\begin{equation}\label{eq:J-exact-adjacent}
 J_s=\frac1s+
 \frac{2sp_s+s^2}{2s}
 \left(R_{s+1}-R_s-\frac4{s^2}\right):
\end{equation}
the formerly independent derivative datum $J_s$ is exactly the adjacent
first difference of the boundary tangent $R_s$.
\end{proposition}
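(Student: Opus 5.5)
The proof splits into two independent parts: the explicit formula \eqref{eq:ps-explicit} for $p_s=y_s'/y_s$, and the identity \eqref{eq:J-exact-adjacent}, which only rearranges the jet transfer \eqref{eq:scalar-second-jet-transfer}.

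For \eqref{eq:ps-explicit} I will use the derivative elimination \eqref{eq:derelimA} at $a=b=s$. It gives
\[
2y_s'=-s\bigl(y_s+{}_2F_1^{\uparrow}(s-1,s;2s;2)\bigr),
\]
so
\[
p_s=-\frac s2-\frac s2\,\frac{{}_2F_1^{\uparrow}(s-1,s;2s;2)}{y_s}.
\]
The value $y_s$ is closed by Lemma~\ref{lem:quadeval}:
\[
y_s=e^{i\pi s/2}\sqrt\pi\,\Gamma(s+\tfrac12)/\Gamma(\tfrac{s+1}2)^2.
\]
The adjacent function ${}_2F_1^{\uparrow}(s-1,s;2s;2)$ I will evaluate through the same quadratic transformation as in Lemma~\ref{lem:quadeval}, now with $b=s$ and $a=s-1$:
\[
{}_2F_1(s-1,s;2s;z)=(1-z)^{-(s-1)/2}\,{}_2F_1\Bigl(\tfrac{s-1}2,\tfrac{s+1}2;s+\tfrac12;\tfrac{z^2}{4(z-1)}\Bigr).
\]
At $z\to2^+$ the inner argument tends to $1$ and $c-a-b=\tfrac12$, so Gauss summation applies. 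The prefactor tends to $e^{i\pi(s-1)/2}=-i\,e^{i\pi s/2}$. The Gauss sum equals $\Gamma(s+\tfrac12)\Gamma(\tfrac12)/(\Gamma(\tfrac s2+1)\Gamma(\tfrac s2))$.

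In the ratio to $y_s$ the phases leave a factor $-i$, and the result is $-i\,\Gamma(\tfrac{s+1}2)^2/(\Gamma(\tfrac s2+1)\Gamma(\tfrac s2))$. Rewriting $\Gamma(\tfrac s2)=\tfrac2s\Gamma(\tfrac{s+2}2)$ gives
\[
-\frac{i s}{2}\Bigl[\frac{\Gamma(\frac{s+1}2)}{\Gamma(\frac{s+2}2)}\Bigr]^2 .
\]
Multiplying by $-\tfrac s2$ yields $\tfrac{is^2}{4}[\cdots]^2$, which equals the stated $\tfrac{i s^2(s+\frac12)}{2(2s+1)}[\cdots]^2$. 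The delicate point is that the branch factor is the same upper determination $\log(1-z)\to-i\pi$ in both evaluations. I will check this against the convention fixed in the Branch convention paragraph.

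For \eqref{eq:J-exact-adjacent} I divide \eqref{eq:scalar-second-jet-transfer} by $y_{s+1}$. At $q=0$, $d=s^2$, and \eqref{eq:scalar-boundary-transfer} gives
\[
y_{s+1}=\frac{2s+1}{s^3}\,(2s\,y_s'+s^2y_s)=\frac{2s+1}{s^3}\,y_s\,(2sp_s+s^2).
\]
Write $w_s'/y_s=J_s+R_sp_s$ and set $E:=2sp_s+s^2$. Then the quotient reads
\[
R_{s+1}\,E=2s(J_s+R_sp_s)+s^2R_s-2+\frac{4}{s^2}\,E .
\]
The terms $2sR_sp_s+s^2R_s$ combine to $R_sE$. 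Solving gives
\[
2sJ_s=E\Bigl(R_{s+1}-R_s-\frac4{s^2}\Bigr)+2,
\]
which is \eqref{eq:J-exact-adjacent} after dividing by $2s$. This step is pure algebra, valid wherever $y_s$ and $E$ are nonzero, and it extends by meromorphic continuation. The main obstacle is therefore only the branch bookkeeping in the first part; I will cross-check $p_s$ numerically against Code~CL.
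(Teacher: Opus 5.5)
Your argument is correct. For \eqref{eq:ps-explicit} it takes a different route from the paper; for \eqref{eq:J-exact-adjacent} it is essentially the paper's computation.

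\textbf{The paper's route for \eqref{eq:ps-explicit}.} The paper never evaluates a non-diagonal boundary value. It reads $p_s=\frac{s^2}{2(2s+1)}\,y_{s+1}/y_s-\frac s2$ off the scalar transfer \eqref{eq:scalar-boundary-transfer} at $q=0$. It then applies Lemma~\ref{lem:quadeval} at $s$ and at $s+1$, so the Gamma factor appears as the ratio $y_{s+1}/y_s=i(s+\frac12)[\Gamma(\frac{s+1}2)/\Gamma(\frac{s+2}2)]^2$.

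\textbf{Your route.} You use the $a$-lowered derivative elimination \eqref{eq:derelimA} at $a=b=s$. You then evaluate the neighbour ${}_2F_1^{\uparrow}(s-1,s;2s;2)$ by a second application of the quadratic transformation and Gauss's sum. Your inner parameters $(\frac{s-1}2,\frac{s+1}2;s+\frac12)$, the excess $\frac12$, the Gauss value and the phase $-ie^{i\pi s/2}$ all check. The result also agrees with the anchor $p_1=y_1'/y_1=-\frac12+\frac i\pi$ of Theorem~\ref{thm:anchor1}.

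\textbf{What each buys.} Your derivation of \eqref{eq:ps-explicit} is independent of Theorem~\ref{thm:boundarytransfer}. It needs only Gauss's contiguous relation and the derivative formula, not the second symmetric contiguity or the hypergeometric equation evaluated at $z=2$. It therefore doubles as an independent check of the $q=0$ transfer. The cost is that the continuation and branch argument of Lemma~\ref{lem:quadeval} must be repeated for a non-symmetric parameter pair. It does go through unchanged: $z^2/(4(z-1))$ meets $[1,\infty)$ only for real $z>1$, and $1-z$ stays in the lower half-plane, so both the inner function and the prefactor are continued exactly as for $y_s$. The paper's route is more economical, because the scalar transfer is needed anyway for the second identity.

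\textbf{The second identity.} Dividing \eqref{eq:scalar-second-jet-transfer} by $y_{s+1}=\frac{2s+1}{s^3}\,y_s(2sp_s+s^2)$ is the same algebra as the paper's $\partial_q^2\log\rho_s(q)|_{q=0}$. It is simply organised at the level of jets rather than logarithms.
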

\begin{proof}
At $q=0$, \eqref{eq:scalar-boundary-transfer} reads
$y_{s+1}=\frac{2s+1}{s^3}(2s\,y_s'+s^2y_s)$, so
$p_s=\frac{s^2}{2(2s+1)}\frac{y_{s+1}}{y_s}-\frac s2$.  By
Lemma~\ref{lem:quadeval},
$y_{s+1}/y_s=e^{i\pi/2}\,(s+\tfrac12)\,
[\Gamma(\frac{s+1}2)/\Gamma(\frac{s+2}2)]^2$, which gives
\eqref{eq:ps-explicit}.  Formula \eqref{eq:J-exact-adjacent} follows by
taking two $q$-derivatives at $q=0$ of $\log\rho_s(q)$, where
\[
 \rho_s(q)=\frac{Y_{s+1}(q;2^+)}{Y_s(q;2^+)}
 =\frac{s(2s+1)}{d^2}\,\{2s\,p_s(q)+d\}.
\]
\end{proof}

\begin{proposition}[no local scalar linear invariant]\label{prop:noinvariant}
For generic $s$ there is no non-zero covector depending rationally only on
$s$ and $q$ whose pairing with $U_s(q)$ is invariant under
\eqref{eq:Mmatrix} and which is determined by the regular solution alone.
Any invariant linear functional solves the backward adjoint equation
$\lambda_s^T=\lambda_{s+1}^TM_s$ and therefore carries one independent
normalising datum; equivalently, the generic linear invariant is the
adjoint/Wronskian pairing with a second solution.  Consequently contiguity
alone cannot fix the boundary correction: one exact anchor value of
$(R_s,J_s)$, equivalently of the jet $(w_s,w_s')$, is indispensable.
\end{proposition}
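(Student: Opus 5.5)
The plan is to treat the statement as one about the structure of the solution space of the first-order two-dimensional recurrence $U_{s+1}=M_s(q)U_s$ of \eqref{eq:Mmatrix}. First I would check that $M_s(q)$ is generically invertible; its determinant is a rational function of $(s,q)$, and by Proposition~\ref{prop:secondsym} it is nonzero off $d=0$. The recurrence then has a two-dimensional solution space over the field of step-one periodic functions. Second, I would note that a covector $\lambda_s(q)$ has $\langle\lambda_s,U_s\rangle$ invariant for \emph{every} solution $U$ exactly when $\lambda_s^T=\lambda_{s+1}^TM_s$. This is the adjoint (backward) recurrence, again two-dimensional. The standard pairing identifies each adjoint solution with $\lambda_s=W_s^{-1}J\,V_s$ for a second primal solution $V$, with $J=\bigl(\begin{smallmatrix}0&1\\-1&0\end{smallmatrix}\bigr)$ and $W_s=\det(U_s,V_s)$ the Casoratian. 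So every invariant is the Wronskian pairing with some second solution, up to a periodic multiplier. That multiplier is the one free normalising datum.

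The core step is to show that no nonzero \emph{rational} adjoint solution exists for generic $q$. Any rational $\lambda_s$ has algebraic growth as $s\to\infty$, of the form $s^{\kappa}(\lambda^{(0)}+O(s^{-1}))$. I would read off the large-$s$ behaviour of the two primal solutions from the exact data already in hand. Lemma~\ref{lem:quadeval} gives $y_s$ at $q=0$ as $e^{i\pi s/2}$ times a Gamma quotient. The second solution, the other Frobenius branch at $z=2$ (equivalently the lower boundary value), carries the factor $e^{-i\pi s/2}$ instead. So the characteristic multipliers of $M_s$ at $s=\infty$ are $\pm i$ times factors of algebraic size; concretely, the leading term of $M_s(0)$ as $s\to\infty$ should have eigenvalues $\pm i$ after scaling. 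A rational adjoint solution would force one of these multipliers to be $1$, or more generally to have algebraic growth with no exponential $e^{\pm i\pi s/2}$ factor. A Birkhoff-type (formal-solution) analysis of the difference system at $s=\infty$ therefore excludes rational $\lambda_s$. The same argument at generic $q$ follows by continuity in $q$, because the leading multipliers do not depend on $q$. The clause ``determined by the regular solution alone'' adds nothing here: any covector built rationally from $U_s$ would itself have to be rational in $(s,q)$.

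The consequence then follows directly. Since the invariant must involve a second solution, contiguity fixes $(w_s,w_s')$ only modulo the homogeneous solutions of \eqref{eq:jetprop}, which form a two-dimensional space over periodic functions. One additional exact value of $(R_s,J_s)$ is therefore needed. By \eqref{eq:J-exact-adjacent}, $J_s$ is determined by $R$ at adjacent points, so the ambiguity reduces to a single anchor, as stated.

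The main obstacle is making the Birkhoff asymptotics rigorous for $M_s$. I would first compute $s^{-1}M_s(0)$, or a suitable gauge of it, as a series in $1/s$ and confirm that the leading eigenvalues are $\pm i$ and distinct. Distinct leading eigenvalues make the formal diagonalisation unobstructed, and exclusion of rational adjoint solutions follows.
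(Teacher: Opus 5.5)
Your first step (invertibility, the backward adjoint equation, and every adjoint solution being a Wronskian pairing $X\mapsto\det(X,V_s)/\omega_s$ with $\omega_{s+1}=\det M_s\,\omega_s$) is the same as the paper's. Your core step, however, is a genuinely different argument.

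The paper never shows that the adjoint system has no rational solution. Its proof is structural: the only invariant covector manufactured from the regular solution is its own Wronskian covector, whose pairing with $U_s$ is $\det(U_s,U_s)/\omega_s\equiv0$, and any other choice amounts to a second solution. Your asymptotic route proves the stronger statement that no nonzero rational adjoint solution exists at all.

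It also needs no Birkhoff theory, but the normalisation must be fixed. $s^{-1}M_s(0)$ is nilpotent at leading order, so conjugate by $D_s=\mathrm{diag}(1,s)$ instead. Then $\widetilde M_s=D_{s+1}^{-1}M_sD_s$ satisfies, for every fixed $q$,
\[
 \widetilde M_s\longrightarrow\widetilde M_\infty:=\begin{pmatrix}2&4\\-2&-2\end{pmatrix}\qquad(s\to\infty).
\]
No continuity argument in $q$ is needed, since $d=s^2(1+O(s^{-2}))$. The eigenvalues are $\pm2i$, not $\pm i$; the modulus $2$ is the $2^s$ growth of $y_s$.

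Suppose $\tilde\lambda_s=D_s\lambda_s=s^{\kappa}(v+O(s^{-1}))$ is rational with $v\ne0$ and $\tilde\lambda_s=\widetilde M_s^{T}\tilde\lambda_{s+1}$. Then $\widetilde M_\infty^{T}v=v$, which is impossible.

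Compute $\det M_s=s^2(2s+1)^2/d^2$ directly; it also vanishes at $s=0,-\tfrac12$. The $4d^2$ of Proposition~\ref{prop:secondsym} is the determinant of the left-hand boundary system, not of $M_s$.

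Two corrections are needed.

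First, your dismissal of the clause ``determined by the regular solution alone'' rests on a false claim. The covector $JU_s/\omega_s$, i.e.\ $X\mapsto\det(X,U_s)/\omega_s$, is built from $U_s$, is not rational in $(s,q)$, and is an adjoint solution. For the literal first sentence your stronger result does make the clause redundant. But the consequence that contiguity alone cannot fix the correction needs exactly the paper's observation that this covector annihilates $U_s$, so state it.

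Second, \eqref{eq:J-exact-adjacent} does not shrink the datum. It trades $J_s$ for $R_{s+1}$, so the anchor remains two numbers, $(R_s,J_s)$ or equivalently $(R_s,R_{s+1})$. A single scalar $R_{s_*}$ does not determine the solution, as the paper remarks right after the proposition.
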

\begin{proof}
The transfer matrix is invertible away from $s^2=q^2$, so the space of
adjoint solutions is two-dimensional and prescribing an invariant
functional is equivalent to prescribing a covector at one value of $s$.
No canonical covector is selected by the regular solution itself: pairing
$U_s$ with a covector built from $U_s$ makes the alternating determinant
vanish identically, while any complementary covector amounts to a second
solution.  Formula \eqref{eq:J-exact-adjacent} exhibits the same
obstruction in scalar form: the missing $z$-jet is exchanged for the
undetermined adjacent value $R_{s+1}$, not eliminated.
\end{proof}

The transfer is intrinsically two-dimensional: one scalar anchor
$R_{s_*}$ does not determine the solution, the minimal local datum being
the pair $(R_{s_*},J_{s_*})$.  Terminating points $s=-N$ are not
legitimate anchors for this family, because $2s=-2N$ is a denominator
parameter pole away from the diagonal $q=0$.  The next subsection supplies
the anchors.

\subsection{\texorpdfstring{Exact anchors: polylogarithms at $s=1$,
Euler moments for every class}{Exact anchors: polylogarithms at s=1,
Euler moments for every class}}\label{sec:anchors}

\begin{theorem}[the polylogarithmic anchor]\label{thm:anchor1}
At $s=1$, $y_1(z)={}_2F_1(1,1;2;z)=-\log(1-z)/z$, so for the upper
boundary convention
\begin{equation}\label{eq:y1}
 y_1=\frac{i\pi}{2},\qquad
 y_1'=-\frac12-\frac{i\pi}{4}.
\end{equation}
Furthermore
\begin{equation}\label{eq:R1J1}
 R_1=-\frac{\pi^2}{6}-\frac{7i}{\pi}\,\zeta(3),
 \qquad
 J_1=2\log2-\frac{7}{\pi^2}\,\zeta(3)-\frac{i\pi}{3},
\end{equation}
equivalently the full jet anchor
\[
 w_1=\frac72\zeta(3)-\frac{i\pi^3}{12},
 \qquad
 w_1'
 =\frac{7\zeta(3)}{2\pi}+\frac{\pi^2}{12}
 +i\Bigl(\pi\log2-\frac{7\zeta(3)}{2\pi}+\frac{\pi^3}{24}\Bigr).
\]
\end{theorem}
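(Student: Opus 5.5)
The anchor at $s=1$ is obtained by writing every quantity at $s=1$ as an explicit elementary or polylogarithmic function of $z$, and only then passing to the upper boundary value $z\to2+i0$ along the path fixed in the branch convention of Subsection~\ref{sec:appell}.

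\textbf{The diagonal values.} At $q=0$ one has $Y_1(0;z)={}_2F_1(1,1;2;z)=-\log(1-z)/z$. Since $\log(1-z)\to-i\pi$ on the upper approach, this gives $y_1=i\pi/2$. Differentiating,
\[
\partial_z y_1=\frac{1}{z(1-z)}+\frac{\log(1-z)}{z^2},
\]
which at $z=2^+$ is $-\tfrac12-\tfrac{i\pi}4$. This proves \eqref{eq:y1}.

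\textbf{Closed form for $w_1(z)$.} For the transverse jet I expand termwise. Since $(1-q)_n(1+q)_n=\prod_{j=1}^n(j^2-q^2)=(n!)^2\prod_{j\le n}(1-q^2/j^2)$, the second $q$-derivative at $0$ is $-2(n!)^2H^{(2)}_n$. With $(2)_n=(n+1)!$ this gives, inside the unit disc,
\[
w_1(z)=-2\sum_{n\ge1}\frac{H_n^{(2)}}{n+1}\,z^n
=-\frac2z\int_0^z\frac{\operatorname{Li}_2(t)}{1-t}\,dt,
\]
using $\sum_n H^{(2)}_n t^n=\operatorname{Li}_2(t)/(1-t)$. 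Integration by parts with $d\operatorname{Li}_2(t)=-\log(1-t)\,dt/t$ turns the integral into
\[
-\log(1-z)\operatorname{Li}_2(z)-\int_0^z\frac{\log^2(1-t)}{t}\,dt .
\]
The remaining integral has the classical closed form
\[
\log z\,\log^2(1-z)+2\log(1-z)\operatorname{Li}_2(1-z)-2\operatorname{Li}_3(1-z)+2\zeta(3).
\]
This form is valid on the germ at $0$, so it continues analytically along the chosen path. Both $w_1$ and its $z$-derivative (differentiate the closed form, or equivalently the series) are therefore explicit. The $z$-derivative can be cross-checked against the derivative-free contiguity of Lemma~\ref{lem:GaussDerivativeFree}, or against \eqref{eq:J-exact-adjacent}.

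\textbf{Boundary evaluation and the main obstacle.} At $z=2^+$ I insert the following values:
\[
1-z=-1,\qquad \operatorname{Li}_2(-1)=-\tfrac{\pi^2}{12},\qquad \operatorname{Li}_3(-1)=-\tfrac34\zeta(3),\qquad \log(1-z)=-i\pi,
\]
together with $\operatorname{Li}_2(2\pm i0)=\tfrac{\pi^2}4\pm i\pi\log2$, with the sign chosen to match the upper path. Collecting terms should give $w_1=\tfrac72\zeta(3)-\tfrac{i\pi^3}{12}$, and likewise $w_1'$. Finally $R_1=w_1/y_1$ and $J_1=w_1'/y_1-w_1y_1'/y_1^2$ follow by direct division.

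The main obstacle is the branch bookkeeping. The terms $\log z\,\log^2(1-z)$, $\operatorname{Li}_2(z)$ and $\operatorname{Li}_3(1-z)$ must all be continued consistently along the path $x(t)=tx_0$, whose Gauss argument $2t+2it(1-t)$ lies in the upper half-plane, to the same upper boundary value. A single wrong sign in $\operatorname{Im}\operatorname{Li}_2(2+i0)$ would corrupt both $\operatorname{Re}R_1$ and $J_1$.

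I would control this in two ways. First, check that each intermediate function is analytic in the upper half-plane off $[1,\infty)$, so that its value is the standard principal limit from above. Second, check the final numbers against the absolutely convergent zero-balanced representation of Proposition~\ref{prop:split}, differentiated twice in $\alpha$ at $s=1$, or against a high-precision evaluation of the defining series along the path.
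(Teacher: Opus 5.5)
Your route is correct, and it differs from the paper's. The paper does not sum the $q$-jet as a series. It writes the transverse variational equation at $s=1$ and reduces order over $y_1$, obtaining $J_1=-\frac{2}{\pi^2}I(2^+)$ and $R_1=-2\int_0^{2^+}\frac{I(u)\,du}{(1-u)\log^2(1-u)}$. It then needs a second integration by parts in $x=\log(1-u)$ to evaluate $R_1$.

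Your closed form $w_1(z)=\frac2z\bigl(\log(1-z)\operatorname{Li}_2(z)+I(z)\bigr)$ contains both of these formulas. Dividing by $y_1(z)=-\log(1-z)/z$ gives $R_1(z)=-2\operatorname{Li}_2(z)-2I(z)/\log(1-z)$. In its derivative the $\operatorname{Li}_2'$ and $I'$ terms cancel, leaving $-2I(z)/\bigl((1-z)\log^2(1-z)\bigr)$, which is exactly the paper's reduction-of-order integrand. So you skip the second integral and need only $I(2^+)$ and $\operatorname{Li}_2(2+i0)$.

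Do commit to the sign. The upper path gives $\operatorname{Li}_2(2+i0)=\frac{\pi^2}{4}+i\pi\log2$. With it the $\pi^2\log2$ terms cancel and $w_1=\frac72\zeta(3)-\frac{i\pi^3}{12}$; the other sign would leave a stray $-2\pi^2\log2$. That cancellation is your branch check.

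Your proposed cross-check through Proposition~\ref{prop:split} is not available at $s=1$. That proposition requires $\operatorname{Re}s<\frac12$ and $2s\notin\mathbb Z$, and the boundary series diverges when $a+b=2$. You do not need it: every closed form you use is analytic on the upper half-plane, so the continuation to $2+i0$ is automatic.

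The one place where ``likewise'' hides a real problem is $w_1'$. Differentiating your closed form gives $w_1'(z)=-w_1(z)/z-2\operatorname{Li}_2(z)/\bigl(z(1-z)\bigr)$, hence
\[
 w_1'=\operatorname{Li}_2(2+i0)-\tfrac12w_1
 =\frac{\pi^2}{4}-\frac74\zeta(3)+i\Bigl(\pi\log2+\frac{\pi^3}{24}\Bigr).
\]
This reproduces $J_1=w_1'/y_1-w_1y_1'/y_1^2=2\log2-\frac{7}{\pi^2}\zeta(3)-\frac{i\pi}{3}$ as stated. It is \emph{not} the $w_1'$ displayed in the statement.

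The displayed value cannot be right. The statement's own $y_1,y_1',R_1,J_1$ force $w_1'=y_1\bigl(J_1+R_1y_1'/y_1\bigr)$, which is the value above. The paper's proof derives only $R_1$ and $J_1$, and does not derive that display.

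So your argument proves $y_1,y_1',w_1,R_1,J_1$. The ``equivalent'' $w_1'$ display has to be corrected rather than proved, and you should carry out that computation explicitly instead of asserting it.
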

\begin{proof}
Formulae \eqref{eq:y1} are immediate from $\log(1-z)\to-i\pi$.  Define
$I(u)=\int_0^u\log^2(1-t)\,t^{-1}\,dt$; an antiderivative calculation
gives
\[
 I(u)=\log^2(1-u)\log u
 +2\log(1-u)\operatorname{Li}_2(1-u)-2\operatorname{Li}_3(1-u)+2\zeta(3),
\]
whence, at the upper boundary,
$I(2^+)=\frac72\zeta(3)-\pi^2\log2+\frac{i\pi^3}{6}$.  The transverse
variational equation at $s=1$ and reduction of order over the explicit
solution $y_1$ yield
\[
 R_1=-2\int_0^{2^+}
 \frac{I(u)}{(1-u)\log^2(1-u)}\,du,
 \qquad
 J_1=-\frac{2}{\pi^2}\,I(2^+),
\]
which proves the $J_1$ formula.  For $R_1$, substitute $x=\log(1-u)$;
since $dI/dx=-x^2e^x/(1-e^x)$, integration by parts gives
\[
 R_1=2\left[-\frac{I}{x}
 +x\log(1-e^x)+\operatorname{Li}_2(e^x)\right]_{x=0}^{x=-i\pi},
\]
and the endpoint values $\operatorname{Li}_2(1)=\pi^2/6$,
$\operatorname{Li}_2(-1)=-\pi^2/12$,
$\operatorname{Li}_3(-1)=-\frac34\zeta(3)$ with
$\log(-1-i0)=-i\pi$ give \eqref{eq:R1J1}; the $\log2$ terms cancel
exactly.  Code~CL confirms both constants independently to $30$ digits by
direct high-precision differentiation of the boundary family.
\end{proof}

Combining Theorem~\ref{thm:anchor1} with the jet propagation
\eqref{eq:jetprop} determines, by finite algebra, the complete
upper-boundary jet on the whole positive integer class; the first
propagated value is
\begin{equation}\label{eq:R2}
 R_2=-\frac{\pi^2}{2}+4-\frac{7i}{\pi}\,\zeta(3)
 -\frac{i\pi}{2}\Bigl(4\log2-\frac{14}{\pi^2}\,\zeta(3)-2\Bigr),
\end{equation}
verified independently at $30$ digits (Code~CL).

\begin{theorem}[canonical Euler-moment anchor for the real residual
classes]
\label{thm:euleranchor}
Let $0<\operatorname{Re}\sigma<1$ with $y_\sigma\ne0$.  For
$\varepsilon>0$ put
\begin{gather*}
 W_{\sigma,\varepsilon}(t)=t^{\sigma-1}(1-t)^{\sigma-1}
 \bigl(1-(2+i\varepsilon)t\bigr)^{-\sigma},\qquad
 T_\varepsilon(t)=\frac{t}{1-(2+i\varepsilon)t},\\
 L_\varepsilon(t)=\log t-\log(1-t)+\log\bigl(1-(2+i\varepsilon)t\bigr),
\end{gather*}
and the normalised complex expectation
$\langle f\rangle_{\sigma,\varepsilon}=
\int_0^1W_{\sigma,\varepsilon}f\,dt\big/
\int_0^1W_{\sigma,\varepsilon}\,dt$; all integrals converge absolutely
for every $\varepsilon>0$.  Write
$\langle f\rangle_\sigma=\lim_{\varepsilon\downarrow0}
\langle f\rangle_{\sigma,\varepsilon}$ whenever the limit exists.  Then
$\langle L^2\rangle_\sigma$ exists as an absolutely convergent integral
at $\varepsilon=0$ (boundary values $1-2t-i0$), the $T$-moments exist
as $\varepsilon\downarrow0$ limits (equivalently, as absolutely
convergent integrals over any contour from $0$ to $1$ that leaves the
segment near $t=\frac12$ into the upper half-plane), and
\begin{equation}\label{eq:euleranchor}
 R_\sigma=-2\psi'(\sigma)+\langle L^2\rangle_\sigma,
 \qquad
 J_\sigma=\lim_{\varepsilon\downarrow0}\Bigl[
 \sigma\bigl\{\langle L_\varepsilon^2T_\varepsilon
 \rangle_{\sigma,\varepsilon}
 -\langle L_\varepsilon^2\rangle_{\sigma,\varepsilon}
 \langle T_\varepsilon\rangle_{\sigma,\varepsilon}\bigr\}
 -2\langle L_\varepsilon T_\varepsilon\rangle_{\sigma,\varepsilon}
 \Bigr].
\end{equation}
Consequently every \emph{real} residual class $s_0+\mathbb Z$ avoiding
the parameter poles and the zeros of the boundary solution carries a
canonical anchor: choose the representative $\sigma\in s_0+\mathbb Z$
with $0<\sigma<1$, the integer class being covered by
Theorem~\ref{thm:anchor1} at $\sigma=1$; the invertible transfer
\eqref{eq:Mmatrix}--\eqref{eq:jetprop} and its inverse then determine
$(R_s,J_s)$ uniquely on the whole class, independently of the
representative used.  (Nonreal classes, which this paper does not use,
carry the same formulae for $0<\operatorname{Re}\sigma<1$ by the identical
argument.)
\end{theorem}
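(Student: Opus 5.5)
Start from Euler's integral representation of the zero-balanced family. For $0<\operatorname{Re}\sigma<1$ and $z=2+i\varepsilon$, $\varepsilon>0$, the parameters satisfy $c=2\sigma$ and $b=\sigma+q$, so $c-b=\sigma-q$ and
\[
 Y_\sigma(q;z)=\frac{\Gamma(2\sigma)}{\Gamma(\sigma+q)\Gamma(\sigma-q)}
 \int_0^1 t^{\sigma+q-1}(1-t)^{\sigma-q-1}(1-zt)^{-\sigma+q}\,dt .
\]
This is valid for $|q|$ small, and the integrand is analytic in $z$ off the cut. Letting $\varepsilon\downarrow0$ reproduces the upper boundary value $2^+$ of the branch convention, since $1-(2+i\varepsilon)t$ approaches $1-2t-i0$.

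The $q$-dependence of the integrand is exactly $e^{qL_\varepsilon(t)}$, with $L_\varepsilon=\log t-\log(1-t)+\log(1-zt)$. At $q=0$ the integrand is $W_{\sigma,\varepsilon}$.

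Take $\partial_q^2$ at $q=0$. The Gamma prefactor $\Gamma(2\sigma)/(\Gamma(\sigma+q)\Gamma(\sigma-q))$ is even in $q$, its logarithm has second derivative $-2\psi'(\sigma)$, and its first derivative vanishes. The integral $\int W e^{qL}$ has logarithmic second derivative $\langle L^2\rangle-\langle L\rangle^2$. By the evenness of $Y_\sigma$ in $q$, the first derivative of the full log vanishes, and that first derivative is $\langle L\rangle$ (the prefactor contributes nothing at first order). So $\langle L\rangle=0$, and
\[
 R_\sigma=\partial_q^2\log Y\big|_0=-2\psi'(\sigma)+\langle L^2\rangle_\sigma ,
\]
using $\partial_q Y|_0=0$ so that $w/y=\partial_q^2\log Y$.

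For $J_\sigma=\partial_zR$, differentiate the same representation in $z$. We have $\partial_z$ of $(1-zt)^{-\sigma+q}$ equal to $(\sigma-q)\,T\,(1-zt)^{-\sigma+q}$. So $\partial_z\log\int We^{qL}$ becomes a ratio of moments with weight $(\sigma-q)T$. Expanding to second order in $q$ and using $\langle L\rangle=0$ gives
\[
 \sigma\bigl(\langle L^2T\rangle-\langle L^2\rangle\langle T\rangle\bigr)-2\langle LT\rangle .
\]
The cross term arises from the $-q$ in $(\sigma-q)$ against the $qL$ factor, plus a covariance term involving $\langle L\rangle$, which vanishes. This calculation is done at each $\varepsilon>0$, after which we pass to the limit.

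The main obstacle is analytic: justifying the limits and absolute convergence.

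\textbf{The $L^2$ moment.} Near $t=1/2$ the factor $(1-2t)^{-\sigma}$ is integrable because $\operatorname{Re}\sigma<1$, and $\log^2$ factors do not spoil this. At the endpoints $t^{\sigma-1}$ and $(1-t)^{\sigma-1}$ are integrable for $\operatorname{Re}\sigma>0$. So $\langle L^2\rangle$ converges absolutely at $\varepsilon=0$, and dominated convergence applies.

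\textbf{The $T$-moments.} $T=t/(1-zt)$ adds a full pole at $t=1/2$, so the integrand behaves like $(1-2t)^{-\sigma-1}$, which is not absolutely integrable. Here I would deform the contour into the upper half-plane near $t=1/2$; this is legitimate for $\varepsilon>0$ because the singularity sits at $1/(2+i\varepsilon)$, just below the real axis. On the deformed contour the limit $\varepsilon\downarrow0$ is uniform, which yields the stated existence. The nonvanishing $y_\sigma\neq0$ is needed to divide by $\int W$.

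Finally, for a real residual class $s_0+\mathbb Z$, choose the representative $\sigma\in(0,1)$. The integer class is anchored instead by Theorem~\ref{thm:anchor1}. Then $(R_\sigma,J_\sigma)$ determines $(y,y',w,w')$ up to the overall scale $y_\sigma$, and this scale cancels in $R$ and $J$. The transfer \eqref{eq:Mmatrix} and \eqref{eq:jetprop} is invertible away from $d=0$, so forward and backward propagation determine $(R_s,J_s)$ on the whole class. The result is independent of the representative, because any two representatives are linked by that same invertible transfer.
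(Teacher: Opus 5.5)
Your proposal is correct and follows essentially the paper's route: Euler's integral with the $q$-dependence isolated as $e^{qL}$, evenness forcing $\langle L\rangle=0$, the $-2\psi'(\sigma)$ from the Gamma prefactor, the $z$-derivative taken at $\varepsilon>0$ before passing to the limit, contour deformation above $t=\tfrac12$ for the $T$-moments, and propagation by the transfer. Your expansion of $(\sigma-q)\langle T\rangle_q$ is the same computation as the paper's differentiation of $\langle L^2\rangle$ with score $\sigma T$ and $\partial_zL=-T$. Two small points to tighten. First, rebuilding the normalised jet $(1,p_\sigma,R_\sigma,J_\sigma+R_\sigma p_\sigma)$ from $(R_\sigma,J_\sigma)$ also uses $p_\sigma$, which is available in closed form from Proposition~\ref{prop:ps}. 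Second, $\det M_s(0)=(2s+1)^2/s^2$, so the transfer degenerates at $s=-\tfrac12$ rather than ``at $d=0$''; this is harmless, since $s=-\tfrac12$ is itself a parameter pole excluded by the admissibility condition.
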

\begin{proof}
\emph{Convergence and existence of the limits.}  For $\varepsilon>0$
the factor $1-(2+i\varepsilon)t$ is bounded away from zero on $[0,1]$,
so all moments converge absolutely; at $\varepsilon=0$ the integrand of
$\langle L^2\rangle_\sigma$ is
$O\bigl(|1-2t|^{-\operatorname{Re}\sigma}\log^2|1-2t|\bigr)$ near
$t=\frac12$, absolutely integrable for
$\operatorname{Re}\sigma<1$, while the $T$-moments carry an extra
factor $(1-2t-i0)^{-1}$ and are \emph{not} absolutely convergent at
$\varepsilon=0$: they must be taken as the stated limits.  The limits
exist by contour deformation: the singularity of the integrand sits at
$t_\varepsilon=1/(2+i\varepsilon)$, which lies \emph{below} the real
segment and tends to $\frac12$ as $\varepsilon\downarrow0$; the
integrand is analytic in $t$ in the upper half-disc around
$t=\frac12$, so $\int_0^1$ may be replaced, for every
$\varepsilon\ge0$, by the integral over the contour
$\mathcal C_\nu$ which follows the segment but crosses a semicircle of
fixed radius $\nu\in(0,\tfrac14)$ \emph{above} $t=\frac12$.  On
$\mathcal C_\nu$ the integrands converge uniformly as
$\varepsilon\downarrow0$ (the denominator is bounded away from zero),
so each moment tends to the corresponding absolutely convergent
$\mathcal C_\nu$-integral, which is independent of $\nu$; this
realises the boundary prescription $1-2t-i0$.  (Code~CO validates the
limit against the deformed-contour values at $\sigma=0.6$.)

\emph{The formulae.}  Euler's integral representation gives, for $q$
near zero and $\varepsilon\ge0$ interpreted as above,
\[
 Y_\sigma(q;2^+)=
 \frac{\Gamma(2\sigma)}
 {\Gamma(\sigma+q)\Gamma(\sigma-q)}
 \int_{\mathcal C_\nu}W_\sigma(t)\,e^{qL(t)}\,dt.
\]
The family is even in $q$, hence $\langle L\rangle_\sigma=0$.  Two
$q$-derivatives of the logarithm give the $R_\sigma$ formula, the
$-2\psi'(\sigma)$ coming from the Gamma prefactor.  For fixed $q$,
logarithmic $z$-differentiation of the integral has score $\sigma T$ at
$q=0$, while $\partial_zL=-T$; differentiating the normalised second
moment gives the $J_\sigma$ formula, first at $\varepsilon>0$ and then
in the limit, which is \eqref{eq:euleranchor}.  The transfer matrix is
invertible away from its explicit divisors, so the anchor propagates
uniquely in both directions; if two admissible representatives are
chosen, Euler's formula identifies both with the same analytic boundary
solution, and uniqueness of the propagation proves independence of the
choice.  Code~CL validates \eqref{eq:euleranchor} at $\sigma=0.6$
against direct differentiation.
\end{proof}

\begin{remark}[what the anchors close, and what remains]
\label{rem:anchorstatus}
Theorems~\ref{thm:anchor1} and~\ref{thm:euleranchor} remove the
normalisation ambiguity identified in
Proposition~\ref{prop:noinvariant}: no unproved asymptotic or Stokes
normalisation is assumed anywhere, and the boundary correction of
Remark~\ref{rem:target-false} is now an explicit convergent expression,
\[
 \Delta(s)=\frac18\operatorname{Re}R_s
 -\frac18\,\psi'\!\left(\frac{1-s}{2}\right),
\]
with $R_s$ given by the anchored transfer.  Every term of the isolated
form \eqref{eq:osc-isolated} of the one-function reduction is therefore
computable exactly at the function level; at $s=1$ the anchor exhibits the
arithmetic flavour of the correction ($\zeta(3)$, $\log2$).  The
remaining passage from this exact function-level solution to the formal
$Z^{-1}$ category (the asymptotic matching) is carried out in
Subsection~\ref{sec:matching}, with complete estimates in
Appendix~\ref{app:vertical}: the formal tangent is computed in closed
form and proved in all orders
(Theorems~\ref{thm:realkernel} and~\ref{thm:imclosed}), and the Stokes
correction of the real part is identified as
$-(\pi^2/8)\sec^2(\pi s/2)$, with the coefficient pinned by the
principal part at $s=1$ and the finite part confirmed by the
polylogarithmic anchor (Proposition~\ref{prop:stokesfun}, proved
modulo one periodicity statement that nothing else uses).  What then
remains of Problem~\ref{prob:formal-cancellation} is only its second,
$2$-adic assertion.
\end{remark}

\subsection{The matching theorem: the closed formal tangent}
\label{sec:matching}

This subsection resolves the matching problem of
Remark~\ref{rem:anchorstatus}: the formal tangent series is computed in
closed form, in all orders.  The real half is an unconditional theorem
with a complete kernel proof; the imaginary half is established by a
two-component matching in the lower half-plane, whose estimates are
carried out in full in Appendix~\ref{app:vertical}, and is certified in
exact rational arithmetic to order $41$.

\begin{theorem}[kernel form of the real tangent]\label{thm:realkernel}
As formal series in $Z^{-1}$,
\begin{equation}\label{eq:realkernel}
 \sum_{n\ge1}\operatorname{Re}g_n\,Z^{-n}
 =\frac1{16}\,\psi'\!\Bigl(\frac{Z+1}4\Bigr)
 -\frac1{16}\,\psi'\!\Bigl(\frac{Z+3}4\Bigr)
 +\frac14\,\psi'\!\Bigl(\frac{Z+1}2\Bigr),
\end{equation}
where each $\psi'$ stands for its Bernoulli asymptotic expansion.
\end{theorem}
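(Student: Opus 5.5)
The plan is to start from Corollary~\ref{cor:R}, which already gives the real tangent as a Borel transform: $\operatorname{Re}\Phi(t)=\sum_n\operatorname{Re}g_n\,t^n/(n-1)!=t^2e^t/\sinh2t$.
Equivalently, $\sum_n\operatorname{Re}g_n Z^{-n}$ is the formal Laplace series $\int_0^\infty e^{-Zt}\,\operatorname{Re}\Phi(t)\,t^{-1}\,dt$, interpreted termwise: $t^{n-1}/(n-1)!\mapsto Z^{-n}$.
So I only need to show that the right-hand side of \eqref{eq:realkernel} has the same formal Borel transform $t\,e^t/\sinh 2t$ (before the extra factor $t$ is restored).
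Everything can be checked at the level of the Borel map $\mathfrak B$ of Remark~\ref{rem:borel}, so no analytic resummation is needed.

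The key tool is the Borel transform of the trigamma expansion.
From $\psi'(w)\sim\sum_{k\ge0}B_k w^{-k-1}$ (with $B_1=+\frac12$ in that convention) one has $\psi'(w)=\int_0^\infty t e^{-wt}/(1-e^{-t})\,dt$ formally.
Rescaling $w=(Z+c)/m$ gives
\[
\psi'\Bigl(\frac{Z+c}{m}\Bigr)=\int_0^\infty \frac{m^2\,\tau\, e^{-(Z+c)\tau}}{1-e^{-m\tau}}\,d\tau
\]
as formal series in $Z^{-1}$.
I would justify this either coefficientwise, using the shift rule $\mathfrak B[f(Z+c)]=e^{-ct}\mathfrak Bf$ and scaling, or by quoting the standard expansion.
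Applying it with $(c,m)=(1,4),(3,4),(1,2)$, the right-hand side of \eqref{eq:realkernel} has kernel
\[
t\Bigl[\frac{e^{-t}-e^{-3t}}{1-e^{-4t}}+\frac{e^{-t}}{1-e^{-2t}}\Bigr].
\]

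This kernel should then simplify to $t\cdot t\,e^t/(t\sinh2t)$ in the appropriate normalisation.
Indeed $(e^{-t}-e^{-3t})/(1-e^{-4t})=e^{-t}/(1+e^{-2t})=\tfrac12\operatorname{sech}t$ and $e^{-t}/(1-e^{-2t})=\tfrac12\operatorname{csch}t$.
The sum is therefore $\tfrac12(\operatorname{sech}t+\operatorname{csch}t)=e^t/\sinh2t$, exactly as in the form $\frac{t^2}2(\operatorname{csch}t+\operatorname{sech}t)$ of Corollary~\ref{cor:R}.
Matching the power of $t$ between the conventions $\sum g_nt^n/(n-1)!$ and the Laplace kernel $t^{n-1}/(n-1)!$ accounts for the one factor of $t$.
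The leading $1/t$ of $\operatorname{csch}$ produces the $Z^{-1}$ term, with no constant term.

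The main obstacle is bookkeeping rather than substance: the Bernoulli sign convention ($B_1=\pm\frac12$) inside the trigamma expansion, and the $t^n/(n-1)!$ versus $t^{n-1}/(n-1)!$ normalisations.
I would pin both down by checking the first two coefficients explicitly.
From Corollary~\ref{cor:R}, $\operatorname{Re}g_1=(1-2^{-1})B_0=\tfrac12$ and $\operatorname{Re}g_2=\tfrac12E_0=\tfrac12$, and I would expand the three trigammas to order $Z^{-2}$ to confirm $\tfrac12Z^{-1}+\tfrac12Z^{-2}$.
Since the Borel map is injective on $Z^{-1}\mathbb Q[[Z^{-1}]]$, equality of kernels then gives \eqref{eq:realkernel} in all orders.
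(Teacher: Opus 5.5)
Your proposal is correct and follows the paper's proof essentially step for step. The paper also rescales $\psi'(x)=\int_0^\infty ue^{-xu}(1-e^{-u})^{-1}\,du$ via $u=4t,4t,2t$, simplifies the resulting kernel to $\frac12(\operatorname{sech}t+\operatorname{csch}t)$, and matches it coefficientwise through Watson's correspondence against $\operatorname{Re}\Phi(t)/t=te^t/\sinh2t$ from Corollary~\ref{cor:R}.

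Your explicit check of $\operatorname{Re}g_1=\operatorname{Re}g_2=\frac12$ is a sensible sanity test of the $B_1$ and $t^n/(n-1)!$ conventions. It is not needed once the kernel identity is established.
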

\begin{proof}
From $\psi'(x)=\int_0^\infty ue^{-xu}(1-e^{-u})^{-1}\,du$ with $u=4t$,
$4t$, $2t$ respectively, the right side of \eqref{eq:realkernel} is the
Watson expansion of $\int_0^\infty te^{-Zt}\kappa(t)\,dt$ with
\[
 \kappa(t)=\frac{e^{-t}-e^{-3t}}{1-e^{-4t}}+\frac{e^{-t}}{1-e^{-2t}}
 =\frac1{2\cosh t}+\frac1{2\sinh t}.
\]
Watson's correspondence $\int_0^\infty t^ne^{-Zt}dt=n!\,Z^{-n-1}$ is
coefficientwise exact, so the $n$-th coefficient of the right side of
\eqref{eq:realkernel} is the $(n-1)$-st Taylor coefficient of
$t\kappa(t)$.  By Corollary~\ref{cor:R},
$\sum_{n\ge1}\operatorname{Re}g_n\,t^{n-1}/(n-1)!
=\operatorname{Re}\Phi(t)/t=te^t/\sinh2t
=t(\operatorname{sech}t+\operatorname{csch}t)/2=t\kappa(t)$,
which proves \eqref{eq:realkernel}.  (Code~CM confirms the coefficient
identity exactly for all $n\le41$.)
\end{proof}

\begin{lemma}[vertical decay and sectorial matching]\label{lem:vertical}
\leavevmode
\begin{enumerate}
\item[\upshape(a)] Fix $0<\rho<1$.  For every $N$ there is $C_N$ such
that, uniformly on the bilateral cone
$|\operatorname{Re}Z|\le\rho\,|\operatorname{Im}Z|$,
$|\operatorname{Im}Z|\ge Y_0(\rho,N)$ (\emph{both} signs of
$\operatorname{Im}Z$, in particular on every vertical strip)
\[
 \Bigl|\frac ND-\sum_{n=1}^Ng_nZ^{-n}\Bigr|\le C_N\,|Z|^{-N-1}.
\]
\item[\upshape(b)] On every closed subsector of
$|\arg(\pm is)|<\pi/2$ the closed blocks $\mathsf K_0$,
$\mathsf S_0$, $\widehat{\mathsf r}$, $Q_{\rm rat}$ and the trigamma
functions are asymptotic to their Bernoulli--Stirling series, with
uniform remainders after each truncation.
\item[\upshape(c)] For every $\delta>0$ and every $N$, uniformly on
$\operatorname{Im}s\ge\delta|s|$,
\[
 \Bigl|T(s)-\sum_{j=1}^Na_j\,s^{-j}\Bigr|
 \le C_{\delta,N}\,|s|^{-N-1}
 +C_\delta\,\frac{e^{-\pi\operatorname{Im}s}}{\operatorname{Im}s},
 \qquad
 T(s)=\sum_{k\ge0}\widehat{\mathsf f}_\Gamma(s-1-2k),
\]
where $\sum_ja_js^{-j}$ is the formal series
$\widehat Q_\Gamma=(2\sinh D)^{-1}\widehat{\mathsf f}_\Gamma$ of
Proposition~\ref{prop:formal-sinh}.
\end{enumerate}
The mirror statements hold in the lower half-plane by Schwarz
reflection, with the same (rational) coefficients.
\end{lemma}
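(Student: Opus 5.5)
I would prove the three parts in the order (b), (c), (a), since (a) is assembled from the other two through the split of Proposition~\ref{prop:split} and the one-function reduction \eqref{eq:onefun}.

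For (b), every closed block is a finite product of Gamma functions, trigonometric factors and trigamma functions of affine arguments in $s$. These are $\mathsf K_0$ and $\mathsf S_0$ by \eqref{eq:blocksclosed}, $\widehat{\mathsf r}$ by \eqref{eq:rhatdef}, and $Q_{\rm rat}$ by \eqref{eq:Qrat}. The key step is to apply the reflection formula to each Gamma whose argument has negative real part. On $|\arg(\pm is)|<\pi/2$ every resulting argument then lies in a closed subsector of $|\arg z|<\pi$, where Stirling's series and the Bernoulli expansion of $\psi'$ hold with uniform remainders (DLMF 5.11). The trigonometric factors produced by reflection, for instance $\cos(\pi s/2)$ in $\mathsf S_0$ or $\tan(\pi s/2)$, are $\mp i$ or $1/2$ times a single exponential, up to a relative error $O(e^{-\pi|\operatorname{Im}s|})$. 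Hence they are absorbed into the error terms, and only the Bernoulli--Stirling part survives.

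For (c), I would compare $T(s)=\sum_k\widehat{\mathsf f}_\Gamma(s-1-2k)$ with the formal solution $\widehat Q_\Gamma$ of Proposition~\ref{prop:formal-sinh}. The plan is to write $\widehat{\mathsf f}_\Gamma$ as a truncated series $\sum_{j\le N+1}b_j\sigma^{-j}$ plus a remainder that is $O(|\sigma|^{-N-2})$ uniformly along the lattice $\sigma=s-1-2k$. This uses (b) for $\widehat{\mathsf r}$, and it holds because $\operatorname{Im}\sigma=\operatorname{Im}s\ge\delta|s|$ keeps the whole lattice in the sector. The remainder sums to $O(|s|^{-N-1})$ by comparison with an integral. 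Each power sum $\sum_k(s-1-2k)^{-j}$ is $2^{-j}\zeta(j,(1-s)/2)$, up to sign conventions, and this Hurwitz function is treated with Euler--Maclaurin summation, equivalently the Hermite--Binet integral. Its asymptotic series is exactly the $(2\sinh D)^{-1}$ inversion, with an exponentially small error $O(e^{-\pi\operatorname{Im}s})$ coming from the reflection of $\zeta(j,\cdot)$ across to the left half-plane. The factor $1/\operatorname{Im}s$ comes from bounding the Binet integral with a denominator $1-e^{2\pi i\sigma}$ near the real axis.

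For (a), I would insert (b) and (c) into \eqref{eq:osc-isolated}. In the cone, $\tan(\pi s/2)=\pm i+O(e^{-\pi|\operatorname{Im}s|})$, so $\operatorname{Re}(N/D)$ and $\operatorname{Im}(N/D)$ are no longer real and imaginary parts; both are meromorphic combinations of the closed blocks and $T$. The formal part must be shown to reproduce $\sum g_nZ^{-n}$. I expect this step to be the main obstacle: the lattice constant $c(s_0)$ and the Stokes correction $\Delta$ must contribute nothing to the formal part in the vertical direction. I would first try to show that $\Delta$ is exponentially small off the real axis. The periodic kernel of the central difference, times $\mathsf K_0$, can only add terms of size $e^{\pm\pi i s}$, and the anchored transfer (Theorems~\ref{thm:anchor1}, \ref{thm:euleranchor}) bounds their coefficients. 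Hence along vertical lines only the formal series survives, and uniqueness of asymptotic expansions fixes the coefficients to the $g_n$, which are the formal coefficients inherited via Corollary~\ref{cor:gausstangent}. The lower half-plane follows by Schwarz reflection, with the coefficients unchanged because they are rational.
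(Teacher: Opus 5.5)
Your part (b) is fine and matches the paper's argument (Stirling in closed sectors). The genuine gap is in (a), and it is one of logical order.

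The $g_n$ are \emph{formal} coefficients read off the inverse-factorial series. Corollary~\ref{cor:gausstangent} is a statement about those formal coefficients and makes no asymptotic claim. Your last step, ``uniqueness of asymptotic expansions fixes the coefficients to the $g_n$'', therefore presupposes what (a) asserts: that $\sum g_nZ^{-n}$ is an asymptotic expansion of the \emph{function} $N/D$. Feeding (b) and (c) into \eqref{eq:osc-isolated} can at best give $N/D$ the series on the right of \eqref{eq:formal-tangent}. Identifying that series with $\sum g_nZ^{-n}$ is Theorem~\ref{thm:imclosed}, which the paper deduces \emph{from} (a).

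Your route also uses inputs that lie downstream of (a):
\begin{enumerate}
\item $c(s_0)=0$ (Theorem~\ref{thm:czero}) rests on Corollary~\ref{cor:Qvert}, which is derived from (a).
\item $\operatorname{Re}(N/D)$, continued off the axis, is $(\mathcal N^{\uparrow}+\mathcal N^{\downarrow})/2$. It contains $\mathsf S_2$, which is not a closed block and is not expressible through $T$. Its closed form is available only under hypothesis (P) of Proposition~\ref{prop:stokesfun}, and that proposition proves the decay again from (a).
\end{enumerate}
Your proposed substitute does not work. The kernel of the central difference contains the constants; a constant is precisely the lattice constant that must be shown to vanish, and it does enter the formal part. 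The anchors give exact values on step-one lattices, not bounds as $|\operatorname{Im}s|\to\infty$.

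The closing appeal to Schwarz reflection also fails for (a). The $g_n$ are Gaussian rationals (for instance $\operatorname{Im}g_1=\frac12$). Reflection sends $\mathcal N^{\uparrow}=N/D$ to $\mathcal N^{\downarrow}$, whose coefficients are $\bar g_n$. The bilateral expansion of the single function $N/D$ is exactly what the matching needs when it descends $Y\to-\infty$.

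The missing idea is a direct function-level estimate of the convergent inverse-factorial series, which the paper supplies as follows.
\begin{enumerate}
\item It proves $|a_k(\alpha)|\le\cosh(\pi|\alpha|)\,k!\,2^{-k}$, hence $|G_n|\le C(n+1)!\,2^{-n/2}$ (Lemma~\ref{lem:hankel}).
\item On the cone, $(j+1)\le\sqrt2\,|Z-j|$ (Lemma~\ref{lem:tailsum}). Reserving $N+2$ factors $|Z-j|\ge|\operatorname{Im}Z|$, the tail of $\sum G_ne_n$ is $O(|Z|^{-N-1})$. The retained $e_n$ expand through Stirling numbers into the plain coefficients.
\item Cauchy estimates in $\alpha$ handle $\partial_\alpha^2$.
\item The exact identity $F=y_0^{-Z}(-i)^{1/2}\mathcal H$ from \eqref{eq:midpoint2F1}, with an $\alpha$-independent prefactor, makes $N/D=\frac12\partial_\alpha^2F(0;Z)/F(0;Z)$ an identity of functions, with $F(0;Z)=1+O(|Z|^{-1})$.
\end{enumerate}
This uses none of (b), (c), $c(s_0)$ or $\Delta$, and it is insensitive to the sign of $\operatorname{Im}Z$. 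That independence is what keeps the later chain non-circular.

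There is a secondary flaw in (c). You claim that $\operatorname{Im}\sigma=\operatorname{Im}s\ge\delta|s|$ keeps the whole lattice in the sector; this is false. $\operatorname{Im}\sigma_k$ is fixed while $\operatorname{Re}\sigma_k\to-\infty$, so $\arg\sigma_k\to\pi$. The lattice enters the region governed by the double poles and zeros of $\widehat{\mathsf r}$ on the negative axis, outside every closed sector where Stirling, or your (b), applies. Since $\widehat{\mathsf f}_\Gamma(\sigma)\approx-\sigma^{-2}$, the tail $k\gtrsim|s|$ contributes at order $|s|^{-1}$. A mere bound there is useless; you need a uniform expansion along the horizontal line.

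The paper obtains this expansion in Lemma~\ref{lem:fhatglobal} from four ingredients:
\begin{enumerate}
\item the reflection $\Gamma(\sigma/2)/\Gamma((1+\sigma)/2)=\cot(\pi\sigma/2)\,\Gamma((1-\sigma)/2)/\Gamma(1-\sigma/2)$;
\item a Beta-integral bound for the right-hand quotient;
\item $\cot(\pi\sigma/2)=-i+O(e^{-\pi\operatorname{Im}\sigma})$;
\item identification of the left-regime coefficients with \eqref{eq:rhatformal}, by uniqueness on the overlap sector.
\end{enumerate}
The remainder is then $C_M(|\sigma|^{-M-1}+|\sigma|^{-2}e^{-\pi\operatorname{Im}\sigma})$. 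Summing the second piece using $\sum_k|\sigma_k|^{-2}\le C/\operatorname{Im}s$ is what produces $e^{-\pi\operatorname{Im}s}/\operatorname{Im}s$. Neither the Hurwitz function nor a Binet denominator is the source: $\zeta(j,(1-s)/2)$ has power-law remainders uniformly on $|\arg a|\le\pi-\delta'$. Your reflection idea from (b) is the right tool, but you must apply it along the whole horizontal line, not only on closed subsectors.
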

\begin{proof}
Appendix~\ref{app:vertical}: (a) is
Proposition~\ref{prop:bilateral}, proved from the coefficient bound
$|G_n|\le C(n+1)!\,2^{-n/2}$ (Lemma~\ref{lem:hankel}), the elementary
product inequality $(j+1)\le\sqrt2\,|Z-j|$ valid on the cone
(Lemma~\ref{lem:tailsum}), and the exact function-level identity
\eqref{eq:midpoint2F1} between the convergent inverse-factorial series
\eqref{eq:F} and the boundary family $\mathcal H$; (b) is
Lemma~\ref{lem:blocksasy}; (c) is
Proposition~\ref{prop:latticetail}, proved from a two-regime uniform
expansion of $\widehat{\mathsf f}_\Gamma$ on horizontal lines
(Stirling on $|\arg\sigma|\le\tfrac34\pi$; reflection, a Beta-integral
bound and $\cot(\pi\sigma/2)=-i+O(e^{-\pi\operatorname{Im}\sigma})$ on
the left regime), the $k\le|s|$ / $k>|s|$ splitting of the lattice
sum, the uniform Hurwitz--Bernoulli expansions of the moment sums
$\sum_k(s-1-2k)^{-j}$, and the formal uniqueness of
Proposition~\ref{prop:formal-sinh}.  Code~CM validates (c) at two
interior points to $32$--$42$ digits; Code~CO validates the new
estimates.
\end{proof}

The following proposition is logically independent of the proof of the
formal tangent and of the dyadic denominator law: it concerns only a
function-level closed form for the real Stokes correction.  The
periodicity hypothesis (P) below is numerically certified but is not
required anywhere in Theorems~\ref{thm:imclosed}, \ref{thm:arithlaw},
\ref{thm:problemW}, \ref{thm:tangentlaw} or~\ref{prob:W}.

\begin{proposition}[conditional closed form for the real Stokes
correction]
\label{prop:stokesfun}
Define the analytic extension of the Stokes correction of
Remark~\ref{rem:target-false},
\[
 \gamma(s):=\frac{\mathcal N^{\uparrow}(s)+\mathcal N^{\downarrow}(s)}2
 -\frac1{16}\,\psi'\!\Bigl(\frac{1-s}2\Bigr)
 +\frac1{16}\,\psi'\!\Bigl(1-\frac s2\Bigr)
 -\frac14\,\psi'(1-s),
\]
with $\mathcal N^{\uparrow\!,\downarrow}$ the two boundary components
of Appendix~\ref{app:sub-relation}, so that
$\gamma=\operatorname{Re}(N/D)-{}$(trigamma part) on the real axis.
Assume
\begin{enumerate}
\item[\upshape(P)] $\gamma$ is $2$-periodic, with poles confined to the
odd class, of order at most two.
\end{enumerate}
Then, on the real axis,
\begin{equation}\label{eq:ReND-closed}
 \operatorname{Re}\frac ND(s)
 =\frac1{16}\,\psi'\!\Bigl(\frac{1-s}2\Bigr)
 -\frac1{16}\,\psi'\!\Bigl(1-\frac s2\Bigr)
 +\frac14\,\psi'(1-s)
 -\frac{\pi^2}8\,\sec^2\frac{\pi s}2 ,
\end{equation}
with convergent trigamma functions: the Stokes correction is closed,
$\gamma=-\frac{\pi^2}8\sec^2\frac{\pi s}2$.  Consequently
\begin{equation}\label{eq:Eclosed}
 \mathcal E(s)=\frac{\pi^2}8\,\sec^2\frac{\pi s}2
 -\frac{\pi^2}4\,\csc^2(\pi s),
\end{equation}
and the one-function identity becomes fully explicit:
\begin{equation}\label{eq:Imclosed-fn}
 \operatorname{Im}\frac ND(s)
 =-\frac12\,T(s)+\tan\frac{\pi s}2\,\mathcal E(s),
 \qquad T(s)=\sum_{k\ge0}\widehat{\mathsf f}_\Gamma(s-1-2k).
\end{equation}
\end{proposition}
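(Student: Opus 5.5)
Under hypothesis (P), $\gamma$ is a $2$-periodic meromorphic function whose only poles lie on the odd class, each of order at most two. The plan is to pin it down by a Liouville-type argument: first show $\gamma\to0$ in vertical directions, then identify its principal part at $s=1$ and match a finite value there with the polylogarithmic anchor.

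\emph{Vertical decay.} On the cone $|\operatorname{Re}Z|\le\rho|\operatorname{Im}Z|$, Lemma~\ref{lem:vertical}(a) gives that both boundary components $\mathcal N^{\uparrow}$ and $\mathcal N^{\downarrow}$ (Schwarz reflections of one another) are asymptotic to the same formal tangent in both half-planes. By Theorem~\ref{thm:realkernel} the real part of that tangent is the Bernoulli expansion of the trigamma combination. Passing from $Z$ to $s=(1-Z)/2$, the three trigamma terms in the definition of $\gamma$ are exactly that combination. The arguments are $(Z+1)/4=(1-s)/2+\tfrac14\cdot$shift; any mismatch of shifts I would absorb by the recurrence $\psi'(z+1)=\psi'(z)-z^{-2}$, whose corrections are themselves $O(s^{-2})$ and formally accounted for. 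By Lemma~\ref{lem:vertical}(b) these terms are uniformly asymptotic in the same sectors. Hence $\gamma(s)=O(|s|^{-N})$ as $|\operatorname{Im}s|\to\infty$ in the strip $0\le\operatorname{Re}s\le2$.

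\emph{Liouville step.} Put $w=e^{i\pi s}$. Then $\gamma$ descends to a meromorphic function on $\mathbb C^\times$ whose only pole is at $w=-1$ (the odd class), of order at most two. It tends to $0$ at $w=0$ and at $w=\infty$. So $\gamma=a\,\sec^2(\pi s/2)+b\,\tan(\pi s/2)\sec(\pi s/2)\cdot(\dots)$; more cleanly, the $2$-periodic functions with double poles only at odd integers and vanishing at $\pm i\infty$ are spanned by $\sec^2(\pi s/2)$ and $\tan(\pi s/2)$. The $\tan$ term does not decay at $\pm i\infty$ (it tends to $\pm i$), and $\sec(\pi s/2)$ is only $4$-periodic, so both are excluded. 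This leaves $\gamma=a\sec^2(\pi s/2)$ with no constant term.

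\emph{Fixing $a$.} Near $s=1$, the double pole of $\gamma$ comes from $\mathcal N^{\uparrow,\downarrow}=R_s/8$ and from $-\tfrac14\psi'(1-s)\sim-\tfrac14(1-s)^{-2}$. Since $R_s$ is finite at $s=1$ by Theorem~\ref{thm:anchor1}, the principal part is $-\tfrac14(s-1)^{-2}$. Matching this with $a\sec^2(\pi s/2)\sim 4a/(\pi^2(s-1)^2)$ gives $a=-\pi^2/16$; I must recheck the bookkeeping, since the stated value is $-\pi^2/8$, so the $\tfrac1{16}\psi'(1-\frac s2)$ term, which also has a pole at $s=2$, and the averaging of the two components need care. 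The main obstacle is exactly this bookkeeping of principal parts. I would confirm it by computing the finite part at $s=1$ from $\operatorname{Re}R_1=-\pi^2/6$ and the Laurent constants of the trigammas, checking consistency with $-(\pi^2/8)\sec^2$; this is the finite-part confirmation mentioned in Remark~\ref{rem:anchorstatus}.

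\emph{Remaining formulas.} Once $\gamma$ is known, \eqref{eq:ReND-closed} is its definition restricted to the real axis. For \eqref{eq:Eclosed}, insert \eqref{eq:ReND-closed} and \eqref{eq:Qrat} into \eqref{eq:Edef}, with $c(s_0)=0$ by Theorem~\ref{thm:czero}. The trigamma pieces then combine via the reflection formula $\psi'(1-s)+\psi'(s)=\pi^2\csc^2\pi s$ (the $\tfrac1{16}$ pieces cancel against $Q_{\rm rat}/2$), leaving $\tfrac{\pi^2}8\sec^2-\tfrac{\pi^2}4\csc^2(\pi s)$. Finally \eqref{eq:Imclosed-fn} is \eqref{eq:osc-isolated}, using $\widehat Q_\Gamma^{\rm quad}=T$.
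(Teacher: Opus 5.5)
Your outline follows the paper's proof: decay of $\gamma$ at both vertical ends, the classification on the period cylinder with double poles only on the odd class, the constant fixed by the principal part at $s=1$, a finite-part check against $\operatorname{Re}R_1=-\pi^2/6$, and then substitution into \eqref{eq:Edef} and \eqref{eq:osc-isolated}. However, the step that fixes the constant is wrong as written, and you leave the discrepancy unresolved.

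The trigamma part of $\gamma$ has two double poles at $s=1$, not one.
\begin{itemize}
\item The term $-\frac14\psi'(1-s)$ contributes $-\frac14(s-1)^{-2}$, as you found.
\item The term $-\frac1{16}\psi'\bigl(\frac{1-s}2\bigr)$ also contributes: its argument $\frac{1-s}2$ tends to $0$, and $\psi'(x)\sim x^{-2}$ gives $-\frac1{16}\cdot\frac4{(s-1)^2}=-\frac14(s-1)^{-2}$.
\end{itemize}
The total principal part is therefore $-\frac12(s-1)^{-2}$. Matching it with $a\sec^2\frac{\pi s}2\sim 4a/(\pi^2(s-1)^2)$ gives $a=-\pi^2/8$, not $-\pi^2/16$.

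The two places you proposed to look are not the source. The function $\psi'(1-\frac s2)$ is regular at $s=1$; its poles lie on the even class $s=2,4,\dots$, where they cancel against those of $-\frac14\psi'(1-s)$. The averaging of $\mathcal N^{\uparrow}$ and $\mathcal N^{\downarrow}$ contributes nothing singular, since both are finite at $s=1$ by Theorem~\ref{thm:anchor1}.

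With the correct principal part, your finite-part test does close. At $s=1$ the trigamma part contributes the constant $-\pi^2/48$, and $\operatorname{Re}R_1/8=-\pi^2/48$. Their sum, $-\pi^2/24$, is exactly the constant term of $-\frac{\pi^2}8\sec^2\frac{\pi s}2=-\frac1{2(s-1)^2}-\frac{\pi^2}{24}+\cdots$.

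There are also smaller inaccuracies in the decay and classification steps.
\begin{itemize}
\item $\mathcal N^{\uparrow}$ and $\mathcal N^{\downarrow}$ are not asymptotic to the same formal tangent. Schwarz reflection of the bilateral expansion gives $\mathcal N^{\downarrow}(s)=\overline{\mathcal N^{\uparrow}(\bar s)}\sim\sum_n\bar g_nZ^{-n}$. It is their average that is asymptotic to $\sum_n\operatorname{Re}g_nZ^{-n}$, and that series is what Theorem~\ref{thm:realkernel} identifies with the trigamma combination.
\item This reflection route is a valid, slightly more direct alternative to the paper's use of \eqref{eq:tworelation-X} in the upper half-plane. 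In any case Lemma~\ref{lem:cylclass} only needs $\gamma\to0$, which already follows from the crude $O(1/|\operatorname{Im}s|)$ bounds on each term, as in Corollary~\ref{cor:Qvert}.
\item Your concern about shift mismatches is moot: with $Z=1-2s$ one has exactly $\frac{Z+1}4=\frac{1-s}2$, $\frac{Z+3}4=1-\frac s2$ and $\frac{Z+1}2=1-s$.
\item In the classification step, $\tan\frac{\pi s}2$ does not belong to the space you describe: its poles are simple and its limits at $\pm i\infty$ are $\pm i$. The clean statement is that the descended function equals $cw/(w+1)^2$ with $w=e^{i\pi s}$, which is Lemma~\ref{lem:cylclass} with $p=2$, $q=0$.
\end{itemize}
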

\begin{proof}
The decay of $\gamma$ at both vertical ends of the strip
$\operatorname{Re}s\in[0,2]$ is now proved: by the two-component
relation \eqref{eq:tworelation-X} and Lemma~\ref{lem:vertical}(a), in
the upper half-plane
$\mathcal N^{\downarrow}=-\tfrac i2X+O(e^{-\pi\operatorname{Im}s})$
while $\mathcal N^{\uparrow}$ is asymptotic to the tangent series, so
$(\mathcal N^{\uparrow}+\mathcal N^{\downarrow})/2$ is asymptotic to
the \emph{real} trigamma series of Theorem~\ref{thm:realkernel}, which
the convergent trigamma part reproduces (Lemma~\ref{lem:vertical}(b));
the lower end is the mirror statement.  Under hypothesis (P) the
classification lemma (Lemma~\ref{lem:cylclass}, with $p=2$, $q=0$)
gives $\gamma=c\,\sec^2\frac{\pi s}2$ for some constant $c$.  The
principal part at $s=1$ pins $c$: the trigamma part of $\gamma$ has the
double pole
$-\bigl[\tfrac1{16}\cdot4+\tfrac14\bigr](s-1)^{-2}=-\tfrac12(s-1)^{-2}$,
while $\mathcal N^{\uparrow\!,\downarrow}$ are finite at $s=1$
(Theorem~\ref{thm:anchor1}), and
$\sec^2\frac{\pi s}2\sim4/(\pi^2(s-1)^2)$; hence $c=-\pi^2/8$.  The
finite part at $s=1$ is then a parameter-free consistency test, which
the polylogarithmic anchor passes exactly:
$\operatorname{Re}R_1/8=-\pi^2/48$
(Theorem~\ref{thm:anchor1}).  Formula \eqref{eq:Eclosed} follows from
\eqref{eq:Edef} with $c(s_0)=0$, now a theorem
(Theorem~\ref{thm:czero}), and the reflection
$\psi'(s)=\pi^2\csc^2(\pi s)-\psi'(1-s)$; and \eqref{eq:Imclosed-fn} is
\eqref{eq:osc-isolated}.  Code~CM verifies
\eqref{eq:ReND-closed}--\eqref{eq:Imclosed-fn} on six residue classes to
the full precision of the convergent representations (up to
$10^{-27}$).

Hypothesis (P) is one of the two statements of this paper that remain
certified rather than proved in all orders
(Remark~\ref{rem:whatremains} records both); nothing else depends on
it.  In
particular Theorem~\ref{thm:imclosed} below, and with it the whole
arithmetic chain of Subsection~\ref{sec:arithmetic}, is proved
independently of this proposition (Appendix~\ref{app:sub-imclosed}).
\end{proof}

\begin{theorem}[the closed formal tangent]\label{thm:imclosed}
As formal series in $Z^{-1}$,
\begin{equation}\label{eq:formal-tangent}
 \sum_{n\ge1}g_n\,Z^{-n}
 =\frac1{16}\,\psi'\!\Bigl(\frac{Z+1}4\Bigr)
 -\frac1{16}\,\psi'\!\Bigl(\frac{Z+3}4\Bigr)
 +\frac14\,\psi'\!\Bigl(\frac{Z+1}2\Bigr)
 -\frac i2\,\widehat Q_\Gamma(Z);
\end{equation}
in particular
\begin{equation}\label{eq:Imgclosed}
 \operatorname{Im}g_n=-\tfrac12\,\widehat q_n\qquad(n\ge1),
\end{equation}
where $\widehat q_n$ are the coefficients of
$\widehat Q_\Gamma=(2\sinh D)^{-1}\widehat{\mathsf f}_\Gamma$ written in
$Z^{-1}$.  The identity \eqref{eq:Imgclosed} holds \emph{in all
orders}: it is proved by the matching argument below, whose estimates
are carried out in full in Appendix~\ref{app:vertical}, and it is
verified in exact rational arithmetic for all $n\le41$ (Code~CM).
\end{theorem}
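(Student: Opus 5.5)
The real part of \eqref{eq:formal-tangent} is already Theorem~\ref{thm:realkernel}, and $\operatorname{Im}g_{2k}=0$ holds by Corollary~\ref{cor:R}. What remains is \eqref{eq:Imgclosed}, and I would prove it by sectorial matching. The plan is to compare the exact function $\operatorname{Im}(N/D)$, written through the isolated form \eqref{eq:osc-isolated}, with the formal series $-\tfrac12\widehat Q_\Gamma$ on a vertical cone. There both sides are governed by uniform Poincar\'e expansions, and a formal series is determined by its asymptotics in any sector.

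Work in the upper half-plane $\operatorname{Im}s\ge\delta|s|$, which corresponds to a cone in $Z$ of the kind allowed by Lemma~\ref{lem:vertical}(a). Because $Z$ is not real there, ``$\operatorname{Im}$'' has to be replaced by its analytic continuation. I would use the two boundary components $\mathcal N^{\uparrow},\mathcal N^{\downarrow}$, that is, the upper and lower determinations, whose half-difference continues $i\operatorname{Im}(N/D)$ off the real axis, together with the exact two-component relation of Appendix~\ref{app:sub-relation}.

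\textbf{Step 1.} Lemma~\ref{lem:vertical}(a) gives that $\mathcal N^{\uparrow}$ is asymptotic to $\sum g_nZ^{-n}$ on the cone.

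\textbf{Step 2.} Continue the one-function identity \eqref{eq:onefun}, in its isolated form \eqref{eq:osc-isolated}, off the axis as an identity among $\mathcal N^{\uparrow}$, $\mathcal N^{\downarrow}$, $T(s)$ and the closed blocks. In the upper half-plane $\tan(\pi s/2)=i+O(e^{-\pi\operatorname{Im}s})$ and $\cot(\pi s/2)=-i+O(e^{-\pi\operatorname{Im}s})$. So every oscillatory factor becomes a constant plus an exponentially small error, and the relation collapses to a linear relation of the form $\mathcal N^{\uparrow}-\mathcal N^{\downarrow}=-iT(s)+(\text{trigamma and closed blocks})+O(e^{-\pi\operatorname{Im}s})$.

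\textbf{Step 3.} Apply Lemma~\ref{lem:vertical}(b) to the closed blocks and trigamma terms, and Lemma~\ref{lem:vertical}(c) to $T(s)$. Then every term on the right has a Poincar\'e expansion in $s^{-1}$, hence in $Z^{-1}$, with rational coefficients. Using the mirror statements for the lower half-plane with the same coefficients, match the expansions of both sides coefficientwise. The real parts reproduce \eqref{eq:realkernel}, which serves as a consistency check. The imaginary parts yield $\operatorname{Im}g_n=-\tfrac12\widehat q_n$, because two formal series with the same sectorial asymptotics coincide.

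\textbf{Main obstacle.} The delicate point is the lattice constant $c(s_0)$ together with the trigamma corrections inside $\mathcal E$. On the real axis these are genuinely periodic and do not vanish (the Stokes term $-(\pi^2/8)\sec^2$). One has to check that off the axis they only contribute terms that are exponentially small in $\operatorname{Im}s$, or constants that cancel between the two components. For $c(s_0)$ I would invoke Theorem~\ref{thm:czero}. The point is to avoid relying on Proposition~\ref{prop:stokesfun}: any $2$-periodic function that is bounded on the cone with decaying asymptotics is $O(e^{-\pi\operatorname{Im}s})$ there, so it drops out of the matching without ever being identified. Finally, convert $s=(1-Z)/2$ and check that $\widehat Q_\Gamma$, re-expanded in $Z^{-1}$, has the coefficients $\widehat q_n$. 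The exact rational check to order $41$ would serve only as confirmation.
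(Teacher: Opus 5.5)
Your ingredients are the paper's: the continued relation, Theorem~\ref{thm:czero}, the bilateral expansion and Lemma~\ref{lem:vertical}. The trouble is the half-plane you chose. Write $\tau=\tan\frac{\pi s}2$. The continuation of \eqref{eq:osc-isolated} off the axis is exactly \eqref{eq:tworelation-X}, $(\tau-i)\mathcal N^{\uparrow}+(\tau+i)\mathcal N^{\downarrow}=X$. For $\operatorname{Im}s\to+\infty$ one has $\tau-i=O(e^{-\pi\operatorname{Im}s})$, so it is the bounded $\mathcal N^{\uparrow}$ term that is annihilated. The relation therefore only gives $\mathcal N^{\downarrow}=X/(2i)+O(e^{-\pi\operatorname{Im}s})$.

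Your displayed form $\mathcal N^{\uparrow}-\mathcal N^{\downarrow}=-iT+\cdots$ can only be obtained as $-iT-2\mathcal E$. Here $\mathcal E=\tfrac12Q_{\rm rat}-\tfrac12(\mathcal N^{\uparrow}+\mathcal N^{\downarrow})-\tfrac14\psi'$ contains the continued real tangent, which is not a closed block; in fact $\mathcal N^{\uparrow}$ then cancels from both sides. Your plan for this term is that its non-series part is $2$-periodic, bounded and decaying, hence $O(e^{-\pi\operatorname{Im}s})$. That is precisely hypothesis (P) of Proposition~\ref{prop:stokesfun}. The $2$-periodicity of the Stokes correction $\gamma$ is proved nowhere, and the term $-(\pi^2/8)\sec^2\frac{\pi s}2$ you quote is itself conditional on it. Only the lattice constant is unconditionally periodic (and zero), through $C(s)\equiv0$. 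So the step where the correction ``drops out without ever being identified'' silently assumes the hypothesis you set out to avoid.

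The repair is not to treat $\mathcal E$ at the function level at all. There are two ways.
\begin{itemize}
\item Descend, as the paper does, along $s=-\tfrac12+iY$ with $Y\to-\infty$. There $\tau+i=O(e^{-\pi|Y|})$ kills the bounded $\mathcal N^{\downarrow}$ term and gives $\mathcal N^{\uparrow}=\tfrac i2X+O(e^{-\pi|Y|})$ directly. The right side $X=-T+\tau\bigl(Q_{\rm rat}-\tfrac12\psi'\bigr)$ contains no real tangent.
\item Stay in the upper half-plane, but read the degenerate relation as an expansion of $\mathcal N^{\downarrow}$. Convert it into $\sum\overline{g_n}Z^{-n}$ using $\mathcal N^{\downarrow}(s)=\overline{\mathcal N^{\uparrow}(\bar s)}$ and the bilateral Lemma~\ref{lem:vertical}(a). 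Equivalently, in your $\mathcal E$ formulation, expand $\tfrac12(\mathcal N^{\uparrow}+\mathcal N^{\downarrow})$ by its asymptotic series $\sum\operatorname{Re}g_nZ^{-n}$ instead of by any periodicity.
\end{itemize}
Either way $\operatorname{Re}(N/D)$ never enters at the function level, so Proposition~\ref{prop:stokesfun} is genuinely avoided. The real part falls out of the same coefficientwise splitting, once Lemma~\ref{lem:stirparity} is used to rewrite $-\tfrac14S_\psi(s)$ as $\tfrac14\psi'\bigl(\tfrac{Z+1}2\bigr)$.
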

\begin{proof}
The complete proof is in Appendix~\ref{app:sub-imclosed}; we record
its structure.  The two boundary components obey the exact
two-component relation (Lemma~\ref{lem:tworelation},
Corollary~\ref{cor:Edef-czero})
\[
 \Bigl(\tan\frac{\pi s}2-i\Bigr)\mathcal N^{\uparrow}
 +\Bigl(\tan\frac{\pi s}2+i\Bigr)\mathcal N^{\downarrow}
 =-\,T(s)+\tan\frac{\pi s}2
 \Bigl(Q_{\rm rat}(s)-\frac{\psi'(s)}2\Bigr)=:X(s),
\]
which uses the vanishing of the lattice constant
(Theorem~\ref{thm:czero}).  Descend the vertical line
$s=-\tfrac12+iY$, $Y\to-\infty$: there
$\tan\frac{\pi s}2+i=O(e^{-\pi|Y|})$ annihilates the
$\mathcal N^{\downarrow}$ term, $\tan\frac{\pi s}2-i\to-2i$, and
$\mathcal N^{\uparrow}$ is asymptotic to the tangent series by the
\emph{bilateral} expansion of Lemma~\ref{lem:vertical}(a); hence
$\mathcal N^{\uparrow}=\tfrac i2X+O(e^{-\pi|Y|})$.  Expanding $X$ by
Lemma~\ref{lem:vertical}(b),(c), with
$\tan\frac{\pi s}2=-i+O(e^{-\pi|Y|})$, and using the formal
reflection parity $S_\psi(1-s)=-S_\psi(s)$ of the Stirling series of
$\psi'$ (Lemma~\ref{lem:stirparity}) to put the real part into
trigamma-kernel form, one obtains exactly
\eqref{eq:formal-tangent}: the real part reproves
Theorem~\ref{thm:realkernel}, and the imaginary part is
\eqref{eq:Imgclosed}.  Proposition~\ref{prop:stokesfun} is
\emph{not} used.
\end{proof}

\begin{corollary}[resolution of the matching problem]\label{cor:step1}
The first assertion of Problem~\ref{prob:formal-cancellation} holds: the
combination has the unique expansion \eqref{eq:formal-tangent} in
$\mathbb Q(i)[[Z^{-1}]]$, with no surviving oscillatory term.  The
entire analytic content of the transverse problem is now carried by the
single explicit formal series $\widehat Q_\Gamma$, constructed from
Bernoulli polynomials by one exponentiation
\eqref{eq:rhatformal}, one rational multiplication
\eqref{eq:fgammahat}, and the central-difference inverse
\eqref{eq:formal-sinh}.  The transverse law (Theorem~\ref{prob:W}) is thereby a
statement about the
$2$-adic arithmetic of $\widehat Q_\Gamma$: this is exactly the second,
arithmetic step, and nothing else remains of the first.  That step is
carried out in Subsection~\ref{sec:arithmetic}.
\end{corollary}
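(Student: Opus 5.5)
The plan is to read Corollary~\ref{cor:step1} as a repackaging of Theorem~\ref{thm:imclosed}, together with Theorem~\ref{thm:realkernel}, Proposition~\ref{prop:formal-sinh} and Lemma~\ref{lem:rhat}. No new analysis should be needed. The first assertion of Problem~\ref{prob:formal-cancellation} asks that the oscillatory combination in \eqref{eq:onefun}, equivalently \eqref{eq:osc-isolated}, descend to a single formal Laurent series in $Z^{-1}$ with no $\tan(\pi s/2)$-periodic residue.

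\textbf{Step 1: existence of the expansion.} Theorem~\ref{thm:imclosed} gives, as an identity of formal series, the expansion \eqref{eq:formal-tangent} of $\sum_n g_nZ^{-n}$. Its left side is, by Corollary~\ref{cor:gausstangent}, the inverse-power expansion of $N/D$ inherited from the inverse-factorial series.

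\textbf{Step 2: coefficients lie in $\mathbb Q(i)$.} The trigamma terms are taken as Bernoulli asymptotic series, so their coefficients are rational. The series $\widehat Q_\Gamma$ lies in $s^{-1}\mathbb Q[[s^{-1}]]$ by Proposition~\ref{prop:formal-sinh}, applied to $\widehat{\mathsf f}_\Gamma\in s^{-2}\mathbb Q[[s^{-1}]]$ from \eqref{eq:fgammahat}. Substituting $s=(1-Z)/2$ and re-expanding in $Z^{-1}$ preserves rationality, because $s^{-1}=-2Z^{-1}(1-Z^{-1})^{-1}$. Hence the right side of \eqref{eq:formal-tangent} lies in $\mathbb Q(i)[[Z^{-1}]]$.

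\textbf{Step 3: uniqueness and absence of oscillation.} A formal power series in $Z^{-1}$ has unique coefficients, so uniqueness is immediate. The sectorial estimates of Lemma~\ref{lem:vertical}(a) show that $N/D$ admits a Poincar\'e expansion on the bilateral cone. Such an expansion is unique, so it coincides with \eqref{eq:formal-tangent}. In particular no term of the form $\tan(\pi s/2)\times(\text{series})$ survives. In the language of Corollary~\ref{cor:cancellation-criterion}, the oscillatory remainder is absorbed: on the vertical sectors $\tan(\pi s/2)\to\mp i$ up to exponentially small errors, which is exactly the mechanism already used in the proof of Theorem~\ref{thm:imclosed}.

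\textbf{Step 4: the constructive description.} The remaining statements describe how $\widehat Q_\Gamma$ is built. It arises from $\widehat{\mathsf r}$ by one exponentiation of a Bernoulli series \eqref{eq:rhatformal}, then one rational multiplication \eqref{eq:fgammahat}, then the operator $(2\sinh D)^{-1}$ with Bernoulli coefficients \eqref{eq:formal-sinh}. Combined with \eqref{eq:Imgclosed} and the closed real part (Corollary~\ref{cor:R}), this shows that the only non-closed data entering the $g_n$, and hence the transverse law, are the coefficients $\widehat q_n$.

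\textbf{Main obstacle.} The only delicate point is phrasing ``nothing else remains'' correctly. The passage from $g_n$ to $w_m$ and the phase constants (Lemma~\ref{lem:weights2}, Theorem~\ref{thm:tangentlaw}) should involve only rational, $2$-adically controlled operations on the $g_n$. I would state this as a forward reference, so the corollary claims precisely that the analytic step is finished.
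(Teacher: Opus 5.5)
Your proposal is correct and matches the paper, which gives no separate argument: the corollary is a direct rereading of Theorem~\ref{thm:imclosed}, with the rationality of the coefficients coming from Proposition~\ref{prop:formal-sinh} and the substitution $s=(1-Z)/2$, exactly as you say. One caution for Step~3: the descent is not an instance of the sufficient condition $\mathcal E=0$ of Corollary~\ref{cor:cancellation-criterion}, because $\mathcal E$ does not vanish on the real axis (Remark~\ref{rem:target-false}); it comes from the sectorial matching along a vertical ray, where $\tan\frac{\pi s}2\mp i$ is exponentially small, so phrase it only in those terms.
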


\begin{remark}[kernel form of the transverse series]\label{rem:kernelW}
Formally, in the Borel variable: if $\varphi(t)$ denotes the kernel of
$\widehat{\mathsf f}_\Gamma$ written in $Z^{-1}$
(so that $\widehat{\mathsf f}_\Gamma\sim\int_0^\infty
e^{-Zt}\varphi(t)\,dt$ coefficientwise), then the step-two equation
gives $\widehat Q_\Gamma$ the kernel $\varphi(t)/(2\sinh2t)$, and
\eqref{eq:Imgclosed} reads
$\operatorname{Im}\Phi(t)/t=-\varphi(t)/(4\sinh2t)$, i.e.
\[
 W(t)=\frac{\sinh2t}{t^2}\operatorname{Im}\Phi(t)
 =-\frac{\varphi(t)}{4t}.
\]
The transverse law (Theorem~\ref{prob:W}) is therefore a statement
about the $2$-adic coefficients of one Borel kernel built from the
Binet-type kernels of the Gamma quotient; Subsection~\ref{sec:arithmetic}
starts exactly here and proves it.
\end{remark}

\subsection{The arithmetic step: the transverse law}
\label{sec:arithmetic}

With the closed formal tangent in hand, the transverse law
(Theorem~\ref{prob:W}) reduces to the $2$-adic analysis of one explicitly
constructed series.  We now carry that analysis out.  Every statement
about the explicit series below is \emph{unconditional}; the
identification with $W$ passes through Theorem~\ref{thm:imclosed} and is,
in addition, verified exactly against the tangent data
(Code~CN).  Throughout, $G_{2j}$ denotes the Genocchi numbers,
$G_{2j}=2(1-2^{2j})B_{2j}$, and $s_2(m)$ the binary digit sum, so that
$\nu_2(m!)=m-s_2(m)$.

\begin{lemma}[Genocchi form of the exponent]\label{lem:genocchi}
The formal exponent of Lemma~\ref{lem:rhat}, written in the variable
$Z$, is
\begin{equation}\label{eq:BZ}
 \log\frac{\widehat{\mathsf r}}2=B(Z)
 =\sum_{j\ge1}b_j\,(Z-1)^{1-2j},
 \qquad
 b_j=\frac{2^{2j-1}G_{2j}}{(2j)(2j-1)},
\end{equation}
and coefficientwise $B$ is the Watson expansion of
$-\int_0^\infty e^{-(Z-1)t}\,t^{-1}\tanh t\,dt$.  Moreover
\begin{equation}\label{eq:nubj}
 \nu_2(b_j)=2j-2-\nu_2(j)\ \ge\ 0,
\end{equation}
with equality only for $j=1$, where $b_1=G_2=-1$.
\end{lemma}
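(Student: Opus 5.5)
The plan is to bypass the Bernoulli-polynomial bookkeeping of \eqref{eq:rhatformal} and work at the level of Borel kernels, where the parity structure is visible. By Lemma~\ref{lem:rhat},
\[
 \log\frac{\widehat{\mathsf r}(s)}2=\log\frac s2+2\log\frac{\Gamma(s/2)}{\Gamma(\frac{s+1}2)}
\]
as formal series. It has zero constant term, since $\widehat{\mathsf r}\to2$. Its $s$-derivative is therefore determined by the digamma difference $\psi(x)-\psi(x+\frac12)$ at $x=s/2$, together with $1/s$. I use the classical representation
\[
 \psi(x)-\psi(x+\tfrac12)=-\int_0^\infty \frac{e^{-xu}}{1+e^{-u/2}}\,du ,
\]
read coefficientwise via Watson's lemma, and $1/s=\frac12\int_0^\infty e^{-su/2}\,du$. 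Then I substitute $s=(1-Z)/2$ and rescale the Borel variable so that the exponential becomes $e^{-(Z-1)t}$. This yields a single kernel for $\partial_Z\log(\widehat{\mathsf r}/2)$.

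Collecting terms, I expect the combination to be $\frac{1-e^{-2t}}{1+e^{-2t}}=\tanh t$ times the factor $e^{-(Z-1)t}$. Integrating once in $Z$, with no constant of integration because the formal series has none, should give
\[
 B(Z)=-\int_0^\infty e^{-(Z-1)t}\,t^{-1}\tanh t\,dt
\]
coefficientwise. The sign conventions need care, since $s\to-\infty$ while the expansion is formal; the formal Stirling series in $1/s$ is the same object in either direction. As an independent check I will compare the leading term with \eqref{eq:rhatformal}. Its $n=1$ term is
\[
 \frac{2\bigl(B_2(0)-B_2(\frac12)\bigr)}{2}\cdot\frac2s=\frac1{2s}=-\frac1{Z-1},
\]
which must equal $b_1(Z-1)^{-1}$.

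Next I expand the kernel. From the standard series $\tanh t=\sum_{j\ge1}2^{2j}(2^{2j}-1)B_{2j}\,t^{2j-1}/(2j)!$, we get
\[
 t^{-1}\tanh t=-\sum_{j\ge1}\frac{2^{2j-1}G_{2j}}{(2j)!}\,t^{2j-2}.
\]
Because this kernel is even in $t$, only odd powers $(Z-1)^{1-2j}$ can occur. Watson's correspondence $\int_0^\infty t^{2j-2}e^{-wt}\,dt=(2j-2)!\,w^{1-2j}$ then gives
\[
 b_j=\frac{2^{2j-1}G_{2j}(2j-2)!}{(2j)!}=\frac{2^{2j-1}G_{2j}}{(2j)(2j-1)} ,
\]
and in particular $b_1=G_2=-1$, consistent with the check above.

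For \eqref{eq:nubj}, I invoke the classical fact that the Genocchi numbers $G_{2j}$ are odd integers (via von Staudt--Clausen, $\nu_2(B_{2j})=-1$ and $2(1-2^{2j})$ has valuation $1$). Since $2j-1$ is odd, this gives
\[
 \nu_2(b_j)=(2j-1)-\nu_2(2j)=2j-2-\nu_2(j).
\]
The bound $\nu_2(j)\le\log_2 j\le 2j-2$ holds, with equality only at $j=1$, which proves nonnegativity and the equality case.

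The main obstacle is the first step: getting the kernel identification exactly right, meaning the rescaling $u\leftrightarrow t$, the factor $2$ from squaring, the $\log s$ term, and the orientation $s=(1-Z)/2$. The leading-coefficient comparison with \eqref{eq:rhatformal} serves as the sanity check that pins down the signs.
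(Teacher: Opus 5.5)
Your proof is correct. Its second half is the paper's: the $\tanh$ expansion, Watson's correspondence $(Z-1)^{1-2j}\leftrightarrow t^{2j-2}/(2j-2)!$, and the valuation count via von Staudt--Clausen. What differs is the first step and the logical order.

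\emph{The paper's route.} It never uses an integral representation. It starts from the Bernoulli-polynomial coefficients of \eqref{eq:rhatformal} and uses $B_m(\tfrac12)=(2^{1-m}-1)B_m$ to rewrite the bracket as $(2-2^{-n})B_{n+1}$. The even $n\ge2$ then vanish because odd-index Bernoulli numbers vanish. It reads off $-G_{2j}/((2j)(2j-1))$ as the coefficient of $s^{1-2j}$, substitutes $1/s=-2/(Z-1)$, and only afterwards checks the $\tanh$ kernel by matching coefficients.

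\emph{Your route.} You derive the kernel first, from $\psi(x)-\psi(x+\tfrac12)=-\int_0^\infty e^{-xu}(1+e^{-u/2})^{-1}\,du$, and then read $b_j$ off it. This explains where $\tanh$ comes from. It makes the absence of even powers of $(Z-1)^{-1}$ automatic, from the evenness of $t^{-1}\tanh t$. It also re-derives \eqref{eq:rhatformal} rather than assuming it. The cost is an analytic step that the paper's purely formal manipulation avoids.

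\emph{Making the analytic step airtight.}
\begin{itemize}
\item Apply Watson's lemma as $s\to+\infty$. There the integrals converge and $\widehat{\mathsf r}$ really is asymptotic to its series.
\item Do not argue along $s\to-\infty$, contrary to your remark about ``either direction.'' On the negative axis reflection gives $\widehat{\mathsf r}(s)=s\cot^2(\pi s/2)\,[\Gamma(\tfrac{1-s}2)/\Gamma(1-\tfrac s2)]^2$, which is not asymptotic to the series.
\item Fix the integration constant by $\widehat{\mathsf r}\to2$. Equivalently, write $\log(\widehat{\mathsf r}(s)/2)=\int_0^\infty e^{-sw}w^{-1}\tanh(w/2)\,dw$ for $s>0$ directly.
\item Identify the result with the formal exponent of Lemma~\ref{lem:rhat} by uniqueness of Poincar\'e expansions along that ray. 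Only then substitute $s=(1-Z)/2$, formally.
\end{itemize}
Finally, the passage from $e^{-sw}$ to $e^{-(Z-1)t}$ is the negative rescaling $t=-w/2$. It has meaning only coefficientwise, through the kernel map $\phi(w)\mapsto-2\phi(-2t)$. It is harmless here because $w^{-1}\tanh(w/2)$ is even, and it is exactly the source of the overall minus sign in $-\int_0^\infty e^{-(Z-1)t}t^{-1}\tanh t\,dt$.
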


\begin{proof}
In \eqref{eq:rhatformal} the bracket is
$B_{n+1}(0)-B_{n+1}(\tfrac12)=(2-2^{-n})B_{n+1}$ by
$B_m(\tfrac12)=(2^{1-m}-1)B_m$; it vanishes for even $n\ge2$ (odd-index
Bernoulli numbers vanish), and for $n=2j-1$ the coefficient of
$u^{2j-1}=s^{1-2j}$ is
$(2^{2j+1}-2)B_{2j}/((2j)(2j-1))=-G_{2j}/((2j)(2j-1))$.
Substituting $u=-2/(Z-1)$ gives \eqref{eq:BZ}.  For the kernel form,
$\tanh t=\sum_{j\ge1}2^{2j}(2^{2j}-1)B_{2j}\,t^{2j-1}/(2j)!$, so
$-(2j-2)!\,[t^{2j-1}]\tanh t
=2^{2j-1}G_{2j}/((2j)(2j-1))=b_j$, and Watson's correspondence
$(Z-1)^{1-2j}\leftrightarrow t^{2j-2}/(2j-2)!$ is coefficientwise exact.
For \eqref{eq:nubj}: by von Staudt--Clausen $\nu_2(B_{2j})=-1$, hence
$G_{2j}=2(1-2^{2j})B_{2j}$ is an odd integer, and
$\nu_2(b_j)=(2j-1)-\nu_2\bigl((2j)(2j-1)\bigr)=2j-2-\nu_2(j)$; finally
$\nu_2(j)\le\log_2j\le2j-2$, with equality throughout only at $j=1$.
\end{proof}

\begin{lemma}[parity]\label{lem:parityF}
As formal series in $Z^{-1}$,
\begin{equation}\label{eq:Bparity}
 B(Z)+\sum_{j\ge1}b_j(Z+1)^{1-2j}
 =\log\frac{Z-1}{Z+1},
\end{equation}
and consequently the forcing of \eqref{eq:fgammahat}, written in
$Z^{-1}$,
\begin{equation}\label{eq:FZform}
 \widehat{\mathsf f}_\Gamma
 =\mathsf F(Z):=-\,\frac{4Z\,e^{B(Z)}}{(Z-1)^2(Z+1)},
\end{equation}
is an \emph{even} series: $[Z^{-n}]\mathsf F=0$ for odd $n$.
\end{lemma}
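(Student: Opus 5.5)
The plan has two parts: first prove the parity identity \eqref{eq:Bparity} from the odd-power structure of $B$, then read \eqref{eq:FZform} and its evenness off from it.

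\textbf{The parity identity.} Write $\beta(w)=\sum_{j\ge1}b_jw^{1-2j}$. This is an odd formal series in $w^{-1}$, and by Lemma~\ref{lem:genocchi} we have $B(Z)=\beta(Z-1)$. So \eqref{eq:Bparity} reads
\[
\beta(Z-1)+\beta(Z+1)=\log(Z-1)-\log(Z+1).
\]
I would prove this in Borel form, using the kernel of Lemma~\ref{lem:genocchi}: coefficientwise $\beta(w)\leftrightarrow-\int_0^\infty e^{-wt}t^{-1}\tanh t\,dt$. A shift $Z\mapsto Z\pm1$ multiplies the kernel by $e^{\mp t}$, so the left side has kernel
\[
-t^{-1}\tanh t\,(e^{t}+e^{-t})=-2t^{-1}\sinh t .
\]
On the right, $\log\frac{Z-1}{Z+1}$ has kernel $(e^{-t}-e^{t})/t=-2t^{-1}\sinh t$, by the rule for $\mathfrak B[\log\frac{Z+a}{Z+b}]$ recorded in Remark~\ref{rem:borel}. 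The two kernels agree, so the identity holds coefficientwise. The only delicate point is that Watson's correspondence is applied to series in $(Z\mp1)^{-1}$ and then re-expanded in $Z^{-1}$. This is harmless, because translation acts on Borel transforms by multiplication by $e^{\mp t}$ as formal power series, exactly as in Remark~\ref{rem:borel}.

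\textbf{The form of $\mathsf F$ and its evenness.} Formula \eqref{eq:FZform} is a substitution. From \eqref{eq:fgammahat}, $\widehat{\mathsf f}_\Gamma=-(2s-1)\widehat{\mathsf r}/(4s^2(s-1))$ with $s=(1-Z)/2$. Then $2s-1=-Z$, $s^2=(Z-1)^2/4$, $s-1=-(Z+1)/2$, and $\widehat{\mathsf r}=2e^{B(Z)}$. Collecting these factors gives $-4Ze^{B}/((Z-1)^2(Z+1))$; I would check the sign and the power of two carefully here.

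For evenness, set $\mathsf F^-(Z):=\mathsf F(-Z)$ and use that $\beta$ is odd:
\[
B(-Z)=\beta(-Z-1)=-\beta(Z+1).
\]
The parity identity then gives $B(-Z)=B(Z)-\log\frac{Z-1}{Z+1}$, so
\[
e^{B(-Z)}=e^{B(Z)}\,\frac{Z+1}{Z-1}.
\]
Substituting, the prefactor $-4Z/((Z-1)^2(Z+1))$ becomes $4Z/((Z+1)^2(Z-1))$ at $-Z$. Multiplying by $e^{B(-Z)}$ gives
\[
\mathsf F(-Z)=\frac{4Z\,e^{B(Z)}}{(Z+1)(Z-1)^2}\cdot(-1)\cdot(-1)\;\cdots
\]
and I expect this to reduce to $\mathsf F(Z)$ after tracking the sign; this sign bookkeeping is the step to check. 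Since $\mathsf F\in Z^{-2}\mathbb Q[[Z^{-1}]]$, the relation $\mathsf F(-Z)=\mathsf F(Z)$ means exactly that the odd-index coefficients vanish.

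\textbf{Main obstacle.} I expect the only real obstacle to be sign bookkeeping, together with making sure the formal logarithm $\log\frac{Z-1}{Z+1}$ has zero constant term, so that exponentiating it is legitimate. The identity itself is immediate once the $\tanh$ kernel is in hand.
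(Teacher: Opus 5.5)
Your route is the same as the paper's. You prove the parity identity through the $\tanh$ kernel of Lemma~\ref{lem:genocchi}, matching $-t^{-1}\tanh t\,(e^{t}+e^{-t})=-2t^{-1}\sinh t$ against the kernel of $\log\frac{Z-1}{Z+1}$. You then substitute $s=(1-Z)/2$ and compare $\mathsf F(-Z)$ with $\mathsf F(Z)$; the paper takes the ratio $\mathsf F(-Z)/\mathsf F(Z)$, which is the same computation. Your justification of the shift rule, your substitution, and your relation $e^{B(-Z)}=e^{B(Z)}(Z+1)/(Z-1)$ are all correct.

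The one step you leave open is the one that decides the statement, and as written it has a sign error. At $-Z$ the prefactor is
\[
 \frac{-4(-Z)}{(-Z-1)^2(-Z+1)}=-\frac{4Z}{(Z+1)^2(Z-1)},
\]
not $+4Z/((Z+1)^2(Z-1))$. The sign from $-Z$ in the numerator cancels the sign from $1-Z=-(Z-1)$ in the denominator, so the original $-4$ survives. With your sign you would conclude $\mathsf F(-Z)=-\mathsf F(Z)$, an odd series. That is impossible, since $\mathsf F=-4Z^{-2}+O(Z^{-3})$ has a nonzero even leading coefficient.

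With the correct prefactor,
\[
 \mathsf F(-Z)=-\frac{4Z}{(Z+1)^2(Z-1)}\cdot\frac{Z+1}{Z-1}\,e^{B(Z)}
 =-\frac{4Z\,e^{B(Z)}}{(Z-1)^2(Z+1)}=\mathsf F(Z),
\]
and no stray factors of $-1$ remain. Since $\mathsf F\in Z^{-2}\mathbb Q[[Z^{-1}]]$, this is exactly the vanishing of the odd-index coefficients. With that line written out, the proof is complete.
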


\begin{proof}
By Lemma~\ref{lem:genocchi} the left side of \eqref{eq:Bparity} is the
Watson expansion of
\[
 -\int_0^\infty e^{-Zt}\,(e^{t}+e^{-t})\,\frac{\tanh t}t\,dt
 =-\int_0^\infty e^{-Zt}\,\frac{2\sinh t}t\,dt,
\]
whose coefficient series is
$-2\sum_{k\ge0}Z^{-2k-1}/(2k+1)=\log\frac{Z-1}{Z+1}$.  Formula
\eqref{eq:FZform} is the substitution $s=(1-Z)/2$ in
\eqref{eq:fgammahat}: $2s-1=-Z$, $4s^2(s-1)=-(Z-1)^2(Z+1)/2$, and
$\widehat{\mathsf r}=2e^{B}$.  Then
\[
 \frac{\mathsf F(-Z)}{\mathsf F(Z)}
 =e^{-\bigl(B(Z)+\sum_jb_j(Z+1)^{1-2j}\bigr)}\cdot\frac{Z-1}{Z+1}=1
\]
by \eqref{eq:Bparity}, which is evenness.  (Code~CN checks
\eqref{eq:Bparity} to order $200$ and the evenness of $\mathsf F$ to
order $320$.)
\end{proof}

\begin{lemma}[all-ones dominance]\label{lem:dominance}
Write $e^{B(Z)}=\sum_{N\ge0}a_N(Z-1)^{-N}$.  Then for every $N\ge0$,
\[
 \nu_2(a_N)=-\nu_2(N!),
\]
and $N!\,a_N$ is a $2$-adic unit.
\end{lemma}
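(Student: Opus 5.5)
We expand $e^{B(Z)}$ as a sum over partitions and show that exactly one partition attains the minimal $2$-adic valuation. Put $x=(Z-1)^{-1}$, so that $B=\sum_{j\ge1}b_jx^{2j-1}$ by Lemma~\ref{lem:genocchi}. Then
\[
 e^{B}=\prod_{j\ge1}\exp\bigl(b_jx^{2j-1}\bigr)
 =\sum_{(k_j)}\prod_j\frac{b_j^{k_j}}{k_j!}\,x^{\sum_j(2j-1)k_j},
\]
so $a_N$ is a sum over all multiplicity vectors $(k_j)$ with $\sum_j(2j-1)k_j=N$, that is, over partitions of $N$ into odd parts. The candidate dominant term is the all-ones partition $k_1=N$, $k_j=0$ for $j\ge2$. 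Its contribution is $b_1^N/N!=(-1)^N/N!$, which has valuation exactly $-\nu_2(N!)$ because $b_1=-1$.

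The core step is a strict inequality: every other partition has valuation strictly larger than $-\nu_2(N!)$. Using \eqref{eq:nubj} and $\nu_2(k!)=k-s_2(k)$, the term indexed by $(k_j)$ has valuation
\[
 \sum_j\Bigl[k_j\bigl(2j-2-\nu_2(j)\bigr)-k_j+s_2(k_j)\Bigr].
\]
Write $N=k_1+\sum_{j\ge2}(2j-1)k_j$. We compare this with $-\nu_2(N!)=-N+s_2(N)$ and must show the difference is positive whenever some $k_j>0$ with $j\ge2$. Rearranged, the inequality to prove is
\[
 \sum_{j\ge2}k_j\bigl(4j-4-\nu_2(j)\bigr)+\sum_j s_2(k_j)>s_2(N).
\]
Two facts give it. First, subadditivity of the digit sum: $s_2(N)\le s_2(k_1)+\sum_{j\ge2}s_2\bigl((2j-1)k_j\bigr)$. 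Second, $s_2\bigl((2j-1)k\bigr)\le s_2(2j-1)\,s_2(k)\le s_2(2j-1)\,k$. It is then enough to show, for $j\ge2$ and $k\ge1$,
\[
 k\bigl(4j-4-\nu_2(j)\bigr)+s_2(k)>s_2(2j-1)\,s_2(k).
\]
This reduces to $4j-4-\nu_2(j)>s_2(2j-1)-1$ termwise in $k$, which holds easily because $s_2(2j-1)\le\log_2(2j)$ while $4j-4-\nu_2(j)\ge 4j-4-\log_2 j$. At $j=2$ the two sides are $3$ and $1$. Strictness comes from this single positive gap. Once it is established, the ultrametric inequality gives $\nu_2(a_N)=-\nu_2(N!)$, and hence $N!\,a_N\in\Zt^\times$.

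The main obstacle is handling the digit-sum bookkeeping rigorously. In particular, $s_2$ is only subadditive, not additive, so the bound for $s_2(N)$ must go in the favourable direction; it does, since a smaller $s_2(N)$ only helps. The bound $s_2(mk)\le s_2(m)s_2(k)$ also needs a short justification: it follows from $s_2(a+b)\le s_2(a)+s_2(b)$ applied to the binary expansion of $m$. If either step proves awkward, a fallback is to argue by a cruder valuation bound: each part $2j-1\ge3$ contributes at least $2j-3-\nu_2(j)\ge1$ extra powers of $2$, and this exceeds the loss of at most $\nu_2$ of a multinomial-type factor. The case $N=0$ is trivial, since $a_0=1$.
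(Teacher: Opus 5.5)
Your proof is correct, and its skeleton is the paper's: expand $a_N$ over partitions of $N$ into odd parts, identify the all-ones term $(-1)^N/N!$ as the unique term of minimal valuation, and conclude by the ultrametric inequality. The difference is in how you prove the strict inequality.

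You compute the gap exactly,
\[
 \sum_{j\ge2}k_j\bigl(4j-4-\nu_2(j)\bigr)+\sum_j s_2(k_j)-s_2(N),
\]
and bound it below using Legendre's formula, subadditivity and submultiplicativity of $s_2$, and the elementary inequality $4j-4-\nu_2(j)>s_2(2j-1)-1$. All of these steps check out, and together they give an explicit gap of at least $2$ for every part $\ge3$.

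The paper avoids digit sums entirely. For every partition, the valuation is at least $\sum_j m_j\nu_2(b_j)-\nu_2\bigl(\prod_j m_j!\bigr)$, and this is $\ge-\nu_2(N!)$ because $\nu_2(b_j)\ge0$ and $\prod_j m_j!$ divides $(\sum_j m_j)!$, which divides $N!$. Any partition other than all-ones contains some $b_j$ with $j\ge2$, which adds $\nu_2(b_j)\ge1$. That route is two lines and needs only the inequality part of \eqref{eq:nubj}, not the exact values. Yours gives a quantitative separation that the lemma does not need.

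Your stated fallback is essentially the paper's argument, but the count you give there is off by one. The quantity $2j-3-\nu_2(j)$ vanishes at $j=2$, which is exactly the critical case. The correct gain is $\nu_2(b_j)=2j-2-\nu_2(j)\ge1$. Also, no comparison of this gain with the factorial loss is required: that loss is at most $\nu_2(N!)$ for every partition, so it is never worse than for the dominant term.
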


\begin{proof}
Expanding the exponential,
$a_N=\sum_\lambda\prod_j b_j^{m_j}/m_j!$, the sum over partitions
$\lambda$ of $N$ into odd parts $2j-1$ with multiplicities $m_j$.  For
any $\lambda$, since every $\nu_2(b_j)\ge0$ and
$\prod_jm_j!\mid\ell!$ with $\ell=\sum_jm_j\le N$ (multinomial
integrality),
\[
 \nu_2\Bigl(\prod_jb_j^{m_j}\big/\!\prod_jm_j!\Bigr)
 \ \ge\ -\nu_2\Bigl(\prod_jm_j!\Bigr)\ \ge\ -\nu_2(\ell!)\ \ge\
 -\nu_2(N!).
\]
The all-ones partition contributes $b_1^N/N!=(-1)^N/N!$, of valuation
exactly $-\nu_2(N!)$.  Every other partition has some part
$2j-1\ge3$, hence $\sum_jm_j\nu_2(b_j)\ge\nu_2(b_j)\ge1$ by
\eqref{eq:nubj}, so its valuation is at least $1-\nu_2(N!)$: strictly
larger.  By the ultrametric inequality the valuation of $a_N$ is exactly
$-\nu_2(N!)$ and $N!\,a_N\equiv(-1)^N\pmod2$.
\end{proof}

\begin{theorem}[the $2$-adic law of the kernel; unconditional]
\label{thm:arithlaw}
For every $m\ge0$,
\begin{equation}\label{eq:arithlaw}
 \nu_2\bigl([Z^{-(2m+2)}]\,\mathsf F\bigr)=2-2m+s_2(m).
\end{equation}
\end{theorem}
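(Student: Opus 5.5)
The plan is to read off the coefficient directly from the closed form \eqref{eq:FZform}, using Lemma~\ref{lem:dominance} as the only $2$-adic input. The guiding observation is that the target $2-2m+s_2(m)$ equals $2-\nu_2((2m)!)$, because $\nu_2((2m)!)=2m-s_2(2m)=2m-s_2(m)$. By Lemma~\ref{lem:dominance} this is exactly $\nu_2(4a_{2m})$. So I expect a single dominant term, $-4a_{2m}$, with every other contribution strictly $2$-adically smaller.

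\emph{Step 1 (expansion).} Write
\[
 \mathsf F(Z)=-4\,\frac{Z}{Z+1}\sum_{N\ge0}a_N(Z-1)^{-N-2},
 \qquad \frac{Z}{Z+1}=\sum_{i\ge0}(-1)^iZ^{-i},
\]
and use the integral expansion
\[
 (Z-1)^{-K}=\sum_{k\ge K}\binom{k-1}{K-1}Z^{-k}.
\]
For $n=2m+2$ this gives
\[
 [Z^{-n}]\mathsf F=-4\sum_{N=0}^{n-2}a_N\sum_{i=0}^{n-N-2}(-1)^i\binom{n-i-1}{N+1}.
\]
The inner sums are rational integers. For $N=n-2=2m$ only $i=0$ survives, and that inner sum equals $\binom{2m+1}{2m+1}=1$.

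\emph{Step 2 (dominance).} The $N=2m$ term is $-4a_{2m}$. By Lemma~\ref{lem:dominance} its valuation is $2-\nu_2((2m)!)$.

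For $N<2m$, the term has valuation at least $2+\nu_2(a_N)=2-\nu_2(N!)$, because the binomial weight is an integer. Since $N\mapsto\nu_2(N!)$ is nondecreasing, $\nu_2(N!)\le\nu_2((2m-1)!)$. Moreover
\[
 \nu_2((2m)!)-\nu_2((2m-1)!)=\nu_2(2m)\ge1 .
\]
Hence every term with $N<2m$ has valuation at least $2-\nu_2((2m)!)+1$. The ultrametric inequality then gives
\[
 \nu_2\bigl([Z^{-(2m+2)}]\mathsf F\bigr)=2-\nu_2((2m)!)=2-2m+s_2(m).
\]
As sanity checks, $m=0$ gives $-4$, of valuation $2$. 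Evenness from Lemma~\ref{lem:parityF} is not needed for this argument; it only explains why the odd-index coefficients are omitted from the statement.

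\emph{Main obstacle.} The only delicate point is the bookkeeping of the mixing by $Z/(Z+1)$ and by the re-expansion of $(Z-1)^{-K}$ in $Z^{-1}$. Two things must be checked. First, both expansions must have integral coefficients, so that they can only raise valuations. Second, no term with $N>n-2$ may reach order $Z^{-n}$; this holds because $(Z-1)^{-N-2}$ starts at $Z^{-N-2}$. Once these are confirmed, the strict gap $\nu_2(2m)\ge1$ between consecutive factorial valuations at the even index $2m$ does all the work.
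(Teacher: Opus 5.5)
Your proposal is correct and follows essentially the same route as the paper. Both arguments expand the rational prefactor with integer coefficients, isolate the $N=2m$ term $-4a_{2m}$ by the all-ones dominance lemma, and use $\nu_2((2m)!)-\nu_2((2m-1)!)=\nu_2(2m)\ge1$ to push every other term strictly higher. The only cosmetic difference is the bookkeeping: you split the prefactor as $Z/(Z+1)$ times $(Z-1)^{-N-2}$, whereas the paper expands $-4Z/((Z-1)^2(Z+1))$ directly with weights $p_k=\lfloor k/2\rfloor+1$; both give the same integer weights $M_{n,N}$ with $M_{n,n-2}=1$.
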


\begin{proof}
Expand the rational prefactor of \eqref{eq:FZform} in $Z^{-1}$:
\[
 \frac{-4Z}{(Z-1)^2(Z+1)}=-4\sum_{k\ge0}p_k\,Z^{-k-2},
 \qquad
 p_k=[x^k]\,\frac1{(1-x)^2(1+x)}=\Bigl\lfloor\frac k2\Bigr\rfloor+1
 \in\mathbb Z,
\]
and
$(Z-1)^{-N}=\sum_{m\ge N}\binom{m-1}{N-1}Z^{-m}$ with integer
coefficients.  Hence, for $n\ge2$,
\[
 [Z^{-n}]\mathsf F
 =-4\sum_{N=0}^{n-2}a_N\,M_{n,N},
 \qquad
 M_{n,N}=\sum_{m=\max(N,1)}^{n-2}p_{\,n-2-m}\binom{m-1}{N-1}
 \in\mathbb Z,
\]
with $M_{n,0}=p_{n-2}$ and $M_{n,n-2}=1$.  The term $N=n-2$ contributes
$-4a_{n-2}$, of valuation exactly $2-\nu_2((n-2)!)$ by
Lemma~\ref{lem:dominance}.  For $N\le n-3$ and $n$ \emph{even},
\[
 \nu_2(a_NM_{n,N})\ \ge\ -\nu_2(N!)\ \ge\ -\nu_2((n-3)!)
 =-\nu_2((n-2)!)+\nu_2(n-2)\ \ge\ -\nu_2((n-2)!)+1 ,
\]
so every other term is strictly larger by at least one binary digit.
The ultrametric inequality gives
$\nu_2([Z^{-n}]\mathsf F)=2-\nu_2((n-2)!)$ exactly; with $n=2m+2$ this
is $2-(2m-s_2(2m))=2-2m+s_2(m)$.
\end{proof}

\begin{theorem}[resolution of the transverse law]\label{thm:problemW}
By Theorem~\ref{thm:imclosed} (proved in all orders,
Appendix~\ref{app:vertical}; certified in exact arithmetic to order
$41$), for every $m\ge0$,
\begin{equation}\label{eq:wm-kernel}
 w_m=-\,\frac{[Z^{-(2m+2)}]\,\mathsf F}{4\,(2m+1)!}\,,
\end{equation}
and consequently
\[
 \nu_2(w_m)=-2\,\nu_2\bigl((2m)!\bigr):
 \qquad
 \bigl((2m)!\bigr)^2w_m\in\Zt^\times
 \quad\text{for every }m\ge0 .
\]
This is the first form of the transverse law (Theorem~\ref{prob:W}).
\end{theorem}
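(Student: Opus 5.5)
The plan is to derive \eqref{eq:wm-kernel} from the Borel-kernel reading of Theorem~\ref{thm:imclosed} recorded in Remark~\ref{rem:kernelW}, and then to obtain the valuation from Theorem~\ref{thm:arithlaw}. I take $w_m$ to be the Taylor coefficients of $W(t)=\frac{\sinh2t}{t^2}\operatorname{Im}\Phi(t)=\sum_m w_mt^{2m}$, as in the transverse law. I expect no real obstacle: all analytic content has already been absorbed into Theorem~\ref{thm:imclosed}, and all $2$-adic content into Theorem~\ref{thm:arithlaw}.

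\emph{Step 1: the kernel identity.} By Theorem~\ref{thm:imclosed}, $\operatorname{Im}g_n=-\frac12\widehat q_n$ for every $n$, where $\widehat Q_\Gamma=(2\sinh D)^{-1}\mathsf F$ with $\mathsf F=\widehat{\mathsf f}_\Gamma$ (Lemma~\ref{lem:parityF}). Let $\varphi(t)=\sum_{n\ge1}[Z^{-n}]\mathsf F\;t^{n-1}/(n-1)!$ be the formal Borel kernel of $\mathsf F$; Watson's correspondence is coefficientwise exact. In the variable $Z=1-2s$ the shift $s\mapsto s\pm1$ becomes $Z\mapsto Z\mp2$, which acts on kernels by multiplication by $e^{\pm2t}$ (Remark~\ref{rem:borel}). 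Hence the kernel of $\widehat Q_\Gamma$ is $\varphi(t)/(2\sinh2t)$, up to a sign I will track from the orientation $Z=1-2s$. The sign is harmless for the valuation but must be checked for \eqref{eq:wm-kernel}.

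Uniqueness in Proposition~\ref{prop:formal-sinh} makes this identification legitimate coefficientwise: both sides solve the same central-difference equation in $s^{-1}\mathbb Q[[s^{-1}]]$. Since $\Phi(t)=\sum g_nt^n/(n-1)!=t\cdot(\text{kernel of }\sum g_nZ^{-n})$, this gives $\operatorname{Im}\Phi(t)=-t\varphi(t)/(4\sinh2t)$, so $W(t)=-\varphi(t)/(4t)$. Reading off coefficients,
\[
 w_m=[t^{2m}]W=-\tfrac14[t^{2m+1}]\varphi=-\frac{[Z^{-(2m+2)}]\mathsf F}{4\,(2m+1)!}.
\]
The evenness of $\mathsf F$ (Lemma~\ref{lem:parityF}) makes $\varphi$ odd, so $W$ is even and no odd-index coefficients appear. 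This is the point where the sign convention must be fixed, so I will verify it against the first value $w_0$, using $[Z^{-2}]\mathsf F=-4$ from $a_0=1$ and $p_0=1$.

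\emph{Step 2: the valuation.} Theorem~\ref{thm:arithlaw} gives $\nu_2([Z^{-(2m+2)}]\mathsf F)=2-2m+s_2(m)$. Moreover $\nu_2(4)=2$ and $\nu_2((2m+1)!)=\nu_2((2m)!)=2m-s_2(2m)=2m-s_2(m)$. Therefore
\[
\nu_2(w_m)=2-2m+s_2(m)-2-(2m-s_2(m))=-2\bigl(2m-s_2(m)\bigr)=-2\nu_2((2m)!).
\]
Equivalently, $((2m)!)^2w_m$ has valuation $0$; it lies in $\mathbb Q$, so it is a $2$-adic unit.
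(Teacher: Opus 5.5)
Your proposal is correct and is essentially the paper's proof: you obtain $W(t)=-\varphi(t)/(4t)$ from Theorem~\ref{thm:imclosed} through the step-two Borel correspondence of Remark~\ref{rem:kernelW}, extract $[t^{2m}]$ using the evenness of $\mathsf F$, and then apply Theorem~\ref{thm:arithlaw} together with $\nu_2((2m+1)!)=\nu_2((2m)!)=2m-s_2(m)$. The sign you left open is $+$: since $s\mapsto s\pm1$ is $Z\mapsto Z\mp2$, the kernel multiplier is $e^{\pm2t}$, so the kernel of $\widehat Q_\Gamma$ is exactly $\varphi(t)/(2\sinh2t)$, which agrees with your check $w_0=1$ from $[Z^{-2}]\mathsf F=-4$.
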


\begin{proof}
By Remark~\ref{rem:kernelW}, $W(t)=-\varphi(t)/(4t)$ with $\varphi$ the
kernel of $\widehat{\mathsf f}_\Gamma=\mathsf F$ in $Z^{-1}$; by
Lemma~\ref{lem:parityF} the even coefficients of $\mathsf F$ carry the
whole kernel (consistently with $\operatorname{Im}g_{2k}=0$), and
extracting $[t^{2m}]$ gives \eqref{eq:wm-kernel}.  Then
Theorem~\ref{thm:arithlaw} and
$\nu_2((2m+1)!)=\nu_2((2m)!)=2m-s_2(m)$ give
$\nu_2(w_m)=\bigl(2-2m+s_2(m)\bigr)-2-\bigl(2m-s_2(m)\bigr)
=-2\bigl(2m-s_2(m)\bigr)=-2\nu_2((2m)!)$.
Independently of Theorem~\ref{thm:imclosed}, the bridge
\eqref{eq:wm-kernel} is verified \emph{exactly} against the tangent data
for all $m\le19$, the law \eqref{eq:arithlaw} is certified for all
$m\le159$, and the first units
$((2m)!)^2w_m=1,\ \tfrac13,\ 9,\ -\tfrac{6435}7,\ 762545,\dots$
reproduce the data recorded in Section~\ref{sec:open} (Code~CN).
\end{proof}

The passage from the $w_m$ back to the imaginary tangent is a
valuation-exact convolution; we reproduce it here, so that the sharp law
for the tangent of \emph{this} paper is self-contained.

\begin{lemma}[the two weights]\label{lem:weights2}
Put
\[
 D(t)=\frac{2t}{\sinh2t}=\sum_{k\ge0}d_kt^{2k},\qquad
 S(t)=\frac{\sinh2t}{2t}=\sum_{k\ge0}e_kt^{2k},
\]
so that $DS=1$.  Then $d_0=e_0=1$ and
$\nu_2(d_k)=\nu_2(e_k)=s_2(k)$ for every $k\ge1$.
\end{lemma}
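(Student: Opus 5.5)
The plan is to compute both series explicitly. The $e_k$ are elementary: $S(t)=\sinh 2t/(2t)=\sum_{k\ge0}(2t)^{2k}/(2k+1)!$, so $e_k=4^k/(2k+1)!$. Since $\nu_2((2k+1)!)=\nu_2((2k)!)=2k-s_2(2k)=2k-s_2(k)$, we get
\[
\nu_2(e_k)=2k-(2k-s_2(k))=s_2(k),
\]
and also $e_0=1$. This half needs no further input.

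For $D$ I would use the expansion of $x/\sinh x$ already written down in the proof of Proposition~\ref{prop:formal-sinh}. Substituting $x=2t$ gives
\[
d_k=\frac{2(1-2^{2k-1})B_{2k}\,4^k}{(2k)!}.
\]
For $k\ge1$, von Staudt--Clausen gives $\nu_2(B_{2k})=-1$, as used in Corollary~\ref{cor:R} and Lemma~\ref{lem:genocchi}. The factor $1-2^{2k-1}$ is odd, so
\[
\nu_2(d_k)=1+0-1+2k-(2k-s_2(k))=s_2(k).
\]
Finally $d_0=1$ from $B_0=1$, and $DS=1$ holds trivially because $D=1/S$.

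No step here is a real obstacle. The one point to state cleanly is $\nu_2((2k)!)=2k-s_2(k)$, which follows from Legendre's formula together with $s_2(2k)=s_2(k)$, and the fact that $(2k+1)!$ has the same $2$-adic valuation as $(2k)!$ because $2k+1$ is odd.

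As a fallback that avoids Bernoulli numbers entirely, I would argue from $DS=1$ by induction. The relation gives $d_k=-\sum_{j\ge1}e_jd_{k-j}$. If the previous steps give $\nu_2(e_jd_{k-j})=s_2(j)+s_2(k-j)$, then this is $\ge s_2(k)$ with equality exactly when no carries occur. Counting the carry-free pairs modulo $2$ then becomes a Lucas-type parity count. I expect this route to be more fragile, so the explicit Bernoulli computation is my primary route.
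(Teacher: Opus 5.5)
Your proof is correct and is essentially the paper's own argument: $e_k=4^k/(2k+1)!$ with Legendre's formula giving $\nu_2((2k+1)!)=2k-s_2(k)$, and $d_k=(2-2^{2k})B_{2k}\,4^k/(2k)!$ from the $x/\sinh x$ expansion at $x=2t$, with $\nu_2(2-2^{2k})=1$ and $\nu_2(B_{2k})=-1$ by von Staudt--Clausen. The fallback convolution route is not needed.
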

\begin{proof}
$e_k=2^{2k}/(2k+1)!$ gives
$\nu_2(e_k)=2k-\bigl(2k+1-s_2(2k+1)\bigr)=s_2(k)$, using
$s_2(2k+1)=s_2(k)+1$.  For $D$, the expansion
$x/\sinh x=\sum_k(2-2^{2k})B_{2k}x^{2k}/(2k)!$ at $x=2t$ gives
$d_k=(2-2^{2k})B_{2k}\,2^{2k}/(2k)!$; for $k\ge1$,
$\nu_2(2-2^{2k})=1$ and $\nu_2(B_{2k})=-1$ by von Staudt--Clausen, so
$\nu_2(d_k)=1-1+2k-(2k-s_2(k))=s_2(k)$.
\end{proof}

\begin{theorem}[sharp law of the imaginary tangent]\label{thm:tangentlaw}
For every $k\ge1$, $\operatorname{Im}g_{2k}=0$; and for every
$r\ge1$,
\[
 \nu_2\bigl(\operatorname{Im}g_{2r-1}\bigr)=-H_r,
 \qquad H_r=r+\nu_2\bigl((r-1)!\bigr).
\]
\end{theorem}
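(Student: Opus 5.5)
The plan is to pass from the kernel coefficients $w_m$ of Theorem~\ref{thm:problemW} to the imaginary tangent through the weight $D(t)$ of Lemma~\ref{lem:weights2}, and then to read off the valuation from a single dominant term by the ultrametric inequality.

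\emph{Even indices.} The vanishing $\operatorname{Im}g_{2k}=0$ is already contained in Corollary~\ref{cor:R}, where it follows from reflection. It is also consistent with the evenness of $\mathsf F$ in Lemma~\ref{lem:parityF}. So only the odd indices need work.

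\emph{Convolution identity.} By definition $W(t)=\frac{\sinh2t}{t^2}\operatorname{Im}\Phi(t)$ with $W(t)=\sum_{m\ge0}w_mt^{2m}$. Hence
\[
 \operatorname{Im}\Phi(t)=\frac{t^2}{\sinh 2t}W(t)=\frac t2\,D(t)\,W(t).
\]
Since $\Phi(t)=\sum_n g_nt^n/(n-1)!$, extracting $[t^{2r-1}]$ gives, for $r\ge1$,
\[
 \frac{\operatorname{Im}g_{2r-1}}{(2r-2)!}=\frac12\sum_{m+k=r-1}w_m\,d_k .
\]
This step is purely formal and needs nothing beyond the definitions.

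\emph{Valuations of the terms.} By Theorem~\ref{thm:problemW} and Lemma~\ref{lem:weights2},
\[
 \nu_2(w_md_k)=-2\nu_2((2m)!)+s_2(k),\qquad s_2(0)=0 .
\]
The function $m\mapsto\nu_2((2m)!)=2m-s_2(m)$ is strictly increasing. Its increment is $2-(s_2(m+1)-s_2(m))\ge1$, because $s_2(m+1)\le s_2(m)+1$. So among the terms with $m+k=r-1$, the term $m=r-1$, $k=0$ has valuation $-2\nu_2((2r-2)!)$. Every term with $m<r-1$ has valuation at least $-2\nu_2((2r-2)!)+2$, and the factor $s_2(k)\ge0$ only raises it further. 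The dominant term is therefore unique, and the ultrametric inequality gives
\[
 \nu_2(\operatorname{Im}g_{2r-1})=-1-2\nu_2((2r-2)!)+\nu_2((2r-2)!)=-1-\nu_2((2r-2)!)=-(2r-1-s_2(r-1)).
\]
Since $H_r=r+\nu_2((r-1)!)=r+(r-1)-s_2(r-1)$, this is exactly $-H_r$.

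\emph{Where the difficulty lies.} No step here is a real obstacle. All the analytic and arithmetic content sits in Theorems~\ref{thm:imclosed} and~\ref{thm:problemW}, which are taken as given. The only point to check with care is the strict monotonicity of $\nu_2((2m)!)$, since that is what makes the dominant term unique. I would also confirm the normalisations against the small case $r=1$: there $\operatorname{Im}g_1=\tfrac12w_0$ with $w_0=1$ a unit (from the listed first units $((2m)!)^2w_m$), so $\nu_2=-1=-H_1$.
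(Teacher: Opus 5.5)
Your argument is correct and is essentially the paper's. The paper also writes $2\operatorname{Im}\Phi/t=W D$ and feeds in $\nu_2(w_m)=-2\nu_2((2m)!)$ from Theorem~\ref{thm:problemW}. It then shows the $k=0$ term is the unique $2$-adically dominant one; where you use only the $W\to\operatorname{Im}\Phi$ direction and the monotonicity of $\nu_2((2m)!)$, it bounds the gap by $4k-s_2(k)$ through subadditivity of $s_2$ and handles both weights $D$ and $S$.

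The one real difference is the even indices. The paper does not rest $\operatorname{Im}g_{2k}=0$ on the reflection remark of Corollary~\ref{cor:R}, since the midpoint conjugation symmetry by itself only makes $\operatorname{Im}L_n$ odd in $D=(A-B)/2$, which does not kill its linear term. Instead it derives $\operatorname{Im}g_{2k}=0$ from Lemma~\ref{lem:parityF} and Theorem~\ref{thm:imclosed}: the Borel kernel $\varphi(t)/(2\sinh2t)$ of $\widehat Q_\Gamma$ is even. You can read off the same fact directly from $W=-\varphi/(4t)$, and you should cite that route rather than reflection.
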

\begin{proof}
Evenness first: by Lemma~\ref{lem:parityF} the odd
$Z^{-1}$-coefficients of $\mathsf F$ vanish, so its Borel kernel
$\varphi$ is an odd function of $t$; the kernel of $\widehat Q_\Gamma$
is $\varphi(t)/(2\sinh2t)$ (Remark~\ref{rem:kernelW}), an even
function, so $\widehat q_n=0$ for even $n$ and
$\operatorname{Im}g_{2k}=-\widehat q_{2k}/2=0$ by
Theorem~\ref{thm:imclosed}.

For the odd indices put
$b_m=[t^{2m}]\bigl(2\operatorname{Im}\Phi(t)/t\bigr)
=2\operatorname{Im}g_{2m+1}/(2m)!$ and
$\tau(m)=-2\nu_2((2m)!)$.  By the first paragraph
$2\operatorname{Im}\Phi/t$ is an even series, and directly from
$W=(\sinh2t/t^2)\operatorname{Im}\Phi$ (Remark~\ref{rem:kernelW};
equivalently Corollary~\ref{cor:Wtransverse}),
$W=(2\operatorname{Im}\Phi/t)\,S$ and
$2\operatorname{Im}\Phi/t=W\!D$: the sequences $(w_m)$ and $(b_m)$
are related by the mutually inverse convolutions with $(e_k)$ and
$(d_k)$.  Let $c=(c_k)$ be either weight.  For $1\le k\le m$,
Lemma~\ref{lem:weights2} and the subadditivity
$s_2(m)\le s_2(m-k)+s_2(k)$ give
\[
 \bigl[s_2(k)+\tau(m-k)\bigr]-\tau(m)
 =s_2(k)+4k+2s_2(m-k)-2s_2(m)\ \ge\ 4k-s_2(k)\ >\ 0 ,
\]
so in $\sum_kc_ka_{m-k}$ every $k\ge1$ term lies strictly above
$\tau(m)$: convolution with either weight preserves the family of
bounds $\nu_2(a_j)\ge\tau(j)$ and preserves equality at each index.
Theorem~\ref{thm:problemW} gives $\nu_2(w_m)=\tau(m)$ for all $m$;
hence $\nu_2(b_m)=\tau(m)$, i.e.
\[
 \nu_2\bigl(\operatorname{Im}g_{2m+1}\bigr)
 =\tau(m)-1+\nu_2\bigl((2m)!\bigr)
 =-\nu_2\bigl((2m)!\bigr)-1
 =-\bigl(2m+1-s_2(m)\bigr)=-H_{m+1}.
\]
(This is the valuation-exact transfer of the companion
paper~\cite{companion}, reproduced so that the statement is proved
inside this paper.  Code~CE confirms the law for $r\le21$ from an
engine containing no Horn recurrence; Code~CN certifies
$\nu_2(w_m)=\tau(m)$ for $m\le159$.)
\end{proof}

\begin{proposition}[amplitude--phase coefficient dictionary]
\label{prop:dictionary}
Normalise the phase constants of \eqref{eq:quantisation} by
\begin{equation}\label{eq:kappadef}
 \kappa_r=2^{-(2r-1)}\operatorname{Im}L_{2r-1}(x_0,y_0),
\end{equation}
with $L=\log F=\sum_nL_nZ^{-n}$ the amplitude logarithm of
\eqref{eq:F} at the midpoint: this is the amplitude--phase dictionary
of the Kummer--Liouville normal form, adopted as the working
definition here as in the companion manuscript.  Its identification
with the classical constants $\Psi_r(1)$ of \eqref{eq:quantisation}
is \emph{proved} for every $r\le8$ and all $\alpha,\beta$: both
sides are polynomials in $(A,B)$ of degree at most $2r-1$ in each
variable (Lemma~\ref{lem:degree} for the amplitude side; the phase
side is computed as a polynomial and its degrees are read off), and
Code~CP checks their equality, in exact rational arithmetic, on a
$16\times16$ interpolation grid of distinct values, which pins the
polynomial identity.  For $r>8$ the identification remains a
certified normalisation statement; Remark~\ref{rem:whatremains}
records this.  With the normalisation \eqref{eq:kappadef},
\[
 [A^1B^0]\kappa_r=2^{-(2r-1)}\operatorname{Im}g_{2r-1}
 \qquad(r\ge1),
\]
with no further constant.
\end{proposition}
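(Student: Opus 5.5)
The proof has two logically separate parts: the interpolation identification of the normalised constants with the classical $\Psi_r(1)$ for $r\le8$, and the coefficient identity $[A^1B^0]\kappa_r=2^{-(2r-1)}\operatorname{Im}g_{2r-1}$. I would handle them in that order.

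\emph{Identification for $r\le8$.} On the amplitude side, Lemma~\ref{lem:degree} gives $\deg_A L_n\le n$. Its proof applies verbatim off the diagonal, because $a_l(\alpha)a_{n-l}(\beta)$ has degree $l$ in $A$ and $n-l$ in $B$. Hence $\operatorname{Im}L_{2r-1}(x_0,y_0)$ is a polynomial in $(A,B)$ of degree at most $2r-1$ in each variable separately. Here the imaginary part is taken coefficientwise: the coefficients lie in $\mathbb Q(i)$, since $p,q\in\mathbb Z[i]$ and the passage to plain powers of $Z^{-1}$ uses the integer Stirling numbers. On the phase side, the recursive solution of the Kummer phase equation produces $\Psi_r(z)$ with coefficients polynomial in $(A,B)$, and $\Psi_r(1)$ is computed explicitly as such a polynomial. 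I would read off that its bidegree is again at most $(2r-1,2r-1)$. Two polynomials of bidegree at most $(15,15)$ that agree on a $16\times16$ grid of distinct values coincide, by tensor-product Lagrange interpolation. This reduces the case $r\le8$ to the exact finite comparison performed by Code~CP. Beyond $r=8$ no claim is made, so nothing further is needed there.

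\emph{The coefficient identity.} This is essentially definitional.
\begin{enumerate}
\item Because $L_{2r-1}$ is polynomial in $(A,B)$ with coefficients in $\mathbb Q(i)$, and $A,B$ are real on the admissible domain, taking $\operatorname{Im}$ commutes with extracting the coefficient of $A^1B^0$. Thus
\[
[A^1B^0]\kappa_r=2^{-(2r-1)}\operatorname{Im}\,[A^1B^0]L_{2r-1}=2^{-(2r-1)}\operatorname{Im}\,\partial_AL_{2r-1}\big|_{A=B=0}.
\]
\item The right-hand derivative is exactly $g_{2r-1}$ of Corollary~\ref{cor:R}, i.e.\ the derivative at the Legendre point $\alpha=\beta=0$. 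The parameter shift by $\frac14$ written in the definition of $g_n$ is only the bookkeeping of the Horn parametrisation $(\frac12\pm\alpha)$; after it is undone, the base point is $A=B=0$.
\end{enumerate}
The main care is this notational reconciliation. I would check it directly on the first coefficient: from Lemma~\ref{lem:KdF}, $L_1=G_1$, and differentiating gives the linear-in-$A$ term $\partial_Aa_1(\alpha)\,p=p/2$. This matches the first value of $g_1$ at the Legendre point, and confirms that no stray power of $2$ or sign enters. That is the meaning of ``with no further constant''.

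I expect the genuine obstacle to be the phase-side degree bound. The Kummer–Liouville recursion integrates odd Laurent polynomials in $z$ at each step, and a priori the $(A,B)$-degree could grow faster than $2r-1$. I would first try to bound it inductively through the recursion in $\mathcal N^{-2}$: each order multiplies by the Liouville potential, which is linear in $(A,B)$, so the degree increases by at most one per order. Failing a clean induction, I would fall back on reading the degrees off the explicit computed polynomials for $r\le8$, exactly as the statement permits.
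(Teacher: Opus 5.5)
Your proposal is correct and takes essentially the paper's route. For the coefficient identity you use exactly the paper's argument: $g_{2r-1}$ is by definition $\partial_A L_{2r-1}$ at the Legendre point $A=B=0$, and $\partial_A$ there commutes with the coefficientwise $\operatorname{Im}$ because $A$ is real. The $r\le 8$ identification is the same bidegree-$(15,15)$ interpolation on a $16\times16$ grid, with the phase-side degrees read off the computed polynomials; your inductive degree bound is an optional extra, and it holds by subadditivity of degrees in the quadratic recursion (giving total degree at most $r$) rather than by repeated multiplication by the potential.
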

\begin{proof}
By Corollary~\ref{cor:R}, $g_n$ \emph{is}
$\partial_AL_n(x_0,y_0;A-\frac14,B-\frac14)|_{A=B=0}$: the same
$L_n$, the same parametrisation, the same midpoint.  The extraction
$[A^1B^0]$ is $\partial_A\partial_B^0$ at $A=B=0$; the prefactor
$2^{-(2r-1)}$ is a constant; and, $A$ being a real parameter,
$\partial_A$ commutes with the (coefficientwise) imaginary part.
Differentiating \eqref{eq:kappadef} in $A$ at $A=B=0$ therefore gives
the display, and no additional constant can enter because both sides
are built from the identical object.
\end{proof}

\begin{corollary}[consequences]\label{cor:consequences}
\emph{(i)} By Proposition~\ref{prop:dictionary} and
Theorem~\ref{thm:tangentlaw}, the sharp denominator law
\[
 \nu_2\bigl([A^1B^0]\kappa_r\bigr)
 =-(2r-1)-H_r=-\bigl(3r-1+\nu_2((r-1)!)\bigr)
\]
holds for every $r$: the companion's Conjecture~W (its
Conjecture~1.7), previously verified for $r\le48$, holds in full.  The
normalisation \eqref{eq:kappadef} and its identification with the
Kummer constants are internal to this paper, and nothing is imported
from~\cite{companion} here; for the classical constants
$\Psi_r(1)$ the display is unconditional for $r\le8$ and holds in
general modulo the certified identification of
Proposition~\ref{prop:dictionary} (Remark~\ref{rem:whatremains}).
\emph{(ii)} In particular the integrality hypothesis
(Conjecture~\ref{conj:integrality}) is discharged, so
Laws~N and~N$'$, the depth-$4(r-1)$ divisibility of
Proposition~\ref{prop:congruence}, and the parity
$C_r\equiv1\pmod2$ of \eqref{eq:parityCr} all hold unconditionally.
\end{corollary}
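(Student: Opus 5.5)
The two parts of Corollary~\ref{cor:consequences} will be handled separately: (i) is a composition of results already stated, and (ii) is an implication through the companion's reductions.

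For (i), start from Proposition~\ref{prop:dictionary}, which gives $[A^1B^0]\kappa_r=2^{-(2r-1)}\operatorname{Im}g_{2r-1}$ exactly, with no extra constant. Taking $2$-adic valuations gives
\[
\nu_2\bigl([A^1B^0]\kappa_r\bigr)=-(2r-1)+\nu_2(\operatorname{Im}g_{2r-1}).
\]
Next insert Theorem~\ref{thm:tangentlaw}, which gives $\nu_2(\operatorname{Im}g_{2r-1})=-H_r$ with $H_r=r+\nu_2((r-1)!)$. The sum is $-(3r-1+\nu_2((r-1)!))$.

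To be careful about what is unconditional, I will trace the dependencies. Theorem~\ref{thm:tangentlaw} rests on Theorem~\ref{thm:problemW}, which rests on Theorem~\ref{thm:imclosed}; that theorem is proved with the Appendix estimates and does not use hypothesis (P). So the law for the amplitude-normalised $\kappa_r$ of \eqref{eq:kappadef} holds for all $r$. For the classical constants $\Psi_r(1)$ it is unconditional only for $r\le 8$, because only there is the identification proved by polynomial interpolation; for larger $r$ it holds modulo that identification.

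For (ii), I would use the companion's reduction as recorded in the introduction. Under the integrality hypothesis, the sharp law is \emph{equivalent} to the parity $C_r\equiv1\pmod2$, which says the normalised cancellation gains no extra factor of $2$. The task here runs in the converse direction: from the now-proved sharp valuation, recover the integrality hypothesis itself.

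The plan for that converse is as follows. The companion's unconditional Bessel-basis integrality theorem gives the bound $\nu_2(\operatorname{Im}g_{2r-1})\ge-H_r-4(r-1)$. The integrality hypothesis asks that the normalised quantity $2^{H_r}\operatorname{Im}g_{2r-1}$ (up to the odd normalisation used there) be $2$-integral. Theorem~\ref{thm:tangentlaw} gives the exact valuation $-H_r$, so that normalised quantity is a $2$-adic unit. This is stronger than integral, so Conjecture~\ref{conj:integrality} holds.

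Once Conjecture~\ref{conj:integrality} holds, Laws~N and~N$'$ and the depth-$4(r-1)$ divisibility of Proposition~\ref{prop:congruence} follow, since each was previously proved conditionally on it. Finally, $C_r\equiv1\pmod2$ follows because, under integrality, it is equivalent to the sharp law, which now holds.

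The main obstacle is matching the precise normalisation of Conjecture~\ref{conj:integrality}. If the conjecture concerns a family of numerators beyond the single tangent coefficient (all $[A^iB^j]$ components, say), the exact valuation of $\operatorname{Im}g_{2r-1}$ alone may not suffice. In that case I would check whether the conjecture as stated in Section~\ref{sec:open} is phrased only for the normalised congruence quantity built from $\operatorname{Im}g_{2r-1}$, and restrict claim (ii) to that formulation.
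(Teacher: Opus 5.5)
Your proposal is correct and follows the paper's route. Part (i) is Proposition~\ref{prop:dictionary} composed with Theorem~\ref{thm:tangentlaw}, and part (ii) rests on the sharp equality $\nu_2(h_r)=-H_r$, which trivially gives the lower bound \eqref{eq:hlower}; Propositions~\ref{prop:lawN} and~\ref{prop:congruence} then hold unconditionally, and $C_r=2^{H_r}\operatorname{Im}g_{2r-1}$ (by \eqref{eq:Cidentity}) is a $2$-adic unit, hence odd. Your closing worry does not arise, because Conjecture~\ref{conj:integrality} is stated exactly as \eqref{eq:hlower} for the single coefficient $h_r=\operatorname{Im}g_{2r-1}$, and the Bessel-basis bound of Proposition~\ref{prop:uncond} is not needed anywhere in the argument.
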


\begin{remark}[what, exactly, remains]\label{rem:whatremains}
The chain of this subsection is elementary and complete: the only
non-arithmetic input is Theorem~\ref{thm:imclosed}; its proof (the
two-component matching of Subsection~\ref{sec:matching}, with the
tail estimates of Lemma~\ref{lem:vertical}) is written out in full
in Appendix~\ref{app:vertical}.  The transverse law is therefore
proved in all orders.  Exactly \emph{two} statements of this paper
remain certified rather than proved in all orders, and neither is
load-bearing for the transverse law:

(i) the $2$-periodicity hypothesis (P) of
Proposition~\ref{prop:stokesfun}, which concerns only the
function-level closed form of the \emph{real} Stokes correction on
the real axis; it is checked numerically on six residue classes to
the full precision of the convergent representations, and no theorem
of this paper (in particular neither Theorem~\ref{thm:imclosed} nor
the transverse law) depends on it;

(ii) the identification of the working normalisation
\eqref{eq:kappadef} with the classical Kummer constants
$\Psi_r(1)$ of \eqref{eq:quantisation}, which is proved (as a
polynomial identity in $(A,B)$, for all $\alpha,\beta$) for every
$r\le8$ by the interpolation-grid certificate of Code~CP, and
certified beyond.  It affects only the translation of the transverse
law to the classical phase constants: the $w_m$-form
(Theorem~\ref{thm:problemW}), the tangent form
(Theorem~\ref{thm:tangentlaw}) and the $[A^1B^0]\kappa_r$-form with
the normalisation \eqref{eq:kappadef}
(Corollary~\ref{cor:consequences}) are proved in all orders without
it.
\end{remark}

\section{The conjugate-layer factorisation}\label{sec:layers}

The key to the transverse structure is a Burchnall--Chaundy-type expansion
\cite{BC1,BC2} of the function \eqref{eq:KdF}, proved here in full
generality by a Wilf--Zeilberger certificate \cite{PWZ}, independently of
the classical literature.

\begin{theorem}[layer expansion]\label{thm:layerexp}
For all parameters $a,b,a',b'$ and generic $c$, as an identity of formal
double series in $x,y$,
\begin{align}\label{eq:BC}
  \sum_{\mu,\nu\ge0}\frac{(a)_\mu(b)_\mu(a')_\nu(b')_\nu}
       {(c)_{\mu+\nu}\,\mu!\,\nu!}x^\mu y^\nu
  ={}&\sum_{r\ge0}\frac{(-1)^r(a)_r(b)_r(a')_r(b')_r}
       {r!\,(c)_{2r}\,(c+r-1)_r}\,(xy)^r\notag\\
  &\times\Fhyp(a{+}r,b{+}r;c{+}2r;x)\,\Fhyp(a'{+}r,b'{+}r;c{+}2r;y).
\end{align}
\end{theorem}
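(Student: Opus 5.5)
Both sides of \eqref{eq:BC} are formal double series in $x,y$, so I will compare the coefficients of $x^\mu y^\nu$. On the right, expand both Gauss factors. The $r$-th layer contributes to $x^\mu y^\nu$ through the terms $x^{\mu-r}$ of the first factor and $y^{\nu-r}$ of the second, which requires $0\le r\le\min(\mu,\nu)$. Since $(a)_r(a+r)_{\mu-r}=(a)_\mu$, and likewise for $b,a',b'$, the numerator Pochhammers $(a)_\mu(b)_\mu(a')_\nu(b')_\nu$ factor out of every term. Similarly $(c)_{2r}(c+2r)_{\mu-r}=(c)_{\mu+r}$. After dividing by the left-hand coefficient, \eqref{eq:BC} becomes equivalent to the single-sum identity
\[
 \sum_{r=0}^{\min(\mu,\nu)}\frac{(-1)^r\,(c)_{\mu+\nu}}{r!\,(\mu-r)!\,(\nu-r)!\,(c+r-1)_r\,(c)_{\mu+r}\,(c+2r)_{\nu-r}}=\frac1{\mu!\,\nu!}.
\]
This identity is free of $a,b,a',b'$, which is why the expansion holds for all values of those four parameters. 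I will also rewrite $(c)_{\mu+r}(c+2r)_{\nu-r}$ as a Gamma quotient, so the summand becomes a proper hypergeometric term in $(\mu,\nu,r)$.

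Next I fix $\mu$ and call the normalised summand $T(\nu,r)$, scaled so that the target total is $1$. I want a WZ pair: a rational certificate $R(\nu,r)$ such that $G=RT$ satisfies
\[
T(\nu+1,r)-T(\nu,r)=G(\nu,r+1)-G(\nu,r).
\]
Summing over $r$, the right side telescopes to zero, because $G$ vanishes at $r=0$ and beyond the natural support. Hence $\sum_r T(\nu,r)$ does not depend on $\nu$. The base case $\nu=0$ leaves only the term $r=0$, which equals $1/\mu!$ after the common factor is accounted for. By the $\mu\leftrightarrow\nu$ symmetry of the original identity, fixing $\nu$ and inducting on $\mu$ would serve equally well. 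A sanity check at $\mu=\nu=1$: the sum $1-c/\bigl(c(c+1)\bigr)\cdot(c+1)=0$ must be adjusted against $1/(\mu!\nu!)=1$, and carrying this out carefully will confirm the normalisation before any general computation.

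An alternative is to recognise the $r$-sum as a terminating balanced ${}_3F_2$, or a very-well-poised series, evaluated by Pfaff--Saalschütz or Dougall. Terms such as $(c+r-1)_r=\Gamma(c+2r-1)/\Gamma(c+r-1)$ have the very-well-poised shape, so Dougall's ${}_5F_4$ summation is a natural candidate. I would try this classical route first for conceptual clarity, and fall back on Zeilberger's algorithm to produce and check the certificate.

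I expect the main obstacle to be obtaining the certificate and handling the boundary. In particular, $G$ must vanish at $r=0$ and at $r=\min(\mu,\nu)+1$ for every $\nu$, including the crossover where $\min(\mu,\nu)$ switches from $\nu$ to $\mu$. The $\nu<\mu$ and $\nu\ge\mu$ regimes may need separate treatment, or the factorials $1/(\mu-r)!$ and $1/(\nu-r)!$ may enforce the natural boundaries automatically. Throughout, genericity of $c$ guarantees that no denominator $(c+r-1)_r$ or $(c)_{\mu+r}$ vanishes.
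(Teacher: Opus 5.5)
Your reduction is correct and coincides with the paper's. After $(c)_{2r}(c+2r)_{\mu-r}=(c)_{\mu+r}$, your single-sum identity is exactly the paper's parameter-free identity \eqref{eq:pure}, and telescoping in one index from the base case $0$ is the paper's strategy with $\mu$ and $\nu$ interchanged.

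\textbf{The gap.} You stop exactly where the proof's content lies: no certificate is produced. Creative telescoping only guarantees \emph{some} telescoper, not necessarily one of order one, so ``I want a WZ pair'' is not yet an argument.

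\textbf{The certificate.} In your orientation the pair exists; it is the paper's certificate with $\mu\leftrightarrow\nu$:
\[
 T(\nu,r)=\frac{(-1)^r(c)_{\mu+\nu}\,\mu!\,\nu!}{r!\,(\mu-r)!\,(\nu-r)!\,(c+r-1)_r\,(c)_{\mu+r}\,(c+2r)_{\nu-r}},
 \qquad
 R(\nu,r)=-\frac{r\,(c+\mu+r-1)}{(c+2r-1)(\nu+1-r)}.
\]
The two term ratios are $T(\nu+1,r)/T(\nu,r)=\frac{(c+\mu+\nu)(\nu+1)}{(c+\nu+r)(\nu+1-r)}$ and $T(\nu,r+1)/T(\nu,r)=-\frac{(\mu-r)(\nu-r)(c+r-1)(c+2r+1)}{(r+1)(c+\mu+r)(c+\nu+r)(c+2r-1)}$. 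Put $G=RT$ and divide both sides of $T(\nu+1,r)-T(\nu,r)=G(\nu,r+1)-G(\nu,r)$ by $T(\nu,r)$. Both sides then equal $\frac{\mu(\nu+1)+r(c+r-1)}{(c+\nu+r)(\nu+1-r)}$.

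\textbf{The boundary.} In $G=RT$ the factor $(\nu+1-r)(\nu-r)!$ combines into $(\nu+1-r)!$, so the pole of $R$ at $r=\nu+1$ is spurious. Thus $G$ is a hypergeometric term, finite at every integer for generic $c$. Extending the reciprocal factorials by $1/\Gamma$, the rational identity yields the WZ relation at every integer $r\ge0$; this is the paper's continuity remark. Summing over $0\le r\le K=\min(\mu,\nu+1)$, a range containing both supports, gives $S(\nu+1)-S(\nu)=G(\nu,K+1)-G(\nu,0)$. Here $G(\nu,0)=0$ by the factor $r$, and $G(\nu,K+1)=0$:
\begin{itemize}
\item if $\mu\le\nu$, then $1/(\mu-r)!$ vanishes at $r=\mu+1$ while $1/(\nu+1-r)!$ stays finite there (the case $\mu=\nu$ is the paper's ``double zero beats the simple pole'');
\item if $\mu\ge\nu+1$, then $1/(\nu+1-r)!$ vanishes at $r=\nu+2$.
\end{itemize}
With $S(0)=T(0,0)=1$ the induction closes, and no appeal to symmetry is needed. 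Your sanity check is only an arithmetic slip: at $\mu=\nu=1$ the two terms are $\frac{c+1}{c}$ and $-\frac1c$, which sum to $1$.

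\textbf{The classical route.} This would close the gap more cheaply, but not through Pfaff--Saalsch\"utz, since the sum is not a balanced ${}_3F_2(1)$. Use $\mu!/(\mu-r)!=(-1)^r(-\mu)_r$, $(c+r-1)_r=(c-1)_{2r}/(c-1)_r$, and, with $a=c-1$, $(c)_{2r}/(c-1)_{2r}=(1+\tfrac a2)_r/(\tfrac a2)_r$. Your sum becomes
\[
 \frac{(c)_{\mu+\nu}}{(c)_\mu(c)_\nu}\;{}_4F_3\!\left(\begin{matrix}a,\ 1+\tfrac a2,\ -\mu,\ -\nu\\ \tfrac a2,\ c+\mu,\ c+\nu\end{matrix};-1\right),
\]
a terminating very-well-poised ${}_4F_3$ at $-1$. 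Its classical evaluation (Dougall's ${}_5F_4$ with the fifth numerator parameter sent to infinity) is $\Gamma(c+\mu)\Gamma(c+\nu)/(\Gamma(c)\Gamma(c+\mu+\nu))=(c)_\mu(c)_\nu/(c)_{\mu+\nu}$, so the sum is $1$. That proof avoids all boundary and pole bookkeeping but imports a summation theorem. The paper's WZ certificate is self-contained and mechanically checkable, which is why it proves the expansion ``independently of the classical literature.''
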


\begin{proof}
Comparing coefficients of $x^\mu y^\nu$ and cancelling
$(a)_r(a+r)_{\mu-r}=(a)_\mu$ and its three companions (the parameter
dependence cancels termwise), the claim is equivalent to the pure
identity
\begin{gather}\label{eq:pure}
  \sum_{r=0}^{\min(\mu,\nu)}T_r=\frac1{(c)_{\mu+\nu}\,\mu!\,\nu!},\\
  T_r=\frac{(-1)^r}
      {r!\,(c)_{2r}(c+r-1)_r\,(c+2r)_{\mu-r}(c+2r)_{\nu-r}\,
       (\mu-r)!\,(\nu-r)!}.\notag
\end{gather}
Set $F(\mu,r)=T_r\,(c)_{\mu+\nu}\,\mu!\,\nu!$, extended to real $r$ by
$1/(\mu-r)!=1/\Gamma(\mu-r+1)$ and $1/(\nu-r)!=1/\Gamma(\nu-r+1)$; the
claim is $S(\mu):=\sum_rF(\mu,r)=1$.  The certificate
\[
  R(r)=-\,\frac{r\,(c+\nu+r-1)}{(c+2r-1)(\mu+1-r)},\qquad G(r)=R(r)F(\mu,r),
\]
satisfies the rational-function identity
$F(\mu+1,r)-F(\mu,r)=G(r+1)-G(r)$ (Code~CD verifies it symbolically after
dividing by $F(\mu,r)$ and clearing denominators).  Both sides are finite
at every integer $r\ge0$: the only delicate points are the simple zeros of
$1/\Gamma$ against the simple pole of $R$ at $r=\mu+1$, where the limits
exist and the identity holds by continuity.  Summing over
$0\le r\le K$, $K=\min(\mu+1,\nu)$, telescopes to
$S(\mu+1)-S(\mu)=G(K+1)-G(0)$.  Now $G(0)=0$ since $R(0)=0$; and
$G(K+1)=0$: if $\nu\le\mu$ then $K+1=\nu+1$, where $1/\Gamma(\nu-r+1)$
vanishes and $R$ is regular unless $\mu=\nu$, in which case the double zero
of $1/\bigl(\Gamma(\mu-r+1)\Gamma(\nu-r+1)\bigr)$ beats the simple pole;
if $\nu\ge\mu+1$ then $K+1=\mu+2$, where $1/\Gamma(\mu-r+1)$ vanishes and
$R$ is regular.  Hence $S(\mu+1)=S(\mu)$, and $S(0)=1$.  Symmetry in
$(\mu,\nu)$ completes the induction.
\end{proof}

\begin{corollary}[conjugate layers of the midpoint amplitude]\label{cor:layers}
For all $\alpha,\beta$, as formal series in $Z^{-1}$, with
$\gamma_r(A)=(\tfrac12-\alpha)_r(\tfrac12+\alpha)_r
=\prod_{j=0}^{r-1}\bigl((j+\tfrac12)^2-A\bigr)$,
\begin{equation}\label{eq:layers}
  F(\alpha,\beta;Z)
  =\sum_{r\ge0}\frac{(-1)^r\,\gamma_r(A)\,\gamma_r(B)}
       {2^{r}\,r!\,(1-Z)_{2r}\,(-Z+r)_r}\;
   \mathcal P_r(A;Z)\;\overline{\mathcal P}_r(B;Z),
\end{equation}
where
$\mathcal P_r(A;Z)=\Fhyp(\tfrac12-\alpha+r,\tfrac12+\alpha+r;1-Z+2r;x_0)$
and $\overline{\mathcal P}_r$ is the same series at $y_0=\bar x_0$.  Each
layer is $O(Z^{-3r})$: with $v=1/Z$,
\begin{equation}\label{eq:rho}
  \frac{(-1)^r\,(x_0y_0)^r}{r!\,(1-Z)_{2r}\,(-Z+r)_r}
  =\frac{v^{3r}}{2^{r}\,r!}\,
   \prod_{j=1}^{2r}\frac1{1-jv}\;\prod_{j=r}^{2r-1}\frac1{1-jv}.
\end{equation}
\end{corollary}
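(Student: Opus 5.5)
The plan is to specialise Theorem~\ref{thm:layerexp} to the Kamp\'e de F\'eriet form \eqref{eq:KdF} of Lemma~\ref{lem:KdF}. I would take $a=\frac12-\alpha$, $b=\frac12+\alpha$, $a'=\frac12-\beta$, $b'=\frac12+\beta$, $c=1-Z$, $x=x_0$ and $y=y_0$. Then $(a)_r(b)_r=\gamma_r(A)$ and $(a')_r(b')_r=\gamma_r(B)$. Since $x_0y_0=\frac12$, the factor $(xy)^r$ becomes $2^{-r}$. The two Gauss factors become $\mathcal P_r(A;Z)$ and $\overline{\mathcal P}_r(B;Z)$.

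The conjugation claim rests on the fact that the Gauss series has real coefficients for real $\alpha,\beta,Z$, evaluated at $y_0=\overline{x_0}$. Finally $(c+r-1)_r=(-Z+r)_r$, so \eqref{eq:layers} follows termwise.

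The one point that needs care is the sense in which the identity holds. Theorem~\ref{thm:layerexp} is an identity of formal double series in $x,y$, whereas \eqref{eq:layers} is claimed as formal series in $Z^{-1}$ at a fixed point. I would argue coefficientwise as follows. Fix an order $Z^{-N}$. Each coefficient $x^\mu y^\nu$ contributes $O(Z^{-\mu-\nu})$, because $1/(1-Z)_{\mu+\nu}=O(Z^{-\mu-\nu})$. Similarly, each $r$-layer together with its Gauss series terms at total degree $\mu+\nu$ is $O(Z^{-\mu-\nu-r})$, since the prefactor is $O(Z^{-3r})$ and the Gauss terms supply $Z^{-(\mu-r)-(\nu-r)}$. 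Hence only finitely many terms of either side affect $[Z^{-N}]$, and the pure identity \eqref{eq:pure}, multiplied by the polynomial coefficients, gives equality order by order. Substituting $x=x_0$, $y=y_0$ is then just a finite evaluation.

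For \eqref{eq:rho} I would write out the Pochhammer symbols directly:
\[
(1-Z)_{2r}=\prod_{j=1}^{2r}(j-Z)=Z^{2r}\prod_{j=1}^{2r}(1-jv),\qquad (-Z+r)_r=\prod_{j=r}^{2r-1}(j-Z)=(-1)^rZ^r\prod_{j=r}^{2r-1}(1-jv).
\]
The first equality uses $(-1)^{2r}=1$. Combining the sign $(-1)^r$ with $(-1)^r$ from the second product and $(x_0y_0)^r=2^{-r}$ gives the stated form, with total order $v^{3r}$. This last computation is routine; the only real obstacle is the formal-topology bookkeeping above.
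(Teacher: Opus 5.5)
Your proposal is correct and is essentially the paper's own proof: you specialise Theorem~\ref{thm:layerexp} at $a=\tfrac12-\alpha$, $b=\tfrac12+\alpha$, $a'=\tfrac12-\beta$, $b'=\tfrac12+\beta$, $c=1-Z$, $x_0y_0=\tfrac12$; you justify the rearrangement $v$-adically because only finitely many $(\mu,\nu,r)$ contribute to each order in $Z^{-1}$; and you expand the Pochhammer symbols to obtain \eqref{eq:rho}. Your appeal to real coefficients is not needed, since $\overline{\mathcal P}_r$ is simply defined as the same series evaluated at $y_0$.
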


\begin{proof}
Apply Theorem~\ref{thm:layerexp} to \eqref{eq:KdF} with $a=\tfrac12-\alpha$,
$b=\tfrac12+\alpha$, $a'=\tfrac12-\beta$, $b'=\tfrac12+\beta$, $c=1-Z$,
$x_0y_0=\tfrac12$, $(c+r-1)_r=(-Z+r)_r$.  Per coefficient of $v^n$ every
sum is finite (the monomial $(\mu,\nu)$ contributes from order $\mu+\nu$
on, the layer $r$ from order $3r$ on), so the rearrangement is
legitimate $v$-adically.  For \eqref{eq:rho} write
$(1-Z+j)=-Z(1-(1+j)v)$ and $(-Z+r+j)=-Z(1-(r+j)v)$.
\end{proof}

\begin{remark}[the arithmetic shape of the dyadic law is structural]
\label{rem:shape}
At the Legendre point $A=B=0$, $\gamma_r(0)=(\tfrac12)_r^2$ with
$(\tfrac12)_r=(2r)!/(4^rr!)$, so the layer-$r$ prefactor in
\eqref{eq:layers} carries
$\gamma_r(0)^2\,2^{-r}/r!=\bigl((2r)!\bigr)^4/\bigl(2^{\,9r}(r!)^5\bigr)$
against three inverse powers of $Z$ per layer.  The two constants of the
dyadic law (the $3r$ of $E_r=3r-1+\nu_2((r-1)!)$, and the weight
$((2m)!)^2$ of its normal form) are visible in the exact structure of
the amplitude itself.
\end{remark}

\begin{remark}[Legendre reading]\label{rem:legendre}
$\Fhyp(a,1-a;c;x)$ is an associated Legendre function, so
$\mathcal P_0(A;Z)$ is, up to a Gamma-and-power prefactor independent of
$\alpha$, the Legendre function $P^{Z}_{\alpha-1/2}(-i)$ of \emph{order}
$Z$ and degree $\alpha-\frac12$, and its partner is the conjugate function
at $+i$.  The transverse tangent below is a degree-derivative at the
conical index $\nu=-\frac12$ at the lemniscatic argument.
\end{remark}

\section{The transverse linearisation}\label{sec:transverse}

\subsection{Diagonal and transverse coordinates}

Put $C=\frac{A+B}2$, $D=\frac{A-B}2$, so $\partial_A=\frac12\partial_C
+\frac12\partial_D$, and define
\[
 \ma=A-\frac14,\qquad \mb=B-\frac14.
\]
Let
$\mathcal L_0=\log F|_{D=0}$, $\mathcal Q=\partial_D\log F|_{D=0}$, and
\[
  u=\Theta_x\mathcal L_0,\quad v=\Theta_y\mathcal L_0,\quad T=u+v,\qquad
  \mathsf p=\Theta_x\mathcal Q,\quad \mathsf q=\Theta_y\mathcal Q,\quad
  \mathsf h=\mathsf p+\mathsf q,
\]
with $\Theta_x=x\partial_x$ and $\Theta_y=y\partial_y$; the sans-serif
letters distinguish these fields from the midpoint constants $p=1+i$,
$q=1-i$ of Section~\ref{sec:half}.  Dividing the Appell equations
\eqref{eq:appellPDE-x}--\eqref{eq:appellPDE-y} by $F$ gives, before
linearisation, the logarithmic Horn system
\begin{align}
 Z u&=\Theta_xT+uT-x\bigl(\Theta_xu+u^2+u-\ma\bigr),\label{eq:logHornx}\\
 Z v&=\Theta_yT+vT-y\bigl(\Theta_yv+v^2+v-\mb\bigr),\label{eq:logHorny}
\end{align}
together with $\Theta_yu=\Theta_xv$.  Indeed,
$(\Theta_x+\tfrac12)^2F/F=\Theta_xu+u^2+u+\tfrac14$, and similarly in
$y$, so these equations are exactly equivalent to the Appell PDEs wherever
$F$ is invertible as a formal series (its constant term is $1$).  At the
midpoint, reflection makes $\partial_C\log F$ real and
$\partial_D\log F$ purely imaginary; the diagonal part is
Theorem~\ref{thm:main}, and the transverse part is what remains.

\begin{theorem}[first transverse variation]\label{thm:transverse}
Differentiating \eqref{eq:logHornx}--\eqref{eq:logHorny} at $D=0$ gives the
exact \emph{linear} forced system
\begin{align}
  Z\mathsf p&=\Theta_x\mathsf h+\mathsf pT+u\mathsf h
    -x\bigl(\Theta_x\mathsf p+2u\mathsf p+\mathsf p-1\bigr),\label{eq:transp}\\
  Z\mathsf q&=\Theta_y\mathsf h+\mathsf qT+v\mathsf h
    -y\bigl(\Theta_y\mathsf q+2v\mathsf q+\mathsf q+1\bigr),\label{eq:transq}
\end{align}
with the compatibility $\Theta_y\mathsf p=\Theta_x\mathsf q$ and forcing
$(x,-y)$.
\end{theorem}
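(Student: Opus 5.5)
The plan is to differentiate the logarithmic Horn system \eqref{eq:logHornx}--\eqref{eq:logHorny} once in the transverse coordinate $D$ and evaluate at $D=0$. First I would fix the formal setting. By Lemma~\ref{lem:KdF} and \eqref{eq:KdF}, $F$ is a formal double series in $(x,y)$ whose coefficients are rational in $Z$ and polynomial in $(A,B)$, hence in $(C,D)$. Its constant term is $1$, so $\log F$ is a well-defined formal series with the same kind of coefficients. Consequently $\partial_D$, $\Theta_x$ and $\Theta_y$ act coefficientwise and commute with each other. In particular
\[
\partial_Du|_{D=0}=\Theta_x\mathcal Q=\mathsf p,\qquad \partial_Dv|_{D=0}=\mathsf q,\qquad \partial_DT|_{D=0}=\mathsf h .
\]
The logarithmic system itself is taken as established above: it is the Appell system \eqref{eq:appellPDE-x}--\eqref{eq:appellPDE-y} divided by the invertible series $F$, using $(\Theta_x+\tfrac12)^2F/F=\Theta_xu+u^2+u+\tfrac14$.

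Next I would record how the parameters depend on $D$. Since $A=C+D$ and $B=C-D$, we have $\ma=C+D-\tfrac14$ and $\mb=C-D-\tfrac14$. Hence $\partial_D\ma=1$ and $\partial_D\mb=-1$, independently of the point. Applying $\partial_D$ to \eqref{eq:logHornx} term by term then gives:
\begin{itemize}
\item[(a)] $Zu\mapsto Z\mathsf p$, since $Z$ does not depend on $D$;
\item[(b)] $\Theta_xT\mapsto\Theta_x\mathsf h$;
\item[(c)] $uT\mapsto\mathsf pT+u\mathsf h$, by the Leibniz rule;
\item[(d)] $-x(\Theta_xu+u^2+u-\ma)\mapsto -x(\Theta_x\mathsf p+2u\mathsf p+\mathsf p-1)$.
\end{itemize}
All of these are evaluated at $D=0$, where $u,v,T$ become the diagonal background fields. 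This is exactly \eqref{eq:transp}. The $y$-equation is handled identically. The only change is that $\partial_D\mb=-1$ turns $-\mb$ into $+1$, which gives \eqref{eq:transq}. The resulting system is linear in $(\mathsf p,\mathsf q,\mathsf h)$, with coefficients built from the background $(u,v,T)$. Its inhomogeneous part is $x$ in the first equation and $-y$ in the second, which is the claimed forcing $(x,-y)$.

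The compatibility follows from $\Theta_y u=\Theta_x v$, which holds identically in $D$ because both sides equal $\Theta_x\Theta_y\log F$. Differentiating in $D$ gives $\Theta_y\mathsf p=\Theta_x\mathsf q$, or more directly, both sides equal $\Theta_x\Theta_y\mathcal Q$. As a check, I would note that the antisymmetry $R(y,x)=-R(x,y)$ of \eqref{eq:forcedAppell} is consistent with the sign flip in the forcing. I would also confirm that the system agrees with the direct linearisation \eqref{eq:forcedAppell} after the substitution $\mathcal Q=R/F_0$.

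The computation itself is mechanical. The one point needing care is the legitimacy of the formal setting. Division by $F$, taking the logarithm, and differentiating in $D$ must all be coefficientwise operations in a single ring, for instance $\mathbb Q(Z)[C,D][[x,y]]$, so that no convergence issue enters. I would settle this first, by noting that every coefficient of $\log F$ is a polynomial in finitely many coefficients of $F$. Everything after that is the Leibniz rule.
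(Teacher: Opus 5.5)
Your proposal is correct and follows the paper's proof essentially step for step: you use $\partial_D\mu_\alpha=1$ and $\partial_D\mu_\beta=-1$, differentiate the logarithmic Horn system termwise by the Leibniz rule at $D=0$, and obtain the compatibility by differentiating $\Theta_yu=\Theta_xv$ in $D$. Your extra step of fixing the ring $\mathbb Q(Z)[C,D][[x,y]]$ only makes explicit what the paper leaves implicit, namely that every operation is coefficientwise.
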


\begin{proof}
At fixed $C$ one has $A=C+D$, $B=C-D$, hence
$\partial_D\ma=1$ and $\partial_D\mb=-1$.  Also
\[
 \partial_Du=\Theta_x\mathcal Q=\mathsf p,\qquad
 \partial_Dv=\Theta_y\mathcal Q=\mathsf q,\qquad
 \partial_DT=\mathsf h.
\]
Differentiating \eqref{eq:logHornx} term by term therefore gives
\[
 Z\mathsf p
 =\Theta_x\mathsf h+\mathsf pT+u\mathsf h
  -x\bigl(\Theta_x\mathsf p+2u\mathsf p+\mathsf p-1\bigr),
\]
because the derivative of $-\ma$ is $-1$.  The $y$ equation is identical
except that $\partial_D\mb=-1$, so the derivative of $-\mb$ is $+1$,
which yields
\[
 Z\mathsf q
 =\Theta_y\mathsf h+\mathsf qT+v\mathsf h
  -y\bigl(\Theta_y\mathsf q+2v\mathsf q+\mathsf q+1\bigr).
\]
Finally, $\Theta_yu=\Theta_xv$ follows from commutation of
$\Theta_x,\Theta_y$ on $\log F$; differentiating this identity in $D$
gives $\Theta_y\mathsf p=\Theta_x\mathsf q$.  Every occurrence of the
unknown transverse quantities is linear, while $u,v,T$ belong to the fixed
diagonal background.
\end{proof}

\begin{corollary}[formal uniqueness of the transverse solution]\label{cor:transverseunique}
Let $\zeta=Z^{-1}$.  In the class of pairs
$$(\mathsf p,\mathsf q)\in\zeta\,\mathbb C[[x,y]][[\zeta]]^2$$
satisfying the compatibility $\Theta_y\mathsf p=\Theta_x\mathsf q$,
system~\eqref{eq:transp}--\eqref{eq:transq} has at most one solution for the
fixed diagonal background $u,v,T$; hence the solution obtained by differentiating
$\mathcal Q$ is the unique formal solution with zero constant term in $Z^{-1}$.
\end{corollary}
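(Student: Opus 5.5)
We argue by linearity: if $(\mathsf p_1,\mathsf q_1)$ and $(\mathsf p_2,\mathsf q_2)$ are two solutions in the stated class, their difference $(\delta\mathsf p,\delta\mathsf q)$ solves the homogeneous system obtained from \eqref{eq:transp}--\eqref{eq:transq} by deleting the forcing terms $\pm x$, $\pm y$. Since $u,v,T$ are the fixed background, this system is linear in the unknowns. It still satisfies $\Theta_y\delta\mathsf p=\Theta_x\delta\mathsf q$ and lies in $\zeta\,\mathbb C[[x,y]][[\zeta]]$. It therefore suffices to show that the only homogeneous solution with zero constant term in $\zeta$ is zero.

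Multiply the homogeneous equations by $\zeta=Z^{-1}$. They become
\[
 \delta\mathsf p=\zeta\bigl[\Theta_x\delta\mathsf h+\delta\mathsf p\,T+u\,\delta\mathsf h-x(\Theta_x\delta\mathsf p+2u\,\delta\mathsf p+\delta\mathsf p)\bigr],
\]
and similarly for $\delta\mathsf q$. The background fields $u=\Theta_x\mathcal L_0$, $v=\Theta_y\mathcal L_0$ and $T$ are themselves in $\zeta\,\mathbb C[[x,y]][[\zeta]]$, because $\mathcal L_0=\log F|_{D=0}$ and $F=1+O(Z^{-1})$ coefficientwise. The operators $\Theta_x$, $\Theta_y$ and multiplication by $x$, $y$ act coefficientwise in $\zeta$.

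Hence the right-hand side is a map $\Lambda$ with the property that the $\zeta^k$-coefficient of $\Lambda(\delta\mathsf p,\delta\mathsf q)$ depends only on the $\zeta^j$-coefficients of $(\delta\mathsf p,\delta\mathsf q)$ with $j\le k-1$. This is the key point: the explicit factor $\zeta$ shifts the order by one, and multiplication by the background series, which has no $\zeta^0$ term, shifts it by at least one more. Note that the derivations $\Theta$ do not raise degree in $\zeta$, but they act only within a fixed coefficient $\mathbb C[[x,y]]$. No convergence in $x,y$ is needed, because each $\zeta$-coefficient is an arbitrary formal power series in $x,y$, and the recursion involves only finitely many operations per coefficient.

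Induction on $k$ then closes the argument. Suppose $[\zeta^j]\delta\mathsf p=[\zeta^j]\delta\mathsf q=0$ for all $j<k$; this holds vacuously for $k=1$ and also at $j=0$ by hypothesis. Then $[\zeta^k]$ of the right-hand side vanishes, so $[\zeta^k]\delta\mathsf p=[\zeta^k]\delta\mathsf q=0$. Thus the difference is zero, and the compatibility condition is not even needed for uniqueness.

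For the final clause we use Theorem~\ref{thm:transverse}: the pair $\mathsf p=\Theta_x\mathcal Q$, $\mathsf q=\Theta_y\mathcal Q$ solves the system and satisfies the compatibility. It has zero constant term in $Z^{-1}$ because $\mathcal Q=\partial_D\log F|_{D=0}$ and $\log F\in\zeta\,\mathbb C[[x,y]][[\zeta]]$ (constant term of $F$ equal to $1$, each $\zeta$-coefficient polynomial in $D$). So it is the unique solution in the class.

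The only step needing care is confirming that $u,v,T$ carry no $\zeta^0$ term and that the $\Theta$-terms are genuinely multiplied by $\zeta$ after division by $Z$. Both are immediate from the form of \eqref{eq:transp}--\eqref{eq:transq}, where every right-hand term appears at order $Z^0$ against $Z\mathsf p$ on the left. So we expect no real obstacle.
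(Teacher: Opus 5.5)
Your proof is correct and is essentially the paper's argument. You subtract two solutions to remove the forcing, then use that the $\zeta$-shift from dividing by $Z$ (and $u,v,T=O(\zeta)$) pushes every right-hand term up in $\zeta$-order while $\Theta_x,\Theta_y$ and multiplication by $x,y$ preserve it. Your forward induction on the $\zeta$-order is the paper's least-nonzero-index contradiction run directly, and, as there, the compatibility condition plays no role in the uniqueness.
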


\begin{proof}
Existence is already supplied by $\mathsf p=\Theta_x\mathcal Q$ and
$\mathsf q=\Theta_y\mathcal Q$.  For uniqueness, let two such solutions be
given and denote their difference by $(\delta p,\delta q)$.  The forcing
terms cancel.  Because $F=1+O(\zeta)$, one has $u,v,T=O(\zeta)$.  Suppose,
for contradiction, that the difference is nonzero and let $n\ge1$ be the
least index for which the coefficient of $\zeta^n$ in either $\delta p$ or
$\delta q$ is nonzero.  Multiplication by $Z=\zeta^{-1}$ makes the left-hand
side of the corresponding difference equation have a nonzero term of order
$\zeta^{n-1}$.  Every term on its right-hand side has order at least
$\zeta^n$: the Euler operators do not change the $\zeta$-order, multiplication
by $x$ or $y$ does not change it, and multiplication by $u,v,T$ raises it by
at least one.  Thus the coefficient of $\zeta^{n-1}$ on the right is zero, a
contradiction.  Hence $\delta p=\delta q=0$.
\end{proof}

The original Riccati--Horn system is nonlinear; its first transverse variation
is linear, and by Theorem~\ref{thm:main} its coefficients live on a background
now known in closed form.

\begin{corollary}\label{cor:Wtransverse}
With $\Phi$ as in Corollary~\ref{cor:R} and
$\sinh(2t)\Phi(t)/t^2=e^t+iW(t)$,
\[
  W(t)=\frac{\sinh2t}{2t^2}\,\operatorname{Im}\widehat{\mathcal Q}(t),
\]
where $\widehat{\mathcal Q}$ is the Borel transform of $\mathcal Q$ at the
midpoint.
\end{corollary}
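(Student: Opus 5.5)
The plan is to unwind the definitions, since the statement only combines Corollary~\ref{cor:R} with the reflection parity of the midpoint amplitude. Throughout, I read ``Borel transform'' in the same normalisation that defines $\Psi_A$ and $\Phi$: for $f=\sum_n c_nZ^{-n}$, set $\widehat f(t)=\sum_{n\ge1}c_n t^n/(n-1)!$. With this convention the stated factor $\frac{1}{2t^2}$ comes out exactly. With the convention $\mathfrak B$ of Remark~\ref{rem:borel}, which carries one fewer power of $t$, the statement would instead read $\frac{\sinh2t}{2t}\operatorname{Im}\mathfrak B\mathcal Q$. I will state this normalisation explicitly at the outset.

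First I split the tangent into diagonal and transverse parts. By definition, $g_n=\partial_AL_n$ is evaluated at the background point of Corollary~\ref{cor:R}, and $\mathcal Q=\partial_D\log F|_{D=0}$ is taken at the same point. Since $A=C+D$ and $B=C-D$, we have $\partial_A=\tfrac12\partial_C+\tfrac12\partial_D$, and hence coefficientwise
\[
 g_n=\tfrac12[Z^{-n}]\partial_C\log F+\tfrac12[Z^{-n}]\mathcal Q .
\]
Second, I use the midpoint conjugation symmetry $\overline{\mathcal F(C,D;x_0,y_0)}=\mathcal F(C,-D;x_0,y_0)$, recorded after \eqref{eq:forcedAppell}. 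It shows that at $D=0$ the coefficients of $\partial_C\log F$ are real and those of $\mathcal Q$ are purely imaginary. The logarithm is legitimate here because $F=1+O(Z^{-1})$, so $\log F$ is a well-defined formal series. Taking imaginary parts then gives $\operatorname{Im}g_n=\tfrac12\operatorname{Im}[Z^{-n}]\mathcal Q$ for every $n$. Applying the Borel map termwise yields
\[
 \operatorname{Im}\Phi(t)=\tfrac12\operatorname{Im}\widehat{\mathcal Q}(t).
\]

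Third, I read off the real part. Corollary~\ref{cor:R} gives $\operatorname{Re}\Phi=t^2e^t/\sinh2t$, so
\[
 \sinh(2t)\,\Phi(t)/t^2=e^t+i\,\sinh(2t)\operatorname{Im}\Phi(t)/t^2 .
\]
This shows that the decomposition $e^t+iW$ is consistent, with $W$ real. It also shows that
\[
 W=\frac{\sinh 2t}{t^2}\operatorname{Im}\Phi=\frac{\sinh2t}{2t^2}\operatorname{Im}\widehat{\mathcal Q},
\]
which is the claim. Here $\operatorname{Im}$ means the coefficientwise imaginary part of a series with coefficients in $\mathbb Q(i)$. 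Multiplication by the real even series $\sinh(2t)/t^2$ commutes with this operation, because the only negative power involved is $t^{-1}$ against $\Phi=O(t)$, so it is harmless.

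The only step needing care is the second one. I must check that the symmetry $\overline{\mathcal F(C,D)}=\mathcal F(C,-D)$ holds at the same shifted base point used in Corollary~\ref{cor:R}, and that it survives differentiation in $D$ and the passage to the formal logarithm coefficientwise. To settle this I would verify that the symmetry holds identically in real $(C,D)$, which is immediate from \eqref{eq:KdF} with $y_0=\bar x_0$ and the interchange of the two summation indices. Differentiating once in $D$ at $D=0$ then gives $\overline{\mathcal Q}=-\mathcal Q$ coefficientwise. The remaining bookkeeping is the power of $t$ in the Borel normalisation, which I fix as described above.
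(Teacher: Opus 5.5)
Your proposal is correct and follows the paper's own argument. You split $\partial_A=\tfrac12\partial_C+\tfrac12\partial_D$ at the Legendre point, use the midpoint conjugation symmetry to see that the $C$-part is real and $\mathcal Q$ is purely imaginary, pass coefficientwise to Borel transforms to get $\operatorname{Im}\Phi=\tfrac12\operatorname{Im}\widehat{\mathcal Q}$, and take imaginary parts in $\sinh(2t)\Phi(t)/t^2=e^t+iW(t)$. Your explicit choice of the $\Phi$-normalisation $t^n/(n-1)!$ (rather than $\mathfrak B$ of Remark~\ref{rem:borel}), and your direct check of $\overline{F(A,B)}=F(B,A)$ from \eqref{eq:KdF}, are exactly the conventions the paper's proof uses implicitly.
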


\begin{proof}
At $A=B=0$,
\[
 \partial_A\log F=\frac12\partial_C\log F
                    +\frac12\partial_D\log F.
\]
The first term is real at the conjugate midpoint and the second is purely
imaginary.  Passing coefficientwise to the Borel transforms therefore gives
\[
 \operatorname{Im}\Phi(t)
   =\frac12\operatorname{Im}\widehat{\mathcal Q}(t).
\]
Taking imaginary parts in
$\sinh(2t)\Phi(t)/t^2=e^t+iW(t)$, noting that $e^t$ is real as a formal
series, gives the displayed formula.
\end{proof}

\subsection{A Green kernel at the Legendre background}

A caution before the computation (expanded in
Remark~\ref{rem:mixed} below): the natural first variation in
$(\alpha,\beta)$ at fixed degree computed here is \emph{not} the
physical transverse tangent, which is a mixed derivative taken at fixed
$\Nn$; the reader should not identify the response below with the
tangent of Corollary~\ref{cor:gausstangent} before reaching that remark.

Off the Horn formalism entirely, the transverse response can be written by
reduction of order.  Put $\alpha=a+\delta$, $\beta=a-\delta$, and
$\varphi_a(x)=P^{(a,a)}_d(x)/P^{(a,a)}_d(1)$,
\[
  v_a=\partial_\delta\left[\frac{P^{(a+\delta,a-\delta)}_d}
                                {P^{(a+\delta,a-\delta)}_d(1)}\right]_{\delta=0}.
\]
Only $\beta-\alpha=-2\delta$ varies in the Jacobi equation, so
$\mathcal J_{a,d}v_a=2\varphi_a'$ with
$\mathcal J_{a,d}y=(1-x^2)y''-2(a+1)xy'+d(d+2a+1)y$ and $v_a(1)=0$.  Writing
$v=\varphi_ah_a$ and using $\mathcal J_{a,d}\varphi_a=0$,
$\bigl((1-x^2)^{a+1}\varphi_a^2h_a'\bigr)'=2(1-x^2)^a\varphi_a\varphi_a'$, so
\[
  G_{a,d}(x,\xi)=\varphi_a(x)\,(1-\xi^2)^a\varphi_a(\xi)
  \int_\xi^x\frac{d\eta}{(1-\eta^2)^{a+1}\varphi_a(\eta)^2}.
\]

\begin{proposition}\label{prop:green}
At $a=0$, with $P_d,Q_d$ the Legendre functions and $H_d$ the harmonic number,
\[
  \frac{v_0(x)}{P_d(x)}=\operatorname{artanh}x-\frac{Q_d(x)}{P_d(x)}-H_d .
\]
\end{proposition}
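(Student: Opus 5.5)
The plan is to integrate the reduction-of-order identity of the preceding paragraph explicitly at $a=0$, where everything becomes Legendre. With $a=0$ we have $\varphi_0=P_d$ (since $P_d(1)=1$), and writing $v_0=P_d\,h$ the displayed identity reads
\[
 \bigl((1-x^2)P_d^2h'\bigr)'=2P_dP_d'=(P_d^2)'.
\]
A first integration gives $(1-x^2)P_d^2h'=P_d^2+c_1$. Because $v_0$ is a polynomial in $x$ (it is the $\delta$-derivative of a polynomial family), $h=v_0/P_d$ is smooth near $x=1$. The left side therefore vanishes at $x=1$, and so $c_1=-P_d(1)^2=-1$. This leaves
\[
 h'=\frac1{1-x^2}-\frac1{(1-x^2)P_d^2}.
\]

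Next I identify both terms as exact derivatives. The first is $(\operatorname{artanh}x)'$. For the second I use the Legendre Wronskian $P_dQ_d'-P_d'Q_d=1/(1-x^2)$, which gives $(Q_d/P_d)'=1/\bigl((1-x^2)P_d^2\bigr)$. Hence $h'=\bigl(\operatorname{artanh}x-Q_d/P_d\bigr)'$.

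Both $h$ and $\operatorname{artanh}x-Q_d/P_d$ are meromorphic on a complex neighbourhood of $(-1,1)$, with poles only at the zeros of $P_d$. That punctured neighbourhood is connected, so a single constant suffices:
\[
 h=\operatorname{artanh}x-\frac{Q_d}{P_d}+c_2 .
\]
This disposes of the worry that the constants might differ between the real intervals separated by the zeros of $P_d$.

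It remains to fix $c_2$ from $v_0(1)=0$, that is $h(1)=0$; this is the step I expect to need the most care. I would use the classical decomposition $Q_d(x)=P_d(x)\operatorname{artanh}x-W_{d-1}(x)$, with $W_{d-1}$ the Christoffel polynomial, valid on $(-1,1)$. It gives $\operatorname{artanh}x-Q_d/P_d=W_{d-1}/P_d$. This expression is regular at $x=1$ and equals $W_{d-1}(1)$ there. The classical evaluation $W_{d-1}(1)=H_d$ (for instance from $W_{d-1}=\sum_{k=1}^d\frac1kP_{k-1}P_{d-k}$, which gives $\sum_{k=1}^d1/k$) then yields $c_2=-H_d$, which is the stated formula.

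As a consistency check I would also verify the forcing $\mathcal J_{0,d}v_0=2P_d'$ and the normalisation $v_0(1)=0$ independently. This uses $P^{(\delta,-\delta)}_d(1)=\binom{d+\delta}{d}$, whose $\delta$-derivative at $0$ is $H_d$, and compares against a small case such as $d=1$.
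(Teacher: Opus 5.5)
Your proposal is correct and follows the paper's route: you integrate the reduction-of-order identity, recognise $(Q_d/P_d)'$ through the Legendre Wronskian, and fix the constant at $x=1$. There, your exact decomposition $Q_d=P_d\operatorname{artanh}x-W_{d-1}$ with $W_{d-1}(1)=H_d$ is the same fact the paper uses in its asymptotic form $Q_d(x)=\frac12\log\frac{2}{1-x}-H_d+o(1)$; your justification of the first-integral constant $-1$ and your single-constant argument across the zeros of $P_d$ supply two steps the paper leaves implicit.
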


\begin{proof}
$P_dQ_d'-Q_dP_d'=(1-x^2)^{-1}$ gives
$(v_0/P_d)'=\frac{P_d^2-1}{(1-x^2)P_d^2}=\frac1{1-x^2}-\bigl(Q_d/P_d\bigr)'$,
so $v_0/P_d=\operatorname{artanh}x-Q_d/P_d+c$.  As $x\to1^-$,
$Q_d(x)=\frac12\log\frac2{1-x}-H_d+o(1)$ and
$\operatorname{artanh}x=\frac12\log\frac2{1-x}+o(1)$, while $P_d(1)=1$; hence
$v_0/P_d\to H_d+c$, and $v_0(1)=0$ forces $c=-H_d$.
\end{proof}

\begin{remark}[the physical response is a mixed derivative]\label{rem:mixed}
Proposition~\ref{prop:green} is not the physical tangent.  With
$\alpha=a+\delta$, $\beta=a-\delta$ one has $A-B=4a\delta$, so
$\partial_\delta(A-B)|_{a=0}=0$: the first $\delta$-variation at $a=0$ does not
move $A-B$ at all.  Since $\kappa_r=(A-B)\,\varrho_r(A,B)$ by
\eqref{eq:antisym},
\[
  [A^1B^0]\kappa_r=\varrho_r(0,0)
  =\tfrac14\,\partial_a\partial_\delta\,\kappa_r(a+\delta,a-\delta)
   \big|_{a=\delta=0},
\]
a \emph{mixed} derivative.  Moreover the expansion must be taken with
$\Nn=d+a+\frac12$ held fixed, not with the degree $d$ held fixed; the two
differ by the variation of the eigenvalue.
\end{remark}

\section{The tangent from the layers, and the congruence form of the
transverse law}\label{sec:tangentlayers}

\emph{Dependency note.}  The results of this section separate into
unconditional statements (Theorem~\ref{thm:lawD},
Lemma~\ref{lem:lambda2}, Proposition~\ref{prop:uncond}) and statements
conditional on the integrality hypothesis
(Conjecture~\ref{conj:integrality}); the conditional ones never use the
transverse law, so no circularity arises when
Corollary~\ref{cor:consequences} later discharges the hypothesis, and
they then hold unconditionally.  The details are in the
logical-dependency remark at the end of the section.

Write $\mathcal P_r'=\partial_A\mathcal P_r|_{A=0}$, abbreviate the layer
weight in \eqref{eq:layers} by $\varrho_r(Z)$, and set, at the Legendre
point $A=B=0$,
\begin{equation}\label{eq:NumDen}
  \operatorname{Num}=\sum_{r\ge0}\varrho_r\,
    \bigl[\gamma_r\mathcal P_r\bigr]'\,
    \overline{\gamma_r\mathcal P_r},
  \qquad
  \operatorname{Den}=\sum_{r\ge0}\varrho_r\,
    \bigl(\gamma_r\mathcal P_r\bigr)
    \overline{\bigl(\gamma_r\mathcal P_r\bigr)} ,
\end{equation}
where the bar is the $y_0$-series (coefficientwise conjugation); both
$\gamma_r$ and $\mathcal P_r$ are polynomials in $A$ coefficientwise, so
the derivative is elementary and exact.  Then, coefficientwise in $v=1/Z$,
\begin{equation}\label{eq:tangentformula}
  G(v):=\sum_{n\ge1}g_nv^n=\partial_A\log F\big|_{A=B=0}
  =\frac{\operatorname{Num}}{\operatorname{Den}},
  \qquad
  \operatorname{Im}g_n=\Bigl[v^n\Bigr]
   \frac{\operatorname{Im}\operatorname{Num}}{\operatorname{Den}},
\end{equation}
the last equality because $\operatorname{Den}$ has real coefficients.
Code~CE rebuilds the tangent from \eqref{eq:tangentformula} and finds
\emph{exact} agreement, real and imaginary parts alike, with the
independent dual-number Horn engine for all $n\le41$; in particular the
real parts reproduce the Bernoulli/Euler closed forms of
Corollary~\ref{cor:R}.  Since $[A^1B^0]\kappa_r=2^{-(2r-1)}
\operatorname{Im}g_{2r-1}$ (Proposition~\ref{prop:dictionary}), the
sharp dyadic law of Theorem~\ref{prob:W} reads
$\nu_2(\operatorname{Im}g_{2r-1})=-H_r$ with
$H_r=r+\nu_2((r-1)!)=E_r-(2r-1)$; it is proved in all orders in
Theorem~\ref{thm:tangentlaw}, and confirmed here for $r\le21$
from an engine containing no Horn recurrence.

\begin{corollary}[explicit denominator]\label{cor:denexp}
$\operatorname{Den}=F(0,0;Z)=\exp\Lambda$, where, by
Theorem~\ref{thm:main} at $\alpha=0$,
\[
  \Lambda=\sum_{n\ge1}L_n^{\mathrm{diag}}(0)\,v^n,\qquad
  L_n^{\mathrm{diag}}(0)=\frac{4^n}{n(n+1)}
  \Bigl[2B_{n+1}\bigl(\tfrac34\bigr)-B_{n+1}(1)-B_{n+1}\bigl(\tfrac12\bigr)
  \Bigr].
\]
Hence the whole surviving problem sits in
$\operatorname{Im}\operatorname{Num}$, an explicit sum of layer data
(Code~CH checks the identity $\operatorname{Den}=\exp\Lambda$
coefficientwise for $n\le41$).
\end{corollary}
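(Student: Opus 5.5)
The plan is to identify $\operatorname{Den}$ with the diagonal amplitude at the Legendre point, and then read off its logarithm from Theorem~\ref{thm:main}.

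First, I note that $\operatorname{Den}$ in \eqref{eq:NumDen} is, term by term, the layer sum \eqref{eq:layers} evaluated at $A=B=0$. At $A=B=0$ we have $\gamma_r(A)=\gamma_r(B)=\gamma_r(0)$. The bar denotes coefficientwise conjugation, and $\gamma_r(0)=(\tfrac12)_r^2$ is real, so $\overline{\gamma_r\mathcal P_r}=\gamma_r(0)\overline{\mathcal P}_r(0;Z)$. Hence $\operatorname{Den}=\sum_r\varrho_r\gamma_r(0)^2\mathcal P_r(0;Z)\overline{\mathcal P}_r(0;Z)$, which by Corollary~\ref{cor:layers} is exactly $F(0,0;Z)$. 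This holds as an identity of formal series in $v=1/Z$, and the rearrangement is legitimate $v$-adically because layer $r$ starts at order $v^{3r}$ by \eqref{eq:rho}. The one point to check is that the weight $\varrho_r(Z)$ in \eqref{eq:NumDen} is the same weight as in \eqref{eq:layers}, including the factor $(x_0y_0)^r=2^{-r}$. That is how it was defined, so no further work is needed.

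Second, $F(0,0;Z)$ is the diagonal amplitude at $\alpha=\beta=0$, i.e.\ $A=B=0$. Its constant term is $1$, so $\log F$ is a well-defined formal series, and by definition $\log F=\sum_{n\ge1}L_n^{\mathrm{diag}}Z^{-n}$ with $A=0$. Exponentiating gives $\operatorname{Den}=\exp\Lambda$. The explicit coefficients then come directly from \eqref{eq:coef} at $\alpha=0$: the two Bernoulli terms $B_{n+1}(\tfrac{3\pm2\alpha}4)$ coincide as $2B_{n+1}(\tfrac34)$. This is the displayed formula.

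Third, for the closing sentence, I use that $\operatorname{Den}$ has real coefficients, for example because $L_n^{\mathrm{diag}}(0)$ is rational. Then \eqref{eq:tangentformula} gives $\operatorname{Im}g_n=[v^n]\operatorname{Im}\operatorname{Num}/\operatorname{Den}$, so all the transverse information sits in $\operatorname{Im}\operatorname{Num}$.

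There is no real obstacle here. The only step needing care is the first one, namely matching the conjugation convention and the layer weights so that $\operatorname{Den}$ literally reproduces \eqref{eq:layers} at the Legendre point. After that, Theorem~\ref{thm:main} does the rest.
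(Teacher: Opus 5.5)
Your proposal is correct and follows the paper's route: by definition $\operatorname{Den}$ is the layer sum \eqref{eq:layers} at the Legendre point, so it equals $F(0,0;Z)$ by Corollary~\ref{cor:layers}. Theorem~\ref{thm:main} at $\alpha=0$, where the two Bernoulli terms merge into $2B_{n+1}(\tfrac34)$, then gives $\log\operatorname{Den}=\Lambda$ with the displayed coefficients. Your observation that these coefficients are rational, so that $\operatorname{Den}$ is real and \eqref{eq:tangentformula} places the transverse data entirely in $\operatorname{Im}\operatorname{Num}$, is the justification the paper itself relies on for the closing sentence.
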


The valuation laws observed in Code~CH can be proved.  The laws for the
denominator follow directly from the proved diagonal closed form and are
self-contained in this paper.  For the numerator laws the external input
must be identified with care.  The companion paper~\cite{companion} proves
\emph{unconditionally} that $2^{E_r}\kappa_r\in\Zt[\ma,\mb]$, an
integrality statement in the Bessel basis $(\ma,\mb)$; the corresponding
statement in the physical basis $(A,B)$ (the lower-bound half
$\nu_2([A^1B^0]\kappa_r)\ge-E_r$ of the sharp law) is \emph{not} a
theorem there but the lower-bound half of its Conjecture~W
(Conjecture~1.7 there), whose sharp form is verified for $r\le48$.  The change
of basis $\ma=A-\frac14$ costs up to two powers of $2$ per unit of
$\mu$-degree, and the companion paper shows that this loss is generically
real, so the two statements are genuinely distinct.  Below we therefore
derive, first, an \emph{unconditional} numerator bound from the proved
Bessel-basis theorem, and then the exact Laws N and N$'$ \emph{conditionally
on the integrality hypothesis}, which we display once and for all as
Conjecture~\ref{conj:integrality}.  No sharpness (equality) assertion from
the transverse law (Theorem~\ref{prob:W}) is used anywhere in the
deductions below, so no circularity arises when
Corollary~\ref{cor:consequences} later discharges the hypothesis.

Write
\[
  \Lambda(v)=\sum_{n\ge1}\lambda_n v^n,
  \qquad \operatorname{Den}^{\pm1}=\exp(\pm\Lambda)
  =\sum_{n\ge0}d_n^{\pm}v^n .
\]

\begin{lemma}[2-adic size of the diagonal logarithm]\label{lem:lambda2}
At the Legendre point,
\[
  \lambda_{2k}=\frac{E_{2k}}{4k},\qquad
  \lambda_{2k-1}=-\frac{(2^{2k}-1)B_{2k}}{(2k)(2k-1)}\qquad(k\ge1).
\]
Hence
\[
  \nu_2(\lambda_{2k})=\nu_2(\lambda_{2k-1})=-2-\nu_2(k),
\]
and, in particular, $\lambda_1=-1/4$ while
$\nu_2(\lambda_n)\ge -n$ for every $n\ge2$.
\end{lemma}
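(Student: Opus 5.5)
The plan is to read the $\lambda_n$ directly off the Legendre specialisation of the diagonal closed form, and then apply von Staudt--Clausen together with the oddness of the Euler numbers. By Corollary~\ref{cor:denexp}, $\lambda_n=L_n^{\mathrm{diag}}(0)$. By the definition of $\Psi_A$ and Corollary~\ref{cor:specials}(iii),
\[
 \sum_{n\ge1}\frac{\lambda_n}{(n-1)!}\,t^n=\Psi_0(t)=\tfrac12\bigl(\operatorname{sech}t-\tanh\tfrac t2-1\bigr).
\]
Hence $\lambda_n=(n-1)!\,[t^n]\Psi_0(t)$, and the constant $-\tfrac12$ plays no role. Since $\operatorname{sech}$ is even and $\tanh$ is odd, the even-index coefficients come only from $\tfrac12\operatorname{sech}t$, and the odd-index coefficients only from $-\tfrac12\tanh\tfrac t2$.

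\emph{Even indices.} From $\operatorname{sech}t=\sum_jE_{2j}t^{2j}/(2j)!$ one gets
\[
 \lambda_{2k}=(2k-1)!\cdot\tfrac12\,\frac{E_{2k}}{(2k)!}=\frac{E_{2k}}{4k}.
\]

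\emph{Odd indices.} We use $\tanh x=\sum_{j\ge1}2^{2j}(2^{2j}-1)B_{2j}x^{2j-1}/(2j)!$, the same expansion used in Lemma~\ref{lem:genocchi}. At $x=t/2$ the factor $2^{2k}\cdot2^{-(2k-1)}$ equals $2$, so
\[
 [t^{2k-1}]\tanh\tfrac t2=\frac{2(2^{2k}-1)B_{2k}}{(2k)!}.
\]
Multiplying by $-\tfrac12(2k-2)!$ gives
\[
 \lambda_{2k-1}=-\frac{(2^{2k}-1)B_{2k}}{(2k)(2k-1)}.
\]
As a check, $k=1$ gives $\lambda_1=-3\cdot\tfrac16/2=-\tfrac14$.

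\emph{Valuations.} Euler numbers are odd integers, and $\nu_2(4k)=2+\nu_2(k)$, so $\nu_2(\lambda_{2k})=-2-\nu_2(k)$. For the odd index, $2^{2k}-1$ is odd and $\nu_2(B_{2k})=-1$ by von Staudt--Clausen, as already used in the proof of Corollary~\ref{cor:R}. Also $\nu_2\bigl((2k)(2k-1)\bigr)=1+\nu_2(k)$, which gives $\nu_2(\lambda_{2k-1})=-1-1-\nu_2(k)=-2-\nu_2(k)$ as well.

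\emph{The bound $\nu_2(\lambda_n)\ge -n$.} For $n=2$ this reads $-2\ge-2$. For $n\ge3$ we have $n\ge 2k-1$ with $k\ge2$, so it suffices to show $\nu_2(k)+2\le 2k-1$. This follows from $\nu_2(k)\le\log_2k\le 2k-3$ for $k\ge2$, with equality at $k=2$.

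The only step needing care is the bookkeeping of powers of $2$ in the rescaled $\tanh$ expansion; everything else is a direct reading of Theorem~\ref{thm:main} at $\alpha=0$.
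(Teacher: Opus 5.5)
Your proposal is correct and follows the paper's proof essentially step for step. You read $\lambda_n$ off $\Psi_0(t)=\tfrac12\bigl(\operatorname{sech}t-\tanh\tfrac t2-1\bigr)$ from Corollary~\ref{cor:specials}(iii), expand $\operatorname{sech}$ in Euler numbers and $\tanh(t/2)$ in Bernoulli numbers, use the oddness of $E_{2k}$ and of $2^{2k}-1$ together with von Staudt--Clausen, and finish with the elementary inequality for $\nu_2(\lambda_n)\ge-n$; your explicit power-of-$2$ bookkeeping and the check $\lambda_1=-\tfrac14$ just fill in details the paper calls standard.
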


\begin{proof}
By Corollary~\ref{cor:specials}(iii),
\[
  \Psi_0(t)=\frac12\left(\operatorname{sech}t
       -\tanh\frac t2-1\right)
  =\sum_{n\ge1}\frac{\lambda_n}{(n-1)!}t^n .
\]
Using
$\operatorname{sech}t=\sum_{k\ge0}E_{2k}t^{2k}/(2k)!$ and the standard
Bernoulli expansion of $\tanh(t/2)$ gives the displayed formulas.  Euler
numbers are odd, and von Staudt--Clausen gives $\nu_2(B_{2k})=-1$; the factor
$2^{2k}-1$ is odd.  Thus both displayed coefficients have valuation
$-2-\nu_2(k)$.  For $n=2k$ the inequality
$-2-\nu_2(k)\ge-2k$ is equivalent to $2+\nu_2(k)\le2k$; for
$n=2k-1\ge3$ it is equivalent to $2+\nu_2(k)\le2k-1$.  Both are immediate.
\end{proof}

\begin{theorem}[Laws D and D$'$]\label{thm:lawD}
For $k\ge1$,
\[
  \nu_2(d_{2k}^{\pm})=-\bigl(5k+\nu_2(k!)\bigr),
\]
and for $k\ge0$,
\[
  \nu_2(d_{2k+1}^{\pm})=-\bigl(5k+2+\nu_2(k!)\bigr).
\]
Equivalently, the previously measured Laws D and D$'$ hold for both
$\operatorname{Den}$ and $\operatorname{Den}^{-1}$ in all orders.
\end{theorem}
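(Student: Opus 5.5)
The plan is to rerun the all-ones dominance argument of Lemma~\ref{lem:dominance}, now with $\Lambda$ in place of $B$ and with the valuations supplied by Lemma~\ref{lem:lambda2}. Expanding the exponential gives
\[
 d_n^{\pm}=\sum_{\lambda\vdash n}\prod_{j\ge1}\frac{(\pm\lambda_j)^{m_j}}{m_j!},
\]
where the sum runs over all partitions of $n$ with $m_j$ parts equal to $j$. The sign $\pm$ only multiplies each term by $\pm1$, so it does not affect any valuation, and $\operatorname{Den}$ and $\operatorname{Den}^{-1}$ can be treated together. The candidate dominant term is the all-ones partition, $(\pm\lambda_1)^n/n!=(\mp1/4)^n/n!$, whose valuation is $-2n-\nu_2(n!)$. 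I will first check that this is exactly the claimed value, using $\nu_2(n!)=n-s_2(n)$:
\begin{itemize}
\item for $n=2k$ it gives $-4k-2k+s_2(k)=-(5k+\nu_2(k!))$;
\item for $n=2k+1$ it gives $-4k-2-(2k+1-s_2(k)-1)=-(5k+2+\nu_2(k!))$.
\end{itemize}
So the theorem is precisely the statement that the all-ones term strictly dominates 2-adically.

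To compare any other partition with the all-ones one, let $m_1$ be its number of ones and $R=n-m_1\ge2$ the total size of its parts of size at least $2$. By Lemma~\ref{lem:lambda2}, $\nu_2(\lambda_j)=-2-\nu_2(\lceil j/2\rceil)$ for $j\ge2$, while $\nu_2(\lambda_1)=-2$. Hence the term of the partition has valuation
\[
 -2m_1-\nu_2(m_1!)-\sum_{j\ge2}\bigl[m_j\bigl(2+\nu_2(\lceil j/2\rceil)\bigr)+\nu_2(m_j!)\bigr].
\]
The dominant valuation is $-2n-\nu_2(n!)$. I will bound $\nu_2(n!)$ from below in two steps:
\begin{itemize}
\item by binomial integrality, $\nu_2(n!)\ge\nu_2(m_1!)+\nu_2(R!)$;
\item since $R!$ is divisible by $\prod_{j\ge2}(jm_j)!$, and each $(jm_j)!$ is divisible by $m_j!\,(j!)^{m_j}$, we get $\nu_2(R!)\ge\sum_{j\ge2}\bigl[\nu_2(m_j!)+m_j\nu_2(j!)\bigr]$.
\end{itemize}
Combining these, the difference between the valuation of the partition term and $-2n-\nu_2(n!)$ is at least
\[
 \sum_{j\ge2}m_j\bigl[2j+\nu_2(j!)-2-\nu_2(\lceil j/2\rceil)\bigr].
\]

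The remaining step is the elementary inequality $2+\nu_2(\lceil j/2\rceil)<2j+\nu_2(j!)$ for every $j\ge2$. It holds because $\nu_2(\lceil j/2\rceil)\le\log_2 j<2j-2$. Every bracket above is therefore at least $1$, and at least one $m_j$ with $j\ge2$ is nonzero, so every non-all-ones partition lies strictly above the dominant valuation. The ultrametric inequality then gives the exact valuations for both signs, and in addition $n!\,4^n d_n^{\pm}$ is a 2-adic unit.

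I expect the only delicate point to be the bookkeeping of the factorial denominators across mixed partitions, that is, making sure the multinomial bound is applied to the ones and to the larger parts separately. The chain $\nu_2(n!)\ge\nu_2(m_1!)+\sum_{j\ge2}\bigl[\nu_2(m_j!)+m_j\nu_2(j!)\bigr]$ handles this, and the small cases $n\le3$ are consistent with the general formulas.
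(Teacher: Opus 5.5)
Your proof is correct and follows the paper's own argument: the all-ones partition is the unique $2$-adically dominant term in the exponential expansion, using Lemma~\ref{lem:lambda2} together with multinomial integrality. The only difference is bookkeeping. The paper uses the cruder bounds $\nu_2(\lambda_j)\ge -j$ for $j\ge2$ and $\prod_j m_j!\mid(\sum_j m_j)!\mid n!$, which already leave a strictly positive margin $M=\sum_{j\ge2}jm_j$. Your exact valuations and the finer divisibility through $(jm_j)!$ are therefore valid but not needed.
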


\begin{proof}
Expand the exponential coefficient by partitions,
\[
 d_n^{\pm}=\sum_{\substack{m_1,\ldots,m_n\ge0\\
                      \sum_{j=1}^n j m_j=n}}
       \prod_{j=1}^n\frac{(\pm\lambda_j)^{m_j}}{m_j!}.
\]
The partition $m_1=n$, $m_j=0$ for $j\ge2$, contributes
$(\pm\lambda_1)^n/n!$ and has valuation
\[
  -2n-\nu_2(n!).
\]
Consider any other partition and put
$M=\sum_{j\ge2}j m_j>0$, so $m_1=n-M$.  Lemma~\ref{lem:lambda2} gives
\begin{align*}
 \nu_2\!\left(\prod_j\frac{\lambda_j^{m_j}}{m_j!}\right)
 &\ge -2m_1-\sum_{j\ge2}jm_j-\sum_j\nu_2(m_j!)\\
 &= -2n+M-\sum_j\nu_2(m_j!).
\end{align*}
Relative to the all-$1$ partition, the difference is at least
\[
 M+\nu_2(n!)-\sum_j\nu_2(m_j!)>0,
\]
because $\sum_jm_j\le n$, the multinomial coefficient shows
$\prod_jm_j!\mid(\sum_jm_j)!$, and $(\sum_jm_j)!\mid n!$.  Hence
$\nu_2(n!)-\sum_j\nu_2(m_j!)\ge0$, while $M>0$.  Thus the all-$1$
partition is the
unique $2$-adically dominant term, and
$\nu_2(d_n^{\pm})=-2n-\nu_2(n!)$.  Finally
$\nu_2((2k)!)=k+\nu_2(k!)$ and
$\nu_2((2k+1)!)=\nu_2((2k)!)$, which gives the two stated formulas.
\end{proof}

Let
\[
  h_r:=\operatorname{Im}g_{2r-1},\qquad
  H_r=r+\nu_2((r-1)!).
\]
\begin{proposition}[unconditional numerator bound]\label{prop:uncond}
For every $r\ge1$,
\[
  \nu_2(h_r)\;\ge\;-H_r-4(r-1).
\]
\end{proposition}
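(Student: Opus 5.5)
The plan is to derive the bound directly from the companion paper's unconditional Bessel-basis integrality theorem, $2^{E_r}\kappa_r\in\Zt[\ma,\mb]$ with $E_r=3r-1+\nu_2((r-1)!)$. We pass to the physical basis by the affine substitution $\ma=A-\frac14$, $\mb=B-\frac14$, and read off the $[A^1B^0]$ coefficient. The loss of two powers of $2$ per unit of $\mu$-degree, controlled by a total-degree bound, produces the term $-4(r-1)$. We then return from $[A^1B^0]\kappa_r$ to $h_r$ through Proposition~\ref{prop:dictionary}. That step is an identity of the working normalisation \eqref{eq:kappadef}, so the sharp law itself is never used.

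\emph{Step 1 (degree bound).} We first show that $\kappa_r$ has total degree at most $2r-1$ in $(A,B)$, hence also in $(\ma,\mb)$, since the substitution is affine. This is the joint-degree version of Lemma~\ref{lem:degree}. Each product $a_l(\alpha)a_{n-l}(\beta)$ has total degree $l+(n-l)=n$ in $(A,B)$, so $G_n$ has total degree at most $n$. The Stirling expansion of $e_n$ makes the plain coefficients $c_m$ of total degree at most $m$. Finally, $L_n$ is isobaric of weight $n$ in the $c_m$, so $L_{2r-1}$, and with it $\kappa_r=2^{-(2r-1)}\operatorname{Im}L_{2r-1}$, has total degree at most $2r-1$.

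\emph{Step 2 (change of basis).} We write $2^{E_r}\kappa_r=\sum_{i+j\le 2r-1}c_{ij}\ma^i\mb^j$ with $c_{ij}\in\Zt$. Substituting, the coefficient of $A^1B^0$ in $\ma^i\mb^j$ is $i\,(-\frac14)^{i-1}(-\frac14)^j$ for $i\ge1$, and $0$ for $i=0$. Its $2$-adic valuation is at least $-2(i+j-1)$, which by Step~1 is at least $-2(2r-2)=-4(r-1)$. The integer factor $i$ only helps. The ultrametric inequality then gives
\[
\nu_2\bigl([A^1B^0]\kappa_r\bigr)\ge -E_r-4(r-1).
\]

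\emph{Step 3 (back to the tangent).} By Proposition~\ref{prop:dictionary}, $h_r=\operatorname{Im}g_{2r-1}=2^{2r-1}[A^1B^0]\kappa_r$. Hence
\[
\nu_2(h_r)\ge (2r-1)-(3r-1+\nu_2((r-1)!))-4(r-1)=-H_r-4(r-1).
\]

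The main obstacle is ensuring that the companion's integrality statement refers to exactly the same object $\kappa_r$ with the same normalisation \eqref{eq:kappadef}, and that its ring is $\Zt[\ma,\mb]$ in precisely our variables. Provided this holds, the argument is purely formal. The only genuine input beyond that theorem is the total-degree bound of Step~1, which I expect to go through exactly as in Lemma~\ref{lem:degree} once joint degree is tracked in place of the degree in $A$ alone.
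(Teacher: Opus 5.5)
Your proposal is correct and follows the paper's proof. Both combine the companion's Bessel-basis integrality $2^{E_r}\kappa_r\in\Zt[\ma,\mb]$ with the per-monomial loss $\nu_2\bigl(i(-\tfrac14)^{i-1+j}\bigr)\ge-2(i+j-1)\ge-4(r-1)$, then multiply by $2^{2r-1}$ via Proposition~\ref{prop:dictionary} and $H_r=E_r-(2r-1)$; the one difference is that you derive the total-degree bound $i+j\le 2r-1$ yourself from a joint-degree version of Lemma~\ref{lem:degree}, where the paper cites the companion for it, and your derivation is sound.
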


\begin{proof}
By the companion paper~\cite{companion}, $2^{E_r}\kappa_r\in\Zt[\ma,\mb]$
and every monomial $\ma^i\mb^j$ occurring in $\kappa_r$ has
$1\le i+j\le 2r-1$.  Writing $\kappa_r=\sum c_{ij}\ma^i\mb^j$ with
$\nu_2(c_{ij})\ge-E_r$, one has
$[A^1B^0]\kappa_r=\sum_{i\ge1}c_{ij}\,i\,(-\tfrac14)^{i-1+j}$, whence
$\nu_2([A^1B^0]\kappa_r)\ge-E_r-2(i-1+j)\ge-E_r-4(r-1)$.  Multiplying by
$2^{2r-1}$ and using $H_r=E_r-(2r-1)$ gives the claim.
\end{proof}

The sharp constant requires removing the $4(r-1)$ loss, and this is exactly
the lower-bound half of the valuation statement equivalent to
the transverse law (Theorem~\ref{prob:W}).  We display it once, as the
hypothesis on which the next two propositions are conditioned; it is
the lower-bound half of Conjecture~W (Conjecture~1.7) of the companion
paper~\cite{companion}, where the sharp law is verified for $r\le48$.  The hypothesis is ultimately \emph{discharged}:
it follows from the transverse law through
Corollary~\ref{cor:consequences}.  We keep the conditional phrasing to
display the logical structure; the deductions below never use the
transverse law, so no circularity arises.

\begin{conjecture}[integrality hypothesis (I); discharged in
Corollary~\ref{cor:consequences}]\label{conj:integrality}
For every $r\ge1$,
\begin{equation}\label{eq:hlower}
  \nu_2(h_r)\ge-H_r ,
\end{equation}
equivalently $\nu_2([A^1B^0]\kappa_r)\ge-E_r$, equivalently
$\mathcal U\in\Zt[[z]]$.
\end{conjecture}

\noindent The sharp assertion of the transverse law
(Theorem~\ref{prob:W}) is that equality holds in \eqref{eq:hlower} at
every $r$.  Also $h_1=1/2$, directly from
$a_1(\alpha)=(4A-1)/8$ in \eqref{eq:F}.

\begin{proposition}[Laws N and N$'$; conditional on
Conjecture~\ref{conj:integrality}]\label{prop:lawN}
Granting the integrality hypothesis \eqref{eq:hlower}, for every
$r,m\ge1$,
\[
  \nu_2\bigl([v^{2r-1}]\operatorname{Im}\operatorname{Num}\bigr)
  =-\bigl(5r-4+\nu_2((r-1)!)\bigr),
\]
\[
  \nu_2\bigl([v^{2m}]\operatorname{Im}\operatorname{Num}\bigr)
  =-\bigl(5m-2+\nu_2((m-1)!)\bigr).
\]
Thus the previously measured Laws N and N$'$ also hold in all orders.
\end{proposition}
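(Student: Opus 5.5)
The plan is to write $\operatorname{Im}\operatorname{Num}$ as a convolution of the tangent with $\operatorname{Den}$, and then show that a single term, the one carrying $h_1$, is strictly $2$-adically dominant. By \eqref{eq:tangentformula}, $\operatorname{Num}=G\cdot\operatorname{Den}$, and $\operatorname{Den}=\exp\Lambda$ has real coefficients (Corollary~\ref{cor:denexp}). Hence $\operatorname{Im}\operatorname{Num}=\operatorname{Den}\cdot\operatorname{Im}G$ coefficientwise in $v$. The even imaginary coefficients vanish, $\operatorname{Im}g_{2k}=0$, by the reflection argument of Corollary~\ref{cor:R}; this is unconditional and does not use the transverse law. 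Therefore
\[
 [v^n]\operatorname{Im}\operatorname{Num}=\sum_{j\ge1,\ 2j-1\le n} h_j\,d^{+}_{n-2j+1},
\]
a finite sum. Theorem~\ref{thm:lawD} gives $d^{+}_N$ exactly: in uniform form, $\nu_2(d^{+}_N)=-2N-\nu_2(N!)$, as obtained in its proof. The hypothesis \eqref{eq:hlower} gives $\nu_2(h_j)\ge -j-\nu_2((j-1)!)$, and $h_1=\tfrac12$ is known exactly.

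Next I evaluate the $j=1$ term. It is $\tfrac12\,d^{+}_{n-1}$, of valuation $-1-2(n-1)-\nu_2((n-1)!)$.

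For $n=2r-1$ this is $-(4r-3)-\nu_2((2r-2)!)$. Using $\nu_2((2r-2)!)=r-1+\nu_2((r-1)!)$, it equals $-(5r-4+\nu_2((r-1)!))$.

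For $n=2m$ it is $-(4m-1)-\nu_2((2m-1)!)$. Using $\nu_2((2m-1)!)=\nu_2((2m-2)!)=m-1+\nu_2((m-1)!)$, it equals $-(5m-2+\nu_2((m-1)!))$.

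These are exactly the two claimed values, so it remains to show that every $j\ge2$ term is strictly smaller in absolute $2$-adic size.

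For $j\ge2$, subtracting the $j=1$ valuation from the lower bound for the $j$-th term gives
\[
 3(j-1)-\nu_2((j-1)!)+\bigl[\nu_2((n-1)!)-\nu_2((n-2j+1)!)\bigr].
\]
The bracket is nonnegative because $(n-2j+1)!$ divides $(n-1)!$. Also $3(j-1)-\nu_2((j-1)!)\ge 2(j-1)>0$, since $\nu_2(k!)\le k-1$. So each $j\ge2$ term lies at least one binary digit above the $j=1$ term. The ultrametric inequality then yields exact equality in both laws.

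The only point needing care is bookkeeping at the edges. For $n=1$ the sum consists of $h_1d_0=\tfrac12$ alone, which matches $r=1$. When $n-2j+1=0$ we have $d^{+}_0=1$, and the uniform formula for $\nu_2(d^+_N)$ still applies. Beyond that I expect no real obstacle. Logically, the argument uses only \eqref{eq:hlower} as a lower bound, never equality, so the conditional structure announced before Conjecture~\ref{conj:integrality} is respected.
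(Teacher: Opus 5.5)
Your proof is correct and follows the paper's own argument: you write $\operatorname{Im}\operatorname{Num}=\operatorname{Den}\cdot\operatorname{Im}G$, use Theorem~\ref{thm:lawD} together with \eqref{eq:hlower} to show that the $h_1$ term is uniquely $2$-adically dominant, and conclude by the ultrametric inequality. The differences are only cosmetic. The paper computes the gap exactly as $4(s-1)+\nu_2\binom{r-1}{s-1}$ and treats the endpoint $d_0^+=1$ separately, whereas you bound the factorial bracket below by $0$ and absorb the endpoint into the uniform formula $\nu_2(d_N^+)=-2N-\nu_2(N!)$.
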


\begin{proof}
Since $G=\operatorname{Num}/\operatorname{Den}$ and
$\operatorname{Den}$ is real,
\[
 \operatorname{Im}\operatorname{Num}
   =\operatorname{Den}\,\operatorname{Im}G
   =\left(\sum_{n\ge0}d_n^+v^n\right)
      \left(\sum_{s\ge1}h_s v^{2s-1}\right).
\]
With the convention $d_0^+=1$, for the odd coefficient,
\[
 [v^{2r-1}]\operatorname{Im}\operatorname{Num}
   =\sum_{s=1}^r h_s d_{2(r-s)}^+ .
\]
The $s=1$ term has valuation
$-(5r-4+\nu_2((r-1)!))$.  For $2\le s<r$, Theorem~\ref{thm:lawD} and
\eqref{eq:hlower} give, relative to the $s=1$ term, the lower bound
\[
 4(s-1)+\nu_2\binom{r-1}{s-1}>0.
\]
For the endpoint $s=r$ one has $d_0^+=1$ rather than an even coefficient
covered by Theorem~\ref{thm:lawD}; directly,
\[
 \nu_2(h_r)-\nu_2(h_1d_{2r-2}^+)
 \ge 4(r-1),
\]
which is the same formula because
$\binom{r-1}{r-1}=1$.  Hence the $s=1$ term is uniquely dominant and the
first formula follows.
Similarly,
\[
 [v^{2m}]\operatorname{Im}\operatorname{Num}
   =\sum_{s=1}^m h_s d_{2(m-s)+1}^+,
\]
and every $s\ge2$ term lies above the $s=1$ term by at least
$4(s-1)+\nu_2\binom{m-1}{s-1}>0$.  The second formula follows.
\end{proof}

\begin{proposition}[congruence and parity form; conditional on
Conjecture~\ref{conj:integrality}]\label{prop:congruence}
Set, for $1\le j\le2r-1$,
\[
  c_{r,j}:=2^{\,5r-4+\nu_2((r-1)!)}\;
  \bigl[v^{j}\bigr]\operatorname{Im}\operatorname{Num}\;\cdot\;
  \bigl[v^{2r-1-j}\bigr]\operatorname{Den}^{-1}.
\]
Then $c_{r,j}\in\Zt$ and
\[
  \nu_2(c_{r,2s-1})=\nu_2\binom{r-1}{s-1}
  \quad(1\le s\le r),
\]
\[
  \nu_2(c_{r,2s})=1+\nu_2\!\left((r-s)\binom{r-1}{s-1}\right)
  \quad(1\le s\le r-1).
\]
Moreover the integrality hypothesis \eqref{eq:hlower} already implies
\begin{equation}\label{eq:depthlower}
  \sum_{j=1}^{2r-1}c_{r,j}\in 2^{4(r-1)}\Zt .
\end{equation}
If
\[
  C_r:=2^{-4(r-1)}\sum_{j=1}^{2r-1}c_{r,j},
\]
then
\begin{equation}\label{eq:Cidentity}
  C_r=2^{H_r}\operatorname{Im}g_{2r-1}\in\Zt,
\end{equation}
and the transverse law (Theorem~\ref{prob:W}) is equivalent simply to
\begin{equation}\label{eq:parityCr}
  C_r\equiv1\pmod2\qquad(r\ge1).
\end{equation}
\end{proposition}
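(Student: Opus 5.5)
The plan is to expand $\operatorname{Im}\operatorname{Num}=\operatorname{Den}\,\operatorname{Im}G$ and combine it with $\operatorname{Den}^{-1}$. The identity $\sum_{j}[v^j]\operatorname{Im}\operatorname{Num}\cdot[v^{2r-1-j}]\operatorname{Den}^{-1}=[v^{2r-1}](\operatorname{Im}\operatorname{Num}\cdot\operatorname{Den}^{-1})=[v^{2r-1}]\operatorname{Im}G=h_r$ is exact, because $\operatorname{Den}\cdot\operatorname{Den}^{-1}=1$ and $\operatorname{Im}\operatorname{Num}$ has no constant term. This gives immediately
\[
\sum_{j=1}^{2r-1}c_{r,j}=2^{5r-4+\nu_2((r-1)!)}h_r .
\]
Since $5r-4+\nu_2((r-1)!)=H_r+4(r-1)$, it follows that $C_r=2^{H_r}h_r$, which is \eqref{eq:Cidentity}. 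Under \eqref{eq:hlower} this gives $C_r\in\Zt$, and hence \eqref{eq:depthlower}. The equivalence of the transverse law with \eqref{eq:parityCr} then holds by definition: equality in \eqref{eq:hlower} is the statement $\nu_2(C_r)=0$.

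For the individual valuations I will invoke Proposition~\ref{prop:lawN}, which gives $\nu_2([v^{2s-1}]\operatorname{Im}\operatorname{Num})=-(5s-4+\nu_2((s-1)!))$ and $\nu_2([v^{2s}]\operatorname{Im}\operatorname{Num})=-(5s-2+\nu_2((s-1)!))$. I combine these with Theorem~\ref{thm:lawD} for $d^-$.

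\emph{Odd case $j=2s-1$.} The partner index is $2(r-s)$, with valuation $-(5(r-s)+\nu_2((r-s)!))$, including the value $0$ at $s=r$. The exponent totals give
\[
5r-4+\nu_2((r-1)!)-(5s-4)-\nu_2((s-1)!)-5(r-s)-\nu_2((r-s)!)=\nu_2\binom{r-1}{s-1}.
\]

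\emph{Even case $j=2s$.} The partner index is $2(r-s)-1=2(r-s-1)+1$, with valuation $-(5(r-s-1)+2+\nu_2((r-s-1)!))$. The total is
\[
5r-4+\nu_2((r-1)!)-5s+2-\nu_2((s-1)!)-5r+5s+5-2-\nu_2((r-s-1)!),
\]
that is $1+\nu_2((r-1)!)-\nu_2((s-1)!)-\nu_2((r-s-1)!)$. Finally I rewrite $(r-1)!/((s-1)!(r-s-1)!)=(r-s)\binom{r-1}{s-1}$. Both valuations are nonnegative, so $c_{r,j}\in\Zt$.

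The main obstacle is bookkeeping rather than ideas. I must check that Proposition~\ref{prop:lawN}'s even law is indexed exactly as used here, that Theorem~\ref{thm:lawD} applies to $d^-$ in the boundary indices, and that the endpoint $s=r$ (where $d_0^-=1$ falls outside Theorem~\ref{thm:lawD}) is consistent with the formula $\nu_2\binom{r-1}{r-1}=0$.

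I will also note explicitly that all of this is conditional only through Proposition~\ref{prop:lawN} and \eqref{eq:hlower}, and never uses the sharp transverse law. The depth statement \eqref{eq:depthlower} does not even need the termwise valuations. It follows directly from the exact convolution identity together with \eqref{eq:hlower}, so it would survive even if the individual laws were miscounted.
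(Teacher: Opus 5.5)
Your proposal is correct and follows essentially the same route as the paper. The exact convolution identity $\sum_j c_{r,j}=2^{H_r+4(r-1)}\operatorname{Im}g_{2r-1}$, which uses that $\operatorname{Den}^{-1}$ is real, yields \eqref{eq:depthlower} and \eqref{eq:Cidentity} directly from \eqref{eq:hlower}. The termwise valuations are then the subtraction of Laws N, N$'$ against Laws D, D$'$ for $\operatorname{Den}^{-1}$; you carry this out explicitly, including the endpoint $d_0^-=1$, where the paper only states the result.
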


\begin{proof}
The two termwise valuation formulas are the same subtraction as before, now
using Theorem~\ref{thm:lawD} and Proposition~\ref{prop:lawN}; explicitly,
for $j=2s-1$ one obtains
$\nu_2\binom{r-1}{s-1}$, and for $j=2s$ one obtains
$1+\nu_2((r-s)\binom{r-1}{s-1})$.  Since
$\operatorname{Den}^{-1}$ has real coefficients,
\[
  2^{\,5r-4+\nu_2((r-1)!)}\operatorname{Im}g_{2r-1}
   =\sum_{j=1}^{2r-1}c_{r,j}.
\]
Combining this identity with \eqref{eq:hlower} proves
\eqref{eq:depthlower}; division by $2^{4(r-1)}$ gives
\eqref{eq:Cidentity}.  The desired sharp valuation
$\nu_2(\operatorname{Im}g_{2r-1})=-H_r$ says exactly that the $2$-adic integer
$C_r$ is a unit, which is equivalent to \eqref{eq:parityCr}.
\end{proof}

Code~CH verifies the four exact valuation formulas and the parity target
$C_r\equiv1\pmod2$ for $r\le21$; Code~CI checks separately the
coefficient formulas of Lemma~\ref{lem:lambda2}, the unique partition
dominance behind Theorem~\ref{thm:lawD}, and the convolution gaps used in
Proposition~\ref{prop:lawN}.

\begin{remark}[logical dependency of the valuation results]
Theorem~\ref{thm:lawD}, Lemma~\ref{lem:lambda2} and
Proposition~\ref{prop:uncond} are unconditional (the last uses the proved
Bessel-basis theorem of~\cite{companion}).
Proposition~\ref{prop:lawN} and Proposition~\ref{prop:congruence} are
conditional exactly on Conjecture~\ref{conj:integrality}, the lower-bound
half of the sharp law; they never use the equality $\nu_2(h_r)=-H_r$, so
there is no circularity, and together they prove the equivalence
\[
  \text{Theorem~\ref{prob:W}}
  \quad\Longleftrightarrow\quad
  \text{Conjecture~\ref{conj:integrality}}
  \;\wedge\;
  \bigl(C_r\equiv1\ (\mathrm{mod}\ 2)\ \text{for every }r\bigr).
\]
Both hypotheses on the right are now supplied by
Theorem~\ref{thm:problemW} through Corollary~\ref{cor:consequences},
so Propositions~\ref{prop:lawN} and~\ref{prop:congruence} hold
unconditionally.
\end{remark}

\begin{remark}[what the nonlocal obstruction now says]\label{rem:nonlocal}
The individual $c_{r,j}$ still have only logarithmically small valuations,
whereas their sum lies in $2^{4(r-1)}\Zt$.  The important distinction is that, once the integrality hypothesis
\eqref{eq:hlower} is granted, this depth of cancellation holds globally;
what remains unexplained is why the cancellation stops there rather than
gaining one further factor of $2$.  Thus the decisive arithmetic content
is the single bit \eqref{eq:parityCr}, which
Subsection~\ref{sec:arithmetic} supplies.  A layerwise or local proof would still have
to account for the first $4(r-1)$ cancelled binary digits, but those digits
are no longer an additional conjectural statement.
\end{remark}

\section{\texorpdfstring{Application: Jacobi zeros for $\alpha,\beta>-1$}{Application: Jacobi zeros for alpha, beta > -1}}\label{sec:zeros}

Throughout this section $\alpha,\beta>-1$, so that $P_n^{(\alpha,\beta)}$
has $n$ real simple zeros in $(-1,1)$, ordered as in
Section~\ref{sec:half}; no statement here is claimed outside this
classical regime.  The purpose of the section is to demonstrate that the
exact structure of the phase constants and of their denominators makes
the phase predictor evaluable in exact arithmetic, with interior (bulk)
accuracy limited only by the truncation order; it is \emph{not} an
asymptotically optimised algorithm, and no timing comparison with the
specialised Gauss--Jacobi solvers
\cite{HaleTownsend2013,GilSeguraTemme2021,BremerYang2020} is intended.

Corollary~\ref{cor:specials} explains why the classical rules are easy: by
\eqref{eq:antisym} the phase constants vanish on $\alpha=\beta$, so Legendre
and Gegenbauer need none.  For $\alpha\ne\beta$ they are needed.  For related asymptotic analyses of Jacobi polynomials and their zeros, see~\cite{DimitrovSantos,GilSeguraTemme}.  The
denominators are controlled: $2^{E_r}\kappa_r\in\Zt[\ma,\mb]$ with
$E_r=3r-1+\nu_2((r-1)!)$, and the odd part of $\den\kappa_r$ divides
$\operatorname{lcm}(1,3,\dots,2r-1)$ \cite{companion}, so
\eqref{eq:quantisation} is evaluable in exact integer arithmetic over a
denominator known in closed form; the observed denominators in the
$(A,B)$-basis are
$4=2^2$, $96=2^5\!\cdot\!3$, $7680=2^9\!\cdot\!15$,
$86016=2^{12}\!\cdot\!21$, in agreement with $2^{E_r}$ times an odd part.

Solving \eqref{eq:quantisation} by Newton with a per-node least-term gate (the
truncation order is the last index before the correction terms stop
decreasing), then polishing by a safeguarded Newton on $P_n^{(\alpha,\beta)}$
confined to the cell bounded by the midpoints of the neighbouring reference
angles, gives the following maximum errors in $\theta$.  The reference
zeros are independent of the predictor: they are obtained from
$P_n^{(\alpha,\beta)}$ itself in $40$-digit multiprecision arithmetic,
and each tabulated error is measured against that reference.

\begin{center}
\small
\begin{tabular}{@{}rrrllll@{}}
\toprule
$n$ & $\alpha$ & $\beta$ & bare & gated, all $k$ & gated, bulk & polished\\
\midrule
$30$  & $0.3$  & $-0.2$ & $1.5\cdot10^{-3}$ & $7.6\cdot10^{-5}$ & $1.7\cdot10^{-18}$ & $9.2\cdot10^{-41}$\\
$30$  & $1.5$  & $0.25$ & $7.0\cdot10^{-3}$ & $1.2\cdot10^{-5}$ & $2.6\cdot10^{-19}$ & $4.6\cdot10^{-41}$\\
$60$  & $-0.4$ & $0.7$  & $5.6\cdot10^{-4}$ & $2.7\cdot10^{-5}$ & $5.3\cdot10^{-24}$ & $9.2\cdot10^{-41}$\\
$60$  & $0$    & $0$    & $8.0\cdot10^{-4}$ & $2.1\cdot10^{-5}$ & $1.2\cdot10^{-23}$ & $3.6\cdot10^{-41}$\\
$100$ & $2.5$  & $-0.5$ & $5.1\cdot10^{-3}$ & $8.6\cdot10^{-6}$ & $1.3\cdot10^{-32}$ & $3.7\cdot10^{-40}$\\
\bottomrule
\end{tabular}
\end{center}

``Bulk'' is the middle half of the nodes; the loss near $k=1$ and $k=n$ is the
known endpoint effect, where a Bessel-type expansion must take over.  We claim
no timing advantage over compiled Gauss--Jacobi implementations; the point is
that the predictor is exact-arithmetic evaluable and that its accuracy in the
bulk is limited only by the truncation order.

\section{The transverse problem: statement and routes}\label{sec:open}

The symmetric half of the problem is now closed: Theorem~\ref{thm:main} and
its corollaries determine the ultraspherical background, and
Corollary~\ref{cor:R} shows it is dyadically inert
($\nu_2(\operatorname{Re}g_n)=-1$ at every order).  The antisymmetric half
can be stated, and is now proved, in one line.

\begin{theorem}[the transverse law]\label{prob:W}
With $W$ as in Corollary~\ref{cor:Wtransverse},
\[
  \bigl((2m)!\bigr)^2w_m\in\Zt^\times\qquad\text{for every }m\ge0 ;
\]
equivalently, with $\kappa_r$ normalised by \eqref{eq:kappadef}
(Proposition~\ref{prop:dictionary}),
\[
 \nu_2\bigl([A^1B^0]\kappa_r\bigr)=-\bigl(3r-1+\nu_2((r-1)!)\bigr)
 \qquad\text{for every }r\ge1 .
\]
\end{theorem}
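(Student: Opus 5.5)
The first assertion is exactly Theorem~\ref{thm:problemW}, and the second is Corollary~\ref{cor:consequences}(i). The proof therefore only has to assemble the chain already established, and check that each link is a proved statement rather than a certified one.

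For the $w_m$-form, start from Theorem~\ref{thm:imclosed}: in all orders, $\operatorname{Im}g_n=-\tfrac12\widehat q_n$, with the estimates supplied in Appendix~\ref{app:vertical}. Remark~\ref{rem:kernelW} then gives $W(t)=-\varphi(t)/(4t)$, where $\varphi$ is the Borel kernel of $\widehat{\mathsf f}_\Gamma=\mathsf F$. By Lemma~\ref{lem:parityF}, $\mathsf F$ is even, so extracting $[t^{2m}]$ yields \eqref{eq:wm-kernel}, namely $w_m=-[Z^{-(2m+2)}]\mathsf F/(4(2m+1)!)$.

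The arithmetic then runs through Lemma~\ref{lem:genocchi}, which gives $\nu_2(b_j)\ge0$ with equality only at $j=1$. Lemma~\ref{lem:dominance} follows: the all-ones partition is the unique dominant term, so $\nu_2(a_N)=-\nu_2(N!)$. Theorem~\ref{thm:arithlaw} then isolates the top term $N=n-2$ for even $n$. Subtracting $\nu_2(4(2m+1)!)=2+2m-s_2(m)$ gives $\nu_2(w_m)=-2\nu_2((2m)!)$, i.e.\ $((2m)!)^2w_m\in\Zt^\times$.

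For the $\kappa_r$-form, transfer from $w_m$ to $b_m=2\operatorname{Im}g_{2m+1}/(2m)!$ by the mutually inverse convolutions with the weights $e_k$ and $d_k$ of Lemma~\ref{lem:weights2}. The gap estimate $s_2(k)+4k+2s_2(m-k)-2s_2(m)>0$ makes the $k=0$ term strictly dominant, so $\nu_2(b_m)=\tau(m)$. This is Theorem~\ref{thm:tangentlaw}: $\nu_2(\operatorname{Im}g_{2r-1})=-H_r$. Proposition~\ref{prop:dictionary} identifies $[A^1B^0]\kappa_r=2^{-(2r-1)}\operatorname{Im}g_{2r-1}$ under the normalisation \eqref{eq:kappadef}, so $\nu_2([A^1B^0]\kappa_r)=-(2r-1)-r-\nu_2((r-1)!)=-(3r-1+\nu_2((r-1)!))$.

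The only non-elementary, load-bearing input is Theorem~\ref{thm:imclosed}, specifically the two-component matching and its sectorial estimates. I would state explicitly that this proof uses neither hypothesis (P) of Proposition~\ref{prop:stokesfun} nor the identification with the classical $\Psi_r(1)$ beyond $r\le8$. The statement is phrased with the normalisation \eqref{eq:kappadef}, so it is proved in all orders as written.
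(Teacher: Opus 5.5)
Your proposal is correct and follows the paper's own proof. The $w_m$-form is Theorem~\ref{thm:problemW}, obtained from the closed formal tangent via the kernel bridge \eqref{eq:wm-kernel}, the all-ones dominance, and Theorem~\ref{thm:arithlaw}; the $\kappa_r$-form is Theorem~\ref{thm:tangentlaw} transported through Proposition~\ref{prop:dictionary}, exactly as in Corollary~\ref{cor:consequences}, and your caveat matches the paper's own: the statement holds in all orders for the normalisation \eqref{eq:kappadef}, while the identification with the classical $\Psi_r(1)$ is proved only for $r\le8$.
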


\begin{proof}
The first form is Theorem~\ref{thm:problemW} of
Subsection~\ref{sec:arithmetic}; the second follows from
Theorem~\ref{thm:tangentlaw} through the coefficient dictionary of
Proposition~\ref{prop:dictionary}, as in
Corollary~\ref{cor:consequences}.  For the classical Kummer constants
$\Psi_r(1)$ of \eqref{eq:quantisation}, the second display is
unconditional for $r\le8$ and holds in general modulo the certified
identification of \eqref{eq:kappadef} with $\Psi_r(1)$
(Remark~\ref{rem:whatremains}).
\end{proof}

\emph{Status and history.}  In earlier versions of this paper the
statement above was the open problem around which the transverse
development was organised; this section keeps the statement, the
delimiting facts and the routes, because they document how the proof
was found and what each route contributes.  Its only analytic input is
the closed formal tangent (Theorem~\ref{thm:imclosed}), proved in all
orders (the matching argument of Subsection~\ref{sec:matching} with
the sectorial estimates of Appendix~\ref{app:vertical}), and
certified in exact arithmetic to order $41$; the $2$-adic step itself
(Subsection~\ref{sec:arithmetic}) is unconditional and elementary, and
the law carries exact certificates to $m\le159$.

Three facts delimit the search.

\begin{enumerate}
\item \emph{It is nonlocal.}  The individual terms of the convolution
  identity of Proposition~\ref{prop:congruence} carry only logarithmically
  small valuations while, under the integrality hypothesis
  \eqref{eq:hlower}, their sum lies in $2^{4(r-1)}\Zt$: the sum must be
  formed before its sharp valuation can be decided, and any local or
  layerwise decomposition must account for those $4(r-1)$ cancelled binary
  digits.
\item \emph{It is the unique formal solution of a linear problem.}  By
  Theorem~\ref{thm:transverse}, Corollary~\ref{cor:transverseunique} and
  Corollary~\ref{cor:Wtransverse}, $W$ is obtained from the unique formal
  solution of \eqref{eq:transp}--\eqref{eq:transq} with zero constant term in
  $Z^{-1}$, a linear forced system whose coefficients are the diagonal fields
  $u,v,T$.
\item \emph{On the physical curve it is univariate.}
  Theorem~\ref{thm:appellreduce} collapses the full Appell amplitude to a
  Gauss function, and Corollary~\ref{cor:gausstangent} identifies the target
  tangent with one symmetric parameter derivative of a zero-balanced
  ${}_2F_1$ at the upper boundary value $2+i0$.
  Subsection~\ref{sec:split} carries this route to its analytic end: the
  boundary value splits into a Ramanujan digamma block and a Kummer block,
  both $\Gamma$-closed at the Legendre point
  (Proposition~\ref{prop:split}), the transverse variation of the Kummer
  block solves a first-order contiguity by explicit quadratures
  (Theorem~\ref{thm:quad}), and the imaginary tangent obeys the
  one-function reduction \eqref{eq:onefun} modulo the proved real part.
  The remaining obstruction
  is arithmetic, not the absence of a one-variable analytic representation.
\end{enumerate}

The transverse structure of
Sections~\ref{sec:genladder}--\ref{sec:tangentlayers} determines the state
of each route: the substitute for Proposition~\ref{prop:step4} away from
the midpoint is Theorem~\ref{thm:genladder}; the diagonal background is
known in closed form on the whole physical hypersurface $x+y=2xy$
(Theorems~\ref{thm:curvecontig} and~\ref{thm:ballot}); and the exact
remaining obstruction of route~(A) below is the operator $M$ of
Remark~\ref{rem:offcurve}.

In earlier versions of this paper, the following statement was the
second open problem around which the transverse analysis was organised.
We retain it as a compact description of the two obstructions that had
to be removed; both are now resolved, by Theorem~\ref{thm:imclosed} and
by Theorems~\ref{thm:arithlaw}--\ref{thm:problemW} respectively.

\begin{problem}[formal cancellation and parity; resolved]\label{prob:formal-cancellation}
Starting from \eqref{eq:onefun}, prove that the combination of the
Gamma-lattice term, the factor $\tan(\pi s/2)$ and the anchored boundary
value of the real tangent has a unique expansion in
$\mathbb Q(i)[[Z^{-1}]]$, with $s=(1-Z)/2$.  After the normalisation of
Proposition~\ref{prop:congruence}, prove that its coefficients satisfy
\[
 C_r\in\Zt,\qquad C_r\equiv1\pmod2\qquad(r\ge1).
\]
The first assertion is the integrality gate; the second is the final sharp
bit.  Neither assertion follows by expanding the oscillatory factors
separately.  \emph{Both assertions are now carried out}: the matching is done in
Subsection~\ref{sec:matching} (the unique expansion is the closed formal
tangent \eqref{eq:formal-tangent}, proved in all orders with the
estimates of Appendix~\ref{app:vertical}), and the arithmetic half is
done in Subsection~\ref{sec:arithmetic}
(Theorems~\ref{thm:arithlaw} and~\ref{thm:problemW}), so this problem
is resolved.
\end{problem}

The routes, in decreasing order of directness (the first of which
has now been carried to completion), are these:

\begin{enumerate}
\item[(A$_0$)] \emph{Zero-balanced Gauss reduction.}
  Theorem~\ref{thm:appellreduce} and
  Corollary~\ref{cor:gausstangent} reduce the full midpoint Horn amplitude,
  on the physical curve itself, to one Gauss function.  In particular the
  target tangent is the second symmetric parameter variation of
  ${}_2F_1^{\uparrow}(s-\alpha/2,s+\alpha/2;2s;2)$.  This is strictly
  smaller than the bivariate forced system.
  Lemma~\ref{lem:forcing-split},
  Proposition~\ref{prop:formal-sinh} and
  Corollary~\ref{cor:rational-quadrature} close the rational half.
  Lemma~\ref{lem:rhat} and Proposition~\ref{prop:gammaquadrature} split
  the Gamma half into one explicit cotangent and the rational formal series
  $\widehat Q_{\Gamma}$, and Proposition~\ref{prop:osc-remainder}
  performs the algebraic cancellation, leaving the single real remainder
  $\mathcal E$ of \eqref{eq:Edef}.  The boundary transfer of
  Subsection~\ref{sec:boundarytransfer} and the anchors of
  Subsection~\ref{sec:anchors} determine that remainder exactly at the
  function level, and Subsection~\ref{sec:matching} completes the
  asymptotic matching: the formal tangent is closed,
  \eqref{eq:formal-tangent}, with
  $\operatorname{Im}g_n=-\frac12\widehat q_n$ certified exactly for
  $n\le41$.  The analytic programme of this route is therefore
  finished; the $2$-adic analysis of $\widehat Q_{\Gamma}$ is carried
  out in Subsection~\ref{sec:arithmetic}, and it settles the oddness of
  $C_r$.  The branch $2+i0$ must be retained
  throughout; replacing it by the opposite boundary value changes the
  function.
\item[(A$'$)] \emph{The parity of the normalised congruence.}
  Theorem~\ref{thm:lawD} proves the two denominator laws
  self-containedly, and Proposition~\ref{prop:uncond} gives the
  unconditional numerator bound; conditionally on the integrality
  hypothesis (Conjecture~\ref{conj:integrality}),
  Proposition~\ref{prop:lawN} proves the two numerator laws and
  Proposition~\ref{prop:congruence} the depth-$4(r-1)$ divisibility, and
  the sharp law becomes exactly the one-bit congruence
  $C_r\equiv1\pmod2$ in \eqref{eq:parityCr}.  The two ingredients,
  the integrality \eqref{eq:hlower} and that single bit, are now
  both supplied by Theorem~\ref{thm:problemW} and
  Corollary~\ref{cor:consequences}.  Each layer factor $\mathcal P_r$
  satisfies the contiguous relations of $\Fhyp$ in its lower parameter,
  i.e.\ a three-term recurrence in $Z$ with rational coefficients, so the
  ratio $\operatorname{Num}/\operatorname{Den}$ solves a Riccati-type
  difference structure over the layer background; by
  Remark~\ref{rem:legendre} the leading layer is a degree-derivative of a
  Legendre function at the conical index at the lemniscatic argument,
  which is the classical home of the $((2m)!)^2$ weights.
  The most direct target is therefore not a further valuation estimate but
  a residue computation for $C_r$ modulo $2$.
\item[(B$'$)] \emph{Integer-node interpolation.}  The two-parameter ladder
  \eqref{eq:2parcontig} generates every $F_{\ell,m}$ from terminating
  Gauss functions; $[A^1B^0]\kappa_r$ is a finite Lagrange functional of
  their logarithms over the grid of Remark~\ref{rem:nodes}, whose node
  differences are integers.
\item[(A)] \emph{Off the curve: formal part completed.}
  Section~\ref{sec:Mcompletion} identifies the missing operator exactly as
  $\mathcal M=-\sigma\partial_\sigma$ and proves the all-jet recurrence
  \eqref{eq:jetladder}.  Hence the full diagonal background
  $\mathcal L_0(C;x,y;Z)$ is formally determined in a neighbourhood of the
  physical curve, and so are the coefficients $u,v,T$ of
  \eqref{eq:transp}--\eqref{eq:transq}.  The remaining task in this route
  is arithmetic, not analytic: propagate the transverse forcing through
  this background and control the resulting cancellations
  (Remark~\ref{rem:Mcompletionstatus}).
\item[(B)] \emph{Transverse Green kernel.}  As before: by
  Remark~\ref{rem:mixed} the physical object is the diagonal variation of
  the kernel $G_{a,d}$ at $a=0$, expanded with $\Nn$ held fixed;
  $\partial_aG_{a,d}$ involves $-\log(1-\eta^2)$ and
  $\chi_d=\partial_a\varphi_a|_{a=0}$, with
  $\mathcal J_{0,d}\chi_d=2xP_d'-2dP_d$ at fixed degree.
\end{enumerate}

We record, finally, the arithmetic shape of the (now proved) law in
its purely $2$-adic form.  Once integrality of $\mathcal U$ is known,
$u_m\in\Zt^\times$ is equivalent to $u_m\equiv1\pmod2$, so
the transverse law is
\[
  (1-z)\,\mathcal U(z)\equiv1\pmod 2 .
\]
This is a faithful restatement rather than a reduction (all the content
sits in the integrality), but it makes the Frobenius target explicit, and
the observed data
$\mathcal U=1+\frac13z+9z^2-\frac{6435}7z^3+762545\,z^4-\cdots$ (odd parts of
$((2m)!)^2w_m$) are consistent with it for $m\le46$.  The normalised congruence
\eqref{eq:parityCr} is the multiplicative counterpart of this statement:
the integrality gate is the same on both sides (it is
Conjecture~\ref{conj:integrality}, discharged by
Corollary~\ref{cor:consequences}), and the decisive bit sits there in one
series $\mathcal U$, here in the parity of the normalised depth-$4(r-1)$
cancellation of two explicitly defined sequences.

\section*{Note on verification}

The verification package separates \emph{proof certificates} from
\emph{numerical validation}.  All algebraic identities used as proof
certificates are checked in exact rational, Gaussian-rational, or symbolic
arithmetic.  Floating-point arithmetic is not used to replace an all-orders
proof.  The only floating computation in the new Appell check CJ is a
separate high-precision validation of the analytically specified boundary
branch $2+i0$; the application to numerical zeros is likewise numerical by
design.  In particular the chain of Section~\ref{sec:half} is checked term by
term:
Lemma~\ref{lem:bessel} for $n\le12$, Propositions~\ref{prop:step3}
and~\ref{prop:step4} for $\ell\le12$, Proposition~\ref{prop:Frec} for
$\ell\le10$, and Theorem~\ref{thm:halfinteger} as an identity of rational
functions for $\ell\le12$; the transfer of Section~\ref{sec:transfer} is
independently confirmed at $25$ distinct values of $A$ for all orders
$n\le24$.

The transverse structure carries its own suite
(\texttt{code/run\_all.py}; the sources are reproduced in
Appendix~\ref{app:code}; the exact certificates use rational,
Gaussian-rational, or symbolic arithmetic): CA verifies Theorem~\ref{thm:genladder}
symbolically in $(p,q,s)$ for $\ell\le8$; CB verifies
Theorem~\ref{thm:curvecontig} and Corollary~\ref{cor:Prec} with symbolic
$\varpi$ for $\ell\le6$; CBb and CG verify Theorem~\ref{thm:ballot}:
the closed form for $\ell\le8$, the interior cubic identity
\eqref{eq:cubic} symbolically, the all-orders boundary reductions from the
proof symbolically, and the four-term identity \eqref{eq:fourterm} on an
additional exhaustive regression grid $\ell\le13$; CC verifies
Theorem~\ref{thm:twopar} for $\ell,m\le5$; CCb verifies
Lemma~\ref{lem:KdF} for $\delta\le6$; CD verifies
Theorem~\ref{thm:layerexp} on all monomials $\mu,\nu\le5$ with symbolic
parameters, the pure identity \eqref{eq:pure} symbolically in $c$ for
$\mu,\nu\le6$, the WZ certificate as a rational identity, and the
telescoping boundaries; REF recomputes the reference tangent with the
dual-number Horn engine for $n\le41$; CE rebuilds the tangent from
\eqref{eq:tangentformula} and checks full agreement with REF and with the
Bernoulli/Euler closed forms; CF measures the valuation profiles; CH
checks Corollary~\ref{cor:denexp}, Laws~N, N$'$, D, D$'$, and the
congruence data of Proposition~\ref{prop:congruence} for $r\le21$.
CI (\texttt{code/factor\_laws\_proof.py}) verifies
the closed formulas and valuations for the coefficients of $\Lambda$, the
exact D/D$'$ laws for $\exp(\pm\Lambda)$ through order $80$, the unique
all-$1$-partition dominance at low orders, and the convolution valuation
gaps used in Proposition~\ref{prop:lawN}; CJ
(\texttt{code/appell\_f3\_reduction.py}) verifies exactly the Appell
coefficient recurrences and the forced transverse system, and checks the
$F_3\to{}_2F_1$ midpoint identity numerically to high precision with the
upper boundary value at $2$.  CK
(\texttt{code/boundary\_split\_quadrature.py}) re-derives the telescoping
certificate of Lemma~\ref{lem:Kcontig} symbolically, and validates at
$40$-digit precision the connection formula, the boundary split
\eqref{eq:split}, the Kummer and quadratic evaluations
\eqref{eq:blocksclosed}, the recurrence \eqref{eq:K2rec}, the quadrature
\eqref{eq:quadrature} on three lattices (the vanishing of the lattice
constant, recorded numerically in Remark~\ref{rem:latticeconstant}, is
proved in Theorem~\ref{thm:czero}), and the one-function reduction
\eqref{eq:onefun}.  CL
(\texttt{code/anchors\_transfer\_quadratures.py}) validates at
$30$--$40$-digit precision the corrected forcing split and the step-two
identity \eqref{eq:rstep}, the separation \eqref{eq:rhatfactor} with its
corrected Poincar\'e expansion, the closed rational quadrature
\eqref{eq:Qrat} and the cotangent factorisation
\eqref{eq:Qgammafactor}, the refutation numbers of
Remark~\ref{rem:target-false} together with the Laplace identity, the
contiguities and the boundary transfer
\eqref{eq:second-symmetric-contiguity}--\eqref{eq:jetprop}, the closed
$p_s$ \eqref{eq:ps-explicit} and the identity
\eqref{eq:J-exact-adjacent}, the polylogarithmic anchor
\eqref{eq:y1}--\eqref{eq:R1J1} and the propagated value \eqref{eq:R2},
and the Euler-moment anchor \eqref{eq:euleranchor}; it also re-verifies
symbolically, in exact arithmetic, the coefficient identity of
Theorem~\ref{thm:Mladder} and the jet ladder \eqref{eq:jetladder}.  CM
(\texttt{code/formal\_tangent\_certificate.py}) certifies the closed
formal tangent \eqref{eq:formal-tangent} in exact rational arithmetic
for all $n\le41$ against the reference engine (both the trigamma real
part and $\operatorname{Im}g_n=-\frac12\widehat q_n$), and validates
numerically the closed Stokes function \eqref{eq:ReND-closed}, the
function-level identity \eqref{eq:Imclosed-fn}, and the sectorial
matching of Lemma~\ref{lem:vertical} at interior points of the upper
half-plane.  CN (\texttt{code/transverse\_law\_certificate.py}), in
exact rational arithmetic throughout, certifies the Genocchi form
\eqref{eq:BZ} and its tanh kernel, the parity identity
\eqref{eq:Bparity} to order $200$, the all-ones dominance of
Lemma~\ref{lem:dominance} for $N\le320$, the evenness of $\mathsf F$
to order $320$, the kernel bridge \eqref{eq:wm-kernel} against the
tangent data for $m\le19$, the sharp law \eqref{eq:arithlaw} for
$m\le159$, and the first units of $((2m)!)^2w_m$ against the data of
Section~\ref{sec:open}.  CO
(\texttt{code/vertical\_estimates\_certificate.py}) validates
Appendix~\ref{app:vertical}: the two-component relation
\eqref{eq:tworelation} at four complex points to $\sim\!10^{-46}$; the
vanishing $C(s)=0$ of Theorem~\ref{thm:czero} at real and complex
points, including a high point on the strip where $Q$ is evaluated by
an independent Euler-contour quadrature; the formal Stirling parity of
Lemma~\ref{lem:stirparity} in exact rational arithmetic to order $40$;
the real half of the completed matching,
$\operatorname{Re}g_n=[Z^{-n}]\bigl(\tfrac12Q_{\rm rat}^{\rm ser}
-\tfrac14S_\psi\bigr)$, in exact rational arithmetic for all
$n\le41$; the coefficient bounds of Lemma~\ref{lem:hankel} exactly for
$n\le60$; the double zeros of the Gamma forcing at the negative odd
integers and of $T$ at $s=-2$; the vertical-decay table
$|Q(0.4+iY)|\,Y\approx0.44$--$0.48$ for $Y=4,8,12$
(Corollary~\ref{cor:Qvert}); and the $\varepsilon$-regularised
Euler-moment anchor of Theorem~\ref{thm:euleranchor}, whose real-axis
moments at $\varepsilon=10^{-2},10^{-3}$ approach the deformed-contour
limit linearly in $\varepsilon$.  CP
(\texttt{code/identification\_check.py}, with the support modules
\texttt{fast\_recursion.py} and \texttt{kappa\_engine.py}) verifies the
amplitude--phase dictionary of Proposition~\ref{prop:dictionary} in
exact rational arithmetic, in two layers: spot checks at four rational
parameter pairs, and the interpolation-grid \emph{proof} of the
polynomial identity $\kappa_r^{\rm phase}=\kappa_r^{\rm amp}$ in
$(A,B)$ for every $r\le8$ and all $\alpha,\beta$ (both sides have
degree at most $2r-1\le15$ in each variable, and equality is checked
exactly on a $16\times16$ product grid of distinct values), together
with the displayed values of $\kappa_1,\kappa_2$.

\appendix

\section{The sectorial estimates in full: vertical decay, the lattice
constant, and the two-component relation}\label{app:vertical}

This appendix carries out, in full, the estimates that were only stated
in Lemma~\ref{lem:vertical}, and it proves two further statements which
together remove every analytic gap in the chain leading to the
transverse law: the lattice constant of the quadrature
\eqref{eq:quadrature} vanishes on every admissible lattice
(Theorem~\ref{thm:czero}), and the closed formal tangent
(Theorem~\ref{thm:imclosed}) holds in all orders.  The logical order is:
coefficient bounds and the bilateral sector expansion of the amplitude
(Subsection~\ref{app:sub-bilateral}); uniform asymptotics of the closed
blocks (Subsection~\ref{app:sub-blocks}); the uniform expansion of the
Gamma forcing and of its lattice sum, in the referee-normal form
$T(s)-\sum_{j\le N}a_js^{-j}
=O(|s|^{-N})+O(e^{-\pi\operatorname{Im}s})$
(Subsection~\ref{app:sub-lattice}); the exact two-component relation
between the two boundary determinations, from which the vertical decay
of the Kummer quotient follows with no further analysis
(Subsection~\ref{app:sub-relation}); the classification lemma on the
period cylinder and the vanishing of the lattice constant
(Subsection~\ref{app:sub-czero}); and the completed proof of
Theorem~\ref{thm:imclosed} (Subsection~\ref{app:sub-imclosed}).  Every
estimate below is elementary; the only classical inputs are Stirling's
formula in sectors, the Euler--Maclaurin (Hurwitz) expansion, and the
removability of bounded isolated singularities.  Code~CO validates each
step numerically (Appendix~\ref{app:code}).

\subsection{Coefficient bounds and the bilateral sector expansion}
\label{app:sub-bilateral}

Throughout, $Z=1-2s$, and $F(\alpha;Z)$ denotes the Legendre-line
specialisation $\beta=0$ of the amplitude \eqref{eq:F},
\begin{gather*}
 F(\alpha;Z)=\sum_{n\ge0}G_n(\alpha)\,e_n(Z),\qquad
 e_n(Z)=\frac1{(Z-1)\cdots(Z-n)},\\
 G_n(\alpha)=\sum_{l=0}^na_l(\alpha)\,a_{n-l}(0)\,p^lq^{\,n-l},
\end{gather*}
with $p=1+i$, $q=1-i$ and the Hankel coefficients $a_k$ of
Section~\ref{sec:half}.  Two exact function-level facts, already
proved in the main text, anchor this subsection.  First, by
Lemma~\ref{lem:KdF} the series $F$ is the Kamp\'e de F\'eriet double
series \eqref{eq:KdF} at the midpoint arguments
$(x_0,y_0)=(\frac{1+i}2,\frac{1-i}2)$, grouped by total degree
$\mu+\nu=n$.  Second, by \eqref{eq:midpoint2F1} (an exact identity for
fixed $Z$),
\begin{equation}\label{eq:FH-exact}
 F(\alpha;Z)=y_0^{-Z}(-i)^{1/2}\,\mathcal H(\alpha;Z),
\end{equation}
with an $\alpha$-independent prefactor, so that
\begin{equation}\label{eq:ND-F}
 \frac ND(s)
 =\frac12\,\frac{\partial_\alpha^2\mathcal H(0;Z)}{\mathcal H(0;Z)}
 =\frac12\,\frac{\partial_\alpha^2F(0;Z)}{F(0;Z)}
\end{equation}
as an identity of functions, not merely of formal series.

\begin{lemma}[Hankel coefficient bounds]\label{lem:hankel}
For all $k\ge0$ and all $\alpha\in\mathbb C$,
\[
 |a_k(\alpha)|\le\cosh(\pi|\alpha|)\,k!\,2^{-k},
\qquad\text{and consequently}\qquad
 |G_n(\alpha)|\le\cosh(\pi|\alpha|)\,(n+1)!\,2^{-n/2}.
\]
\end{lemma}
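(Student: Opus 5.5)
Bound the Hankel coefficients through a closed form for the product, then estimate $G_n$ by convolution. By definition
\[
 a_k(\alpha)=\frac{(\frac12+\alpha)_k(\frac12-\alpha)_k}{(-2)^kk!},
\]
and the product of the two Pochhammer symbols is
\[
 (\tfrac12+\alpha)_k(\tfrac12-\alpha)_k=\prod_{j=0}^{k-1}\bigl((j+\tfrac12)^2-\alpha^2\bigr).
\]
Write $\Gamma(\frac12+\alpha+k)\Gamma(\frac12-\alpha+k)$ divided by $\Gamma(\frac12+\alpha)\Gamma(\frac12-\alpha)=\pi/\cos(\pi\alpha)$. Then
\[
 |a_k(\alpha)|=\frac{|\cos\pi\alpha|}{\pi\,2^k\,k!}\,\bigl|\Gamma(\tfrac12+\alpha+k)\Gamma(\tfrac12-\alpha+k)\bigr|,
\]
and $|\cos\pi\alpha|\le\cosh(\pi|\alpha|)$. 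So it suffices to prove
\[
 \bigl|\Gamma(\tfrac12+\alpha+k)\Gamma(\tfrac12-\alpha+k)\bigr|\le\pi\,(k!)^2 .
\]
This fails as stated for large real $\alpha$, since the left side has poles. The first thing I would try is therefore to avoid Gamma and bound the finite product directly:
\[
 \prod_{j<k}\bigl|(j+\tfrac12)^2-\alpha^2\bigr|\le\prod_{j<k}\bigl((j+\tfrac12)^2+|\alpha|^2\bigr),
\]
and compare with
\[
 \frac{\cosh(\pi|\alpha|)}{1}=\prod_{j\ge0}\Bigl(1+\frac{|\alpha|^2}{(j+\frac12)^2}\Bigr).
\]
This is the Weierstrass product of $\cos$ at imaginary argument.

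Factoring $(j+\frac12)^2$ out of each term gives
\[
 \prod_{j<k}\bigl((j+\tfrac12)^2+|\alpha|^2\bigr)=\prod_{j<k}(j+\tfrac12)^2\cdot\prod_{j<k}\Bigl(1+\tfrac{|\alpha|^2}{(j+\frac12)^2}\Bigr)\le\bigl((\tfrac12)_k\bigr)^2\cosh(\pi|\alpha|).
\]
The partial product is bounded by the full product because every factor is at least $1$. Since $(\frac12)_k\le k!$, this gives
\[
 |a_k(\alpha)|\le\cosh(\pi|\alpha|)\,\frac{((\frac12)_k)^2}{2^kk!}\le\cosh(\pi|\alpha|)\,\frac{(\frac12)_k}{2^k}\le\cosh(\pi|\alpha|)\,k!\,2^{-k},
\]
which is the first claim, with room to spare. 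This step is routine; the only care needed is to use $|x^2-\alpha^2|\le|x|^2+|\alpha|^2$ for complex $\alpha$.

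For $G_n$, use $|p|=|q|=\sqrt2$ and $|G_n|\le\sum_l|a_l(\alpha)||a_{n-l}(0)|\,2^{n/2}$. At $\alpha=0$ the product formula gives the sharper bound $|a_m(0)|=((\frac12)_m)^2/(2^mm!)$ with no $\cosh$ factor. Then
\[
 |G_n|\le\cosh(\pi|\alpha|)\,2^{n/2}2^{-n}\sum_{l=0}^n l!\,(n-l)!,
\]
and $l!\,(n-l)!\le n!$ gives $\sum_{l=0}^n l!\,(n-l)!\le(n+1)\,n!=(n+1)!$. This yields $\cosh(\pi|\alpha|)(n+1)!\,2^{-n/2}$. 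The main obstacle is only the first bound. If the Weierstrass-product comparison had a gap, I would fall back on the cruder termwise bound, since the claimed constant leaves ample slack.
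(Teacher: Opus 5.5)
Your proof is correct and follows the paper's route. You bound each factor by the triangle inequality, compare the partial product with the Weierstrass product $\cosh(\pi|\alpha|)=\prod_{j\ge0}\bigl(1+|\alpha|^2/(j+\frac12)^2\bigr)$, and then run the convolution estimate with $|p|=|q|=\sqrt2$ and $\sum_l l!\,(n-l)!\le(n+1)!$. The only deviations are that you bound the central value $|a_k(0)|=((\frac12)_k)^2/(2^kk!)$ via $(\frac12)_k\le k!$ instead of $((2k-1)!!)^2\le(2k)!$ and $\binom{2k}{k}\le4^k$, and that the abandoned Gamma-function detour should simply be deleted.
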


\begin{proof}
From the product form
$a_k(\nu)=\prod_{j=0}^{k-1}\bigl(4\nu^2-(2j+1)^2\bigr)/(8^kk!)$,
\[
 |a_k(\alpha)|\le|a_k(0)|
 \prod_{j=0}^{k-1}\Bigl(1+\frac{4|\alpha|^2}{(2j+1)^2}\Bigr)
 \le|a_k(0)|\prod_{j\ge0}\Bigl(1+\frac{4|\alpha|^2}{(2j+1)^2}\Bigr)
 =|a_k(0)|\cosh(\pi|\alpha|),
\]
the last step being the product formula
$\cos(\pi z/2)=\prod_{j\ge0}(1-z^2/(2j+1)^2)$ at $z=2i|\alpha|$.  For
the central value, $|a_k(0)|=\bigl((2k-1)!!\bigr)^2/(8^kk!)$ and
$\bigl((2k-1)!!\bigr)^2\le(2k-1)!!\,(2k)!!=(2k)!$, so
$|a_k(0)|\le(2k)!/(8^kk!)=k!\binom{2k}k4^{-k}\cdot2^k\cdot2^{-k}
\le k!\,4^k/8^k=k!\,2^{-k}$.
For $G_n$: $|p|=|q|=\sqrt2$ and
$\sum_{l=0}^{n}l!\,(n-l)!\le(n+1)\,n!=(n+1)!$, hence
$|G_n(\alpha)|\le\cosh(\pi|\alpha|)\,2^{n/2}\,2^{-n}\,(n+1)!
=\cosh(\pi|\alpha|)(n+1)!\,2^{-n/2}$.
\end{proof}

For $0<\rho<1$ and $Y_0\ge1/(1-\rho)$ put
\begin{equation}\label{eq:Omega-def}
 \Omega_\rho:=\bigl\{Z:\ |\operatorname{Re}Z|\le\rho\,|\operatorname{Im}Z|,
 \ |\operatorname{Im}Z|\ge Y_0\bigr\}.
\end{equation}
Both signs of $\operatorname{Im}Z$ are allowed: the region is symmetric
under conjugation, and every vertical strip
$|\operatorname{Re}Z|\le\Lambda$, $|\operatorname{Im}Z|\ge\Lambda/\rho$
lies inside it.

\begin{lemma}[tail-sum bound]\label{lem:tailsum}
Let $Z\in\Omega_\rho$ and $M\ge0$.  Then $(j+1)\le\sqrt2\,|Z-j|$ for
every integer $j\ge0$, and
\[
 S_M(Z):=\sum_{n\ge M+1}(n+1)!\,2^{-n/2}
 \prod_{j=M+1}^{n}|Z-j|^{-1}
 \ \le\ (M+1)!\,2^{-M/2}\,\bigl(18|Z|+4\bigr).
\]
\end{lemma}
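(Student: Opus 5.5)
The proof has two parts. The pointwise inequality comes from the geometry of the cone $\Omega_\rho$. The tail bound comes from comparing each term of $S_M(Z)$ with the first one: consecutive ratios are at most $1$ while $n$ is of size $O(|Z|)$, and at most a fixed fraction below $1$ beyond that.

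\emph{Pointwise inequality.} Write $x=\operatorname{Re}Z$ and $y=|\operatorname{Im}Z|$. By Cauchy--Schwarz, $2|Z-j|^2=2\bigl((x-j)^2+y^2\bigr)\ge(|x-j|+y)^2$. The triangle inequality gives $|x-j|\ge j-|x|$. On $\Omega_\rho$ we have $y-|x|\ge(1-\rho)y\ge(1-\rho)Y_0\ge1$. Hence $|x-j|+y\ge j+1$, and so $(j+1)\le\sqrt2\,|Z-j|$ for every integer $j\ge0$.

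\emph{Tail bound.} Let $t_n$ be the $n$th summand of $S_M(Z)$. Then
\[
 t_{n+1}/t_n=\frac{(n+2)\,2^{-1/2}}{|Z-(n+1)|}.
\]
\begin{enumerate}
\item[(a)] By the pointwise inequality at $j=n+1$, this ratio is at most $1$ for every $n$. Also $t_{M+1}\le(M+2)!\,2^{-(M+1)/2}\cdot\sqrt2/(M+2)=(M+1)!\,2^{-M/2}$. Therefore $t_n\le(M+1)!\,2^{-M/2}$ for all $n\ge M+1$.
\item[(b)] For large $n$, use instead $|Z-(n+1)|\ge n+1-|Z|$. If $n+1\ge c|Z|+c'$ for suitable absolute constants $c,c'$, then $(n+2)/\bigl(\sqrt2(n+1-|Z|)\bigr)\le\frac34$. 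For example $c=16$ works after absorbing lower-order terms using $|Z|\ge Y_0\ge1$.
\end{enumerate}

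\emph{Splitting the sum.} Put $N_0=\lceil c|Z|+c'\rceil$. The terms with $M+1\le n<N_0$ number at most $N_0$, and each is at most $(M+1)!\,2^{-M/2}$ by (a). The terms with $n\ge N_0$ decay geometrically with ratio $\frac34$ from a starting term that is also at most $(M+1)!\,2^{-M/2}$, so together they contribute at most $4(M+1)!\,2^{-M/2}$. Adding the two parts gives $S_M(Z)\le(M+1)!\,2^{-M/2}(N_0+4)$. Choosing the constants in (b) with $N_0\le18|Z|$ yields the stated bound $(18|Z|+4)$.

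The only real work is in this last step: fixing explicit constants in (b) so that the crossover index fits under $18|Z|$. It is routine, because $|Z|\ge1$ lets every additive constant be absorbed into the $|Z|$ term.
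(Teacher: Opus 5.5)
Your architecture is the same as the paper's. The pointwise inequality makes consecutive ratios at most $1$; past a threshold proportional to $|Z|$ the ratios are uniformly below $1$; and the sum is split there. Your proof of the pointwise inequality is correct, and a little cleaner than the paper's minimisation of the quadratic $2(j-a)^2+2b^2-(j+1)^2$ in $j$: you use $(|x-j|+y)^2\le 2|Z-j|^2$ together with $y-|x|\ge(1-\rho)y\ge1$. Your part~(a) is also fine.

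The gap is in the step you call routine. Take ratio $\tfrac34$ and the estimate $|Z-(n+1)|\ge n+1-|Z|$, and put $m=n+1$. The requirement $(m+1)/\bigl(\sqrt2\,(m-|Z|)\bigr)\le\tfrac34$ is then equivalent to $m\ge(3\sqrt2\,|Z|+4)/(3\sqrt2-4)\approx17.49\,|Z|+16.49$.
\begin{itemize}
\item Every admissible slope is therefore at least $3\sqrt2/(3\sqrt2-4)\approx17.49$, so $c=16$ is impossible.
\item The room left under $18|Z|$ is only about $0.51\,|Z|$. That absorbs the additive constant $\approx15.5$ only when $|Z|$ is about $30$ or larger.
\item But $\Omega_\rho$, with $\rho$ small and $Y_0=1/(1-\rho)$, contains points with $|Z|$ arbitrarily close to $1$. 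There your ratio-$\tfrac34$ regime starts only around $n\approx33$. Your count $(N_0+4)$ then gives roughly $37\,(M+1)!\,2^{-M/2}$, against the required $\approx22\,(M+1)!\,2^{-M/2}$.
\end{itemize}
As written, you prove a bound of the same shape with slope about $34$. That would suffice for Proposition~\ref{prop:bilateral}, but it is not the stated lemma.

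The repair is to ask for less decay: split exactly at $18|Z|$ and use ratio $\tfrac45$.
\begin{itemize}
\item For $n\ge18|Z|$ (hence $n\ge18$) one has $|Z-n|\ge n-|Z|\ge\tfrac{17}{18}n$. So $t_n/t_{n-1}\le\frac{18(n+1)}{17\sqrt2\,n}\le\frac{19}{17\sqrt2}<\tfrac45$.
\item The indices $M+1\le n<18|Z|$ number at most $18|Z|$, and each term is at most $(M+1)!\,2^{-M/2}$ by your~(a).
\item The first index past the split already carries a factor $\tfrac45$ relative to its predecessor, which is itself at most $(M+1)!\,2^{-M/2}$. So the tail is at most $(M+1)!\,2^{-M/2}\sum_{r\ge1}(4/5)^r=4\,(M+1)!\,2^{-M/2}$.
\end{itemize}
This is exactly the paper's argument, and it gives $18|Z|+4$.
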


\begin{proof}
Write $a=\operatorname{Re}Z$, $b=|\operatorname{Im}Z|$, so $|a|\le\rho b$
and $b\ge1/(1-\rho)$, i.e.\ $\rho b+1\le b$.  Then
$2|Z-j|^2-(j+1)^2=2(j-a)^2+2b^2-(j+1)^2$; as a function of $j$ this
quadratic has positive leading coefficient and minimum value
$2b^2-2(a+1)^2\ge2b^2-2(\rho b+1)^2\ge0$, which proves
$(j+1)\le\sqrt2|Z-j|$ for all $j\ge0$.  Consequently the terms
$t_n:=(n+1)!\,2^{-n/2}\prod_{j=M+1}^n|Z-j|^{-1}$ satisfy
$t_n/t_{n-1}=(n+1)/(\sqrt2\,|Z-n|)\le1$, so
$t_n\le t_M=(M+1)!\,2^{-M/2}$ for every $n>M$.  Moreover for
$n\ge18|Z|$ one has $|Z-n|\ge n-|Z|\ge n(1-\tfrac1{18})$ and hence
$t_n/t_{n-1}\le\frac{18\,(n+1)}{17\sqrt2\,n}\le\frac45$ once
$n\ge18|Z|\ge18$.  Splitting the sum at $18|Z|$,
\[
 S_M\le t_M\bigl(18|Z|\bigr)
 +t_M\sum_{r\ge1}(4/5)^r\le t_M(18|Z|+4).\qedhere
\]
\end{proof}

\begin{proposition}[bilateral sector expansion]\label{prop:bilateral}
Fix $0<\rho<1$.  Then:
\begin{enumerate}
\item[\upshape(i)] the double series \eqref{eq:KdF} converges
absolutely for every $Z\notin\{1,2,3,\dots\}$ and every $\alpha$; its
grouping by $\mu+\nu=n$ is the series $F(\alpha;Z)$ above, which is
therefore analytic in $(\alpha,Z)$ on
$\mathbb C\times(\mathbb C\setminus\{1,2,\dots\})$ and satisfies
\eqref{eq:FH-exact} there;
\item[\upshape(ii)] for every $N$ there are $C_N=C_N(\rho)$ and
$Y_0=Y_0(\rho,N)$ such that, uniformly for $Z\in\Omega_\rho$ and
$|\alpha|\le1$,
\[
 \Bigl|F(\alpha;Z)-\sum_{m=0}^{N}c_m(\alpha)Z^{-m}\Bigr|
 \le C_N\,|Z|^{-N-1},
\]
where $c_m(\alpha)=\sum_{n\le m}G_n(\alpha)S(m,n)$ are the plain
coefficients of Lemma~\ref{lem:degree}; the same bound holds for
$\partial_\alpha^kF$ and $\partial_\alpha^kc_m$, $k=1,2$;
\item[\upshape(iii)] consequently, uniformly for $Z\in\Omega_\rho$
(\emph{both} signs of $\operatorname{Im}Z$),
\[
 \Bigl|\frac ND(s)-\sum_{n=1}^{N}g_nZ^{-n}\Bigr|\le
 C_N'\,|Z|^{-N-1}.
\]
\end{enumerate}
\end{proposition}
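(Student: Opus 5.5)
For (i) I will work directly with the inverse-factorial series and the bound of Lemma~\ref{lem:hankel}. For fixed $Z\notin\{1,2,\dots\}$ we have $|e_n(Z)|=\prod_{j=1}^n|Z-j|^{-1}$. For $n>2|Z|+2$ each factor satisfies $|Z-j|\ge j/2$ once $j$ exceeds $2|Z|$. Hence $|G_n(\alpha)e_n(Z)|\le C(Z,\alpha)(n+1)!\,2^{-n/2}\,2^n/n!$ up to a fixed finite product. That bound is only polynomially growing, so it is not yet summable, and I will refine it. For $j\ge 4|Z|$ one has $|Z-j|\ge \tfrac34 j$. This gives terms of size $O\bigl(n\,(4/3)^n 2^{-n/2}\bigr)=O\bigl(n(0.943)^n\bigr)$, which is summable, locally uniformly in $(\alpha,Z)$.

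Absolute convergence of the double series \eqref{eq:KdF} follows the same way. The same argument goes through with $|x_0|^\mu|y_0|^\nu$ and the termwise bounds $|a_\mu(\alpha)|2^\mu|x_0|^\mu\le\cosh(\pi|\alpha|)\mu!\,2^{-\mu/2}$ taken before grouping. So the regrouping by $\mu+\nu=n$ is legitimate, and uniform convergence gives analyticity. Identity \eqref{eq:FH-exact} holds for large real $Z$ by \eqref{eq:midpoint2F1} (Theorem~\ref{thm:appellreduce}), and it extends by analytic continuation on the connected domain.

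For (ii) I split $F=\sum_{n\le M}G_ne_n+\sum_{n>M}G_ne_n$ with $M=N+1$. The head is a finite sum of rational functions, each with a convergent expansion $e_n=\sum_{m\ge n}S(m,n)Z^{-m}$ for $|Z|>n$. Truncating these at order $N$ leaves an error $O(|Z|^{-N-1})$ uniformly on $\Omega_\rho$ once $Y_0$ is large. By construction the truncated head coefficients are exactly the $c_m$, since $S(m,n)=0$ for $n>m$ and the tail only contributes at orders $\ge M$.

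For the tail I factor $|e_n|=\prod_{j\le M}|Z-j|^{-1}\prod_{j=M+1}^n|Z-j|^{-1}$. On $\Omega_\rho$ we have $|Z-j|\ge c_\rho|Z|$ for $j\le M$, provided $Y_0$ is large relative to $M$. Lemma~\ref{lem:tailsum} then bounds the tail by $\cosh(\pi)\,(c_\rho|Z|)^{-M}(M+1)!\,2^{-M/2}(18|Z|+4)=O(|Z|^{1-M})=O(|Z|^{-N})$. To gain the missing power I take $M=N+2$ instead. The $\alpha$-derivatives are handled by Cauchy estimates on the disc $|\alpha|\le2$: the bounds above hold uniformly there, with constant $\cosh 2\pi$.

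For (iii) I use \eqref{eq:ND-F}, which gives $N/D=\tfrac12\partial_\alpha^2F(0;Z)/F(0;Z)$. By (ii), the numerator and the denominator each equal a polynomial in $Z^{-1}$ plus $O(|Z|^{-N-1})$, and the denominator is $1+O(|Z|^{-1})$, so it stays bounded away from $0$ for $|Z|\ge Y_0$. Dividing the truncated series as power series gives the formal quotient, whose coefficients are the $g_n$. This uses Corollary~\ref{cor:gausstangent} together with evenness in $\alpha$, so that $\partial_A=\tfrac12\partial_\alpha^2$. The quotient error is again $O(|Z|^{-N-1})$. The main obstacle is the tail estimate in (ii): one must ensure that the $M$ near-origin factors each give a full power of $|Z|$ while the far factors are controlled only by Lemma~\ref{lem:tailsum}'s $(18|Z|+4)$ loss, and this is why $M$ is taken two beyond $N$.
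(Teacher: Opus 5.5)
Your proposal is correct and follows the paper's proof essentially step for step. In (ii) you reserve the first $N+2$ factors of $e_n$, each bounded below by a multiple of $|Z|$ on $\Omega_\rho$, apply Lemma~\ref{lem:tailsum} with $M=N+2$, and handle the $\alpha$-derivatives by Cauchy estimates on $|\alpha|\le2$; in (iii) you divide the two uniform expansions via \eqref{eq:ND-F} and Corollary~\ref{cor:gausstangent}, exactly as the paper does, and the only variation is in (i), where your direct bound $|Z-j|\ge\frac34 j$ for $j\ge4|Z|$ replaces the paper's Stirling bound on $|(1-Z)_m|$ and works equally well (your preliminary bound $(n+1)2^{n/2}$ grows exponentially rather than polynomially, but the refinement makes that slip harmless).
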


\begin{proof}
(i) With $|x_0|=|y_0|=2^{-1/2}$, the modulus of the
$(\mu,\nu)$-term of \eqref{eq:KdF} is at most
$C(\alpha)\,\frac{\mu!\,\nu!}{|(1-Z)_{\mu+\nu}|}\,2^{-(\mu+\nu)/2}
\mu^{O(1)}\nu^{O(1)}$ by Lemma~\ref{lem:hankel} applied to each
Pochhammer pair, and
$\mu!\,\nu!/(\mu+\nu)!=\binom{\mu+\nu}{\mu}^{-1}\le1$ while
$|(1-Z)_m|\ge c(Z)\,m!\,m^{-\operatorname{Re}Z-1}$ for $m$ large, by
Stirling; the sum over $\mu+\nu=m$ then converges geometrically in
$m$.  Grouping the absolutely convergent double series by $\mu+\nu=n$
gives $G_ne_n$ exactly as in the proof of Lemma~\ref{lem:KdF}.
Identity \eqref{eq:FH-exact} holds for each fixed generic $Z$ by
Theorem~\ref{thm:appellreduce} and \eqref{eq:midpoint2F1}, and both
sides are analytic off the indicated sets, so it holds on the whole
common domain.

(ii) Split $F=\sum_{n\le N}G_ne_n+R_N$.  Each $e_n$, $n\le N$, is
analytic at $Z=\infty$ with
$e_n=\sum_{m=n}^{N}S(m,n)Z^{-m}+O_N(|Z|^{-N-1})$ uniformly for
$|Z|\ge2N+2$ (finitely many geometric expansions of
$Z^{-n}\prod_{j\le n}(1-j/Z)^{-1}$), which produces the polynomial part
$\sum_{m\le N}c_mZ^{-m}$ with the stated coefficients plus an error
$O_N(|Z|^{-N-1})$.  For the tail, reserve the first $N+2$ factors:
for $n\ge N+2$,
\[
 \prod_{j=1}^n|Z-j|\ \ge\ |\operatorname{Im}Z|^{\,N+2}
 \prod_{j=N+3}^{n}|Z-j|,
\]
so by Lemmas~\ref{lem:hankel} and~\ref{lem:tailsum} (with $M=N+2$),
\[
 |R_N|\le\sum_{n=N+1}^{N+2}\frac{|G_n|}{|\operatorname{Im}Z|^{\,n}}
 +\frac{\cosh\pi}{|\operatorname{Im}Z|^{N+2}}\,S_{N+2}(Z)
 \le\frac{C_N''}{|\operatorname{Im}Z|^{N+1}}
 +\frac{C_N'''\,|Z|}{|\operatorname{Im}Z|^{N+2}}
 \le C_N\,|Z|^{-N-1},
\]
using $|\operatorname{Im}Z|\ge(1+\rho^2)^{-1/2}|Z|$ on $\Omega_\rho$.
The same argument applies verbatim to
$\partial_\alpha^kF=\sum_n\partial_\alpha^kG_n\,e_n$ after bounding
$|\partial_\alpha^kG_n(\alpha)|\le C(n+1)!\,2^{-n/2}$ for $|\alpha|\le2$
by the Cauchy formula on the circle $|\alpha'-\alpha|=1$ applied to
Lemma~\ref{lem:hankel}; alternatively, apply Cauchy's formula directly
to the remainder $R_N(\alpha;Z)$, which is analytic in $\alpha$.

(iii) $F$ is even in $\alpha$ (each $a_l$ is a function of $\alpha^2$),
so $\partial_\alpha F(0;Z)=0$ and, by \eqref{eq:ND-F},
$N/D=\tfrac12\partial_\alpha^2F(0;Z)/F(0;Z)$.  By (ii),
$F(0;Z)=1+O(|Z|^{-1})$ uniformly on $\Omega_\rho$, so for
$Y_0$ large $|F(0;Z)|\ge\frac12$ and the quotient of the two uniform
expansions is again a uniform expansion, whose coefficients are, by
Corollary~\ref{cor:gausstangent}, exactly $g_n$.
\end{proof}

\begin{remark}[on the aperture, and why it suffices]
\label{rem:aperture}
The cone $|\operatorname{Re}Z|\le\rho|\operatorname{Im}Z|$ with
$\rho<1$ cannot be widened by the termwise estimate above: for
directions closer to the real axis the individual terms $G_ne_n$ with
$n\asymp|Z|$ grow exponentially and the series converges only through
cancellation.  No use made of Lemma~\ref{lem:vertical} in this paper
requires a wider aperture: the matching of
Subsection~\ref{app:sub-imclosed} is performed along a single vertical
line, the decay statements of Subsection~\ref{app:sub-czero} concern
vertical strips, and the coefficients of a Poincar\'e expansion along
one ray are unique.  The mirror symmetry used repeatedly below is the
Schwarz reflection $\mathcal N^{\downarrow}(s)
=\overline{\mathcal N^{\uparrow}(\bar s)}$, valid because all four
blocks $\mathsf S_0,\mathsf K_0,\mathsf S_2,\mathsf K_2$ are real on
the real axis.
\end{remark}

\subsection{Uniform asymptotics of the closed blocks}
\label{app:sub-blocks}

\begin{lemma}[blocks]\label{lem:blocksasy}
Uniformly on every closed subsector of
$|\arg(\pm is)|<\pi/2$, i.e.\ off the real axis, and in particular on
every vertical strip $|\operatorname{Re}s|\le\Lambda$,
$|\operatorname{Im}s|\ge1$:
\begin{enumerate}
\item[\upshape(i)] $\psi'(s)$, $Q_{\rm rat}(s)$ and the convergent
trigamma blocks of \eqref{eq:ReND-closed} are asymptotic to their
(Bernoulli) Stirling series, with remainders
$O_N(|s|^{-N})$ after $N$ terms, and
$|\psi'(s)|+|Q_{\rm rat}(s)|\le C/|\operatorname{Im}s|$;
\item[\upshape(ii)] $\tan\frac{\pi s}2=\pm i+O(e^{-\pi|\operatorname{Im}s|})$
for $\operatorname{Im}s\to\pm\infty$, uniformly in
$\operatorname{Re}s$, and likewise
$\cot\frac{\pi s}2=\mp i+O(e^{-\pi|\operatorname{Im}s|})$,
$\sec^2\frac{\pi s}2$, $\csc^2(\pi s)=O(e^{-\pi|\operatorname{Im}s|})$;
\item[\upshape(iii)] $\widehat{\mathsf r}$ of \eqref{eq:rhatdef} is
asymptotic to its Poincar\'e series \eqref{eq:rhatformal} uniformly on
$|\arg s|\le\pi-\delta$, for every $\delta>0$.
\end{enumerate}
\end{lemma}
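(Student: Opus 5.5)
The plan is to reduce all three items to two classical inputs: Stirling's formula for $\log\Gamma$, $\psi$ and $\psi'$ in closed sectors $|\arg z|\le\pi-\delta$, and the elementary exponential representation of the trigonometric functions. Stirling's formula is uniform for $|z|\ge1$ in such a sector, and its remainder after $N$ terms is $O_N(|z|^{-N-1})$. Throughout, ``large'' means $|s|\ge s_0(\delta,\Lambda)$. On a vertical strip with $|\operatorname{Im}s|\ge1$, the complementary bounded part is a compact set avoiding the real-axis poles, and there every statement is trivial.

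For (i), we first check that each argument occurring in a block stays in a Stirling sector. The arguments are $s$ for $\psi'(s)$, $\frac{1-s}2$ and $1-\frac s2$ for $Q_{\rm rat}$ in \eqref{eq:Qrat}, and, for the blocks of \eqref{eq:ReND-closed}, $\frac{Z+1}4,\frac{Z+3}4,\frac{Z+1}2$ with $Z=1-2s$ (equivalently $\frac{1-s}2$, $1-\frac s2$, $1-s$). On a closed subsector of $|\arg(\pm is)|<\pi/2$, each of these is an affine image $c_1s+c_0$ with $c_1\in\{\pm1,\pm\frac12\}$. The direction of $c_1s$ stays a fixed positive angle away from the negative real axis, and the shift $c_0$ moves the argument only by $O(1/|s|)$. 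Stirling for $\psi'$ therefore applies uniformly. Re-expanding $\psi'(c_1s+c_0)$ in powers of $s^{-1}$ is a finite binomial rearrangement at each order, so it gives the stated Bernoulli series with $O_N(|s|^{-N})$ remainders. For the crude bound $C/|\operatorname{Im}s|$, we use $\psi'(z)=\sum_{k\ge0}(z+k)^{-2}$ together with $|z+k|^2\ge(\operatorname{Re}z+k)^2+(\operatorname{Im}z)^2$. On a strip, only $O(\Lambda)$ indices have $\operatorname{Re}z+k<0$, and each contributes at most $(\operatorname{Im}z)^{-2}$. The remaining indices are dominated by $\int_{-1}^\infty(t^2+y^2)^{-1}dt\le\pi/|y|$, and this proves $|\psi'|\le C/|\operatorname{Im}s|$. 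Since $Q_{\rm rat}$ is a difference of two such terms with $|\operatorname{Im}|$ comparable to $|\operatorname{Im}s|/2$, the same bound holds for it.

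For (ii), we write $x=\pi s/2$ and $\tan x=-i\,(e^{2ix}-1)/(e^{2ix}+1)$, with $|e^{2ix}|=e^{-\pi\operatorname{Im}s}$. For $\operatorname{Im}s\to+\infty$ this equals $i\,(1-e^{2ix})/(1+e^{2ix})=i+O(e^{-\pi\operatorname{Im}s})$, uniformly in $\operatorname{Re}s$. The case $\operatorname{Im}s\to-\infty$ is symmetric using $e^{-2ix}$. The cotangent statement follows by taking reciprocals. For the remaining two functions, $\sec^2x=4e^{2ix}/(1+e^{2ix})^2$ and $\csc^2(\pi s)=-4e^{2\pi is}/(1-e^{2\pi is})^2$, which are $O(e^{-\pi\operatorname{Im}s})$ and $O(e^{-2\pi\operatorname{Im}s})$ respectively, with the mirror bounds in the lower half-plane.

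For (iii), we take the logarithm: $\log\widehat{\mathsf r}(s)=\log s+2\log\Gamma(\frac s2)-2\log\Gamma(\frac s2+\frac12)$. For $|\arg s|\le\pi-\delta$ and $|s|$ large, both $\frac s2$ and $\frac{s+1}2$ lie in $|\arg z|\le\pi-\delta/2$. Hence the uniform gamma-quotient expansion used in Lemma~\ref{lem:stirlingexp} (subtracting \cite[Eq.~5.11.8]{DLMF} at $h=0$ and $h=\frac12$) applies at $z=s/2$ with uniform remainder. The term $-\frac12\log z$ cancels $\log s$ up to $\log 2$, so $\log(\widehat{\mathsf r}/2)$ equals the finite exponent sum of \eqref{eq:rhatformal} plus $O_N(|s|^{-N-1})$. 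Exponentiating then preserves uniformity: the exponent $E_N(s)$ is bounded on the region, and $e^{E_N+O(|s|^{-N-1})}=e^{E_N}(1+O(|s|^{-N-1}))$; truncating the Taylor series of $e^{E_N}$ is a polynomial rearrangement. We expect the only real obstacle to be bookkeeping near the edge of the sector: one must ensure the shifted arguments, for instance $\frac{s+1}2$ when $\arg s$ is near $\pm(\pi-\delta)$, stay in a slightly larger Stirling sector. This is handled by taking $s_0$ large depending on $\delta$, as above.
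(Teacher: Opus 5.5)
Your proposal is correct and follows essentially the paper's route: Stirling's formula in closed sectors, re-expanded in $s^{-1}$, for (i) and (iii), and the exponential forms of $\tan$, $\cot$, $\sec^2$ and $\csc^2$ for (ii). The one variation is the crude bound in (i). You derive it from the partial-fraction series $\psi'(z)=\sum_{k\ge0}(z+k)^{-2}$ with an integral comparison, whereas the paper uses $\psi'(z)=O(1/|z|)$ on $|\operatorname{Im}z|\ge\frac12$ together with the recurrence; your version is if anything more robust, since that intermediate claim holds only for bounded $\operatorname{Re}z$, while your series estimate extends to closed subsectors as well.
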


\begin{proof}
(i) and (iii) are Stirling's formula with remainder in sectors
\cite[\S5.11]{DLMF} applied to the explicit Gamma quotients; the decay
bound in (i) follows from $\psi'(z)=O(1/|z|)$ on
$|\operatorname{Im}z|\ge\tfrac12$ together with
$\psi'(z+1)=\psi'(z)-z^{-2}$ to reach the arguments $(1-s)/2$,
$1-s/2$.  (ii) is elementary:
$\tan\frac{\pi s}2\mp i=\mp2i\,(e^{\pm i\pi s}+1)^{-1}
\cdot e^{\pm i\pi s}\cdot(\dots)$, and
$|e^{\pm i\pi s}|=e^{-\pi|\operatorname{Im}s|}$ in the indicated
half-planes.
\end{proof}

\subsection{The Gamma forcing and its lattice sum}
\label{app:sub-lattice}

\begin{lemma}[global bound and two-regime expansion of the forcing]
\label{lem:fhatglobal}
Let $\widehat{\mathsf f}_\Gamma$ be \eqref{eq:fgammahat}.  Then:
\begin{enumerate}
\item[\upshape(i)] there is $C$ with
$|\widehat{\mathsf f}_\Gamma(\sigma)|\le C\,|\sigma|^{-2}$ uniformly on
$|\operatorname{Im}\sigma|\ge1$, \emph{including}
$\operatorname{Re}\sigma\to-\infty$;
\item[\upshape(ii)] for every $M$ there is $C_M$ such that, uniformly on
$\operatorname{Im}\sigma\ge1$,
\[
 \Bigl|\widehat{\mathsf f}_\Gamma(\sigma)
 -\sum_{j=2}^{M}\varphi_j\,\sigma^{-j}\Bigr|
 \le C_M\bigl(|\sigma|^{-M-1}
 +|\sigma|^{-2}e^{-\pi\operatorname{Im}\sigma}\bigr),
\]
where $\sum_j\varphi_j\sigma^{-j}$ is the formal series
\eqref{eq:fgammahat}; the mirror statement holds for
$\operatorname{Im}\sigma\le-1$ by
$\widehat{\mathsf f}_\Gamma(\bar\sigma)
=\overline{\widehat{\mathsf f}_\Gamma(\sigma)}$.
\end{enumerate}
\end{lemma}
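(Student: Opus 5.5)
The plan is to work directly from the explicit form $\widehat{\mathsf f}_\Gamma(\sigma)=-(2\sigma-1)\widehat{\mathsf r}(\sigma)/(4\sigma^2(\sigma-1))$, so that everything reduces to the behaviour of $\widehat{\mathsf r}(\sigma)=\sigma[\Gamma(\sigma/2)/\Gamma((1+\sigma)/2)]^2$ on $|\operatorname{Im}\sigma|\ge1$. The rational prefactor is $O(|\sigma|^{-2})$ there, since $|\sigma-1|\ge|\operatorname{Im}\sigma|\ge1$ and $|\sigma|\ge1$. Hence (i) amounts to showing that $\widehat{\mathsf r}$ is bounded on the whole region $|\operatorname{Im}\sigma|\ge1$. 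Part (ii) is a two-regime uniform expansion of $\widehat{\mathsf r}$. By the conjugation symmetry (all Gamma quotients are real on the real axis) it suffices to treat $\operatorname{Im}\sigma\ge1$.

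Fix $\delta=\pi/4$ and cover the region by two regimes.

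\emph{Right regime, $|\arg\sigma|\le\frac34\pi$.} Here Lemma~\ref{lem:blocksasy}(iii) (Stirling in sectors for the quotient $\Gamma(\sigma/2)/\Gamma((1+\sigma)/2)$) gives $\widehat{\mathsf r}(\sigma)=2+\sum_{k=1}^{M}r_k\sigma^{-k}+O_M(|\sigma|^{-M-1})$ uniformly. This gives boundedness and, after multiplying by the expansion of the rational prefactor, the expansion in (ii) with no exponential term.

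\emph{Left regime, $\operatorname{Re}\sigma<0$ and $|\arg\sigma|>\frac34\pi$.} Use the reflection formulas from the proof of Lemma~\ref{lem:rhat} to rewrite $\Gamma(\sigma/2)/\Gamma((1+\sigma)/2)$ as
\[
\frac{\Gamma((1-\sigma)/2)}{\Gamma(1-\sigma/2)}\cdot\frac{\sin(\pi(1+\sigma)/2)}{\sin(\pi\sigma/2)}=\frac{\Gamma((1-\sigma)/2)}{\Gamma(1-\sigma/2)}\cot\frac{\pi\sigma}2 .
\]
Now $-\sigma$ lies in the right sector, so Stirling applies to the new Gamma quotient in the variable $\sigma'=-\sigma$. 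Its square times $\sigma$ has the formal expansion $-(2+\dots)$ in $\sigma^{-1}$. The factor $\cot^2(\pi\sigma/2)=-1+O(e^{-\pi\operatorname{Im}\sigma})$ by Lemma~\ref{lem:blocksasy}(ii), and the sign combines to give $+2+\dots$. The remaining task is to check that the resulting series coincides with the Poincaré series \eqref{eq:rhatformal}. This follows because both are the expansion of the same analytic function along the common ray directions $\arg\sigma\in(\frac\pi2,\frac34\pi)$, where the two regimes overlap, and Poincaré coefficients along a ray are unique. The error then consists of the Stirling remainder $O(|\sigma|^{-M-1})$ plus $|\text{leading}|\cdot O(e^{-\pi\operatorname{Im}\sigma})$; multiplying by the $O(|\sigma|^{-2})$ prefactor yields exactly the stated bound. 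Boundedness in this regime follows from the same formula, since $\cot$ is bounded on $\operatorname{Im}\sigma\ge1$. (If a Beta-integral bound is preferred for the Gamma ratio uniformly as $\operatorname{Re}\sigma\to-\infty$, one can write $\Gamma((1-\sigma)/2)/\Gamma(1-\sigma/2)=B((1-\sigma)/2,\frac12)/\sqrt\pi$ and estimate the integral directly, which is $O(|\sigma|^{-1/2})$.)

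The main obstacle is the uniformity near the boundary of each regime. In particular, when $\operatorname{Re}\sigma\to-\infty$ with $\operatorname{Im}\sigma$ fixed near $1$, the argument approaches $\pi$, outside any closed sector for the original Stirling formula. The reflection step is precisely what handles this: after reflection the arguments $(1-\sigma)/2$ and $1-\sigma/2$ lie near the positive real axis, where Stirling is uniform. I would therefore take care to choose the overlap region $\frac\pi2<\arg\sigma<\frac34\pi$ so that both expansions are valid there, and to verify that the $e^{-\pi\operatorname{Im}\sigma}$ error from $\cot^2$ carries the factor $|\sigma|^{-2}$ rather than something larger. The latter holds because the reflected Gamma-squared times $\sigma$ is bounded.
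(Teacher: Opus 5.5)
Your proposal is correct and follows the paper's proof essentially step for step: Stirling on $|\arg\sigma|\le\frac34\pi$, the reflection $\Gamma(\sigma/2)/\Gamma(\frac{1+\sigma}2)=\cot\frac{\pi\sigma}2\,\Gamma(\frac{1-\sigma}2)/\Gamma(1-\frac\sigma2)$ on the left regime with $\cot\frac{\pi\sigma}2=-i+O(e^{-\pi\operatorname{Im}\sigma})$, and identification of the two Poincar\'e series by uniqueness on an overlap sector. The only differences are minor. The paper takes the overlap $\frac34\pi<|\arg\sigma|<\pi-\delta$, whereas your $\frac\pi2<\arg\sigma<\frac34\pi$ works equally well because $\operatorname{Im}\sigma\ge c|\sigma|$ there. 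The paper also bounds the reflected Gamma ratio directly by the Beta integral, which you keep as a fallback to Stirling.
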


\begin{proof}
Everything reduces to the Gamma quotient in
$\widehat{\mathsf r}(\sigma)=\sigma\,
[\Gamma(\sigma/2)/\Gamma((1+\sigma)/2)]^2$.  On the right regime
$|\arg\sigma|\le\tfrac34\pi$, Stirling gives
$\Gamma(\sigma/2)/\Gamma((1+\sigma)/2)=(\sigma/2)^{-1/2}(1+O(1/\sigma))$
uniformly, whence $|\widehat{\mathsf r}(\sigma)|\le C$ and the
Poincar\'e expansion \eqref{eq:rhatformal} with uniform remainders
(Lemma~\ref{lem:blocksasy}(iii)).  On the left regime
$\tfrac34\pi\le|\arg\sigma|\le\pi$, $\operatorname{Im}\sigma\ge1$,
apply the two reflection formulas used in the proof of
Lemma~\ref{lem:rhat}:
\begin{equation}\label{eq:rhat-left}
 \frac{\Gamma(\sigma/2)}{\Gamma((1+\sigma)/2)}
 =\cot\Bigl(\frac{\pi\sigma}2\Bigr)\,
 \frac{\Gamma\bigl(\frac{1-\sigma}2\bigr)}
      {\Gamma\bigl(1-\frac\sigma2\bigr)} ,
\end{equation}
whose right-hand Gamma arguments have real part tending to $+\infty$.
The Beta-integral bound, for $\operatorname{Re}w>0$,
\[
 \Bigl|\frac{\Gamma(w)}{\Gamma(w+\frac12)}\Bigr|
 =\frac{|B(w,\tfrac12)|}{\Gamma(\tfrac12)}
 \le\frac{B(\operatorname{Re}w,\tfrac12)}{\sqrt\pi}
 \le C\,(\operatorname{Re}w)^{-1/2}
\]
with $w=\frac{1-\sigma}2$, together with
$\cot(\pi\sigma/2)=-i+O(e^{-\pi\operatorname{Im}\sigma})$
(Lemma~\ref{lem:blocksasy}(ii)), gives
$|\widehat{\mathsf r}(\sigma)|\le C$ on the left regime as well; (i)
follows since
$\widehat{\mathsf f}_\Gamma=-\tfrac{(2\sigma-1)}{4\sigma^2(\sigma-1)}
\widehat{\mathsf r}$.
For (ii): on the left regime, \eqref{eq:rhat-left} squared gives
\[
 \widehat{\mathsf r}(\sigma)
 =-\sigma\Bigl[\frac{\Gamma\bigl(\frac{1-\sigma}2\bigr)}
      {\Gamma\bigl(1-\frac\sigma2\bigr)}\Bigr]^2
 \bigl(1+O(e^{-\pi\operatorname{Im}\sigma})\bigr),
\]
and the squared quotient has a uniform Stirling expansion in
$\sigma^{-1}$ (right half-plane arguments).  The resulting Poincar\'e
series coincides with \eqref{eq:rhatformal}: both are asymptotic
expansions of the same function $\widehat{\mathsf r}$ on the open
overlap sector $\tfrac34\pi<|\arg\sigma|<\pi-\delta$ (where the cot
correction is exponentially small in $|\sigma|$), and the coefficients
of a Poincar\'e expansion on a sector are unique.  (Arithmetically the
same coincidence is the parity identity of Lemma~\ref{lem:parityF}.)
Combining the two regimes, with the exponentially small cot correction
retained additively, gives (ii).
\end{proof}

\begin{lemma}[pole and zero structure of the forcing and of the lattice
sum]\label{lem:fhatpoles}
$\widehat{\mathsf f}_\Gamma$ is meromorphic on $\mathbb C$ with:
double zeros at the negative odd integers; a pole of order $3$ at
$\sigma=0$, a simple pole at $\sigma=1$, double poles at the negative
even integers, and no other poles.  The principal-part coefficients at
$\sigma=-2m$ are $O(m^{-2})$.  Consequently
$T(s)=\sum_{k\ge0}\widehat{\mathsf f}_\Gamma(s-1-2k)$ converges
normally on compact sets avoiding $\mathbb Z$ and defines a meromorphic
function on $\mathbb C$ with poles contained in $\mathbb Z$;
$T$ has (at least) double zeros at the negative even integers, at most
double poles at the negative odd integers, and satisfies
\begin{equation}\label{eq:T-global}
 |T(s)|\le\frac{C}{|\operatorname{Im}s|},\qquad
 |\operatorname{Im}s|\ge1,
\end{equation}
uniformly in $\operatorname{Re}s$.  Finally
$T(s+2)-T(s)=\widehat{\mathsf f}_\Gamma(s+1)$.
\end{lemma}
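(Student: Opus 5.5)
I would read everything off the factorisation $\widehat{\mathsf f}_\Gamma(\sigma)=-\frac{(2\sigma-1)}{4\sigma^2(\sigma-1)}\,\widehat{\mathsf r}(\sigma)$ with $\widehat{\mathsf r}(\sigma)=\sigma\,[\Gamma(\sigma/2)/\Gamma((1+\sigma)/2)]^2$. Every pole and zero comes from the rational prefactor together with the poles of $\Gamma(\sigma/2)$ (at $\sigma=0,-2,-4,\dots$) and the poles of $\Gamma((1+\sigma)/2)$ (at $\sigma=-1,-3,\dots$).

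\emph{Poles and zeros of the forcing.}
\begin{itemize}
\item At the negative odd integers, $\widehat{\mathsf r}$ has a double zero from the squared reciprocal Gamma. The prefactor is regular and nonzero there, so $\widehat{\mathsf f}_\Gamma$ has double zeros.
\item At $\sigma=-2m$ with $m\ge1$, the square of $\Gamma(\sigma/2)$ gives a double pole. The prefactor is regular and nonzero there, since its zero is at $\sigma=\frac12$.
\item At $\sigma=0$, $\widehat{\mathsf r}\sim\sigma\cdot 4\sigma^{-2}/\pi$ has a simple pole. Combined with $\sigma^{-2}$ from the prefactor this gives order $3$.
\item At $\sigma=1$, $\widehat{\mathsf r}(1)=\Gamma(1/2)^2=\pi\neq0$. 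The prefactor has a simple pole there, so $\widehat{\mathsf f}_\Gamma$ has a simple pole.
\end{itemize}

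\emph{Size of the principal parts at $\sigma=-2m$.} I would use the reflection form \eqref{eq:rhat-left}. Near $\sigma=-2m$, the factor $\cot^2(\pi\sigma/2)$ supplies a double pole with bounded Laurent coefficients. The smooth factor $\Gamma(\frac{1-\sigma}2)/\Gamma(1-\frac\sigma2)$ equals $\Gamma(m+\frac12)/\Gamma(m+1)$ at the centre, which is $O(m^{-1/2})$, and its logarithmic derivative there is $O(1/m)$. Hence $\sigma$ times its square has value and derivative of size $O(1)$. The prefactor is $O(m^{-2})$ with derivative $O(m^{-3})$. Both principal-part coefficients are therefore $O(m^{-2})$.

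\emph{Convergence of $T$.} On a compact set $K$ avoiding $\mathbb Z$, the shifted points $s-1-2k$ eventually lie at distance at least $\delta>0$ from the lattice. I would bound $\widehat{\mathsf f}_\Gamma(\sigma)=O(|\sigma|^{-2})$ uniformly for $\operatorname{Re}\sigma\to-\infty$ off neighbourhoods of the integers. This is the left-regime argument of Lemma~\ref{lem:fhatglobal}: a bounded Gamma quotient by the Beta bound, and $\cot$ bounded at distance $\delta$ from the even integers, with the pole terms subtracted. This gives normal convergence and hence a meromorphic $T$.

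\emph{Singularities of $T$.} Near an integer $n$, only finitely many summands are singular, one for each $k$ with $s-1-2k$ a pole. Separating those terms, $T$ is a finite sum of singular terms plus a normally convergent remainder, so its poles lie in $\mathbb Z$.
\begin{itemize}
\item At a negative odd $s$, the relevant $\sigma=s-1-2k$ are negative even integers, so $T$ has at most double poles there.
\item At a negative even $s$, every $\sigma=s-1-2k$ is a negative odd integer. Every summand therefore has a double zero, and so does the normally convergent sum.
\end{itemize}

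\emph{The difference relation.} $T(s+2)-T(s)=\widehat{\mathsf f}_\Gamma(s+1)$ is immediate by telescoping the absolutely convergent series.

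\emph{The estimate \eqref{eq:T-global}.} This is the one step needing care. Put $y=|\operatorname{Im}s|\ge1$. Lemma~\ref{lem:fhatglobal}(i) gives $|\widehat{\mathsf f}_\Gamma(s-1-2k)|\le C/((\operatorname{Re}s-1-2k)^2+y^2)$. Summing over $k$ is dominated by $\sum_{j\in\mathbb Z}C/(j^2+y^2)\le C'/y$, uniformly in $\operatorname{Re}s$.

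The main obstacle I expect is the uniformity in Lemma~\ref{lem:fhatglobal}(i) as $\operatorname{Re}\sigma\to-\infty$. That uniformity is exactly what the reflection and Beta-integral bound already provide, so the remaining work is just this comparison sum.
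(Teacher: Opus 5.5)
Your analysis of $\widehat{\mathsf f}_\Gamma$ itself is correct. Your principal-part estimate controls both the $(\sigma+2m)^{-2}$ and the $(\sigma+2m)^{-1}$ coefficients, so it is in fact more complete than the paper's. The estimate \eqref{eq:T-global} and the difference relation are argued as in the paper.

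The gap is in the singularity analysis of $T$. Your claim that near an integer $n$ only finitely many summands are singular is false for every odd $n$. Then $n-1-2k$ runs through all even integers $\le n-1$, and each of these that is $\le0$ is a pole of $\widehat{\mathsf f}_\Gamma$. At a negative odd $s$ every summand has a double pole, as your own bullet about negative odd $s$ in effect says. So there is no splitting into a finite singular part plus a normally convergent remainder. Consequently neither the meromorphy of $T$ at the odd integers nor the bound ``at most double poles at the negative odd integers'' follows from what you wrote. You derive the $O(m^{-2})$ principal-part bound, which the statement introduces with ``Consequently'', but you never use it; it is exactly the missing link.

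To close the gap, work near a negative odd $n$ and write each summand as its principal part $A_m(s-n)^{-2}+B_m(s-n)^{-1}$ (with $-2m=n-1-2k$) plus a holomorphic remainder.
\begin{itemize}
\item Summability of $|A_m|+|B_m|$ makes the principal parts add up to a function with at most a double pole at $n$.
\item The remainders converge uniformly on $|s-n|\le\delta$ by the maximum principle. On the circle $|s-n|=\delta$, both the summand and its principal part are $O(k^{-2})$: the summand by your left-regime bound, since $\cot\frac{\pi\sigma}{2}$ is bounded there.
\end{itemize}
This is the paper's route.

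A shorter alternative uses \eqref{eq:rhat-left} once more. Write $\widehat{\mathsf f}_\Gamma(\sigma)=h(\sigma)\cot^2\frac{\pi\sigma}2$ with $h(\sigma)=-\frac{2\sigma-1}{4\sigma(\sigma-1)}\bigl[\Gamma(\frac{1-\sigma}2)/\Gamma(1-\frac\sigma2)\bigr]^2$, which is holomorphic on $\operatorname{Re}\sigma<0$. Since $\cot^2\frac{\pi(s-1-2k)}2=\tan^2\frac{\pi s}2$ does not depend on $k$, you get $T(s)=\tan^2\frac{\pi s}2\sum_{k\ge0}h(s-1-2k)$ on $\operatorname{Re}s<1$. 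The sum there is normally convergent and holomorphic, by the same Beta-integral bound. This gives at once the at most double poles at negative odd $s$ and the double zeros at negative even $s$. For $\operatorname{Re}s\ge1$ only finitely many summands need separate treatment.
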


\begin{proof}
$\Gamma((1+\sigma)/2)^{-2}$ has double zeros at
$\sigma=-1,-3,\dots$; $\Gamma(\sigma/2)^{2}$ has double poles at
$\sigma=0,-2,-4,\dots$; combining with the elementary factor
$-(2\sigma-1)\sigma^{-1}\bigl(4\sigma(\sigma-1)\bigr)^{-1}\cdots$ of
\eqref{eq:fgammahat} gives the stated orders (at $\sigma=0$:
$\widehat{\mathsf r}\sim4/(\pi\sigma)$, so
$\widehat{\mathsf f}_\Gamma\sim-1/(\pi\sigma^3)$; at $\sigma=1$:
$\widehat{\mathsf r}(1)=\pi$).  At $\sigma=-2m$ the double-pole
coefficient of $\widehat{\mathsf r}$ is
$-8m\,\Gamma(m+\tfrac12)^2/\bigl(\pi^2(m!)^2\bigr)=O(1)$ by the
reflection formula, and the elementary factor contributes an extra
$O(m^{-2})$.  For fixed $R$, the terms of $T$ with poles in
$|s|\le R$ are those with $|s-1-2k|\le R+2m$-type coincidences,
i.e.\ their singular parts sit at odd integers in $[-R,R]$; the
principal-part coefficients are summable in $k$ by the previous
sentence, and away from small disks around the integers the terms are
dominated by Lemma~\ref{lem:fhatglobal}(i), which gives normal
convergence and meromorphy.  The double zeros of $T$ at $s=-2m$ hold
because every term is then evaluated at a negative odd integer, where
$\widehat{\mathsf f}_\Gamma$ has a double zero as a function of $s$.
For \eqref{eq:T-global}: with $\operatorname{Im}s=Y$, $|Y|\ge1$, all
lattice points $\sigma_k=s-1-2k$ have $|\operatorname{Im}\sigma_k|=|Y|$,
so by Lemma~\ref{lem:fhatglobal}(i),
$|T|\le C\sum_{k\ge0}\bigl(Y^2+(\operatorname{Re}s-1-2k)^2\bigr)^{-1}
\le C'/|Y|$ by comparison with
$\int_{-\infty}^{\infty}(Y^2+u^2)^{-1}du=\pi/|Y|$.  The difference
equation is immediate from the definition.
\end{proof}

\begin{proposition}[uniform expansion of the lattice sum]
\label{prop:latticetail}
Let $a_j$ be the coefficients of the formal series
$\widehat Q_\Gamma=(2\sinh D)^{-1}\widehat{\mathsf f}_\Gamma\in
s^{-1}\mathbb Q[[s^{-1}]]$ of \eqref{eq:Qgammahat}.  For every
$\delta>0$ and every $N$ there are $C_{\delta,N}$ and $s_0$ such that,
uniformly on the sector
$\operatorname{Im}s\ge\delta|s|$, $|s|\ge s_0$,
\[
 \Bigl|T(s)-\sum_{j=1}^{N}a_j\,s^{-j}\Bigr|
 \ \le\ C_{\delta,N}\,|s|^{-N-1}
 \ +\ C_\delta\,\frac{e^{-\pi\operatorname{Im}s}}{\operatorname{Im}s}.
\]
The mirror statement holds on $\operatorname{Im}s\le-\delta|s|$ with
the same (rational) coefficients, by Schwarz reflection.
\end{proposition}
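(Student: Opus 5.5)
The proof will follow the outline already given in the proof of Lemma~\ref{lem:vertical}(c). Fix $\delta>0$, $N$, and $s$ in the sector $\operatorname{Im}s\ge\delta|s|$ with $|s|$ large, and put $Y=\operatorname{Im}s$. We truncate the forcing expansion of Lemma~\ref{lem:fhatglobal}(ii) at a large order $M$, to be fixed in terms of $N$. This gives a decomposition $\widehat{\mathsf f}_\Gamma=\Phi_M+E_M$, with $\Phi_M(\sigma)=\sum_{j=2}^M\varphi_j\sigma^{-j}$ and
\[
|E_M(\sigma)|\le C_M\bigl(|\sigma|^{-M-1}+|\sigma|^{-2}e^{-\pi\operatorname{Im}\sigma}\bigr).
\]
Every lattice point $\sigma_k=s-1-2k$ has $\operatorname{Im}\sigma_k=Y\ge1$, so this bound applies uniformly along the whole lattice, including the left regime where $\operatorname{Re}\sigma_k\to-\infty$. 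The lattice sum then splits as
\[
T(s)=\sum_{j=2}^M\varphi_j\,\zeta_j(s)+\sum_{k\ge0}E_M(\sigma_k),\qquad \zeta_j(s)=\sum_{k\ge0}(s-1-2k)^{-j}=(-2)^{-j}\zeta\bigl(j,\tfrac{1-s}2\bigr).
\]

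\emph{The error sum.} We bound $\sum_kE_M(\sigma_k)$ in two pieces, as in Lemma~\ref{lem:fhatpoles}. First, comparing with $\int_{\mathbb R}(Y^2+u^2)^{-1}\,du=\pi/Y$, we get
\[
\sum_k|\sigma_k|^{-2}e^{-\pi Y}\le C\,e^{-\pi Y}/Y.
\]
Second, $\sum_k|\sigma_k|^{-M-1}\le C\,Y^{-M}$. On the sector $Y\ge\delta|s|$ this is $O_\delta(|s|^{-M})$, so the choice $M=N+1$ suffices. The $k\le|s|$ versus $k>|s|$ splitting is only needed to keep these constants explicit, because $|\sigma_k|\ge\max(Y,|\operatorname{Re}\sigma_k|)$ throughout.

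\emph{The Hurwitz terms.} The argument $w=(1-s)/2$ lies in a closed sector $|\arg w|\le\pi-\delta'$ for some $\delta'=\delta'(\delta)>0$, since $\operatorname{Im}w=-Y/2$ with $Y\ge\delta|s|$. On such a sector the Hurwitz zeta has the uniform Euler--Maclaurin expansion
\[
\zeta(j,w)\sim\frac{w^{1-j}}{j-1}+\frac{w^{-j}}2+\sum_i\frac{B_{2i}(j)_{2i-1}}{(2i)!}\,w^{-j-2i+1},
\]
valid for each $j\ge2$ with remainders $O(|s|^{-N-1})$ after finitely many terms. Re-expanding in $s^{-1}$ turns $\sum_{j\le M}\varphi_j\zeta_j(s)$ into a polynomial in $s^{-1}$ plus $O(|s|^{-N-1})$. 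The expansions involved contain no exponentially small terms, since the sector stays away from the negative real $w$-axis.

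\emph{Identifying the coefficients.} This is the step I expect to need the most care. The plan is to avoid computing anything and use uniqueness instead. Each $\zeta_j$ satisfies $\zeta_j(s+1)-\zeta_j(s-1)=s^{-j}$, and this difference equation is inherited by its asymptotic series, as an identity of formal series (shifting $s$ by $\pm1$ stays inside the sector). The resulting series lies in $s^{-1}\mathbb Q[[s^{-1}]]$ and has no constant term, because $\zeta(j,w)\to0$. Proposition~\ref{prop:formal-sinh} therefore identifies it with $(2\sinh D)^{-1}s^{-j}$. Summing against $\varphi_j$ gives $(2\sinh D)^{-1}\Phi_M$, which agrees with $\widehat Q_\Gamma$ through order $s^{-N}$ once $M\ge N+1$, since $(2\sinh D)^{-1}$ lowers the order by exactly one. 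Collecting the three estimates proves the bound. The mirror statement follows from $T(\bar s)=\overline{T(s)}$, which holds because $\widehat{\mathsf f}_\Gamma$ is real on the real axis. The main subtlety to check is that the constant in the Hurwitz remainder is uniform in $j\le M$ and over the sector; this holds because only finitely many $j$ occur.
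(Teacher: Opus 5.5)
Your proposal is correct and follows the paper's proof. You truncate the forcing by Lemma~\ref{lem:fhatglobal}(ii), bound the two error sums by comparison with $\int_{\mathbb R}(Y^2+u^2)^{-1}du$, expand the moment sums $\zeta_j$ (the paper's $\Lambda_j$) by the uniform Hurwitz expansion on $|\arg\frac{1-s}2|\le\pi-\delta'$, and identify the coefficients through the formal uniqueness of Proposition~\ref{prop:formal-sinh}. The only variation is in that last step: you apply uniqueness to each $\zeta_j$ separately via $\zeta_j(s+1)-\zeta_j(s-1)=s^{-j}$ and then sum by linearity, whereas the paper applies it once to $T$ via $T(s+2)-T(s)=\widehat{\mathsf f}_\Gamma(s+1)$; your version identifies each moment series completely and independently of the truncation orders $M,N$.
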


\begin{proof}
Fix $M\ge N+2$.  Insert Lemma~\ref{lem:fhatglobal}(ii) termwise:
\[
 T(s)=\sum_{j=2}^{M}\varphi_j\,\Lambda_j(s)+E_1+E_2,
 \qquad
 \Lambda_j(s):=\sum_{k\ge0}(s-1-2k)^{-j},
\]
with
$|E_1|\le C_M\sum_{k\ge0}|s-1-2k|^{-M-1}$ and
$|E_2|\le C\,e^{-\pi\operatorname{Im}s}
\sum_{k\ge0}|s-1-2k|^{-2}$.  For the error sums split at
$k\le|s|$ and $k>|s|$: the near terms number at most $|s|+1$ and each
is at most $(\operatorname{Im}s)^{-M-1}\le(\delta|s|)^{-M-1}$, giving
$\le2\delta^{-M-1}|s|^{-M}$; the far terms are dominated by
$\sum_{k>|s|}(2k-1-|s|)^{-M-1}\le C|s|^{-M}$.  Hence
$|E_1|\le C_{\delta,M}'|s|^{-M}$, and similarly
$|E_2|\le C_\delta\,e^{-\pi\operatorname{Im}s}/\operatorname{Im}s$ as
in \eqref{eq:T-global}.  Next, with $a=(1-s)/2$ (which satisfies
$|\arg a|\le\pi-\delta'$, $\delta'=\delta'(\delta)>0$, on the sector),
\[
 \Lambda_j(s)=(-2)^{-j}\,\zeta(j,a),
\]
and the Hurwitz zeta function has the classical uniform large-$a$
expansion \cite[\S25.11(xii)]{DLMF}
\[
 \zeta(j,a)=\frac{a^{1-j}}{j-1}+\frac{a^{-j}}2
 +\sum_{r=1}^{R}\binom{j+2r-2}{2r-1}\frac{B_{2r}}{2r}\,a^{-j-2r+1}
 +O_{j,R}\bigl(|a|^{-j-2R-1}\bigr)
\]
uniformly on $|\arg a|\le\pi-\delta'$.  Substituting these finitely
many expansions (each truncated at total order $N+1$ in $s^{-1}$) and
re-expanding $a=(1-s)/2$ in powers of $s^{-1}$ produces a fixed formal
series $\widehat T=\sum_j\hat a_js^{-j}$, with no constant term (the
leading contribution of $\Lambda_j$ is of order $s^{1-j}$ and
$\varphi_j$ starts at $j=2$), such that
$T(s)=\sum_{j\le N}\hat a_js^{-j}+O_{\delta,N}(|s|^{-N-1})
+O_\delta(e^{-\pi\operatorname{Im}s}/\operatorname{Im}s)$ on the
sector.  It remains to identify $\widehat T=\widehat Q_\Gamma$.  The
exact difference equation $T(s+2)-T(s)=\widehat{\mathsf f}_\Gamma(s+1)$
(Lemma~\ref{lem:fhatpoles}), the uniform expansion just proved (applied
at $s$ and $s+2$, both in the sector), and
Lemma~\ref{lem:fhatglobal}(ii) at $\sigma=s+1$ force the coefficientwise
identity $\widehat T(s+2)-\widehat T(s)
=\widehat{\mathsf f}_\Gamma$-series$(s+1)$; equivalently, in the
shifted variable $u=s+1$, $\widehat T(u+1)-\widehat T(u-1)$ equals the
series \eqref{eq:fgammahat} at $u$.  By
Proposition~\ref{prop:formal-sinh} the unique solution of this
equation in $s^{-1}\mathbb Q[[s^{-1}]]$ is $\widehat Q_\Gamma$, so
$\widehat T=\widehat Q_\Gamma$ and $\hat a_j=a_j$.
\end{proof}

Propositions~\ref{prop:bilateral} and~\ref{prop:latticetail} and
Lemma~\ref{lem:blocksasy} together prove Lemma~\ref{lem:vertical}, in
the uniform quantitative form stated there.

\subsection{The two-component relation and vertical decay of the
Kummer quotient}\label{app:sub-relation}

Recall from Corollary~\ref{cor:onefun} the two boundary components
\[
 \mathcal N^{\uparrow}(s)
 =-\frac{\psi'(s)}4
 +\frac{\mathsf S_2+i\pi\mathsf K_2}{2(\mathsf S_0+i\pi\mathsf K_0)},
 \qquad
 \mathcal N^{\downarrow}(s)
 =-\frac{\psi'(s)}4
 +\frac{\mathsf S_2-i\pi\mathsf K_2}{2(\mathsf S_0-i\pi\mathsf K_0)},
\]
meromorphic functions of $s$ with
$\mathcal N^{\uparrow}=N/D$ (the upper boundary determination),
$\mathcal N^{\downarrow}(s)=\overline{\mathcal N^{\uparrow}(\bar s)}$,
and $\mathcal N^{\downarrow}=\overline{N/D}$ on the real axis.

\begin{lemma}[the two-component relation]\label{lem:tworelation}
As an identity of meromorphic functions on $\mathbb C$,
\begin{equation}\label{eq:tworelation}
 \Bigl(\tan\frac{\pi s}2-i\Bigr)\mathcal N^{\uparrow}(s)
 +\Bigl(\tan\frac{\pi s}2+i\Bigr)\mathcal N^{\downarrow}(s)
 =\tan\frac{\pi s}2\,\Bigl(Q(s)-\frac{\psi'(s)}2\Bigr),
\end{equation}
with $Q=\mathsf K_2/\mathsf K_0$ as in Theorem~\ref{thm:quad}.
\end{lemma}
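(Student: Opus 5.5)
The identity is purely algebraic in the four blocks, so the plan is to substitute the definitions of $\mathcal N^{\uparrow}$ and $\mathcal N^{\downarrow}$ and simplify. The key input is Lemma~\ref{lem:quadeval} together with \eqref{eq:blocksclosed}: $\pi\mathsf K_0=\tan\frac{\pi s}2\,\mathsf S_0$, as meromorphic functions of $s$. This holds because both sides are explicit Gamma/trigonometric expressions agreeing on an open real interval.

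Abbreviate $\tau=\tan\frac{\pi s}2$. Then
\[
\mathsf S_0\pm i\pi\mathsf K_0=\mathsf S_0(1\pm i\tau),
\qquad
\tau\mp i=\mp i(1\pm i\tau).
\]
Hence
\[
(\tau-i)\,\frac{\mathsf S_2+i\pi\mathsf K_2}{2\mathsf S_0(1+i\tau)}
=-\frac{i(\mathsf S_2+i\pi\mathsf K_2)}{2\mathsf S_0},
\]
and
\[
(\tau+i)\,\frac{\mathsf S_2-i\pi\mathsf K_2}{2\mathsf S_0(1-i\tau)}
=\frac{i(\mathsf S_2-i\pi\mathsf K_2)}{2\mathsf S_0}.
\]
Their sum is $\pi\mathsf K_2/\mathsf S_0$, and the $\mathsf S_2$ terms cancel; this is why the unknown Ramanujan block $\mathsf S_2$ disappears. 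Writing $\pi/\mathsf S_0=\tau/\mathsf K_0$, the sum becomes $\tau\,\mathsf K_2/\mathsf K_0=\tau Q$.

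The $-\psi'(s)/4$ parts contribute $-\frac{\psi'}4\bigl[(\tau-i)+(\tau+i)\bigr]=-\tau\psi'/2$. Adding this to $\tau Q$ gives exactly the right-hand side of \eqref{eq:tworelation}.

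The remaining work is bookkeeping about meromorphy, and I expect this to be the only delicate point. First, $\mathsf K_2$ is defined by the absolutely convergent series of Theorem~\ref{thm:quad} only for $\operatorname{Re}s<\tfrac12$. Likewise, the split \eqref{eq:split} producing $\mathsf S_2$ and $\mathsf K_2$ holds initially off the lattice $2s\in\mathbb Z$. The plan is therefore to prove the identity on the open set $\operatorname{Re}s<\tfrac12$, $2s\notin\mathbb Z$, where all blocks are analytic and $\mathsf S_0(1\pm i\tau)\ne0$ generically. The identity then extends to $\mathbb C$ by the identity theorem. For the extension, $\mathcal N^{\uparrow}$ and $\mathcal N^{\downarrow}$ are regarded as meromorphic continuations: $\mathcal N^{\uparrow}$ through $N/D=\frac12\partial_\alpha^2F/F$ via \eqref{eq:ND-F}, and $\mathcal N^{\downarrow}$ through its Schwarz reflection. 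The function $Q$ is continued through the difference equation \eqref{eq:Qdiff}.

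I would also remark that the division by $\mathsf K_0$ and $\mathsf S_0$ is harmless. Their zeros are isolated, so identities of meromorphic functions survive clearing them.
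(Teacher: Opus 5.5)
Your proposal is correct and is the same computation as the paper's proof. You substitute $\tau=\tan\frac{\pi s}2=\pi\mathsf K_0/\mathsf S_0$ and use $(\tau\mp i)/(1\pm i\tau)=\mp i$, so the $\mathsf S_2$ terms cancel and the block part collapses to $\tau\mathsf K_2/\mathsf K_0=\tau Q$, while the $-\psi'/4$ coefficients sum to $2\tau$. Your meromorphic-continuation bookkeeping (proving the identity on $\operatorname{Re}s<\tfrac12$ and extending by the identity theorem) is a sensible supplement to what the paper leaves implicit.
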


\begin{proof}
Write $\tau=\tan\frac{\pi s}2=\pi\mathsf K_0/\mathsf S_0$
(Lemma~\ref{lem:quadeval}).  From $1+i\tau=i(\tau-i)$ and
$1-i\tau=-i(\tau+i)$,
\[
 \frac{\tau-i}{1+i\tau}=-i,\qquad
 \frac{\tau+i}{1-i\tau}=i .
\]
Hence
\[
 (\tau-i)\,\frac{\mathsf S_2+i\pi\mathsf K_2}{\mathsf S_0(1+i\tau)}
 +(\tau+i)\,\frac{\mathsf S_2-i\pi\mathsf K_2}{\mathsf S_0(1-i\tau)}
 =\frac{-i(\mathsf S_2+i\pi\mathsf K_2)+i(\mathsf S_2-i\pi\mathsf K_2)}
 {\mathsf S_0}
 =\frac{2\pi\mathsf K_2}{\mathsf S_0}
 =2\tau\,Q ,
\]
and adding the $-\psi'/4$ parts, whose coefficients sum to $2\tau$,
gives \eqref{eq:tworelation}.  (Code~CO verifies
\eqref{eq:tworelation} at four complex points to $40$ digits.)
\end{proof}

\begin{corollary}[vertical decay of the Kummer quotient]
\label{cor:Qvert}
For every $\Lambda>0$ there are $C(\Lambda)$ and $Y_1(\Lambda)$ with
\[
 |Q(s)|\le\frac{C(\Lambda)}{|\operatorname{Im}s|},
 \qquad |\operatorname{Re}s|\le\Lambda,\
 |\operatorname{Im}s|\ge Y_1(\Lambda),
\]
uniformly on the strip; in particular $Q\to0$ at both vertical ends of
every strip, uniformly in $\operatorname{Re}s$ on compact sets.
\end{corollary}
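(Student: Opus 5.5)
The plan is to solve the two-component relation \eqref{eq:tworelation} for $Q$ and bound every other term on a vertical strip. Away from the zeros of $\tan\frac{\pi s}2$,
\[
 Q(s)=\frac{\psi'(s)}2+\frac{(\tau-i)\mathcal N^{\uparrow}(s)+(\tau+i)\mathcal N^{\downarrow}(s)}{\tau},\qquad \tau=\tan\frac{\pi s}2 .
\]
By Lemma~\ref{lem:blocksasy}(ii), on $\operatorname{Im}s\ge Y$ we have $\tau=i+O(e^{-\pi\operatorname{Im}s})$, uniformly in $\operatorname{Re}s$. Hence $|\tau|\ge\frac12$ there, $|\tau-i|=O(e^{-\pi\operatorname{Im}s})$, and $|\tau+i|\le3$. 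By Lemma~\ref{lem:blocksasy}(i), $|\psi'(s)|\le C/|\operatorname{Im}s|$. It therefore suffices to show that $\mathcal N^{\downarrow}=O(1/|\operatorname{Im}s|)$ in the upper half-strip. We also need $\mathcal N^{\uparrow}$ to be at most polynomially (or even subexponentially) large there, so that the factor $e^{-\pi\operatorname{Im}s}$ kills it.

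For $\mathcal N^{\uparrow}$ in the lower half-strip and $\mathcal N^{\downarrow}$ in the upper half-strip, use Proposition~\ref{prop:bilateral}(iii). With $Z=1-2s$, a strip $|\operatorname{Re}s|\le\Lambda$, $|\operatorname{Im}s|\ge Y_1$ lies inside $\Omega_\rho$ for any fixed $\rho$, after enlarging $Y_1$. Proposition~\ref{prop:bilateral}(iii) is bilateral, so $\mathcal N^{\uparrow}=N/D$ satisfies $|\mathcal N^{\uparrow}(s)|\le |g_1||Z|^{-1}+C_1'|Z|^{-2}=O(1/|\operatorname{Im}s|)$ for \emph{both} signs of $\operatorname{Im}s$. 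Schwarz reflection $\mathcal N^{\downarrow}(s)=\overline{\mathcal N^{\uparrow}(\bar s)}$ then gives the same bound for $\mathcal N^{\downarrow}$ on both half-strips. So both components are $O(1/|\operatorname{Im}s|)$ throughout, which is stronger than needed. Substituting into the displayed formula gives
\[
 |Q|\le \frac{C}{|\operatorname{Im}s|}+2\bigl(O(e^{-\pi|\operatorname{Im}s|})+3\bigr)\frac{C}{|\operatorname{Im}s|},
\]
and the lower half-strip is handled symmetrically, with $\tau\to-i$.

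The step I expect to need care is the identification used in applying Proposition~\ref{prop:bilateral}. That proposition concerns the function $\frac12\partial_\alpha^2F(0;Z)/F(0;Z)$, and we must check that off the real axis it coincides with the meromorphic continuation $\mathcal N^{\uparrow}$ defined through the blocks $\mathsf S,\mathsf K$. This follows from \eqref{eq:ND-F} together with Proposition~\ref{prop:split} on the real segment. Both sides are analytic in $s$ off the discrete singular set (and $F(0;Z)\ne0$ for large $|\operatorname{Im}Z|$), so the identity theorem extends the equality to the strip. If the split formula only furnishes the equality on part of the real axis, this still suffices, since the identity theorem needs only a set with an accumulation point. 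A minor remaining issue is that $\tau$ has poles and zeros only on the real axis, so for $|\operatorname{Im}s|\ge Y_1\ge1$ the division by $\tau$ is harmless. With that settled, the constants depend only on $\Lambda$ through $Y_1$ and $\rho$, giving the uniform statement.
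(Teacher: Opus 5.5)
Your proposal is correct and follows the paper's proof essentially verbatim. You solve \eqref{eq:tworelation} for $Q$, control $\psi'$, $\tau\mp i$ and $1/\tau$ by Lemma~\ref{lem:blocksasy}, bound $\mathcal N^{\uparrow}$ by Proposition~\ref{prop:bilateral}(iii) with $N=1$ after mapping the strip into a cone $\Omega_\rho$, and transfer the bound to $\mathcal N^{\downarrow}$ by Schwarz reflection. Your extra identity-theorem remark, which identifies the block-defined $\mathcal N^{\uparrow}$ with $\tfrac12\partial_\alpha^2F(0;Z)/F(0;Z)$ off the real axis, is a detail the paper absorbs into \eqref{eq:ND-F}, where it is stated as an identity of functions.
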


\begin{proof}
Solve \eqref{eq:tworelation} for $Q$:
\[
 Q=\frac{\psi'(s)}2+\frac{(\tau-i)\mathcal N^{\uparrow}
 +(\tau+i)\mathcal N^{\downarrow}}{\tau}.
\]
The vertical strip $|\operatorname{Re}s|\le\Lambda$,
$|\operatorname{Im}s|\ge Y_1$ maps under $Z=1-2s$ into
$\Omega_{1/2}$ once $Y_1\ge2(2\Lambda+1)$, so
Proposition~\ref{prop:bilateral}(iii) with $N=1$ gives
$|\mathcal N^{\uparrow}(s)|\le C/|Z|\le C'/|\operatorname{Im}s|$ there;
the same bound holds for $\mathcal N^{\downarrow}(s)
=\overline{\mathcal N^{\uparrow}(\bar s)}$ since $\bar s$ lies in the
mirror strip.  By Lemma~\ref{lem:blocksasy},
$|\psi'(s)|\le C/|\operatorname{Im}s|$,
$|\tau\mp i|\le2e^{-\pi|\operatorname{Im}s|}$ and
$|\tau|\ge\tfrac12$ for $|\operatorname{Im}s|\ge1$.  The claim follows.
\end{proof}

\subsection{The classification lemma and the vanishing of the lattice
constant}\label{app:sub-czero}

\begin{lemma}[classification on the period cylinder]\label{lem:cylclass}
Let $P$ be meromorphic on $\mathbb C$, $2$-periodic, with all poles
contained in $\mathbb Z$, of order at most $p$ at the odd integers and
at most $q$ at the even integers, and suppose
$P(s)\to0$ as $\operatorname{Im}s\to+\infty$ and as
$\operatorname{Im}s\to-\infty$, uniformly for
$\operatorname{Re}s\in[0,2]$.  Then
\[
 P(s)=\frac{w\,P_1(w)}{(w+1)^p\,(w-1)^q},\qquad w=e^{i\pi s},
\]
with $P_1$ a polynomial of degree at most $p+q-2$ (and $P\equiv0$ when
$p+q\le1$).  In particular:
if $p\le1$ and $q=0$ then $P\equiv0$; if $p=2$ and $q=0$ then
$P\in\mathbb C\cdot\sec^2\frac{\pi s}2$.
\end{lemma}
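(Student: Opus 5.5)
The plan is to pass to the uniformising variable $w=e^{i\pi s}$, which maps the period cylinder $\mathbb C/2\mathbb Z$ biholomorphically onto $\mathbb C^\times$. Two-periodicity of $P$ means $P(s)=g(w)$ for a single-valued meromorphic function $g$ on $\mathbb C^\times$. Its poles lie only at $w=e^{i\pi n}$, i.e.\ at $w=-1$ (odd $n$, order at most $p$) and $w=1$ (even $n$, order at most $q$). The limit $\operatorname{Im}s\to+\infty$ corresponds to $w\to0$, and $\operatorname{Im}s\to-\infty$ to $w\to\infty$. Uniformity in $\operatorname{Re}s\in[0,2]$ is exactly uniformity in $\arg w$, so the hypotheses become: $g(w)\to0$ as $w\to0$ and as $w\to\infty$.

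First I will treat the origin. Since $g$ is bounded near $w=0$, Riemann's removable singularity theorem extends $g$ holomorphically to $0$ with $g(0)=0$; hence $g(w)=w\,h(w)$ with $h$ holomorphic at $0$. Next set
\[
  G(w):=(w+1)^p(w-1)^q\,g(w),
\]
which is holomorphic on $\mathbb C$ because the poles at $\pm1$ are cleared by the stated orders. At infinity, $g\to0$ gives $|G(w)|=o(|w|^{p+q})$. By Liouville's theorem (or Cauchy estimates), $G$ is then a polynomial of degree at most $p+q-1$. It also vanishes at $0$ since $g(0)=0$, so $G(w)=wP_1(w)$ with $\deg P_1\le p+q-2$. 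This is the displayed form. When $p+q\le1$ the degree bound is negative, which forces $P_1\equiv0$ and hence $P\equiv0$; this covers the case $p\le1$, $q=0$.

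Finally, for $p=2$ and $q=0$, $P_1$ is a constant $c$, so
\[
  P=\frac{c\,w}{(w+1)^2}.
\]
Since $(w+1)^2/w=w+2+w^{-1}=2+2\cos\pi s=4\cos^2\frac{\pi s}2$, this gives $P=\frac c4\sec^2\frac{\pi s}2$.

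The only point requiring care is the translation of the decay hypotheses into behaviour of $g$ at $0$ and $\infty$. Decay along vertical lines alone would not suffice; it is the uniformity over a full period in $\operatorname{Re}s$ that yields uniform decay on small and large circles in the $w$-plane. Given that uniformity, the argument is routine.
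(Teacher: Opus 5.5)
Your proof is correct and follows the paper's argument. You descend to $w=e^{i\pi s}$ on $\mathbb C^\times$, remove the singularity at $w=0$ with value $0$, and conclude that $(w+1)^p(w-1)^q\,g(w)$ is a polynomial of degree at most $p+q-1$ vanishing at $0$. The only cosmetic difference is at infinity: the paper removes the singularity at $w=\infty$ and uses rationality of meromorphic functions on the sphere, while you apply Liouville's theorem (Cauchy estimates) to the entire function $G=o(|w|^{p+q})$ — the same step in other words.
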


\begin{proof}
The map $s\mapsto w=e^{i\pi s}$ realises the cylinder
$\mathbb C/2\mathbb Z$ as $\mathbb C^*$; a $2$-periodic meromorphic
function descends to a meromorphic function $R(w)$ on
$\mathbb C^*$.  The poles of $P$ lie over $w=e^{i\pi\cdot(\text{odd})}
=-1$ and $w=+1$, with the stated orders, since $w\mp1$ vanishes to
first order in $s$ at the corresponding integers.  The end
$\operatorname{Im}s\to+\infty$ is $w\to0$ and the end
$\operatorname{Im}s\to-\infty$ is $w\to\infty$; the hypothesis says
precisely that $R(w)\to0$ as $w\to0$ and as $w\to\infty$ (uniform decay
on one period suffices, every $w$ near $0$ or $\infty$ having a
preimage with $\operatorname{Re}s\in[0,2]$).  Hence $R$ is bounded near
the punctures, the singularities at $0$ and $\infty$ are removable with
value $0$, and $R$ is meromorphic on the sphere, i.e.\ rational, with
poles confined to $\{\pm1\}$ of orders $\le p$, $\le q$.  Thus
$R=P_0(w)/((w+1)^p(w-1)^q)$ with $\deg P_0\le p+q$ (finiteness at
$\infty$), $\deg P_0\le p+q-1$ (zero at $\infty$) and $P_0(0)=0$ (zero
at $0$): $P_0=wP_1$, $\deg P_1\le p+q-2$.  For $p=2$, $q=0$ this is
$R=c\,w/(w+1)^2$ and $4w/(w+1)^2=\sec^2\frac{\pi s}2$.
\end{proof}

\begin{theorem}[the lattice constant vanishes]\label{thm:czero}
Define
\[
 C(s):=Q(s)-Q_{\rm rat}(s)+\cot\frac{\pi s}2\;T(s).
\]
Then $C\equiv0$.  Equivalently: in the quadrature decomposition
\eqref{eq:Qdecomp} the lattice constant vanishes,
$c(s_0)=0$ for \emph{every} admissible step-two lattice, real or
complex; and \eqref{eq:Edef} holds with $c(s_0)=0$.
\end{theorem}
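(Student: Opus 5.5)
The plan is to show that $C$ is a $2$-periodic meromorphic function with controlled poles that decays at both vertical ends. Once that is done, the classification Lemma~\ref{lem:cylclass} with $p\le1$, $q=0$ forces $C\equiv0$.

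\emph{Periodicity.} By Theorem~\ref{thm:quad}(i), $Q(s+1)-Q(s-1)=\mathsf f(s)=\mathsf f_{\rm rat}(s)+\mathsf f_\Gamma(s)$. By Corollary~\ref{cor:rational-quadrature}, $Q_{\rm rat}(s+1)-Q_{\rm rat}(s-1)=\mathsf f_{\rm rat}(s)$. For the third term, $\cot\frac{\pi s}2$ is $2$-periodic, and $T(s+1)-T(s-1)=\widehat{\mathsf f}_\Gamma(s)$ by Lemma~\ref{lem:fhatpoles}. Hence the central difference of $\cot\frac{\pi s}2\,T(s)$ is $\cot\frac{\pi s}2\,\widehat{\mathsf f}_\Gamma(s)$. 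Lemma~\ref{lem:rhat} gives $\cot\frac{\pi s}2\,\widehat{\mathsf f}_\Gamma(s)=\cot\frac{\pi s}2\tan\frac{\pi s}2\cdot\mathsf f_\Gamma(s)/\tan^2\frac{\pi s}{2}$, and I need to check the sign carefully here. Since $\mathsf f_\Gamma=\tan\frac{\pi s}2\,\widehat{\mathsf f}_\Gamma$, we get $\cot\frac{\pi s}2\,\widehat{\mathsf f}_\Gamma=\mathsf f_\Gamma/\tan^2$, which is not $\mathsf f_\Gamma$. The shift must therefore be handled as in Proposition~\ref{prop:gammaquadrature}: at the shifted argument $s\pm1$ the cotangent becomes $-\tan$. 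Concretely, $\cot\frac{\pi(s+1)}2=-\tan\frac{\pi s}2$, and I will use that the difference $T(s+1)-T(s-1)$ is evaluated with the common periodic factor $\cot\frac{\pi(s\pm1)}2=-\tan\frac{\pi s}{2}$. This gives the difference $-\tan\frac{\pi s}2\,\widehat{\mathsf f}_\Gamma(s)=-\mathsf f_\Gamma(s)$. With the sign in the definition of $C$, the three differences then cancel, so $C(s+1)=C(s-1)$ and $C$ is $2$-periodic. This bookkeeping of the half-period shift is routine but must match \eqref{eq:Qgammafactor} exactly.

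\emph{Poles.} $Q=\mathsf K_2/\mathsf K_0$ is meromorphic, and its poles lie in the zeros of $\mathsf K_0$ and the poles of $\mathsf K_2$. From the Kummer closed form \eqref{eq:blocksclosed} and the series for $\mathsf K_2$, these lie in $\mathbb Z$. $Q_{\rm rat}$ has double poles only at odd integers $s\ge1$ and even $s\ge2$, and $T$ has poles in $\mathbb Z$ as described in Lemma~\ref{lem:fhatpoles}. By periodicity it suffices to inspect one period, say $s$ near $0$ and near $-1$ (or $1$), where everything is explicit. The goal is to show that the principal parts cancel at even integers and leave at most a simple pole at odd ones. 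The tool is the double zeros of $T$ at negative even integers against the simple pole of $\cot$, together with the double pole of $Q_{\rm rat}$ matched by the Kummer quotient. I expect this local pole bookkeeping to be the main obstacle. If a double pole survives at the odd class, Lemma~\ref{lem:cylclass} only yields $C=c\sec^2\frac{\pi s}2$, and I would then pin $c=0$ by evaluating $C$ at one point. The natural choice is to compare the leading coefficient at $s=-1$ via the explicit blocks, or to use the numerical value from Remark~\ref{rem:latticeconstant} only as a check.

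\emph{Decay.} On a vertical strip $\operatorname{Re}s\in[0,2]$, Corollary~\ref{cor:Qvert} gives $Q=O(1/|\operatorname{Im}s|)$. Lemma~\ref{lem:blocksasy}(i) gives the same for $Q_{\rm rat}$. Lemma~\ref{lem:blocksasy}(ii) gives $\cot\frac{\pi s}2=O(1)$, and \eqref{eq:T-global} gives $T=O(1/|\operatorname{Im}s|)$. Hence $C\to0$ uniformly at both ends. Lemma~\ref{lem:cylclass} then gives $C\equiv0$, and restricting to a lattice $s_0-2k$ and comparing with \eqref{eq:quadrature}, \eqref{eq:Qdecomp} yields $c(s_0)=0$.
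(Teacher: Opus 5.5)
Your skeleton is the paper's: $2$-periodicity, pole control, vertical decay, then Lemma~\ref{lem:cylclass} with $p\le1$, $q=0$. The decay step is fine. The periodicity step is also correct once you drop the false intermediate claim that the central difference of $\cot\frac{\pi s}2\,T(s)$ equals $\cot\frac{\pi s}2\,\widehat{\mathsf f}_\Gamma(s)$. Your corrected version works: since $\cot\frac{\pi(s\pm1)}2=-\tan\frac{\pi s}2$, that difference is $-\mathsf f_\Gamma(s)$, which cancels against $\mathsf f-\mathsf f_{\rm rat}$.

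The gap is the pole-order step, which you leave as ``the main obstacle''. It is exactly the step that separates $C\equiv0$ from $C\in\mathbb C\cdot\sec^2\frac{\pi s}2$. Your fallback does not close it. There is no point at which $C$ is known exactly, and the numerical value of Remark~\ref{rem:latticeconstant} cannot enter a proof. Reading off the $(s+1)^{-2}$-coefficient of $C$ is the very local computation you deferred.

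The missing argument is short. Read the orders at \emph{nonpositive} representatives, where no cancellation between terms is needed. There $Q_{\rm rat}$ is analytic, since its double poles sit at $s=1,2,3,\dots$. Your remark about matching those poles against the Kummer quotient only matters at representatives such as $s=1$ or $s=2$, where genuine cancellations would be required.

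The ingredient you do not state is that $\mathsf K_2$ is analytic at every $s=-m$, $m\ge0$. The inner term $(s+m)^{-2}$ appears only for $n\ge m+1$, and there it is killed by the double zero of $((s)_n/n!)^2$. With this:
\begin{itemize}
\item \emph{Even class} (read at $s=-2m$, $m\ge1$, or equally at $s=0$). We have $\mathsf K_0(-2m)=(-1)^m\binom{2m}m\ne0$, so $Q$ is analytic, and $Q_{\rm rat}$ is analytic. The simple pole of $\cot\frac{\pi s}2$ is absorbed by the double zero of $T$ from Lemma~\ref{lem:fhatpoles}. Hence $q=0$.
\item \emph{Odd class} (read at $s=-(2m+1)$). $\mathsf K_0$ has only a simple zero, coming from $1/\Gamma(1+s)$, so $Q$ has at most a simple pole, and $Q_{\rm rat}$ is analytic. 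The simple zero of $\cot\frac{\pi s}2$ reduces the at most double pole of $T$ to at most a simple one. Hence $p\le1$.
\end{itemize}
Lemma~\ref{lem:cylclass} then gives $C\equiv0$ directly, with no constant to pin.

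A minor point: the series for $\mathsf K_2$ converges only for $\operatorname{Re}s<\frac12$. Meromorphy of $Q$ on all of $\mathbb C$ therefore comes from the recurrence \eqref{eq:K2rec}, or from the periodicity of $C$, not from the series itself.
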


\begin{proof}
\emph{$C$ is meromorphic with poles in $\mathbb Z$.}
$\mathsf K_0=\Gamma(1+\tfrac s2)/(\Gamma(1+s)\Gamma(1-\tfrac s2))$ is
entire, with zeros exactly at the negative integers and the positive
even integers; $\mathsf K_2$ is analytic on
$\operatorname{Re}s<\tfrac12$ (the defining series converges locally
uniformly there; at a nonpositive integer $s=-m$ the only singular
inner terms are those with $j=m$, present exactly for $n\ge m+1$, and
there the double zero of $((s)_n/n!)^2$ at $s=-m$ cancels the double
pole of $(s+m)^{-2}$, while the finitely many terms with $n\le m$
contain no factor $(s+m)^{-2}$: the series is analytic there), and extends meromorphically to $\mathbb C$ by the certified
recurrence \eqref{eq:K2rec}, whose coefficients vanish only at
integers.  Hence $Q=\mathsf K_2/\mathsf K_0$ is meromorphic with poles
in $\mathbb Z$; so are $Q_{\rm rat}$, $\cot\frac{\pi s}2$ and $T$
(Lemma~\ref{lem:fhatpoles}).

\emph{$C$ is $2$-periodic.}  By \eqref{eq:Qdiff} at $s+1$,
$Q(s+2)-Q(s)=\mathsf f(s+1)$; by the trigamma recurrence,
$Q_{\rm rat}(s+2)-Q_{\rm rat}(s)=\mathsf f_{\rm rat}(s+1)$; by
Lemma~\ref{lem:fhatpoles},
$T(s+2)-T(s)=\widehat{\mathsf f}_\Gamma(s+1)$; and
$\tan\frac{\pi(s+1)}2=-\cot\frac{\pi s}2$, so
$\mathsf f_\Gamma(s+1)
=-\cot\frac{\pi s}2\,\widehat{\mathsf f}_\Gamma(s+1)$
(Lemma~\ref{lem:rhat}).  Hence
\[
 C(s+2)-C(s)=\mathsf f(s+1)-\mathsf f_{\rm rat}(s+1)
 +\cot\frac{\pi s}2\,\widehat{\mathsf f}_\Gamma(s+1)
 =\mathsf f_\Gamma(s+1)
 +\cot\frac{\pi s}2\,\widehat{\mathsf f}_\Gamma(s+1)=0 .
\]

\emph{Pole orders.}  Because $C$ is $2$-periodic, the pole order on
each residue class may be read off at any one representative.  At
$s=-2m$, $m\ge1$: $\mathsf K_0(-2m)=(-1)^m\binom{2m}m\ne0$ and
$\mathsf K_2$ is analytic there, so $Q$ is analytic;
$Q_{\rm rat}$ is analytic (its trigamma arguments are
$\tfrac12+m$ and $1+m$); and $\cot\frac{\pi s}2$ has a simple pole
against the double zero of $T$ (Lemma~\ref{lem:fhatpoles}).  Hence $C$
is analytic on the whole even class.  At $s=-(2m+1)$:
$\mathsf K_0$ has a simple zero and $\mathsf K_2$ is analytic, so $Q$
has at most a simple pole; $Q_{\rm rat}$ is analytic; and
$\cot\frac{\pi s}2$ has a simple \emph{zero} against the at most double
pole of $T$.  Hence $C$ has at most a simple pole on the odd class.

\emph{Decay at the ends.}  On the strip
$\operatorname{Re}s\in[0,2]$, $|\operatorname{Im}s|\ge Y_1$:
$|Q|\le C/|\operatorname{Im}s|$ by Corollary~\ref{cor:Qvert};
$|Q_{\rm rat}|\le C/|\operatorname{Im}s|$ by
Lemma~\ref{lem:blocksasy}(i); $\cot\frac{\pi s}2$ is bounded and
$|T|\le C/|\operatorname{Im}s|$ by \eqref{eq:T-global}.  Hence
$C(s)\to0$ uniformly on the strip as
$\operatorname{Im}s\to\pm\infty$.

By Lemma~\ref{lem:cylclass} with $p=1$, $q=0$, $C\equiv0$.  On any
step-two lattice $\{s_0-2k\}$ avoiding the poles, comparison with
\eqref{eq:Qdecomp} and \eqref{eq:Qgammafactor} gives
$c(s_0)=C(s_0)=0$.  (Code~CK had found $c(s_0)=0$ to $40$ digits on
real lattices; Code~CO re-checks $C=0$ at complex points.)
\end{proof}

\begin{corollary}\label{cor:Edef-czero}
In Proposition~\ref{prop:osc-remainder} the remainder \eqref{eq:Edef}
is
$\mathcal E(s)=\tfrac12Q_{\rm rat}(s)-\operatorname{Re}\tfrac
ND-\tfrac14\psi'(s)$, with no lattice term; and the two-component
relation takes the closed form
\begin{equation}\label{eq:tworelation-X}
 \Bigl(\tan\frac{\pi s}2-i\Bigr)\mathcal N^{\uparrow}
 +\Bigl(\tan\frac{\pi s}2+i\Bigr)\mathcal N^{\downarrow}
 =-\,T(s)+\tan\frac{\pi s}2\,
 \Bigl(Q_{\rm rat}(s)-\frac{\psi'(s)}2\Bigr)=:X(s).
\end{equation}
\end{corollary}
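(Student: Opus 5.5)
The first assertion of Corollary~\ref{cor:Edef-czero} is just \eqref{eq:Edef} with the lattice term removed. Theorem~\ref{thm:czero} gives $c(s_0)=C(s_0)=0$ on every admissible lattice, so substituting this into \eqref{eq:Edef} leaves $\mathcal E=\tfrac12Q_{\rm rat}-\operatorname{Re}\frac ND-\tfrac14\psi'$. Nothing further is needed for this part.

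For the closed two-component relation \eqref{eq:tworelation-X}, I start from the exact relation \eqref{eq:tworelation} of Lemma~\ref{lem:tworelation}. Its right-hand side is $\tan\frac{\pi s}2\bigl(Q-\tfrac12\psi'\bigr)$. By Theorem~\ref{thm:czero}, $C\equiv0$ as a meromorphic function, which means
\[
 Q(s)=Q_{\rm rat}(s)-\cot\frac{\pi s}2\,T(s).
\]
Multiplying by $\tan\frac{\pi s}2$ and using $\tan\frac{\pi s}2\cot\frac{\pi s}2=1$ gives
\[
 \tan\frac{\pi s}2\,Q=\tan\frac{\pi s}2\,Q_{\rm rat}-T.
\]
Inserting this into \eqref{eq:tworelation} produces $X(s)$ exactly as displayed.

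The only point requiring care is that the product $\tan\cdot\cot=1$ holds as meromorphic functions. At the even integers, where $\cot\frac{\pi s}2$ has poles, the identity $C\equiv0$ already holds globally, so the resulting equality extends by the identity theorem. Pole cancellations therefore need no separate check.

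There is no real obstacle: the substantive input is $C\equiv0$, which is already proved. I would only remark that on the real axis $\mathcal N^{\uparrow}=N/D$, so $\mathcal E$ is expressed through $\operatorname{Re}(N/D)$. This is consistent with Proposition~\ref{prop:osc-remainder}, whose derivation becomes exact once $c(s_0)=0$.
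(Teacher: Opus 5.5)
Your proposal is correct and is the paper's argument: insert $Q=Q_{\rm rat}-\cot\frac{\pi s}2\,T$ (from $C\equiv0$, Theorem~\ref{thm:czero}) into \eqref{eq:tworelation} and use $\tan\frac{\pi s}2\cot\frac{\pi s}2=1$. The first assertion is, as you say, just \eqref{eq:Edef} with $c(s_0)=0$, and your remark on extending the meromorphic identity across the integers is harmless but not needed.
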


\begin{proof}
Insert $Q=Q_{\rm rat}-\cot\frac{\pi s}2\,T$
(Theorem~\ref{thm:czero}) into \eqref{eq:tworelation} and use
$\tan\cdot\cot=1$.
\end{proof}

\subsection{Completion of the proof of the closed formal tangent}
\label{app:sub-imclosed}

One formal lemma is still needed.  Let
$S_{\psi}(s)=s^{-1}+\tfrac12s^{-2}+\sum_{k\ge1}B_{2k}s^{-2k-1}$ denote
the Stirling series of $\psi'$.

\begin{lemma}[formal reflection parity of the Stirling series]
\label{lem:stirparity}
As an identity in $\mathbb Q[[s^{-1}]]$, with the left side re-expanded
in powers of $s^{-1}$,
\[
 S_\psi(1-s)=-S_\psi(s).
\]
\end{lemma}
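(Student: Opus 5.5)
The plan is to pass to the Borel (Watson) side, where the Stirling series of $\psi'$ has a simple kernel and the shift $s\mapsto 1-s$ acts by an exponential. Coefficientwise, $S_\psi(s)$ is the Watson expansion of
\[
 \int_0^\infty e^{-st}\,\frac{t}{1-e^{-t}}\,dt,
\]
since $t/(1-e^{-t})=\sum_{n\ge0}(-1)^nB_nt^n/n!$ with $B_1=-\tfrac12$. The $n$-th term then gives $(-1)^nB_n\,s^{-n-1}$, which reproduces $s^{-1}+\tfrac12s^{-2}+\sum_kB_{2k}s^{-2k-1}$. This is exact termwise, exactly as in the proof of Theorem~\ref{thm:realkernel}.

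For the left side we need the expansion of $S_\psi(1-s)$ in powers of $s^{-1}$. Writing $1-s=-s(1-1/s)$, each term $c_n(1-s)^{-n-1}=c_n(-1)^{n+1}s^{-n-1}(1-s^{-1})^{-n-1}$ re-expands into a series starting at order $s^{-n-1}$. The re-expansion is therefore a well-defined operation on $s^{-1}\mathbb Q[[s^{-1}]]$, since each coefficient is a finite sum.

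On the Borel side this operation is computed as follows. If $f(s)=\sum c_ns^{-n-1}$ has kernel $k(t)=\sum c_nt^n/n!$, then the formal series $f(-s)$ has kernel $-k(-t)$. Shifting $u\mapsto u+1$ multiplies the kernel by $e^{-t}$, which is the rule $\mathfrak B[f(Z+c)]=e^{-ct}\mathfrak Bf$ of Remark~\ref{rem:borel}, again termwise exact because the binomial re-expansion matches the product of Taylor series. Since $1-s=-(s-1)$, the kernel of $S_\psi(1-s)$ is $-e^{t}k(-t)$, where $k(t)=t/(1-e^{-t})$.

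It remains to compute this kernel:
\[
 -e^{t}k(-t)=-e^{t}\frac{-t}{1-e^{t}}=\frac{te^{t}}{1-e^{t}}=-\frac{t}{1-e^{-t}}=-k(t).
\]
Hence the kernels of $S_\psi(1-s)$ and $-S_\psi(s)$ agree, and by injectivity of the termwise Watson correspondence the two formal series agree.

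The only step needing care is the justification of the shift and reflection rules as identities of formal series rather than of integrals. I would verify them directly on monomials: the kernel of $(s-1)^{-n-1}$ is $e^{t}t^n/n!$, which is the binomial identity $\sum_m\binom{m}{n}s^{-m-1}$ on both sides. Linearity then extends the rule to infinite sums, because each output coefficient is a finite sum. Once that is settled, the computation of $-e^{t}k(-t)$ above is a one-line algebraic check.
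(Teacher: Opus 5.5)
Your proof is correct and follows the same Borel--Watson route as the paper: the kernel of $S_\psi$ is $t/(1-e^{-t})$, and shifts act on the kernel by exponentials. The difference is only in packaging. The paper uses the kernel just to derive the difference equation $S_\psi(s)-S_\psi(s-1)=-(s-1)^{-2}$, then applies the coefficient parity $S_\psi(-x)+S_\psi(x)=x^{-2}$ directly. You instead perform the reflection on the kernel side as well, which collapses both steps into the single identity $e^{t}k(-t)=k(t)$.
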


\begin{proof}
Work in the Borel (Watson) picture, in which
$\sum_nc_ns^{-n-1}$ corresponds to the kernel $\sum_nc_nt^n/n!$ and a
shift $s\mapsto s-a$ corresponds, exactly and coefficientwise, to
multiplication of the kernel by $e^{at}$.  The kernel of $S_\psi$ is
$\kappa(t)=t/(1-e^{-t})$.  First,
$\kappa(t)(1-e^{t})=-te^{t}$ is the kernel of $-\sum_{n\ge1}ns^{-n-1}
=-(s-1)^{-2}$, i.e.
\begin{equation}\label{eq:Spsi-rec}
 S_\psi(s)-S_\psi(s-1)=-(s-1)^{-2}
\end{equation}
coefficientwise.  Second, directly from the parity of the
coefficients, $S_\psi(-x)+S_\psi(x)=x^{-2}$.  Applying this at
$x=s-1$ and then \eqref{eq:Spsi-rec},
\[
 S_\psi(1-s)=-S_\psi(s-1)+(s-1)^{-2}
 =-\bigl[S_\psi(s)+(s-1)^{-2}\bigr]+(s-1)^{-2}=-S_\psi(s).
\]
(Code~CO verifies the identity in exact rational arithmetic to order
$40$.)
\end{proof}

\begin{proof}[Completed proof of Theorem~\ref{thm:imclosed}]
All expansions below are along the vertical line
$s=-\tfrac12+iY$, $Y\to-\infty$; recall that the coefficients of a
Poincar\'e expansion along a single ray are unique, so establishing
\eqref{eq:formal-tangent} there establishes it as an identity of formal
series.  The line lies, for $Z=1-2s$, in $\Omega_{1/2}$, and in the
sector $\operatorname{Im}s\le-\tfrac12|s|$ for $|Y|\ge1$.

By Proposition~\ref{prop:bilateral}(iii),
\begin{equation}\label{eq:match-1}
 \mathcal N^{\uparrow}(s)=\sum_{n=1}^Ng_nZ^{-n}+O(|Z|^{-N-1}),
\end{equation}
and $\mathcal N^{\downarrow}(s)
=\overline{\mathcal N^{\uparrow}(\bar s)}=O(|Z|^{-1})$ since $\bar s$
lies in the mirror line.  By Lemma~\ref{lem:blocksasy}(ii),
$\tan\frac{\pi s}2+i=O(e^{-\pi|Y|})$ and
$\tan\frac{\pi s}2-i=-2i+O(e^{-\pi|Y|})$.  The right side of
\eqref{eq:tworelation-X} is $O(1/|Y|)$ by \eqref{eq:T-global} and
Lemma~\ref{lem:blocksasy}(i).  Hence, solving
\eqref{eq:tworelation-X} for $\mathcal N^{\uparrow}$,
\begin{equation}\label{eq:match-2}
 \mathcal N^{\uparrow}(s)=\frac{X(s)}{-2i}+O\bigl(e^{-\pi|Y|}\bigr)
 =\frac i2X(s)+O\bigl(e^{-\pi|Y|}\bigr).
\end{equation}
Now expand $X$.  By the mirror statement of
Proposition~\ref{prop:latticetail} (with $\delta=\tfrac12$),
$T(s)=\sum_{j\le N}a_js^{-j}+O(|s|^{-N-1})+O(e^{-\pi|Y|})$, with
$\sum a_js^{-j}=\widehat Q_\Gamma$; by Lemma~\ref{lem:blocksasy}(i)
and Corollary~\ref{cor:rational-quadrature}, $Q_{\rm rat}$ and
$\psi'$ are asymptotic to their Stirling series, the former to the
unique formal solution of its central-difference equation; and
$\tan\frac{\pi s}2=-i+O(e^{-\pi|Y|})$.  Therefore
\[
 X(s)=-\widehat Q_\Gamma(s)
 -i\Bigl(Q_{\rm rat}^{\rm ser}(s)-\tfrac12S_\psi(s)\Bigr)
 +O(|s|^{-N-1})+O(e^{-\pi|Y|})
\]
in the obvious truncated sense, where $Q_{\rm rat}^{\rm ser}$ denotes
the (rational-coefficient) Stirling series of \eqref{eq:Qrat}.
Combining with \eqref{eq:match-1}--\eqref{eq:match-2} and equating
coefficients along the line,
\begin{equation}\label{eq:match-3}
 \sum_{n\ge1}g_nZ^{-n}
 =\frac12\,Q_{\rm rat}^{\rm ser}(s)-\frac14\,S_\psi(s)
 -\frac i2\,\widehat Q_\Gamma(s),
\end{equation}
all three series on the right being re-expanded in $Z^{-1}$ through the
exact substitution $s=(1-Z)/2$.  The three series have rational
coefficients, so the splitting of \eqref{eq:match-3} into real and
imaginary parts is coefficientwise:
$\operatorname{Im}g_n=-\tfrac12\widehat q_n$, which is
\eqref{eq:Imgclosed}, and
$\sum\operatorname{Re}g_nZ^{-n}
=\tfrac12Q_{\rm rat}^{\rm ser}-\tfrac14S_\psi(s)$.  For the real part,
$\tfrac12Q_{\rm rat}^{\rm ser}
=\tfrac1{16}\psi'\bigl(\tfrac{Z+1}4\bigr)
-\tfrac1{16}\psi'\bigl(\tfrac{Z+3}4\bigr)$ as Bernoulli series (the
arguments $(1-s)/2$ and $1-s/2$ equal $(Z+1)/4$ and $(Z+3)/4$), while
Lemma~\ref{lem:stirparity} gives
$-\tfrac14S_\psi(s)=\tfrac14S_\psi(1-s)
=\tfrac14\psi'\bigl(\tfrac{Z+1}2\bigr)$ as Bernoulli series
($1-s=(Z+1)/2$); so the real part of \eqref{eq:match-3} is exactly
\eqref{eq:realkernel}, in agreement with the independent kernel proof
of Theorem~\ref{thm:realkernel}.  This proves
\eqref{eq:formal-tangent} in all orders.
\end{proof}

\begin{remark}[what this proof uses, and what it does not]
\label{rem:whatitiuses}
The chain is: Lemma~\ref{lem:hankel} $\to$
Proposition~\ref{prop:bilateral} $\to$ Corollary~\ref{cor:Qvert}
$\to$ Theorem~\ref{thm:czero} $\to$
Corollary~\ref{cor:Edef-czero} $\to$ the matching above, together with
Proposition~\ref{prop:latticetail} and the two formal lemmas
(Proposition~\ref{prop:formal-sinh}, Lemma~\ref{lem:stirparity}).  It
does \emph{not} use Proposition~\ref{prop:stokesfun}: the closed
Stokes function of the real part is no longer load-bearing.  The
matching is performed in the \emph{lower} half-plane, where the factor
$\tan\frac{\pi s}2+i$ is exponentially small; the upper half-plane
gives, by the same computation, the expansion of
$\mathcal N^{\downarrow}$, which is the conjugate statement.
\end{remark}

\section{The verification code}\label{app:code}

This appendix reproduces, verbatim, the complete verification suite
described in the Note on verification; the same files are provided as
ancillary material (directory \texttt{code/}), together with two small
cached data files (\texttt{layers\_g.pkl}, \texttt{gcoef\_ref.pkl})
and the log \texttt{suite\_rev.log} of a complete passing run.  The
suite is executed by \texttt{python3 run\_all.py} from inside
\texttt{code/} (Python~3.9+; \texttt{sympy} for the symbolic
certificates, \texttt{mpmath} for the high-precision validations in CJ
and CK).  A Mathematica mirror of the principal exact certificates is
also included in the ancillary directory
(\texttt{checks\_mathematica.wl}, together with the exact tangent data
in \texttt{gcoef\_ref\_data.wl}); the Python suite remains the
certified reference.

The table below classifies every script.  \emph{Certificate} means an
exact check (rational, Gaussian-rational or symbolic arithmetic) of a
stated identity on the stated range; \emph{validation} means a
high-precision numerical check; \emph{mixed} means both occur in the
same script.  No entry of this table substitutes for an all-orders
proof: the theorems of the paper are proved in the text, and the
scripts verify them independently.

\begin{center}
\small
\begin{tabular}{@{}llll@{}}
\toprule
Script & Claim checked & Arithmetic & Role\\
\midrule
REF & reference tangent $g_n$, $n\le41$ & Gaussian-rational & certificate data\\
CA  & Theorem~\ref{thm:genladder} & symbolic & certificate\\
CB  & Theorem~\ref{thm:curvecontig}, Corollary~\ref{cor:Prec} & symbolic & certificate\\
CBb & Theorem~\ref{thm:ballot}, closed form & exact/symbolic & certificate\\
CG  & Theorem~\ref{thm:ballot}, proof identities & symbolic & certificate\\
CC  & Theorem~\ref{thm:twopar} & symbolic & certificate\\
CCb & Lemma~\ref{lem:KdF} & exact & certificate\\
CD  & Theorem~\ref{thm:layerexp} (WZ) & symbolic & certificate\\
CE  & tangent from layers $=$ REF & exact rational & certificate\\
CF  & valuation profiles & exact rational & data\\
CH  & factor laws, congruence data $r\le21$ & exact rational & certificate (range)\\
CI  & proof ingredients of Laws D/N & exact rational & certificate\\
CJ  & Appell reduction; branch $2+i0$ & exact $+$ multiprec. & mixed\\
CK  & boundary split, quadratures & symbolic $+$ $40$-digit & mixed\\
CL  & anchors, boundary transfer & exact $+$ $30$--$40$-digit & mixed\\
CM  & Theorem~\ref{thm:imclosed}, $n\le41$; Stokes function & exact $+$ numeric & mixed\\
CN  & arithmetic step; law for $m\le159$ & exact rational & certificate\\
CO  & Appendix~\ref{app:vertical} & exact $+$ multiprec. & validation\\
CP  & Proposition~\ref{prop:dictionary} (dictionary) & exact rational & certificate\\
\bottomrule
\end{tabular}
\end{center}

\subsection{The driver (REF + CA--CP)}
The driver; it runs every job below in order and reports any failure.

\smallskip\noindent \texttt{run\_all.py}:\par\nopagebreak
\begin{lstlisting}
#!/usr/bin/env python3
"""Verification suite: regenerates REF, then runs CA-CP."""
import os, subprocess, sys, time
HERE=os.path.dirname(os.path.abspath(__file__))
jobs=[
 ('REF','horn_reference.py',['41']),
 ('CA','ladder_general.py',[]), ('CB','curve_contiguity.py',[]),
 ('CBb','ballot_check.py',[]), ('CG','ballot_proof_cert.py',[]),
 ('CC','offdiag_ladder.py',[]), ('CCb','hyp_base.py',[]),
 ('CD','bc_layers.py',[]), ('CE','layers_tangent.py',['41']),
 ('CF','layers_valuation.py',['35']), ('CH','factor_laws2.py',['41']),
 ('CI','factor_laws_proof.py',['80']), ('CJ','appell_f3_reduction.py',[]),
 ('CK','boundary_split_quadrature.py',[]),
 ('CL','anchors_transfer_quadratures.py',[]),
 ('CM','formal_tangent_certificate.py',[]),
 ('CN','transverse_law_certificate.py',[]),
 ('CO','vertical_estimates_certificate.py',[]),
 ('CP','identification_check.py',[]),
]
failed=[]
for lab,script,args in jobs:
    print('='*72); print(f'[{lab}] {script} {" ".join(args)}'); print('='*72,flush=True)
    t=time.time()
    rc=subprocess.call([sys.executable,os.path.join(HERE,script)]+args,cwd=HERE)
    print(f'[{lab}] exit={rc}, elapsed={time.time()-t:.2f}s',flush=True)
    if rc: failed.append(lab)
print('='*72)
print('SUITE:', 'ALL PASSED' if not failed else f'FAILURES: {failed}')
sys.exit(1 if failed else 0)
\end{lstlisting}

\subsection{REF: the dual-number Horn reference engine}
Recomputes the reference tangent with the dual-number Horn engine.

\smallskip\noindent \texttt{horn\_reference.py}:\par\nopagebreak
\begin{lstlisting}
#!/usr/bin/env python3
"""Reference engine for the Legendre tangent (adapted from Code BK of the
main package: dual numbers over the integral logarithmic Horn recurrence).

Computes g_n = d/dA L_n(x0,y0; A-1/4, B-1/4)|_{A=B=0} exactly for n <= NMAX
and writes gcoef_ref.pkl next to this file.

Usage: python3 horn_reference.py [NMAX]      (default 41)
"""
import os
import pickle
import sys
from fractions import Fraction as F

NMAX = int(sys.argv[1]) if len(sys.argv) > 1 else 41
HERE = os.path.dirname(os.path.abspath(__file__))


def padd(p, q):
    r = dict(p)
    for k, v in q.items():
        s = r.get(k, 0) + v
        if s: r[k] = s
        elif k in r: del r[k]
    return r

def pmul(p, q):
    r = {}
    for (c1, d1, e1), v1 in p.items():
        for (c2, d2, e2), v2 in q.items():
            e = e1 + e2
            if e > 1: continue
            k = (c1+c2, d1+d2, e)
            s = r.get(k, 0) + v1*v2
            if s: r[k] = s
            elif k in r: del r[k]
    return r

def pscal(p, z):
    return {k: v*z for k, v in p.items()} if z else {}

def thx(p): return {k: v*k[0] for k, v in p.items() if k[0]}
def thy(p): return {k: v*k[1] for k, v in p.items() if k[1]}
def mulx(p): return {(c+1, d, e): v for (c, d, e), v in p.items()}
def muly(p): return {(c, d+1, e): v for (c, d, e), v in p.items()}


def legendre_horn(nmax):
    U = {0: {}, 1: {(1, 0, 0): -1, (1, 0, 1): 4}}
    V = {0: {}, 1: {(0, 1, 0): -1}}
    for n in range(1, nmax):
        Tn = padd(U[n], V[n])
        cu = thx(Tn); cv = thy(Tn)
        for a_ in range(1, n):
            b_ = n - a_
            Tb = padd(U[b_], V[b_])
            cu = padd(cu, pmul(U[a_], Tb))
            cv = padd(cv, pmul(V[a_], Tb))
        su = padd(thx(U[n]), U[n]); sv = padd(thy(V[n]), V[n])
        for a_ in range(1, n):
            b_ = n - a_
            su = padd(su, pmul(U[a_], U[b_]))
            sv = padd(sv, pmul(V[a_], V[b_]))
        U[n+1] = padd(pscal(cu, 4), pscal(mulx(su), -4))
        V[n+1] = padd(pscal(cv, 4), pscal(muly(sv), -4))
    return U, V


def gpow(c, d):
    re, im = 1, 0
    for _ in range(c):
        re, im = re-im, re+im
    for _ in range(d):
        re, im = re+im, im-re
    return re, im


def tangent(nmax):
    U, V = legendre_horn(nmax)
    g = {}
    for n in range(1, nmax+1):
        T = padd(U[n], V[n])
        re = F(0); im = F(0)
        for (c, d, e), val in T.items():
            if e != 1: continue
            R, I = gpow(c, d)
            den = F(val, 4**n * 2**(c+d) * (c+d))
            re += den*R; im += den*I
        g[n] = (re, im)
    return g


if __name__ == '__main__':
    g = tangent(NMAX)
    out = os.path.join(HERE, 'gcoef_ref.pkl')
    pickle.dump(g, open(out, 'wb'))
    print(f"wrote {out} with g_1..g_{NMAX}")
\end{lstlisting}

\subsection{CA: the ladder off the midpoint}
Verifies Theorem~\ref{thm:genladder} symbolically in $(p,q,s)$.

\smallskip\noindent \texttt{ladder\_general.py}:\par\nopagebreak
\begin{lstlisting}
#!/usr/bin/env python3
"""Off-midpoint ladder for the half-odd-integer family.

Bessel polynomials y_n, general p, q, s symbolic.  We test, for l <= LMAX:

  (S3-general)  s^2 g_l' + (sigma - 2 l s) g_l = Ct_l
      where g_l = y_l(ps) y_l(qs),
            Ct_l = y_{l-1}(ps) y_l(qs)/p + y_l(ps) y_{l-1}(qs)/q,
            sigma = 1/p + 1/q .

  (S4-general)  Ct_{l+2} = Ct_l + 2(2l+1) s g_l + rho (2l+3) s g_{l+1}
      where rho = p/q + q/p .

  (Mixed step)  Ct_{l+1} = rho (2l+1) s g_l + Dt_l,
                Dt_{l+1} = 2 (2l+1) s g_l + Ct_l,
      where Dt_l = y_l(ps) y_{l-1}(qs)/p + y_{l-1}(ps) y_l(qs)/q .

All identities should hold as polynomial identities in Q(p,q)[s].
"""
import sympy as sp

p, q, s = sp.symbols('p q s')

def ybp(n, x):
    if n < 0:
        return sp.Integer(1)   # y_{-1} = 1 by convention
    return sum(sp.Rational(sp.factorial(n+k), sp.factorial(n-k)*sp.factorial(k)*2**k)*x**k
               for k in range(n+1))

LMAX = 8
sigma = 1/p + 1/q
rho = p/q + q/p

def g(l):  return sp.expand(ybp(l, p*s)*ybp(l, q*s))
def Ct(l): return sp.expand(ybp(l-1, p*s)*ybp(l, q*s)/p + ybp(l, p*s)*ybp(l-1, q*s)/q)
def Dt(l): return sp.expand(ybp(l, p*s)*ybp(l-1, q*s)/p + ybp(l-1, p*s)*ybp(l, q*s)/q)

ok = True
for l in range(1, LMAX+1):
    lhs = sp.expand(s**2*sp.diff(g(l), s) + (sigma - 2*l*s)*g(l))
    d = sp.simplify(lhs - Ct(l))
    r = (d == 0)
    print(f"S3-general l={l}: {'OK' if r else 'FAIL '+str(d)}")
    ok &= r

for l in range(1, LMAX-1):
    lhs = Ct(l+2)
    rhs = sp.expand(Ct(l) + 2*(2*l+1)*s*g(l) + rho*(2*l+3)*s*g(l+1))
    d = sp.simplify(lhs - rhs)
    r = (d == 0)
    print(f"S4-general l={l}: {'OK' if r else 'FAIL '+str(d)}")
    ok &= r

for l in range(1, LMAX):
    d1 = sp.simplify(Ct(l+1) - sp.expand(rho*(2*l+1)*s*g(l) + Dt(l)))
    d2 = sp.simplify(Dt(l+1) - sp.expand(2*(2*l+1)*s*g(l) + Ct(l)))
    r = (d1 == 0) and (d2 == 0)
    print(f"mixed  l={l}: {'OK' if r else 'FAIL '+str((d1,d2))}")
    ok &= r

# midpoint check: at p=1+i, q=1-i (pq=2, sigma=1, rho=0) S4-general must reduce
# to the proved midpoint law C_{l+2}=C_l+(4l+2) s g_l with C = (pq/2)*Ct = Ct
I = sp.I
for l in range(1, 5):
    mid = lambda e: sp.expand(e.subs({p: 1+I, q: 1-I}))
    d = sp.simplify(mid(Ct(l+2)) - mid(Ct(l)) - (4*l+2)*s*mid(g(l)))
    print(f"midpoint reduction l={l}: {'OK' if d == 0 else 'FAIL'}")
    ok &= (d == 0)

print("ALL OK" if ok else "FAILURES PRESENT")
\end{lstlisting}

\subsection{CB: the physical-curve contiguity}
Verifies Theorem~\ref{thm:curvecontig} and Corollary~\ref{cor:Prec} with symbolic $\varpi$.

\smallskip\noindent \texttt{curve\_contiguity.py}:\par\nopagebreak
\begin{lstlisting}
#!/usr/bin/env python3
"""Code CB.  The diagonal half-odd-integer family on the physical curve.

On the physical curve the Horn weights are p = 2/(1-t), q = 2/(1+t),
t = e^{i theta}, so that

    p + q = p q = varpi := 4/(1-tau^2),   sigma = 1/p+1/q = 1,   rho = varpi - 2.

Verified here (exact arithmetic, varpi symbolic):

 (1) the three-term contiguity, for l <= LMAX-2:
     F_{l+2}(Z)(Z-2l-4) = F_l(Z)(Z+2l+2) + (varpi-2)(2l+3) F_{l+1}(Z)

 (2) the pole structure and the numerator recurrence: with
     F_l = P_l(Z;varpi) / prod_{j=1}^{l} (Z-2j),   P_l monic of degree l,
     P_0 = 1,  P_1 = Z + (varpi-2),
     P_{l+2}(Z) = (Z^2-(2l+2)^2) P_l(Z) + (varpi-2)(2l+3) P_{l+1}(Z).

At varpi = 2 (the midpoint t = i) the extra term vanishes and the family
collapses to the Gamma-quotient numerators S_l of the main paper.
"""
import sympy as sp

Z, varpi = sp.symbols('Z varpi')
p, q, s, w = sp.symbols('p q s w')

def ybp(n, x):
    if n < 0: return sp.Integer(1)
    return sum(sp.Rational(sp.factorial(n+k), sp.factorial(n-k)*sp.factorial(k)*2**k)*x**k
               for k in range(n+1))

LMAX = 6

def Gcurve(l):
    """coefficients of y_l(ps)y_l(qs), reduced on p+q = pq = pi."""
    prod = sp.expand(ybp(l, p*s)*ybp(l, q*s))
    poly = sp.Poly(prod, s)
    out = []
    for n in range(2*l+1):
        c = sp.expand(poly.coeff_monomial(s**n)).subs({p: (varpi+w)/2, q: (varpi-w)/2})
        cpoly = sp.Poly(sp.expand(c), w)
        acc = 0
        for (deg,), coef in cpoly.terms():
            assert deg % 2 == 0, "not symmetric in p,q"
            acc += coef*(varpi**2-4*varpi)**(deg//2)
        out.append(sp.expand(acc))
    return out

F = {}
for l in range(LMAX+1):
    G = Gcurve(l)
    F[l] = sum(G[n]/sp.prod([(Z-j) for j in range(1, n+1)]) for n in range(len(G)))

ok = True
for l in range(0, LMAX-1):
    d = sp.simplify(sp.together(F[l+2]*(Z-2*l-4) - F[l]*(Z+2*l+2) - (varpi-2)*(2*l+3)*F[l+1]))
    r = (d == 0); ok &= r
    print(f"contiguity l={l}: {'OK' if r else 'FAIL'}")

# numerator recurrence
P = {0: sp.Integer(1), 1: Z + varpi - 2}
for l in range(0, LMAX-1):
    P[l+2] = sp.expand((Z**2-(2*l+2)**2)*P[l] + (varpi-2)*(2*l+3)*P[l+1])
for l in range(LMAX+1):
    lhs = sp.cancel(sp.together(F[l]))
    rhs = sp.cancel(P[l]/sp.prod([(Z-2*j) for j in range(1, l+1)]))
    d = sp.simplify(lhs - rhs)
    r = (d == 0); ok &= r
    print(f"F_{l} = P_{l}/prod(Z-2j): {'OK' if r else 'FAIL'}")

print("ALL OK" if ok else "FAILURES")
\end{lstlisting}

\subsection{CBb: the ballot closed form}
Verifies the closed form of Theorem~\ref{thm:ballot} and the four-term identity on a regression grid.

\smallskip\noindent \texttt{ballot\_check.py}:\par\nopagebreak
\begin{lstlisting}
#!/usr/bin/env python3
"""Full check of the closed formula for the curve-family numerators:

P_l(Z; 2+u) = sum_{k=0}^{l} (2k-1)!! C(l+k,2k) u^k *
              sum_{j>=0} (-1)^j ( (l-k)! / (l-k-2j)! ) * (k/(2j+k)) C(2j+k,j) * S_{l-k-2j}(Z)

(k=0: only j=0 with weight 1), S_d the midpoint numerators
  S_{2m} = prod_{j=1}^m (Z^2-(4j-2)^2),   S_{2m+1} = Z prod_{j=1}^m (Z^2-16j^2).

Verified against the exact P_l computed from the Bessel-product generating
function on the physical curve, for l <= LMAX.
"""
import sympy as sp

Z, u, s, p, q, w, varpi = sp.symbols('Z u s p q w varpi')

def ybp(n, x):
    if n < 0: return sp.Integer(1)
    return sum(sp.Rational(sp.factorial(n+k), sp.factorial(n-k)*sp.factorial(k)*2**k)*x**k
               for k in range(n+1))

def Gcurve(l):
    prod = sp.expand(ybp(l, p*s)*ybp(l, q*s))
    poly = sp.Poly(prod, s)
    out = []
    for n in range(2*l+1):
        c = sp.expand(poly.coeff_monomial(s**n)).subs({p: (varpi+w)/2, q: (varpi-w)/2})
        cpoly = sp.Poly(sp.expand(c), w)
        acc = 0
        for (deg,), coef in cpoly.terms():
            acc += coef*(varpi**2-4*varpi)**(deg//2)
        out.append(sp.expand(acc))
    return out

def P_exact(l):
    G = Gcurve(l)
    N = 0
    for n in range(2*l+1):
        N += G[n]*sp.prod([(Z-j) for j in range(n+1, 2*l+1)])
    P = sp.cancel(sp.expand(N) / sp.prod([(Z-(2*j-1)) for j in range(1, l+1)]))
    return sp.expand(P.subs(varpi, u+2))

def dfact(n):
    r = 1
    while n > 1: r *= n; n -= 2
    return r

def S(d):
    if d < 0: return sp.Integer(0)
    if d % 2 == 0:
        return sp.expand(sp.prod([Z**2-(4*j-2)**2 for j in range(1, d//2+1)]))
    return sp.expand(Z*sp.prod([Z**2-16*j**2 for j in range(1, (d-1)//2+1)]))

def ballot(j, k):
    if j == 0: return sp.Integer(1)
    if k == 0: return sp.Integer(0)
    return sp.Rational(k, 2*j+k)*sp.binomial(2*j+k, j)

LMAX = 8
allok = True
for l in range(LMAX+1):
    Pex = P_exact(l)
    Pf = sp.Integer(0)
    for k in range(l+1):
        inner = sp.Integer(0)
        d = l-k
        for j in range(0, d//2+1):
            inner += (-1)**j * sp.Rational(sp.factorial(d), sp.factorial(d-2*j)) * ballot(j, k) * S(d-2*j)
        Pf += dfact(2*k-1)*sp.binomial(l+k, 2*k)*u**k*inner
    dlt = sp.expand(Pex - Pf)
    ok = (dlt == 0)
    allok &= ok
    print(f"l={l}: {'OK' if ok else 'FAIL '+str(dlt)}")
print("ALL OK" if allok else "FAILURES")
\end{lstlisting}

\subsection{CG: the ballot proof certificates}
Verifies the interior cubic and the boundary reductions from the proof of Theorem~\ref{thm:ballot} symbolically.

\smallskip\noindent \texttt{ballot\_proof\_cert.py}:\par\nopagebreak
\begin{lstlisting}
#!/usr/bin/env python3
"""Code CG.  Proof certificate for the ballot theorem.

The ballot closed form for the curve numerators P_l(Z;varpi) is equivalent,
after expanding both sides of the proved recurrence

    P_{l+2} = (Z^2-(2l+2)^2) P_l + (pi-2)(2l+3) P_{l+1}

in the (unique, triangular) basis { u^k S_e }, u = varpi-2, to the four-term
coefficient identity

  c(l+2,k,J) = c(l,k,J) - 4(k+2J-2)(2l+4-k-2J) c(l,k,J-1)
                        + (2l+3) c(l+1,k-1,J),
  c(l,k,j) = (2k-1)!! C(l+k,2k) (-1)^j (l-k)!/(l-k-2j)! * (k/(2j+k)) C(2j+k,j).

Dividing by c(l,k,J) and clearing denominators, this reduces (generic
indices) to ONE cubic polynomial identity, with D = l-k-2J:

  (l+k+1)(l+k+2)(2J+k-1) = (D+1)(D+2)(2J+k-1)
                           + 4J(J+k)(2l+4-k-2J) + 2(2l+3)(k-1)(J+k).

This code verifies (1) the cubic identity symbolically, (2) the interior
ratio reduction symbolically, (3) the two nontrivial boundary reductions
D=-2 and D=-1 symbolically, (4) the J=0 boundary identity symbolically, and
(5) the four-term identity on an exhaustive integer regression grid l <= 13.
The finite grid is a regression test, not a substitute for the all-orders
boundary proof now written in the manuscript.
"""
import sympy as sp
from fractions import Fraction as Fr
from math import comb, factorial

l, k, J, D = sp.symbols('l k J D')

# ---------------------------------------------------------------- (1)
lhs = (l+k+1)*(l+k+2)*(2*J+k-1)
rhs = (D+1)*(D+2)*(2*J+k-1) + 4*J*(J+k)*(2*l+4-k-2*J) + 2*(2*l+3)*(k-1)*(J+k)
poly = sp.expand((lhs - rhs).subs(D, l-k-2*J))
print("(1) cubic identity (0 expected):", sp.simplify(poly))

# ---------------------------------------------------------------- (2)
ratio_A = (l+k+1)*(l+k+2)/(((l-k-2*J)+1)*((l-k-2*J)+2))
ratio_B = -J*(J+k)/(((l-k-2*J)+1)*((l-k-2*J)+2)*(2*J+k-2)*(2*J+k-1))
ratio_C = 2*(k-1)*(J+k)/(((l-k-2*J)+1)*((l-k-2*J)+2)*(2*J+k-1))
expr = ratio_A - 1 + 4*(k+2*J-2)*(2*l+4-k-2*J)*ratio_B - (2*l+3)*ratio_C
print("(2) ratio reduction (0 expected):", sp.simplify(sp.together(expr)))


# ---------------------------------------------------------------- (3) all-orders boundary reductions
# D=-2 and D=-1: ratios are taken relative to c(l+2,k,J), exactly as in the
# manuscript.  The resulting RHS/LHS must be 1.
rB_m2 = -J/(2*(2*J+k-2)*(2*J+k-1)*(2*J+2*k-1))
rC_m2 = (k-1)/((2*J+k-1)*(2*J+2*k-1))
l_m2 = k + 2*J - 2
bd_m2 = sp.simplify(
    -4*(k+2*J-2)*(2*l_m2+4-k-2*J)*rB_m2
    + (2*l_m2+3)*rC_m2 - 1
)
print("(3a) boundary D=-2 (0 expected):", sp.factor(bd_m2))

rB_m1 = -J/(2*(2*J+k-2)*(2*J+k-1)*(2*J+2*k+1))
rC_m1 = (k-1)/((2*J+k-1)*(2*J+2*k+1))
l_m1 = k + 2*J - 1
bd_m1 = sp.simplify(
    -4*(k+2*J-2)*(2*l_m1+4-k-2*J)*rB_m1
    + (2*l_m1+3)*rC_m1 - 1
)
print("(3b) boundary D=-1 (0 expected):", sp.factor(bd_m1))

# J=0, k<=l.  Ratios are relative to c(l,k,0).
rA0 = (l+k+1)*(l+k+2)/((l-k+1)*(l-k+2))
rC0 = 2*k/((l-k+1)*(l-k+2))
bd_J0 = sp.simplify(rA0 - 1 - (2*l+3)*rC0)
print("(3c) boundary J=0 (0 expected):", sp.factor(bd_J0))

# ---------------------------------------------------------------- (4) finite regression grid

def dfact(n):
    r = 1
    while n > 1: r *= n; n -= 2
    return r

def beta(j, kk):
    if j == 0: return Fr(1)
    if kk == 0: return Fr(0)
    return Fr(kk, 2*j+kk)*comb(2*j+kk, j)

def c(lv, kv, jv):
    if kv < 0 or jv < 0 or kv > lv: return Fr(0)
    d = lv - kv
    if d - 2*jv < 0: return Fr(0)
    return (Fr(dfact(2*kv-1))*comb(lv+kv, 2*kv)*(-1)**jv
            * Fr(factorial(d), factorial(d-2*jv))*beta(jv, kv))

bad = 0
for lv in range(0, 14):
    for kv in range(0, lv+3):
        for Jv in range(0, (lv+3)//2 + 1):
            lhsv = c(lv+2, kv, Jv)
            rhsv = (c(lv, kv, Jv) - 4*(kv+2*Jv-2)*(2*lv+4-kv-2*Jv)*c(lv, kv, Jv-1)
                    + (2*lv+3)*c(lv+1, kv-1, Jv))
            if lhsv != rhsv:
                bad += 1
print("(4) exhaustive regression grid l<=13:", "OK" if bad == 0 else f"{bad} FAILURES")
print("ALL OK" if bad == 0 else "FAILURES")
\end{lstlisting}

\subsection{CC: the two-parameter family off the diagonal}
Verifies Theorem~\ref{thm:twopar}.

\smallskip\noindent \texttt{offdiag\_ladder.py}:\par\nopagebreak
\begin{lstlisting}
#!/usr/bin/env python3
"""Two-parameter half-odd-integer family AT THE MIDPOINT p=1+i, q=1-i.

  g_{l,m}(s) = y_l(ps) y_m(qs),   F_{l,m}(Z) = sum_n G_n e_n(Z).

Claims to verify (exact Gaussian-rational arithmetic):

 (L1)  s^2 g' + (1-(l+m)s) g = Ct_{l,m},
       Ct_{l,m} = y_{l-1}(ps)y_m(qs)/p + y_l(ps)y_{m-1}(qs)/q
 (L2)  Ct_{l+1,m+1} = Dt_{l,m} + 2i(l-m) s g_{l,m}
       Dt_{l+1,m+1} = Ct_{l,m} + 2(l+m+1) s g_{l,m},
       Dt_{l,m} = y_{l+ ...}  -- Dt_{l,m} = y_l(ps)y_{m-1}(qs)/p + y_{l-1}(ps)y_m(qs)/q
 (L3)  Ct_{l+2,m+2} = Ct_{l,m} + 2(l+m+1) s g_{l,m} + 2i(l-m) s g_{l+1,m+1}
 (F)   F_{l+2,m+2}(Z)(Z-l-m-4) = F_{l,m}(Z)(Z+l+m+2) + 2i(l-m) F_{l+1,m+1}(Z)
       as identities of rational functions of Z.
"""
import sympy as sp

s, Z = sp.symbols('s Z')
I = sp.I
p = 1 + I
q = 1 - I

def ybp(n, x):
    if n < 0: return sp.Integer(1)
    return sum(sp.Rational(sp.factorial(n+k), sp.factorial(n-k)*sp.factorial(k)*2**k)*x**k
               for k in range(n+1))

def g(l, m):  return sp.expand(ybp(l, p*s)*ybp(m, q*s))
def Ct(l, m): return sp.expand(ybp(l-1, p*s)*ybp(m, q*s)/p + ybp(l, p*s)*ybp(m-1, q*s)/q)
def Dt(l, m): return sp.expand(ybp(l, p*s)*ybp(m-1, q*s)/p + ybp(l-1, p*s)*ybp(m, q*s)/q)

LM = 6
ok = True
for l in range(0, LM):
    for m in range(0, LM):
        if l == 0 and m == 0: continue
        lhs = sp.expand(s**2*sp.diff(g(l,m), s) + (1-(l+m)*s)*g(l,m))
        d = sp.expand(lhs - Ct(l,m))
        r = (d == 0); ok &= r
        if not r: print(f"L1 FAIL {l},{m}: {d}")
print("L1 done", ok)

for l in range(0, LM):
    for m in range(0, LM):
        d1 = sp.expand(Ct(l+1,m+1) - Dt(l,m) - 2*I*(l-m)*s*g(l,m))
        d2 = sp.expand(Dt(l+1,m+1) - Ct(l,m) - 2*(l+m+1)*s*g(l,m))
        r = (d1 == 0) and (d2 == 0); ok &= r
        if not r: print(f"L2 FAIL {l},{m}: {d1}, {d2}")
print("L2 done", ok)

for l in range(0, LM-1):
    for m in range(0, LM-1):
        d = sp.expand(Ct(l+2,m+2) - Ct(l,m) - 2*(l+m+1)*s*g(l,m) - 2*I*(l-m)*s*g(l+1,m+1))
        r = (d == 0); ok &= r
        if not r: print(f"L3 FAIL {l},{m}: {d}")
print("L3 done", ok)

def F(l, m):
    G = sp.Poly(g(l,m), s).all_coeffs()[::-1]
    tot = sp.Integer(0)
    den = sp.Integer(1)
    out = sp.Integer(1)  # n=0 term, G_0=1
    acc = 0
    e = sp.Integer(1)
    Fs = sp.Integer(0)
    for n, Gn in enumerate(G):
        if n == 0:
            Fs += Gn
        else:
            e = e/(Z-n) if n == 1 else e/(Z-n)
            # rebuild each time to avoid mutation issues
    # simpler: direct
    Fs = sp.Integer(0)
    for n, Gn in enumerate(G):
        den_n = sp.prod([(Z-j) for j in range(1, n+1)])
        Fs += Gn/den_n
    return sp.cancel(sp.together(Fs))

for l in range(0, LM-1):
    for m in range(0, LM-1):
        lhs = F(l+2,m+2)*(Z-l-m-4)
        rhs = F(l,m)*(Z+l+m+2) + 2*I*(l-m)*F(l+1,m+1)
        d = sp.simplify(sp.cancel(sp.together(lhs - rhs)))
        r = (d == 0); ok &= r
        print(f"F-contiguity ({l},{m}): {'OK' if r else 'FAIL '+str(d)}")

print("ALL OK" if ok else "FAILURES")
\end{lstlisting}

\subsection{CCb: the Kamp\'e de F\'eriet base lemma}
Verifies Lemma~\ref{lem:KdF}.

\smallskip\noindent \texttt{hyp\_base.py}:\par\nopagebreak
\begin{lstlisting}
#!/usr/bin/env python3
"""Code CC-base.  The one-sided family is a terminating Gauss function.

  F_{d,0}(Z) = 2F1(-d, d+1; 1-Z; x0),   x0 = (1+i)/2,

checked as an identity of rational functions of Z for d <= 6, in exact
Gaussian-rational arithmetic.
"""
import sympy as sp

Z = sp.symbols('Z')
I = sp.I
x0 = sp.Rational(1, 2)*(1+I)

def ybc(n, k):
    return sp.Rational(sp.factorial(n+k), sp.factorial(n-k)*sp.factorial(k)*2**k)

def F_d0(d):
    p = 1+I
    tot = sp.Integer(0)
    for n in range(d+1):
        tot += ybc(d, n)*p**n/sp.prod([(Z-t) for t in range(1, n+1)])
    return sp.cancel(sp.together(tot))

def hyp(d):
    tot = sp.Integer(0)
    for k in range(d+1):
        tot += sp.rf(-d, k)*sp.rf(d+1, k)/(sp.rf(1-Z, k)*sp.factorial(k))*x0**k
    return sp.cancel(sp.together(tot))

ok = True
for d in range(7):
    dd = sp.simplify(F_d0(d) - hyp(d))
    r = (dd == 0); ok &= r
    print(f"d={d}: {'OK' if r else 'FAIL'}")
print("ALL OK" if ok else "FAILURES")
\end{lstlisting}

\subsection{CD: the conjugate-layer expansion and the WZ certificate}
Verifies Theorem~\ref{thm:layerexp}, the pure identity \eqref{eq:pure} and the WZ certificate.

\smallskip\noindent \texttt{bc\_layers.py}:\par\nopagebreak
\begin{lstlisting}
#!/usr/bin/env python3
"""Code CD.  The conjugate-layer (Burchnall--Chaundy type) factorisation.

 (1) triangular derivation of the layer coefficients with generic symbolic
     parameters a, b, a', b', c, and consistency on all monomials mu,nu <= 5:

     sum_{mu,nu} (a)mu (b)mu (a')nu (b')nu / ((c)_{mu+nu} mu! nu!) x^mu y^nu
       = sum_r  (-1)^r (a)_r(b)_r(a')_r(b')_r (xy)^r
                / ( r! (c)_{2r} (c+r-1)_r )
                * 2F1(a+r,b+r;c+2r;x) * 2F1(a'+r,b'+r;c+2r;y)

 (2) the pure-c reduction (the parameter dependence cancels termwise):

     sum_r (-1)^r / ( r!(c)_{2r}(c+r-1)_r (c+2r)_{mu-r}(c+2r)_{nu-r}
                      (mu-r)!(nu-r)! )  =  1/((c)_{mu+nu} mu! nu!)

     verified symbolically in c for mu,nu <= 6.

 (3) the WZ certificate proving (2) for ALL mu,nu:
       R(r) = - r (c+nu+r-1) / ( (c+2r-1)(mu+1-r) ),
     with  F(mu+1,r)-F(mu,r) = G(r+1)-G(r),  G = R F(mu,.),
     verified as an identity of rational functions, and the telescoping
     boundary  G(min(mu+1,nu)+1) = 0  verified in all parity cases.
     Base case mu = 0 is the single term r = 0.  Symmetry in (mu,nu)
     completes the induction.  Hence (1) holds for all mu, nu.
"""
import sympy as sp

a, b, ap, bp, c, nu, mu, r = sp.symbols('a b a_p b_p c nu mu r')

def poch(z, k):
    return sp.rf(z, k)

def pochn(z, k):
    out = sp.Integer(1)
    for j in range(k):
        out *= (z+j)
    return out

# ---------------------------------------------------------------- (1)
def lhs_coeff(m_, n_):
    return (pochn(a, m_)*pochn(b, m_)*pochn(ap, n_)*pochn(bp, n_)
            / (pochn(c, m_+n_)*sp.factorial(m_)*sp.factorial(n_)))

def omega_coeff(r_, m_, n_):
    if m_ < r_ or n_ < r_: return sp.Integer(0)
    i = m_ - r_; j = n_ - r_
    return (pochn(a+r_, i)*pochn(b+r_, i)/(pochn(c+2*r_, i)*sp.factorial(i))
            * pochn(ap+r_, j)*pochn(bp+r_, j)/(pochn(c+2*r_, j)*sp.factorial(j)))

def c_layer(r_):
    return (sp.Integer(-1)**r_ * pochn(a, r_)*pochn(b, r_)*pochn(ap, r_)*pochn(bp, r_)
            / (sp.factorial(r_)*pochn(c, 2*r_)*pochn(c+r_-1, r_)))

ok = True
for m_ in range(0, 6):
    for n_ in range(0, 6):
        L = lhs_coeff(m_, n_)
        R_ = sum(c_layer(t)*omega_coeff(t, m_, n_) for t in range(min(m_, n_)+1))
        d = sp.simplify(sp.together(L - R_))
        ok &= (d == 0)
        if d != 0: print(f"(1) FAIL ({m_},{n_}): {d}")
print("(1) layer expansion, generic parameters, mu,nu<=5:", "OK" if ok else "FAIL")

# ---------------------------------------------------------------- (2)
def T(rv, m_, n_):
    return sp.Integer(-1)**rv / (sp.factorial(rv)*pochn(c, 2*rv)*pochn(c+rv-1, rv)
        * pochn(c+2*rv, m_-rv)*pochn(c+2*rv, n_-rv)*sp.factorial(m_-rv)*sp.factorial(n_-rv))

ok2 = True
for m_ in range(0, 7):
    for n_ in range(0, 7):
        s_ = sum(T(rv, m_, n_) for rv in range(min(m_, n_)+1))
        d = sp.simplify(s_ - 1/(pochn(c, m_+n_)*sp.factorial(m_)*sp.factorial(n_)))
        ok2 &= (d == 0)
        if d != 0: print(f"(2) FAIL ({m_},{n_})")
print("(2) pure-c identity, mu,nu<=6:", "OK" if ok2 else "FAIL")

# ---------------------------------------------------------------- (3)
a_r = (c+mu+nu)*(mu+1)/((c+mu+r)*(mu+1-r))
rho = -(mu-r)*(nu-r)*(c+r-1)*(c+2*r+1)/((r+1)*(c+mu+r)*(c+nu+r)*(c+2*r-1))
R = -r*(c+nu+r-1)/((c+2*r-1)*(mu+1-r))
resid = sp.simplify(sp.together(a_r - 1 - (R.subs(r, r+1)*rho - R)))
print("(3) WZ certificate rational identity:", "OK" if resid == 0 else f"FAIL {resid}")

ok3 = True
for muv, nuv in [(5,3),(3,5),(2,6),(4,5),(4,4),(0,3),(3,0),(6,2),(1,1),(0,0)]:
    Fg = (sp.Integer(-1)**r * poch(c, muv+nuv) * sp.factorial(muv)*sp.factorial(nuv)
        / (sp.gamma(r+1)*poch(c, 2*r)*poch(c+r-1, r)*poch(c+2*r, muv-r)*poch(c+2*r, nuv-r)
           * sp.gamma(muv-r+1)*sp.gamma(nuv-r+1)))
    G = (R.subs([(mu, muv), (nu, nuv)]))*Fg
    K = min(muv+1, nuv)
    val = sp.limit(sp.simplify(G), r, K+1)
    val0 = sp.limit(sp.simplify(G), r, 0)
    good = (sp.simplify(val) == 0) and (sp.simplify(val0) == 0)
    ok3 &= good
    if not good: print(f"(3) boundary FAIL at ({muv},{nuv})")
print("(3) telescoping boundaries:", "OK" if ok3 else "FAIL")

print("ALL OK" if (ok and ok2 and resid == 0 and ok3) else "FAILURES")
\end{lstlisting}

\subsection{CE: the tangent rebuilt from the layers}
Rebuilds the tangent from \eqref{eq:tangentformula} and checks full agreement with REF and with the closed forms.

\smallskip\noindent \texttt{layers\_tangent.py}:\par\nopagebreak
\begin{lstlisting}
#!/usr/bin/env python3
"""Code CE.  The Legendre tangent from the conjugate-layer factorisation.

    F(alpha,beta;Z) = sum_r rho_r(Z) P_r(A;Z) Pbar_r(B;Z),   A=alpha^2, B=beta^2,

    rho_r(Z)  = (-1)^r (x0 y0)^r / ( r! (1-Z)_{2r} (-Z+r)_r )
              = v^{3r} / (2^r r!) * prod_{j=1}^{2r}(1-jv)^{-1}
                                  * prod_{j=r}^{2r-1}(1-jv)^{-1},    v = 1/Z,
    P_r(A;Z)  = prod_{j<r}((j+1/2)^2 - A)
                * sum_k prod_{j<k}((r+j+1/2)^2 - A) x0^k/k! / (1-Z+2r)_k ,
    Pbar_r    = the same series at y0 = conj(x0).

Then, with everything at A = B = 0 (the Legendre point),

    g_n = [v^n] ( sum_r rho_r P'_r(0) Pbar_r(0) ) / ( sum_r rho_r P_r(0) Pbar_r(0) ),

and the code checks, for n <= N:
  * Re g_{2k+1} = (1-2^{2k-1}) B_{2k},  Re g_{2k} = (2k-1)E_{2k-2}/2  (Identity R),
  * Im g_{2k} = 0,
  * full agreement (Re and Im) with the independent Horn reference engine
    (horn_reference.py) if gcoef_ref.pkl is present.

Usage: python3 layers_tangent.py [N]        (default 41)
"""
import os, pickle, sys
from fractions import Fraction as Fr
from math import comb, factorial

N = int(sys.argv[1]) if len(sys.argv) > 1 else 41
M = N + 1
HERE = os.path.dirname(os.path.abspath(__file__))

# ---------- Gaussian-rational truncated series in v
def S0():   return [[Fr(0), Fr(0)] for _ in range(M)]
def Sone():
    s = S0(); s[0][0] = Fr(1); return s

def sadd(a, b):
    return [[a[i][0]+b[i][0], a[i][1]+b[i][1]] for i in range(M)]

def smul(a, b):
    c = S0()
    for i in range(M):
        ar, ai = a[i]
        if ar == 0 and ai == 0: continue
        for j in range(M - i):
            br, bi = b[j]
            if br == 0 and bi == 0: continue
            c[i+j][0] += ar*br - ai*bi
            c[i+j][1] += ar*bi + ai*br
    return c

def sscal(a, zr, zi=Fr(0)):
    return [[a[i][0]*zr - a[i][1]*zi, a[i][0]*zi + a[i][1]*zr] for i in range(M)]

def sshift(a, k):
    c = S0()
    for i in range(M-k):
        c[i+k] = [a[i][0], a[i][1]]
    return c

def sinv(a):
    den = a[0][0]**2 + a[0][1]**2
    assert den != 0
    c = S0()
    c[0] = [a[0][0]/den, -a[0][1]/den]
    for n in range(1, M):
        sr = Fr(0); si = Fr(0)
        for k in range(1, n+1):
            ar, ai = a[k]; br, bi = c[n-k]
            sr += ar*br - ai*bi
            si += ar*bi + ai*br
        ir, ii = c[0]
        c[n] = [-(ir*sr - ii*si), -(ir*si + ii*sr)]
    return c

def geom(cc):
    """1/(1 - cc*v)"""
    s = S0(); pw = Fr(1)
    for i in range(M):
        s[i][0] = pw; pw *= cc
    return s

x0 = (Fr(1, 2), Fr(1, 2))
y0 = (Fr(1, 2), Fr(-1, 2))

def dscal_poly(P, c0, c1):
    """multiply dual series (val, d/dA) by (c0 + c1*A)|_{A=0} dual"""
    return (sscal(P[0], c0), sadd(sscal(P[1], c0), sscal(P[0], c1)))

def dmul(P, Q):
    return (smul(P[0], Q[0]), sadd(smul(P[0], Q[1]), smul(P[1], Q[0])))

def P_layer(r, conj=False):
    """dual series (P_r(0), P_r'(0)) at argument x0 (or y0)."""
    xr, xi = (y0 if conj else x0)
    pref = (Sone(), S0())
    for j in range(r):
        pref = dscal_poly(pref, (Fr(1, 2)+j)**2, Fr(-1))
    total = (S0(), S0())
    cur = (Sone(), S0())
    total = (sadd(total[0], cur[0]), sadd(total[1], cur[1]))
    for k in range(1, M):
        cur = dscal_poly(cur, (Fr(1, 2)+r+k-1)**2, Fr(-1))
        g = geom(Fr(1+2*r+(k-1)))
        cur = (smul(cur[0], g), smul(cur[1], g))
        cur = (sshift(cur[0], 1), sshift(cur[1], 1))
        cur = (sscal(cur[0], Fr(-1, k)), sscal(cur[1], Fr(-1, k)))
        cur = (sscal(cur[0], xr, xi), sscal(cur[1], xr, xi))
        total = (sadd(total[0], cur[0]), sadd(total[1], cur[1]))
    return dmul(pref, total)

def rho(r):
    s = Sone()
    for j in range(1, 2*r+1):
        s = smul(s, geom(Fr(j)))
    for j in range(r, 2*r):
        s = smul(s, geom(Fr(j)))
    return sscal(sshift(s, 3*r), Fr(1, 2**r * factorial(r)))

def bern(Nb):
    A = [Fr(0)]*(Nb+1); B = [Fr(0)]*(Nb+1)
    for m in range(Nb+1):
        A[m] = Fr(1, m+1)
        for j in range(m, 0, -1):
            A[j-1] = j*(A[j-1]-A[j])
        B[m] = A[0]
    return B

def euler(Ne):
    E = [Fr(0)]*(Ne+1); E[0] = Fr(1)
    for n in range(2, Ne+1, 2):
        E[n] = -sum(Fr(comb(n, k))*E[k] for k in range(0, n, 2))
    return E

def main():
    Num = S0(); Den = S0()
    for r in range(N//3 + 1):
        Px = P_layer(r, conj=False)
        Py = P_layer(r, conj=True)
        rh = rho(r)
        Den = sadd(Den, smul(rh, smul(Px[0], Py[0])))
        Num = sadd(Num, smul(rh, smul(Px[1], Py[0])))
    G = smul(Num, sinv(Den))

    B = bern(N+3); E = euler(N+3)
    okR = True
    for n in range(1, N+1):
        if n % 2 == 1:
            expect = (1 - Fr(2)**(n-2))*B[n-1] if n > 1 else Fr(1, 2)
        else:
            expect = Fr(n-1)*E[n-2]/2
        if G[n][0] != expect:
            okR = False
            print(f"  Re g_{n} MISMATCH")
    print(f"[1] Re part = Bernoulli/Euler closed form (Identity R), n<={N}:", okR)
    okI0 = all(G[n][1] == 0 for n in range(2, N+1, 2))
    print("[2] Im g_even = 0:", okI0)

    ref = os.path.join(HERE, 'gcoef_ref.pkl')
    if os.path.exists(ref):
        old = pickle.load(open(ref, 'rb'))
        common = [n for n in old if n <= N]
        ok = all(old[n][0] == G[n][0] and old[n][1] == G[n][1] for n in common)
        print(f"[3] full agreement with Horn reference engine, n<={max(common)}:", ok)
    else:
        print("[3] gcoef_ref.pkl not found; run horn_reference.py first")

    out = {n: (G[n][0], G[n][1]) for n in range(1, N+1)}
    pickle.dump(out, open(os.path.join(HERE, 'layers_g.pkl'), 'wb'))
    print("wrote layers_g.pkl")

if __name__ == '__main__':
    main()
\end{lstlisting}

\subsection{CF: the valuation profiles}
Measures the valuation profiles.

\smallskip\noindent \texttt{layers\_valuation.py}:\par\nopagebreak
\begin{lstlisting}
#!/usr/bin/env python3
"""Code CF.  2-adic valuation profiles of the conjugate-layer decomposition.

Measures, for odd n:
  * nu2 of the layer-r contribution to [v^n] ImNum,  ImNum = sum_r rho_r Im[P'_r Pbar_r];
  * nu2([v^n] Im G0) and nu2([v^n] Im (G-G0)) for the log split
        G = d/dA log F|_0 = G0 + Gcorr,  G0 = d/dA log P_0|_0 ;
  * nu2(Im g_n) against the conjectural sharp law -H_r, r=(n+1)/2,
        H_r = r + nu2((r-1)!).

Finding: the layer and log splits do NOT localise the valuation (each piece
sits far below the target and the target emerges only in the full ratio),
in exact agreement with obstruction (1) of the main document.  The
cancellation of Problem W is thereby concentrated in the single explicit
ratio  Im(Num)/Den  of layer data.

Usage: python3 layers_valuation.py [N]      (default 35)
"""
import sys
from fractions import Fraction as Fr
from math import factorial

import layers_tangent as LT

N = int(sys.argv[1]) if len(sys.argv) > 1 else 35
LT.M = N + 1

def v2(x):
    if x == 0: return None
    n, d = x.numerator, x.denominator
    c = 0
    while n % 2 == 0: n //= 2; c += 1
    while d % 2 == 0: d //= 2; c -= 1
    return c

def H(r):
    v = 0; f = factorial(r-1)
    while f % 2 == 0: f //= 2; v += 1
    return r + v

layers = []
Den = LT.S0(); Num = LT.S0()
for r in range(N//3 + 1):
    Px = LT.P_layer(r, conj=False)
    Py = LT.P_layer(r, conj=True)
    rh = LT.rho(r)
    contrib = LT.smul(rh, LT.smul(Px[1], Py[0]))
    layers.append(contrib)
    Den = LT.sadd(Den, LT.smul(rh, LT.smul(Px[0], Py[0])))
    Num = LT.sadd(Num, contrib)
G = LT.smul(Num, LT.sinv(Den))
P0 = LT.P_layer(0, conj=False)
G0 = LT.smul(P0[1], LT.sinv(P0[0]))

print(" n  -H_r  nu2(ImNum) minlayer  nu2(ImG0)  nu2(ImGcorr)  nu2(Im g_n)  sharp?")
allsharp = True
for n in range(1, N+1, 2):
    t = -H((n+1)//2)
    tot = sum((L[n][1] for L in layers), Fr(0))
    prof = [(r, v2(L[n][1])) for r, L in enumerate(layers) if L[n][1] != 0]
    vmin = min([v for _, v in prof], default=None)
    corr = G[n][1] - G0[n][1]
    sharp = (v2(G[n][1]) == t)
    allsharp &= sharp
    print(f"{n:3d} {t:5d} {str(v2(tot)):>9} {str(vmin):>8} {str(v2(G0[n][1])):>10}"
          f" {str(v2(corr)):>12} {str(v2(G[n][1])):>11}   {sharp}")
print("sharp law nu2(Im g_{2r-1}) = -H_r on the whole computed range:", allsharp)
\end{lstlisting}

\subsection{CH: the four valuation laws and the congruence data}
Checks Corollary~\ref{cor:denexp}, Laws~N, N$'$, D, D$'$ and the congruence data.

\smallskip\noindent \texttt{factor\_laws2.py}:\par\nopagebreak
\begin{lstlisting}
#!/usr/bin/env python3
"""Code CH.  The factor laws, the explicit denominator, and the congruence
form of the dyadic law (historically open; now proved in all orders).

With Num, Den the layer data of the tangent (Code CE conventions):

  Idn    :  Den = exp(Lambda),  Lambda_n = L_n^{diag}(0)
            = 4^n/(n(n+1)) [2 B_{n+1}(3/4) - B_{n+1}(1) - B_{n+1}(1/2)]
            (the PROVED diagonal closed form at alpha = 0)  -- exact check.

  Law N  :  nu2( ImNum_{2r-1} )      = -(5r-4+nu2((r-1)!))     r >= 1
  Law N' :  nu2( ImNum_{2m} )        = -(5m-2+nu2((m-1)!))     m >= 1
  Law D  :  nu2( (Den^{pm1})_{2k} )  = -(5k+nu2(k!))           k >= 1
  Law D' :  nu2( (Den^{pm1})_{2k+1}) = -(5k+2+nu2(k!))         k >= 0
            [proved in v3; this script remains an independent exact check]

  Congruence form: granting Laws N, N', D, D', the full convolution terms
      c_{r,j} := 2^{5r-4+nu2((r-1)!)} ImNum_j (Den^{-1})_{2r-1-j},
      j = 1..2r-1,
  are 2-adic integers with
      nu2(c_{r,2s-1}) = nu2( C(r-1,s-1) ),
      nu2(c_{r,2s})   = 1 + nu2( (r-s) C(r-1,s-1) ),
  The denominator theorem already gives divisibility of the sum by
  2^(4(r-1)); the sharp statement is that the normalised quotient
  C_r is odd -- historically the open parity target, now proved in all
  orders (Theorem "resolution of the transverse law").  This code
  verifies the termwise valuations and that parity on the computed range.

Usage: python3 factor_laws2.py [N]     (default 41)
"""
import sys
from fractions import Fraction as Fr
from math import comb, factorial

import layers_tangent as LT

N = int(sys.argv[1]) if len(sys.argv) > 1 else 41
LT.M = N + 1
FAIL = 0

def v2(x):
    if x == 0: return None
    n, d = x.numerator, x.denominator
    c = 0
    while n % 2 == 0: n //= 2; c += 1
    while d % 2 == 0: d //= 2; c -= 1
    return c

def v2i(n):
    c = 0
    while n % 2 == 0: n //= 2; c += 1
    return c

Num = LT.S0(); Den = LT.S0()
for r in range(N//3 + 1):
    Px = LT.P_layer(r, conj=False)
    Py = LT.P_layer(r, conj=True)
    rh = LT.rho(r)
    Den = LT.sadd(Den, LT.smul(rh, LT.smul(Px[0], Py[0])))
    Num = LT.sadd(Num, LT.smul(rh, LT.smul(Px[1], Py[0])))
Deninv = LT.sinv(Den)

# ---------------------------------------------------------------- Idn
import sympy as sp
lam = [Fr(0)]*(N+1)
for n in range(1, N+1):
    val = (sp.Rational(4)**n/(n*(n+1))
           * (2*sp.bernoulli(n+1, sp.Rational(3, 4))
              - sp.bernoulli(n+1, 1) - sp.bernoulli(n+1, sp.Rational(1, 2))))
    lam[n] = Fr(int(sp.numer(val)), int(sp.denom(val)))
expL = [Fr(0)]*(N+1); expL[0] = Fr(1)
for n in range(1, N+1):
    expL[n] = sum(j*lam[j]*expL[n-j] for j in range(1, n+1))/n
okI = all(Den[n][0] == expL[n] and Den[n][1] == 0 for n in range(N+1))
print("[Idn]  Den = exp(Lambda), proved diagonal closed form, n<=%d: %s" % (N, okI))
FAIL += (not okI)

# ---------------------------------------------------------------- Laws
def check(name, pairs):
    global FAIL
    ok = all(g == t for g, t in pairs)
    print(f"[{name}] holds on computed range: {ok}")
    FAIL += (not ok)

check("Law N ", [(v2(Num[2*r-1][1]), -(5*r-4+v2i(factorial(r-1))))
                 for r in range(1, (N+1)//2+1)])
check("Law N'", [(v2(Num[2*m][1]), -(5*m-2+v2i(factorial(m-1))))
                 for m in range(1, N//2+1)])
check("Law D ", [(v2(Deninv[2*k][0]), -(5*k+v2i(factorial(k))))
                 for k in range(1, N//2+1)])
check("Law D'", [(v2(Deninv[2*k+1][0]), -(5*k+2+v2i(factorial(k))))
                 for k in range(0, (N-1)//2+1)])
check("Law D  (Den itself)", [(v2(Den[2*k][0]), -(5*k+v2i(factorial(k))))
                 for k in range(1, N//2+1)])
check("Law D' (Den itself)", [(v2(Den[2*k+1][0]), -(5*k+2+v2i(factorial(k))))
                 for k in range(0, (N-1)//2+1)])

# ---------------------------------------------------------------- congruence
okC = True
for r in range(1, (N+1)//2 + 1):
    scale = Fr(2)**(5*r-4 + v2i(factorial(r-1)))
    tot = Fr(0)
    for j in range(1, 2*r):
        c_rj = Num[j][1]*Deninv[2*r-1-j][0]*scale
        tot += c_rj
        if j % 2 == 1:
            s = (j+1)//2
            tv = v2i(comb(r-1, s-1))
        else:
            s = j//2
            tv = 1 + v2i((r-s)*comb(r-1, s-1))
        if c_rj != 0 and v2(c_rj) != tv:
            okC = False
            print(f"  term valuation FAIL r={r},j={j}: {v2(c_rj)} vs {tv}")
    if v2(tot) != 4*(r-1):
        okC = False
        print(f"  depth FAIL r={r}: nu2(sum)={v2(tot)} vs {4*(r-1)}")
print("[Congruence] termwise valuations and depth nu2(sum_j c_{r,j}) = 4(r-1),"
      " r<=%d: %s" % ((N+1)//2, okC))
FAIL += (not okC)

print("ALL OK" if FAIL == 0 else f"{FAIL} FAILURES")
sys.exit(1 if FAIL else 0)
\end{lstlisting}

\subsection{CI: the denominator-law proofs and convolution gaps}
Verifies the closed formulas for the coefficients of $\Lambda$, the exact D/D$'$ laws and the convolution valuation gaps.

\smallskip\noindent \texttt{factor\_laws\_proof.py}:\par\nopagebreak
\begin{lstlisting}
#!/usr/bin/env python3
"""Code CI. Exact checks for the all-order proof ingredients added in v3.

This is a *proof companion*, not a substitute for the analytic argument.
It checks on a configurable exact range:

  (I)  the closed formulas for lambda_n = [v^n] Lambda against the Bernoulli-
       polynomial formula from Theorem 3.1 at alpha=0;
  (II) nu_2(lambda_1)=-2 and nu_2(lambda_n)>=-n (n>=2);
  (III) the coefficients of exp(+/- Lambda) have
        nu_2(d_n^pm) = -2n-nu_2(n!), hence Laws D,D';
  (IV) for small n, enumeration of every exponential partition confirms that
       the all-1 partition is the unique 2-adically dominant contribution;
  (V)  the exact convolution-gap identities used to deduce Laws N,N' from
       the denominator lower bound nu_2(h_s)>=-H_s;
  (VI) if gcoef_ref.pkl is present (created by horn_reference.py), the
       remaining parity target C_r=2^H_r Im(g_{2r-1}) == 1 (mod 2) is checked
       on the available range.

Usage: python3 factor_laws_proof.py [N]   (default 80)
"""
from fractions import Fraction as Fr
from math import factorial, comb
import os, pickle, sys
import sympy as sp

N = int(sys.argv[1]) if len(sys.argv) > 1 else 80
HERE = os.path.dirname(os.path.abspath(__file__))


def v2q(x):
    if x == 0:
        return None
    n, d = x.numerator, x.denominator
    c = 0
    while n % 2 == 0:
        n //= 2; c += 1
    while d % 2 == 0:
        d //= 2; c -= 1
    return c


def v2i(n):
    if n == 0:
        return None
    c = 0
    while n % 2 == 0:
        n //= 2; c += 1
    return c


def vf(n):
    return v2q(Fr(factorial(n), 1))


def euler_numbers(nmax):
    E = [Fr(0)] * (nmax + 1)
    E[0] = Fr(1)
    for n in range(2, nmax + 1, 2):
        E[n] = -sum(Fr(comb(n, k)) * E[k] for k in range(0, n, 2))
    return E

E = euler_numbers(N + 2)


def lambda_closed(n):
    if n % 2 == 0:
        k = n // 2
        return E[n] / (4 * k)
    k = (n + 1) // 2
    B = sp.bernoulli(2*k)
    val = -(2**(2*k)-1)*B / ((2*k)*(2*k-1))
    return Fr(int(sp.numer(val)), int(sp.denom(val)))


def lambda_bernoulli_poly(n):
    val = (sp.Rational(4)**n / (n*(n+1))
           * (2*sp.bernoulli(n+1, sp.Rational(3,4))
              - sp.bernoulli(n+1, 1)
              - sp.bernoulli(n+1, sp.Rational(1,2))))
    return Fr(int(sp.numer(val)), int(sp.denom(val)))


# I-II
ok_formula = True
ok_bound = True
lam = [Fr(0)] * (N + 1)
for n in range(1, N + 1):
    a = lambda_closed(n)
    b = lambda_bernoulli_poly(n)
    lam[n] = a
    if a != b:
        ok_formula = False
        print(f"lambda formula FAIL n={n}: {a} != {b}")
    k = (n + 1)//2 if n % 2 else n//2
    target = -2 - v2i(k)
    if v2q(a) != target:
        ok_bound = False
        print(f"lambda valuation FAIL n={n}: {v2q(a)} != {target}")
    if n >= 2 and v2q(a) < -n:
        ok_bound = False
        print(f"lambda lower bound FAIL n={n}")
print(f"[I] lambda closed formulas vs Bernoulli-polynomial formula, n<={N}: {ok_formula}")
print(f"[II] lambda valuations and nu2(lambda_n)>=-n for n>=2: {ok_bound}")


# III: exp recurrence: n d_n = sum_{j=1}^n j*(+/-lambda_j)*d_{n-j}
def exp_coeff(sign):
    d = [Fr(0)] * (N + 1)
    d[0] = Fr(1)
    for n in range(1, N + 1):
        d[n] = sum(Fr(j) * sign * lam[j] * d[n-j] for j in range(1, n+1)) / n
    return d

ok_D = True
for sign in (1, -1):
    d = exp_coeff(sign)
    for n in range(1, N + 1):
        target = -2*n - vf(n)
        if v2q(d[n]) != target:
            ok_D = False
            print(f"D law FAIL sign={sign:+d}, n={n}: {v2q(d[n])} != {target}")
print(f"[III] Laws D,D' for exp(+/-Lambda), n<={N}: {ok_D}")


# IV: enumerate integer partitions via multiplicities for n<=min(N,16)
def gen_mults(total, j=1, current=None):
    if current is None:
        current = [0] * (total + 1)
    if j > total:
        if sum(k*current[k] for k in range(1, total+1)) == total:
            yield current.copy()
        return
    remaining = total - sum(k*current[k] for k in range(1, j))
    for m in range(remaining // j + 1):
        current[j] = m
        yield from gen_mults(total, j+1, current)
    current[j] = 0

ok_part = True
P_MAX = min(N, 16)
for n in range(2, P_MAX + 1):
    base = -2*n - vf(n)
    eq = []
    for ms in gen_mults(n):
        val = 0
        for j in range(1, n+1):
            m = ms[j]
            if m:
                val += m*v2q(lam[j]) - vf(m)
        if val <= base:
            eq.append((val, tuple(ms[1:])))
    expected = (base, tuple([n] + [0]*(n-1)))
    if eq != [expected]:
        ok_part = False
        print(f"partition dominance FAIL n={n}: {eq[:5]}")
print(f"[IV] unique all-1 partition dominance, exhaustive n<={P_MAX}: {ok_part}")


# V: exact gap identities. A_r := nu2((r-1)!).
ok_gap = True
for r in range(2, N//2 + 2):
    A = vf(r-1)
    for s in range(2, r+1):
        lhs = 4*(s-1) + A - vf(s-1) - vf(r-s)
        rhs = 4*(s-1) + v2i(comb(r-1, s-1))
        if lhs != rhs or rhs <= 0:
            ok_gap = False
            print(f"odd gap FAIL r={r},s={s}: {lhs}, {rhs}")
for m in range(2, N//2 + 2):
    A = vf(m-1)
    for s in range(2, m+1):
        lhs = 4*(s-1) + A - vf(s-1) - vf(m-s)
        rhs = 4*(s-1) + v2i(comb(m-1, s-1))
        if lhs != rhs or rhs <= 0:
            ok_gap = False
            print(f"even gap FAIL m={m},s={s}: {lhs}, {rhs}")
print(f"[V] convolution-gap identities for Laws N,N', indices<={N}: {ok_gap}")


# VI: historical parity target, now proved in all orders (Theorem
# 'resolution of the transverse law'); verified here on the reference range.
ref = os.path.join(HERE, 'gcoef_ref.pkl')
if os.path.exists(ref):
    g = pickle.load(open(ref, 'rb'))
    ok_parity = True
    rmax = 0
    for n, (_re, im) in sorted(g.items()):
        if n % 2 == 0:
            continue
        r = (n+1)//2
        H = r + vf(r-1)
        C = im * (Fr(2) ** H)
        rmax = max(rmax, r)
        if v2q(C) != 0:
            ok_parity = False
            print(f"parity target FAIL r={r}: C_r={C}, nu2={v2q(C)}")
    print(f"[VI] parity target C_r odd on reference range r<={rmax}"
          f" (proved in all orders): {ok_parity}")
else:
    print("[VI] gcoef_ref.pkl absent: run horn_reference.py first to check the parity target")

ok = ok_formula and ok_bound and ok_D and ok_part and ok_gap
print("ALL PROVED-INGREDIENT CHECKS OK" if ok else "FAILURES IN PROOF-INGREDIENT CHECKS")
sys.exit(0 if ok else 1)
\end{lstlisting}

\subsection{CJ: the Appell reduction and the boundary branch}
Verifies the Appell coefficient recurrences exactly and the $F_3\to{}_2F_1$ identity with the $2+i0$ branch.

\smallskip\noindent \texttt{appell\_f3\_reduction.py}:\par\nopagebreak
\begin{lstlisting}
#!/usr/bin/env python3
"""Code CJ.  Appell F3 reduction and transverse forced system.

This check accompanies Section "The Appell F3 specialization on the physical
curve" of the v4 manuscript.

It performs two independent tests.

(1) Exact symbolic arithmetic (SymPy):
    - verifies the coefficient recurrences equivalent to the Appell F3 PDEs
      for the paired parameters (1/2-alpha,1/2+alpha) and
      (1/2-beta,1/2+beta);
    - differentiates on A=C+D, B=C-D and verifies coefficientwise the forced
      transverse system

          L_x R = -x F_0,      L_y R = +y F_0,

      together with R_{m,n}=-R_{n,m}.

(2) High-precision numerical arithmetic (mpmath):
    - compares the defining Appell double series at
          x0=(1+i)/2, y0=(1-i)/2
      with Vidunas' F3 -> 2F1 specialization on y=x/(2x-1);
    - evaluates the Gauss function at 2+i*eps.  The sign of eps is important:
      the straight physical path x=t*x0 approaches 4x(1-x)=2 from the upper
      half-plane.

Usage:
    python3 appell_f3_reduction.py
"""

import sympy as sp
from mpmath import mp

A, B, C, Z = sp.symbols("A B C Z")


def q(k, X):
    """((k-1/2)^2-X), k>=1."""
    return (sp.Rational(2 * k - 1, 2)) ** 2 - X


def P(m, X):
    out = sp.Integer(1)
    for k in range(1, m + 1):
        out *= q(k, X)
    return sp.expand(out)


def den(m, n):
    return sp.rf(1 - Z, m + n) * sp.factorial(m) * sp.factorial(n)


def f(m, n, X=A, Y=B):
    return P(m, X) * P(n, Y) / den(m, n)


def rcoef(m, n):
    """d/dD f_{m,n}(C+D,C-D)|_{D=0}."""
    pm, pn = P(m, C), P(n, C)
    dpm = sp.diff(P(m, A), A).subs(A, C)
    dpn = sp.diff(P(n, B), B).subs(B, C)
    return (dpm * pn - pm * dpn) / den(m, n)


def exact_checks(max_degree=6):
    ok_pde = True
    ok_forced = True
    ok_skew = True

    for m in range(max_degree + 1):
        for n in range(max_degree + 1):
            if m >= 1:
                lhs = m * (m + n - Z) * f(m, n) - q(m, A) * f(m - 1, n)
                ok_pde &= sp.cancel(lhs) == 0
            if n >= 1:
                lhs = n * (m + n - Z) * f(m, n) - q(n, B) * f(m, n - 1)
                ok_pde &= sp.cancel(lhs) == 0

            rr = rcoef(m, n)
            ok_skew &= sp.cancel(rr + rcoef(n, m)) == 0

            if m >= 1:
                # L_x R = -x F_0
                lhs = m * (m + n - Z) * rr - q(m, C) * rcoef(m - 1, n)
                rhs = -f(m - 1, n, C, C)
                ok_forced &= sp.cancel(lhs - rhs) == 0
            if n >= 1:
                # L_y R = +y F_0
                lhs = n * (m + n - Z) * rr - q(n, C) * rcoef(m, n - 1)
                rhs = +f(m, n - 1, C, C)
                ok_forced &= sp.cancel(lhs - rhs) == 0

    print(f"[CJ-1] Appell coefficient PDEs, m,n<={max_degree}: {ok_pde}")
    print(f"[CJ-2] forced transverse coefficient system, m,n<={max_degree}: {ok_forced}")
    print(f"[CJ-3] transverse coefficient skew-symmetry, m,n<={max_degree}: {ok_skew}")
    return ok_pde and ok_forced and ok_skew


def f3_direct(alpha, beta, zbig, cutoff=80):
    """Defining F3 series at the midpoint; stable for the test Z<0 values."""
    x = mp.mpc(mp.mpf("0.5"), mp.mpf("0.5"))
    y = mp.conj(x)
    c = 1 - zbig

    ax = [mp.rf(mp.mpf("0.5") - alpha, m)
          * mp.rf(mp.mpf("0.5") + alpha, m)
          * x**m / mp.factorial(m) for m in range(cutoff)]
    by = [mp.rf(mp.mpf("0.5") - beta, n)
          * mp.rf(mp.mpf("0.5") + beta, n)
          * y**n / mp.factorial(n) for n in range(cutoff)]

    total = mp.mpc(0)
    for m in range(cutoff):
        for n in range(cutoff):
            total += ax[m] * by[n] / mp.rf(c, m + n)
    return total


def f3_reduced(alpha, beta, zbig, eps=mp.mpf("1e-35")):
    x = mp.mpc(mp.mpf("0.5"), mp.mpf("0.5"))
    c = 1 - zbig
    b1 = mp.mpf("0.5") + alpha
    b2 = mp.mpf("0.5") + beta
    aa = (b2 - b1 + c) / 2
    bb = (b1 + b2 + c - 1) / 2
    # Along x=t*x0 the transformed argument tends to 2 from Im z > 0.
    hz = mp.mpc(2, eps)
    return (1 - x) ** (c - 1) * (1 - 2 * x) ** b2 * mp.hyper([aa, bb], [c], hz)


def numerical_checks():
    mp.dps = 80
    samples = [
        (mp.mpf("0.1"), mp.mpf("0.2"), mp.mpf("-10.3")),
        (mp.mpf("0.0"), mp.mpf("0.0"), mp.mpf("-7.7")),
        (mp.mpf("0.3"), mp.mpf("-0.2"), mp.mpf("-13.4")),
    ]
    tol = mp.mpf("1e-25")
    ok = True
    for j, (alpha, beta, zbig) in enumerate(samples, start=1):
        direct = f3_direct(alpha, beta, zbig, cutoff=90)
        reduced = f3_reduced(alpha, beta, zbig)
        err = abs(direct - reduced)
        rel = err / max(mp.mpf(1), abs(direct))
        good = rel < tol
        ok &= good
        print(f"[CJ-4.{j}] midpoint F3 -> 2F1, rel.err={mp.nstr(rel, 6)}: {good}")

    # Demonstrate that the opposite boundary value is genuinely different.
    alpha, beta, zbig = samples[0]
    direct = f3_direct(alpha, beta, zbig, cutoff=90)
    upper = f3_reduced(alpha, beta, zbig, eps=mp.mpf("1e-35"))
    lower = f3_reduced(alpha, beta, zbig, eps=mp.mpf("-1e-35"))
    upper_err = abs(direct - upper)
    lower_err = abs(direct - lower)
    branch_ok = upper_err < tol and lower_err > mp.mpf("1e-6")
    print(f"[CJ-5] upper-cut branch selected by physical path: {branch_ok}")
    return ok and branch_ok


def main():
    ok1 = exact_checks()
    ok2 = numerical_checks()
    ok = ok1 and ok2
    print("[CJ] ALL CHECKS:", ok)
    if not ok:
        raise SystemExit(1)


if __name__ == "__main__":
    main()
\end{lstlisting}

\subsection{CK: the boundary split and the transverse quadrature}
Re-derives the certificate of Lemma~\ref{lem:Kcontig} symbolically and validates \eqref{eq:split}--\eqref{eq:onefun} at $40$-digit precision.

\smallskip\noindent \texttt{boundary\_split\_quadrature.py}:\par\nopagebreak
\begin{lstlisting}
#!/usr/bin/env python3
"""CK: the boundary split of the zero-balanced reduction and the transverse
quadrature (Subsection "The boundary split and the transverse quadrature").

Exact certificate (sympy, symbolic in n, s, alpha):
  (C0) the telescoping certificate of the contiguity Lemma:
       A(a+n)(b+n)/(ab) + B + C(a-1)(b-1)/((a-1+n)(b-1+n))
         + (a+n)(b+n) r(n+1)/(n+1)^2 + r(n)  ==  0
       with A = 4(s-1)(4s^2-al^2), B = 2 al^2 (2s-1),
            C = s(4(s-1)^2-al^2),
            r(n) = -n^2 P(n)/((a-1+n)(b-1+n)),
            P(n) = 8(s-1)n^2 + 8(3s-2)(s-1)n + 4(5s-2)(s-1)^2 + (2-s)al^2.

High-precision numerical validation (mpmath, dps=40) of:
  (C1) the zero-balanced connection formula inside |1-z|<1;
  (C2) the boundary split at 2+i0:
       H(al) = G [ S(al) + i pi K(al) ],  K(al) = 2F1(a,b;1;-1),
       G = Gamma(2s)/(Gamma(a)Gamma(b));
  (C3) Kummer closure K(0) = Gamma(1+s/2)/(Gamma(1+s)Gamma(1-s/2));
  (C4) quadratic evaluation H(0) = e^{i pi s/2} sqrt(pi) Gamma(s+1/2)
                                    / Gamma((s+1)/2)^2,
       and the Gamma-trig consistency  pi G K(0) = sin(pi s/2) |H(0)| part,
       i.e. Im H(0)/Re H(0) = tan(pi s/2);
  (C5) the full contiguity at random (al, s);
  (C6) the inhomogeneous recurrence for K2 = d^2/dal^2 K|_0:
       8 s^2 (s-1) K2(s+1) + 2 s (s-1)^2 K2(s-1)
         = 4(s-1) K0(s+1) - 2(2s-1) K0(s) + s K0(s-1);
  (C7) the quadrature  K2(s) = K0(s) * sum_{k>=0} F(s-1-2k),
       F(s) = -(2s-1)/(2 s^2 (s-1)^2) - (2s-1) rho(s)/(4 s^2 (s-1)),
       rho(s) = 2 G(1+s/2) G((1-s)/2) / (G(1-s/2) G((1+s)/2)),
       on three lattices (this also checks that the lattice constant
       vanishes, to the working precision);
  (C8) the one-function reduction
       Im(N/D) = pi K2/(2 S0) - tan(pi s/2) (Re(N/D) + psi'(s)/4),
       N/D = (1/2) d^2/dal^2 H / H at al=0,  S0 = Re H(0)/G.

Exit code 0 iff everything passes.
"""
import sys

TOL = None
fails = []


def check(name, err, tol):
    ok = abs(err) < tol
    print(f"  {name}: err = {err}  {'OK' if ok else 'FAIL'}")
    if not ok:
        fails.append(name)


def exact_certificate():
    import sympy as sp
    n, s, al = sp.symbols('n s alpha')
    a = s - al/2
    b = s + al/2
    A = 4*(s-1)*(4*s**2-al**2)
    B = 2*al**2*(2*s-1)
    C = s*(4*(s-1)**2-al**2)
    P = 8*(s-1)*n**2 + 8*(3*s-2)*(s-1)*n + 4*(5*s-2)*(s-1)**2 + (2-s)*al**2
    r = -n**2*P/((a-1+n)*(b-1+n))
    rp1 = r.subs(n, n+1)
    expr = (A*(a+n)*(b+n)/(a*b) + B + C*(a-1)*(b-1)/((a-1+n)*(b-1+n))
            + (a+n)*(b+n)*rp1/(n+1)**2 + r)
    res = sp.simplify(sp.together(expr))
    print("(C0) exact telescoping certificate (sympy):", res)
    if res != 0:
        fails.append("C0")


def numeric():
    from mpmath import (mp, mpf, mpc, gamma, digamma, polygamma, hyp2f1,
                        pi, log, rf, factorial, exp, sqrt, tan, sin, diff,
                        nsum, inf)
    mp.dps = 40
    tol = mpf('1e-25')

    def G(a, b):
        return gamma(a+b)/(gamma(a)*gamma(b))

    def cn(a, b, n):
        return rf(a, n)*rf(b, n)/factorial(n)**2

    def Rn(a, b, n):
        return 2*digamma(n+1) - digamma(a+n) - digamma(b+n)

    print("(C1) connection formula inside |1-z|<1")
    for (a, b, z) in [(mpf('0.2'), mpf('0.3'), mpc('1.8', '0.3')),
                      (mpf('-1.7'), mpf('0.4'), mpc('1.9', '0.1'))]:
        w = 1-z
        tot = sum(cn(a, b, n)*(Rn(a, b, n) - log(w))*w**n for n in range(3000))
        check("C1", abs(hyp2f1(a, b, a+b, z) - G(a, b)*tot)
              / abs(hyp2f1(a, b, a+b, z)), tol)

    eps = mpf('1e-30')

    def H(al, s):
        return hyp2f1(s-al/2, s+al/2, 2*s, 2+eps*1j)

    def K(al, s):
        return hyp2f1(s-al/2, s+al/2, 1, -1)

    def K0(s):
        return gamma(1+s/2)/(gamma(1+s)*gamma(1-s/2))

    print("(C2) boundary split: Im H = pi G K   (real al, Z)")
    for (al, s) in [(mpf(0), mpf('-2.3')), (mpf('0.6'), mpf('-4.1'))]:
        a, b = s-al/2, s+al/2
        check("C2", abs(H(al, s).imag - pi*G(a, b)*K(al, s))/abs(H(al, s)),
              tol)

    print("(C3) Kummer closure at al=0")
    for s in [mpf('-2.3'), mpf('-7.5')]:
        check("C3", abs(K(mpf(0), s) - K0(s))/max(abs(K0(s)), mpf(1)), tol)

    print("(C4) quadratic evaluation and Im/Re = tan(pi s/2)")
    for s in [mpf('-0.7'), mpf('-3.55')]:
        M = sqrt(pi)*gamma(s+mpf(1)/2)/gamma((s+1)/2)**2
        check("C4a", abs(H(0, s) - exp(1j*pi*s/2)*M)/abs(H(0, s)), tol)
        check("C4b", abs(H(0, s).imag/H(0, s).real - tan(pi*s/2)), tol)

    print("(C5) full contiguity at random (al,s)")
    for (al, s) in [(mpf('0.37'), mpf('-2.3')), (mpf('1.21'), mpf('-4.7'))]:
        r = (4*(s-1)*(4*s**2-al**2)*K(al, s+1) + 2*al**2*(2*s-1)*K(al, s)
             + s*(4*(s-1)**2-al**2)*K(al, s-1))
        check("C5", abs(r), tol)

    def K2(s):
        return diff(lambda al: K(al, s), 0, 2)

    print("(C6) inhomogeneous recurrence for K2")
    for s in [mpf('-2.3'), mpf('-4.7')]:
        lhs = 8*s**2*(s-1)*K2(s+1) + 2*s*(s-1)**2*K2(s-1)
        rhs = 4*(s-1)*K0(s+1) - 2*(2*s-1)*K0(s) + s*K0(s-1)
        check("C6", abs(lhs-rhs)/max(abs(rhs), mpf(1)), tol)

    print("(C7) quadrature representation on three lattices")

    def rho(s):
        return 2*gamma(1+s/2)*gamma((1-s)/2)/(gamma(1-s/2)*gamma((1+s)/2))

    def F(s):
        return (-(2*s-1)/(2*s**2*(s-1)**2)
                - (2*s-1)*rho(s)/(4*s**2*(s-1)))

    for s0 in [mpf('-0.75'), mpf('-2.3'), mpf('-5.6')]:
        tot = nsum(lambda k: F(s0-1-2*k), [0, inf])
        check("C7", abs(K2(s0) - K0(s0)*tot)/abs(K2(s0)), tol)

    print("(C8) one-function reduction of the imaginary tangent")
    for s in [mpf('-1.25'), mpf('-3.3'), mpf('-7.6')]:
        H0 = H(0, s)
        H2 = diff(lambda al: H(al, s), 0, 2)
        ND = (H2/H0)/2
        S0 = (H0/G(s, s)).real
        rhs = pi*K2(s)/(2*S0) - tan(pi*s/2)*(ND.real + polygamma(1, s)/4)
        check("C8", abs(ND.imag - rhs)/abs(ND.imag), tol)


exact_certificate()
numeric()
print("CK:", "ALL PASSED" if not fails else f"FAILURES: {fails}")
sys.exit(1 if fails else 0)
\end{lstlisting}

\subsection{CL: formal quadratures, boundary transfer, exact anchors}
Validates the corrected splitting identities, the closed rational
quadrature, the cotangent factorisation, the refutation numbers, the
derivative-free boundary transfer and its jets, the polylogarithmic
and Euler-moment anchors, and (symbolically) the $\mathcal M$-completed
ladder.

\smallskip\noindent \texttt{anchors\_transfer\_quadratures.py}:\par\nopagebreak
\begin{lstlisting}
#!/usr/bin/env python3
"""CL: formal quadratures, boundary transfer, and exact anchors
(Subsections "Formal quadratures...", "Derivative-free contiguities...",
"Exact anchors...", and the section "The radial-operator completion").

High-precision numerical validation (mpmath, dps=40) of:
  (L1) the corrected forcing split
         f_rat(s) = 1/(2 s^2) - 1/(2(s-1)^2) = -(2s-1)/(2 s^2 (s-1)^2),
       and the step-two identity  r(s) r(s-1) = -4s/(s-1);
  (L2) the separation r(s) = tan(pi s/2) * rhat(s),
       rhat(s) = s [Gamma(s/2)/Gamma((1+s)/2)]^2, and the corrected
       Poincare expansion  log(rhat/2) ~ 2 sum_n (-1)^{n+1}
       [B_{n+1}(0)-B_{n+1}(1/2)]/(n(n+1)) (2/s)^n;
  (L3) the corrected closed rational quadrature
         sum_{k>=0} f_rat(s-1-2k)
           = (1/8)[psi'((1-s)/2) - psi'(1-s/2)],
       and the cotangent factorisation of the Gamma quadrature;
  (L4) the corrected refutation numbers of the candidate identity:
       Re(N/D) at s=-0.3 vs (1/8) psi'((1-s)/2), and the Laplace identity
       int_0^inf t e^{2st}/sinh(2t) dt = (1/8) psi'((1-s)/2)  (Re s<0);
  (L5) the second symmetric contiguity (interior z), the two boundary
       rows, the scalar boundary transfer, and the second-jet transfer;
  (L6) the explicit diagonal logarithmic z-derivative p_s and the
       J-exact-adjacent identity;
  (L7) the polylogarithmic anchor at s=1:
         y_1 = i pi/2,  y_1' = -1/2 - i pi/4,
         R_1 = -pi^2/6 - 7 i zeta(3)/pi,
         J_1 = 2 log 2 - 7 zeta(3)/pi^2 - i pi/3,
       the antiderivative value I(2+) = 7 zeta(3)/2 - pi^2 log 2 + i pi^3/6,
       the transfer matrix M_s(q), and the propagated value R_2;
  (L8) the Euler-moment canonical anchor at a generic sigma in (0,1).

Exact certificate (sympy, symbolic in p,q):
  (L9) the coefficient identity behind the M-completed ladder for
       l <= 3, m <= 8, and the jet ladder for k <= 2 at a rational
       off-curve point (exact arithmetic).

Exit code 0 iff everything passes.
"""
import sys

fails = []


def check(name, err, tol):
    ok = abs(err) < tol
    print(f"  {name}: err = {err}  {'OK' if ok else 'FAIL'}")
    if not ok:
        fails.append(name)


def numeric():
    from mpmath import (mp, mpf, mpc, gamma, polygamma, hyp2f1, pi, log,
                        tan, cot, exp, diff, nsum, inf, quad, zeta,
                        polylog, bernoulli, binomial)
    mp.dps = 40
    tol = mpf('1e-25')
    EPS = mpf('1e-30')

    def rr(s):
        return 2*gamma(1+s/2)*gamma((1-s)/2)/(gamma(1-s/2)*gamma((1+s)/2))

    print("(L1) forcing split and step-two identity")
    s = mpf('-2.3')
    check("L1a", (1/(2*s**2) - 1/(2*(s-1)**2)) - (-(2*s-1)/(2*s**2*(s-1)**2)), tol)
    check("L1b", rr(s)*rr(s-1) + 4*s/(s-1), tol)

    print("(L2) separation of the Gamma oscillation")
    for s in [mpf('-2.3'), mpf('0.4')]:
        rhat = s*(gamma(s/2)/gamma((1+s)/2))**2
        check("L2a", rr(s) - tan(pi*s/2)*rhat, tol)
    sBig = mpf('60')
    rhat = sBig*(gamma(sBig/2)/gamma((sBig+1)/2))**2

    def Bpoly(n, x):
        return sum(binomial(n, k)*bernoulli(k)*x**(n-k) for k in range(n+1))
    ser = 2*sum((-1)**(n+1)*(Bpoly(n+1, mpf(0))-Bpoly(n+1, mpf('0.5')))
                / (n*(n+1))*(2/sBig)**n for n in range(1, 14))
    check("L2b", log(rhat/2) - ser, mpf('1e-18'))

    print("(L3) closed rational quadrature and cotangent factorisation")

    def frat(sg):
        return -(2*sg-1)/(2*sg**2*(sg-1)**2)

    def fgam(sg):
        return -(2*sg-1)*rr(sg)/(4*sg**2*(sg-1))

    def fgamhat(sg):
        rh = sg*(gamma(sg/2)/gamma((1+sg)/2))**2
        return -(2*sg-1)*rh/(4*sg**2*(sg-1))
    for s in [mpf('-0.75'), mpf('-2.3')]:
        S = nsum(lambda k: frat(s-1-2*k), [0, inf])
        check("L3a", S - (polygamma(1, (1-s)/2) - polygamma(1, 1-s/2))/8, tol)
        Sg = nsum(lambda k: fgam(s-1-2*k), [0, inf])
        Qh = nsum(lambda k: fgamhat(s-1-2*k), [0, inf])
        check("L3b", Sg + cot(pi*s/2)*Qh, tol)

    def ND(s):
        H0 = hyp2f1(s, s, 2*s, 2+EPS*1j)
        H2 = diff(lambda al: hyp2f1(s-al/2, s+al/2, 2*s, 2+EPS*1j), 0, 2)
        return (H2/H0)/2

    print("(L4) refutation numbers and the Laplace identity")
    s = mpf('-0.3')
    nd = ND(s)
    check("L4a", nd.real - mpf('-1.155057380'), mpf('1e-8'))
    check("L4b", polygamma(1, (1-s)/2)/8 - mpf('0.3989317985'), mpf('1e-9'))
    lap = quad(lambda t: t*exp(2*s*t)/((exp(2*t)-exp(-2*t))/2), [0, inf])
    check("L4c", lap - polygamma(1, (1-s)/2)/8, mpf('1e-20'))

    print("(L5) contiguities and the boundary transfer")

    def Y(s, q, z):
        return hyp2f1(s-q, s+q, 2*s, z)
    s, q, z = mpf('0.35'), mpf('0.15'), mpc('0.3', '0.2')

    def theta(f, zz):
        return zz*diff(f, zz)
    f1 = lambda u: Y(s+1, q, u)
    f0 = lambda u: Y(s, q, u)
    lhs = (s**2-q**2)*(theta(lambda u: theta(f1, u), z)
                       + (4*s+1)*theta(f1, z) + 2*s*(2*s+1)*f1(z))
    rhs = 2*s*(2*s+1)*(theta(lambda u: theta(f0, u), z)
                       + 2*s*theta(f0, z) + (s**2-q**2)*f0(z))
    check("L5a", abs(lhs-rhs)/abs(rhs), tol)

    def Yb(s, q):
        return hyp2f1(s-q, s+q, 2*s, 2+EPS*1j)

    def Ybz(s, q):
        return diff(lambda zz: hyp2f1(s-q, s+q, 2*s, zz), 2+EPS*1j)
    for (s, q) in [(mpf('0.35'), mpf('0.15')), (mpf('-1.3'), mpf('0.2'))]:
        d = s**2-q**2
        check("L5b", abs(Yb(s+1, q) - s*(2*s+1)/d**2*(2*s*Ybz(s, q)
              + d*Yb(s, q)))/abs(Yb(s+1, q)), mpf('1e-25'))
        check("L5c", abs(2*Ybz(s+1, q) + (2*s+1)*Yb(s+1, q)
              - 2*s*(2*s+1)/d*Ybz(s, q))/max(abs(Yb(s+1, q)), mpf(1)),
              mpf('1e-25'))

    def w_of(s):
        return diff(lambda qq: Yb(s, qq), 0, 2)

    def wz_of(s):
        return diff(lambda qq: diff(
            lambda zz: hyp2f1(s-qq, s+qq, 2*s, zz), 2+EPS*1j), 0, 2)
    s = mpf('0.35')
    ys, ysz, ws, wsz = Yb(s, 0), Ybz(s, 0), w_of(s), wz_of(s)
    rhs = ((2*s+1)/s**3*(2*s*wsz + s**2*ws - 2*ys)
           + 4*(2*s+1)/s**5*(2*s*ysz + s**2*ys))
    check("L5d", abs(w_of(s+1) - rhs)/abs(rhs), mpf('1e-24'))

    print("(L6) explicit p_s and the J-exact-adjacent identity")
    for s in [mpf('0.35'), mpf('-1.3')]:
        ps = Ybz(s, 0)/Yb(s, 0)
        claim = (-s/2 + 1j*s**2*(s+mpf(1)/2)/(2*(2*s+1))
                 * (gamma((s+1)/2)/gamma((s+2)/2))**2)
        check("L6a", abs(ps-claim), mpf('1e-24'))
    s = mpf('0.35')
    Rs, Rs1 = w_of(s)/Yb(s, 0), w_of(s+1)/Yb(s+1, 0)
    ps = Ybz(s, 0)/Yb(s, 0)
    Js = wz_of(s)/Yb(s, 0) - w_of(s)*Ybz(s, 0)/Yb(s, 0)**2
    check("L6b", abs(Js - (1/s + (2*s*ps+s**2)/(2*s)*(Rs1-Rs-4/s**2))),
          mpf('1e-24'))

    print("(L7) polylogarithmic anchor and matrix propagation")
    y1, y1z = Yb(1, 0), Ybz(1, 0)
    check("L7a", abs(y1 - 1j*pi/2), mpf('1e-25'))
    check("L7b", abs(y1z - (-mpf(1)/2 - 1j*pi/4)), mpf('1e-25'))
    check("L7c", abs(w_of(1)/y1 - (-pi**2/6 - 7j*zeta(3)/pi)), mpf('1e-24'))
    J1 = wz_of(1)/y1 - w_of(1)*y1z/y1**2
    check("L7d", abs(J1 - (2*log(2) - 7*zeta(3)/pi**2 - 1j*pi/3)),
          mpf('1e-24'))
    lgm = -1j*pi
    I2 = (lgm**2*log(mpf(2)) + 2*lgm*polylog(2, mpf(-1))
          - 2*polylog(3, mpf(-1)) + 2*zeta(3))
    check("L7e", abs(I2 - (mpf(7)/2*zeta(3) - pi**2*log(2) + 1j*pi**3/6)),
          tol)
    s, q = mpf('0.6'), mpf('0.2')
    d = s**2-q**2
    M = [[s*(2*s+1)/d, 2*s**2*(2*s+1)/d**2],
         [-s*(2*s+1)**2/(2*d), s*(2*s+1)*(d-s*(2*s+1))/d**2]]
    check("L7f", abs(M[0][0]*Yb(s, q)+M[0][1]*Ybz(s, q)-Yb(s+1, q))
          / abs(Yb(s+1, q)), mpf('1e-25'))
    check("L7g", abs(M[1][0]*Yb(s, q)+M[1][1]*Ybz(s, q)-Ybz(s+1, q))
          / max(abs(Ybz(s+1, q)), mpf(1)), mpf('1e-25'))
    R2c = (-pi**2/2 + 4 - 7j*zeta(3)/pi
           - 1j*pi/2*(4*log(2) - 14*zeta(3)/pi**2 - 2))
    check("L7h", abs(w_of(2)/Yb(2, 0) - R2c), mpf('1e-24'))

    print("(L8) Euler-moment anchor at sigma = 0.6")
    sg = mpf('0.6')
    dl = mpf('1e-12')
    Wf = lambda t: t**(sg-1)*(1-t)**(sg-1)*(1-2*t-1j*dl)**(-sg)
    Lf = lambda t: log(t) - log(1-t) + log(1-2*t-1j*dl)
    Tf = lambda t: t/(1-2*t-1j*dl)
    pts = [0, mpf('0.25'), mpf('0.5'), mpf('0.75'), 1]
    den = quad(Wf, pts)
    mL2 = quad(lambda t: Wf(t)*Lf(t)**2, pts)/den
    mT = quad(lambda t: Wf(t)*Tf(t), pts)/den
    mLT = quad(lambda t: Wf(t)*Lf(t)*Tf(t), pts)/den
    mL2T = quad(lambda t: Wf(t)*Lf(t)**2*Tf(t), pts)/den
    Rsig = -2*polygamma(1, sg) + mL2
    Jsig = sg*(mL2T - mL2*mT) - 2*mLT
    Rnum = w_of(sg)/Yb(sg, 0)
    Jnum = wz_of(sg)/Yb(sg, 0) - w_of(sg)*Ybz(sg, 0)/Yb(sg, 0)**2
    check("L8a", abs(Rnum-Rsig), mpf('1e-9'))
    check("L8b", abs(Jnum-Jsig), mpf('1e-9'))


def exact_mcompletion():
    import sympy as sp
    p, q = sp.symbols('p q')

    def a_coef(k, nu):
        r = sp.Rational(1)
        for j in range(k):
            r *= (sp.Rational(1, 2)+nu+j)*(sp.Rational(1, 2)-nu+j)
        return r/((-2)**k*sp.factorial(k))

    def G(l, m):
        nu = sp.Rational(2*l+1, 2)
        return sp.expand(sum(a_coef(j, nu)*a_coef(m-j, nu)*p**j*q**(m-j)
                             for j in range(m+1)))
    sig = 1/p + 1/q
    rho = p/q + q/p
    print("(L9) M-completed ladder coefficient identity, l<=3, m<=8")
    for l in range(4):
        for m in range(9):
            d = sp.simplify(sig*G(l+2, m+1) + (m-2*l-4)*G(l+2, m)
                            - sig*G(l, m+1) - (m+2*l+2)*G(l, m)
                            - rho*(2*l+3)*G(l+1, m))
            if d != 0:
                fails.append(f"L9-{l}-{m}")
                print(f"  FAIL l={l} m={m}")
    print("  L9: " + ("OK" if not [f for f in fails if str(f).startswith('L9')]
                      else "FAIL"))

    # jet ladder at sigma=1, rho=2 (p=q=2), Z=17/2, k<=2, exact
    sigma_s = sp.symbols('sigma_s')
    rho0 = sp.Rational(2)
    S = (rho0+2)/sigma_s
    P = (rho0+2)/sigma_s**2
    disc = sp.sqrt(S**2-4*P)
    pc, qc = (S+disc)/2, (S-disc)/2
    Zv = sp.Rational(17, 2)

    def e(m):
        r = sp.Integer(1)
        for j in range(1, m+1):
            r /= (Zv-j)
        return r

    def F_curve(l):
        nu = sp.Rational(2*l+1, 2)
        tot = sp.Integer(0)
        for m in range(9):
            Gm = sum(a_coef(j, nu)*a_coef(m-j, nu)*pc**j*qc**(m-j)
                     for j in range(m+1))
            tot += sp.expand(Gm)*e(m)
        return tot

    def Fj(l, k):
        expr = F_curve(l)
        for _ in range(k):
            expr = sp.diff(expr, sigma_s)
        return sp.simplify(expr.subs(sigma_s, 1))
    print("(L9b) jet ladder, l<=1, k<=2, exact rational point")
    for l in range(2):
        for k in range(3):
            lhs = ((Zv+k-2*l-4)*Fj(l+2, k)
                   + (k*(Zv+k-1)*Fj(l+2, k-1) if k >= 1 else 0))
            rhs = ((Zv+k+2*l+2)*Fj(l, k)
                   + (k*(Zv+k-1)*Fj(l, k-1) if k >= 1 else 0)
                   + rho0*(2*l+3)*Fj(l+1, k))
            d = sp.simplify(lhs-rhs)
            if d != 0:
                fails.append(f"L9b-{l}-{k}")
                print(f"  FAIL l={l} k={k}")
    print("  L9b: " + ("OK" if not [f for f in fails
                                    if str(f).startswith('L9b')] else "FAIL"))


numeric()
exact_mcompletion()
print("CL:", "ALL PASSED" if not fails else f"FAILURES: {fails}")
sys.exit(1 if fails else 0)
\end{lstlisting}

\subsection{CM: the closed formal tangent, certified}
Certifies \eqref{eq:formal-tangent} in exact rational arithmetic for
all $n\le41$ and validates the closed Stokes function, the
function-level imaginary identity and the sectorial matching.

\smallskip\noindent \texttt{formal\_tangent\_certificate.py}:\par\nopagebreak
\begin{lstlisting}
#!/usr/bin/env python3
"""CM: the closed formal tangent (Subsection "The matching theorem").

EXACT certificate (rational arithmetic, Fractions):
  (M1) Re g_n  =  [Z^{-n}] of the formal expansion of
       (1/16)psi'((Z+1)/4) - (1/16)psi'((Z+3)/4) + (1/4)psi'((Z+1)/2),
  (M2) Im g_n  =  -(1/2) [Z^{-n}] Qhat_Gamma,
       Qhat_Gamma = (1/(2 sinh D)) fhat_Gamma  (formal, D = d/ds),
       fhat_Gamma(s) = -(2s-1) rhat(s)/(4 s^2 (s-1)),
       rhat(s) ~ 2 exp( 2 sum_{n>=1} (-1)^{n+1}
                 [B_{n+1}(0)-B_{n+1}(1/2)]/(n(n+1)) (2/s)^n ),
  for ALL n <= 41, against the exact g_n of the reference engine
  (gcoef_ref.pkl).  Convention B_1 = -1/2 throughout.

High-precision numerical validation (mpmath):
  (M3) closed Stokes function:
       gamma(s) := Re(N/D) - trigamma part = -(pi^2/8) sec^2(pi s/2)
       at six residue classes (via the boundary split; the residual is
       limited only by the convergence of the S2 series at small |s|);
  (M4) the closed function-level imaginary identity
       Im(N/D) = -(1/2) T(s) + tan(pi s/2) E(s),
       T(s) = sum_k fhat_Gamma(s-1-2k),
       E(s) = (pi^2/8) sec^2(pi s/2) - (pi^2/4) csc^2(pi s);
  (M5) sectorial matching: T(s) agrees with the formal Qhat_Gamma to
       exponentially small error at s = -5+40i and s = -3+25i, and
       rhat agrees with its formal series off the real axis.

Exit code 0 iff everything passes.
"""
import sys
import math
import pickle
from fractions import Fraction as F
from functools import lru_cache

fails = []
ORD = 48
GPKL = __file__.rsplit('/', 1)[0] + '/gcoef_ref.pkl'


def mul(a, b):
    n = ORD+1
    out = [F(0)]*n
    for i, ai in enumerate(a):
        if ai == 0:
            continue
        for j, bj in enumerate(b):
            if i+j >= n:
                break
            if bj:
                out[i+j] += ai*bj
    return out


def add(a, b):
    return [x+y for x, y in zip(a, b)]


def scal(c, a):
    return [c*x for x in a]


def exp_series(a):
    assert a[0] == 0
    out = [F(0)]*(ORD+1)
    out[0] = F(1)
    for k in range(ORD):
        s = F(0)
        for j in range(k+1):
            if j+1 <= ORD and a[j+1]:
                s += F(j+1)*a[j+1]*out[k-j]
        out[k+1] = s/F(k+1)
    return out


@lru_cache(maxsize=None)
def bern(n):
    if n == 1:
        return F(-1, 2)          # convention B_1 = -1/2
    A = [F(0)]*(n+1)
    for m in range(n+1):
        A[m] = F(1, m+1)
        for j in range(m, 0, -1):
            A[j-1] = j*(A[j-1]-A[j])
    return A[0]


def bern_poly(n, x):
    return sum(F(math.comb(n, k))*bern(k)*x**(n-k) for k in range(n+1))


def build_targets():
    # rhat = 2 exp( 2 sum c_n (2u)^n ), u = 1/s
    expo = [F(0)]*(ORD+1)
    for n in range(1, ORD+1):
        c = F((-1)**(n+1), n*(n+1))*(bern_poly(n+1, F(0))
                                     - bern_poly(n+1, F(1, 2)))
        expo[n] += 2*c*F(2)**n
    rhat = scal(F(2), exp_series(expo))
    # fhat = -(2-u) u^2 rhat / (4(1-u))
    inv1mu = [F(1)]*(ORD+1)
    tmp = [F(0)]*(ORD+1)
    tmp[2] = F(2)
    tmp[3] = F(-1)
    pref = scal(F(-1, 4), mul(tmp, inv1mu))
    fhat = mul(pref, rhat)

    def Dop(a):
        out = [F(0)]*(ORD+1)
        for k in range(ORD):
            if a[k]:
                out[k+1] += -F(k)*a[k]
        return out

    def Dinv(a):
        out = [F(0)]*(ORD+1)
        assert a[0] == 0 and a[1] == 0
        for k in range(2, ORD+1):
            if a[k]:
                out[k-1] = -a[k]/F(k-1)
        return out

    Qhat = scal(F(1, 2), Dinv(fhat))
    Dpow = fhat
    j = 0
    while True:
        j += 1
        Dpow = Dop(Dpow) if j == 1 else Dop(Dop(Dpow))
        if 2*j-1 > ORD:
            break
        cj = F(1-2**(2*j-1))*bern(2*j)/F(math.factorial(2*j))
        Qhat = add(Qhat, scal(cj, Dpow))

    # convert u-series to w-series, u = -2w/(1-w), w = 1/Z
    u_w = [F(0)]+[F(-2)]*ORD
    upow = [[F(1)]+[F(0)]*ORD]
    for k in range(1, ORD+1):
        upow.append(mul(upow[-1], u_w))

    def to_w(a):
        out = [F(0)]*(ORD+1)
        for k in range(ORD+1):
            if a[k]:
                out = add(out, scal(a[k], upow[k]))
        return out

    target_Im = scal(F(-1, 2), to_w(Qhat))

    # trigamma expansions
    def psi1_w(shift, scale):
        inv_y = [F(0)]*(ORD+1)
        for m in range(ORD):
            inv_y[m+1] = F(scale)*F(-shift)**m
        out = inv_y[:]
        iy2 = mul(inv_y, inv_y)
        out = add(out, scal(F(1, 2), iy2))
        powk = inv_y[:]
        for k in range(1, ORD//2+2):
            powk = mul(powk, iy2)
            out = add(out, scal(bern(2*k), powk))
        return out

    target_Re = add(add(scal(F(1, 16), psi1_w(1, 4)),
                        scal(F(-1, 16), psi1_w(3, 4))),
                    scal(F(1, 4), psi1_w(1, 2)))
    return target_Re, target_Im, rhat, Qhat


def exact_certificate():
    target_Re, target_Im, rhat_u, Qhat_u = build_targets()
    g = pickle.load(open(GPKL, 'rb'))
    NC = min(max(g.keys()), ORD-6)
    okR = okI = True
    for n in range(1, NC+1):
        re_g, im_g = g[n]
        if target_Re[n] != re_g:
            okR = False
            print(f"  (M1) FAIL n={n}: {re_g} vs {target_Re[n]}")
        if target_Im[n] != im_g:
            okI = False
            print(f"  (M2) FAIL n={n}: {im_g} vs {target_Im[n]}")
    print(f"(M1) Re identity: {'ALL MATCH' if okR else 'MISMATCH'} to n={NC}")
    print(f"(M2) Im identity: {'ALL MATCH' if okI else 'MISMATCH'} to n={NC}")
    if not okR:
        fails.append("M1")
    if not okI:
        fails.append("M2")
    return rhat_u, Qhat_u


def numeric(rhat_u, Qhat_u):
    from mpmath import (mp, mpf, mpc, gamma, digamma, polygamma, pi, cos,
                        sin, tan, nsum, inf)
    mp.dps = 45

    def evalu(series, s):
        u = mpc(1)/s
        tot = mpc(0)
        p = mpc(1)
        for k in range(len(series)):
            c = series[k]
            if c:
                tot += mpf(c.numerator)/mpf(c.denominator)*p
            p *= u
        return tot

    def K0f(s):
        return gamma(1+s/2)/(gamma(1+s)*gamma(1-s/2))

    def Mf(s):
        return mp.sqrt(pi)*gamma(s+mpf(1)/2)/gamma((s+1)/2)**2

    def Gf(s):
        return gamma(2*s)/gamma(s)**2

    def series_terms(s):
        tol = mpf(10)**(-mp.dps+5)
        K2 = mpf(0)
        S2 = mpf(0)
        cn = mpf(1)
        inner = mpf(0)
        pp = polygamma(1, s)
        n = 0
        while n < 8000:
            if n > 0:
                cn *= ((s+n-1)/n)**2
                inner += 1/(s+n-1)**2
            sg = -1 if n % 2 else 1
            if n >= 1:
                K2 += sg*cn*inner
            Rn = 2*digamma(n+1) - 2*digamma(s+n)
            S2 += sg*cn*((polygamma(1, s+n)-pp)/2*Rn
                         - polygamma(2, s+n)/2)
            if n > 10 and abs(cn)*(abs(inner)+50) < tol:
                break
            n += 1
        return -K2/2, S2

    def ND(s):
        K0 = K0f(s)
        S0 = cos(pi*s/2)*Mf(s)/Gf(s)
        K2, S2 = series_terms(s)
        return -polygamma(1, s)/4 + (S2 + 1j*pi*K2)/(S0 + 1j*pi*K0)/2

    def check(name, err, tol):
        ok = abs(err) < tol
        print(f"  {name}: err = {err}  {'OK' if ok else 'FAIL'}")
        if not ok:
            fails.append(name)

    print("(M3) closed Stokes function on six classes")
    # tolerance scales with the convergence rate n^{2s-2} of the S2 series
    for s, tol in [('-0.3', '1e-10'), ('-0.75', '1e-13'), ('-1.45', '1e-19'),
                   ('-2.3', '1e-25'), ('-0.15', '1e-10'), ('-1.2', '1e-16')]:
        s = mpf(s)
        gam = (ND(s).real - polygamma(1, (1-s)/2)/16
               + polygamma(1, 1-s/2)/16 - polygamma(1, 1-s)/4)
        check("M3", gam + pi**2/8/cos(pi*s/2)**2, mpf(tol))

    def rhat_num(s):
        return s*(gamma(s/2)/gamma((1+s)/2))**2

    def fhat_num(s):
        return -(2*s-1)*rhat_num(s)/(4*s**2*(s-1))

    print("(M4) closed function-level imaginary identity")
    for s, tol in [('-0.3', '1e-10'), ('-1.45', '1e-19'), ('-2.3', '1e-25')]:
        s = mpf(s)
        T = nsum(lambda k: fhat_num(s-1-2*k), [0, inf])
        E = pi**2/8/cos(pi*s/2)**2 - pi**2/4/sin(pi*s)**2
        check("M4", ND(s).imag + T/2 - tan(pi*s/2)*E, mpf(tol))

    print("(M5) sectorial matching (exponentially small errors)")
    for spt in [mpc(-5, 40), mpc(-3, 25)]:
        T = nsum(lambda k: fhat_num(spt-1-2*k), [0, inf])
        check("M5a", abs(T - evalu(Qhat_u, spt)), mpf('1e-20'))
        check("M5b", abs(rhat_num(spt) - evalu(rhat_u, spt)), mpf('1e-25'))


rhat_u, Qhat_u = exact_certificate()
numeric(rhat_u, Qhat_u)
print("CM:", "ALL PASSED" if not fails else f"FAILURES: {fails}")
sys.exit(1 if fails else 0)
\end{lstlisting}

\subsection{CN: the transverse law, certified}
Certifies, in exact rational arithmetic, every ingredient of the
arithmetic step: the Genocchi form and tanh kernel of the exponent,
the parity identity, the all-ones dominance, the evenness of
$\mathsf F$, the kernel bridge to the tangent data, the sharp law
for $m\le159$, and the first units.

\smallskip\noindent \texttt{transverse\_law\_certificate.py}:\par\nopagebreak
\begin{lstlisting}
#!/usr/bin/env python3
"""CN: the arithmetic step -- certificate of the transverse law
(Subsection "The arithmetic step: the transverse law").

All computations in EXACT rational arithmetic (Fractions).

  (N1) Genocchi form of the exponent: the defining coefficients
       2(-1)^{n+1}[B_{n+1}(0)-B_{n+1}(1/2)]/(n(n+1)) * 2^n  equal
       -G_{2j}/((2j)(2j-1)) for n = 2j-1 and vanish for even n  (n < 60);
  (N2) tanh kernel:  b_j = -(2j-2)! [t^{2j-1}] tanh t  (j <= 40);
  (N3) parity identity:
       sum_j b_j[(Z-1)^{1-2j} + (Z+1)^{1-2j}] = log((Z-1)/(Z+1))
       as series in 1/Z, to order 200;
  (N4) all-ones dominance: nu2(a_N) = -nu2(N!) and N! a_N a 2-adic unit,
       for e^{B} = sum a_N (Z-1)^{-N}, N <= NORD;
  (N5) evenness: the odd Z^{-1}-coefficients of
       F(Z) = -4 Z e^{B}/((Z-1)^2(Z+1)) vanish (order <= NORD);
  (N6) the kernel bridge: w_m = -F_{2m+2}/(4(2m+1)!) agrees EXACTLY with
       w_m computed from the certified Im g_n data via
       W(t) = (sinh 2t/t^2) Im Phi(t), for all m <= 19;
  (N7) the sharp law: nu2(F_{2m+2}) = 2 - 2m + s2(m), equivalently
       ((2m)!)^2 w_m in Z2^x, for ALL m <= (NORD-2)/2  (= 159);
  (N8) the first units ((2m)!)^2 w_m are
       1, 1/3, 9, -6435/7, 762545, ... (the U-series data of the text).

Exit code 0 iff everything passes.
"""
import math
import pickle
import sys
from fractions import Fraction as F
from functools import lru_cache

NORD = 320
GPKL = __file__.rsplit('/', 1)[0] + '/gcoef_ref.pkl'
fails = []


@lru_cache(maxsize=None)
def bern(n):
    if n == 1:
        return F(-1, 2)          # convention B_1 = -1/2
    A = [F(0)]*(n+1)
    for m in range(n+1):
        A[m] = F(1, m+1)
        for j in range(m, 0, -1):
            A[j-1] = j*(A[j-1]-A[j])
    return A[0]


def bern_poly(n, x):
    return sum(F(math.comb(n, k))*bern(k)*x**(n-k) for k in range(n+1))


def genocchi(m):
    return 2*(1-F(2)**m)*bern(m)


def nu2(q):
    if q == 0:
        return None
    n, d = abs(q.numerator), q.denominator
    v = 0
    while n % 2 == 0:
        n //= 2
        v += 1
    while d % 2 == 0:
        d //= 2
        v -= 1
    return v


def s2(n):
    return bin(n).count('1')


def bj(j):
    return F(2)**(2*j-1)*genocchi(2*j)/F((2*j)*(2*j-1))


# ---------- (N1) ----------
ok = True
for n in range(1, 60):
    c_def = (2*F((-1)**(n+1), n*(n+1))
             * (bern_poly(n+1, F(0)) - bern_poly(n+1, F(1, 2)))*F(2)**n)
    if n % 2 == 0:
        ok &= (c_def == 0)
    else:
        j = (n+1)//2
        ok &= (c_def == -genocchi(2*j)/F((2*j)*(2*j-1)))
print("(N1) Genocchi form of the exponent:", "OK" if ok else "FAIL")
if not ok:
    fails.append("N1")

# ---------- (N2) tanh kernel ----------
ORD2 = 90
cosh = [F(1, math.factorial(k)) if k % 2 == 0 else F(0) for k in range(ORD2+1)]
sinh = [F(1, math.factorial(k)) if k % 2 == 1 else F(0) for k in range(ORD2+1)]
inv = [F(0)]*(ORD2+1)
inv[0] = F(1)
for k in range(1, ORD2+1):
    s = F(0)
    for i in range(1, k+1):
        if cosh[i]:
            s += cosh[i]*inv[k-i]
    inv[k] = -s
tanh = [F(0)]*(ORD2+1)
for i in range(ORD2+1):
    if sinh[i]:
        for k in range(ORD2+1-i):
            if inv[k]:
                tanh[i+k] += sinh[i]*inv[k]
ok = all(bj(j) == -F(math.factorial(2*j-2))*tanh[2*j-1] for j in range(1, 41))
print("(N2) tanh kernel of the exponent:", "OK" if ok else "FAIL")
if not ok:
    fails.append("N2")

# ---------- (N3) parity identity ----------
PORD = 200


def inv_pow(sign, N, ORD):
    out = [F(0)]*(ORD+1)
    for m in range(N, ORD+1):
        out[m] = F(math.comb(m-1, N-1))*F(-sign)**(m-N)
    return out


tot = [F(0)]*(PORD+1)
for j in range(1, PORD//2+2):
    if 2*j-1 <= PORD:
        A = inv_pow(-1, 2*j-1, PORD)
        Bp = inv_pow(1, 2*j-1, PORD)
        for m in range(PORD+1):
            tot[m] += bj(j)*(A[m]+Bp[m])
ok = all(tot[m] == (F(-2, m) if m % 2 == 1 else F(0))
         for m in range(1, PORD+1))
print("(N3) parity identity B + B_+ = log((Z-1)/(Z+1)):", "OK" if ok else "FAIL")
if not ok:
    fails.append("N3")

# ---------- e^{B} in x = (Z-1)^{-1} ----------
Bx = [F(0)]*(NORD+1)
for j in range(1, NORD//2+2):
    if 2*j-1 <= NORD:
        Bx[2*j-1] = bj(j)
a = [F(0)]*(NORD+1)
a[0] = F(1)
for k in range(NORD):
    ssum = F(0)
    for j in range(k+1):
        if j+1 <= NORD and Bx[j+1]:
            ssum += F(j+1)*Bx[j+1]*a[k-j]
    a[k+1] = ssum/F(k+1)

# ---------- (N4) dominance ----------
ok = True
for N in range(1, NORD+1):
    if nu2(a[N]) != -nu2(F(math.factorial(N))):
        ok = False
        break
    unit = a[N]*math.factorial(N)
    if unit.numerator % 2 == 0 or unit.denominator % 2 == 0:
        ok = False
        break
print("(N4) all-ones dominance, N <= %d:" % NORD, "OK" if ok else "FAIL")
if not ok:
    fails.append("N4")

# ---------- F(Z) ----------
Fz = [F(0)]*(NORD+1)
for n in range(2, NORD+1):
    tot = F((n-2)//2 + 1)          # N = 0 term: p_{n-2}
    for N in range(1, n-1):
        if a[N] == 0:
            continue
        acc = F(0)
        for m in range(N, n-1):
            k = n-2-m
            acc += F(k//2+1)*F(math.comb(m-1, N-1))
        tot += a[N]*acc
    Fz[n] = -4*tot

ok = all(Fz[n] == 0 for n in range(3, NORD+1, 2))
print("(N5) odd coefficients of F vanish:", "OK" if ok else "FAIL")
if not ok:
    fails.append("N5")

# ---------- (N6) bridge to the certified tangent data ----------
g = pickle.load(open(GPKL, 'rb'))
NG = max(g.keys())
ok = True
for m in range(0, (NG-1)//2):
    wm = F(0)
    for kk in range(1, 2*m+2, 2):
        n = 2*m+2-kk
        if 1 <= n <= NG:
            wm += F(2**kk, math.factorial(kk))*g[n][1]/F(math.factorial(n-1))
    if wm != -Fz[2*m+2]/F(4*math.factorial(2*m+1)):
        ok = False
        print(f"   (N6) FAIL m={m}")
print("(N6) kernel bridge w_m = -F_(2m+2)/(4(2m+1)!), m <= %d:"
      % ((NG-1)//2-1), "OK" if ok else "FAIL")
if not ok:
    fails.append("N6")

# ---------- (N7) the sharp law ----------
ok = True
for m in range(0, (NORD-2)//2+1):
    if nu2(Fz[2*m+2]) != 2-2*m+s2(m):
        ok = False
        print(f"   (N7) FAIL m={m}: {nu2(Fz[2*m+2])} vs {2-2*m+s2(m)}")
print("(N7) nu2(F_(2m+2)) = 2-2m+s2(m), m <= %d:" % ((NORD-2)//2),
      "OK" if ok else "FAIL")
if not ok:
    fails.append("N7")

# ---------- (N8) units ----------
expected = [F(1), F(1, 3), F(9), F(-6435, 7), F(762545)]
ok = True
for m, e in enumerate(expected):
    wm = -Fz[2*m+2]/F(4*math.factorial(2*m+1))
    um = F(math.factorial(2*m))**2*wm
    if um != e:
        ok = False
        print(f"   (N8) FAIL m={m}: {um} vs {e}")
print("(N8) units 1, 1/3, 9, -6435/7, 762545:", "OK" if ok else "FAIL")
if not ok:
    fails.append("N8")

print("CN:", "ALL PASSED" if not fails else f"FAILURES: {fails}")
sys.exit(1 if fails else 0)
\end{lstlisting}

\subsection{CO: the sectorial estimates, the lattice constant, and
the completed matching}
Validates the new appendix: the two-component relation at complex
points ($\sim\!10^{-46}$), the vanishing of $C(s)$ at real and
complex points, the exact Stirling parity and the exact real half
of the completed matching for all $n\le41$, the coefficient bounds
of Lemma~\ref{lem:hankel} for $n\le60$, the double zeros of the
forcing, an independent Euler-contour evaluation of
$Q=\mathsf K_2/\mathsf K_0$ with the decay table
$|Q(0.4+iY)|\,Y\approx0.44$--$0.48$ for $Y=4,8,12$, and the
$\varepsilon$-regularised Euler-moment anchor.

\smallskip\noindent \texttt{vertical\_estimates\_certificate.py}:\par\nopagebreak
\begin{lstlisting}
#!/usr/bin/env python3
"""CO: certificate for Appendix "The sectorial estimates in full"
(two-component relation, vanishing lattice constant, completed matching).

  (O1) the two-component relation
       (tan(pi s/2)-i) N^up + (tan(pi s/2)+i) N^dn = tan(pi s/2)(Q-psi'/2)
       at four complex points, to ~40 digits (exact identity);
  (O2) C(s) = Q - Q_rat + cot(pi s/2) T(s) = 0 (Theorem czero) at real and
       complex points, Q from the convergent block series;
  (O3) formal reflection parity S_psi(1-s) = -S_psi(s) of the Stirling
       series of psi', in exact rational arithmetic to order 40;
  (O4) the real half of the completed matching, exactly:
       Re g_n = [Z^{-n}] ( (1/2) Q_rat-series - (1/4) S_psi(s) ),
       for all n <= 41 against the reference tangent (gcoef_ref.pkl);
  (O5) coefficient bounds of Lemma "hankel": |a_k(0)| <= k! 2^{-k} and
       |G_n| <= (n+1)! 2^{-n/2}, exactly, k,n <= 60; double zeros of
       fhat_Gamma at -1,-3,-5 and the double zero of T at s=-2;
  (O6) vertical decay of Q = K2/K0 (Corollary Qvert) via an independent
       Euler-integral evaluation on the contour 0 -> 1-i -> 1:
       the evaluator is validated against the exact K0 and against the
       contiguity Q(s+1)-Q(s-1)=f(s) with the block series at Re s<0;
       then |Q(0.4+iY)|*Y is bounded for Y = 4, 8, 12, and C(s)=0 is
       re-checked at s = 0.4+8i using the Euler value of Q;
  (O7) the epsilon-regularised Euler-moment anchor J_sigma (revised
       Theorem "euleranchor"): the real-axis moments at
       eps = 1e-2, 1e-3 approach the eps=0 deformed-contour value.

Exit code 0 iff everything passes.
"""
import math
import pickle
import sys
from fractions import Fraction as F
from functools import lru_cache

from mpmath import (mp, mpf, mpc, gamma, polygamma, pi, cos, sin, tan, cot,
                    nsum, inf, sqrt, log, quad, zeta)

fails = []
GPKL = __file__.rsplit('/', 1)[0] + '/gcoef_ref.pkl'


def check(name, err, tol):
    ok = abs(err) < tol
    print(f"  {name}: err = {abs(err)}  {'OK' if ok else 'FAIL'}")
    if not ok:
        fails.append(name)


# ---------- blocks (convergent series, Re s < 1/2) ----------
def K0f(s):
    return gamma(1+s/2)/(gamma(1+s)*gamma(1-s/2))


def blocks(s):
    tol = mpf(10)**(-mp.dps+6)
    K2 = mpc(0)
    S2 = mpc(0)
    cn = mpc(1)
    inner = mpc(0)
    pp = polygamma(1, s)
    n = 0
    while n < 12000:
        if n > 0:
            cn *= ((s+n-1)/n)**2
            inner += 1/(s+n-1)**2
        sg = -1 if n % 2 else 1
        if n >= 1:
            K2 += sg*cn*inner
        Rn = 2*polygamma(0, n+1) - 2*polygamma(0, s+n)
        S2 += sg*cn*((polygamma(1, s+n)-pp)/2*Rn - polygamma(2, s+n)/2)
        if n > 10 and abs(cn)*(abs(inner)+50) < tol:
            break
        n += 1
    K2 = -K2/2
    Mf = sqrt(pi)*gamma(s+mpf(1)/2)/gamma((s+1)/2)**2
    Gf = gamma(2*s)/gamma(s)**2
    S0 = cos(pi*s/2)*Mf/Gf
    return S0, K0f(s), S2, K2


def NDud(s):
    S0, K0, S2, K2 = blocks(s)
    up = -polygamma(1, s)/4 + (S2 + 1j*pi*K2)/(S0 + 1j*pi*K0)/2
    dn = -polygamma(1, s)/4 + (S2 - 1j*pi*K2)/(S0 - 1j*pi*K0)/2
    return up, dn, K2/K0


def rhat(s):
    return s*(gamma(s/2)/gamma((1+s)/2))**2


def fhat(s):
    return -(2*s-1)*rhat(s)/(4*s**2*(s-1))


def frat(s):
    return 1/(2*s**2) - 1/(2*(s-1)**2)


def ffull(s):
    return frat(s) + tan(pi*s/2)*fhat(s)


def Tsum(s):
    return nsum(lambda k: fhat(s-1-2*k), [0, inf])


def Qrat(s):
    return (polygamma(1, (1-s)/2) - polygamma(1, 1-s/2))/8


# ---------- (O1) ----------
mp.dps = 45
print("(O1) two-component relation at four complex points")
for s in [mpc(-0.6, 0.8), mpc(-1.3, -0.7), mpc(-0.35, 1.5), mpc(-2.2, 0.4)]:
    up, dn, Q = NDud(s)
    t = tan(pi*s/2)
    check("O1", (t-1j)*up + (t+1j)*dn - t*(Q - polygamma(1, s)/2),
          mpf('1e-34'))

# ---------- (O2) ----------
print("(O2) C(s) = Q - Q_rat + cot(pi s/2) T(s) = 0")
for s, tol in [(mpc(-0.6, 0.8), '1e-10'), (mpc(-1.3, -0.7), '1e-16'),
               (mpf('-0.45'), '1e-9'), (mpf('-1.85'), '1e-20')]:
    _, _, Q = NDud(s)
    check("O2", Q - Qrat(s) + cot(pi*s/2)*Tsum(s), mpf(tol))

# ---------- (O3) exact Stirling parity ----------
@lru_cache(maxsize=None)
def bern(n):
    if n == 1:
        return F(-1, 2)
    A = [F(0)]*(n+1)
    for m in range(n+1):
        A[m] = F(1, m+1)
        for j in range(m, 0, -1):
            A[j-1] = j*(A[j-1]-A[j])
    return A[0]


ORD = 48
spsi = [F(0)]*(ORD+2)
spsi[1] = F(1)
spsi[2] = F(1, 2)
for k in range(1, ORD//2+1):
    if 2*k+1 <= ORD+1:
        spsi[2*k+1] = bern(2*k)
refl = [F(0)]*(ORD+2)
for j in range(1, ORD+2):
    if spsi[j]:
        for m in range(j, ORD+2):
            refl[m] += spsi[j]*F((-1)**j)*F(math.comb(m-1, j-1))
ok = all(refl[m] == -spsi[m] for m in range(1, 41))
print("(O3) S_psi(1-s) = -S_psi(s), exact to order 40:",
      "OK" if ok else "FAIL")
if not ok:
    fails.append("O3")

# ---------- (O4) exact real half of the completed matching ----------
def mul(a, b):
    n = ORD+1
    out = [F(0)]*n
    for i, ai in enumerate(a):
        if ai == 0:
            continue
        for j, bj in enumerate(b):
            if i+j >= n:
                break
            if bj:
                out[i+j] += ai*bj
    return out


def Dop(a):
    out = [F(0)]*(ORD+1)
    for k in range(ORD):
        if a[k]:
            out[k+1] += -F(k)*a[k]
    return out


def Dinv(a):
    out = [F(0)]*(ORD+1)
    assert a[0] == 0 and a[1] == 0
    for k in range(2, ORD+1):
        if a[k]:
            out[k-1] = -a[k]/F(k-1)
    return out


# f_rat as u-series (u = 1/s): 1/(2 s^2) - 1/(2 (s-1)^2)
fr = [F(0)]*(ORD+1)
fr[2] += F(1, 2)
for m in range(2, ORD+1):
    fr[m] -= F(m-1, 2)          # 1/(s-1)^2 = sum (m-1) u^m
Qr = [F(0)]*(ORD+1)
for i, v in enumerate(Dinv(fr)):
    Qr[i] = v/F(2)
Dpow = fr
j = 0
while True:
    j += 1
    Dpow = Dop(Dpow) if j == 1 else Dop(Dop(Dpow))
    if 2*j-1 > ORD:
        break
    cj = F(1-2**(2*j-1))*bern(2*j)/F(math.factorial(2*j))
    for i in range(ORD+1):
        if Dpow[i]:
            Qr[i] += cj*Dpow[i]
target_u = [Qr[i]/F(2) - (spsi[i] if i <= ORD else F(0))/F(4)
            for i in range(ORD+1)]
# convert to w = 1/Z via u = -2w/(1-w)
u_w = [F(0)]+[F(-2)]*ORD
upow = [[F(1)]+[F(0)]*ORD]
for k in range(1, ORD+1):
    upow.append(mul(upow[-1], u_w))
tw = [F(0)]*(ORD+1)
for k in range(ORD+1):
    if target_u[k]:
        for i in range(ORD+1):
            if upow[k][i]:
                tw[i] += target_u[k]*upow[k][i]
g = pickle.load(open(GPKL, 'rb'))
NC = min(max(g.keys()), ORD-6)
ok = all(tw[n] == g[n][0] for n in range(1, NC+1))
print(f"(O4) Re g_n = [Z^-n]((1/2)Q_rat - (1/4)S_psi), exact, n <= {NC}:",
      "OK" if ok else "FAIL")
if not ok:
    for n in range(1, NC+1):
        if tw[n] != g[n][0]:
            print(f"   mismatch n={n}: {tw[n]} vs {g[n][0]}")
    fails.append("O4")

# ---------- (O5) coefficient bounds; odd zeros ----------
def a0(k):                       # |a_k(0)| exactly
    num = 1
    for j in range(k):
        num *= (2*j+1)**2
    return F(num, 8**k*math.factorial(k))


ok = all(a0(k) <= F(math.factorial(k), 2**k) for k in range(61))
GnB = True
for n in range(61):
    Gn = sum(a0(l)*a0(n-l)*F(2)**F(0)  # |p|^l |q|^{n-l} = 2^{n/2}: square it
             for l in range(n+1))
    # compare squares to stay rational: (sum a_l a_{n-l})^2 * 2^n
    lhs = (sum(a0(l)*a0(n-l) for l in range(n+1)))**2 * F(2)**n
    rhs = F(math.factorial(n+1))**2 / F(2)**n
    if lhs > rhs:
        GnB = False
print("(O5a) |a_k(0)| <= k!2^-k and |G_n| <= (n+1)!2^{-n/2}, n,k <= 60:",
      "OK" if (ok and GnB) else "FAIL")
if not (ok and GnB):
    fails.append("O5a")
print("(O5b) double zeros of fhat at -1,-3,-5; T(-2+e)/e^2 finite")
mp.dps = 45
for x in ['-1', '-3', '-5']:
    check("O5b", fhat(mpf(x)+mpf('1e-25'))/mpf('1e-25'), mpf('1e-15'))
r1 = Tsum(mpf(-2)+mpf('1e-3'))/mpf('1e-3')**2
r2 = Tsum(mpf(-2)+mpf('1e-4'))/mpf('1e-4')**2
check("O5c", (r1-r2)/r2, mpf('1e-2'))

# ---------- (O6) Euler-contour evaluation of Q and vertical decay ----------
print("(O6) Euler contour: validation, decay of Q, and C=0 high up")
mp.dps = 30


def euler_moments(s):
    # weight t^{-s}(1-t)^{s-1}(1-t/2)^{-s} on 0 -> 1-i -> 1;
    # lam = log t - log(1-t) - log(1-t/2)
    pts = [mpf(0), mpc(1, -1), mpf(1)]

    def w(t):
        return t**(-s)*(1-t)**(s-1)*(1-t/2)**(-s)

    def lam(t):
        return log(t) - log(1-t) - log(1-t/2)

    I0 = quad(w, pts)
    I1 = quad(lambda t: w(t)*lam(t), pts)
    I2 = quad(lambda t: w(t)*lam(t)**2, pts)
    return I0, I1, I2


def Q_euler(s):
    I0, I1, I2 = euler_moments(s)
    var = I2/I0 - (I1/I0)**2
    return -pi**2/4/sin(pi*s)**2 + var/4


# validation 1: K0 from the contour (alpha = 0)
s = mpc('0.4', '2')
I0, _, _ = euler_moments(s)
K0c = 2**(-s)*sin(pi*(1-s))/pi*I0
check("O6a", (K0c - K0f(s))/K0f(s), mpf('1e-12'))
# validation 2: contiguity Q(s+1)-Q(s-1)=f(s) at s=-0.6+2i,
# with Q(s-1) from the (fast) block series at Re = -1.6
sc = mpc('-0.6', '2')
_, _, Qm = NDud(sc-1)
check("O6b", Q_euler(sc+1) - Qm - ffull(sc), mpf('1e-8'))
# decay table
for Y in ['4', '8', '12']:
    sY = mpc('0.4', Y)
    qv = Q_euler(sY)
    bnd = abs(qv)*mpf(Y)
    ok = mpf('0.1') < bnd < mpf('1.0')
    print(f"  O6c: |Q(0.4+{Y}i)|*Y = {float(bnd):.6f}  "
          f"{'OK' if ok else 'FAIL'}")
    if not ok:
        fails.append("O6c")
# C = 0 at a high point, Q from the Euler contour
sH = mpc('0.4', '8')
check("O6d", Q_euler(sH) - Qrat(sH) + cot(pi*sH/2)*Tsum(sH), mpf('1e-6'))

# ---------- (O7) epsilon-regularised J_sigma ----------
print("(O7) eps-regularised Euler-moment anchor at sigma = 0.6")
mp.dps = 25
sg = mpf('0.6')


def Jmom(eps, contour):
    if contour:
        pts = [mpf(0), mpc('0.5', '0.35'), mpf(1)]
    else:
        pts = [mpf(0), mpf('0.5'), mpf(1)]
    c = 2 + 1j*eps

    def W(t):
        return t**(sg-1)*(1-t)**(sg-1)*(1-c*t)**(-sg)

    def L(t):
        return log(t) - log(1-t) + log(1-c*t)

    def T(t):
        return t/(1-c*t)

    den = quad(W, pts)
    mL2 = quad(lambda t: W(t)*L(t)**2, pts)/den
    mT = quad(lambda t: W(t)*T(t), pts)/den
    mLT = quad(lambda t: W(t)*L(t)*T(t), pts)/den
    mL2T = quad(lambda t: W(t)*L(t)**2*T(t), pts)/den
    return sg*(mL2T - mL2*mT) - 2*mLT


Jc = Jmom(mpf(0), True)
J2 = Jmom(mpf('1e-2'), False)
J3 = Jmom(mpf('1e-3'), False)
print(f"  J(contour) = {Jc}")
print(f"  |J(1e-2)-Jc| = {abs(J2-Jc)},  |J(1e-3)-Jc| = {abs(J3-Jc)}")
ok = abs(J3-Jc) < abs(J2-Jc) and abs(J3-Jc) < mpf('0.02')*abs(Jc)
print("  O7: monotone approach of the eps-limit:", "OK" if ok else "FAIL")
if not ok:
    fails.append("O7")

print("CO:", "ALL PASSED" if not fails else f"FAILURES: {fails}")
sys.exit(1 if fails else 0)
\end{lstlisting}

\subsection{CP: the amplitude--phase identification}
Verifies Proposition~\ref{prop:dictionary}'s normalisation in exact
rational arithmetic, in two layers: spot checks of $\kappa_r$ from an
independent exact Laurent implementation of the Kummer phase recursion
(\texttt{fast\_recursion.py}) against \eqref{eq:kappadef} computed by
the Stirling amplitude engine (\texttt{kappa\_engine.py}); and the
interpolation-grid proof of the polynomial identity in $(A,B)$ for
every $r\le8$ (degree bound $2r-1$ per variable, $16\times16$ product
grid), together with the displayed $\kappa_1,\kappa_2$.

\smallskip\noindent \texttt{identification\_check.py}:\par\nopagebreak
\begin{lstlisting}
#!/usr/bin/env python3
"""CP: amplitude--phase identification (Proposition "amplitude--phase
coefficient dictionary").

Two layers:
  (a) spot checks at four rational (alpha, beta) pairs, r <= 8;
  (b) an interpolation-grid PROOF: both sides are polynomials in
      (A, B) = (alpha^2, beta^2) of degree <= 2r-1 in each variable
      (amplitude side: Lemma "degree"; phase side: degrees of the
      computed polynomial, asserted), so exact agreement on a 16 x 16
      product grid of distinct values proves the polynomial identity
      kappa_r^{phase} = kappa_r^{amplitude} for ALL alpha, beta and
      every r <= 8.  Beyond r = 8 the identification remains a
      certified statement (see the paper's Remark "what remains").

kappa_r from the PHASE recursion (fast_recursion.primitives + lat1, the
Liouville/Kummer route, exact Laurent arithmetic) is compared with kappa_r
from the AMPLITUDE definition (2.4), i.e. 2^{-(2r-1)} Im L_{2r-1}(x0,y0)
computed by the Stirling engine (kappa_engine.kappas_exact), for r <= 8 and
several rational (alpha, beta).  Also rechecks the displayed kappa_1..3.
"""
import sys, os
from fractions import Fraction as F

HERE = os.path.dirname(os.path.abspath(__file__))
sys.path.insert(0, HERE)
from fast_recursion import primitives, lat1
from kappa_engine import kappas_exact

R = 8
psi = primitives(R)
kpoly = {r: lat1(psi[r]) for r in range(1, R+1)}   # {(i,j): coeff} in A^i B^j

def evalpoly(p, A, B):
    return sum(c * A**i * B**j for (i, j), c in p.items())

pairs = [(F(1), F(1,3)), (F(3,2), F(1,5)), (F(2,7), F(5,3)), (F(1,2), F(0))]
ok = True
for al, be in pairs:
    A, B = al*al, be*be
    kamp = kappas_exact(al, be, R)[0]
    for r in range(1, R+1):
        good = (evalpoly(kpoly[r], A, B) == kamp[r])
        ok &= good
        if not good:
            print(f"MISMATCH r={r} at alpha={al}, beta={be}")
print(f"phase-recursion kappa == amplitude kappa (eq. (kappadef)), r<={R}, "
      f"{len(pairs)} parameter pairs: {ok}")

# ---- interpolation-grid PROOF for each r <= R ----------------------------
# Both sides are polynomials in (A,B): the amplitude side has degree <= 2r-1
# in each variable (deg_A L_n <= n, Lemma "degree" of the paper), and the
# phase side is the computed polynomial kpoly[r], whose degrees are read off
# and asserted to be <= 2R-1 = 15.  Two polynomials of degree <= 15 in each
# variable that agree on a 16 x 16 product grid of distinct A- and B-values
# are identical; hence agreement on the grid below PROVES the identity
# kappa_r^{phase} = kappa_r^{amplitude} as polynomials in (A,B), i.e. for
# ALL alpha, beta, for every r <= 8.
GRID = 2*R                      # 16 values per variable
for r in range(1, R+1):
    dA = max((i for (i, j) in kpoly[r]), default=0)
    dB = max((j for (i, j) in kpoly[r]), default=0)
    assert dA <= 2*R-1 and dB <= 2*R-1, (r, dA, dB)
alphas = [F(i, 2) for i in range(1, GRID+1)]     # A = i^2/4, all distinct
betas  = [F(j, 3) for j in range(1, GRID+1)]     # B = j^2/9, all distinct
okg = True
for al in alphas:
    for be in betas:
        A, B = al*al, be*be
        kamp = kappas_exact(al, be, R)[0]
        for r in range(1, R+1):
            if evalpoly(kpoly[r], A, B) != kamp[r]:
                okg = False
                print(f"GRID MISMATCH r={r} at alpha={al}, beta={be}")
print(f"interpolation-grid identity proof ({GRID}x{GRID} product grid, "
      f"degree bound 2r-1 <= {2*R-1}): kappa_r^phase == kappa_r^amp as "
      f"polynomials in (A,B), all alpha,beta, r<={R}: {okg}")
ok &= okg

# displayed examples
k1 = kpoly[1]; k2 = kpoly[2]
ok2 = (k1 == {(1,0): F(1,4), (0,1): F(-1,4)})
ok2 &= (k2.get((1,0)) == F(-7,96) and k2.get((0,1)) == F(7,96)
        and k2.get((2,0)) == F(2,96) and k2.get((0,2)) == F(-2,96))
print("displayed kappa_1, kappa_2 coefficients:", ok2)
sys.exit(0 if (ok and ok2) else 1)
\end{lstlisting}

\smallskip\noindent \texttt{fast\_recursion.py}:\par\nopagebreak
\begin{lstlisting}
#!/usr/bin/env python3
# CP support module: exact Laurent implementation of the Kummer phase
# recursion (independent of the amplitude engine).
"""Real convolution recursion for the phase densities and constants.

Exact arithmetic on dictionaries: a Laurent polynomial in z is a map
{exponent: {(i,j): Fraction}}, the inner map meaning sum c_ij A^i B^j with
A = alpha^2, B = beta^2.  No computer-algebra simplification is ever invoked,
so the cost is that of integer arithmetic alone and alpha, beta stay symbolic.

  a_1     = V/2
  b_m     = - sum_{p=1}^{m-1} a_p b_{m-p} - a_m'/2
  a_{m+1} = ( b_m' - sum_{p=1}^{m} a_p a_{m+1-p}
                   + sum_{p=1}^{m-1} b_p b_{m-p} ) / 2

with ' = d/dtheta = -(1+z^2)/2 d/dz and z = cot(theta/2); then Psi_m is the
odd Laurent primitive of a_m and kappa_m = Psi_m(1).

Usage:  python3 fast_recursion.py 12
"""
import sys
from fractions import Fraction as F


# --- coefficient ring: {(i,j): Fraction} meaning A^i B^j ---------------
def cadd(x, y):
    o = dict(x)
    for k, v in y.items():
        w = o.get(k, 0) + v
        if w:
            o[k] = w
        elif k in o:
            del o[k]
    return o


def cscal(x, s):
    return {} if s == 0 else {k: v*s for k, v in x.items()}


def cmul(x, y):
    o = {}
    for (i1, j1), v1 in x.items():
        for (i2, j2), v2 in y.items():
            k = (i1+i2, j1+j2)
            w = o.get(k, 0) + v1*v2
            if w:
                o[k] = w
            elif k in o:
                del o[k]
    return o


# --- Laurent polynomials in z: {exponent: coefficient dict} ------------
def ladd(x, y):
    o = dict(x)
    for e, c in y.items():
        w = cadd(o.get(e, {}), c)
        if w:
            o[e] = w
        elif e in o:
            del o[e]
    return o


def lscal(x, s):
    return {e: cscal(c, s) for e, c in x.items() if cscal(c, s)}


def lmul(x, y):
    o = {}
    for e1, c1 in x.items():
        for e2, c2 in y.items():
            e = e1 + e2
            w = cadd(o.get(e, {}), cmul(c1, c2))
            if w:
                o[e] = w
            elif e in o:
                del o[e]
    return o


def ldz(x):                                    # d/dz
    o = {}
    for e, c in x.items():
        if e == 0:
            continue
        w = cscal(c, F(e))
        if w:
            o[e-1] = w
    return o


def lmul_1pz2(x):                              # multiply by 1 + z^2
    return ladd(x, {e+2: c for e, c in x.items()})


def ldtheta(x):                                # d/dtheta
    return lscal(lmul_1pz2(ldz(x)), F(-1, 2))


def ldiv_1pz2(x):                              # exact division by 1 + z^2
    if not x:
        return {}
    Q = {}
    L = dict(x)
    for e in range(max(x), min(x)-1, -1):
        qe = cadd(L.get(e, {}), cscal(Q.get(e, {}), -1))
        if qe:
            Q[e-2] = qe
    if ladd(lmul_1pz2(Q), lscal(x, -1)):
        raise ArithmeticError('division by (1+z^2) left a remainder')
    return Q


def lprimitive_z(x):                           # antiderivative in z
    o = {}
    for e, c in x.items():
        if e == -1:
            raise ArithmeticError('unexpected logarithmic term')
        w = cscal(c, F(1, e+1))
        if w:
            o[e+1] = w
    return o


def lat1(x):                                   # evaluate at z = 1
    o = {}
    for c in x.values():
        o = cadd(o, c)
    return o


# --- the potential V(z) = (1+z^2)/16 [ (1-4A) + (1-4B)/z^2 ] -----------
V = lscal(lmul_1pz2({0: {(0, 0): F(1), (1, 0): F(-4)},
                     -2: {(0, 0): F(1), (0, 1): F(-4)}}), F(1, 16))


def primitives(R):
    """Return {r: Psi_r} for r = 1..R, as Laurent dictionaries."""
    a = {1: lscal(V, F(1, 2))}
    b = {}
    psi = {}
    for m in range(1, R+1):
        psi[m] = lprimitive_z(lscal(ldiv_1pz2(a[m]), F(-2)))
        if 0 in psi[m]:
            raise AssertionError('Psi_%d has a constant term' % m)
        if m == R:
            break
        s = {}
        for p in range(1, m):
            s = ladd(s, lmul(a[p], b[m-p]))
        b[m] = ladd(lscal(s, F(-1)), lscal(ldtheta(a[m]), F(-1, 2)))
        s1 = {}
        for p in range(1, m+1):
            s1 = ladd(s1, lmul(a[p], a[m+1-p]))
        s2 = {}
        for p in range(1, m):
            s2 = ladd(s2, lmul(b[p], b[m-p]))
        a[m+1] = lscal(ladd(ladd(ldtheta(b[m]), lscal(s1, F(-1))), s2), F(1, 2))
    return psi


def constants(R):
    """Return {r: kappa_r} for r = 1..R, as coefficient dictionaries."""
    return {r: lat1(p) for r, p in primitives(R).items()}


def show(c):
    if not c:
        return '0'
    out = []
    for (i, j) in sorted(c, reverse=True):
        m = ('A^%d' % i if i > 1 else 'A' if i == 1 else '')
        m += ('B^%d' % j if j > 1 else 'B' if j == 1 else '')
        out.append(str(c[(i, j)]) + ('*' + m if m else ''))
    return ' + '.join(out).replace('+ -', '- ')


if __name__ == '__main__':
    R = int(sys.argv[1]) if len(sys.argv) > 1 else 8
    K = constants(R)
    for r in range(1, R+1):
        print('kappa_%-2d = %s' % (r, show(K[r])))
\end{lstlisting}

\smallskip\noindent \texttt{kappa\_engine.py}:\par\nopagebreak
\begin{lstlisting}
"""Exact and high-precision engine for the Jacobi phase constants.
# CP support module: exact Stirling engine for the amplitude normal form
# kappa_r = 2^{-(2r-1)} Im L_{2r-1}(x0,y0).

kappa_r = Im(lambda_{2r-1}) / 2^{2r-1},  where log T = sum_m lambda_m (2N)^{-m},
T = sum_n G_n / prod_{j=1}^n (2N-j),  G_n = sum_l a_l(alpha) a_{n-l}(beta)(1+i)^l(1-i)^{n-l}.

Psi_r(theta) uses g_n(theta) with weights w=2/(t-1), wt=2/(t+1), t=e^{i theta}.
"""
from fractions import Fraction as F
import functools

# ---------- exact Gaussian rationals ----------
class GQ:
    __slots__ = ('re', 'im')
    def __init__(self, re=0, im=0):
        self.re = F(re); self.im = F(im)
    def __add__(s, o): return GQ(s.re + o.re, s.im + o.im)
    def __sub__(s, o): return GQ(s.re - o.re, s.im - o.im)
    def __mul__(s, o):
        if isinstance(o, GQ):
            return GQ(s.re*o.re - s.im*o.im, s.re*o.im + s.im*o.re)
        return GQ(s.re*o, s.im*o)
    __rmul__ = __mul__
    def __truediv__(s, o): return GQ(s.re/o, s.im/o)
    def __repr__(s): return f"({s.re}+{s.im}i)"

def stirling2_table(M):
    S = [[0]*(M+1) for _ in range(M+1)]
    S[0][0] = 1
    for m in range(1, M+1):
        for n in range(1, m+1):
            S[m][n] = n*S[m-1][n] + S[m-1][n-1]
    return S

def a_coeffs(nu, K):
    """a_k(nu) = (1/2+nu)_k (1/2-nu)_k / ((-2)^k k!), exact for rational nu."""
    nu = F(nu)
    out = [F(1)]
    p = F(1)
    for k in range(1, K+1):
        p *= (F(1,2)+nu+k-1)*(F(1,2)-nu+k-1)
        out.append(p / ((-2)**k * factorial(k)))
    return out

@functools.lru_cache(maxsize=None)
def factorial(n):
    r = 1
    for i in range(2, n+1):
        r *= i
    return r

def kappas_exact(alpha, beta, Rmax):
    M = 2*Rmax - 1
    S = stirling2_table(M)
    aa = a_coeffs(alpha, M); ab = a_coeffs(beta, M)
    # powers of (1+i) and (1-i)
    P = [GQ(1, 0)]; Q = [GQ(1, 0)]
    for k in range(1, M+1):
        P.append(P[-1]*GQ(1, 1)); Q.append(Q[-1]*GQ(1, -1))
    G = []
    for n in range(M+1):
        acc = GQ(0, 0)
        for l in range(n+1):
            acc = acc + (P[l]*Q[n-l])*(aa[l]*ab[n-l])
        G.append(acc)
    c = [GQ(0, 0)]*(M+1)
    c[0] = GQ(1, 0)
    for m in range(1, M+1):
        acc = GQ(0, 0)
        for n in range(1, m+1):
            acc = acc + G[n]*F(S[m][n])
        c[m] = acc
    lam = [GQ(0, 0)]*(M+1)
    for m in range(1, M+1):
        acc = c[m]*F(m)
        for j in range(1, m):
            acc = acc - (lam[j]*c[m-j])*F(j)
        lam[m] = acc / F(m)
    out = {}
    for r in range(1, Rmax+1):
        m = 2*r-1
        out[r] = lam[m].im / F(2**m)
    return out, lam, c

# ---------- high precision, general theta ----------
def psis_mp(alpha, beta, theta, Rmax, dps=400):
    from mpmath import mp, mpf, mpc, exp, factorial as mfact, im
    mp.dps = dps
    M = 2*Rmax - 1
    S = stirling2_table(M)
    al = mp.mpf(str(alpha)) if not isinstance(alpha, mp.mpf) else alpha
    be = mp.mpf(str(beta)) if not isinstance(beta, mp.mpf) else beta
    def acoef(nu, K):
        out = [mp.mpf(1)]; p = mp.mpf(1)
        for k in range(1, K+1):
            p *= (mp.mpf(1)/2+nu+k-1)*(mp.mpf(1)/2-nu+k-1)
            out.append(p/(mp.mpf(-2)**k*mp.factorial(k)))
        return out
    aa = acoef(al, M); ab = acoef(be, M)
    t = mp.e**(mp.mpc(0, 1)*theta)
    w = 2/(t-1); wt = 2/(t+1)
    Pw = [mp.mpc(1)]; Pwt = [mp.mpc(1)]
    for k in range(1, M+1):
        Pw.append(Pw[-1]*(-w)); Pwt.append(Pwt[-1]*wt)
    G = []
    for n in range(M+1):
        acc = mp.mpc(0)
        for l in range(n+1):
            acc += aa[l]*ab[n-l]*Pw[l]*Pwt[n-l]
        G.append(acc)
    c = [mp.mpc(0)]*(M+1); c[0] = mp.mpc(1)
    for m in range(1, M+1):
        acc = mp.mpc(0)
        for n in range(1, m+1):
            acc += G[n]*S[m][n]
        c[m] = acc
    lam = [mp.mpc(0)]*(M+1)
    for m in range(1, M+1):
        acc = c[m]*m
        for j in range(1, m):
            acc -= j*lam[j]*c[m-j]
        lam[m] = acc/m
    return {r: mp.im(lam[2*r-1])/mp.mpf(2)**(2*r-1) for r in range(1, Rmax+1)}
\end{lstlisting}


\begin{thebibliography}{99}

\bibitem{Szego}
G. Szeg\H{o},
\emph{Orthogonal Polynomials}, 4th ed.,
American Mathematical Society Colloquium Publications, vol.~23,
American Mathematical Society, Providence, RI, 1975.
ISBN 978-0-8218-1023-1.

\bibitem{DLMF}
F. W. J. Olver, A. B. Olde Daalhuis, D. W. Lozier,
B. I. Schneider, R. F. Boisvert, C. W. Clark, B. R. Miller,
B. V. Saunders, H. S. Cohl and M. A. McClain, eds.,
\emph{NIST Digital Library of Mathematical Functions},
Release 1.2.7 of 2026-06-15,
\url{https://dlmf.nist.gov/}.

\bibitem{AndrewsAskeyRoy}
G. E. Andrews, R. Askey and R. Roy,
\emph{Special Functions},
Encyclopedia of Mathematics and its Applications, vol.~71,
Cambridge University Press, Cambridge, 1999.
\doi{10.1017/CBO9781107325937}.

\bibitem{KrallFrink}
H. L. Krall and O. Frink,
A new class of orthogonal polynomials: the Bessel polynomials,
\emph{Trans. Amer. Math. Soc.} \textbf{65} (1949), 100--115.
\doi{10.1090/S0002-9947-1949-0028473-1}.

\bibitem{Burchnall}
J. L. Burchnall,
The Bessel polynomials,
\emph{Canad. J. Math.} \textbf{3} (1951), 62--68.
\doi{10.4153/CJM-1951-009-3}.

\bibitem{DunsterEtAl}
T. M. Dunster, A. Gil, D. Ruiz-Antol\'{\i}n and J. Segura,
Computation of the reverse generalized Bessel polynomials and their zeros,
\emph{Comput. Math. Methods} \textbf{3} (2021), no.~6, e1198.
\doi{10.1002/cmm4.1198}.

\bibitem{DimitrovSantos}
D. K. Dimitrov and E. J. C. dos Santos,
Asymptotic behaviour of Jacobi polynomials and their zeros,
\emph{Proc. Amer. Math. Soc.} \textbf{144} (2016), 535--545.
\doi{10.1090/proc/12689}.

\bibitem{GilSeguraTemme}
A. Gil, J. Segura and N. M. Temme,
Asymptotic expansions of Jacobi polynomials for large values of $\beta$
and of their zeros,
\emph{SIGMA} \textbf{14} (2018), Paper 073, 9 pp.
\doi{10.3842/SIGMA.2018.073}.

\bibitem{Hahn1980}
E. Hahn,
Asymptotik bei Jacobi-Polynomen und Jacobi-Funktionen,
\emph{Math. Z.} \textbf{171} (1980), 201--226.

\bibitem{FrenzenWong1985}
C. L. Frenzen and R. Wong,
A uniform asymptotic expansion of the Jacobi polynomials with error
bounds,
\emph{Canad. J. Math.} \textbf{37} (1985), no.~5, 979--1007.
\doi{10.4153/CJM-1985-053-5}.

\bibitem{HaleTownsend2013}
N. Hale and A. Townsend,
Fast and accurate computation of Gauss--Legendre and Gauss--Jacobi
quadrature nodes and weights,
\emph{SIAM J. Sci. Comput.} \textbf{35} (2013), no.~2, A652--A674.
\doi{10.1137/120889873}.

\bibitem{DeanoHuybrechsOpsomer2016}
A. Dea\~no, D. Huybrechs and P. Opsomer,
Construction and implementation of asymptotic expansions for Jacobi-type
orthogonal polynomials,
\emph{Adv. Comput. Math.} \textbf{42} (2016), 791--822.
\doi{10.1007/s10444-015-9442-z}.

\bibitem{BremerYang2020}
J. Bremer and H. Yang,
Fast algorithms for Jacobi expansions via nonoscillatory phase
functions,
\emph{IMA J. Numer. Anal.} \textbf{40} (2020), no.~3, 2019--2051.
\doi{10.1093/imanum/drz016}.

\bibitem{GilSeguraTemme2021}
A. Gil, J. Segura and N. M. Temme,
Asymptotic expansions of Jacobi polynomials and of the nodes and weights
of Gauss--Jacobi quadrature for large degree and parameters in terms of
elementary functions,
\emph{J. Math. Anal. Appl.} \textbf{494} (2021), 124642.
\doi{10.1016/j.jmaa.2020.124642}.

\bibitem{Nemes2025}
G. Nemes,
Large-degree asymptotic expansions for the Jacobi and related functions,
\emph{Math. Comp.}, published electronically 29 October 2025.
\doi{10.1090/mcom/4145}.

\bibitem{HuangLinWangWong2025}
X.-M. Huang, Y. Lin, X.-S. Wang and R. Wong,
Error bounds for the asymptotic expansions of the Jacobi polynomials,
arXiv:2508.04520 (2025).

\bibitem{companion}
I. Area,
The dyadic denominator law for the phase constants of the Jacobi zeros,
submitted, 2026. \url{https://arxiv.org/abs/2608.23006}.

\bibitem{VidunasContiguous}
R. Vid\=unas,
Contiguous relations of hypergeometric series,
\emph{J. Comput. Appl. Math.} \textbf{153} (2003), 507--519.
\doi{10.1016/S0377-0427(02)00643-X}.

\bibitem{VidunasAppell}
R. Vid\=unas,
Specialization of Appell's functions to univariate hypergeometric functions,
\emph{J. Math. Anal. Appl.} \textbf{355} (2009), 145--163.
\doi{10.1016/j.jmaa.2009.01.047}.

\bibitem{BC1}
J. L. Burchnall and T. W. Chaundy,
Expansions of Appell's double hypergeometric functions,
\emph{Quart. J. Math. Oxford Ser.} \textbf{11} (1940), 249--270.
\doi{10.1093/qmath/os-11.1.249}.

\bibitem{BC2}
J. L. Burchnall and T. W. Chaundy,
Expansions of Appell's double hypergeometric functions (II),
\emph{Quart. J. Math. Oxford Ser.} \textbf{12} (1941), 112--128.
\doi{10.1093/qmath/os-12.1.112}.

\bibitem{PWZ}
M. Petkov\v{s}ek, H. S. Wilf and D. Zeilberger,
\emph{A${}={}$B},
A K Peters, Wellesley, MA, 1996.
ISBN 1-56881-063-6.\newline\doi{10.1201/9781439864500}.


\bibitem{AndersonEtAl}
G. D. Anderson, R. W. Barnard, K. C. Richards,
M. K. Vamanamurthy and M. Vuorinen,
Inequalities for zero-balanced hypergeometric functions,
\emph{Trans. Amer. Math. Soc.} \textbf{347} (1995), 1713--1723.
\doi{10.1090/S0002-9947-1995-1264800-3}.
\bibitem{SimicVuorinen}
S. Simi\'c and M. Vuorinen,
Landen inequalities for zero-balanced hypergeometric functions,
\emph{Abstr. Appl. Anal.} \textbf{2012} (2012), Article ID 932061, 11 pp.
\doi{10.1155/2012/932061}.
\end{thebibliography}
\end{document}